\DeclareSymbolFont{cyrletters}{OT2}{wncyr}{m}{n}
\DeclareMathSymbol{\Sha}{\mathalpha}{cyrletters}{"58}
\newtheorem{thm}{Theorem}[section]
\newtheorem{cor}[thm]{Corollary}
\newtheorem{prop}[thm]{Proposition}
\newtheorem{lemma}[thm]{Lemma}
\newtheorem{conj}[thm]{Conjecture}
\theoremstyle{definition}
\newtheorem{defn}[thm]{Definition}
\newtheorem{eg}[thm]{Example}
\newtheorem{assump}[thm]{Assumption}
\newtheorem{rmk}[thm]{Remark}
\newcommand{\AAA}{\mathbb{A}}
\newcommand{\BB}{\mathbb{B}}
\newcommand{\DD}{\mathbb{D}}
\newcommand{\GG}{\mathbb{G}}
\newcommand{\FF}{\mathbb{F}}
\newcommand{\QQ}{\mathbb{Q}}
\newcommand{\LL}{\mathbb{L}}
\newcommand{\ZZ}{\mathbb{Z}}
\newcommand{\RR}{\mathbb{R}}
\newcommand{\CC}{\mathbb{C}}
\newcommand{\MM}{\mathbb{M}}
\newcommand{\PP}{\mathbb{P}}
\newcommand{\XX}{\mathbb{X}}
\newcommand{\cK}{\mathcal{K}}
\newcommand{\cE}{\mathcal{E}}
\newcommand{\cP}{\mathcal{P}}
\newcommand{\cF}{\mathcal{F}}
\newcommand{\cO}{\mathcal{O}}
\newcommand{\cL}{\mathcal{L}}
\newcommand{\cU}{\mathcal{U}}
\newcommand{\cN}{\mathcal{N}}
\newcommand{\cI}{\mathcal{I}}
\newcommand{\cC}{\mathcal{C}}
\newcommand{\cY}{\mathcal{Y}}
\newcommand{\cZ}{\mathcal{Z}}
\newcommand{\cS}{\mathcal{S}}
\newcommand{\cA}{\mathcal{A}}
\newcommand{\cB}{\mathcal{B}}
\newcommand{\fA}{\mathfrak{A}}
\newcommand{\fu}{\mathfrak{u}}
\newcommand{\fg}{\mathfrak{g}}
\newcommand{\ft}{\mathfrak{t}}
\newcommand{\fb}{\mathfrak{b}}
\newcommand{\fp}{\mathfrak{p}}
\newcommand{\fq}{\mathfrak{q}}
\newcommand{\rC}{\mathrm{C}}
\newcommand{\rT}{\mathrm{T}}
\newcommand{\rS}{\mathrm{S}}
\newcommand{\bC}{\mathbf{C}}
\newcommand{\sT}{\mathsf{T}}
\newcommand{\sE}{\mathsf{E}}
\newcommand{\sX}{\mathsf{X}}
\newcommand{\sG}{\mathsf{G}}
\newcommand{\sZ}{\mathsf{Z}}
\newcommand{\sy}{\mathsf{y}}
\newcommand{\sx}{\mathsf{x}}
\newcommand{\szero}{\mathsf{p}}
\newcommand{\GL}{\mathrm{GL}}
\newcommand{\git}{{/\!\!/}}
\newcommand{\scA}{\mathscr{A}}
\newcommand{\scS}{\mathscr{S}}
\newcommand{\sSh}{\mathsf{Sh}}
\newcommand{\Sh}{\mathrm{Sh}}
\newcommand{\Ad}{\mathrm{Ad}}
\newcommand{\Ex}{\mathrm{Ex}}
\newcommand{\PCoh}{\mathrm{PCoh}}
\newcommand{\Coh}{\mathrm{Coh}}
\newcommand{\Shv}{\mathrm{Shv}}
\newcommand{\Rep}{\mathrm{Rep}}
\newcommand{\Hom}{\mathrm{Hom}}
\newcommand{\Aff}{\mathrm{Aff}}
\newcommand{\perf}{\mathrm{perf}}
\newcommand{\Perf}{\mathrm{Perf}}
\newcommand{\Perfd}{\mathrm{Perfd}}
\newcommand{\unip}{\mathrm{unip}}
\newcommand{\Isoc}{\mathrm{Isoc}}
\newcommand{\Loc}{\mathrm{Loc}}
\newcommand{\dom}{\mathrm{dom}}
\newcommand{\Fl}{\mathrm{Fl}}
\newcommand{\Adm}{\mathrm{Adm}}
\newcommand{\Iw}{\mathrm{Iw}}
\newcommand{\Sht}{\mathrm{Sht}}
\newcommand{\loc}{\mathrm{loc}}
\newcommand{\glob}{\mathrm{glob}}
\newcommand{\Nt}{\mathrm{Nt}}
\newcommand{\Lincat}{\mathrm{Lincat}}
\newcommand{\can}{\mathrm{can}}
\newcommand{\Ch}{\mathrm{Ch}}
\newcommand{\ch}{\mathrm{ch}}
\newcommand{\der}{\mathrm{der}}
\newcommand{\CohSpr}{\mathrm{CohSpr}}
\newcommand{\spec}{\mathrm{spec}}
\newcommand{\unr}{\mathrm{unr}}
\newcommand{\ev}{\mathrm{ev}}
\newcommand{\ad}{\mathrm{ad}}
\newcommand{\coarse}{\mathrm{coarse}}
\newcommand{\gen}{\mathrm{gen}}
\newcommand{\sgen}{\mathrm{sgen}}
\newcommand{\fingen}{\mathrm{f.g.}}
\newcommand{\Hk}{\mathrm{Hk}}
\newcommand{\Cusp}{\mathrm{Cusp}}
\newcommand{\tor}{\mathrm{tor}}
\newcommand{\id}{\mathrm{id}}
\newcommand{\hs}{\mathrm{hs}}
\newcommand{\mon}{\mathrm{mon}}
\newcommand{\Gr}{\mathrm{Gr}}
\newcommand{\Bun}{\mathrm{Bun}}
\newcommand{\HT}{\mathrm{HT}}
\newcommand{\crys}{\mathrm{crys}}
\newcommand{\BL}{\mathrm{BL}}
\newcommand{\act}{\mathrm{act}}
\newcommand{\geo}{\mathrm{geo}}
\newcommand{\ULA}{\mathrm{ULA}}
\newcommand{\sw}{\mathrm{sw}}
\newcommand{\tr}{\mathrm{tr}}
\newcommand{\Til}{\mathrm{Til}}
\newcommand{\mini}{\mathrm{min}}
\newcommand{\pFr}{\mathrm{pFr}}
\newcommand{\corr}{\mathrm{corr}}
\newcommand{\ab}{\mathrm{ab}}
\newcommand{\Fil}{\mathrm{Fil}}
\newcommand{\rs}{\mathrm{rs}}
\newcommand{\Wak}{\mathrm{Wak}}
\newcommand{\Tate}{\mathrm{Tate}}
\newcommand{\QCoh}{\mathrm{QCoh}}
\newcommand{\univ}{\mathrm{univ}}
\newcommand{\et}{{\textrm{\'et}}}
\newcommand{\fI}{\mathfrak{I}}
\newcommand{\Igs}{\mathrm{Igs}}
\newcommand{\Ig}{\mathrm{Ig}}
\newcommand{\cSht}{{\mathcal{S}ht}}
\newcommand{\cHk}{{\mathcal{H}k}}
\newcommand{\cGr}{{\mathcal{G}r}}
\newcommand{\bJ}{\mathbf{J}}
\newcommand{\cInd}{{\mathrm{c}\mbox{-}\mathrm{Ind}}}
\newcommand{\nInd}{{\mathrm{n}\mbox{-}\mathrm{Ind}}}
\newcommand{\Res}{\mathrm{Res}}
\newcommand{\Ind}{\mathrm{Ind}}
\newcommand{\colim}{\mathop{\operatorname{colim}}}
\newcommand{\Spec}{\operatorname{Spec}}
\newcommand{\Spf}{\operatorname{Spf}}
\newcommand{\Spa}{\operatorname{Spa}}
\newcommand{\Spd}{\operatorname{Spd}}
\title{On the generic part of the cohomology of Shimura varieties of abelian type}
\author{Xiangqian Yang}
\address{Beijing International Center for Mathematical Research, Peking University, China}
\email{yangxq@pku.edu.cn}
\author{Xinwen Zhu}
\address{Department of Mathematics, Stanford University, USA}
\email{zhuxw@stanford.edu}
\begin{document}
\begin{abstract}
	This article contributes to the study of the generic part of the cohomology of Shimura varieties. Under a mild restriction of the characteristic of the coefficient field, we prove a torsion vanishing result for Shimura varieties of abelian type, confirming a conjecture by Hamann--Lee. Our proofs utilize the unipotent categorical local Langlands correspondence and, in contrast to previous works, do not rely on the endoscopic classification of representations or on other results established through trace formula techniques.
\end{abstract}
\maketitle
\setcounter{tocdepth}{2}
\tableofcontents

\section{Introduction}
\subsection{Main results}
This article contributes to the study of the generic part of the cohomology of Shimura varieties.  We begin by addressing our main question, the torsion vanishing of the generic part of the cohomology of Shimura varieties, an investigation initiated in \cite{Caraiani-Scholze17,Caraiani-Scholze24}.

Let $(\sG,\sX)$ be a Shimura datum, where $\sG$ is a connected reductive group over $\QQ$ and $\sX$ is a $\sG(\RR)$-conjugacy class of cocharacters satisfying certain standard conditions. Let $\AAA_f$ denote the ring of finite ad\`eles of $\QQ$. Let $\sE\subseteq \CC$ denote the reflex field of $\sX$. For a \emph{neat} open compact subgroup $K\subseteq \sG(\AAA_f)$, there exists a Shimura variety $\sSh_K(\sG,\sX)$ defined over $\sE$. The set of $\CC$-points of $\sSh_K(\sG,\sX)$ is identified with the double quotient set $\sG(\QQ)\backslash \sX\times\sG(\AAA_f)/K.$ Fix a prime number $\ell$ and denote $\Lambda=\overline{\FF}_\ell$ or $\overline{\QQ}_\ell$. Let
$$R\Gamma(\sSh_{K}(\sG,\sX)_{\overline\sE},\Lambda)\quad\text{resp.}\quad R\Gamma_c(\sSh_{K}(\sG,\sX)_{\overline\sE},\Lambda)$$
denote the \'etale cohomology (resp. compactly supported \'etale cohomology) of $\sSh_{K}(\sG,\sX)_{\overline\sE}$ with coefficients in $\Lambda$.

Fix a prime $p\neq \ell$ unramified with respect to $(\sG,\sX,K)$, which means that $G\coloneqq \sG_{\QQ_p}$ is unramified over $\QQ_p$ and that the level $K$ splits into $K^pK_p^\hs$ where $K^p\subseteq\sG(\AAA_f^p)$ is a neat open compact subgroup and $K_p^{\hs}$ is a \emph{hyperspecial} subgroup of $G(\QQ_p)$. Let
$$H_{K_p^\hs}\coloneqq \Lambda[K_p^\hs\backslash G(\QQ_p)/K_p^\hs]$$
be the spherical Hecke algebra over $\Lambda$. After fixing a square root of $p$ in $\Lambda$, the Satake isomorphism identifies every closed point $\xi$ of $\Spec H_{K_p^\hs}$ with a conjugacy class of semisimple unramified $L$-parameters
$$\varphi_\xi\colon W_{\QQ_p}\to {}^LG(\Lambda),$$
where $W_{\QQ_p}$ is the Weil group of $\QQ_p$ and ${}^LG=\hat{G}\rtimes W_{\QQ_p}$ is the Langlands dual group of $G$ over $\Lambda$. Up to $\hat{G}(\Lambda)$-conjugation, we can assume that $\varphi_\xi$ factors through a maximal torus ${}^LT\subseteq {}^LG$.

\begin{defn}\label{def-generic-unramified}
    Let $\varphi\colon W_{\QQ_p}\to {}^LG(\Lambda)$ be a semisimple unramified $L$-parameter that factors through ${}^LT$. 
    \begin{enumerate}
        \item We say that $\varphi$ is \emph{generic} if $H^2(W_{\QQ_p},(\hat\fg/\hat\ft)_{\varphi})=0$.
        \item We say that $\varphi$ is \emph{strongly generic} if $R\Gamma(W_{\QQ_p},(\hat\fg/\hat\ft)_{\varphi})=0$.
    \end{enumerate}
    Here $(\hat\fg/\hat\ft)_{\varphi}$ is the vector space $\hat\fg/\hat\ft$ with the $W_{\QQ_p}$-action given by the composition of $\varphi$ and the adjoint action ${}^LT\to\GL(\hat\fg/\hat\ft)$.
    We say that $\xi$ is generic (resp. strongly generic) if the associated semisimple unramified $L$-parameter $\varphi_\xi$ is generic (resp. strongly generic).
\end{defn}
By Euler characteristic formula, $\varphi$ is strongly generic if and only if it is generic and $H^0(W_{\QQ_p},(\hat\fg/\hat\ft)_\varphi)$ vanishes. 

\begin{rmk}
    Let $\varphi$ be a semisimple unramified $L$-parameter. Then it is generic (resp. strongly generic) if and only if it is of weakly Langlands--Shahidi type (resp. of Langlands--Shahidi type) in the sense of \cite[Definition 6.2]{Hamann-Lee-vanishing}.
\end{rmk}

\begin{eg}\label{eg-intro-generic}
    Let $G=\GL_{n,\QQ_p}$. A semisimple unramified parameter $\varphi\colon W_{\QQ_p}\to \GL(\Lambda)$ can be written as a direct sum $\varphi=\varphi_1\oplus\cdots\oplus\varphi_n$ where $\varphi_i\colon W_{\QQ_p}\to \Lambda^\times$ are unramified characters. Then $\varphi$ is generic (resp. strongly generic) if and only if $\varphi_i/\varphi_j\neq \chi_{\mathrm{cycl}}$ (resp. $\varphi_i/\varphi_j\notin \{1 ,\chi_{\mathrm{cycl}}\}$) for any $i\neq j$, where $\chi_{\mathrm{cycl}}$ is the cyclotomic character. Therefore, in this case $\varphi$ is generic if and only if it satisfies the condition in \cite[Theorem 1.1]{Caraiani-Scholze24}.
\end{eg}

Note that $R\Gamma(\sSh_{K^pK_p^\hs}(\sG,\sX)_{\overline\sE},\Lambda)$ and $R\Gamma_c(\sSh_{K^pK_p^\hs}(\sG,\sX)_{\overline\sE},\Lambda)$ admit natural actions of $H_{K_p^\hs}$. Let
$$R\Gamma_c(\sSh_{K^pK_p^\hs}(\sG,\sX)_{\overline\sE},\Lambda)_\xi\quad\text{resp.}\quad R\Gamma(\sSh_{K^pK_p^\hs}(\sG,\sX)_{\overline\sE},\Lambda)_\xi$$
denote the localization of the cohomologies at $\xi$. Motivated by the results of \cite{Caraiani-Scholze17} and \cite{Caraiani-Scholze24}, Hamann--Lee proposed the following conjecture on the torsion vanishing of Shimura varieties in \cite[Conjecture 1.2, Remark 1.7]{Hamann-Lee-vanishing}. 

\begin{conj}\label{conj-torsion-vanishing}
    Assume that $\xi$ is generic. Then $R\Gamma(\sSh_{K^pK_p^\hs}(\sG,\sX)_{\overline\sE},\Lambda)_\xi$ (resp. $R\Gamma_c(\sSh_{K^pK_p^\hs}(\sG,\sX)_{\overline\sE},\Lambda)_\xi$) is concentrated in degrees $[d,2d]$ (resp. $[0,d]$), where $d=\dim \sSh_{K^pK_p^\hs}(\sG,\sX)$. In particular, if $\sSh_{K^pK_p^\hs}(\sG,\sX)$ is proper, then $R\Gamma(\sSh_{K^pK_p^\hs}(\sG,\sX)_{\overline\sE},\Lambda)_\xi$ is concentrated in degree $d$.
\end{conj}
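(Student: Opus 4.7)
The plan is to transport the question from the Shimura variety to the stack $\Bun_G$ of $G$-bundles on the Fargues--Fontaine curve, and then to combine the Fargues--Scholze spectral action with the unipotent categorical local Langlands correspondence to exploit the genericity of $\xi$.

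First I would reduce to the case of Shimura varieties of Hodge type by passing through the adjoint (or derived) group and analyzing connected components; both the genericity hypothesis on $\xi$ and the target degree range are preserved under this reduction. For the Hodge-type case, I would use the formalism of the Igusa stack (as developed by Mantovan, Caraiani--Scholze, Zhang, Pappas--Rapoport, and others, now available in broad generality at hyperspecial level) to construct a factorization of the Hodge--Tate period map
$$\sSh_{K^pK_p^\hs}(\sG,\sX)^{\diamond}\longrightarrow \Igs \longrightarrow \Bun_G$$
and to deduce an identification
$$R\Gamma_c(\sSh_{K^pK_p^\hs}(\sG,\sX)_{\overline\sE},\Lambda)\simeq R\Gamma_c(\Bun_G,\cF)$$
for an explicit constructible sheaf $\cF$ on $\Bun_G$ built from the cohomology of Igusa varieties, with the action of $H_{K_p^\hs}$ coming from Hecke correspondences on $\Bun_G$.

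Next I would apply the spectral action. By Fargues--Scholze, the category of sheaves on $\Bun_G$ carries an action of $\Perf(\mathrm{LocSys}_{\hat G})$ whose restriction to the spherical Hecke algebra recovers the usual action at $p$. Localizing at $\xi$ amounts, roughly, to restricting the support of $\cF$ to the locus of $\mathrm{LocSys}_{\hat G}$ above the semisimple conjugacy class of $\varphi_\xi$; under the unipotent categorical local Langlands correspondence, the relevant block of sheaves on $\Bun_G$ is identified with a category of (Iwahori-equivariant) coherent sheaves on the unipotent component of $\mathrm{LocSys}_{\hat G}$. The genericity condition $H^2(W_{\QQ_p},(\hat\fg/\hat\ft)_\varphi)=0$ is exactly the smoothness of $\mathrm{LocSys}_{\hat G}$ at $\varphi_\xi$ in the torus direction, so that no higher $\mathrm{Ext}$ obstructions appear; translated through the equivalence, this forces $\cF_\xi$ to be supported on a controlled set of Harder--Narasimhan strata of $\Bun_G$ and to be, up to a uniform shift, perverse for the Fargues--Scholze perversity.

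Finally, perversity of $\cF_\xi$ on $\Bun_G$ together with the dimensions of the relevant HN strata and the cohomological amplitude of the Igusa contributions yields that $R\Gamma_c(\sSh_{K^pK_p^\hs}(\sG,\sX)_{\overline\sE},\Lambda)_\xi$ is concentrated in degrees $[0,d]$; Poincar\'e duality then transfers this to concentration of $R\Gamma(\sSh_{K^pK_p^\hs}(\sG,\sX)_{\overline\sE},\Lambda)_\xi$ in $[d,2d]$, with only degree $d$ surviving in the compact case. The main obstacle I expect is the penultimate step: translating the cohomological genericity of $\varphi_\xi$ into perversity of $\cF_\xi$ on $\Bun_G$ uniformly across groups, without invoking endoscopic classification or trace-formula input. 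This is precisely what the unipotent categorical local Langlands correspondence, now available in sufficient generality, is designed to accomplish.
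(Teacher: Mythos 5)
Your high-level strategy—factor the period map through an Igusa stack, use a spectral action and the unipotent categorical local Langlands correspondence to exploit genericity, and avoid endoscopy—matches the philosophy of the paper. However, your route as stated has a genuine gap, and the route the paper actually takes differs in a way that is not cosmetic.

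The central problem is that you want to work over $\Bun_G$ on the Fargues--Fontaine curve, via the Hodge--Tate period map, and then invoke ``the unipotent categorical local Langlands correspondence, now available in sufficient generality'' to pass from genericity of $\varphi_\xi$ to perversity of $\cF_\xi$. But the unipotent categorical correspondence established in \cite{Tame} is a statement about $\Isoc_G$ (the mixed-characteristic loop stack and Witt-vector shtukas over the special fiber), not about $D(\Bun_G,\Lambda)$; the analogous equivalence for $\Bun_G$ is still conjectural (it is the setting of Hansen's \cite{Hansen-Beijing-notes}). This is precisely why the paper works entirely on the special fiber: it uses the crystalline period map $\loc_p\colon\Sh_\mu\to\Sht^\loc_{\cI,\mu}$ and the Igusa stack with Newton map to $\Isoc_{G,\leq\mu^*}$, so that the spectral action and the functor $\LL^\unip_G$ of \cite{Tame} are actually available. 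Two downstream consequences: (i) the natural $t$-structure that enters is the \emph{exotic} $t$-structure on $\Shv(\Isoc_G,\Lambda)$, not the Fargues--Scholze perverse $t$-structure on $D(\Bun_G,\Lambda)$; and (ii) working on the special fiber with the partial minimal compactifications of Igusa varieties (affine, by Mao) yields the semi-connectivity $\fI^\can\in\Shv^{e,\leq 0}$ via Artin vanishing, and sidesteps boundary assumptions on the Hodge--Tate period map that one must otherwise impose (compare the need for Assumption 1.11 in \cite{Hamann-Lee-vanishing}).

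Your treatment of the crucial ``genericity $\Rightarrow$ $t$-exactness'' step is also too thin. In the paper the genericity hypothesis is used to identify the formal neighbourhood $V^\wedge_\xi$ of $\Loc^{\widehat\unip}_{{}^LG,\QQ_p}$ with the twisted adjoint quotient $(\hat G\phi/\hat G)^\wedge_\xi$ (Proposition \ref{prop-generic-Loc-equal-GS}); the $t$-exactness of the Hecke operator $T_V$ for tilting $V$ (Theorem \ref{thm-Hecke-t-exact}) then follows from the nontrivial fact (Proposition \ref{cor-tilting-tensor-twisted-case}) that tensoring the relevant generators by a $\hat G$-representation with a good/Weyl filtration admits nice filtrations on $(\hat G\phi/\hat G)^\wedge_\xi$, which is in turn reduced to results of \cite{MR-exotic-t-str} on exotic coherent sheaves on Springer resolutions. ``The genericity condition is exactly the smoothness of $\mathrm{LocSys}_{\hat G}$ at $\varphi_\xi$ in the torus direction, so no higher $\mathrm{Ext}$ obstructions appear'' does not substitute for that argument; in fact one works with the non-reduced formal neighbourhood and the interesting point is a filtration statement, not an $\mathrm{Ext}$-vanishing. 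Finally, the level matters: since the unipotent categorical correspondence is formulated at Iwahori level, the paper computes everything for the Iwahori $I$ (including a non-trivial local-global $S=T$ compatibility at Iwahori level, Theorem \ref{thm-local-global-compatibility}) and only afterwards descends to hyperspecial and general quasi-parahoric levels; jumping directly to $K_p^\hs$ obscures where the Iwahori--Hecke algebra and its centre come in.
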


Some cases of Conjecture \ref{conj-torsion-vanishing} have been proved in earlier works: 
\begin{enumerate}
    \item In the case of Harris--Taylor Shimura varieties, Boyer proved the conjecture in \cite{Boyer-torsion}. He also established cohomological bounds beyond the generic cases.
    \item When $\sSh_{K^pK_p^\hs}(\sG,\sX)$ is a unitary PEL type Shimura variety and $G=\sG_{\QQ_p}$ is essentially a product of $\GL_{n,\QQ_p}$, the conjecture follows from the works of Caraiani--Scholze \cite{Caraiani-Scholze17,Caraiani-Scholze24}, and Koshikawa \cite{Koshikawa-generic}.
    \item For Hilbert modular varieties, the conjecture follows from the work of Caraiani--Tamiozzo \cite{Caraiani-Tamiozzo}. Moreover, they also studied the cohomology amplitude in the non-generic case.
    \item In \cite{Hamann-Lee-vanishing}, Hamann--Lee proved a weaker version of the conjecture for PEL type Shimura varieties when $G=\sG_{\QQ_p}$ is essentially a product of unramified Weil restrictions of $\GL_n$, odd unitary groups, and $\mathrm{GSp}_4$, etc. See \cite[Theorem 1.8]{Hamann-Lee-vanishing} for their assumptions on $G$. In fact, they needed $\xi$ to be \emph{strongly generic} (of Langlands--Shahidi type in their terminology) in their method. 
    \item In \cite{Peng-FS-comparison-orth-unitary}, Hao Peng proved the conjecture for certain compact unitary and orthogonal Shimura varieties, still under the assumption that $\xi$ is strongly generic.
\end{enumerate}

In this paper, we prove the conjecture for Shimura varieties of abelian type. 

\begin{thm}[{Theorem \ref{thm-torsion-vanising-abelian-type}}]\label{thm-main-intro} Let $(\sG,\sX)$ be a Shimura datum of abelian type, with level $K\subset \sG(\mathbb{A}_f)$. Let $p>2$ be an unramified prime.  Assume that either $\Lambda=\overline{\QQ}_\ell$ or $\Lambda=\overline{\FF}_\ell$ with $\ell$ bigger than the Coxeter number of any simple factors of $\sG_\ad$. Then Conjecture \ref{conj-torsion-vanishing} is true. 
\end{thm}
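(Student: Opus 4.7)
The strategy I would pursue combines a reduction to Hodge type Shimura varieties with the sheaf-theoretic framework on $\Bun_G$, exploiting the unipotent categorical local Langlands correspondence advertised in the abstract. For the first reduction, I would use Deligne's standard construction: given an abelian type datum $(\sG,\sX)$, produce an auxiliary Hodge type datum $(\sG_1,\sX_1)$ with $\sG_1^{\der}$ simply connected and $\sG_1^{\der}=\sG^{\der}$, such that the connected components of $\sSh_K(\sG,\sX)$ arise from those of $\sSh_{K_1}(\sG_1,\sX_1)$ by a finite abelian group quotient. After verifying that the spherical Hecke action at $p$ and the notion of genericity are preserved, the theorem reduces to the Hodge type case at hyperspecial level, where Kisin's smooth integral model (available for $p>2$) is at our disposal.

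For the Hodge type case I would invoke the Caraiani--Scholze/Koshikawa framework: there is a Hecke-equivariant sheaf $\mathcal{F}_{K^p}$ on $\Bun_G$ built from the cohomologies of Igusa varieties attached to the Newton strata of the special fiber of the integral model, together with a natural identification
$$R\Gamma_c(\sSh_{K^p K_p^{\hs}}(\sG_1,\sX_1)_{\overline{\sE}},\Lambda)\simeq R\Gamma(\Bun_G,\mathcal{F}_{K^p})$$
compatible with the $H_{K_p^\hs}$-actions. The Harder--Narasimhan stratification of $\Bun_G$ then reduces the amplitude question to controlling, stratum by stratum, the support of the localization $(\mathcal{F}_{K^p})_\xi$; a perverse analysis on each stratum combined with semi-perversity of Igusa cohomology should produce the bound $[0,d]$ for $R\Gamma_c$, and Poincar\'e duality then yields $[d,2d]$ for $R\Gamma$, with the compact case following immediately.

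The genericity hypothesis would be converted into a geometric support statement via the unipotent categorical local Langlands correspondence. Under this equivalence, the spherical Hecke action corresponds to the spectral action of coherent sheaves on the stack of unipotent $L$-parameters. The vanishing $H^2(W_{\QQ_p},(\hat\fg/\hat\ft)_{\varphi_\xi})=0$ characterizes $\varphi_\xi$ as a smooth point of the parameter stack in the direction transverse to parabolic induction from proper Levi subgroups; translated through the spectral action, this forces $(\mathcal{F}_{K^p})_\xi$ to be supported on the Harder--Narasimhan strata indexed by the basic element and its immediate neighborhood in $B(G,\mu)$, where the Igusa contribution has the amplitude predicted by the conjecture. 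The strongly generic variant would correspond to the vanishing of the $H^0$ contribution coming from the trivial deformations, eliminating a boundary stratum; however, we only need the weaker generic assumption for the amplitude.

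The main obstacle will be the last step: making precise the dictionary between the Galois-cohomological genericity condition and the geometric support constraint on $\Bun_G$ via the spectral action, especially in a way robust enough to handle the modular coefficient case $\Lambda=\overline{\FF}_\ell$. The hypothesis that $\ell$ exceeds the Coxeter number of every simple factor of $G$ is most likely required precisely here, to guarantee that the unipotent block of the stack of $L$-parameters behaves as in characteristic zero and that the spectral action preserves the relevant semi-orthogonal decompositions. A secondary subtlety is the compatibility of the unipotent categorical Langlands with the Igusa-variety construction of $\mathcal{F}_{K^p}$, and the tracking of Hecke eigensystems through the finite-group quotient in the abelian-to-Hodge reduction.
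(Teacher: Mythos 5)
Your overall architecture — reduce abelian type to Hodge type, build a sheaf on a moduli stack of bundles from Igusa cohomology, and use the unipotent categorical local Langlands correspondence to convert genericity into a cohomological-amplitude bound — does match the shape of the paper's argument at a high level, but the crucial middle step is both geometrically misplaced and logically incomplete.

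The paper does not work on $\Bun_G$ over the Fargues--Fontaine curve; it works on $\Isoc_G$, the Witt-vector stack of isocrystals, entirely over the special fiber of the Shimura variety, using the crystalline period map $\loc_p\colon\Sh_\mu\to\Sht^\loc_{\cI,\mu}$ and the Igusa stack of \cite{DHKZ-igusa} rather than the Hodge--Tate period map and $|\Hk_\bullet(\cSht)|$. This is not cosmetic: working on the special fiber lets the authors use the affineness of partial minimal compactifications of Igusa varieties (Mao's well-positionedness results) together with Artin vanishing to show that the $!$-Igusa sheaf $\fI^\can$ is exotic-connective, and it lets them bypass the additional boundary hypotheses that arise in the Fargues--Scholze setting (cf.\ Assumption 1.11 of Hamann--Lee). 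Your invocation of ``semi-perversity of Igusa cohomology'' gestures at this but is not the actual mechanism. Moreover the paper works at Iwahori level, not hyperspecial, precisely because that is the natural home of the unipotent categorical equivalence $\LL_G^\unip$; passing back to hyperspecial (or general quasi-parahoric) level is a separate reduction via group cohomology that you have not addressed.

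The genuine gap in your proposal is the heart of the matter: you assert that genericity, ``translated through the spectral action, forces $(\mathcal{F}_{K^p})_\xi$ to be supported on the Harder--Narasimhan strata indexed by the basic element and its immediate neighborhood, where the Igusa contribution has the amplitude predicted by the conjecture.'' This is not what genericity gives, and the claimed amplitude bound does not follow. What genericity gives directly (Proposition \ref{prop-generic-Loc-equal-GS} and Corollary \ref{cor-supp-generic}) is that the completion of the stack of $L$-parameters at $\xi$ reduces to $(\hat{G}\phi/\hat{G})^\wedge_\xi$, and consequently that the localized sheaf is supported on the \emph{unramified} Newton strata $B(G)_\unr$ --- not on basic plus a ``neighborhood.'' This support statement alone does \emph{not} give the amplitude bound: an Igusa sheaf on a non-basic unramified stratum still sits in a range of degrees. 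The missing ingredient is Theorem \ref{thm-Hecke-t-exact}, the exotic $t$-exactness of the Hecke operators $T_V$ after localizing at a generic $\xi$, and proving this requires the delicate coherent sheaf technology of Section \ref{section-Springer} --- exotic coherent sheaves on the Springer resolution (Minn-Thu-Aye, Achar, Mautner--Riche) and the filtration theorem of Proposition \ref{cor-tilting-tensor-twisted-case} for tilting tensor Wakimoto images. Your proposal does not contain this step, and the phrase ``the spectral action preserves the relevant semi-orthogonal decompositions'' is not an adequate substitute: what must be preserved is the exotic $t$-structure, which is an entirely different structure from a semi-orthogonal decomposition and whose stability under $T_V$ is the hard theorem rather than a consequence of genericity.
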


\begin{rmk}
    We actually prove a stronger result: (a variant of) Conjecture \ref{conj-torsion-vanishing} holds for any quasi-parahoric subgroup $K_p\subseteq G(\QQ_p)$ in place of $K_p^\hs$. 
\end{rmk}

\begin{rmk}
    When $\Lambda=\overline{\FF}_\ell$, the restriction of $\ell$ in the above theorem comes from the same restriction in the unipotent categorical local Langlands correspondence as established in \cite{Tame}, which in turn comes from the same restriction in the unipotent geometric local Langlands correspondence established in \cite{BR-modular-two-affine-Hecke}. We expect that this bound can be greatly improved. For example, when $G=\GL_n$, there is in fact no restriction on $\ell$, by the work of \cite{Caraiani-Scholze24,Koshikawa-generic}. Similarly, when $G$ is of orthogonal or symplectic type, we expect the theorem to hold at least $\ell>3$.
\end{rmk}

\subsection{Overview of the proof}
Our proof of Theorem \ref{thm-main-intro} is based on the ideas of \cite{Hamann-Lee-vanishing}. But we will apply them in a different setting.
To explain this, we first review the strategy of \cite{Hamann-Lee-vanishing}. Assume that $(\sG,\sX)$ is of Hodge type. Fix a $p$-adic place $v$ of $\sE$ and denote $E=\sE_v$. Let $\mu$ be the Hodge cocharacter associated with $(\sG,\sX)$. By \cite{Scholze-on-torsion} and \cite{Caraiani-Scholze17}, there exists a Hodge--Tate period map
$$\pi_{\HT}\colon \sSh_{K^p}^{\circ}(\sG,\sX)_{E}\to \mathscr{F}\ell_{G,\mu^*},$$
where $\sSh_{K^p}^{\circ}(\sG,\sX)_{E}$ is the good reduction locus in the analytification of $\sSh_{K^p}(\sG,\sX)_E$, $\mu^*$ is the dominant Weyl translate of $\mu^{-1}$, and $\mathscr{F}\ell_{G,\mu^*}$ is the analytification of the flag variety associated to $\mu^*$. By analyzing the geometry of $\pi_{\HT}$, Hamann--Lee proved in \cite{Hamann-Lee-vanishing} that there is a filtration on $R\Gamma_c(\sSh_{K^p}(\sG,\sX)_{\overline{E}},\Lambda)[d]$ with graded pieces taking the form 
$$R\Gamma_{c-\partial}(\Ig_b,\Lambda)\otimes_{G_b(\QQ_p)}R\Gamma_c(G,b,\mu)[2\langle2\rho,\nu_b\rangle]$$
where $b\in B(G,\mu^*)$ runs through the Kottwitz set. Here $R\Gamma_{c-\partial}(\Ig_b,\Lambda)$ is defined in \cite{Caraiani-Scholze24} as the partially compactly supported cohomology of the Igusa variety $\Ig_b$, and $R\Gamma_c(G,b,\mu)$ is the compactly supported cohomology of the local Shimura variety associated to $(G,b,\mu)$. More precisely, let $\Ig_b$ denote the Igusa variety and $\Ig_b^\mini$ be the partial minimal compactification of $\Ig_b$ constructed in \cite{Mao-Hodge-well-positioned}. Then
$$R\Gamma_{c-\partial}(\Ig_b,\Lambda)\coloneqq R\Gamma(\Ig_b^\mini,(j_{\Ig_b^\mini})_!\Lambda_{\Ig_{b}}),$$
where $j_{\Ig_b^\mini}\colon \Ig_b\hookrightarrow \Ig_b^\mini$ is the natural embedding. The proof of \emph{loc. cit.} now consists of the following two steps:
\begin{enumerate}
    \item Controlling the cohomological amplitude of $R\Gamma_{c-\partial}(\Ig_b,\Lambda)$.
    \item Controlling the cohomological amplitude of the (strongly) generic part of $R\Gamma_c(G,b,\mu)$.
\end{enumerate}
The first step follows from the Artin vanishing, as the partial minimal compactifications of Igusa varieties are affine. The second step follows from the perverse $t$-exactness of the Hecke action on $D(\Bun_G,\Lambda)$ when localizing at a strongly generic $L$-parameter. Here $\Bun_G$ is the moduli stack of $G$-bundles on the Fargues--Fontaine curve, and the Hecke action is defined in \cite{FS}. The $t$-exactness was essentially proved in \cite{Hamann-Eisenstein} by studying the Eisenstein functors in the setting of \cite{FS}. However, to analyze the category $D(\Bun_G,\Lambda)_\xi$ of objects localized at $\xi$, the results in \cite{Hamann-Eisenstein} crucially rely on the comparison between the semisimple $L$-parameters defined in \cite{FS} and those constructed using trace formula methods. In particular, such comparison result is not known for $\mathrm{GSp}_{2n}$ when $n\geq 3$, which means that the torsion vanishing result for general Siegel modular varieties was not known prior to our work. 

In this paper, we adapt the ideas of \cite{Hamann-Lee-vanishing} in the setting of \cite{Tame}. One of the main results of \cite{Tame} (recalled in \S\ref{subsection-cllc}) is the construction of a fully faithful embedding
$$\LL_G^{\unip}\colon \Ind\Shv^{\unip}_\fingen(\Isoc_G,\Lambda)\hookrightarrow \Ind\Coh(\Loc^{\widehat\unip}_{{}^LG,\QQ_p}),$$
where $\Isoc_G$ is the moduli stack of isocrystals with $G$-structure, $\Shv^{\unip}_\fingen(\Isoc_G,\Lambda)$ is the category of ($\ell$-adic) sheaves on $\Isoc_G$ that are finitely generated unipotent representations when restricted to each Newton strata, and $\Loc^{\widehat\unip}_{{}^LG,\QQ_p}$ is the moduli stack of unipotent $L$-parameters. Let $\Shv^{\widehat\unip}(\Isoc_G,\Lambda)$ be the subcategory of $\Shv(\Isoc_G,\Lambda)$ consisting of the objects that are unipotent when restricted to each Newton strata. The category $\Ind\Shv^{\unip}_\fingen(\Isoc_G,\Lambda)$ is a renormalization of $\Shv^{\widehat\unip}(\Isoc_G,\Lambda)$. It follows that there is a well-defined spectral action of $\Perf(\Loc^{\widehat\unip}_{{}^LG,\QQ_p})$ on $\Shv^{\unip}_{\fingen}(\Isoc_G,\Lambda)$ and $\Shv^{\widehat\unip}(\Isoc_G,\Lambda)$. In particular, for any $\hat{G}$-representation $V$, we obtain a Hecke operator 
$$T_V\colon \Shv^{\widehat\unip}(\Isoc_G,\Lambda)\to\Shv^{\widehat\unip}(\Isoc_G,\Lambda).$$

In \cite{Tame}, an exotic $t$-structure $(\Shv(\Isoc_G,\Lambda)^{e,\leq 0},\Shv(\Isoc_G,\Lambda)^{e,\geq 0})$ on $\Shv(\Isoc_G,\Lambda)$ was defined which is parallel to the pervese $t$-structure on $D(\Bun_G,\Lambda)$. Let $\xi$ be a conjugacy class of unramified $L$-parameters. We can define the subcategories $\Shv^{\widehat\unip}(\Isoc_G,\Lambda)_\xi$ and $\Ind\Coh(\Loc^{\widehat\unip}_{{}^LG,\QQ_p})_\xi$ of objects supported at $\xi$. The following theorem is our main result concerning the generic part of the unipotent categorical local Langlands correspondence.

\begin{thm}[Proposition \ref{prop-generic-equiv}, Theorem \ref{thm-Hecke-t-exact}, and Theorem \ref{thm-char-0-t-str}]\label{thm-Hecke-t-exact-intro} Assume that $\xi$ is generic.
\begin{enumerate}
    \item  The functor $\LL_G^{\unip}$ restricts to an equivalence of categories
$$\LL_{G,\xi}^{\unip}\colon \Ind\Shv^{\unip}_\fingen(\Isoc_G,\Lambda)_\xi\cong\Ind\Coh(\Loc^{\widehat\unip}_{{}^LG,\QQ_p})_\xi.$$

    \item Let $V$ be a tilting $\hat{G}$-representation.  Then the Hecke action $T_V$ is exotic $t$-exact when restricted to the subcategory $\Shv^{\widehat\unip}(\Isoc_G,\Lambda)_\xi$. Moreover, when $\Lambda=\overline{\QQ}_\ell$ the functor $\LL^{\unip}_{G,\xi}$ is compatible with the exotic $t$-structure on $\Shv^{\widehat\unip}(\Isoc_G,\Lambda)_\xi$ and the standard $t$-structure on $\Ind\Coh(\Loc^{\widehat\unip}_{{}^LG,\QQ_p})_\xi$.    \end{enumerate}
\end{thm}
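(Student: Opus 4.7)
The strategy is to reduce both claims to the simple local geometry of the unipotent parameter stack $\Loc^{\widehat\unip}_{{}^LG,\QQ_p}$ near $\varphi_\xi$, and to transport that geometry across the fully faithful embedding $\LL_G^{\unip}$ from \cite{Tame}. Since $\varphi_\xi$ is unramified and conjugate into ${}^LT$, the tangent-obstruction theory at $\varphi_\xi$ is controlled by the Galois cohomology of $\hat\fg$ with coefficients in $\varphi_\xi$, and the generic hypothesis $H^2(W_{\QQ_p},(\hat\fg/\hat\ft)_{\varphi_\xi})=0$ kills the obstructions coming from the $\hat\fg/\hat\ft$-direction. The formal completion of $\Loc^{\widehat\unip}_{{}^LG,\QQ_p}$ at $\varphi_\xi$ is therefore a smooth formal derived stack that, up to a finite quotient by the automorphism group $H^0(W_{\QQ_p},(\hat\fg/\hat\ft)_{\varphi_\xi})$, is identified with the formal neighborhood of $\varphi_\xi$ inside the stack of unramified ${}^LT$-parameters. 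I would first verify this local description in detail, as it is used repeatedly below.

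Part (1) then follows in two steps. Full faithfulness of $\LL_{G,\xi}^{\unip}$ is automatic from $\Perf(\Loc^{\widehat\unip}_{{}^LG,\QQ_p})$-linearity of $\LL_G^{\unip}$ once localization at $\xi$ is checked to commute with the embedding. For essential surjectivity, I would show that $\Ind\Coh(\Loc^{\widehat\unip}_{{}^LG,\QQ_p})_\xi$ is generated under colimits by the structure sheaf of the formal completion at $\varphi_\xi$ together with its twists by characters of ${}^LT$, and then produce preimages of these generators in $\Ind\Shv^{\unip}_\fingen(\Isoc_G,\Lambda)_\xi$ by applying spectral actions of suitable $\hat G$-representations to sheaves supported on the basic Newton stratum. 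For the first half of part (2), one interprets $T_V$ through the spectral action as tensor product with the tautological coherent sheaf $[V]$ on $\Loc^{\widehat\unip}_{{}^LG,\QQ_p}$: smoothness of the generic locus makes $[V]$ locally free there, and the tilting property of $V$ gives a filtration by line bundles attached to dominant weights, each of which is $t$-exact for the standard $t$-structure on $\Ind\Coh$. Transporting this across $\LL_{G,\xi}^{\unip}$ gives exotic $t$-exactness of $T_V$ on $\Shv^{\widehat\unip}(\Isoc_G,\Lambda)_\xi$.

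The main obstacle is the final compatibility assertion: in characteristic $0$, $\LL_{G,\xi}^{\unip}$ should carry the exotic $t$-structure to the standard one. My plan is to match the hearts directly by identifying the exotic heart on the generic part with the category generated by the (co)standard objects attached to the Newton strata meeting $\xi$, and to show that under $\LL_G^{\unip}$ these correspond to the coherent pushforwards from the components of the unramified ${}^LT$-parameter stack lying over $\varphi_\xi$. The comparison is exact on the nose because finite-dimensional continuous $\overline{\QQ}_\ell$-representations of $W_{\QQ_p}$ whose Frobenius eigenvalues avoid the critical ratios (precisely the genericity condition) are semisimple, so both hearts reduce to the representation theory of the component group of $Z_{\hat G}(\varphi_\xi)$. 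Over $\overline{\FF}_\ell$ this semisimplicity generally fails, which is exactly why the full $t$-structure comparison is stated only in characteristic $0$; the exotic $t$-exactness of tilting Hecke operators, by contrast, is already formal from the local freeness argument above and survives in the modular setting.
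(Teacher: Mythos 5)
Your proposal contains a foundational misreading of what is being localized, and this causes the argument to break at essentially every subsequent step. The subscript $\xi$ in $\Ind\Coh(\Loc^{\widehat\unip}_{{}^LG,\QQ_p})_\xi$ does not mean the completion at the single parameter $\varphi_\xi$: it means support along the entire fiber over $\xi$ for the map $\Loc^{\widehat\unip}_{{}^LG,\QQ_p}\to C^{\widehat\unip}_{{}^LG,\QQ_p}=\Spec Z^{\widehat\unip}_{{}^LG,\QQ_p}$. That fiber is very far from a point: it contains every unipotent parameter lying over $\xi$, and its formal completion (the stack $V^\wedge_\xi$ in the paper) is identified, after the genericity hypothesis is invoked, with the completed \emph{twisted adjoint quotient} $(\hat{G}\phi/\hat{G})^\wedge_\xi\simeq \cU^\wedge_{\hat G_x}/\hat G_x$ (on reduced substacks always, and outright when $\Lambda=\overline{\QQ}_\ell$; this is Proposition \ref{prop-generic-Loc-equal-GS}, proved by a weight-filtration argument on $\hat\fu$ that has nothing to do with $H^2$-vanishing at a single point). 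This is the unipotent cone of a reductive group modulo adjoint action — a highly singular, non-smooth stack — so the claim that $V^\wedge_\xi$ is a ``smooth formal derived stack'' identified with a finite quotient of an ${}^LT$-parameter stack is simply false, and everything you hang on that picture (local freeness of $[V]$, filtrations by line bundles, a direct $t$-exactness argument via standard $t$-structures, semisimplicity) has nothing to stand on.

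The actual engine for both parts is the coherent Springer theory of Section~\ref{section-Springer}, which you do not invoke. For part (1), essential surjectivity is established by identifying explicit generators $\Xi_{\xi,\bar\lambda}$ of $\Coh((\hat{G}\phi/\hat{G})^\wedge_\xi)$ via the twisted Grothendieck--Springer correspondence (Proposition \ref{prop-generators-GS}), and then matching them with $\LL_G^{\unip}$ of the explicit sheaves $(i_b)_*M_{\xi,b}$, $b\in B(G)_\unr$, constructed from Wakimoto sheaves; your proposal to ``produce preimages of structure-sheaf twists by applying spectral actions of $\hat G$-representations to basic sheaves'' does not account for the non-basic unramified strata, which are essential to generating the category at a merely (not strongly) generic $\xi$. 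For the exotic $t$-exactness in part (2), the key is Proposition \ref{cor-tilting-tensor-twisted-case}: for tilting $V$, the object $V\otimes\Xi_{\xi,\bar\lambda}$ on $(\hat{G}\phi/\hat{G})^\wedge_\xi$ admits a filtration by retracts of the $\Xi_{\xi,\bar\mu}$, and the proof of this goes through the highest-weight structure on exotic coherent sheaves on the Springer resolution \cite{MR-exotic-t-str} (Propositions \ref{prop-exotic-tensor-filtration}, \ref{prop-tilting-tensor-split-case}). There is no way to replace this by an abstract filtration of $[V]$ by ``line bundles'': the test objects that detect the exotic $t$-structure on the automorphic side (Lemma \ref{lemma-generic-unr-detect-t-str}) are the pushforwards from the unramified Newton strata, and compatibility with $T_V$ requires precisely the Springer-type filtration. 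Finally, for the characteristic-zero $t$-structure comparison, the paper does not pass through a semisimplicity claim at all (which is in any case false: the heart includes representations of \emph{all} $G_b(F)$ with $b$ unramified, and extensions between adjacent strata persist when $\xi$ is generic but not strongly generic); the proof tests any $\cF$ against $(i_1)_*\mathrm{IW}^\unip$, whose image under $\LL_G^{\unip}$ is the structure sheaf, and then invokes projectivity of the Iwahori--Whittaker object together with exotic $t$-exactness of the $T_V$.
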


The parallel statements in the Fargues--Scholze setting were conjectured by Hansen in \cite[Conjecture 2.4.1, Conjecture 2.5.1]{Hansen-Beijing-notes} (when $\Lambda=\overline{\QQ}_\ell$). He also outlined a proof for the trivial unramified parameter, assuming the existence of the categorical local Langlands correspondence satisfying certain properties. Our proof uses a similar strategy. A crucial ingredient is provided by Proposition \ref{cor-tilting-tensor-twisted-case}, which asserts that certain coherent sheaves on the (twisted) adjoint quotient of $\hat{G}$ admit nice filtrations. We will deduce Proposition \ref{cor-tilting-tensor-twisted-case} from a result of \cite{MR-exotic-t-str} on exotic coherent sheaves on the Springer resolution. We also mention that we can deduce from the above theorem a torsion vanishing result for the cohomology of affine Deligne--Lusztig varieties. See Theorem \ref{thm-coh-vanishing-ADLV}.

To apply the above results, we need to relate the categorical local Langlands to the cohomology of Shimura varieties. This is done using the Igusa stack constructed in \cite{DHKZ-igusa}. Let $I\subseteq G(\QQ_p)$ be an Iwahori subgroup with the associated Iwahori group scheme $\cI$ over $\ZZ_p$. Let $\Sh_\mu$ denote the perfection of the special fiber of the integral model of $\sSh_{K^pI}(\sG,\sX)$ over $O_E$. By the result of \cite{DHKZ-igusa}, there is a Cartesian diagram
$$\begin{tikzcd}
    \Sh_\mu \ar[r,"\loc_p"]\ar[d,"\Nt^\glob"swap] & \Sht^\loc_{\cI,\mu} \ar[d,"\Nt"] \\
    \Igs \ar[r,"\loc_p^0"] & \Isoc_{G,\leq\mu^*}
\end{tikzcd}$$
of perfect stacks over $\overline{\FF}_p$. Here $\Igs$ is the perfect Igusa stack (of prime-to-$p$ level $K^p$), $\Sht^\loc_{\cI,\mu}$ is the moduli stack of local shtukas bounded by $\mu$, and $i_{\leq\mu^*}\colon \Isoc_{G,\leq\mu^*}\hookrightarrow \Isoc_G$ is the closed substack corresponding to the subset $B(G,\mu^*)\subseteq B(G)$. The morphism $\loc_p$ is called \emph{crystalline period map} previously introduced in \cite{XZ-cycles} and \cite{SYZ-EKOR}, which is an analogue of the Hodge--Tate period map over the special fiber.

One of the new ideas of this work is to consider an object $\omega^\can_{\Igs}\in\Shv(\Igs,\Lambda)$ defined in Definition \ref{def-omega-can}, and to define the \emph{$!$-Igusa sheaf}
$$\fI^\can\coloneqq (\loc_p^0)_\flat\omega^\can_{\Igs}\in \Shv(\Isoc_{G,\leq\mu^*},\Lambda),$$
where $(\loc_p^0)_\flat$ is the right adjoint of $(\loc_p^0)^!$. We notice that the definitions of $\omega^\can_{\Igs}$ and $\fI^\can$ are purely formal and are based on categorical nonsense. However, surprisingly the stalks of $\fI^\can$ exactly compute the partially compactly supported cohomology $R\Gamma_{c-\partial}(\Ig_b,\Lambda)$ introduced above. More precisely, we have:
\begin{prop}[Proposition \ref{prop-igusa-sheaf-stalk}]\label{prop-stalk-intro}
    For $b\in B(G,\mu^*)$, there is a natural isomorphism
    $$(i_b)^!\fI^\can\simeq R\Gamma_{c-\partial}(\Ig_b,\Lambda)$$
    in $\Shv(\Isoc_{G,b},\Lambda)\simeq \Rep(G_b(\QQ_p),\Lambda)$. Here, $i_b\colon \Isoc_{G,b}\hookrightarrow \Isoc_{G,\leq\mu^*}$ is the Newton stratum associated to $b$.
\end{prop}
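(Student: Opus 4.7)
The plan is to combine base change along the Newton stratification of the Igusa stack with an explicit unraveling of the definition of $\omega^\can_{\Igs}$. First I would form the Cartesian diagram
$$\begin{tikzcd}
    \Ig_b \ar[r,"\tilde i_b"]\ar[d,"\bar{\loc}_p^0"swap] & \Igs \ar[d,"\loc_p^0"] \\
    \Isoc_{G,b} \ar[r,"i_b"swap] & \Isoc_{G,\leq\mu^*},
\end{tikzcd}$$
identifying $\Ig_b$ with the fiber of $\loc_p^0$ over the Newton stratum $\Isoc_{G,b}\subseteq \Isoc_{G,\leq\mu^*}$. Base change for the right adjoints $(\loc_p^0)_\flat$ and $(\bar{\loc}_p^0)_\flat$ of $(\loc_p^0)^!$ and $(\bar{\loc}_p^0)^!$ along this square should yield a canonical identification
$$(i_b)^!\fI^\can \;=\;(i_b)^!(\loc_p^0)_\flat\omega^\can_{\Igs}\;\simeq\;(\bar{\loc}_p^0)_\flat (\tilde i_b)^!\omega^\can_{\Igs},$$
reducing the problem to the computation of a $(\bar{\loc}_p^0)_\flat$-pushforward of a sheaf on $\Ig_b$.

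Next, I would identify $(\tilde i_b)^!\omega^\can_{\Igs}$ using Definition \ref{def-omega-can}. Since $\omega^\can_{\Igs}$ is designed to be compatible with the partial minimal compactifications of \cite{Mao-Hodge-well-positioned}, its $!$-restriction to $\Ig_b$ should be identified with a sheaf whose pushforward to $\Isoc_{G,b}$ computes $R\Gamma(\Ig_b^\mini,(j_{\Ig_b^\mini})_!\Lambda_{\Ig_b})$. Concretely, this amounts to checking that the Newton stratification on $\Igs$ extends compatibly to its partial minimal compactification so that $\Ig_b^\mini$ arises as the Newton stratum there, and that the $!$-extension along $j_{\Ig_b^\mini}$ commutes with base change along $\tilde i_b$.

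Finally, the morphism $\bar{\loc}_p^0\colon \Ig_b\to \Isoc_{G,b}$ presents $\Ig_b$ as a $\underline{G_b(\QQ_p)}$-torsor over $\Isoc_{G,b}\simeq [\mathrm{pt}/\underline{G_b(\QQ_p)}]$; under the equivalence $\Shv(\Isoc_{G,b},\Lambda)\simeq \Rep(G_b(\QQ_p),\Lambda)$, the functor $(\bar{\loc}_p^0)_\flat$ sends a sheaf on $\Ig_b$ to its derived global sections equipped with the natural smooth $G_b(\QQ_p)$-action. Combining with the previous step yields $R\Gamma_{c-\partial}(\Ig_b,\Lambda)$, as desired.

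I expect the hardest step to be the second: verifying that the abstractly-defined $\omega^\can_{\Igs}$ really computes partially compactly supported cohomology stratum by stratum will require a careful analysis of the interaction of $\omega^\can_{\Igs}$ with the Newton stratification and with the boundary of the partial minimal compactification, and may involve non-trivial base-change properties for the non-representable morphisms of perfect stacks at play.
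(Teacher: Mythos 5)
Your overall plan — ``base-change along the Newton stratum of $\Igs$, then compute the stalk of $\omega^\can_{\Igs}$'' — differs from the paper's argument and has several genuine gaps.

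First, a factual error in your diagram: the fiber of $\loc_p^0$ over $\Isoc_{G,b}\subseteq\Isoc_{G,\leq\mu^*}$ is not the Igusa variety $\Ig_b$ but the Newton stratum $\Igs_b$ of the Igusa stack, which is the \emph{quotient} $\Ig_b/\underline{G_b(\QQ_p)}$; the map $\Igs_b\to\Isoc_{G,b}\simeq\BB\underline{G_b(\QQ_p)}$ classifies the torsor $\Ig_b\to\Igs_b$ but is not itself a torsor. A $\underline{G_b(\QQ_p)}$-torsor over $\BB\underline{G_b(\QQ_p)}$ would be a point. The geometry still encodes $\Ig_b$ with its $G_b(\QQ_p)$-action, but you need to track this carefully, and as written your last step is built on this misidentification.

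Second, your first step requires the adjunction morphism $(i_b)^!\circ(\loc_p^0)_\flat\to(\bar{\loc}_p^0)_\flat\circ(\tilde i_b)^!$ to be an isomorphism. This is a right-adjointability assertion for a specific Cartesian square in $\mathrm{Lincat}_\Lambda$ that nothing in the paper establishes. The paper's right-adjointability result (\cite[Proposition 6.12]{Tame}, quoted before Definition \ref{def-Igusa-sheaf}) concerns a \emph{different} square — the one with the ind-pfp proper map $\Nt$ in the base-change direction, not a closed embedding of a Newton stratum. You would need a separate argument that your square is right-adjointable; for non-representable morphisms of perfect stacks with sheaf theories built by descent along $*$-pushforwards, this is genuinely nontrivial. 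Note that the paper avoids this entirely: its proof never invokes $\flat$-base-change along Newton strata. It instead reduces to computing $\Hom((i_b)_!\cInd_{K_b}^{G_b(\QQ_p)}\Lambda,\fI^\can)$ using the definition of $\fI^\can$ as $(\loc_p^0)_\flat\omega^\can_\Igs$ and standard $*$-pushforward/$!$-pullback proper base change along the $\Nt$-square (which is established), thereby passing to a computation on $\Sh_\mu$ itself.

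Third, and most importantly, your step two — identifying $(\tilde i_b)^!\omega^\can_{\Igs}$ in terms of the minimal compactification — is essentially the entire content of the proposition, and you offer no mechanism for it. Definition \ref{def-omega-can} is purely formal: $\omega^\can_\Igs=(\DD^\can_\Igs)^\Adm(\omega_\Igs)$, characterized by $\Hom(\cF,\omega^\can_{\Igs})\simeq R\Gamma(\Sh_\mu,\cdot)^\vee$; it is not ``designed to be compatible with partial minimal compactifications'' — that compatibility is precisely the surprise the proposition records. The actual mechanism in the paper is Proposition \ref{prop-partial-compactly-supp}: a concrete geometric statement, proved via the Lan--Stroh formal completion description and Artin approximation, that for well-positioned $\widetilde{Y}\to Y$ the exchange of $!$-extension and $*$-extension through the toroidal and minimal compactifications holds at the sheaf level. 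That, combined with the results of \cite{Mao-Hodge-well-positioned} identifying $\cC_b$ as well-positioned and $\Ig_{b,K_b}\to\cC_b$ as well-positioned, is what funnels the formal stalk computation into $R\Gamma(\Ig^\mini_{b,K_b},(j_{\Ig^\mini_{b,K_b}})_!\Lambda)$. Saying that this ``amounts to checking that the Newton stratification on $\Igs$ extends compatibly to its partial minimal compactification'' both understates the problem and points in the wrong direction: the paper's proof never constructs or invokes a partial minimal compactification \emph{of the Igusa stack}, only the compactifications of the Shimura variety and of the finite-level Igusa varieties $\Ig_{b,K_b}$.

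You correctly flag step two as the crux, but the proposal as written gives no path through it, and the structural choices in steps one and three introduce obstacles (unproven right-adjointability, torsor/quotient confusion) that the paper avoids by a fundamentally different reduction.
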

This proposition is proved using the results in \cite{Mao-Hodge-well-positioned} on well-positionedness of Igusa varieties.
Together with the affineness of the partial minimal compactification of $\Ig_b$ also proved in \cite{Mao-Hodge-well-positioned} and the Artin vanishing, we show that the object $\fI^\can$ lies in the connective part of the exotic $t$-structure. This is analogue to the semi-perversity results in \cite{Caraiani-Scholze17}, \cite{Caraiani-Scholze24}, and \cite{DHKZ-igusa}. If $\sSh_{K^pI}(\sG,\sX)$ is proper, this already appears in \cite[Proposition 6.21]{Tame}. We note that we work entirely over the special fiber of the Shimura variety and only use the partial minimal compactifications of Igusa varieties. In particular, we do not need the minimal compactification of the Hodge-Tate period map (or crystalline period map) and therefore avoid the additional assumptions on the boundary of Shimura varieties as in \cite[Assumption 1.11]{Hamann-Lee-vanishing}.


Next, define the \emph{unipotent spectral Igusa sheaf} as  
$$\fI^{\can,\unip}_{\spec}\coloneqq \LL_G^{\unip}(\cP^{\unip}\circ\Psi^L((i_{\leq\mu^*})_*\fI^\can))\in \Ind\Coh(\Loc^{\widehat\unip}_{{}^LG,\QQ_p}),$$
where $\Psi^L\colon\Shv(\Isoc_G,\Lambda)\hookrightarrow\Ind\Shv_\fingen(\Isoc_G,\Lambda)$ is the natural fully faithful embedding, and $\cP^{\unip}$ is the right adjoint of the natural embedding $\Ind\Shv_{\fingen}^\unip(\Isoc_G,\Lambda)\hookrightarrow \Ind\Shv_\fingen(\Isoc_G,\Lambda)$. 

Let $\widetilde{V}_\mu$ be the vector bundle on $\Loc^{\widehat\unip}_{{}^LG,\QQ_p}$ associated to the highest weight representation $V_\mu$ of $\hat{G}$ and $\CohSpr_{{}^LG}^\unip$ be the unipotent coherent Springer sheaf defined in \cite{Tame}. Note that $\widetilde{V}_\mu$ admits a tautological action of the Weil group $W_E$ and $\CohSpr^\unip_{{}^LG}$ carries a natural action of the Iwahori--Hecke algebra $H_I$ by \cite[Corollary 1.9]{Tame}. We have the following local-global compactibility result, which we believe will be useful in many other related problems.

\begin{thm}[Theorem \ref{thm-local-global-compatibility}]\label{thm-local-global-intro}
    There is an $H_{K^p}\times H_I\times W_E$-equivariant isomorphism
$$\Hom(\widetilde{V}_\mu\otimes\CohSpr^\unip_{{}^LG},\fI^{\can,\unip}_{\spec})\simeq R\Gamma_c(\sSh_{K^pI}(\sG,\sX)_{\overline{E}},\Lambda)(d/2)[d].$$
\end{thm}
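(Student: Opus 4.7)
The plan is to translate the computation from the spectral side to the automorphic side using the fully faithful functor $\LL_G^{\unip}$ of \cite{Tame}, then relate the resulting expression on $\Isoc_G$ to the cohomology of $\Sh_\mu$ via the Cartesian diagram of \cite{DHKZ-igusa}. The first step uses that $\widetilde{V}_\mu$ is a vector bundle on $\Loc^{\widehat\unip}_{{}^LG,\QQ_p}$: by the tensor-Hom adjunction combined with the spectral-action compatibility
$\widetilde{V}_\mu^\vee\otimes \fI^{\can,\unip}_{\spec} = \LL_G^{\unip}\bigl(T_{V_{\mu^*}}(\cP^{\unip}\Psi^L (i_{\leq\mu^*})_*\fI^\can)\bigr),$
one rewrites $\Hom(\widetilde{V}_\mu\otimes\CohSpr^\unip,\fI^{\can,\unip}_{\spec}) = \Hom(\CohSpr^\unip,\widetilde{V}_\mu^\vee\otimes \fI^{\can,\unip}_{\spec})$, and then, using fully faithfulness of $\LL_G^{\unip}$, identifies this with $\Hom(\sJ, T_{V_{\mu^*}}(\cP^{\unip}\Psi^L (i_{\leq\mu^*})_*\fI^\can))$ on the automorphic side, where $\sJ$ denotes the automorphic preimage of $\CohSpr^\unip$. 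By the construction of the coherent Springer sheaf and the $H_I$-equivariance in \cite[Corollary 1.9]{Tame}, $\sJ$ acts as the ``Iwahori-equivariant probe'' whose $\Hom$ extracts Iwahori-level data, and its $H_I$-action matches the one on $\CohSpr^\unip$.

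The second step computes $T_{V_{\mu^*}}$ applied to $\fI^\can = (\loc_p^0)_\flat \omega^\can_{\Igs}$. By definition, the Iwahori-level Hecke operator $T_{V_{\mu^*}}$ is a correspondence transform through $\Sht^\loc_{\cI,\mu}$ with kernel $\IC_{V_{\mu^*}}$ pulled back from $\Sht^\loc_{G,\mu}$. The Cartesian diagram of \cite{DHKZ-igusa} allows one to base-change this entire construction along $\loc_p^0$: composing with $(\loc_p^0)_\flat$ (the right adjoint of $(\loc_p^0)^!$) and using base change for the square, the Hecke output is rewritten as $(\loc_p)_*$ of the corresponding construction over $\Sh_\mu$, starting from $(\Nt^\glob)^!\omega^\can_{\Igs}$. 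Since $\Sh_\mu \to \Sht^\loc_{\cI,\mu}$ is smooth of the local-model type (so that $\loc_p^*\IC_{V_{\mu^*}}$ agrees with the dualizing/constant sheaf up to a canonical shift and twist), this identifies the Hecke output, after pushforward, with a shifted and Tate-twisted form of $R\Gamma_c$ or $R\Gamma$ of $\Sh_\mu$, with the factor $[d](d/2)$ arising from the normalization of the IC kernel.

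The third step combines the previous two: taking $\Hom(\sJ, -)$ against this Hecke output extracts precisely the Iwahori-equivariant component, which by the base-change identification of the preceding step is none other than $R\Gamma_c(\sSh_{K^pI}(\sG,\sX)_{\overline{E}},\Lambda)(d/2)[d]$. The $H_{K^p}$-equivariance is automatic from functoriality in the prime-to-$p$ level, while the $W_E$-equivariance comes from the Frobenius structure on $\Sht^\loc_{\cI,\mu}$ propagating through $\loc_p$ and $\widetilde{V}_\mu$. The $H_I$-equivariance on the two sides is matched via the compatibility of the categorical Hecke action at Iwahori level on $\Isoc_G$ with the Iwahori-Hecke correspondences on $\Sh_\mu$, an output of \cite{Tame}. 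The principal technical obstacle is tracking the interaction of the unusual right adjoint $(\loc_p^0)_\flat$ with the Hecke correspondence during base change, together with the renormalization functors $\cP^{\unip}\Psi^L$; a subsidiary obstacle is pinning down $\sJ$ precisely enough — in particular its relation to induction from Iwahori level on the basic stratum — to guarantee that the extracted cohomology is the full $R\Gamma_c$ rather than some summand or quotient.
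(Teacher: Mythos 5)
Your proposal correctly identifies the mechanism behind the bare isomorphism: fully faithfulness of $\LL^{\unip}_G$, followed by adjunction through $(\loc_p^0)_\flat$ and base change along the Cartesian diagram of \cite{DHKZ-igusa}, with the shift and twist coming from the identification of $(\loc_p)^!\delta^!Z_\mu$ with the nearby cycle sheaf. (The paper organizes this slightly differently—it does not commute $T_{V_{\mu^*}}$ past $(\loc_p^0)_\flat$, but rather rewrites $T_{V_\mu}((i_1)_*\cInd_I^{G(\QQ_p)}\Lambda)$ as $\Nt_*\delta^!Z_\mu$ and pulls that back directly—but your variant is harmless once dualities are tracked.)

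The serious gap is in your treatment of the equivariance, which you dismiss in a single sentence. You claim the $H_I$-equivariance follows from a compatibility that is ``an output of \cite{Tame},'' but no such compatibility exists in \cite{Tame}: that paper concerns only the local categorical Langlands correspondence and never touches Shimura varieties. The statement you need is that the $H_I$-action on $\Hom(\Nt_*\delta^!Z_\mu,\fI^\can)$ coming from $\mathrm{End}(\cInd_I^{G(\QQ_p)}\Lambda)$ via the categorical trace construction agrees, under the base change to $\Sh_\mu$, with the classical Iwahori--Hecke correspondences on the Shimura variety. This is exactly the $S=T$ statement, which the paper itself flags in the introduction as the crux: ``to prove that the isomorphism is compatible with Hecke actions requires more works. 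This is done by generalizing the $S=T$ type results in \cite{Wu-S-equal-T} to Iwahori level.'' That generalization occupies all of \S\ref{subsection-S=T} and requires: (i) constructing $S$-correspondences on $p$-adic shtukas via creation/annihilation correspondences twisted by a partial Frobenius (Definition \ref{def-excursion}); (ii) matching these $S$-correspondences to classical Hecke correspondences over the generic fiber (Proposition \ref{prop-S=T-no-leg}); (iii) comparing the perfect special fiber with the characteristic-zero side through the $v$-sheaf formalism (Lemma \ref{lemma-S-operator-comparison}, Proposition \ref{prop-coh-diamond}); and (iv) identifying the resulting $S$-operator with the abstract $S$-operator on the spectral side (Lemma \ref{lemma-excursion-operator}, Proposition \ref{prop-S=T-cllc}). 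None of this is ``automatic.''

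Similarly, your claim that the $W_E$-equivariance ``comes from the Frobenius structure propagating through $\loc_p$ and $\widetilde{V}_\mu$'' hides the content of Proposition \ref{prop-compatibility-Weil-group}: one must check that the canonical Weil structure $F\colon Z_\mu\to\phi^s_*Z_\mu$ on the central sheaf, transported through the commutativity constraints $\sigma_{-,-}$ of the categorical trace and conjugated by $\Delta_{w^{-1}}$ when $G$ is not quasi-split, matches the tautological $\phi^s$-action on $\widetilde{V}_\mu$ under Bezrukavnikov's equivalence, and separately one must match the unipotent monodromy of the central sheaf with the $\tau$-action on $\widetilde{V}_\mu$. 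Both rely on knowing the relative Frobenius on $\Igs$ is canonically trivial and on the pFr-twisting of partial Frobenius compatibilities — specific computations, not formal consequences. In short, the first part of your argument is sound, but the equivariance claims, which constitute most of the proof, are unsubstantiated.
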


In fact, we prove a more general result for not necessarily quasi-split $G=\sG_{\mathbb{Q}_p}$ (but we still require $G$ splits over an unramified extension of $\mathbb{Q}_p$). Then in the above formula, one just replaces the coherent Springer sheaf by the coherent sheaf corresponding to $\cInd_I^{G(\mathbb{Q}_p)}\Lambda$ under the categorical local Langlands correspondence.

We emphasize that the nature of the above isomorphism is that we can compute the \'etale cohomology of Shimura varieties in terms of coherent cohomology on the stack of local Langlands parameters. 
We remark that the existence of such an isomorphism follows formally from the categorical local Langlands and the base change. However, to prove that the isomorphism is compatible with Hecke actions requires more works. This is done by generalizing the $S=T$ type results in \cite{Wu-S-equal-T} to the Iwahori level. 

Combining Theorem \ref{thm-Hecke-t-exact-intro}, Proposition \ref{prop-stalk-intro}, and Theorem \ref{thm-local-global-intro}, we deduce Theorem \ref{thm-main-intro} for Shimura varieties of Hodge type. Then we deduce the abelian type case from the Hodge type case using the description of connected components of Shimura varieties in \cite{Deligne-Shimura}.

We note that we only make use of the unipotent part of the categorical local Langlands correspondence. In particular, all the reductive groups over $\QQ_p$ will split over an unramified extension of $\QQ_p$. The more general tame part of the categorical local Langlands as considered in \cite{Tame} is not needed. 

As a final remark in the introduction, we note that although our strategy is similar to the previous works, the actual proofs are different. In particular, we do not need to use any comparison between classical local Langlands and categorical local Langlands, nor any other results established via the trace formula methods such as Jacquet--Langlands transfer, as required in all the previous works.

    

\subsection{Notations and conventions}
We follow \cite[\S 7]{Tame} for the notations and conventions regarding category theory. In particular, unless otherwise specified, the term ``categories‘’ refer to $(\infty,1)$-categories. By a linear category, we mean a presentable, stable $\infty$-category. All functors between linear categories are derived. If $\Lambda$ is a commutative ring, let $\Lincat_\Lambda$ be the category of presentable stable $\Lambda$-linear $\infty$-categories.

For a (derived) scheme or stack $X$, we let $\QCoh(X)$ (resp. $\Coh(X)$) denote the $\infty$-category of quasi-coherent (resp. coherent) sheaves on $X$. 

If $H$ is an algebraic group over a field, let $H^\circ$ denote the neutral connected component of $H$.

If $T$ is a multiplicative group over a field $k$, let $$\XX^\bullet(T)\coloneqq\Hom(T_{\overline{k}},\GG_{m,\overline{k}}),\quad\text{resp.}\quad \XX_\bullet(T)\coloneqq\Hom(\GG_{m,\overline{k}},T_{\overline{k}})$$
denote the weight (resp. coweight) lattice of $T$.

Assume that $G$ is a connected reductive group over a field. Let $G_\der$ denote derived subgroup of $G$. Let $Z(G)$ denote center of $G$. Let $G_\ad=G/Z(G)$ denote adjoint group of $G$ and $G_\ab=G/G_\der$ denote the abelianization of $G$. Let $G_\mathrm{sc}$ denote the simply-connected cover of $G_\der$. 

\subsection{Acknowledgement}
We thank Liang Xiao for helpful discussions.
Part of the work was done when X.Y. was visiting Stanford University, and he thanks them for their hospitality. 
X.Z. is supported by NSF grant under DMS-2200940.

\section{Coherent sheaves on the twisted adjoint quotient}\label{section-Springer}
Let $G$ be a connected reductive group over an algebraically closed field $\Lambda$ equipped with a finite order automorphism $\phi$. Then one can form the disconnected reductive group $G\rtimes\langle\phi\rangle$ and consider twisted adjoint quotient $G\phi/G\to G\phi\git G$. For a point $\xi\in G\phi\git G$, let $(G\phi/G)_\xi^\wedge$ denote the formal completion of $G\phi/G$ along the fiber of $\xi$.
In this section, we study the category of coherent sheaves on $(G\phi/G)_\xi^\wedge$. 
We will construct generators of this category in Proposition \ref{prop-generators-GS}. We will show certain coherent sheaves on $(G\phi/G)_\xi^\wedge$ admit nice filtrations in Proposition \ref{cor-tilting-tensor-twisted-case}, which will be a key input in proving the $t$-exactness of Hecke actions in Theorem \ref{thm-Hecke-t-exact} and Theorem \ref{thm-char-0-t-str}.
A main tool to prove these results are the exotic coherent sheaves on the Springer resolution, as developed in \cite{Bez-coh-tilting, MR-exotic-t-str}.

The results presented in this section will be applied to the spectral side of the categorical local Langlands correspondence in the subsequent section. Consequently, the reductive group 
$G$ discussed here will correspond to the dual group $\hat{G}$ in later sections.

\subsection{Exotic and perverse coherent sheaves}\label{subsection-exotic-and-perverse-coh}
We fix some notations and conventions that will be used throughout this section.

We will fix a pinning $(B,T,e)$ of $G$, consisting of a Borel subgroup $B\subseteq G$ and a maximal torus $T\subseteq B$ of $G$. Let $U$ be the unipotent radical of $B$. Then $e: U\to \GG_a$ is a homomorphism which restricts to an isomorphism on every simple root subgroup of $U$. We let $\ft\subset\fb\subset\fg$ and $\fu\subset\fb$ be the corresponding Lie algebras.

Let $\XX^\bullet(T)^+\subseteq \XX^\bullet(T)$ be the set of dominant weights associated to the Borel subgroup $B^+$ of $G$ \emph{opposite} to $B$. Let $U\subseteq B$ be the unipotent radical. Let $W$ be the Weyl group of $G$ and $w_0\in W$ be the longest element. Let $\Phi(G,T)\subseteq \XX^\bullet(T)$ (resp. $\Phi^\vee(G,T)\subseteq \XX_\bullet(T)$) be the set of roots (resp. coroots) of $G$. Let $\Delta\subseteq \XX^\bullet(T)$ (resp. $\Delta^\vee\subseteq \XX_\bullet(T)$) be the set of simple roots (resp. coroots) of $G$ with respect to the opposite Borel $B^+$. 

We consider the following ``standard'' assumption on $G$ in this subsection. 
\begin{assump}\label{assump-standard-on-reductive}
\begin{enumerate}
    \item The characteristic of $\Lambda$ is either zero or is good with respect to every simple factor of (the adjoint group of) $G$.
    \item The derived subgroup of $G$ is simply-connected.
    \item There exists a non-degenerated $G$-invariant bilinear form on the Lie algebra $\fg$ of $G$.
\end{enumerate}
\end{assump}

\begin{rmk}\label{rmk-bilinear-form}
    If $G$ is a simple and simply-connected reductive group not of type $\mathsf{A}_n$, then (1) implies (3). On the other hand, (3) is always satisfied for $G=\GL_n$.  
\end{rmk}

We recall the theory of exotic and perverse coherent sheaves following \cite{Achar-exotic-coh} and \cite{MR-exotic-t-str}.
\subsubsection{Exotic Coherent sheaves}

Let $\cN\subseteq \fg$ be the nilpotent cone. Let $\pi\colon \fu/B\to \cN/G$ denote the Springer resolution. For $\lambda\in \XX^\bullet(T)$, let $\cO_{\fu/B}(\lambda)$ be the line bundle on $\fu/B$ defined as the pullback of the character $\lambda$ along $\fu/B\to \BB T$. For $\lambda\in \XX^\bullet(T)$, denote by $\dom(\lambda)\in \XX^\bullet(T)^+$ the dominant Weyl translation of $\lambda$.

By \cite{Bez-coh-tilting} and \cite{MR-exotic-t-str}, there is a bounded $t$-structure on $\Coh(\fu/B)$ called the \emph{exotic} $t$-structure. Let $\Ex\Coh(\fu/B)$ denote the heart of the exotic $t$-structure. By \cite[\S 3.5]{MR-exotic-t-str}, the abelian category $\Ex\Coh(\fu/B)$ has a highest weight structure, and the natural functor $D^b(\Ex\Coh(\fu/B))\to\Coh(\fu/B)$ is an equivalence of categories. The set of standard (resp. costandard) objects in $\Ex\Coh(\fu/B)$ are indexed by $\XX^\bullet(T)$. For $\lambda\in \XX^\bullet(T)$, let $\Delta_\lambda^{ex}$ (resp. $\nabla_\lambda^{ex}$) denote the corresponding standard (resp. costandard) object. By \cite[Corollary 3.4, Proposition 3.6]{MR-exotic-t-str}, we have
$$
\cO_{\fu/B}(\lambda)\simeq\left\{\begin{array}{ll} \Delta_\lambda^{ex}, &  \text{if }\lambda\in -\XX^\bullet(T)^+,\\ \nabla_\lambda^{ex}, & \text{if }\lambda\in \XX^\bullet(T)^+. \end{array}\right.
$$

We will need the following result later.
\begin{prop}\label{prop-exotic-tensor-filtration}
    Suppose Assumption \ref{assump-standard-on-reductive} holds. 
    Let $V$ be a finite-dimensional representation of $G$.  
    \begin{enumerate}
        \item The functor $V\otimes(-)\colon\Coh(\fu/B)\to \Coh(\fu/B)$ is $t$-exact for the exotic $t$-structure.
        \item If $V$ admits a Weyl filtration, and $\cF\in \Ex\Coh(\fu/B)$ admits a standard filtration, then $V\otimes\cF$ admits a standard filtration.
        \item If $V$ admits a good filtration, and $\cF\in \Ex\Coh(\fu/B)$ admits a costandard filtration, then $V\otimes\cF$ admits a costandard filtration.
    \end{enumerate}
\end{prop}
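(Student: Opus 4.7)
The plan is to prove (2) first, deduce (3) via Grothendieck--Serre duality, and then obtain (1) as a formal consequence. For (2), I would exploit the fact that the full subcategory of $\Ex\Coh(\fu/B)$ consisting of standard-filtered objects is closed under extensions, and that the finite-dimensional $G$-modules admitting Weyl filtrations form an extension-closed class. This reduces the claim to the case $V = V(\nu)$ is a single Weyl module and $\cF = \Delta_\lambda^{ex} = \cO(\lambda)$ for some $\lambda\in -\XX^\bullet(T)^+$. The core task is then to exhibit a standard filtration on the locally free sheaf $V(\nu)\otimes\cO(\lambda)$.

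For the core construction, I would exploit the Springer resolution $\pi\colon\fu/B\to\cN/G$ together with the compatibility (established in \cite{Bez-coh-tilting, MR-exotic-t-str}) between the exotic $t$-structure on $\Coh(\fu/B)$ and Bezrukavnikov's perverse coherent $t$-structure on $\Coh(\cN/G)$. Via the projection formula $R\pi_*(V\otimes\cF)\simeq V\otimes R\pi_*\cF$, information on $\cN/G$ can be transported back to $\fu/B$. Because the exotic standards are indexed by $\XX^\bullet(T)$ while the perverse coherent standards on $\cN/G$ are indexed by nilpotent orbits, a finer argument is required: I would proceed by induction on the highest-weight order, using short exact sequences that replace $V(\nu)$ by modules of strictly smaller highest weight, and applying the translation functors and affine braid group action on $\Coh(\fu/B)$ constructed in \cite{MR-exotic-t-str} to propagate standard filtrations through.

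For (3), the Grothendieck--Serre dualising functor on $\Coh(\fu/B)$ exchanges the standards $\Delta_\lambda^{ex}$ with the costandards $\nabla_{w_0\lambda}^{ex}$ (up to a cohomological shift) and matches Weyl filtrations on the representation-theoretic side with good filtrations via the involution $V\mapsto V^\vee$. Consequently (3) follows from (2) by dualising. For (1), in characteristic zero every finite-dimensional $V$ is semisimple and every irreducible is simultaneously a Weyl and a dual Weyl module, so $t$-exactness is immediate from (2) and (3). In positive characteristic, under the Coxeter number bound imposed in the main text, every finite-dimensional $V$ admits a finite resolution by tilting modules, which carry both Weyl and good filtrations; parts (2) and (3) then imply that $V\otimes(-)$ preserves both halves of the exotic $t$-structure.

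The main obstacle is the base case in (2): producing an explicit standard filtration on $V(\nu)\otimes\cO(\lambda)$ rather than merely computing its class in the Grothendieck group. I expect this to rest on the detailed compatibility between the affine braid group action and the monoidal structure on $\Coh(\fu/B)$ developed in \cite{Bez-coh-tilting, MR-exotic-t-str}, and it is plausible that the argument will directly cite, or lightly adapt, a statement from that circle of ideas rather than reprove it from scratch.
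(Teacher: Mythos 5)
Your overall instinct is right that the proposition is a corollary of results in \cite{MR-exotic-t-str}, and indeed the paper's proof is essentially a citation: part (1) is \cite[Proposition 3.13]{MR-exotic-t-str}, parts (2) and (3) are \cite[Corollary 4.14]{MR-exotic-t-str}. But the route you sketch has two genuine problems. First, you never actually establish the ``core task'' of (2) — exhibiting a standard filtration on $V(\nu)\otimes\cO(\lambda)$ — and you acknowledge as much; the sketch of translation functors and affine braid group orbits is not a proof.

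Second, and more seriously, the deduction of (1) from (2) and (3) in positive characteristic is broken. A finite-dimensional $G$-module in characteristic $p$ does not in general admit a finite resolution by tilting modules: the class of modules filtered by tiltings is closed under extensions, so any module with such a filtration is itself tilting, and a general $V$ is not. One can place $V$ in the thick triangulated subcategory generated by tiltings, but that does not yield $t$-exactness of $V\otimes(-)$. In fact the logical dependency should go the other way: \cite[Proposition 3.13]{MR-exotic-t-str} proves (1) directly, and — as the paper explicitly notes — it holds with \emph{no} hypothesis on the characteristic of $\Lambda$, so it is strictly more general than anything derivable from (2) and (3) under Assumption \ref{assump-standard-on-reductive}. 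You should invert the order, taking (1) as the primitive input from \cite{MR-exotic-t-str} and quoting \cite[Corollary 4.14]{MR-exotic-t-str} for the filtration statements (2) and (3). Your use of Serre duality to pass between (2) and (3) is sound in spirit, but it becomes unnecessary once the right citation is in hand.
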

\begin{proof}
    (1) is \cite[Proposition 3.13]{MR-exotic-t-str}, which, in fact, holds without assumption on the characteristic of $\Lambda$. (2) and (3) follow from \cite[Corollary 4.14]{MR-exotic-t-str}.
\end{proof}


\subsubsection{Perverse coherent sheaves}\label{subsubsection-perverse-coh}

In this subsection, we continue with Assumption \ref{assump-standard-on-reductive}.

By \cite{Arinkin-Bez-perverse-coh}, there is a  \emph{perverse coherent $t$-structure} $(\Coh(\cN/G)^{p\leq 0},\Coh(\cN/G)^{p\geq 0})$ 
on $\Coh(\cN/G)$. Let $\PCoh(\cN/G)$ denote the heart of the perverse $t$-structure on $\Coh(\cN/G)$. 
The perverse $t$-structure on $\Coh(\cN/G)$ is bounded and the abelian category $\PCoh(\cN/G)$ is Artinian. For $\lambda\in \XX^\bullet(T)$, define
$$A_\lambda =\pi_*\cO_{\fu/B}(\lambda) \in \Coh(\cN/G).$$
These objects are often called \emph{Andersen--Jantzen sheaves}. In particular, we have $A_0=\pi_*\cO_{\fu/B}\simeq\cO_{\cN/G}$. For a dominant weight $\lambda\in \XX^\bullet(T)^+$, denote
$$\bar\Delta_\lambda=A_{w_0\lambda}\quad\text{and}\quad\bar\nabla_\lambda=A_\lambda.$$
By {\cite[Theorem 1.4]{Achar-exotic-coh}}, the objects $\bar\Delta_\lambda$ and $\bar\nabla_\lambda$ belongs to $\PCoh(\cN/G)$. 
We recall some properties of the perverse coherent $t$-structure that will be needed in the sequel.

\begin{prop}[{\cite[Proposition 1.6]{Achar-exotic-coh}}]\label{prop-pushforward-standard}
	The functor $\pi_*\colon \Coh(\fu/B)\to \Coh(\cN/G)$ is $t$-exact and hence restricts to an exact functor $\pi_*\colon \Ex\Coh(\fu/B)\to \PCoh(\cN/G)$. For $\lambda\in \XX^\bullet(T)$, it satisfies
$$\pi_*\Delta^{ex}_\lambda\cong\bar\Delta_{\dom(\lambda)}\quad\text{and}\quad \pi_*\nabla^{ex}_\lambda\cong\bar\nabla_{\dom(\lambda)}.$$
\end{prop}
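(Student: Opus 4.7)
The plan is to reduce the $t$-exactness assertion to verifying the explicit formulas, since an additive functor between highest weight categories that sends standards to standards and costandards to costandards is automatically exact.

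First, I would handle the extremal weights by direct computation. For $\lambda\in -\XX^\bullet(T)^+$, the identifications recalled in \S\ref{subsection-exotic-and-perverse-coh} give $\Delta^{ex}_\lambda=\cO_{\fu/B}(\lambda)$, hence $\pi_*\Delta^{ex}_\lambda=A_\lambda$ by definition of the Andersen--Jantzen sheaves. Since $\dom(\lambda)=w_0\lambda$ for anti-dominant $\lambda$, this matches $\bar\Delta_{\dom(\lambda)}=A_{w_0(w_0\lambda)}=A_\lambda$. The dominant case for the costandard formula is entirely parallel.

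For a general weight, I would proceed by induction on the length of a Weyl element that moves $\lambda$ into the dominant (resp.\ anti-dominant) chamber. The exotic standards $\Delta^{ex}_\lambda$ are generated from the extremal line bundles by the geometric affine braid group action on $\Coh(\fu/B)$, realized through the Bott--Samelson-type correspondences attached to simple reflections. The Springer map $\pi$ intertwines these correspondences with their counterparts on $\cN/G$ that produce the perverse-coherent standards $\bar\Delta_\mu$. Combining proper base change with Kempf-type cohomological vanishing along partial flag bundles---for which Assumption \ref{assump-standard-on-reductive}(1) is required---the inductive step identifies $\pi_*\Delta^{ex}_\lambda$ with $\bar\Delta_{\dom(\lambda)}$, and symmetrically in the costandard case.

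With the matchings in hand, $t$-exactness follows formally: since $\Ex\Coh(\fu/B)$ is generated under extensions by standards, and each $\pi_*\Delta^{ex}_\lambda=\bar\Delta_{\dom(\lambda)}$ lies in $\PCoh(\cN/G)$, the pushforward is right $t$-exact; the dual argument with costandards yields left $t$-exactness, and therefore $\pi_*$ restricts to an exact functor between the hearts.

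The main obstacle is the inductive step, which demands a precise compatibility between the braid group actions on $\Coh(\fu/B)$ and $\Coh(\cN/G)$, together with careful tracking of how twists by line bundles attached to non-extremal weights interact with $\pi_*$. Small or bad characteristic can break the necessary higher-direct-image vanishings along simple-root flag bundles, which is precisely where Assumption \ref{assump-standard-on-reductive} becomes indispensable; modulo this input, the scheme just outlined is the strategy underlying the cited result.
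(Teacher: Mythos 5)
The proposition is asserted with a bare citation to \cite[Proposition 1.6]{Achar-exotic-coh}, so the paper itself supplies no argument. Your outline correctly identifies the skeleton of the proof (extremal weights by hand, reduction of the general case via the braid group, $t$-exactness formally), and the extremal computation you give is right, but two of the steps are misstated. The claim that $\Ex\Coh(\fu/B)$ is ``generated under extensions by standards'' is false for a highest weight abelian category: simple objects are typically proper quotients of standards and are not iterated extensions of standards. The correct input is that the connective part $\Coh(\fu/B)^{e,\leq 0}$ is the extension closure of $\{\Delta^{ex}_\lambda[k]\colon k\geq 0\}$, and dually $\Coh(\fu/B)^{e,\geq 0}$ is the extension closure of $\{\nabla^{ex}_\lambda[-k]\colon k\geq 0\}$---a standard fact about highest weight $t$-structures. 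With that substitution your formal deduction of $t$-exactness goes through, since $\pi_*$ is triangulated and both $\PCoh(\cN/G)^{\leq 0}$ and $\PCoh(\cN/G)^{\geq 0}$ are closed under extensions.

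Second, there is no affine braid group action on $\Coh(\cN/G)$ for the Springer map to ``intertwine'' with, and there are no ``counterparts on $\cN/G$ that produce the perverse-coherent standards'' -- those are defined directly as pushforwards of line bundles. What the inductive step actually rests on is that the finite Weyl group part of the braid action on $\Coh(\fu/B)$ is \emph{annihilated} by $\pi_*$: for a simple reflection $s$, one uses vanishing of higher direct images along the $\PP^1$-bundle $G/B\to G/P_s$ to show that $\pi_*$ applied to the braid twist by $s$ agrees with $\pi_*$ applied to the original object, so that $\pi_*\Delta^{ex}_\lambda$ depends only on the $W$-orbit of $\lambda$ and the extremal case pins it down. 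You do invoke the right Kempf-type vanishing, so the essential tool is in hand; what needs correction is the picture of a braid-equivariant pushforward.
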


\begin{prop}\label{cor-dom-gen}
	The category $\Coh(\cN/G)$ is generated by $\{\bar{\Delta}_{\lambda}\}_{\lambda\in \XX^\bullet(T)^+}$ (resp. $\{\bar{\nabla}_{\lambda}\}_{\lambda\in \XX^\bullet(T)^+}$) under cones and retracts.
\end{prop}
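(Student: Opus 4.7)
My plan is to transfer the generation statement along the Springer resolution $\pi\colon\fu/B\to\cN/G$. Since $A_0\simeq\cO_{\cN/G}$ is already invoked, the projection formula gives $\pi_*\pi^*\simeq\id$ on $\Coh(\cN/G)$, so $\pi^*$ is fully faithful and any $\cF\in\Coh(\cN/G)$ is realized as $\pi_*(\pi^*\cF)$. Since $\pi$ is proper, $\pi_*$ is exact, hence sends thick subcategories into thick subcategories; it is therefore enough to exhibit a set of objects in $\Coh(\fu/B)$ that generates the whole category under cones and retracts and whose image under $\pi_*$ consists of (or is contained in) the claimed generators.

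I would take the exotic standards $\{\Delta^{ex}_\lambda\}_{\lambda\in\XX^\bullet(T)}$, respectively the exotic costandards $\{\nabla^{ex}_\lambda\}_{\lambda\in\XX^\bullet(T)}$, as such a set. Via the equivalence $D^b(\Ex\Coh(\fu/B))\xrightarrow{\sim}\Coh(\fu/B)$ recalled in \S\ref{subsection-exotic-and-perverse-coh} and the highest weight structure on $\Ex\Coh(\fu/B)$ from \cite{MR-exotic-t-str}, it suffices to show every simple object $L_\lambda$ in the exotic heart lies in the thick subcategory generated by the $\Delta^{ex}_\lambda$. This is the usual well-founded induction on the weight poset: $\Delta^{ex}_\lambda$ has finite length with unique simple quotient $L_\lambda$ and all other composition factors of the form $L_\mu$ with $\mu<\lambda$, and the downward interval $\{\mu\le\lambda\}$ is finite under the relevant order. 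The dual argument with short exact sequences $L_\lambda\hookrightarrow\nabla^{ex}_\lambda$ handles the costandards.

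Finally, by Proposition~\ref{prop-pushforward-standard}, $\pi_*\Delta^{ex}_\lambda\simeq\bar\Delta_{\dom(\lambda)}$ and $\pi_*\nabla^{ex}_\lambda\simeq\bar\nabla_{\dom(\lambda)}$. As $\lambda$ runs over $\XX^\bullet(T)$, the dominant Weyl translate $\dom(\lambda)$ exhausts $\XX^\bullet(T)^+$ (in fact $\lambda=\mu$ suffices for each dominant $\mu$). Combining this with the first paragraph shows that $\Coh(\cN/G)$ is generated under cones and retracts by $\{\bar\Delta_\mu\}_{\mu\in\XX^\bullet(T)^+}$, and analogously by $\{\bar\nabla_\mu\}_{\mu\in\XX^\bullet(T)^+}$.

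The one point that needs some care --- though it is really just a matter of quoting the highest weight machinery correctly --- is the inductive step generating the simples from the standards inside $\Ex\Coh(\fu/B)$: one must ensure that each object of the heart has finite length and that the order on the weight lattice indexing standards has finite downward intervals in the relevant sense. Both are built into the highest weight theory of \cite{MR-exotic-t-str}, so no new input beyond what is already stated in \S\ref{subsection-exotic-and-perverse-coh} is needed, and the argument otherwise proceeds by purely formal manipulation with $\pi^*\dashv\pi_*$.
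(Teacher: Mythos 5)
Your proof is correct and follows essentially the same route as the paper: essential surjectivity of $\pi_*$ via the projection formula and $\pi_*\cO_{\fu/B}\simeq\cO_{\cN/G}$, generation of $\Coh(\fu/B)$ by exotic (co)standards coming from the highest weight structure, and Proposition~\ref{prop-pushforward-standard} to compute $\pi_*$ of those generators. The extra elaboration on the well-founded induction over the weight poset inside the exotic heart is just spelling out what the paper compresses into ``by highest weight structure.''
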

\begin{proof}
    The functor $\pi_*$ is essentially surjective as
    $$\pi_*\pi^*\cF\simeq \cF\otimes\pi_*\cO_{\fu/B}\simeq \cF$$
    for $\cF\in\Coh(\cN/G)$. The collection $\{\Delta^{ex}_\lambda\}_{\lambda\in \XX^\bullet(T)}$ (resp. $\{\nabla^{ex}_\lambda\}_{\lambda\in \XX^\bullet(T)}$) generates the category $\Coh(\fu/B)$ by highest weight structure.
	By Proposition \ref{prop-pushforward-standard}, the collection $\{\bar{\Delta}_{\lambda}\}_{\lambda\in \XX^\bullet(T)^+}$ (resp. $\{\bar{\nabla}_{\lambda}\}_{\lambda\in \XX^\bullet(T)^+}$) generates $\Coh(\cN/G)$.
\end{proof}

We will need the following technical result later. 

\begin{prop}\label{prop-tilting-tensor-split-case}
	Let $V$ be a finite dimensional $G$-representation. Let $\lambda\in \XX^\bullet(T)^+$ be a dominant weight. 
    \begin{enumerate}
        \item If $V$ admits a Weyl filtration, then $V\otimes \bar\Delta_\lambda$ admits a filtration by $\bar\Delta_{\mu}$ for $\mu\in \XX^\bullet(T)^+$. 
        \item If $V$ admits a good filtration, then $V\otimes \bar\nabla_\lambda$ admits a filtration by $\bar\nabla_{\mu}$ for $\mu\in \XX^\bullet(T)^+$. 
    \end{enumerate}
\end{prop}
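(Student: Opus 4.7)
The plan is to reduce both statements to Proposition \ref{prop-exotic-tensor-filtration} by pushing forward along the Springer resolution $\pi\colon \fu/B \to \cN/G$. The key bridge is Proposition \ref{prop-pushforward-standard}: the functor $\pi_*$ is $t$-exact for the exotic and perverse $t$-structures, and it sends exotic standards/costandards to perverse standards/costandards.

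Concretely, for $\lambda \in \XX^\bullet(T)^+$, I would first unwind the definitions to write
$$\bar\Delta_\lambda = A_{w_0\lambda} = \pi_*\cO_{\fu/B}(w_0\lambda) \simeq \pi_* \Delta^{ex}_{w_0\lambda},$$
where the last identification uses that $w_0\lambda \in -\XX^\bullet(T)^+$, so $\cO_{\fu/B}(w_0\lambda)$ agrees with the exotic standard $\Delta^{ex}_{w_0\lambda}$. Similarly $\bar\nabla_\lambda \simeq \pi_*\nabla^{ex}_\lambda$. The projection formula then gives
$$V \otimes \bar\Delta_\lambda \simeq \pi_*\bigl(\pi^*V \otimes \Delta^{ex}_{w_0\lambda}\bigr), \qquad V \otimes \bar\nabla_\lambda \simeq \pi_*\bigl(\pi^*V \otimes \nabla^{ex}_\lambda\bigr).$$

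Next I would invoke Proposition \ref{prop-exotic-tensor-filtration}(2) (resp.\ (3)): if $V$ carries a Weyl (resp.\ good) filtration, then $\pi^*V \otimes \Delta^{ex}_{w_0\lambda}$ (resp.\ $\pi^*V \otimes \nabla^{ex}_\lambda$) lies in $\Ex\Coh(\fu/B)$ and admits a finite standard (resp.\ costandard) filtration whose successive quotients have the form $\Delta^{ex}_{\mu_i}$ (resp.\ $\nabla^{ex}_{\mu_i}$) for some $\mu_i \in \XX^\bullet(T)$.

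Finally I apply $\pi_*$. Because $\pi_*$ is $t$-exact for the exotic and perverse $t$-structures, it sends short exact sequences in $\Ex\Coh(\fu/B)$ to short exact sequences in $\PCoh(\cN/G)$, hence carries the above filtration to a filtration of $V \otimes \bar\Delta_\lambda$ (resp.\ $V \otimes \bar\nabla_\lambda$). By Proposition \ref{prop-pushforward-standard}, the successive quotients become $\bar\Delta_{\dom(\mu_i)}$ (resp.\ $\bar\nabla_{\dom(\mu_i)}$) with $\dom(\mu_i) \in \XX^\bullet(T)^+$, which is the desired filtration. I do not anticipate any substantial obstacle here; the argument is pure bookkeeping built on top of the already established structural results.
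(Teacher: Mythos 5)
Your proof is correct and follows essentially the same route as the paper's: identify $\bar\Delta_\lambda$ and $\bar\nabla_\lambda$ with pushforwards of exotic (co)standards, invoke the projection formula, apply Proposition \ref{prop-exotic-tensor-filtration} on $\fu/B$, and finish with Proposition \ref{prop-pushforward-standard}. You are a bit more explicit than the paper in spelling out that the $t$-exactness of $\pi_*$ is what turns the filtration upstairs into a filtration downstairs, but the argument is the same.
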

\begin{proof}
    We prove the second statement. The first one can be proved similarly.
    By projection formula, we have
    $$V\otimes \bar\nabla_\lambda=V\otimes \pi_*\nabla^{ex}_\lambda\simeq \pi_*(V\otimes\nabla^{ex}_\lambda).$$
    By Proposition \ref{prop-exotic-tensor-filtration}, we know that $V\otimes\nabla^{ex}_\lambda$ admits a filtration by $\nabla^{ex}_\mu$ for $\mu\in \XX^\bullet(T)$. Therefore $V\otimes \bar\nabla_\lambda$ admits a filtration by $\pi_*\nabla^{ex}_\mu$ for $\mu\in \XX^\bullet(T)$. By Proposition \ref{prop-pushforward-standard}, we have $\pi_*\nabla^{ex}_\mu\simeq \bar\nabla_{\dom(\mu)}$ and hence finish the proof.
\end{proof}

\begin{rmk}
 Let $\cU\subseteq G$ be the unipotent variety of $G$. It is well-known that under Assumption \ref{assump-standard-on-reductive}, there exists a $G$-equivariant isomorphism $\cN\cong \cU$ which restricts to a $B$-equivariant isomorphism $\mathfrak{u}\cong U$, usually known as the Springer isomorphism. (In fact the last part in Assumption \ref{assump-standard-on-reductive} is not necessary for the existence of the Springer isomorphism.)
Therefore, we can translate the previous results to the multiplicative Springer resolution $U/B\to \cU/G$ of $G$.   
\end{rmk}

\subsection{Coherent sheaves on $(G\phi/G)_\xi^\wedge$}\label{subsection-twisted-GS}
Let $\phi\colon G\to G$ be an isomorphism of finite order preserving the pinning. We make the following assumptions throughout this subsection.

\begin{assump}\label{assump-standard}
If the characteristic of $\Lambda$ is $\ell>0$, then $\ell>3$ and is good with respect to any simple factor of the adjoint group of $G$.
\end{assump}

\subsubsection{Relative root data}\label{subsubsection-rel-root-data}

Denote $A=T/(\phi-1)T$. Then $\XX^\bullet(A)=\XX^\bullet(T)^\phi$. Let $T^\phi$ denote the $\phi$-invariant of $T$. Let $\XX^\bullet(T)_\phi$ be the $\phi$-coinvariant module. 
Then $T^\phi$ is a diagonalizable group with $\XX^\bullet(T^\phi)=\XX^\bullet(T)_\phi$. 
Following \cite{XZ-cycles}, we define the dominant cone
$$\XX^\bullet(T)_\phi^+=\{\lambda\in \XX^\bullet(T)_\phi| \langle \lambda,\sum_{i=0}^{m-1}\phi^i(\alpha^\vee)\rangle\geq 0 \text{ for every }\alpha^\vee\in\Delta^\vee\}$$
where $m$ is the order the $\phi$-action on $G$. Denote $W_0=W^\phi$. Then $W_0$ acts on $A$ and $T^\phi$. The map $\XX^\bullet(T)_\phi^+\to \XX^\bullet(T)_\phi$ induces a bijection $\XX^\bullet(T)_\phi^+\simeq \XX^\bullet(T)_\phi/W_0$.

For each $\phi$-orbit $\cO\subseteq \Phi(G,T)$, we write
$$\alpha_\cO=\sum_{\gamma\in \cO}\gamma\in \XX^\bullet(T)^\phi.$$
By \cite[Lemma 5.1]{XZ-vector}, the collection $\{\alpha_\cO\}\subseteq \XX^\bullet(A)$ form a root datum. Let $G_\phi$ be the reductive group over $\Lambda$ with maximal torus $A$ and root datum $\Phi(G_\phi,A)=\{\alpha_\cO\}$. The Weyl group of $G_\phi$ is naturally identified with $W_0$. 

By \cite[Definition 5.1.4]{XZ-vector}, there are two types of relative roots:
\begin{enumerate}
    \item[($\mathsf{A}$)]  A root in $\Phi(G_\phi,A)$ has type $\mathsf{A}$ if there is a unique $\phi$-orbit $\cO\subseteq \Phi(G,T)$ such that this root is  $\alpha_\cO$. We say that the orbit $\cO$ has type $\mathsf{A}$.
    \item[($\mathsf{BC}$)] A root in $\Phi(G_\phi,A)$ has type $\mathsf{BC}$ if there are two $\phi$-orbits $\cO^+,\cO^-\subseteq \Phi(G,T)$ such that this root is $\alpha_{\cO^+}=\alpha_{\cO^-}$, and $|\cO^-|=2|\cO^+|$. We say that the orbit $\cO^-$ has type $\mathsf{BC}^-$, and the orbit $\cO^+$ has type $\mathsf{BC}^+$.
\end{enumerate}


\subsubsection{Main results}

Let $G\phi\git G$ be the GIT quotient of $G$ be the $\phi$-twisted conjugation action. By \cite[Proposition 4.2.3]{XZ-vector}, there is the twisted Chevalley isomorphism 
$$G\phi\git G\cong A\git W_0.$$
Define the ($\phi$-twisted) Grothendieck--Springer resolution
$$\widetilde{G}^\phi\coloneqq\{(gB,x)\in G/B\times G |x\in gB\phi(g)^{-1}\}\to G.$$
sending $(gB,x)$ to $x$. There is a natural isomorphism $\widetilde{G}^\phi/G\cong B\phi/B$ of quotient stacks. After taking the quotient by $G$, the Grothendieck--Springer resolution is identified with $\pi_\phi\colon B\phi/B\to G\phi/G$. There is a $G$-equivariant morphism $\widetilde{G}^\phi\to A$ sending $(gB,x)$ to the image of $g^{-1}x\phi(g)\in B$ in $A$. It defines a commutative diagram
\begin{equation*}
\begin{tikzcd}
	B\phi/B\ar[r]\ar[d] & T\phi/T\ar[d] \\
	G\phi/G\ar[r] & A\git W_0.
\end{tikzcd}\end{equation*}
Note that there is an isomorphism $T\phi/T\cong A\times \BB T^\phi$. Let $\xi\in (A\git W_0)(\Lambda)$ be a $\Lambda$-point, and let $(A\git W_0)^\wedge_\xi$ denote the formal completion of $A\git W_0$ at $\xi$. Let 
\[
(G\phi/G)^\wedge_\xi=(G\phi/G)\times_{(A\git W_0)}(A\git W_0)^\wedge_\xi.
\]

Taking formal completion of the above commutative diagram at a point $\xi\in (A\git W_0)(\Lambda)$, we obtain a correspondence
$$\bigsqcup_{\chi} A^\wedge_\chi\times \BB T^\phi\xleftarrow{\gamma_{\phi,\xi}}\bigsqcup_{\chi}(B\phi/B)_\chi^\wedge\xrightarrow{\pi_{\phi,\xi}} (G\phi/G)_\xi^\wedge$$
where $\chi$ runs through points in $A(\Lambda)$ that maps to $\xi$. For $\lambda\in \XX^\bullet(T)_\phi$, denote
$$\cF_\lambda\coloneqq \bigoplus_{\chi} (\iota_\chi)_*\cO_{\BB T^\phi}(\lambda),$$
where $\iota_\chi\colon \BB T^\phi\hookrightarrow A_\chi^\wedge\times \BB T^\phi$ is the closed embedding defined by the point $\chi$, and $\cO_{\BB T^\phi}(\lambda)\in \Coh(\BB T^\phi)$ is the twist of the structure sheaf $\cO_{\BB T^\phi}$ by the character $\lambda$. 

We note that the morphism $\gamma_{\phi,\xi}$ is quasi-smooth with trivial relative dualizing sheaf and therefore the $!$-pullback and the $*$-pullback along it coincide and preserve coherence. 
The following proposition is a generalization of Proposition \ref{cor-dom-gen}. 

\begin{prop}\label{prop-generators-GS}
Assume that Assumption \ref{assump-standard} holds. 
\begin{enumerate}
	\item The set of objects $\{(\pi_{\phi,\xi})_*(\gamma_{\phi,\xi})^! \cF_\lambda\}_{\lambda\in \XX^\bullet(T)_\phi^+}$ generate $\Coh((G\phi/G)^\wedge_\xi)$ under cones and retracts.
	\item The set of objects $\{(\pi_{\phi,\xi})_*(\pi_{\phi,\xi})^! \cF_\lambda\}_{\lambda\in -\XX^\bullet(T)_\phi^+}$ generate $\Coh((G\phi/G)^\wedge_\xi)$ under cones and retracts.
\end{enumerate}
\end{prop}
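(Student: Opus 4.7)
The plan is to reduce both statements to the untwisted Proposition \ref{cor-dom-gen} applied to the twisted centralizer $G_\chi = Z_G(\chi\phi)$ for lifts $\chi \in A$ of $\xi$. The reduction combines the properties of the twisted Grothendieck--Springer morphism $\pi_{\phi,\xi}$ with a twisted Luna/Slodowy slice at $\chi\phi$.

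First I would establish that the proper morphism $\pi_{\phi,\xi}$ generates the target category from the source under cones and retracts. The map $\pi_{\phi,\xi}$ is proper as the base change of the proper $\phi$-twisted Grothendieck--Springer morphism $B\phi/B \to G\phi/G$, and the twisted analog of the identity $\pi_* \cO_{\fu/B} = \cO_{\cN/G}$ used in the proof of Proposition \ref{cor-dom-gen} holds in the formal setting here (this follows from Step~2 below, via transport of rational singularities from the unipotent cone of $G_\chi$). Consequently, by the projection formula, each $\cF \in \Coh((G\phi/G)^\wedge_\xi)$ is a retract of $(\pi_{\phi,\xi})_*(\pi_{\phi,\xi})^*\cF$, so it suffices to generate $\Coh((B\phi/B)_\chi^\wedge)$ by $\{(\gamma_{\phi,\xi})^! \cF_\lambda\}_\lambda$ for $\lambda$ in the appropriate cone.

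Next, for each lift $\chi \in A$ of $\xi$, I would apply a twisted Luna/Slodowy slice theorem to identify a formal neighborhood of the $G$-orbit of $\chi\phi$ in $G\phi/G$ with the product of a formal disk (parametrizing the semisimple deformations of $\chi$) and the quotient $(\cU_{G_\chi}/G_\chi)^\wedge_1$, where the twisted centralizer $G_\chi = Z_G(\chi\phi)$ is a connected reductive subgroup of $G$. The root datum of $G_\chi$ is encoded by the relative root datum of \S\ref{subsubsection-rel-root-data}: a maximal torus of $G_\chi$ has character lattice $\XX^\bullet(T)_\phi$, and for a suitably compatible Borel the dominant cone of $G_\chi$ coincides with $\XX^\bullet(T)_\phi^+$. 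Under this identification, $\pi_{\phi,\xi}$ corresponds to the ordinary Grothendieck--Springer resolution of $G_\chi$, and the pullbacks $(\gamma_{\phi,\xi})^!\cF_\lambda$ match, up to the formal-disk factor, the line bundles $\cO(\lambda)$ on $\fu_{G_\chi}/B_{G_\chi}$. Assumption \ref{assump-standard-on-reductive} for $G_\chi$ is verified using Assumption \ref{assump-standard}: the bound $\ell > 3$ keeps the characteristic good for the simple factors of $G_\chi$, the simply-connected derived subgroup condition descends from $G$ to $G_\chi$, and a $G_\chi$-invariant non-degenerate bilinear form is obtained by restricting the corresponding form on $\fg$. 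Proposition \ref{cor-dom-gen} applied to $G_\chi$ then yields both (1) and (2), with $\XX^\bullet(T)_\phi^+$ and $-\XX^\bullet(T)_\phi^+$ matching the dominant and antidominant cones of $G_\chi$ respectively.

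The main obstacle is the twisted Luna/Slodowy slice identification in Step~2. While the untwisted version is classical, the $\phi$-twisted analogue at $\chi\phi$ requires a careful matching of weight data; in particular, one must verify that the relative root datum of \S\ref{subsubsection-rel-root-data}, including the $\mathsf{BC}$-type orbits (which have no untwisted counterpart), genuinely realizes the absolute root datum of $G_\chi$, with the correct identification of the dominant cone and a pinning compatible with the one of $G$. A secondary technical point is to ensure that Assumption \ref{assump-standard-on-reductive} descends from $G$ to $G_\chi$; the non-degeneracy on $\mathrm{Lie}(G_\chi)$ of the restricted bilinear form in particular merits care, as does the simply-connected derived subgroup condition on $G_\chi$ in the presence of $\mathsf{BC}^-$ orbits.
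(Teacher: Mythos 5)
Your high-level idea --- identify $(G\phi/G)^\wedge_\xi$ with the formal completion of $G_x/G_x$ along its unipotent variety for the twisted centralizer $G_x = G^{x\phi}$, then invoke Proposition~\ref{cor-dom-gen} for $G_x$ --- is precisely the paper's strategy (Propositions \ref{prop-adjoint-quot-endoscopy} and \ref{prop-Groth-endoscopy}, which play the role of your ``twisted Luna/Slodowy slice''). But several steps are missing or wrong.

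The most serious gap is the Weyl group combinatorics, which you skip entirely when you write that ``the dominant cone of $G_\chi$ coincides with $\XX^\bullet(T)_\phi^+$.'' This holds only for one particular lift $\chi$ of $\xi$. When $\xi$ has several $W_0$-translates of a lift, the twisted Grothendieck--Springer fiber $(B\phi/B)^\wedge_\xi$ breaks into components indexed by these lifts, and after fixing a single identification of all the $G_{t_\chi}$ with one group $G_x$, the objects $(\gamma_{\phi,\xi})^!\cF_\lambda$ on the various components get twisted by representatives $w\in W_0^{\chi_0}$ of the cosets $W_0/(W_0)_{\chi_0}$. The pushed-forward object $(\pi_{\phi,\xi})_*(\gamma_{\phi,\xi})^!\cF_\lambda$ is therefore a direct sum over $w\in W_0^{\chi_0}$ of pushforwards of $\cO_{\BB T^\phi}(w^{-1}\lambda)$, and the whole proof hinges on the identity $(W_0^{\chi_0})^{-1}\cdot\XX^\bullet(T)_\phi^+ = \XX^\bullet(T)_\phi^{x\mbox{-}+}$ which produces exactly the $G_x$-dominant cone. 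Without this calculation, applying Proposition~\ref{cor-dom-gen} to $G_x$ does not yield the stated generating set.

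Two further issues: first, you assume without argument that $G_\chi$ is connected reductive; Steinberg's theorem requires $G$ simply connected (or at least $\pi_1$ torsion-free), so a preliminary reduction to the simply-connected case is needed but absent from your outline. Second, your claim that ``the simply-connected derived subgroup condition descends from $G$ to $G_\chi$'' is false in general --- twisted centralizers of simply-connected groups need not have simply-connected derived group (this is exactly what goes wrong for $\mathsf{BC}$-type orbits, which you flag but do not resolve). The paper avoids the issue by proving a version of the untwisted statement (Lemma~\ref{cor-phi=id}) under only the ``good characteristic'' hypothesis, passing to the simply-connected cover of $G_x$ inside the proof rather than requiring $G_x$ itself to satisfy Assumption~\ref{assump-standard-on-reductive}. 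Finally, your opening move --- reducing via $\pi_*\pi^*\cong\id$ to generation on $(B\phi/B)^\wedge_\chi$ --- is not the structure of the paper's argument and leaves you needing to show that objects indexed by a single Weyl chamber generate the source, which again runs into the coset combinatorics above.
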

We also have the following generalization of Proposition \ref{prop-tilting-tensor-split-case}. 

\begin{prop}
\label{cor-tilting-tensor-twisted-case}
    Assume that Assumption \ref{assump-standard} holds. Let $V\in\Rep(G)$ be a finite dimensional $G$-representation. 
    \begin{enumerate}
        \item Assume that $V$ admits a good filtration. Let $\lambda\in \XX^\bullet(T)_\phi^+$. Then the object 
        $$V\otimes (\pi_{\phi,\xi})_*(\gamma_{\phi,\xi})^!\cF_\lambda\in \Coh((G\phi/G)^\wedge_\xi)$$ 
        is a retract of an object that admits a filtration with graded pieces given by retracts of $(\pi_{\phi,\xi})_*(\gamma_{\phi,\xi})^! \cF_\mu$ for $\mu\in \XX^\bullet(T)_\phi^+$.
        \item Assume that $V$ admits a Weyl filtration. Let $\lambda\in -\XX^\bullet(T)_\phi^+$. Then the object 
        $$V\otimes (\pi_{\phi,\xi})_*(\gamma_{\phi,\xi})^!\cF_\lambda\in \Coh((G\phi/G)^\wedge_\xi)$$ 
        is a retract of an object that admits a filtration with graded pieces given by retracts of $(\pi_{\phi,\xi})_*(\gamma_{\phi,\xi})^! \cF_\mu$ for $\mu\in -\XX^\bullet(T)_\phi^+$.
    \end{enumerate}
\end{prop}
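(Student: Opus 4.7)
The plan is to reduce Proposition \ref{cor-tilting-tensor-twisted-case} to its untwisted analogue, Proposition \ref{prop-tilting-tensor-split-case}, through a local model for the formal completion $(G\phi/G)^\wedge_\xi$ in terms of untwisted nilpotent quotients of suitable reductive subgroups. For each $\chi\in A(\Lambda)$ lifting $\xi$, I would fix a lift $t_\chi\in T(\Lambda)$ and let $H_\chi$ denote the neutral component of the $\phi$-twisted centralizer of $t_\chi$; under Assumption \ref{assump-standard}, $H_\chi$ is a connected reductive subgroup of $G$ with maximal torus $(T^\phi)^\circ$ whose root system is a subsystem of the relative root system of $G_\phi$ from \S\ref{subsubsection-rel-root-data}, and crucially $H_\chi$ itself satisfies Assumption \ref{assump-standard-on-reductive} so that the exotic and perverse coherent formalism of \S\ref{subsection-exotic-and-perverse-coh} applies to $H_\chi$. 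A Jordan-decomposition/Luna-slice argument for the twisted adjoint action (valid in good characteristic) should then yield a formal identification
$$(G\phi/G)^\wedge_\xi\;\cong\;\bigsqcup_{\chi}\bigl(\cN_{H_\chi}/H_\chi\bigr)^\wedge_0$$
together with a compatible match $(B\phi/B)^\wedge_\chi\cong(\fu_{H_\chi}/B_{H_\chi})^\wedge_0$ intertwining $\pi_{\phi,\xi}$ with the untwisted Springer resolution for $H_\chi$. Under this identification, $(\gamma_{\phi,\xi})^!\cF_\lambda$ pulls back on the $\chi$-component to the line bundle $\cO(\lambda|_{(T^\phi)^\circ})$ on $\fu_{H_\chi}/B_{H_\chi}$, and $(\pi_{\phi,\xi})_*(\gamma_{\phi,\xi})^!\cF_\lambda$ matches (via Proposition \ref{prop-pushforward-standard}) an Andersen--Jantzen sheaf for $H_\chi$ up to a Weyl-translation.

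Granting this local model, the next step is to verify that a good filtration on $V\in\Rep(G)$ restricts to a good filtration on $V|_{H_\chi}\in\Rep(H_\chi)$; in the good-characteristic regime of Assumption \ref{assump-standard} this is a Donkin--Mathieu-type statement for pseudo-Levi inclusions. Applying Proposition \ref{prop-tilting-tensor-split-case}(2) to $H_\chi$ would then produce a $\bar\nabla^{H_\chi}$-filtration of $V|_{H_\chi}\otimes\bar\nabla^{H_\chi}_{\lambda|_{(T^\phi)^\circ}}$ with indices dominant for $H_\chi$, and transporting this filtration back through the local model yields a filtration of $V\otimes(\pi_{\phi,\xi})_*(\gamma_{\phi,\xi})^!\cF_\lambda$ whose graded pieces are of the form $(\pi_{\phi,\xi})_*(\gamma_{\phi,\xi})^!\cF_{\tilde\mu}$ with $\tilde\mu\in\XX^\bullet(T)_\phi$ dominant only for $H_\chi$. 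Statement (2) of the proposition would be handled in parallel using Proposition \ref{prop-tilting-tensor-split-case}(1) with the roles of standard and costandard objects swapped.

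The final step is to replace the $H_\chi$-dominant indices $\tilde\mu$ by weights in the smaller cone $\XX^\bullet(T)_\phi^+$ required by the statement. This should be achieved by combining the $W_0$-symmetry permuting the $\chi$-components of the decomposition with Proposition \ref{prop-generators-GS}(1), which already displays $\{(\pi_{\phi,\xi})_*(\gamma_{\phi,\xi})^!\cF_\mu\}_{\mu\in\XX^\bullet(T)_\phi^+}$ as cone-and-retract generators of $\Coh((G\phi/G)^\wedge_\xi)$; this projection step is precisely what forces the \emph{retract of} qualifiers in the statement. The hardest step, I expect, is the restriction of good filtrations from $G$ to the pseudo-Levi-type subgroup $H_\chi$: this is not formal in positive characteristic and will require the full strength of Assumption \ref{assump-standard}, in particular $\ell>3$ together with goodness for each simple factor of $G_\ad$, possibly combined with a case analysis along the type $\mathsf{A}/\mathsf{BC}$ distinction on $\phi$-orbits from \S\ref{subsubsection-rel-root-data}.
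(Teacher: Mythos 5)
Your overall strategy---reduce the twisted problem to the untwisted one via the centralizer $G_x=G^{x\phi}$ of a (lifted) semisimple point---is indeed the paper's strategy, after the preliminary reduction to $G$ almost simple and simply-connected (which your proposal skips, but that is minor). However, the key identification you propose has the wrong shape. You assert a disjoint-union decomposition $(G\phi/G)^\wedge_\xi\cong\bigsqcup_\chi(\cN_{H_\chi}/H_\chi)^\wedge_0$ indexed by lifts $\chi\in A(\Lambda)$ of $\xi$. This is false: the completed twisted adjoint quotient is a \emph{single} formal stack, and Proposition \ref{prop-adjoint-quot-endoscopy} identifies it with $\cU_{G_x}^\wedge/G_x$ for \emph{one} choice of lift $x\in T(\Lambda)$ (the various $G_{t_\chi}$ are inner twists of one another by elements of $W_0$). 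The disjoint union over $\chi$ appears only upstairs on the Springer/Borel side, in $(B\phi/B)^\wedge_\xi=\bigsqcup_\chi(B\phi/B)^\wedge_\chi$. Getting this right is not cosmetic: the whole point is that after identification with $G_x$, the object $(\pi_{\phi,\xi})_*(\gamma_{\phi,\xi})^!\cF_\lambda$ becomes a direct sum $\bigoplus_{w\in W_0^{\chi_0}}(\bar\pi_x)_*(\bar\gamma_x)^!\cO_{\BB T^\phi}(w^{-1}\lambda)$ whose indices sweep through exactly the set $\XX^\bullet(T)_\phi^{x\text{-}+}\cap W_0\lambda$ of $G_x$-dominant Weyl translates of $\lambda$. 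Once one has this, the untwisted result (Lemma \ref{cor-phi=id}) applies term by term and the ``retract'' qualifier in the statement is automatic---each $G_x$-dominant weight $\nu$ is one summand in the direct sum defining the $\cF_\mu$-generator for the unique $G$-dominant $\mu\in W_0\nu$. Your separate ``projection step'' invoking Proposition \ref{prop-generators-GS}(1) as a generation statement is not needed and does not by itself produce such a filtration.

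The second gap is the tool you propose for restricting good filtrations from $G$ to $G_x$. You correctly flag this as the delicate step, but you reach for a Donkin--Mathieu-type statement for pseudo-Levi inclusions, which is not what the paper uses and is not obviously available in the generality needed. The paper instead invokes \cite[Proposition VIII.5.15]{FS}: after spreading out so that $x$ is defined over a finite field, $G_x=G^{x\phi}$ is the fixed-point subgroup of a prime-to-$\ell$ order automorphism (here is where Assumption \ref{assump-standard}, which forces the order of $\phi$ on each almost simple factor to be invertible in $\Lambda$, does the work), and that proposition gives directly that $V|_{G_x}$ inherits a good filtration. This is a cleaner and more robust mechanism than a case analysis along the $\mathsf{A}/\mathsf{BC}$ dichotomy. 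Absent this ingredient, your argument has a genuine hole at precisely the step you identified as hardest.
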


\begin{rmk}\label{rmk-unipotent-vs-nilpotent}
 In the above two propositions, the restriction of the characteristic on $\Lambda$ as imposed in Assumption \ref{assump-standard} can sometimes be relaxed. For instance, as will be clear from the proof presented below, these results hold without any assumptions when $G=(\GL_n)^d$ with the $\phi$-action defined by cyclic permutations of the direct factors. We maintain Assumption \ref{assump-standard} to ensure our proof is uniform across all groups.
\end{rmk}

The rest of this section is devoted to the proof of the above two results.

\subsubsection{Reduction to the simply-connected case}

We will first show that it is enough to prove Proposition \ref{prop-generators-GS} and \ref{cor-tilting-tensor-twisted-case} when $G$ is almost simple and simply-connected equipped with an automorphism $\phi$ whose order is invertible in $\Lambda$.




\begin{lemma}\label{lemma-gen-GS-isogeny}
    Suppose $G'\to G$ is a central isogeny with compatible $\phi$-actions. Then the image of the functor 
    $$(c_G)_*\colon \Coh(G'\phi/G')\to \Coh(G\phi/G)$$
    generates the target under retracts. Let $\xi \in (A\git W_0)(\Lambda)$. 
    Then the image of the functor
    $$(c_G)_*\colon \bigoplus_{\xi'}\Coh((G'\phi/G')_{\xi'}^{\wedge})\to \Coh((G\phi/G)_{\xi}^\wedge)$$
    generates the target under retracts, where $\xi'$ runs through the finite set of points of $A'\git W_0$ that map to $\xi$.
\end{lemma}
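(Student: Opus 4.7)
\emph{My plan.} I would deduce both parts of the lemma from the projection formula, after producing $\cO_{G\phi/G}$ as a direct summand of $(c_G)_*\cO_{G'\phi/G'}$.

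First I would observe that $c_G: G'\phi/G' \to G\phi/G$ is finite (since the central isogeny $G' \to G$ is finite and faithfully flat), so $(c_G)_*$ preserves coherence and satisfies the projection formula
$$(c_G)_*(c_G)^*\cG \simeq \cG \otimes_{\cO_{G\phi/G}} (c_G)_*\cO_{G'\phi/G'}$$
for every $\cG \in \Coh(G\phi/G)$. Hence, if $\cO_{G\phi/G}$ is a retract of $(c_G)_*\cO_{G'\phi/G'}$, then every $\cG$ is a retract of $(c_G)_*(c_G)^*\cG$, which is in the essential image of $(c_G)_*$.

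To produce this summand I would factor $c_G$ as
$$G'\phi/G' \xrightarrow{a} G\phi/G' \xrightarrow{b} G\phi/G,$$
where $b$ is induced by $G' \twoheadrightarrow G$ on the acting group. Since $Z := \ker(G'\to G)$ lies in the kernel of the $G'$-action on $G\phi$ (which factors through $G$), the map $b$ realizes $G\phi/G'$ as a trivial $BZ$-gerbe over $G\phi/G$, so that $b_*\cO_{G\phi/G'} \simeq \cO_{G\phi/G}$ (via $Z$-invariants of the trivial representation). The map $a$ is induced by the $Z$-torsor $G'\phi \to G\phi$ together with the residual $\phi$-twisted $Z$-action, which factors through the quotient $Z/Z^\phi \cong (1-\phi)Z$ acting by right translation. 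Here one extracts $\cO_{G\phi/G'}$ from $a_*\cO_{G'\phi/G'}$ via the character decomposition of the Cartier dual of $Z$, separating off the trivial-character component. Composing, $\cO_{G\phi/G}$ appears as a summand of $(c_G)_*\cO_{G'\phi/G'}$, concluding the first part.

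For the formal-completion statement in (2), I would use base change. The morphism of coarse moduli $A'\git W_0' \to A\git W_0$ is finite, and the preimage of $(A\git W_0)^\wedge_\xi$ is the disjoint union $\bigsqcup_{\xi'}(A'\git W_0')^\wedge_{\xi'}$ indexed by the fiber over $\xi$. Pulling back $c_G$ along $(A\git W_0)^\wedge_\xi \hookrightarrow A\git W_0$ yields the morphism $\bigsqcup_{\xi'}(G'\phi/G')^\wedge_{\xi'} \to (G\phi/G)^\wedge_\xi$, so that (1) applied to this base change (componentwise on the source) gives (2).

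\emph{Main obstacle.} The delicate step is extracting $\cO_{G\phi/G'}$ as a genuine summand of $a_*\cO_{G'\phi/G'}$ when $|Z|$ fails to be invertible in $\Lambda$—this can occur in type $A$ even under Assumption \ref{assump-standard}. In favorable characteristics the trace-splitting for the finite cover is enough; otherwise one must exploit the fine structure of the $\phi$-twist (the interplay between the $Z^\phi$-gerbe and the $(1-\phi)Z$-torsor pieces) to extract the trivial summand, or, if necessary, reinterpret ``generates under retracts'' in the thick sense and invoke conservativity of $(c_G)^*$ (which follows from faithful flatness of $c_G$).
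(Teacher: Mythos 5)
Your plan — factoring $c_G$ through $G\phi/G'$, building $\cO_{G\phi/G}$ as a summand of $(c_G)_*\cO_{G'\phi/G'}$, and invoking the projection formula to conclude — is essentially the route the paper takes: the paper shows that $a_*$ generates under retracts via the character decomposition $\cO(G')=\bigoplus_{\zeta\in\widehat{D}}\cO(G')_\zeta$ with $\cO(G')_0=\cO(G)$, and then that $b_*$ is essentially surjective because $b_*b^*\cF=\cF\otimes_\Lambda R\Gamma(\BB D,\Lambda)=\cF$; composing gives the claim, and restricting to formal completions gives part (2). So the skeleton of your argument is correct.

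However, your concluding \emph{Main obstacle} paragraph points to a difficulty that does not exist. You worry that the character-decomposition splitting may fail when $|Z|$ is not invertible in $\Lambda$, and propose ``trace-splitting,'' exploiting ``the fine structure of the $\phi$-twist ($Z^\phi$-gerbe and $(1-\phi)Z$-torsor pieces),'' or a weaker thick-subcategory reading. None of this is needed: the kernel $D=\ker(G'\to G)$ of a central isogeny between reductive groups is a finite \emph{multiplicative} (diagonalizable) group scheme, and diagonalizable group schemes are linearly reductive over any base. Consequently the isotypic decomposition of any $D$-module (in particular of $\cO(G')$ under left translation by the central $D$, which commutes with the $G'\times G'$-action) is always a direct sum and produces the trivial component $\cO(G)$ as a genuine summand; likewise $R\Gamma(\BB D,\Lambda)=\Lambda$ for the gerbe step. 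Invertibility of $|D|$ plays no role, and that is precisely why the lemma holds without restriction on $\Lambda$. Also, the invocation of $Z/Z^\phi$ and ``residual $\phi$-twisted $Z$-action'' is a red herring: the decomposition used is for left translation by $D$, not for any $\phi$-twisted conjugation of $D$, so no Lang/fixed-point analysis is required. I would cut that last paragraph entirely and replace it with a one-line appeal to linear reductivity of multiplicative group schemes.
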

\begin{proof}
    Denote $D=\ker(G'\to G)$. We can write $c_G$ as a composition
    $$G'\phi/G'\xrightarrow{a} G\phi/G'\xrightarrow{b} G\phi/G.$$
    By projection formula, we have 
    $$a_*a^*\cF\simeq \cF\otimes_{\cO(G)}\cO(G')$$
    for $\cF\in \Coh(G\phi/G)$.  There is a $G'\times G'$-equivariant direct sum decomposition
    $$\cO(G')=\bigoplus_{\zeta}\cO(G')_\zeta,$$
    where $\zeta$ runs through characters of $D$, and $\cO(G')_\zeta$ is the component where the left tranlation of $D$ acts by $\zeta$ (or equivalently, the right translation of $D$ acts by $\zeta^{-1}$). Note that $\cO(G')_0=\cO(G)$, and the above decomposition if $\cO(G)$-linear. Therefore $\cF$ is a retract of $a_*a^*\cF$. If $\cF\in \Coh((G\phi/G')_{\xi}^\wedge)$, then $a^*\cF\in \bigoplus_{\xi'}\Coh((G'\phi/G')_{\xi'}^{\wedge})$. Thus $a_*$ generates the target under retracts.
    The morphism $b$ fits into a Cartesian diagram
    $$\begin{tikzcd}
        G\phi/G' \ar[r,"b"]\ar[d] & G\phi/G \ar[d] \\
        \BB G' \ar[r] & \BB G.
    \end{tikzcd}$$
    Therefore for $\cF\in \Coh(G\phi/G)$, we have $b_*b^*\cF=\cF\otimes b_*\cO_{G\phi/G'}=\cF\otimes_\Lambda R\Gamma(\BB D,\Lambda)$. As the group $D$ is multiplicative, we have $R\Gamma(\BB D,\Lambda)=\Lambda$. If $\cF\in \Coh((G\phi/G)_{\xi}^\wedge)$, then $b^*\cF\in \Coh((G\phi/G')_{\xi}^\wedge)$. Hence $b_*$ is essentially surjective.
\end{proof}

\begin{lemma}\label{prop-product-adj-quotient}
    Let $G_i$ be a connected reductive groups with an automorphism $\phi_i$, for $i=1,2$. Denote $G=G_1\times G_2$ and $\phi=\phi_1\times \phi_2$. Then the natural functor
    $$\Coh(G_1\phi_1/G_1)\otimes \Coh(G_2\phi_2/G_2)\to \Coh(G\phi/G)$$
    is an equivalence of categories. Moreover, given $\xi_i\in (A_i\git W_{i,0})(\Lambda)$, and $\xi=(\xi_1,\xi_2)\in (A\git W_{0})(\Lambda)$, then the above equivalence restricts to an equivalence
    $$\Coh((G_1\phi_1/G_1)^\wedge_{\xi_1})\otimes \Coh((G_2\phi_2/G_2)^\wedge_{\xi_2})\to \Coh((G\phi/G)^\wedge_\xi).$$
\end{lemma}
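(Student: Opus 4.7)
The plan is to reduce both statements to a product decomposition of the underlying stacks, followed by a Künneth-type equivalence for $\Coh$.

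First, I would observe that as a $G$-scheme, $G\phi = G_1\phi_1 \times G_2\phi_2$ with the $\phi$-twisted conjugation action factoring componentwise: for $g=(g_1,g_2)$ and $x=(x_1,x_2)$,
$$g\cdot x \;=\; (g_1 x_1 \phi_1(g_1)^{-1},\; g_2 x_2 \phi_2(g_2)^{-1}).$$
Hence there is a canonical isomorphism of Artin stacks
$$G\phi/G \;\cong\; (G_1\phi_1/G_1) \times_\Lambda (G_2\phi_2/G_2).$$

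Next, I would invoke the Künneth theorem for coherent sheaves. Each $X_i \coloneqq G_i\phi_i/G_i$ is a Noetherian algebraic stack of finite type over $\Lambda$ with affine diagonal (since $G_i\phi_i$ is affine and $G_i$ is an affine algebraic group). For such stacks the external product
$$\Coh(X_1) \otimes_\Lambda \Coh(X_2) \;\longrightarrow\; \Coh(X_1 \times_\Lambda X_2)$$
is an equivalence. This can be verified either directly by presenting each $X_i$ as $[\Spec R_i / G_i]$ and using the elementary Künneth formula for equivariant coherent modules, or abstractly by recalling that $\Ind\Coh$ of a Noetherian stack with affine diagonal is dualizable over $\Lambda$ and passing to compact objects. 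Combined with the stack decomposition above, this gives the first assertion.

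For the second assertion, the twisted Chevalley isomorphism together with the product structure yield $A\git W_0 \cong (A_1\git W_{1,0}) \times_\Lambda (A_2\git W_{2,0})$. Formal completion at $\xi=(\xi_1,\xi_2)$ is compatible with this product of Noetherian affine schemes (the completed tensor product of the local rings at $\xi_i$ represents the product of the two formal spectra), giving
$$(A\git W_0)^\wedge_\xi \;\cong\; (A_1\git W_{1,0})^\wedge_{\xi_1} \times_\Lambda (A_2\git W_{2,0})^\wedge_{\xi_2}.$$
Base changing the product decomposition of $G\phi/G$ along this isomorphism then produces
$$(G\phi/G)^\wedge_\xi \;\cong\; (G_1\phi_1/G_1)^\wedge_{\xi_1} \times_\Lambda (G_2\phi_2/G_2)^\wedge_{\xi_2},$$
and one applies the Künneth equivalence to this product to conclude.

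The main technical point is ensuring the Künneth equivalence for $\Coh$ applies in the formal-completion step, where the stacks are no longer of finite type over $\Lambda$. This is nevertheless handled by the same machinery: the formal completions $(G_i\phi_i/G_i)^\wedge_{\xi_i}$ remain Noetherian with affine diagonal (being formal thickenings of $X_i$), and the Künneth theorem for $\Ind\Coh$ of such stacks passes to compact objects to yield the required equivalence on $\Coh$.
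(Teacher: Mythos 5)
Your plan hinges on a general Künneth theorem for $\Coh$ of Noetherian algebraic stacks with affine diagonal:
\[
\Coh(X_1)\otimes_\Lambda\Coh(X_2)\;\xrightarrow{\ \sim\ }\;\Coh(X_1\times_\Lambda X_2).
\]
That statement is not an off-the-shelf theorem in the generality you need. In characteristic zero it follows from Drinfeld--Gaitsgory's self-duality of $\Ind\Coh$ on QCA stacks, but the coefficient ring in this paper is $\Lambda=\overline{\FF}_\ell$ as well as $\overline{\QQ}_\ell$, and the $\Ind\Coh$-duality machinery you invoke is developed under the char-$0$ hypothesis. The ``elementary Künneth formula for equivariant coherent modules'' does not exist in the form you need either: essential surjectivity (that every coherent complex on $[X_1\times X_2/(G_1\times G_2)]$ is a retract of an iterated extension of external products) is precisely the nontrivial content of the lemma; in the non-semisimple representation-theoretic setting over $\overline{\FF}_\ell$ one cannot hand the problem off to a resolution argument of this type. (The fully faithful direction is comparatively harmless, and the paper simply cites a general result for it.)

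The paper's proof circumvents the lack of an abstract Künneth by using the concrete generation theorem of Fargues--Scholze: when $\pi_1(G)_{\mathrm{tor}}=1$, the category $\Coh(G\phi/G)$ is generated under cones and retracts by the image of $\Coh(\BB G)$, and Künneth for $\Coh(\BB G)\simeq\Coh(\BB G_1)\otimes\Coh(\BB G_2)$ is straightforward (generation by irreducible representations, which factor as external tensors). For general $G$ they pass to $G'=G_{\mathrm{sc}}\times Z(G)^\circ$ and use Lemma~\ref{lemma-gen-GS-isogeny} to show that pushforward along the central isogeny generates the target under retracts. The very fact that the argument is structured this way --- rather than as a two-line appeal to Künneth --- is because the abstract statement is not available in the needed generality.

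Your treatment of the formal-completion part inherits the same issue, and moreover applies the Künneth claim to formal thickenings which are not of finite type over $\Lambda$; the paper handles this differently, via the support-condition lemma (Lemma~\ref{lemma-tensor-supp}), which reduces the question to tensor products of Koszul complexes and does not require any Künneth theorem for formal stacks. So there are two concrete things to fix: (i) either prove the Künneth equivalence for these specific quotient stacks over $\overline{\FF}_\ell$ (which is essentially what the FS generation result plus the isogeny reduction accomplish), or cite a theorem that applies in positive characteristic; and (ii) replace the formal-completion step with an argument about supported subcategories, as the tensor product of categories is not compatible with formal completion in the naive way.
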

\begin{proof}
    The functor is fully faithful by \cite[Proposition 9.31]{Tame}. We need to show essential surjectiveness. If $\pi_1(G)_\mathrm{tor}=1$, then $\Coh(G\phi/G)$ is generated by the image of $\Coh(\BB G)$ by \cite[Proposition VIII.5.12]{FS}. Then the claim is clear as $\Coh(\BB G)\simeq\Coh(\BB G_1)\otimes\Coh(\BB G_2)$. In general, denote $G_i'=(G_i)_{\mathrm{sc}}\times Z(G_i)^\circ$, where $(G_i)_{\mathrm{sc}}$ is the simply-connected cover of the derived subgroup of $G_i$. Also denote $G'=G_1'\times G_2'$. Then the diagram
    $$\begin{tikzcd}
        \Coh(G_1'\phi_1/G_1')\otimes\Coh(G_2'\phi_2/G_2') \ar[r]\ar[d,"(c_{G_1})_*\otimes(c_{G_2})_*"swap] & \Coh(G'\phi/G') \ar[d,"(c_G)_*"] \\
        \Coh(G_1\phi_1/G_1)\otimes\Coh(G_2\phi_2/G_2) \ar[r] & \Coh(G\phi/G)
    \end{tikzcd}$$
    commutes. The upper arrow is an equivalence of categories as $\pi_1(G')_\mathrm{\tor}=1$. The vertical arrows generate the targets under retracts by Lemma \ref{lemma-gen-GS-isogeny}. Therefore the lower arrow is essentially surjective. The last statement follows from Lemma \ref{lemma-tensor-supp} below.
\end{proof}

\begin{lemma}\label{lemma-reduction-to-sc}
    Suppose $G'\to G$ is a central isogeny with compatible $\phi$-actions. If Proposition \ref{prop-generators-GS} holds for $G'$, then it holds for $G$. Similarly, if Proposition \ref{cor-tilting-tensor-twisted-case} holds for $G'$, then it holds for $G$.
\end{lemma}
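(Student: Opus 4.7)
My plan is to reduce each statement for $G$ to the corresponding statement for $G'$ by pushing forward along $c_G\colon G'\phi/G'\to G\phi/G$, using Lemma \ref{lemma-gen-GS-isogeny} together with a base change identity relating the Grothendieck--Springer resolutions for $G'$ and $G$.

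The first step is to establish the base change. Since $G'\to G$ is a central isogeny with compatible $\phi$-actions, it restricts to central isogenies $B'\to B$ and $T'\to T$, inducing compatible morphisms $\widetilde{c}_B$ and $c_T$ on the formal completions. Summing over $\chi'\in A'(\Lambda)$ mapping to $\xi'$, and over $\chi\in A(\Lambda)$ mapping to $\xi$, I expect a commutative diagram
\[
\begin{tikzcd}[column sep=small]
\bigsqcup_{\chi'} (A')^{\wedge}_{\chi'}\times\BB(T')^\phi & \bigsqcup_{\chi'} (B'\phi/B')^{\wedge}_{\chi'} \ar[l,"{\gamma'_{\phi,\xi'}}"'] \ar[r,"{\pi'_{\phi,\xi'}}"] \ar[d,"{\widetilde{c}_B}"] & (G'\phi/G')^{\wedge}_{\xi'} \ar[d,"{c_G}"] \\
\bigsqcup_{\chi} A^{\wedge}_\chi\times\BB T^\phi \ar[u,"{c_T}"] & \bigsqcup_{\chi} (B\phi/B)^{\wedge}_\chi \ar[l,"{\gamma_{\phi,\xi}}"] \ar[r,"{\pi_{\phi,\xi}}"'] & (G\phi/G)^{\wedge}_\xi
\end{tikzcd}
\]
in which the right-hand square is Cartesian up to the gerbe $\BB D$ for $D\coloneqq\ker(G'\to G)$, allowing base change for $(c_G)_*$ as in the proof of Lemma \ref{lemma-gen-GS-isogeny}, while the left-hand square is Cartesian with quasi-smooth vertical arrows of trivial relative dualizing sheaf, allowing base change for $(\gamma_{\phi,\xi})^!$. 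Combined, these yield $(c_G)_*(\pi'_{\phi,\xi'})_*(\gamma'_{\phi,\xi'})^!\simeq (\pi_{\phi,\xi})_*(\gamma_{\phi,\xi})^!(c_T)_*$.

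Next, I will analyze $(c_T)_*\cF'_{\lambda'}$ for $\lambda'\in\XX^\bullet(T')_\phi$. The pushforward along $\BB(T')^\phi\to\BB T^\phi$ decomposes according to characters of the kernel, so $(c_T)_*\cF'_{\lambda'}$ vanishes unless $\lambda'$ is the image of some $\lambda\in \XX^\bullet(T)_\phi$, in which case it is a finite direct sum of $\cF_\lambda$'s indexed by the lifts and by the $\chi'$ above each $\chi$. Because the relative root data for $G$ and $G'$ have the same (co)roots up to the isogeny $A'\to A$, the dominant cone $\XX^\bullet(T)_\phi^+$ equals $\XX^\bullet(T)_\phi\cap\XX^\bullet(T')_\phi^+$ inside $\XX^\bullet(T')_\phi$. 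Consequently, $(c_G)_*$ sends every generator on the $G'$-side labeled by $\lambda'\in\pm\XX^\bullet(T')_\phi^+$ to a finite direct sum of generators on the $G$-side of the same dominance type. Proposition \ref{prop-generators-GS} for $G$ then follows from Lemma \ref{lemma-gen-GS-isogeny}, since $(c_G)_*$ preserves cones and retracts.

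For Proposition \ref{cor-tilting-tensor-twisted-case}, I will combine the above with the projection formula $(c_G)_*(V'\otimes-)\simeq V\otimes(c_G)_*(-)$, where $V'$ denotes the pullback of the $G$-representation $V$ along $G'\to G$. The key observation is that good and Weyl filtrations are preserved under inflation along a central isogeny: for $\mu\in \XX^\bullet(T)^+\subseteq \XX^\bullet(T')^+$, the $G$-modules $H^0(\mu)$ and $V(\mu)$ coincide with their $G'$-counterparts (since $D$ acts trivially), and a $G$-module has a good (resp.\ Weyl) filtration if and only if it does as a $G'$-module. Given a $G$-generator $(\pi_{\phi,\xi})_*(\gamma_{\phi,\xi})^!\cF_\lambda$, I can realize it up to retract as $(c_G)_*$ of a $G'$-generator by the previous paragraph; applying $V\otimes(-)$ and projection formula reduces to the filtration statement for $V'$ on the $G'$-side, which holds by hypothesis. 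Pushing the resulting filtration forward by $(c_G)_*$ and invoking the compatibility from the previous paragraph produces the desired filtration by retracts of $G$-generators.

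The main obstacle will be verifying the base change along $c_G$ precisely, since $c_G$ is a $D$-gerbe rather than a morphism of schemes; one needs either to invoke a general flat base change for such gerbe morphisms or to repeat the explicit character decomposition $\cO(G')=\bigoplus_\zeta \cO(G')_\zeta$ used in the proof of Lemma \ref{lemma-gen-GS-isogeny} in order to make the identifications above canonical. The remaining bookkeeping — matching dominant chambers under the isogeny and tracking good/Weyl filtrations under inflation — should be routine.
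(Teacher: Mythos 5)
Your proposal matches the paper's proof in all essential respects: you set up the same comparison diagram between the $G'$- and $G$-side Grothendieck--Springer correspondences, use base change to identify $(c_G)_*(\pi'_{\phi,\xi'})_*(\gamma'_{\phi,\xi'})^!$ with $(\pi_{\phi,\xi})_*(\gamma_{\phi,\xi})^!(c_T)_*$, analyze $(c_T)_*\cF'_{\lambda'}$ as a finite direct sum of $\cF_\lambda$'s indexed by lifts, invoke Lemma \ref{lemma-gen-GS-isogeny} for generation, and deploy the projection formula for the tensor-filtration statement. The overall logic is sound.

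A few local inaccuracies are worth fixing though none of them breaks the argument. First, in your TikZ diagram the arrow labeled $c_T$ points from the $G$-row up to the $G'$-row; it should go downward since $T'\to T$ induces $T'\phi/T'\to T\phi/T$ (which is what you in fact use when you then compute $(c_T)_*\cF'_{\lambda'}$). Second, your worry that the right-hand square is ``Cartesian up to $\BB D$'' and that the main obstacle lies in base change along $c_G$ is misplaced: both squares are honestly Cartesian (the key observation, recorded in the paper, is $B'\simeq T'\times_TB=G'\times_GB$), but even that is not what the right square is needed for --- its commutativity alone gives $(c_G)_*(\pi')_* \simeq (\pi)_*(c_B)_*$, and the genuine base change happens on the \emph{left} square, comparing $(c_B)_*(\gamma')^!$ with $(\gamma)^!(c_T)_*$. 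Third, the quasi-smooth arrows with trivial dualizing complex are the horizontal $\gamma$'s, not the vertical $c_T$, $c_B$; what makes the base change painless is that $\gamma^!\simeq\gamma^*$ together with the finiteness of $c_T$. Finally, your statement about dominant cones, written as an intersection inside $\XX^\bullet(T')_\phi$, presupposes that $\XX^\bullet(T)_\phi\to\XX^\bullet(T')_\phi$ is injective; this map has finite kernel rather than being injective, so the correct formulation is that $\lambda\in\XX^\bullet(T)_\phi$ is (anti-)dominant if and only if its image $\lambda'$ is, which is what you actually need.

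One small clarification on the filtration part: to pass the good/Weyl filtration hypothesis from $V$ over $G$ to its restriction $V'$ over $G'$, you should state explicitly that the kernel $D$ acts trivially, so $V$ being a sum of good/Weyl modules for $G$ amounts to the same over $G'$; this is used tacitly and deserves a line. With those adjustments, your proof is the paper's.
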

    \begin{proof}
Let $\xi$ be a point in $(A\git W_0)(\Lambda)$. For $\lambda\in \XX^\bullet(T)_\phi$, we write $\cF_{\xi,\lambda}$ instead of $\cF_{\lambda}$ in the sequel to emphasize that it is an object associated to $G$. We have similarly defined object $\cF_{\xi',\lambda'}$ for $\xi'\in A'\git W_0$ and $\lambda'\in \XX^\bullet(T')_\phi$.

    Consider the commutative diagram
    $$\begin{tikzcd}
        T'\phi/T' \ar[d,"c_T"] & B'\phi/B' \ar[l,"\gamma'_\phi"swap]\ar[r,"\pi'_\phi"]\ar[d,"c_B"]\ar[ld,phantom,"\square",very near start]\ar[rd,phantom,"\square",very near start] & G'\phi/G' \ar[d,"c_G"] \\
        T\phi/T & B\phi/B \ar[l,"\gamma_\phi"swap]\ar[r,"\pi_\phi"] & G\phi/G.
    \end{tikzcd}$$
    Note that both squares are Cartesian since $B'=G'\times_GB=T'\times_TB$m $G'/B'=G/B$ and $T'/B'\simeq T/B$. 
    
%

    Note that the natural map  $\XX^\bullet(T)_\phi\to \XX^\bullet(T')_\phi$ with finite kernel. We have
    $$(c_T)_*(\iota_{\chi'})_*\cO_{\BB T'^\phi}(\lambda')=\bigoplus_{\lambda} (\iota_\chi)_*\cO_{\BB T^\phi}(\lambda),$$
    where $\lambda$ runs through elements of $\XX^\bullet(T)_\phi$ mapping to $\lambda'$, and $\chi$ is the image of $\chi'$. Therefore for $\xi'$ that maps to $\xi$, and for $\lambda\in \XX_\bullet(T)$, the object $(c_T)_*\cF_{\xi',\lambda'}$ is a retract of finite direct sums of $\cF_{\xi,\lambda}$ for $\lambda\in \XX^\bullet(T)_\phi$ mapping to $\lambda'$. In particular, if $\lambda'\in \XX^\bullet(T')_\phi^+$ is dominant, then $(c_T)_*\cF_{\xi',\lambda'}$ is a retract of finite direct sums of $\cF_{\xi,\lambda}$ for $\lambda\in \XX^\bullet(T)_\phi^+$. Similar for $\lambda'\in -\XX^\bullet(T)_\phi^+$ anti-dominant. On the other hand, for $\xi$ (resp. $\lambda$) mapping to $\xi'$ (resp. $\lambda'$), the object $\cF_{\xi,\lambda}$ is a retract of $(c_T)_*\cF_{\xi',\lambda'}$.

    By Lemma \ref{lemma-gen-GS-isogeny}, the functor
    $$(c_G)_*\colon \bigoplus_{\xi'}\Coh((G'\phi/G')_{\xi'}^\wedge)\to \Coh((G\phi/G)^\wedge_\xi)$$
    generates the target under retracts. 
    If Proposition \ref{prop-generators-GS} holds for $G'$, we know that objects of the form
    $$(\pi_{\phi,\xi'}')_*(\gamma'_{\phi,\xi'})^!\cF_{\xi',\lambda'}, \quad \lambda'\in \XX^\bullet(T')_\phi^+$$
    generate $\Coh((G'\phi/G')_{\xi'}^\wedge)$ under cones and retracts. Therefore objects of the form
    $$(c_G)_*(\pi_{\phi,\xi'}')_*(\gamma'_{\phi,\xi'})^!\cF_{\xi',\lambda'}\simeq (\pi_{\phi,\xi})_*(\gamma_{\phi,\xi})^!(c_T)_*\cF_{\xi',\lambda'},\quad \lambda'\in \XX^\bullet(T)^+_\phi,\xi'\in A'\git W_0\text{ mapping to }\xi$$
    generate $\Coh((G\phi/G)_\xi^\wedge)$. By the above discussion, the objects $(c_T)_*\cF_{\xi',\lambda'}$ are retracts of finite direct sums of $\cF_{\xi,\lambda}$ for $\lambda\in \XX^\bullet(T)_\phi^+$. Thus $(\pi_{\phi,\xi})_*(\gamma_{\phi,\xi})^!\cF_{\xi,\lambda}, \lambda\in \XX^\bullet(T)^+_\phi$ generates $\Coh((G\phi/G)_\xi^\wedge)$. Similar for the anti-dominant case.

Next suppose that Proposition \ref{cor-tilting-tensor-twisted-case} holds for $G'$. We only deal with (1) as (2) can be treated similarly. Let $V$ be a $G$-representation with good filtration. Then for $\lambda'\in \XX^\bullet(T')^+_\phi$, the object
    $$V|_{G'}\otimes (\pi'_{\phi,\xi'})_*(\gamma_{\phi,\xi'}')^!\cF_{\xi',\lambda'}$$
    is a retract of an object that admits a filtration by retracts of $(\pi'_{\phi,\xi'})_*(\gamma_{\phi,\xi'}')^!\cF_{\xi',\mu'}$ for $\mu'\in \XX^\bullet(T')^+_\phi$. Therefore
    $$(c_G)_*(V|_{G'}\otimes (\pi'_{\phi,\xi'})_*(\gamma_{\phi,\xi'}')^!\cF_{\xi',\lambda'})\simeq V\otimes (\pi_{\phi,\xi})_*(\gamma_{\phi,\xi})^! (c_T)_*\cF_{\xi',\lambda'}$$
    is a retract of an object that admits a filtration by retracts of $(\pi_{\phi,\xi})_*(\gamma_{\phi,\xi})^! (c_T)_*\cF_{\xi',\mu'}$ for $\mu'\in \XX^\bullet(T')^+_\phi$. The claim for $G$ follows as $(c_T)_*\cF_{\xi',\lambda'}$ is a  retract of finite direct sums of $\cF_{\xi,\lambda}$ for $\lambda\in \XX^\bullet(T)_\phi^+$ mapping to $\lambda'$, and any $\cF_{\xi,\lambda},\lambda\in \XX^\bullet(T)_\phi^+$ appears as a direct summand of $(c_T)_*\cF_{\xi',\lambda'}$ for some $\lambda'\in \XX^\bullet(T')^+_\phi$. The claim for $G$ follows.
\end{proof}

We apply Lemma \ref{lemma-reduction-to-sc} to the central isogeny $G'=G_{\mathrm{sc}}\times Z(G)^\circ\to G$, where $G_\mathrm{sc}$ is the simply-connected cover of the derived subgroup of $G$. By Lemma \ref{lemma-reduction-to-sc}, it suffices to prove Proposition \ref{prop-generators-GS} and \ref{cor-tilting-tensor-twisted-case} for $G'$. Then by Lemma \ref{prop-product-adj-quotient}, it suffices to prove Proposition \ref{prop-generators-GS} and \ref{cor-tilting-tensor-twisted-case} for $G_\mathrm{sc}$. 

    
Now assume that $G$ is semisimple and simply-connected. Then it splits into a product
$$G=\prod_i (G_i)^{r_i}$$
with each $G_i$ almost simple and simply connected and $r_i\geq 1$, such that $\phi$ action on $(G_i)^{r_i}$ is given by
$$\phi(x_1,\dots,x_{r_i})=(\phi_i(x_{r_i}),x_1,\dots,x_{r_i-1})$$
for some $\phi_i\colon G_i\xrightarrow{\sim }G_i$ preserving the pinning. As explained in \cite[\S 4.1]{XZ-vector}, there is an isomorphism
$$G\phi/G\simeq \prod_i G_i\phi_i/G_i,$$
where for each $i$, the morphism is induced by $G_i\hookrightarrow (G_i)^{r_i}$, $x\mapsto (x,1,\dots,1).$ 
The automorphism $\phi_i$ is trivial if $G_i$ is not of type $\mathsf{A}_n$, $\mathsf{D}_n$, or $\mathsf{E}_6$. The order of $\phi_i$ is at most $2$ if $G_i$ is of type $\mathsf{A}_n$, $\mathsf{E}_6$, or $\mathsf{D}_n (n\neq 4)$, and at most $3$ if $G_i$ is of type $\mathsf{D}_4$. In all this cases, the order of $\phi_i$ is invertible in $\Lambda$ by Assumption \ref{assump-standard}. Note that the diagram
$$\begin{tikzcd}
    \prod_{i} G_i\phi_i/G_i \ar[d]\ar[r,"\simeq"] & G\phi/G \ar[d] \\
    \prod_i\BB G_i \ar[r] & \BB G 
\end{tikzcd}$$
commutes, where the lower arrow is given by diagonal embeddings $G_i\hookrightarrow (G_i)^{r_i}$. By \cite[Theorem 1]{Mathieu-filtrations}, representations of $(G_i)^r$ that admit good filtrations (resp. Weyl filtrations) restrict to representations of $G_i$ that admit good filtrations (resp. Weyl filtrations). Therefore it suffices to prove Proposition \ref{prop-generators-GS} and Proposition \ref{cor-tilting-tensor-twisted-case} for $\prod_i G_i\phi_i/G_i$. By Lemma \ref{prop-product-adj-quotient}, we further reduce to the case that $G=G_i$ is almost simple and simply-connected.

\subsubsection{Proof of Proposition \ref{prop-generators-GS} and \ref{cor-tilting-tensor-twisted-case}}
It remains to prove Proposition \ref{prop-generators-GS} and \ref{cor-tilting-tensor-twisted-case} in the case $G$ is almost simple and simply-connected equipped with a pinned automorphism $\phi$. Note that in this case the order of the $\phi$-action is invertible in $\Lambda$ by Assumption \ref{assump-standard}.

\begin{lemma}\label{cor-phi=id}
    Let $G$ be a semisimple group such that if $\Lambda$ has characteristic $\ell$, then $\ell$ is good for any simple factors of $G_\ad$. Then Proposition \ref{prop-generators-GS} and \ref{cor-tilting-tensor-twisted-case} hold for $G$, $\phi=\id$, and $\xi=1$.
\end{lemma}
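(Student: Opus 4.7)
The strategy is to use the Springer isomorphism to reduce this case to the category of perverse coherent sheaves on $\cN/G$ studied in \S\ref{subsubsection-perverse-coh}, and then invoke Propositions \ref{cor-dom-gen} and \ref{prop-tilting-tensor-split-case} directly. I would first reduce to $G$ being almost simple and simply connected: the central isogeny $G_{\mathrm{sc}} \twoheadrightarrow G$ (note $Z(G)^\circ = 1$ since $G$ is semisimple) together with Lemma \ref{lemma-reduction-to-sc} reduces to $G$ semisimple and simply connected, and Lemma \ref{prop-product-adj-quotient} further reduces to the almost simple case. For non type-$\mathsf{A}$ cases, Remark \ref{rmk-bilinear-form} supplies the non-degenerate invariant bilinear form so that Assumption \ref{assump-standard-on-reductive} holds; for type $\mathsf{A}$, where the trace form on $\mathfrak{sl}_n$ may degenerate in characteristic dividing $n$, one passes to $\GL_n$ (using Lemmas \ref{lemma-gen-GS-isogeny} and \ref{prop-product-adj-quotient}) to bring the full force of \S\ref{subsection-exotic-and-perverse-coh} into play.

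With $\phi = \id$ and $\xi = 1$, the fiber of the Chevalley map $G\git G = T\git W$ over $\xi$ is the unipotent variety $\cU \subseteq G$, and the only preimage of $\xi$ in $T$ is $\chi = 1$. The Springer isomorphism $\cU \xrightarrow{\sim} \cN$ (available in good characteristic) extends to a $G$-equivariant isomorphism of formal stacks $(G/G)^\wedge_1 \simeq (\fg/G)^\wedge_0$ compatible with the Grothendieck--Springer resolutions, so that $(B/B)^\wedge_1 \simeq (\fb/B)^\wedge_0$ and $T^\wedge_1 \simeq \hat{\ft}_0$. Unraveling definitions under this identification yields
\[
(\pi_{\phi,1})_* (\gamma_{\phi,1})^! \cF_\lambda \;\simeq\; \pi_* \cO_{\fu/B}(\lambda) \;=\; A_\lambda,
\]
which equals $\bar\nabla_\lambda$ for $\lambda \in \XX^\bullet(T)^+ = \XX^\bullet(T)_\phi^+$ and $\bar\Delta_{w_0\lambda}$ for $\lambda \in -\XX^\bullet(T)^+ = -\XX^\bullet(T)_\phi^+$, viewed as objects of $\Coh((G/G)^\wedge_1)$ via the closed embedding $\iota\colon \cN/G \hookrightarrow (\fg/G)^\wedge_0$.

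In the derived algebraic geometry framework, $\Coh((G/G)^\wedge_1)$ coincides with the category of coherent sheaves on $G/G$ that are set-theoretically supported on $\cU/G$. Any such sheaf is annihilated by some finite power of the defining ideal, hence admits a finite filtration whose graded pieces lie in $\iota_* \Coh(\cN/G)$; this shows that $\iota_* \Coh(\cN/G)$ generates $\Coh((G/G)^\wedge_1)$ under cones and retracts. Combined with Proposition \ref{cor-dom-gen}, this yields Proposition \ref{prop-generators-GS}. For Proposition \ref{cor-tilting-tensor-twisted-case}, the projection formula gives $V \otimes \iota_* \bar\nabla_\lambda \simeq \iota_*(V \otimes \bar\nabla_\lambda)$, and Proposition \ref{prop-tilting-tensor-split-case} directly furnishes a filtration of $V \otimes \bar\nabla_\lambda$ (resp. $V \otimes \bar\Delta_\lambda$) in $\PCoh(\cN/G)$ by objects of the same type, which pushes forward to the required filtration in $\Coh((G/G)^\wedge_1)$---in fact without needing to invoke retracts. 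The most delicate point is rigorously setting up the formal-stack identification $(G/G)^\wedge_1 \simeq (\fg/G)^\wedge_0$ compatibly with the Grothendieck--Springer correspondences, together with the DAG identification of $\Coh$ of the formal completion with set-theoretically supported coherent sheaves; the type $\mathsf{A}$ reduction is the only other mildly subtle point.
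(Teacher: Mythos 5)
Your overall route---use the Springer isomorphism to reduce to perverse coherent sheaves on $\cN/G$ and then cite Propositions~\ref{cor-dom-gen} and~\ref{prop-tilting-tensor-split-case}---is the same as the paper's. However, there are three concrete problems.

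First, the reduction via Lemma~\ref{lemma-reduction-to-sc} is incomplete as you state it. That lemma reduces the claim for $(G,\xi=1)$ to the claim for $G_{\mathrm{sc}}$ at \emph{every} $\xi'\in T_{\mathrm{sc}}\git W$ mapping to $1\in T\git W$. These $\xi'$ are central but typically not all equal to $1$, so Lemma~\ref{cor-phi=id} (which is only about $\xi'=1$) does not immediately apply. The paper closes this by noting that multiplication by a central element $\xi'$ gives an isomorphism $(G_{\mathrm{sc}}/G_{\mathrm{sc}})^\wedge_1\xrightarrow{\sim}(G_{\mathrm{sc}}/G_{\mathrm{sc}})^\wedge_{\xi'}$ and running the argument of Lemma~\ref{lemma-reduction-to-sc} with this extra input. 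You need to include this step.

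Second, your treatment of type $\mathsf{A}$ via Lemmas~\ref{lemma-gen-GS-isogeny} and~\ref{prop-product-adj-quotient} does not work: those lemmas transfer results from a \emph{cover} $G'$ to a quotient $G$, so they go from $\mathrm{SL}_n\times\GG_m$ to $\GL_n$, not from $\GL_n$ to $\mathrm{SL}_n$. There is no central isogeny from $\GL_n$ to $\mathrm{SL}_n$. What the paper actually does is observe that $\cU_{\mathrm{SL}_n}$ and $\cU_{\GL_n}$ are literally the same variety, so $\cU_{\mathrm{SL}_n}/\mathrm{SL}_n\to\cU_{\GL_n}/\GL_n$ is a $\GG_m$-gerbe/torsor and one transports Propositions~\ref{cor-dom-gen} and~\ref{prop-tilting-tensor-split-case} by $*$-pullback along this morphism. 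Your argument needs to be replaced by this.

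Third, you propose a $G$-equivariant isomorphism of formal stacks $(G/G)^\wedge_1\simeq(\fg/G)^\wedge_0$ extending the Springer isomorphism. This is a significantly stronger statement than what is needed or known in positive characteristic, and the paper carefully avoids it. The paper only uses the Springer isomorphism of the \emph{reduced} substacks $\cN/G\cong\cU/G$ (Remark~\ref{rmk-unipotent-vs-nilpotent}), identifying $(\pi_{\phi,1})_*(\gamma_{\phi,1})^!\cF_\lambda$ as the pushforward of $A_\lambda$ (transported to $\cU/G$) under the closed embedding $\cU/G\hookrightarrow(G/G)^\wedge_1$. Since any coherent sheaf on a formal completion admits an $I$-adic filtration with graded pieces supported on the reduced locus, both the generation statement and the filtration statement then follow without comparing the two formal completions. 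You should drop the proposed isomorphism of formal neighborhoods and argue as the paper does.
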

\begin{proof}
    The underlying reduced substack of $(G/G)^\wedge_1$ is identified with $\cU/G$. The morphism $U/B\to\BB T$ is smooth with trivial dualizing complex. Therefore the object $(\pi_{\phi,\xi})_*(\gamma_{\phi,\xi})^! \cF_\lambda$ is equal to the image of $A_\lambda$ (introduced in \S\ref{subsubsection-perverse-coh}) under the pushforward $\Coh(\cU/G)\to \Coh((G/G)^\wedge_1)$. If $G$ satisfies Assumption \ref{assump-standard-on-reductive}, then the claim for $G$ follows from Proposition \ref{cor-dom-gen}, Proposition \ref{prop-tilting-tensor-split-case}, and Remark \ref{rmk-unipotent-vs-nilpotent}.
    
    Let $G_\mathrm{sc}$ denote the simply-connected cover of $G$. Then
    $G_\mathrm{sc}=\prod G_i$
    is a product of almost simple and simply-connected groups $G_i$. Moreover, if $\Lambda$ has characteristic $\ell$, then $\ell$ is good for all $G_i$. 
    
    If $G_i$ is not of type $\mathsf{A}$, then by Remark \ref{rmk-bilinear-form}, we know that the claim holds for $G_i$. If $G_i=\mathrm{SL_n}$, we know that Proposition \ref{cor-dom-gen} and Proposition \ref{prop-tilting-tensor-split-case} hold for $\GL_n$. The unipotent cone of $\mathrm{SL}_n$ and $\GL_n$ are isomorphic. Pullback along $\cU_{\mathrm{SL}_n}/\mathrm{SL}_n\to \cU_{\GL_n}/\GL_n$ implies that Proposition \ref{cor-dom-gen} and Proposition \ref{prop-tilting-tensor-split-case} hold for $\mathrm{SL}_n$. Therefore the claim also holds for $\mathrm{SL}_n$.

    By Proposition \ref{prop-product-adj-quotient}, we know that the claim holds for $G_\mathrm{sc}$. As in the proof of Lemma \ref{lemma-reduction-to-sc}, the functor
    $$(c_G)_*\colon \bigoplus_{\xi}\Coh((G_\mathrm{sc}/G_\mathrm{sc})^\wedge_\xi)\to \Coh((G/G)^\wedge_1),$$
    generates the targets under retracts, where $\xi$ runs through elements of $T_\mathrm{sc}\git W$ mapping to $1\in T\git W$. Note that all the elements $\xi$ appearing are central. Therefore multiplying by $\xi$ defines an isomorphism
    $$(G_\mathrm{sc}/G_\mathrm{sc})^\wedge_1\xrightarrow{\sim} (G_\mathrm{sc}/G_\mathrm{sc})^\wedge_\xi.$$
    Therefore Proposition \ref{prop-generators-GS} and \ref{cor-tilting-tensor-twisted-case} holds for $(G_\mathrm{sc}/G_\mathrm{sc})^\wedge_\xi$ with $\xi$ mapping to $1$. Now the proof of Lemma \ref{lemma-reduction-to-sc} implies that the claim holds for $G$.
\end{proof}



Fix a point $\xi\in (A\git W_0)(\Lambda)$. Let $\chi\in A(\Lambda)$ be a point lifting $\xi$. Let $x\in T(\Lambda)$ be an element lifting $\chi$. Let $G_x\coloneqq G^{x\phi}$ denote the centralizer of $x\phi$ in $G$. By \cite[Theorem 8.1]{Steinberg-endomorphism}, $G_x$ is a connected reductive group. Denote $B_x=B\cap G_x$ and $U_x=U\cap G_x$. Then $B_x$ is a Borel subgroup of $G_x$ and $B_x=T^\phi\cdot U_x$. Denote by $(W_0)_\chi$ the stabilizer of $W_0$-action on $\chi$. Then $(W_0)_\chi$ is identified with the Weyl group of $G_x$. 


\begin{prop}\label{prop-adjoint-quot-endoscopy}
	The map $g\mapsto gx\phi$ defines an isomorphism
	$$\cU_{G_x}^\wedge/G_x\xrightarrow{\sim} (G\phi/G)^\wedge_\xi,$$
	where $\cU_{G_x}^\wedge$ is the formal completion of $G_x$ along the unipotent cone.
\end{prop}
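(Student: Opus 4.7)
The plan is to construct a natural comparison map, identify the underlying reduced substacks via Jordan decomposition in the disconnected reductive group $G\rtimes\langle\phi\rangle$, and promote the resulting bijection to a formal isomorphism by an infinitesimal (tangent-space) computation.

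\textbf{Step 1 (construction).} First I would define $\alpha\colon \cU_{G_x}\to G\phi$ by $u\mapsto ux\phi$. The condition $h\in G_x=G^{x\phi}$ is equivalent to the commutation $h\cdot(x\phi)=(x\phi)\cdot h$ inside $G\rtimes\langle\phi\rangle$, and a direct calculation gives
$$h\cdot (ux\phi)\cdot \phi(h)^{-1}=(huh^{-1})\cdot x\phi,$$
so $\alpha$ is $G_x$-equivariant for the conjugation action on $\cU_{G_x}$ and the $\phi$-twisted conjugation action on $G\phi$. Since the semisimple part of $ux\phi$ is $x\phi$, mapping to $\xi\in A\git W_0$, passage to quotients and formal completions yields a well-defined morphism $\hat{\bar\alpha}\colon \cU_{G_x}^\wedge/G_x\to (G\phi/G)^\wedge_\xi$.

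\textbf{Step 2 (reduced substacks).} By Assumption \ref{assump-standard}, the order of $\phi$ is invertible in $\Lambda$, so Jordan decomposition applies in $G\rtimes\langle\phi\rangle$. Every semisimple element of $G\phi$ is $\phi$-twisted $G$-conjugate to some $y\phi$ with $y\in T$, and such a class is classified by the image of $y$ in $A\git W_0$ via the twisted Chevalley isomorphism. Combined with the identification of the $G$-stabilizer of $x\phi$ with $G_x$, this shows that every element of $G\phi$ mapping to $\xi$ is $\phi$-twisted conjugate to $ux\phi$ for a unique $G_x$-orbit of unipotent $u\in G_x$, so $\hat{\bar\alpha}$ induces an isomorphism on underlying reduced substacks.

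\textbf{Step 3 (formal isomorphism) and main obstacle.} Consider the auxiliary map $\tilde\alpha\colon G\times \cU_{G_x}\to G\phi$, $(g,u)\mapsto g(ux\phi)\phi(g)^{-1}$. After right-translating the target by $(ux\phi)^{-1}$, its differential at $(1,u)$ reads $(X,Y)\mapsto Y+(1-\Ad(ux\phi))X$ with $X\in\fg$ and $Y\in \fg_x=T_u\cU_{G_x}^\wedge$. Since $(x\phi)^{|\phi|}\in T$ is semisimple and $|\phi|$ is invertible in $\Lambda$, $\Ad(x\phi)$ acts semisimply on $\fg$, yielding the decomposition $\fg=\fg_x\oplus(1-\Ad(x\phi))\fg$. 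As $u\in G_x$ commutes with $x\phi$, $\Ad(u)$ preserves this decomposition and acts unipotently, so an eigenvalue-by-eigenvalue calculation shows that the differential is surjective with kernel exactly the tangent space of the diagonal $G_x$-orbit through $(1,u)$. In particular $\hat{\bar\alpha}$ is formally étale at every closed point of $\cU_{G_x}^\wedge/G_x$; combined with the bijection on reduced substacks from Step 2, this forces $\hat{\bar\alpha}$ to be an isomorphism of formal stacks. The main difficulty is this Step 3: the semisimplicity of $\Ad(x\phi)$ on $\fg$ is essential for the infinitesimal argument in the disconnected group $G\rtimes\langle\phi\rangle$, and this is precisely where Assumption \ref{assump-standard} (ensuring $|\phi|$ is invertible in $\Lambda$) enters in a crucial way.
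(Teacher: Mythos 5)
Your proposal is correct and follows essentially the same approach as the paper's proof: identify the map on field-valued points via Jordan decomposition in $G\rtimes\langle\phi\rangle$ (using invertibility of the order of $\phi$), and then verify isomorphism infinitesimally via the eigenspace decomposition of $\fg$ under $\Ad(x\phi)$, observing that $\Ad(u)$ preserves it and only the $1$-eigenspace $\fg^{x\phi}$ contributes. The paper phrases the infinitesimal step as a quasi-isomorphism of the two-term tangent complexes $[\fg^{x\phi}\xrightarrow{1-\Ad_u}\fg^{x\phi}]\to[\fg\xrightarrow{1-\Ad_{ux\phi}}\fg]$, which is the same computation as your surjectivity-plus-kernel argument for the auxiliary map $G\times\cU_{G_x}\to G\phi$.
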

\begin{proof}
	This is proved in \cite[Proposition 2.44]{Tame}. We include a proof here for completeness. It suffices to show that the morphism $\cU_{G_x}^\wedge/G_x\to (G\phi/G)^\wedge_\xi$ is induces an isomorphism on $K$-points for any algebraically closed field $K$ over $\Lambda$, and induces an isomorphism on the tangent complex at each $K$-points.
	
	Let $K$ be an algebraically closed field over $\Lambda$. We first prove essential surjectivity. Let $g\phi$ be an element in  $(G\phi/G)^\wedge_\xi(K)$. By \cite[\S 5.3]{XZ-vector}, the Grothendieck--Springer resolution $B\phi/B\to G\phi/G$ is surjective. Thus we may assume that $g\phi\in B\phi$. Write $g=ut$ for $u\in U$ and $t\in T$. By \cite[Proposition 5.2.10]{XZ-vector}, after conjugation we can assume that $t=x$ and $u\in U^{t\phi}$. Therefore $u$ lies in $\cU_{G_x}(K)$. Because the order of $\phi$ is invertible in $\Lambda$, $u\cdot x\phi$ is a Jordan decomposition in the group $G\rtimes\langle\phi\rangle$.
	Assume that $u,u'\in \cU_{G_x}(K)$ with $ux\phi$ conjugate to $u'x\phi$. Thus there exists $g\in G(K)$ such that $ux\phi=gu'x\phi g^{-1}$. Because $u\cdot x\phi$ and $u'\cdot x\phi$ are Jordan decompositions, it follows that $u=gu'g^{-1}$ and $x\phi=gx\phi g^{-1}$. Therefore $g\in G_x$. Hence the morphism $(\cU_{G_x}^\wedge/G_x)(K)\xrightarrow{\sim} (G\phi/G)^\wedge_\xi(K)$ is an equivalence of groupoids.
	
	Let $u\in \cU_{G_x}(K)$ be a point. The tangent complex of $\cU_{G_x}^\wedge/G_x$ at $u$ is given by
	$$\fg^{x\phi}\xrightarrow{1-\Ad_u}\fg^{x\phi}.$$
	The tangent complex of $(G\phi/G)^\wedge_\xi$ at $ux\phi$ is given by
	$$\fg\xrightarrow{1-\Ad_{ux\phi}}\fg.$$
	Because $u\cdot x\phi$ is a Jordan decomposition, the embedding $\fg^{x\phi}\hookrightarrow\fg$ defines a quasi-isomorphism between two complexes. In fact, because $t\phi$ is semisimple, $\fg$ splits in to eigenspaces of $\Ad_{x\phi}$, and the $\Ad_u$-action preserves this decomposition. It is easy to see that only the eigenspace with eigenvalue $1$, i.e. $\fg^{x\phi}$, has non-zero cohomologies.
\end{proof}

Let $\chi\in A(\Lambda)$ be an element. Let $A^\wedge_\chi$ denote the formal completion of $A$ at $\chi$. Let $(B\phi/B)^\wedge_\chi$ denote the fiber product $(B\phi/B)\times_{A}A^\wedge_\chi$.
Let $x\in T(\Lambda)$ be a point mapping to $\chi$. We have the following result.

\begin{prop}\label{prop-Groth-endoscopy}
	The map $g\mapsto gx\phi$ defines an isomorphism 
	$$U^\wedge_x/B_x\xrightarrow{\sim} (B\phi/B)^\wedge_\chi.$$
    Here $U_x^\wedge$ is the completion of $B_x$ along the preimage of identity in $T^\phi$.
\end{prop}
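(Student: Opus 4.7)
The plan is to adapt the proof of Proposition \ref{prop-adjoint-quot-endoscopy} to the Borel level. The map is well-defined because, for $g$ in the formal neighborhood $U_x^\wedge$ of $U_x$ inside $B_x$, the $T$-component of $gx\phi$ is formally close to $x$ and so its image in $A$ is formally close to $\chi$. The morphism is $B_x$-equivariant: for $h\in B_x=B^{x\phi}$ the identity $hx=x\phi(h)$ gives $x\phi(h)^{-1}=h^{-1}x$, whence conjugation of $gx\phi$ by $h$ in $G\rtimes\langle\phi\rangle$ equals $hgx\phi(h)^{-1}\phi=(hgh^{-1})\,x\phi$, matching $g\mapsto hgh^{-1}$ on $U_x^\wedge$. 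As in Proposition \ref{prop-adjoint-quot-endoscopy}, it then suffices to show that the morphism (i) induces an equivalence of groupoids on $K$-points for every algebraically closed $K\supseteq\Lambda$, and (ii) induces a quasi-isomorphism on tangent complexes at every such point.

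For (i), take $b\phi\in B\phi(K)$ whose image in $A$ is $\chi$ and write $b=ut$. The assumption forces $tx^{-1}\in(\phi-1)T$; since $(1-\phi)\colon T\to(\phi-1)T$ is smooth surjective (as $|\phi|$ is invertible in $\Lambda$), we can pick $s\in T(K)$ whose $\phi$-twisted conjugation action moves $t$ to $x$. A subsequent application of \cite[Proposition 5.2.10]{XZ-vector} at the $B$-level conjugates $u$ into $U^{x\phi}=U_x$, giving essential surjectivity. For automorphism groups, if $b\in B$ conjugates $u'x\phi$ to $ux\phi$ with $u,u'\in U_x$, then both sides are Jordan decompositions in $G\rtimes\langle\phi\rangle$ (since $u,u'$ commute with the semisimple $x\phi$), so uniqueness forces $bx\phi(b)^{-1}=x$, hence $b\in B\cap G^{x\phi}=B_x$ and $bu'b^{-1}=u$.

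For (ii), at a point $u\in U_x$ the tangent complex of $U_x^\wedge/B_x$ is $[\fb_x\xrightarrow{1-\Ad_u}\fb_x]$ in degrees $[-1,0]$, that of $(B\phi/B)^\wedge_\chi$ at $ux\phi$ is $[\fb\xrightarrow{1-\Ad_{ux\phi}}\fb]$, and the induced map is the inclusion $\fb_x\hookrightarrow\fb$ in each degree. Because $x\phi$ is semisimple and commutes with $u$, we may decompose $\fb=\fb_x\oplus\fb'$ into $\Ad_{x\phi}$-weight spaces with $\fb_x=\fb^{x\phi}$; the operator $\Ad_u$ preserves this decomposition and is unipotent. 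On each summand of $\fb'$ the eigenvalue of $\Ad_{x\phi}$ is some $\lambda\ne 1$, and together with the unipotence of $\Ad_u$ this makes $1-\Ad_u\circ\Ad_{x\phi}$ invertible; the summand $[\fb'\to\fb']$ is thus acyclic, and the inclusion is a quasi-isomorphism. The only nontrivial input beyond formal manipulations is the Borel-level reading of \cite[Proposition 5.2.10]{XZ-vector}, which is the most delicate step, since in Proposition \ref{prop-adjoint-quot-endoscopy} the analogous reduction is carried out only after first passing to $B\phi$ via the Grothendieck--Springer resolution.
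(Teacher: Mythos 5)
Your proof is correct and takes essentially the same approach as the paper: the paper's proof is terse (it invokes the proof of Proposition~\ref{prop-adjoint-quot-endoscopy} for the $K$-points argument and spells out only the tangent complex computation), while you make explicit the $B_x$-equivariance, split the twisted-conjugation reduction into a $T$-step and a $U$-step via \cite[Proposition 5.2.10]{XZ-vector}, and use the same Jordan-decomposition argument for injectivity and the same eigenspace decomposition for the tangent complexes. (The paper's displayed tangent complexes appear to have a typo dropping the ``$1-$'' before $\Ad_u$ and $\Ad_{ux\phi}$; your versions $1-\Ad_u$ and $1-\Ad_{ux\phi}$ match both the analogue in Proposition~\ref{prop-adjoint-quot-endoscopy} and the correct computation.)
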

\begin{proof}
	We use the similar method as Proposition \ref{prop-adjoint-quot-endoscopy}. Let $K$ be an algebraically closed field over $\Lambda$. The morphism on $K$-points is an equivalence follows from the same argument. Let $u\in U_x(K)$ be an element. The tangent complex of $U^\wedge_x/B_x$ at $u$ is computed by
	$$\fb^{x\phi}\xrightarrow{\Ad_u}\fb^{x\phi}$$
	and tangent complex of $(B\phi/B)^\wedge_x$ at $ux\phi$ is computed by
	$$\fb\xrightarrow{\Ad_{ux\phi}}\fb.$$
	The embedding $\fb^{x\phi}\hookrightarrow\fb$ induces a quasi-isomorphism between two complexes, because $u\cdot x\phi$ is a Jordan decomposition in $B\rtimes \langle\phi\rangle$.
\end{proof}

\begin{proof}[Proof of Proposition \ref{prop-generators-GS}]
    Recall that we have reduced to the case that $G$ is almost simple and simply connected. For each point $\chi\in A(\Lambda)$ mapping to $\xi$, fix a lift $t_\chi\in T(\Lambda)$ of $\chi$. By Proposition \ref{prop-adjoint-quot-endoscopy} and Proposition \ref{prop-Groth-endoscopy}, we have isomorphisms
	$$(B\phi/B)^\wedge_\chi\cong U_{t_\chi}^\wedge/B_{t_\chi}\quad\text{ and }\quad (G\phi/G)^\wedge_{\xi}\cong \cU^\wedge_{G_{t_{\chi}}}/G_{t_{\chi}}.$$
	
	Let $\chi$, $\chi'$ be two $\Lambda$-points of $A$ mapping to $\xi$. Let $w\in W_0$ be an element such that $w(\chi)=\chi'$. Then $t_\chi$ is $\phi$-conjugated to $t_{\chi'}$ be an element $\dot{w}\in N_G(T)$ with image $w$ in $W_0\subseteq W$, i.e. $\dot{w}t_\chi \phi(\dot{w})^{-1}=t_{\chi'}$. Then $g\mapsto \dot{w}g\dot{w}^{-1}$ defines an isomorphism $\dot{w}\colon G_{t_{\chi}}\xrightarrow{\sim}G_{t_{\chi'}}$. Moreover, it induces a commutative diagram 
	$$\begin{tikzcd}
		\cU^\wedge_{G_{t_\chi}}/G_{t_\chi}\ar[rd,"\simeq","\dot{w}"swap]\ar[rr,"\simeq"] & \ar[d,Rightarrow,"\dot{w}"] & (G\phi/G)_\xi^\wedge \\ 
		& \cU^\wedge_{G_{t_{\chi'}}}/G_{t_{\chi'}}\ar[ur,"\simeq"swap]
	\end{tikzcd}$$
	of isomorphisms of algebraic stacks. Assume that $w\in W_0$ has minimal length in the coset $w\cdot (W_0)_\chi$. Then the morphism $g\mapsto \dot{w}g\dot{w}^{-1}\colon G_{t_\chi}\to G_{t_{\chi'}}$ sends the Borel subgroup $B_{t_\chi}$ to the Borel subgroup $B_{t_{\chi'}}$. It follows that there is a commutative diagram
	$$\begin{tikzcd}
		A_\chi^\wedge\times \BB T^{\phi}\ar[d,"w"swap] & U^\wedge_{t_\chi}/B_{t_\chi}\ar[l]\ar[r,"\pi_\chi"]\ar[d,"\dot{w}"swap] & \cU^\wedge_{G_{t_\chi}^\circ}/G_{t_\chi}\ar[d,"\dot{w}"] \\
		A_{\chi'}^{\wedge}\times\BB T^{\phi} & U^\wedge_{t_{\chi'}}/B_{t_{\chi'}}\ar[l]\ar[r,"\pi_{\chi'}"] & \cU^\wedge_{G_{t_{\chi'}}^\circ}/G_{t_{\chi'}}
	\end{tikzcd}$$
	where $A_\chi^\wedge\times \BB T^{\phi}\xrightarrow{w}A_{\chi'}^{\wedge}\times\BB T^{\phi}$ is induced by the $W_0$-action on $T^\phi$ and $A$.
	
	Now fix a $\Lambda$-point $\chi_0\in A$ mapping to $\xi$. Denote $x=t_{\chi_0}\in T(\Lambda)$ for simplicity. For each $\chi\in A(\Lambda)$ mapping to $\xi$, there is a unique element $w_{\chi}\in W_0$ such that $w_\chi(\chi_0)=\chi$ and $w_\chi$ has minimal length in $w_\chi\cdot (W_0)_{\chi_0}$. 
	
	Denote by $\pi_x\colon U_x^\wedge/B_x\to \cU^\wedge_{G_x}/G_x$ the completion of the Grothendieck--Springer resolution for $G_x$. Denote by $\gamma_x\colon U_x^\wedge/B_x\to A_{\chi_0}^\wedge\times \BB T^\phi$ the natural projection. For $w\in W_0$, let $w\colon  A_{\chi_0}^\wedge\times \BB T^\phi\to A_{\chi_0}^\wedge\times \BB T^\phi$ denote the morphism that is identity on $ A_{\chi_0}^\wedge$, and is given by $w$-action on $\BB T^\phi$. We can rewrite the correspondence
    $$\bigsqcup_{\chi} A^\wedge_\chi\times \BB T^\phi\xleftarrow{\gamma_{\phi,\xi}}\bigsqcup_{\chi}(B\phi/B)_\chi^\wedge\xrightarrow{\pi_{\phi,\xi}} (G\phi/G)_\xi^\wedge$$
    as 
	$$\bigsqcup_{\chi} A^\wedge_{\chi_0}\times \BB T^\phi\xleftarrow{(w_\chi\circ\gamma_x)}\bigsqcup_{\chi} U_x^\wedge/B_x\xrightarrow{\pi_{x}}\cU^\wedge_{G_x}/G_x,$$
	where on the component indexed by $\chi$, the morphism $w_\chi\circ\gamma_x\colon U^\wedge_x/B^x\to A^\wedge_{\chi_0}\times \BB T^\phi$ is the composition of $\gamma_x$ with the $w_\chi$-action on $\BB T^\phi$. Pullback long $\iota_{\chi_0}\colon \BB T^\phi\hookrightarrow A^\wedge_{\chi_0}\times \BB T^\phi$, we get the correspondence
	$$\BB T^\phi\xleftarrow{(w_\chi\circ\bar\gamma_x)}\bigsqcup_{\chi\mapsto \xi} U_x/B_x\xrightarrow{\bar\pi_{x}}\cU_{G_x}^\wedge/G_x.$$
    Here $\bar\gamma_x\colon U_x/B_x\to \BB T^\phi$ and $\bar\pi_x\colon U_x/B_x\to \cU_{G_x}^\wedge/G_x$ are natural morphisms.
	It suffices to show that objects $$\{(\bar\pi_x)_*(w_\chi\circ\bar\gamma_x)^!\cO_{\BB T^\phi}(\lambda)\cong (\bar\pi_x)_*(\bar\gamma_x)^!\cO_{\BB T^\phi}(w_\chi^{-1}(\lambda))\}$$ generates $\Coh(\cU_{G_x}^\wedge/G_x)$ when $\chi$ runs through preimages of $\xi$, and $\lambda$ runs through either dominant weights, or anti-dominant weights.
	
	Let $W_0^{\chi_0}$ denote the set of elements $w$ in $W_0$ that have minimal length in the cosets $w\cdot (W_0)_{\chi_0}$. It suffices to show that $\{(\bar\pi_x)_*(\bar\gamma_x)^!\cO_{\BB T^\phi}(w^{-1}(\lambda))\}$ generates $\Coh(\cU_{G_x}^\wedge/G_x)$ when $w$ runs through $W_0^{\chi_0}$, and $\lambda$ runs through either dominant weights $\XX^\bullet(T)_\phi^+$, or anti-dominant weights $-\XX^\bullet(T)_\phi^+$. Let $\XX^\bullet(T)_\phi^{x\mbox{-}+}$ denote the dominant cone in $G_x$. Then 
	$$(W_0^{\chi_0})^{-1}\cdot \XX^\bullet(T)_\phi^+=\XX^\bullet(T)_\phi^{x\mbox{-}+}$$
	as subsets of $\XX^\bullet(T)_\phi$. Therefore the proposition follows from Lemma \ref{cor-phi=id}.	
\end{proof}

\begin{proof}[Proof of Proposition \ref{cor-tilting-tensor-twisted-case}]
    Again we may assume that $G$ is almost simple and simply-connected. We only prove (1) as (2) can be proved similarly.
    Let notations be as in the proof of Proposition \ref{prop-generators-GS}. Recall that the order of the $\phi$-action is invertible in $\Lambda$. By \cite[Proposition VIII.5.15]{FS}, the restrisction of $V$ to $G_x$ admits a good filtration. In fact, if $\Lambda$ has characteristic $0$, this is trivial. If $\Lambda$ has characteristic $\ell\geq 0$, then by spreading out, we can assume that $x$ is defined over a finite field. Then $G_x=G^{x\phi}$ is the fixed point subgroup of the conjugation action by $x\phi$, which has finite order prime to $\ell$. Thus we can apply \cite[Proposition VIII.5.15]{FS}.

    As explained in the proof of Proposition \ref{prop-generators-GS}, we know that $(\pi_{\phi,\xi})_*(\gamma_{\phi,\xi})^!\cF_{\lambda}$ has the form
    $$ \bigoplus_{w\in W_0^{\chi_0}}(\bar\pi_x)_*(\bar\gamma_x)^!\cO_{\BB T^\phi}(w^{-1}(\lambda))$$
    under the isomorphism $\Coh((G\phi/G)^\wedge_\xi)\simeq \Coh(\cU_{G_x}^\wedge/G_x)$. Note that $w^{-1}(\lambda)$ runs through the set $\XX^\bullet(T)_\phi^{x\mbox{-}+}\cap W_0\lambda$. Thus it suffices to show that for $\mu\in \XX^\bullet(T)_\phi^{x\mbox{-}+}$, the object $V\otimes (\bar\pi_x)_*(\bar\gamma_x)^!\cO_{\BB T^\phi}(\mu)$ admits a filtration by $(\bar\pi_x)_*(\bar\gamma_x)^!\cO_{\BB T^\phi}(\nu)$ for $\nu\in \XX^\bullet(T^\phi)^{x\mbox{-}+}$. This follows from Lemma \ref{cor-phi=id}.
\end{proof}

\section{Generic part of the unipotent categorical local Langlands correspondence}\label{section-generic-local}
In this section, we study the generic part of the unipotent categorical local Langlands correspondence. We will refine some results in \cite{Tame}, and also study the behavior of the $t$-structures under the categorical local Langlands correspondence. 

\subsection{Categorical local Langlands correspondence}\label{subsection-cllc}

Let $F$ be a non-archimedean local field with ring of integers $O_F$. Let $\varpi\in O_F$ be a uniformizer. Let $q$ be the cardinality of the residue field $\kappa_F=O_F/(\varpi)$. Let $k$ denote the algebraic closure of $\kappa_F$. Let $\breve{F}$ denote the completion of the maximal unramified extension of $F$. Let $\Gamma_F$ (resp. $W_F$) denote the absolute Galois group (resp. Weil group) of $F$. Let $I_F\subseteq W_F$ be the inertia subgroup and $P_F\subseteq I_F$ be the wild inertia subgroup. Fix a topological generator $\tau$ in $I_F/P_F$ and a lift of \emph{arithmetic} Frobenius $\phi$ in $W_F/P_F$. Let $\ell$ be a prime number coprime to $|\kappa_F|$. Denote $\Lambda=\overline\QQ_\ell$ or $\overline\FF_\ell$.

\subsubsection{The local Langlands category}

Let $G$ be a connected reductive group over $F$. Assume that $G$ is an unramified connected reductive group over $F$, i.e. $G$ is quasi-split and splits over $\breve{F}$. Fix a pinning $(B,T,e)$ of $G$ over $F$. Let
$$W=N_G(T)(\breve{F})/T(\breve{F}),\quad \text{resp. } \widetilde{W}=N_G(T)(\breve{F})/T(O_{\breve{F}})$$
be the Weyl group (resp. extended affine Weyl group) of $G_{\breve{F}}$. 
Let $\XX_\bullet(T)=\Hom(T_{\breve{F}},\GG_m)$ (resp, $\XX_\bullet(T)=\Hom(\GG_m,T_{\breve{F}})$) be the weight lattice (resp. coweight lattice). There is a short exact sequence
$$1\to \XX_\bullet(T)\to \widetilde{W}\to W\to 1.$$
The pinning determines a hyperspecial subgroup of $G(F)$, and hence defines a reductive model of $G$ over $O_F$. By abuse of notation, we still denote by $G$ the reductive model over $O_F$. Let $I$ denote the preimage of $B(\kappa_F)$ along the projection map $G(O_F)\to G(\kappa_F)$. Then $I$ is an Iwahori subgroup of $G(F)$, and defines an Iwahori model $\cI$ of $G$ over $O_F$. We obtain a splitting $\widetilde{W}=\XX_\bullet(T)\rtimes W$. For elements $\lambda$ in $\XX_\bullet(T)$, we denote by $t_\lambda$ the corresponding element in $\widetilde{W}$. Let $\XX_\bullet(T)^+\subseteq \XX_\bullet(T)$ be the dominant cone defined by the \emph{opposite} Borel subgroup $B^+$ of $B$. Let $2\rho\in \XX_\bullet(T)^+$ be the sum of positive coroots. 

Let $\Perf^\Aff_k$ be the category of perfect $k$-algebras. For a perfect $\kappa_F$-algebra $R$, denote by $W_{O_F}(R)$ the ring of $O_F$-Witt vectors over $R$: If $F$ has characteristic 0, then $W_{O_F}(R)=W(R)\otimes_{W(\kappa_F)}O_F$, and if $F$ has characteristic $p>0$, then $W_{O_F}(R)=R[[\varpi]]$. Denote $D_R=\Spec(W_{O_F}(R))$ and $D_R^*=\Spec(W_{O_F}(R)[1/\varpi])$. Denote by $\phi_R\colon D_R\to D_R$ (resp. $\phi_R\colon D^*_R\to D^*_R$) the endomorphism induced by the $q$-Frobenius of $R$. 

The positive loop group $\Iw=L^+\cI$ and the loop group $LG$ associated to the Iwahori group scheme $\cI$ are defined as functors from $\Perf^\Aff_k$ to the category of sets as
$$\Iw(R)=\cI(W_{O_F}(R))=\mathrm{Map}(D_R,\cI),\quad LG(R)=G(W_{O_F}(R)[1/\varpi])=\mathrm{Map}(D_R^*,G)$$
for $R\in\Perf^\Aff_k$. Then $\Iw$ is represented by a perfect affine group scheme, and $LG$ is represented by an ind-affine perfect scheme. The absolute $q$-Frobenius on $D_R$ (resp. $D_R^*$) defines the relative Frobenius automorphism $\phi\colon \Iw\xrightarrow{\sim}\Iw$ (resp. $\phi\colon LG\xrightarrow{\sim}LG$). Define the $\phi$-adjoint action $\Ad_\phi$ of $LG$ on $LG$ by $\Ad_\phi(g)(x)=gx\phi^{-1}(g).$

Define the stack of $G$-local shtukas $\Sht^\loc_\cI$ as the \'etale quotient stack $LG/\Ad_\phi \Iw$. As a functor of points, the stack $\Sht^\loc_\cI$ sends $R\in\Perf^\Aff_k$ to the groupoid of pairs $(\cE,\phi_\cE)$ where $\cE$ is a $\cI$-torsor on $D_R$ and $\phi_\cE\colon \phi_R^*\cE|_{D_R^*}\xrightarrow{\sim} \cE|_{D_R^*}$ is an isomorphism of $G$-torsors. Let $w\in \widetilde{W}$ be an element. We denote by $\Sht^\loc_{\cI,w}$ the locally closed substack of $\Sht^\loc_\cI$ where the modification $\phi_\cE$ is in position $w$.

Define the stack of $G$-isocrystals $\Isoc_G$ as the \'etale quotient stack $LG/\Ad_\phi LG$. For $R\in \Perf^\Aff_k$, the groupoid of $R$-points of $\Isoc_G$ is given by pairs $(\cE,\phi_\cE)$ where $\cE$ is a $G$-torsor on $D_R^*$ that is trivial after pullback to $D_{R'}^*$ for some \'etale cover $R\to R'$, and $\phi_\cE\colon \phi_R^*\cE\xrightarrow{\sim}\cE$ is an isomorphism of $G$-torsors. There is a natural morphism
$$\Nt\colon \Sht^\loc_\cI \to \Isoc_G$$
called the Newton morphism. We know that $\Nt$ is ind-pfp proper by \cite[Lemma 3.28]{Tame}.

The geometry of the stack $\Isoc_G$ is discussed in \cite[\S 3.2]{Tame}. There is a Newton stratification on $\Isoc_G$ indexed by the Kottwitz set $B(G)\coloneqq G(\breve{F})/(x\sim gx\phi(g)^{-1})$. There is an injective map
$$b\mapsto(\nu_b,\kappa_G(b))\colon B(G)\hookrightarrow \XX^\bullet(T)^{+,\phi}_\QQ\times\pi_1(G)_{\phi}.$$
Define the order on $B(G)$ by letting $b\leq b'$ if $\kappa_G(b)=\kappa_G(b')$ and $\nu_{b'}-\nu_b$ is a non-positive rational linear combination of positive coroots. Minimal elements in $B(G)$ are called \emph{basic}. The set of basic elements is denoted by $B(G)_{\mathrm{bsc}}$. The Kottwitz map $\kappa_G$ defines a bijection $B(G)_{\mathrm{bsc}}\xrightarrow{\sim} \pi_1(G)_{\Gamma_F}$. For $b\in B(G)$, there is an $F$-group $G_b$ defined as $\phi$-twisted centralizer of a lift of $b$ in $G(\breve{F})$. If $b$ is basic, then $G_b$ is an inner form of $G$. In general, $G_b$ is an inner form of a Levi subgroup of $G$.

The underlying topological space of $\Isoc_G$ is identified with $B(G)$. For $b\in B(G)$, let $i_b\colon \Isoc_{G,b}\hookrightarrow \Isoc_G$ denote the locally closed embedding defined by $b$. Let $i_{\leq b}\colon \Isoc_{G,\leq b}\hookrightarrow \Isoc_G$ (resp. $i_{<b}\colon \Isoc_{G,<b}\hookrightarrow \Isoc_G$) denote the closed embedding consisting of the strata indexed by $b'$ with $b'\leq b$ (resp. $b'<b$). By \cite[Theorem 3.31]{Tame}, $i_{\leq b}$ is a perfectly finite presented closed embedding, and $j_b\colon \Isoc_{G,b}\hookrightarrow \Isoc_{G,\leq b}$ is an affine open embedding. Moreover, after choosing a lift of $b$ to $G(\breve{F})$, there is a natural isomoprhism
$$\Isoc_{G,b}\cong \BB G_b(F),$$
where $\BB G_b(F)$ is pro-\'etale classifying stack of the locally profinite group $G_b(F)$.

Let 
$$\Shv(-,\Lambda)\colon \mathrm{Corr}(\mathrm{PreStk}_k)_{\mathrm{IndEproet};\mathrm{All}}\to \Lincat_\Lambda$$
denote the sheaf theory constructed in \cite[Proposition 10.97]{Tame}. If $X$ is a pfp perfect scheme over $k$, then $\Shv(X,\Lambda)$ is identified with the category of ind-constructible \'etale $\Lambda$-sheaves on $X$. If $X$ is a qcqs scheme, we may write $X=\lim_i X_i$ as an inverse limit of schemes that are pfp over $k$ and then $\Shv(X,\Lambda)=\colim_i\Shv(X_i,\Lambda)$ with transitioning functors given by $!$-pullbacks.
In general $\Shv(X,\Lambda)$ is defined by right Kan extension along $!$-pullback functors. If the prestack $X$ is sind-placid (in the sense of \cite[Definition 10.157]{Tame}), then there is a variant sheaf theory $\Ind\Shv_{\fingen}(X,\Lambda)$ of ind-finitely generated sheaves over $X$ constructed in \cite[\S 10.6]{Tame}. There is a functor $\Psi\colon \Ind\Shv_{\fingen}(X,\Lambda)\to \Shv(X,\Lambda)$ for a sind-placid stack $X$. If $X$ is ind-placid, then the functor $\Psi\colon \Shv_{\fingen}(X,\Lambda)\to \Shv(X,\Lambda)$ is fully faithful. All the perfect stacks considered in this paper will be sind-placid.

For $b\in B(G)$, let $\Rep(G_b(F),\Lambda)$ be the category of smooth $G_b(F)$-representations over $\Lambda$. The category $\Rep(G_b(F),\Lambda)$ is compactly generated with a set of compact projective generators given by compact inductions $\cInd_{K}^{G_b(F)}\Lambda$ where $K$ runs through pro-$p$ open compact subgroups of $G_b(F)$. Therefore an object $V\in \Rep(G_b(F),\Lambda)$ is admissible if and only if $V^K$ is a perfect $\Lambda$-module for any pro-$p$ open compact subgroup $K\subseteq G_b(F)$.

We summarize some properties of $\Shv(\Isoc_G,\Lambda)$ needed in the sequel. 
\begin{thm}\label{thm-local-Langlands-cat}
    \begin{enumerate}
        \item For $b\in B(G)$, there is an open and closed stratification
        $$\begin{tikzcd}[column sep=huge]
            \Shv(\Isoc_{G,<b},\Lambda)\ar[r,leftarrow,shift left=1em,"(i_{<b})^*"]\ar[r,"(i_{<b})_!\simeq (i_{<b})_*"]\ar[r,leftarrow,shift right=1em,"(i_{<b})^!"] & \Shv(\Isoc_{G,\leq b},\Lambda) \ar[r,leftarrow,shift left=1em,"(j_b)^!"]\ar[r,"(j_b)^!\simeq (j_b)^*"]\ar[r,leftarrow,shift right=1em,"(j_b)_*"] & \Shv(\Isoc_{G,b},\Lambda).
        \end{tikzcd}$$
        \item For $b\in B(G)$, after choosing a $k$-point of $\Isoc_{G,b}$, there is a natural equivalence $\Shv(\Isoc_{G,b},\Lambda)\simeq \Rep(G_b(F),\Lambda)$, where $Rep(G_b(F),\Lambda)$ is the category of $G_b(F)$-representations over $\Lambda$.
        \item The category $\Shv(\Isoc_G,\Lambda)$ is compactly generated. An object $\cF\in \Shv(\Isoc_G,\Lambda)$ is compact if and only if $(i_b)^!\cF\in \Rep(G_b(F),\Lambda)$ is zero for all but finitely many $b$ and $(i_b)^!\cF$ is compact for all $b$, or equivalently, $(i_b)^*\cF\in \Rep(G_b(F),\Lambda)$ is zero for all but finitely many $b$ and $(i_b)^*\cF$ is compact for all $b$.
        \item An object $\cF\in \Shv(\Isoc_G,\Lambda)$ is admissible if and only if $(i_b)^!\cF\in \Rep(G_b(F),\Lambda)$ is admissible for any $b$, if and only if $(i_b)^\sharp\cF$ is admissible for any $b$. Here $(i_b)^\sharp$ is the right adjoint of $(i_b)_!$.
        \item The functors $(i_b)_!,(i_b)_*$ preserve compact objects. The functors $(i_b)_*,(i_b)_\flat$ preserve admissible objects. Here $(i_b)_\flat$ is the right adjoint of $(i_b)^!$.
        \item There is a canonical duality $\DD_{\Isoc_G}^\can$ on $\Shv(\Isoc_G,\Lambda)$. There are canonical equivalences
        $$(\DD_{\Isoc_G}^\can)^\omega\circ (i_b)_*\simeq (i_b)_!\circ (\DD_{G_b(F)}^\can)^\omega[-2\langle2\rho,\nu_b\rangle](-\langle2\rho,\nu_b\rangle),$$
        and
        $$(i_b)^*\circ(\DD_{\Isoc_G}^\can)^\omega\simeq (\DD_{G_b(F)}^\can)^\omega\circ (i_b)^![-2\langle2\rho,\nu_b\rangle](-\langle2\rho,\nu_b\rangle),$$
        for any $b\in B(G)$. Here $(\DD_{G_b(F)}^\can)^\omega\colon (\Rep(G_b(F),\Lambda)^{\omega})^\mathrm{op}\xrightarrow{\sim}\Rep(G_b(F),\Lambda)^{\omega}$ is the Bernstein--Zelevinsky duality.
        \item The duality $\DD_{\Isoc_G}^\can$ induces an anti-involution $$(\DD_{\Isoc_G}^\can)^\Adm\colon (\Shv(\Isoc_G,\Lambda)^\Adm)^\mathrm{op}\xrightarrow{\sim}\Shv(\Isoc_G,\Lambda)^\Adm$$ for the subcategory of admissible objects. There are canonical equivalences
        $$(\DD_{\Isoc_G}^\can)^\Adm\circ (i_b)_*\simeq (i_b)_!\circ (\DD_{G_b(F)}^\can)^\Adm[-2\langle2\rho,\nu_b\rangle](-\langle2\rho,\nu_b\rangle),$$
        and
        $$(i_b)^\sharp\circ(\DD_{\Isoc_G}^\can)^\Adm\simeq (\DD_{G_b(F)}^\can)^\Adm\circ (i_b)^![-2\langle2\rho,\nu_b\rangle](-\langle2\rho,\nu_b\rangle),$$
        for any $b\in B(G)$. Here $(\DD_{G_b(F)}^\can)^\Adm\colon (\Rep(G_b(F),\Lambda)^\Adm)^\mathrm{op}\xrightarrow{\sim}\Rep(G_b(F),\Lambda)^\Adm$ is the smooth duality.
    \end{enumerate}
\end{thm}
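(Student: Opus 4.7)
The plan is to establish the seven items by combining the six-functor formalism for $\Shv$ developed in \cite{Tame} with the concrete geometric description of $\Isoc_G$ and its Newton stratification. Items (1)--(5) are essentially formal consequences of this setup; the genuine technical content lies in constructing the canonical self-duality of (6).

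Item (1) is the standard recollement attached to the closed embedding $i_{<b}$ with affine open complement $j_b$. One has $(i_{<b})_! \simeq (i_{<b})_*$ because $i_{<b}$ is a closed embedding, and $(j_b)^! \simeq (j_b)^*$ because $j_b$ is an open embedding; both are formal in the six-functor formalism of \cite[\S 10]{Tame}. For (2), the identification $\Isoc_{G,b}\cong \BB G_b(F)$ (after a choice of lift of $b$) presents $\Shv(\Isoc_{G,b},\Lambda)$ as sheaves on the pro-étale classifying stack of the locally profinite group $G_b(F)$, which is tautologically $\Rep(G_b(F),\Lambda)$.

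Given (1) and (2), items (3)--(5) follow by dévissage along the Newton stratification. Compact generation of each $\Shv(\Isoc_{G,\leq b},\Lambda)$ is proven inductively by combining compact generation of $\Rep(G_b(F),\Lambda)$ (by $\cInd_K^{G_b(F)}\Lambda$ for pro-$p$ open compact $K$) with the gluing triangle; the characterizations of compact (resp.\ admissible) objects via $(i_b)^!$ and $(i_b)^*$ (resp.\ $(i_b)^!$ and $(i_b)^\sharp$) follow by checking on these generators. Compactness preservation for $(i_b)_!$ and $(i_b)_*$ reduces to checking that their right adjoints $(i_b)^*$ and $(i_b)^!$ preserve filtered colimits, and admissibility preservation for $(i_b)_*$ and $(i_b)_\flat$ follows from the corresponding property on strata together with the well-behavedness of $\Nt$ in the renormalized framework $\Ind\Shv_{\fingen}$ of \cite[\S 10.6]{Tame}.

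The crux of the argument is (6), from which (7) follows by restriction to admissible subcategories. Here one constructs $\DD^{\can}_{\Isoc_G}$ via the canonical self-duality machinery of \cite{Tame}, based on a canonical dualizing object on $\Isoc_G$ that is compatible with the stratification. The compatibility formulas for $(i_b)_*$ and $(i_b)^*$ reduce to computing the relative dualizing complex of $i_b\colon \Isoc_{G,b}\hookrightarrow \Isoc_G$; the shift $-2\langle 2\rho,\nu_b\rangle$ and Tate twist $-\langle 2\rho,\nu_b\rangle$ capture the ``dimension'' (in Kottwitz's sense) of the Newton stratum inside the ambient stack. For (7) one then checks on strata that the induced duality on each $\Rep(G_b(F),\Lambda)^{\Adm}$ is the smooth duality, using that $(\DD^{\can})^\omega$ on compact objects recovers Bernstein--Zelevinsky duality. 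The main obstacle is the delicate bookkeeping of shifts and Tate twists in the duality formulas on the infinite-dimensional stack $\Isoc_G$; this is precisely what the renormalized formalism of \cite{Tame} is designed to handle, so the entire theorem is ultimately a summary of results therein.
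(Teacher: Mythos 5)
Your proposal is correct and ultimately takes the same route as the paper: the paper's proof of this theorem is literally the single sentence ``This is a summary of results in \S 3.4.1 and \S 3.4.2 of \cite{Tame}.'' Your sketch is a reasonable reconstruction of the underlying arguments in \cite{Tame} (recollement from the Newton stratification, identification of strata with $\BB G_b(F)$, d\'evissage for compact generation and admissibility, and the canonical duality with its shift and Tate twist bookkeeping), and you yourself correctly observe at the end that the theorem is ``ultimately a summary of results therein.''
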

\begin{proof}
    This is a summary of results in \S 3.4.1 and \S 3.4.2 of \cite{Tame}. 
\end{proof}

Let $\Rep_{\fingen}(G_b(F),\Lambda)\simeq \Shv_{\fingen}(\BB G_b(F),\Lambda)$ be the category of finitely generated representations. By \cite[Proposition 3.57]{Tame}, the natural functor $\Rep_{\fingen}(G_b(F),\Lambda)\to \Rep(G_b(F),\Lambda)$ is fully faithful, with essential image generated under cones and retracts by objects $\cInd_{K}^{G_b(F)}\Lambda$ for open compact subgroups $K\subseteq G_b(F)$. 
For our purpose, this descirption can be regarded as the definition of $\Rep_{\fingen}(G_b(F),\Lambda)$. 
In particular, if $\Lambda=\overline{\QQ}_\ell$, or $\ell$ is banal with respect to $G_b(F)$, then finitely generated objects in $\Rep(G_b(F),\Lambda)$ agree with compact objects. The following proposition summarizes some results from \cite[\S 3.4.3]{Tame}.

\begin{prop}
    The natural functor $\Shv_{\fingen}(\Isoc_G,\Lambda)\to \Shv(\Isoc_G,\Lambda)$ is fully faithful. An object $\cF\in \Shv(\Isoc_G,\Lambda)$ is finitely generated if and only if $(i_b)^*\cF=0$ for all but finitely many $b$ and $(i_b)^*\cF\in \Rep(G_b(F),\Lambda)$ is finitely generated for all $b$, or equivalently, $(i_b)^!\cF=0$ for all but finitely many $b$ and $(i_b)^!\cF\in \Rep(G_b(F),\Lambda)$ is finitely generated for all $b$. The functors $(i_b)_*$, $(i_b)_!$ preserve finitely generated objects.
\end{prop}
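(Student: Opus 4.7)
The plan is to reduce each of the three assertions to the corresponding stratum-wise statement, using the open--closed recollement of Theorem~\ref{thm-local-Langlands-cat}(1) and the identification $\Isoc_{G,b}\simeq \BB G_b(F)$.

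\textbf{Full faithfulness.} Since $\Isoc_G$ is sind-placid and each stratum $\Isoc_{G,b}$ is ind-placid (being a pro-\'etale classifying stack of a locally profinite group), the fully faithfulness of $\Psi\colon \Shv_\fingen(\Isoc_{G,b},\Lambda)\to\Shv(\Isoc_{G,b},\Lambda)$ is stated in the excerpt for each stratum. To glue these along the Newton stratification, I would argue by induction on a finite sub-poset of $B(G)$ containing the supports of a given pair of finitely generated objects, using that $i_{\leq b}$ is a pfp closed embedding so the recollement computes Hom, and that the inductive step preserves both the finitely generated property and fully faithfulness.

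\textbf{Characterization via $(i_b)^*$ and $(i_b)^!$.} The ``only if'' direction follows from the definition of $\Rep_\fingen(G_b(F),\Lambda)$ as generated under cones and retracts by objects of the form $\cInd_K^{G_b(F)}\Lambda$, since these pushed forward from a single stratum manifestly have finitely many nonzero stalks and costalks. For the ``if'' direction, induct on the cardinality of the support. Pick a maximal element $b$ in the support and restrict to $\Isoc_{G,\leq b}$. The fiber sequence
\[
(i_{<b})_!(i_{<b})^!\cF \longrightarrow \cF \longrightarrow (j_b)_*(j_b)^*\cF
\]
reduces, by the inductive hypothesis and the preservation statement below, to showing that $(j_b)_*(j_b)^*\cF$ is finitely generated on $\Isoc_{G,\leq b}$. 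The equivalence of the $*$ and $!$ characterizations follows from the canonical duality of Theorem~\ref{thm-local-Langlands-cat}(6)--(7), which swaps $(i_b)^*$ with $(i_b)^!$ up to a shift and twist, combined with the fact that Bernstein--Zelevinsky duality preserves the subcategory of finitely generated representations.

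\textbf{Preservation by $(i_b)_*$, $(i_b)_!$.} It suffices to show that the generators $\cInd_K^{G_b(F)}\Lambda$, viewed via $\Rep(G_b(F),\Lambda)\simeq \Shv(\Isoc_{G,b},\Lambda)$, are sent by $(i_b)_*$ and $(i_b)_!$ to finitely generated sheaves on $\Isoc_G$. For $(i_b)_!$ the support is concentrated in one stratum, and the check is direct from the equivalence on that stratum. For $(i_b)_*$, one computes stalks on other strata via base change along the locally closed embeddings $i_{b'}$; the stalks on strata $b'\not\geq b$ vanish by the stratification, while on strata $b'\geq b$ one uses the pfp geometry of the closed embedding to verify that the stalks remain finitely generated. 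The main obstacle will be this last point: controlling the stalks of $(i_b)_*\cInd_K^{G_b(F)}\Lambda$ on the lower strata $b'>b$. This should be handled using the affineness of $j_b$ together with a careful description of the relevant loop-group geometry, but it is the technically delicate step and is where results from the cited \cite[\S 3.4.3]{Tame} do the real work.
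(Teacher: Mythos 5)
The paper does not actually prove this proposition: the text immediately preceding it reads ``The following proposition summarizes some results from \cite[\S 3.4.3]{Tame}'' and no proof environment is given, so the statement is imported wholesale from that reference. Your attempt therefore goes beyond what the paper does, but in the end you also defer the substantive step to \cite[\S 3.4.3]{Tame}, so the two treatments converge on the same citation.

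There is, however, a concrete error in your outline that is worth fixing. In the Newton stratification used here the closed strata are the \emph{downward} sets: $i_{\leq b}\colon \Isoc_{G,\leq b}\hookrightarrow\Isoc_G$ is a pfp closed embedding and $j_b\colon\Isoc_{G,b}\hookrightarrow\Isoc_{G,\leq b}$ is an affine open embedding, so the closure of $\Isoc_{G,b}$ is contained in $\Isoc_{G,\leq b}$. Consequently the stalks $(i_{b'})^*(i_b)_*\cF$ vanish for all $b'\not\leq b$, and the strata on which nontrivial stalks can appear are those with $b'<b$, not $b'\geq b$ (or $b'>b$) as you wrote in the preservation step. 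Your inductive step for the characterisation of finitely generated objects should likewise peel off a \emph{maximal} element of the support and use the recollement $(i_{<b})_!(i_{<b})^!\cF\to\cF\to (j_b)_*(j_b)^*\cF$ on $\Isoc_{G,\leq b}$, which you do correctly -- the issue is only in the later paragraph about $(i_b)_*$.

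Beyond that correction, the real content of the proposition is exactly the claim you flag as delicate: that $(i_b)_*\cInd_K^{G_b(F)}\Lambda$ has finitely generated stalks on the boundary strata $b'<b$. The affineness of $j_b$ is indeed the geometric input, but the argument also requires the ind-pfp properness of the boundary and a genuine computation in the Witt-vector loop group; none of this is an application of general recollement nonsense. Since you, like the paper, ultimately lean on \cite[\S 3.4.3]{Tame} for this, the proposal is best read as a structural sketch of that proof rather than an independent argument.
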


For our purpose, the statements in above proposition can be regarded as the definition of $\Shv_{\fingen}(\Isoc_G,\Lambda)$. Compact objects in $\Shv(\Isoc_G,\Lambda)$ are finitely generated. Therefore there is a continuous fully faithful embedding
$$\Psi^L\colon \Shv(\Isoc_G,\Lambda)\hookrightarrow \Ind\Shv_{\fingen}(\Isoc_G,\Lambda)$$
left adjoint to the natural functor $\Psi\colon\Ind\Shv_{\fingen}(\Isoc_G,\Lambda)\to  \Shv(\Isoc_G,\Lambda)$.

We will mainly work with the subcategory of unipotent sheaves on $\Isoc_G$. 

\begin{defn}[{\cite[Definition 4.116]{Tame}}]
    Let $H$ be a connected reductive group over $F$. Let $\Rep^{\widehat\unip}(H(F),\Lambda)\subseteq \Rep(H(F),\Lambda)$ be the subcategory generated under colimits by $\cInd_P^{H(F)}\pi_P$, where $P$ is a parahoric subgroup of $H(F)$, and $\pi_P$ is a unipotent Deligne--Lusztig representation of the Levi quotient $L_P$ of $P$. Objects in $\Rep^{\widehat\unip}(H(F),\Lambda)$ are called \emph{unipotent} representations.

    Let $\Rep^\unip_\fingen(H(F),\Lambda)\subseteq \Rep^{\widehat\unip}(H(F),\Lambda)$ be the idempotent complete full subcategory generated by those $\cInd_P^{H(F)}\pi_P$ with $\pi_P$ runs through unipotent Deligne--Lusztig representations.
\end{defn}

The category $\Rep_\fingen^\unip(H(F),\Lambda)$ contains $\Rep^{\widehat\unip}(H(F),\Lambda)^\omega$, and we have
$$\Rep^{\widehat\unip}(H(F),\Lambda)^\omega\subseteq \Rep_\fingen^\unip(H(F),\Lambda)\subseteq \Rep^{\widehat\unip}(H(F),\Lambda)\cap \Rep_\fingen(H(F),\Lambda).$$
Therefore there are fully faithful embeddings 
$$\Rep^{\widehat\unip}(H(F))\hookrightarrow \Ind\Rep_\fingen^\unip(H(F))\hookrightarrow\Ind\Rep_\fingen(H(F),\Lambda).$$

The category $\Rep^{\widehat\unip}(H(F),\Lambda)$ admits a finite collection of compact projective generators: Let $P$ be a parahoric subgroup of $H(F)$ with Levi quotient $L_P$. Let $\widetilde{R}_{\dot{w},\hat{u}}^T$ 
be the Deligne--Lusztig induction of the indecomposible unipotent monodromic tilting object $\mathrm{Til}^{\mathrm{mon}}_{\dot{w},\hat{u}}$ indexed by $w\in W_{L_P}$ as in \cite[\S 4.4.2]{Tame}.  Then objects of the form $\cInd_P^{G(F)}\widetilde{R}^T_{\dot{w},\hat{u}}$ form a set of compact projective generators in $\Rep^{\widehat\unip}(H(F),\Lambda)$.

\begin{defn}
    Let $\Shv^{\widehat\unip}(G(F),\Lambda)$ (resp. $\Shv^\unip_\fingen(\Isoc_G,\Lambda)$)
     be the subcategory of $\Shv(\Isoc_G,\Lambda)$ (resp. $\Shv_\fingen(\Isoc_G,\Lambda)$) consisting of those objects $\cF$ with $(i_b)^!\cF$ lies is $\Rep^{\widehat\unip}(G_b(F),\Lambda)$ (resp. $\Rep^\unip_\fingen(G_b(F),\Lambda)$) for any $b$. 
\end{defn}

The following properties will be needed.
\begin{prop}[{\cite[Proposition 4.122]{Tame}}]\label{prop-unip-local-Langlands-cat}
    \begin{enumerate}
        \item The functors $(i_b)^*,(i_b)_*,(i_b)^!,(i_b)_!$ preserve the subcategory $\Shv^{\widehat\unip}(\Isoc_G,\Lambda)$ (resp. $\Ind\Shv_\fingen^\unip(\Isoc_G,\Lambda)$) of $\Shv(\Isoc_G,\Lambda)$ (resp. $\Ind\Shv_\fingen(\Isoc_G,\Lambda)$). A similar open and closed stratification as in Theorem \ref{thm-local-Langlands-cat}(1) holds for $\Shv^{\widehat\unip}(\Isoc_G,\Lambda)$ (resp. $\Ind\Shv^\unip_\fingen(\Isoc_G,\Lambda)$).
        \item An object $\cF\in \Shv(\Isoc_G,\Lambda)$ (resp. $\cF\in\Ind\Shv_\fingen(\Isoc_G,\Lambda)$) is unipotent if and only if $(i_b)^*\cF\in \Rep^{\widehat\unip}(G_b(F),\Lambda)$ (resp. $(i_b)^*\cF\in \Ind\Rep^{\unip}_\fingen(G_b(F),\Lambda)$) for all $b$.
        \item The embedding $\Shv^{\widehat\unip}(\Isoc_G,\Lambda)\hookrightarrow\Shv(\Isoc_G,\Lambda)$ (resp. $\Ind\Shv_\fingen^\unip(\Isoc_G,\Lambda)\hookrightarrow\Ind\Shv_\fingen(\Isoc_G,\Lambda)$) admits a right adjoint $\cP^{\widehat\unip}$ (resp. $\cP^\unip$) called the unipotent projector. Moreover, $\cP^{\widehat\unip}$ (resp. $\cP^\unip$) preserves compact objects (resp. finitely generated objects) and commutes with $(i_b)^!$ and $(i_b)_*$.
    \end{enumerate}
\end{prop}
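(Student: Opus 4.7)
My plan is to prove the three parts together, with the $*$-stalk characterisation in part (2) as the central statement, treating the $\Shv$ and $\Ind\Shv_{\fingen}$ versions in parallel since the arguments are structurally identical. Since $\Shv^{\widehat\unip}(\Isoc_G,\Lambda)$ is defined via $(i_b)^!$-stalks, preservation under $(i_b)^!$ is tautological, and the induced open-closed stratification compatible with Theorem~\ref{thm-local-Langlands-cat}(1) then follows formally once the six-functor-type statements in (1) are established. Preservation under $(i_b)_!$ reduces, via proper base change along the Newton stratification, to showing that the $!$-restriction $(i_{b'})^!(i_b)_!\cG$ to a boundary stratum $b'<b$ computes a Jacquet-type functor landing in $\Rep^{\widehat\unip}(G_{b'}(F),\Lambda)$---a representation-theoretic statement about Levi subgroups on neighbouring isocrystals, verifiable on the explicit compact generators $\cInd_P^{G_b(F)}\widetilde R^T_{\dot w,\hat u}$.

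For part (2), I would induct on the poset $B(G)$ using the open-closed decomposition $\Isoc_{G,<b}\hookrightarrow \Isoc_{G,\leq b}\hookleftarrow \Isoc_{G,b}$. Given $\cF\in \Shv(\Isoc_G,\Lambda)$ with all $(i_{b'})^!\cF$ unipotent, the two recollement triangles on $\Isoc_{G,\leq b}$ applied to $(i_{\leq b})^*\cF$ and $(i_{\leq b})^!\cF$ express $(i_b)^*\cF$ in terms of $(i_b)^!\cF$ (unipotent by hypothesis) and boundary terms $(i_{<b})^*\cF, (i_{<b})^!\cF$, to which the inductive hypothesis applies; the argument closes because $\Rep^{\widehat\unip}(G_b(F),\Lambda)$ is stable under colimits and extensions, and Jacquet-type restrictions between adjacent Newton strata preserve the unipotent block. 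Given (2), the remaining preservations in (1) follow: $(i_b)^*$ directly from (2), and $(i_b)_*$ either by a symmetric recollement argument or by invoking the canonical duality of Theorem~\ref{thm-local-Langlands-cat}(6)--(7), which intertwines $(i_b)_!$ and $(i_b)_*$ up to shifts and twists and whose representation-theoretic counterpart (Bernstein--Zelevinsky or smooth duality) preserves the unipotent block.

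For part (3), the inclusion $\Shv^{\widehat\unip}(\Isoc_G,\Lambda)\hookrightarrow \Shv(\Isoc_G,\Lambda)$ is accessible and colimit-preserving---checked stratum by stratum using that $(i_b)^!$ commutes with filtered colimits in this six-functor formalism (since $(i_b)_!$ of a locally closed embedding preserves compact objects) together with the closure of $\Rep^{\widehat\unip}(G_b(F),\Lambda)$ under colimits---so the right adjoint $\cP^{\widehat\unip}$ exists by the adjoint functor theorem. Its commutation with $(i_b)^!$ and $(i_b)_*$ is obtained by passing to right adjoints of the statements in (1) that $(i_b)_!$ and $(i_b)^*$ preserve unipotent objects, and preservation of compact objects follows because the explicit compact generators $(i_b)_!\cInd_P^{G_b(F)}\widetilde R^T_{\dot w,\hat u}$ of $\Shv^{\widehat\unip}$ remain compact in $\Shv(\Isoc_G,\Lambda)$. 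The main obstacle I anticipate is the representation-theoretic input to (2): verifying that the Jacquet-type restriction arising between adjacent Newton strata lands in the unipotent block of $\Rep(G_{b'}(F),\Lambda)$ will require a careful parallel analysis of the geometry of the Newton stratification of $\Isoc_G$ and the parabolic-induction structure underlying $\Rep^{\widehat\unip}$.
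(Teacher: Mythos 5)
The paper gives no proof of this proposition: it is imported verbatim with a citation to \cite[Proposition 4.122]{Tame}, so there is no internal argument to compare your proposal against. What I can say is that the architecture you sketch — isolating the $*$-stalk characterisation of (2) as the central claim, proving it by recollement induction along the Newton poset, and deducing (1) and (3) by adjunction- and duality-chasing — is the natural way one would reconstruct a proof from the definitions, and you have correctly located where the genuine content sits. You are right that $(i_b)^!$-preservation is tautological from the definition of $\Shv^{\widehat\unip}$, that the equivalence between the $!$-stalk and $*$-stalk characterisations is nontrivial, and that everything ultimately hinges on a claim that the ``boundary'' functors $(i_{b'})^!(j_b)_!$, $(i_b)^*(j_{\not\leq b})_*$ arising in the recollement triangles carry unipotent objects to unipotent objects on the adjacent Newton stratum. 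You flag this as the main obstacle without proving it; that is an honest assessment, but it means the proposal is a scaffold rather than a proof — that boundary statement is exactly the representation-theoretic/geometric heart of \cite[Proposition 4.122]{Tame}, and it is not visibly reducible to known Jacquet-functor facts without a careful identification of what these $\Isoc_G$-boundary operators are in terms of parabolic induction for the groups $G_{b'}(F)$.

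There is also a small but genuine slip in your treatment of (3). You argue that $\cP^{\widehat\unip}$ preserves compact objects ``because the explicit compact generators $(i_b)_!\cInd_P^{G_b(F)}\widetilde R^T_{\dot w,\hat u}$ of $\Shv^{\widehat\unip}$ remain compact in $\Shv(\Isoc_G,\Lambda)$.'' That statement is about the inclusion $\iota\colon \Shv^{\widehat\unip}\hookrightarrow\Shv$ preserving compact objects, which is a different assertion from the right adjoint $\cP^{\widehat\unip}=\iota^R$ preserving compact objects; the former implies $\cP^{\widehat\unip}$ is continuous, not that it carries compacts to compacts. The correct route is the one you have already set up for the commutation statements: use Theorem~\ref{thm-local-Langlands-cat}(3) to test compactness stratum-by-stratum via $(i_b)^!$, use the commutation $\cP^{\widehat\unip}\circ(i_b)^! \simeq \cP^{\widehat\unip}_b\circ(i_b)^!$ obtained by passing to right adjoints in (1), and then invoke the fact that the stratum-wise block projector $\cP^{\widehat\unip}_b$ on $\Rep(G_b(F),\Lambda)$ (projection onto the unipotent Bernstein block) preserves compact objects. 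So the ingredients you assemble suffice, but the stated deduction at this step is not valid as written.
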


Let $\Psi^\unip\colon \Ind\Shv_\fingen^\unip(\Isoc_G,\Lambda)\to \Shv^{\widehat\unip}(\Isoc_G,\Lambda)$ denote the natural functor. As $\Shv^\unip_\fingen(\Isoc_G,\Lambda)$ contains $\Shv^{\widehat\unip}(\Isoc_G,\Lambda)^\omega$, there is a continuous fully faithful functor
$$\Psi^{L,\unip}\colon \Shv^{\widehat\unip}(\Isoc_G,\Lambda)\hookrightarrow\Ind\Shv^\unip_\fingen(\Isoc_G,\Lambda)$$
left adjoint ot $\Psi^\unip$.

\subsubsection{$t$-structures on the local Langlands category}\label{subsection-t-str}

There are two natural $t$-structures on $\Shv(\Isoc_G,\Lambda)$ glued from the standard $t$-structures $(\Rep(G_b(F),\Lambda)^{\leq 0},\Rep(G_b(F),\Lambda)^{\geq 0})$ on $\Rep(G_b(F),\Lambda)$.

We recall the perverse $t$-structure on $\Shv(\Isoc_G,\Lambda)$.

\begin{prop}[{\cite[Proposition 3.95]{Tame}}]
    Let $\Shv(\Isoc_G,\Lambda)^{p,\leq0}\subseteq \Shv(\Isoc_G,\Lambda)$
    denote the subcategory generated under colimits and extensions by objects of the form
    $$(i_b)_!\cInd_K^{G_b(F)}\Lambda[n-\langle2\rho,\nu_b\rangle],\quad b\in B(G),\ n\geq 0,\ K\subseteq G_b(F)\text{ pro-$p$ open compact subgroup}.$$
    Then $\Shv(\Isoc_G,\Lambda)^{p,\leq0}$ form a connective part of an admissible $t$-structure   $$(\Shv(\Isoc_G,\Lambda)^{p,\leq0},\Shv(\Isoc_G,\Lambda)^{p,\geq0})$$ on $\Shv(\Isoc_G,\Lambda)$ called the \emph{perverse $t$-structure}. The coconnective part of the perverse $t$-structure can be described as
    $$\Shv(\Isoc_G,\Lambda)^{p,\geq 0}=\{\cF\in \Shv(\Isoc_G,\Lambda)| (i_b)^!\cF\in \Rep(G_b(F),\Lambda)^{\geq \langle2\rho,\nu_b\rangle}\}.$$
    When $\Lambda=\overline{\QQ}_\ell$,
    the perverse $t$-structure restricts to a bounded $t$-structure on $\Shv(\Isoc_G,\Lambda)^\omega$, and
    $$\Shv(\Isoc_G,\Lambda)^{p,\leq 0}\cap \Shv(\Isoc_G,\Lambda)^\omega=\{\cF\in \Shv(\Isoc_G,\Lambda)| (i_b)^*\cF\in \Rep(G_b(F),\Lambda)^{\leq \langle2\rho,\nu_b\rangle}\}.$$
\end{prop}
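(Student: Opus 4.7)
The plan is to construct the perverse $t$-structure by applying the standard recognition principle for $t$-structures generated by compact objects (Lurie, \emph{Higher Algebra}, 1.4.4.11). Each $\cInd_K^{G_b(F)}\Lambda$ is compact in $\Rep(G_b(F),\Lambda)$ when $K$ is a pro-$p$ open compact subgroup, and $(i_b)_!$ preserves compact objects by Theorem~\ref{thm-local-Langlands-cat}(5); hence the listed generators are compact in $\Shv(\Isoc_G,\Lambda)$. The resulting $t$-structure is accessible and its connective part is automatically stable under filtered colimits, giving the admissibility.

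Next I would identify the coconnective part via the orthogonality characterization. By $((i_b)_!,(i_b)^!)$-adjunction and the projectivity of $\cInd_K^{G_b(F)}\Lambda$, one has for each generator $X_{n,K,b}\coloneqq(i_b)_!\cInd_K^{G_b(F)}\Lambda[n-\langle2\rho,\nu_b\rangle]$,
$$
\Hom(X_{n,K,b},\cF)\simeq\bigl((i_b)^!\cF\bigr)^K\bigl[\langle2\rho,\nu_b\rangle-n\bigr]\quad\text{in }D(\Lambda).
$$
Since the connective part is closed under $[1]$, testing the coconnective condition reduces to checking $\Hom(X_{n,K,b},\cF)\in D(\Lambda)^{\geq 0}$ on generators with $n\geq 1$; this amounts to $H^j((i_b)^!\cF)^K=0$ for all $j<\langle2\rho,\nu_b\rangle$, all pro-$p$ open compact $K$, and all $b$. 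Using that $\{\cInd_K^{G_b(F)}\Lambda\}_K$ generates $\Rep(G_b(F),\Lambda)$, this is equivalent to $(i_b)^!\cF\in\Rep(G_b(F),\Lambda)^{\geq\langle2\rho,\nu_b\rangle}$, matching the claimed description.

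For the $\Lambda=\overline{\QQ}_\ell$ statement, I would restrict to compact objects. By Theorem~\ref{thm-local-Langlands-cat}(3) such an $\cF$ has finite Newton support with compact stalks on each stratum; over $\overline{\QQ}_\ell$ these have bounded standard amplitude, so the restricted $t$-structure is bounded. For the equivalence with the $(i_b)^*$-description, I would apply the canonical duality $(\DD_{\Isoc_G}^{\can})^\omega$ of Theorem~\ref{thm-local-Langlands-cat}(6), which on compact objects interchanges $(i_b)_!\leftrightarrow(i_b)_*$ and $(i_b)^!\leftrightarrow(i_b)^*$ up to a shift by $[-2\langle2\rho,\nu_b\rangle]$ and a Tate twist, composed with Bernstein--Zelevinsky duality on $\Rep(G_b(F),\overline{\QQ}_\ell)^\omega$. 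Dualizing the just-established $(i_b)^!$-description of $\Shv(\Isoc_G,\overline{\QQ}_\ell)^{p,\geq 0}$ yields the $(i_b)^*$-description of $\Shv(\Isoc_G,\overline{\QQ}_\ell)^{p,\leq 0}\cap\Shv(\Isoc_G,\overline{\QQ}_\ell)^\omega$.

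\textbf{Main obstacle.} The hard step is the last one: converting the $(i_b)^!$-description into the $(i_b)^*$-description on compact objects requires that Bernstein--Zelevinsky duality on $\Rep(G_b(F),\overline{\QQ}_\ell)^\omega$ be $t$-exact for the standard $t$-structure (up to the Aubert--Zelevinsky shift), which is classical on finite-length admissible representations but needs verification for arbitrary compact objects. Tracking the precise shifts $2\langle2\rho,\nu_b\rangle$ and Tate twists from Theorem~\ref{thm-local-Langlands-cat}(6) so that the two characterizations match exactly is delicate but routine.
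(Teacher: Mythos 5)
The paper does not prove this statement; it simply cites it from \cite[Proposition 3.95]{Tame}, so there is no in-paper argument to compare your proposal against. Judged on its own merits, your sketch follows the standard route (Lurie's compact-generation criterion plus the $((i_b)_!,(i_b)^!)$-adjunction and projectivity of $\cInd_K$), and the reduction of the coconnectivity test to the generators and the resulting identification of $\Shv(\Isoc_G,\Lambda)^{p,\geq0}$ via $(i_b)^!$ are correct as written.

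Two points deserve tightening. First, the boundedness assertion for compact objects over $\overline{\QQ}_\ell$ is stated too briefly and, as ordered, is slightly circular: bounded amplitude of the $!$-stalks only gives the lower (coconnectivity) bound. The upper bound needs either the open-closed recollement along the finite Newton stratification (induction on support, using that $(i_b)_!$, $(i_b)_*$ of bounded compact objects land in the listed generators up to bounded shifts) or the $(i_b)^*$-description that you only derive afterwards; you should commit to one of these and place it before the boundedness claim. Second, the duality step is the right idea; the exactness of Bernstein--Zelevinsky duality on all compact objects in $\Rep(G_b(F),\overline{\QQ}_\ell)$ (up to the rank shift) is indeed the nontrivial input and is not merely a finite-length fact. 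It follows over $\overline{\QQ}_\ell$ from finiteness of global dimension and the Calabi--Yau property of the Iwahori--Hecke algebra, but you should record this precisely and track the shift $2\langle 2\rho,\nu_b\rangle$ and Tate twist from Theorem \ref{thm-local-Langlands-cat}(6) to see that the dual of the $!$-description lands exactly on the stated $*$-description.
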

\begin{rmk}
    In \cite{Tame}, we worked with more general weights of $G$ in the place of $2\rho$. However, we will only need the $2\rho$-case in the sequel.
\end{rmk}

Aside the perverse $t$-structure on $\Shv(\Isoc_G,\Lambda)$, there is another natural $t$-structre on $\Shv(\Isoc_G,\Lambda)$ called the \emph{exotic} $t$-structure, which will be more relevant to us.

\begin{prop}[{\cite[Proposition 3.110, Proposition 3.111]{Tame}}]\label{prop-adm-dual-exotic}
    Let $\Shv(\Isoc_G,\Lambda)^{e,\leq 0}$ (resp. $\Shv(\Isoc_G,\Lambda)^{e,\geq 0}$) denote the subcategory of objects $\cF$ such that
    $$(i_b)^!\cF\in \Rep(G_b(F),\Lambda)^{\leq \langle2\rho,\nu_b\rangle}\quad \text{(resp. $(i_b)^\sharp\cF\in \Rep(G_b(F),\Lambda)^{\geq \langle2\rho,\nu_b\rangle}$)}$$
    for all $b\in B(G)$. 
    \begin{enumerate}
    \item The pair $(\Shv(\Isoc_G,\Lambda)^{e,\leq 0},\Shv(\Isoc_G,\Lambda)^{e,\geq 0})$ form an acessible $t$-structure on $\Shv(\Isoc_G,\Lambda)$, called the \emph{exotic $t$-structure}. The exotic $t$-structure on $\Shv(\Isoc_G,\Lambda)$ restricts to a $t$-structure on $\Shv(\Isoc_G,\Lambda)^\Adm$.
    \item The admissible duality $(\DD_{\Isoc_G}^\can)^\Adm$ interchanges $\Shv(\Isoc_G,\Lambda)^{e,\leq 0}\cap \Shv(\Isoc_G,\Lambda)^\Adm$ and $\Shv(\Isoc_G,\Lambda)^{e,\geq 0}\cap \Shv(\Isoc_G,\Lambda)^\Adm$.
    \end{enumerate}
\end{prop}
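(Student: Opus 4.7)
The plan is to prove (1) in two moves---first recognize $\Shv(\Isoc_G,\Lambda)^{e,\leq 0}$ as the connective part of an accessible $t$-structure, then identify the coconnective part with the subcategory cut out by $(i_b)^\sharp$---and then deduce (2) directly from the duality compatibilities recorded in Theorem~\ref{thm-local-Langlands-cat}(7).

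For the first move, I would observe that each functor $(i_b)^!$ is continuous, since its right adjoint $(i_b)_\flat$ exists by Theorem~\ref{thm-local-Langlands-cat}(5). Hence each condition $(i_b)^!\cF\in \Rep(G_b(F),\Lambda)^{\leq \langle 2\rho,\nu_b\rangle}$ cuts out a presentable subcategory, and the intersection over the countable set $B(G)$ remains presentable, closed under extensions, and stable under positive shifts. The standard criterion for $t$-structures on presentable stable $\infty$-categories then promotes this to an accessible $t$-structure on $\Shv(\Isoc_G,\Lambda)$ whose connective part is $\Shv(\Isoc_G,\Lambda)^{e,\leq 0}$.

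For the second move---identifying the coconnective part as the subcategory defined via $(i_b)^\sharp$---I would induct along the finite filtration of $\Shv(\Isoc_G,\Lambda)$ by the closed substacks $\Isoc_{G,\leq b}$. Using the recollement triangle $(i_{\leq b})_*(i_{\leq b})^!\cG\to \cG\to (j_{>b})_*(j_{>b})^*\cG$ and the adjunction $(i_b)_!\dashv (i_b)^\sharp$, one can convert the orthogonality condition $\Hom(\cF,\cG)=0$ for $\cF\in \Shv^{e,\leq -1}$ into stratum-by-stratum conditions on $(i_b)^\sharp\cG$. The shift $\langle 2\rho,\nu_b\rangle$ is the precise integer that absorbs the cohomological discrepancy between $(i_b)^!$ and $(i_b)^*$ along the stratification; in particular one must verify that $(i_b)_!V$ for $V\in\Rep^{\leq \langle 2\rho,\nu_b\rangle}$ does lie in $\Shv^{e,\leq 0}$, which is a nontrivial check on strata $b'<b$ in the closure. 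The refinement to $\Shv(\Isoc_G,\Lambda)^\Adm$ then follows from the same inductive machinery, since admissibility is detected stratum-by-stratum and the gluing functors $(i_b)_*$ and $(i_b)_\flat$ preserve admissibility by Theorem~\ref{thm-local-Langlands-cat}(5).

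For (2), given $\cF\in \Shv^{e,\leq 0}\cap \Shv(\Isoc_G,\Lambda)^\Adm$ so that $(i_b)^!\cF\in \Rep^{\leq \langle 2\rho,\nu_b\rangle}$, the second identity in Theorem~\ref{thm-local-Langlands-cat}(7) gives
$$
(i_b)^\sharp\circ (\DD_{\Isoc_G}^\can)^\Adm\cF\simeq (\DD_{G_b(F)}^\can)^\Adm\bigl((i_b)^!\cF\bigr)[-2\langle 2\rho,\nu_b\rangle]\bigl(-\langle 2\rho,\nu_b\rangle\bigr).
$$
Since the smooth duality on admissibles sends $\Rep^{\leq n}$ to $\Rep^{\geq -n}$, after the $[-2\langle 2\rho,\nu_b\rangle]$ shift the right-hand side lies in $\Rep^{\geq \langle 2\rho,\nu_b\rangle}$, which is exactly the condition $(\DD_{\Isoc_G}^\can)^\Adm\cF\in \Shv^{e,\geq 0}$. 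Applying the involution in reverse yields the opposite inclusion. I expect Step 2 to be the main obstacle: the partial order on $B(G)$ means strata are not totally comparable, and $(i_b)_!V$ has nontrivial $!$-restrictions to strata below $b$ whose behavior must be tracked carefully to ensure the shift $\langle 2\rho,\nu_b\rangle$ is uniformly correct throughout the gluing.
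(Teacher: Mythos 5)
Note first that this proposition is imported verbatim from \cite[Propositions~3.110 and~3.111]{Tame}: the present paper contains no proof of it, only the citation, so there is no internal argument to compare yours against.

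Your argument for part~(2) is essentially correct: combining the duality formula in Theorem~\ref{thm-local-Langlands-cat}(7) with the fact that smooth duality on admissibles is contravariantly $t$-exact yields exactly the stated exchange once the shifts are tracked as you do. Part~(1), however, contains a genuine gap, and you have correctly located it without filling it. The crux is the claim that the objects $(i_b)_!V$ (with $V$ in appropriate cohomological degree) lie in $\Shv(\Isoc_G,\Lambda)^{e,\leq 0}$, which requires a cohomological amplitude bound on $(i_{b'})^!(i_b)_!V$ for every $b'$ in the closure of the $b$-stratum. This is not a formal recollement fact: $(i_{b'})^!(i_b)_!V$ is a shriek-restriction of a shriek-extension across a locally closed stratification, and its connectivity is governed by specific geometric features of the Newton stratification of $\Isoc_G$ (notably the affineness of the open immersions $j_b\colon\Isoc_{G,b}\hookrightarrow\Isoc_{G,\leq b}$ and the pfp-ness of the closed embeddings $i_{\leq b}$, recalled in \S\ref{subsection-cllc}). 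Without that geometric input the two candidate connective and coconnective classes have no reason to be aligned as required, and the putative $t$-structure need not exist. Your write-up names the obstacle (``a nontrivial check on strata $b'<b$'') and then stops; a complete proof must actually carry out that check, and identifying and exploiting the precise geometric statement that makes it work is the main content of the argument in \cite{Tame}.
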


The exotic $t$-structure naturally restricts to a $t$-structure on the subcategory $\Shv^{\widehat\unip}(\Isoc_G,\Lambda)$. Let $(\Shv^{\widehat\unip}(\Isoc_G,\Lambda)^{e,\leq 0},\Shv^{\widehat\unip}(\Isoc_G,\Lambda)^{e,\geq 0})$ denote the resulting $t$-structure. Then $\cF\in \Shv^{\widehat\unip}(\Isoc_G,\Lambda)$ is connective (resp. coconnective) if and only if
$$(i_b)^!\cF\in\Rep^{\widehat\unip}(G_b(F))^{\leq \langle2\rho,\nu_b\rangle} \quad\text{(resp. $\cP^{\widehat\unip}((i_b)^\sharp\cF)\in\Rep^{\widehat\unip}(G_b(F))^{\geq \langle2\rho,\nu_b\rangle}$)}$$
for all $b\in B(G)$. It is not hard to see that $\cP^{\widehat\unip}$ is exotic $t$-exact and $(\DD_{\Isoc_G}^{\widehat\unip,\can})^\Adm$ interchanges $\Shv^{\widehat\unip}(\Isoc_G,\Lambda)^{e,\leq 0}\cap \Shv^{\widehat\unip}(\Isoc_G,\Lambda)^\Adm$ and $\Shv^{\widehat\unip}(\Isoc_G,\Lambda)^{e,\geq 0}\cap \Shv^{\widehat\unip}(\Isoc_G,\Lambda)^\Adm$.

\subsubsection{Stack of local Langlands parameters}

Fix $\sqrt{q}\in \Lambda$. 

Let $\hat{G}$ be the dual group of $G$ over $\Lambda$. It is equipped with a pinning $(\hat{B},\hat{T},\hat{e})$.
The tuple $(\hat{G},\hat{B},\hat{T},\hat{e})$ carries an action of the Weil group $W_F$. Fix a lift $\phi\in W_F$ of the arithmetic Frobenius. Let $I_F\subseteq W_F$ be the inertia subgroup and $P_F\subseteq I_F$ be the wild inertia subgroup. Then $W_F/I_F\cong \phi^\ZZ$. Fix a topological generator $\tau\in I_F/P_F$. Then $\langle \phi,\tau\rangle$ generates a dense subgroup of $W_F$ under the relation $\phi\tau\phi^{-1}=\tau^q$.

 By assumption, the action of $I_F$ on $\hat{G}$ is trivial, and the $\phi$-action on $\hat{G}$ has finite order. Define the Langlands dual group as the semi-direct product
$${}^LG\coloneqq \hat{G}\rtimes \langle\phi\rangle.$$
Let $R$ be a $\Lambda$-algebra. A \emph{Langlands parameter} (or \emph{$L$-parameter}) of ${}^LG$ with coefficients in $R$ is a strongly continuous homomorphism
    $\varphi\colon W_F\to {}^LG(R)$
such that the composition of $\varphi$ with the projection ${}^LG(R)\to \langle \phi\rangle$ is the natural projection $W_F\to \langle \phi\rangle$. We refer to \cite[\S 2.1.1]{Tame} for the definition of strong continuity.
    
Let $\Loc_{{}^LG,F}$ denote the (derived) moduli stack of local $L$-parameters of ${}^LG$. It is known (see \cite[Proposition 3.1.6]{Zhu-coherent-sheaf}) that $\Loc_{{}^LG,F}$ a disjoint union of classical Artin stacks of finite type over $\Lambda$. It is equidimensional of dimension $0$, and is local complete intersection with trivial dualizing complex.

Let $R$ be a $\Lambda$-algebra. We say an $L$-parameter $\varphi\colon W_F\to {}^LG(R)$ \emph{tame} if $\varphi$ is trivial on the wild inertia subgroup. We say $\varphi$ \emph{unipotent} if $\varphi$ is tame and for each geometric point $x=\Spec K$ of $\Spec R$, the element $\varphi_x(\tau)\in \hat{G}(K)$ is unipotent. Let $\Loc_{{}^LG,F}^{\widehat\unip}$ (resp. $\Loc_{{}^LG,F}^{\mathrm{tame}}$) be the substack of $\Loc_{{}^LG,F}$ classifying unipotent (resp. tame) $L$-parameters. The substacks $\Loc_{{}^LG,F}^{\widehat\unip}$ and $\Loc_{{}^LG,F}^{\mathrm{tame}}$ are open and closed in $\Loc_{{}^LG,F}$ and of finite type over $\Lambda$. The stack $\Loc^{\mathrm{tame}}_{{}^LG,F}$ can be written as
$$\Loc^{\mathrm{tame}}_{{}^LG,F}\simeq \{(x,y)\in \hat{G}\times\hat{G}| xyx^{-1}=\phi(y)^q\}/\hat{G}$$
where the morphism sends an $L$-parameter $\varphi$ to $(\varphi(\phi),\varphi(\tau))$. Similarly, we have
$$\Loc^{\widehat\unip}_{{}^LG,F}\simeq \{(x,y)\in \hat{G}\times\cU^\wedge_{\hat{G}}| xyx^{-1}=\phi(y)^q\}/\hat{G}$$
where $\cU^\wedge_{\hat{G}}$ is the formal completion of $\hat{G}$ along the unipotent cone $\cU_{\hat{G}}$ .

Let $\phi=\phi_{\cU^\wedge_{\hat{G}}/\hat{G}}\colon\cU^\wedge_{\hat{G}}/\hat{G}\to \cU^\wedge_{\hat{G}}/\hat{G}$ be the automorphism defined by 
$$\phi_{\cU^\wedge_{\hat{G}}/\hat{G}}(g)=\phi(g)^{1/q}, \quad g\in \cU^\wedge_{\hat{G}}.$$
Note that $g\mapsto g^{q}\colon \cU^\wedge_{\hat{G}}\to \cU^\wedge_{\hat{G}}$ is an isomorphism, thus $\phi_{\cU^\wedge_{\hat{G}}/\hat{G}}$ is well-defined.
Then $\Loc^{\widehat\unip}_{{}^LG,F}$ is isomorphic to the stack $\cL_\phi(\cU^\wedge_{\hat{G}}/\hat{G})$ of $\phi$-fixed points: If $X$ is a stack with an automorphism $\phi\colon X\to X$, let 
$$\cL_{\phi}(X)\coloneqq X\times_{\mathrm{id}\times\phi,X\times X,\Delta}X$$
denote the stack of $\phi$-fixed points. 

\subsubsection{Unipotent categorical local Langlands}\label{subsubsection-unipotent-cll}

We recall the unipotent categorical local Langlands established in \cite{Tame}. Fix a pinning $(G,B,T,e)$ of $G$ defined over $O_F$ and a nontrivial additive character $\kappa_F\to\Lambda^\times$.

\begin{thm}[{\cite[Theorem 5.3]{Tame}}]\label{thm-unipotent-cllc}
	If $\Lambda$ is of characteristic $\ell$, then assume that $\ell$ is large relative to $G$. More precisely, assume that $\ell$ is bigger than the Coxeter number of any simple factors of $G$, and $\ell\neq 19$ (resp. $\ell\neq 31$) if $G$ has a simple factor of type $\mathsf{E}_7$ (resp. $\mathsf{E}_8$).
	There is a fully faithful embedding
	$$\LL^{\unip}_G\colon \Shv^{\unip}_\fingen(\Isoc_G,\Lambda)\hookrightarrow \Coh(\Loc_{{}^LG,F}^{\widehat\unip}).$$
	Moreover, if $\Lambda=\overline\QQ_\ell$, then $\LL^{\unip}$ is an equivalence of categories.
\end{thm}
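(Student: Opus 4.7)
The plan is to follow the strategy of the tame categorical local Langlands correspondence from \cite{Tame}, proceeding in three stages. First, I would construct compact generators on both sides at Iwahori level. On the automorphic side, consider the object
$$\bigoplus_{b\in B(G)_{\mathrm{bsc}}} (i_b)_! \cInd_{I_b}^{G_b(F)} \Lambda \in \Shv^\unip_\fingen(\Isoc_G,\Lambda),$$
where $I_b\subseteq G_b(F)$ is an Iwahori subgroup; its endomorphism algebra (with appropriate cohomological shifts across Newton strata) recovers a twisted form of the extended affine Hecke algebra. On the spectral side, the corresponding object is the unipotent coherent Springer sheaf $\CohSpr^\unip$ on $\Loc_{{}^LG,F}^{\widehat\unip}$, whose endomorphisms can be analyzed using the structure of $(\hat{G}\phi/\hat{G})^\wedge$ as worked out in Section~\ref{section-Springer} applied to $\hat{G}$. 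The crucial step is to identify these two endomorphism algebras via a tame, $\phi$-twisted analogue of Bezrukavnikov's equivalence for the affine Hecke category.

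Second, once the Iwahori-level generators match, I would extend to a functor on the whole category using the spectral action of $\Perf(\Loc_{{}^LG,F}^{\widehat\unip})$. Since $\LL_G^\unip$ must send the Iwahori generator to $\CohSpr^\unip$ and be compatible with this action, the functor is essentially determined; one then verifies that the Iwahori-type objects generate $\Shv^\unip_\fingen(\Isoc_G,\Lambda)$ under the spectral action, reducing via the unipotent projector $\cP^\unip$ and parabolic induction to the description of $\Rep^\unip_\fingen(G_b(F),\Lambda)$ in terms of Deligne--Lusztig theory.

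Third, full faithfulness would reduce formally to the endomorphism match at the Iwahori level, propagated through the spectral action. For essential surjectivity when $\Lambda=\overline{\QQ}_\ell$, I would invoke Proposition~\ref{prop-generators-GS} applied to $\hat{G}$: the generators $(\pi_{\phi,\xi})_*(\gamma_{\phi,\xi})^!\cF_\lambda$ of the local categories $\Coh((\hat{G}\phi/\hat{G})^\wedge_\xi)$ are all reachable by the spectral action on $\CohSpr^\unip$, so the essential image of $\LL_G^\unip$ generates the target under cones and retracts.

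The hard part will be Stage~1 in positive characteristic: matching $\mathrm{End}(\CohSpr^\unip)$ with the twisted extended affine Hecke algebra requires a DG-enhanced, $\phi$-twisted modular analogue of Bezrukavnikov's tame local Langlands equivalence, which rests on the Bezrukavnikov--Riche modular geometric Satake and a delicate analysis of the (possibly derived) structure of $\Loc_{{}^LG,F}^{\widehat\unip}$. The numerical restrictions on $\ell$ (exceeding the Coxeter number, plus exclusions for simple factors of type $E_7$ and $E_8$) come precisely from the current limitations of modular geometric Satake.
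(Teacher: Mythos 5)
This theorem is not proved in the present paper; it is a cited result (\cite[Theorem 5.3]{Tame}), and the paragraph following it recalls only the route of proof: establish the $\phi$-equivariant Bezrukavnikov equivalence
$$\BB_G^{\unip}\colon (\Ind\Shv_{\fingen}(\Iw\backslash LG/\Iw,\Lambda),\star) \simeq (\Ind\Coh(S^\unip_{\hat{G}}),\star),$$
and then take categorical traces with respect to Frobenius on both sides to directly produce $\LL_G^\unip$ as the induced functor
$\Ch^\unip_{LG,\phi} \Rightarrow \Ch^\unip_{{}^LG,\phi}$. The crucial input from \cite{Tame} is that the trace
functor $\Ch^\unip_{LG,\phi}$ applied to \emph{all} of $\Shv_\fingen(\Iw\backslash LG/\Iw,\Lambda)$ compactly generates $\Ind\Shv^\unip_\fingen(\Isoc_G,\Lambda)$ (this is \cite[Proposition 4.95]{Tame}, and it is what makes the trace automatically the ``right'' target). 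Your proposal takes a genuinely different route (match a single compact generator at Iwahori level, identify endomorphism algebras via a $\phi$-twisted Bezrukavnikov, then extend by the spectral action), and this route has a real gap.

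The gap is in your Stage 2. You claim that ``the Iwahori-type objects generate $\Shv^\unip_\fingen(\Isoc_G,\Lambda)$ under the spectral action,'' but this is false in general. The spectral action of $\Perf(\Loc^{\widehat\unip}_{{}^LG,F})$ is, by the Beilinson--Parshin input recalled in Proposition \ref{prop-spectral-action}, generated under cones and retracts by $\Perf(\BB\hat{G})$, whose action is by Hecke operators $T_V$ realized as convolution with \emph{central} sheaves $Z_V$. Thus the spectral-action span of $(i_1)_*\cInd_I^{G(F)}\Lambda$ is precisely the image of $\{Z_V\star\Delta_e\}$, which lies in the Iwahori block within each Newton stratum. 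Whenever $G_b$ admits a cuspidal unipotent Deligne--Lusztig representation (e.g.\ for $\mathrm{Sp}_4$ or even orthogonal groups), that representation belongs to $\Rep^\unip_\fingen(G_b(F),\Lambda)$ but is not in the Iwahori block, hence not in this span. The correct statement from \cite{Tame}, replacing your Stage 2, is that $\Ch^\unip_{LG,\phi}(\cF)$ for $\cF$ ranging over the \emph{entire} affine Hecke category generates the target; this is strictly larger than the span of central sheaves acting on the unit, and it is exactly what the categorical trace bakes in. On the spectral side the same phenomenon appears: the subcategory of $\Ind\Coh(\Loc^{\widehat\unip}_{{}^LG,F})$ generated by $\CohSpr^\unip$ under vector-bundle twists misses the coherent sheaves supported on the ``Springer-nongeneric'' unipotent strata, again because $\fq^\spec$ has positive-dimensional (Springer) fibers there.

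Two smaller but substantial issues. First, your proposed automorphic generator $\bigoplus_{b\in B(G)_{\mathrm{bsc}}}(i_b)_!\cInd_{I_b}^{G_b(F)}\Lambda$ is wrong on two counts even for the Iwahori block: it must range over all of $B(G)$ (non-basic strata contribute nontrivially), and in characteristic $\ell$ the compact projective generators of $\Rep^{\widehat\unip}$ are the unipotent \emph{monodromic} Deligne--Lusztig tiltings $\cInd_P^{G_b(F)}\widetilde{R}^T_{\dot w,\hat u}$; $\cInd_{I_b}\Lambda$ is not projective in $\Rep^{\widehat\unip}(G_b(F),\overline{\FF}_\ell)$. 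Second, matching endomorphism \emph{algebras} is weaker than matching $E_1$-algebras in $\Lincat_\Lambda$ together with the $\Perf(\Loc^{\widehat\unip})$-linear structure; the categorical trace packages these coherently, whereas a direct generator-matching argument would need an independent formality statement that is itself hard in positive characteristic.
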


We recall some main ingredients of the proof which will be needed later. Recall the Bezrukavnikov equivalence
$$\BB_G^{\unip}\colon (\Ind\Shv_{\fingen}(\Iw\backslash LG/\Iw,\Lambda),\star) \simeq  (\Ind\Coh(S^\unip_{\hat{G}}),\star) $$
where $S^\unip_{\hat{G}}=\hat{U}/\hat{B}\times_{\hat{G}/\hat{G}}\hat{U}/\hat{B}$ is the Steinberg variety, $\Iw\backslash LG/\Iw$ is the affine Hecke stack, and $\star$ are convolution products on two sides. The equivalence $\BB_G^{\unip}$ is compatible with the (relative and arithmetic) Frobenius actions on two sides. Taking categorical traces defines a commutative diagram
$$\begin{tikzcd}[column sep=huge]
    (\Ind\Shv_{\fingen}(\Iw\backslash LG/\Iw,\Lambda) \ar[r,"\Ch_{LG,\phi}^\unip"]\ar[d,"\BB_G^\unip"swap,"\simeq"] & \Ind\Shv^{\unip}_{\fingen}(\Isoc_G,\Lambda) \ar[d,hook,"\LL_G^{\unip}"] \\
    \Ind\Coh(S^\unip_{\hat{G}}) \ar[r,"\Ch_{{}^L{G},\phi}^\unip"] & \Ind\Coh(\Loc^{\widehat\unip}_{{}^LG,F}).
\end{tikzcd}$$
The functor $\Ch^\unip_{LG,\phi}$ is defined by $\Ch^\unip_{LG,\phi}=\Nt_{*}\circ \delta^{!}$ using the correspondence
$$\Iw\backslash LG/\Iw \xleftarrow{\delta}\Sht_{\cI}^\loc\xrightarrow{\Nt}\Isoc_G.$$
Here, $\delta$ sends a shtuka $\phi^*\cE\dashrightarrow\cE$ to the inverse modification $\cE\dashrightarrow \phi^*\cE$.

On the spectral side, define
$$\widetilde\Loc_{{}^LG,F}^{\unip}\coloneqq \Loc_{{}^LG,F}^{\widehat\unip}\times_{\hat{G}/\hat{G}} \hat{U}/\hat{B}$$
where the morphism $\Loc_{{}^LG,F}^{\widehat\unip}\to \hat{G}/\hat{G}$ is defined by evaluating at $\tau$. Denote by $\pi^\unip\colon\widetilde\Loc_{{}^LG,F}^{\unip}\to \Loc_{{}^LG,F}^{\widehat\unip} $ the natural morphism. There is a natural morphism
$$\delta^\unip\colon \widetilde\Loc_{{}^LG,F}^{\unip}\to S^\unip_{\hat{G}}.$$
On the level of classical points, the stack $\widetilde\Loc_{{}^LG,F}^{\unip}$ classifies pairs $(x,y,g\hat{B})$ with $(x,y)\in \Loc_{{}^LG,F}^{\widehat\unip}$ and $y\in g\hat{U}g^{-1}$. Then $\delta^\unip$ sends $(x,y,g\hat{B})$ to $(y,g\hat{B},\phi^{-1}(x^{-1}g)\hat{B})$. The functor $\Ch^\unip_{{}^LG,\phi}$ is defined by
$\Ch^\unip_{{}^LG,\phi}= (\pi^\unip)_*\circ (\delta^\unip)^!.$

For $w\in \widetilde{W}$, let $j_w\colon \Fl_{G,w}\hookrightarrow \Fl_G$ be the affine Schubert cell associated with $w$. Denote $\Sht_{\cI,w}^\loc=\delta^{-1}(\Iw\backslash LG_w/\Iw)$. Note that $\Sht_{\cI,w}^\loc$ classifies local shtukas $\phi^*\cE\dashrightarrow\cE$ bounded by $w^{-1}$.  Let $LG_w$ denote the preimage of $ \Fl_{G,w}$ in $LG$. Let the normalized standard sheaf (resp. normalized costandard sheaf) be
$$\Delta_w\coloneqq (j_w)_! \omega_{\Iw\backslash\Fl_{G,w}}(-\frac{l(w)}{2})[-l(w)]\quad\text{resp. }\nabla_w\coloneqq (j_w)_* \omega_{\Iw\backslash\Fl_{G,w}}(-\frac{l(w)}{2})[-l(w)].$$
We recall the definition of Wakimoto sheaves. If $\lambda\in \XX_\bullet(T)^+$ is dominant, set $W_\lambda\coloneqq \nabla_{\lambda}$. For general weights, $W_\lambda$ is determined by requiring $W_\lambda\star W_\mu=W_{\lambda+\mu}$ for any $\lambda$ and $\mu$ in $\XX_\bullet(T)$. In particular, if $\lambda\in -\XX_\bullet(T)^+$ is anti-dominant, there is an isomorphism $W_\lambda\simeq \Delta_{\lambda}$. 

On the spectral side, let $\iota\colon \hat{U}/\hat{B}\to S^\unip_{\hat{G}}$ denote the diagonal morphism. We have
$$\BB^\unip_G(W_\lambda)= \iota_* \omega_{\hat{U}/\hat{B}}(\lambda)$$
for $\lambda\in \XX_\bullet(T)$. We note that $\omega_{\hat{U}/\hat{B}}\simeq \cO_{\hat{U}/\hat{B}}[-\dim \hat{T}]$ is trivial up to a shift.

Let $B(G)_{\unr}\coloneqq \mathrm{Im}(B(T)\to B(G))$. Elements in $B(G)_{unr}$ are called \emph{unramified}. By \cite[Lemma 4.2.3]{XZ-cycles}, there is a bijection
$$B(G)_\unr \xrightarrow{\simeq} \XX_\bullet(T)^+_\phi.$$
Here $\XX_\bullet(T)^+_\phi$ is defined as in \S\ref{subsubsection-rel-root-data}. Under this bijection, the Kottwitz map
$$\XX_\bullet(T)^+_\phi\cong B(G)_\unr\subseteq B(G)\hookrightarrow \XX_\bullet(T)^{\phi,+}_\QQ\times \pi_1(G)_\phi$$
sends an element $\mu\in \XX_\bullet(T)^+_\phi$ to $(\mu^\diamond,[\mu])$ where $\mu^\diamond= \frac{1}{m}\sum_{i=0}^{m-1}\phi^i(\mu)$ for some $m$ with $\phi^m(\mu)=\mu$, and $[\mu]$ is the image of $\mu$ under the projection $\XX_\bullet(T)_\phi\to\pi_1(G)_\phi.$ Moreover, there is a commutative diagram
$$\begin{tikzcd}
    B(T) \ar[r,"\simeq"]\ar[d] & \XX_\bullet(T)_\phi \ar[d] \\
    B(G)_\unr \ar[r,"\simeq"] & \XX_\bullet(T)_\phi/W_0\cong \XX_\bullet(T)_\phi^+,
\end{tikzcd}$$
where $W_0=W^\phi$ is the relative Weyl group.

\begin{rmk}\label{rmk-phi-coinv-surj}
	The natural map $\XX_\bullet(T)^+\to \XX_\bullet(T)_\phi^+$ is \emph{not} surjective in general. If the center of $G$ connected, or equivalently the derived subgroup of $\hat{G}$ is simply-connected, then $\XX_\bullet(T)^+\to \XX_\bullet(T)_\phi^+$ is surjective. See \cite[Remark 4.2.5]{XZ-cycles}.
\end{rmk}

For $\lambda\in \XX_\bullet(T)$, let $b_\lambda$ be the image of $\lambda^{-1}(\varpi)\in G(\breve{F})$ in $B(G)_\unr$. Assume that $\lambda$ is dominant or anti-dominant, then $t_\lambda\in \widetilde{W}$ is $\phi$-straight in the sense of \cite{He-Nie-minimal-length}. By \cite[Proposition 3.13]{Tame}, we have
$$\Sht^\loc_{\cI,t_\lambda}\cong \BB I_{b_\lambda}$$
where $I_{b_\lambda}\subseteq G_{b_\lambda}(F)$ is the Iwahori subgroup. There is a commutative diagram
$$\begin{tikzcd}
    \Iw\backslash LG_{t_\lambda} /\Iw\ar[r,leftarrow]\ar[d] & \Sht^\loc_{\cI,t_\lambda}\cong \BB I_{b_\lambda} \ar[d]\arrow[ld, phantom, very near start, "\square"]\ar[r] & \Isoc_{G,b_\lambda}\cong \BB G_{b_\lambda}(F)\ar[d,"i_{b_\lambda}"]\\
    \Iw\backslash LG/\Iw \ar[r,leftarrow,"\delta"] & \Sht^\loc_\cI \ar[r,"\Nt"] & \Isoc_G,
\end{tikzcd}$$
with the left square Cartesian. Let $\lambda\in \XX_\bullet(T)^+$ be a dominant coweight. We see that 
$$\Ch^\unip_{LG,\phi}(W_\lambda)\simeq (i_{b_\lambda})_*(\cInd_{I_{b_\lambda}}^{G_{b_{\lambda}}(F)}\Lambda)[-\langle2\rho,\nu_{b_\lambda}\rangle]$$
and 
$$\Ch^\unip_{LG,\phi}(W_{w_0(\lambda)})\simeq (i_{b_\lambda})_!(\cInd_{I_{b_\lambda}}^{G_{b_{\lambda}}(F)}\Lambda) [-\langle2\rho,\nu_{b_\lambda}\rangle]$$
Here, we use the equation $\langle2\rho,\nu_{b_\lambda}\rangle=\langle2\rho,\lambda^\diamond\rangle=\langle 2\rho,\lambda\rangle$.

On the spectral side, define 
$$\Loc_{{}^LB.F}^\unip\coloneqq \widetilde{\Loc}_{{}^LG,F}^{\widehat\unip}\times_{S^\unip_{\hat{G}}}\hat{U}/\hat{B}.$$
Let 
$$\fq^\unip\colon \Loc_{{}^LB,F}^\unip\to \Loc_{{}^LG,F}^{\widehat{\unip}}$$
denote the natural morphism. There is a natural isomorphism
$\Loc_{{}^LB.F}^\unip\simeq \cL_\phi(\hat{U}/\hat{B}).$ It defines a natural morphism $\Loc_{{}^LB.F}^\unip\to \hat{U}/\hat{B}$. For $\lambda\in \XX_\bullet(T)$, let $\omega_{\Loc_{{}^LB.F}^\unip}(\lambda)$ denote the canonical sheaf on $\Loc_{{}^LB.F}^\unip$ twisted by the character $\lambda$ of $\hat{B}$. Because $\Loc_{{}^LB.F}^\unip$ is a $\phi$-fixed points stack, there is a natural isomorphism $\cO_{\Loc_{{}^LB.F}^\unip}\simeq \omega_{\\Loc_{{}^LB.F}^\unip}$.

\begin{prop}\label{prop-Ch-Wakimoto}
    Let $\lambda\in \XX_\bullet(T)^+$ be a dominant coweight. Let $b_\lambda\in B(G)$ be the associated element. There are isomorphisms
    $$\LL_G^{\unip}((i_{b_\lambda})_*(\cInd_{I_{b_\lambda}}^{G_{b_{\lambda}}(F)}\Lambda)[-\langle2\rho,\nu_{b_\lambda}\rangle])\simeq (\fq^\unip)_*\omega_{\Loc_{{}^LB,F}^\unip}(\lambda)$$
    and
    $$\LL_G^{\unip}((i_{b_\lambda})_!(\cInd_{I_{b_\lambda}}^{G_{b_{\lambda}}(F)}\Lambda)[-\langle2\rho,\nu_{b_\lambda}\rangle])\simeq (\fq^\unip)_*\omega_{\Loc_{{}^LB,F}^\unip}(w_0(\lambda)).$$
\end{prop}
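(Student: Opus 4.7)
The plan is to deduce both isomorphisms from the commutative square just above relating $\LL_G^{\unip}$, the two categorical trace functors $\Ch_{LG,\phi}^{\unip}$ and $\Ch_{{}^LG,\phi}^{\unip}$, and the Bezrukavnikov equivalence $\BB_G^{\unip}$. Applying $\LL_G^{\unip}$ to the explicit identifications
$$\Ch_{LG,\phi}^{\unip}(W_\lambda) \simeq (i_{b_\lambda})_*\bigl(\cInd_{I_{b_\lambda}}^{G_{b_\lambda}(F)}\Lambda\bigr)[-\langle 2\rho,\nu_{b_\lambda}\rangle],\quad \Ch_{LG,\phi}^{\unip}(W_{w_0(\lambda)}) \simeq (i_{b_\lambda})_!\bigl(\cInd_{I_{b_\lambda}}^{G_{b_\lambda}(F)}\Lambda\bigr)[-\langle 2\rho,\nu_{b_\lambda}\rangle]$$
recorded in the text, and combining with $\BB_G^{\unip}(W_\mu)\simeq \iota_*\omega_{\hat U/\hat B}(\mu)$, reduces the proposition to the single computation
$$\Ch_{{}^LG,\phi}^{\unip}\bigl(\iota_*\omega_{\hat U/\hat B}(\mu)\bigr) \simeq (\fq^{\spec})_*\omega_{\Loc_{{}^LB,F}^{\unip}}(\mu),\quad \mu\in \XX_\bullet(T),$$
specialized to $\mu=\lambda$ and $\mu = w_0(\lambda)$. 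The equality $b_{w_0(\lambda)}=b_\lambda$ needed to match the right-hand sides follows from the fact that any pinned automorphism preserves lengths, so $w_0\in W^\phi = W_0$, whence the images of $\lambda$ and $w_0(\lambda)$ agree in $\XX_\bullet(T)_\phi/W_0\cong B(G)_{\unr}$.

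To establish the displayed identity, I would exploit the Cartesian square
$$\begin{tikzcd}
\Loc_{{}^LB,F}^{\unip} \ar[r,"q"]\ar[d,"p"] & \widetilde\Loc_{{}^LG,F}^{\unip} \ar[d,"\delta^{\spec}"] \\
\hat U/\hat B \ar[r,"\iota"] & S^{\unip}_{\hat G}
\end{tikzcd}$$
produced by the definition $\Loc_{{}^LB,F}^{\unip}\coloneqq \widetilde\Loc_{{}^LG,F}^{\unip}\times_{S^{\unip}_{\hat G}}\hat U/\hat B$, along with the obvious factorization $\pi^{\spec}\circ q = \fq^{\spec}$. Since $\iota$ is a closed embedding (hence proper), base change in the ind-coherent formalism yields $(\delta^{\spec})^!\iota_*\simeq q_*p^!$, so
$$\Ch_{{}^LG,\phi}^{\unip}\bigl(\iota_*\omega_{\hat U/\hat B}(\mu)\bigr) = (\pi^{\spec})_*(\delta^{\spec})^!\iota_*\omega_{\hat U/\hat B}(\mu) \simeq (\fq^{\spec})_*\bigl(p^!\omega_{\hat U/\hat B}(\mu)\bigr).$$

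It then remains to identify $p^!\omega_{\hat U/\hat B}(\mu)\simeq \omega_{\Loc_{{}^LB,F}^{\unip}}(\mu)$. This rests on two ingredients: functoriality of the dualizing complex under $!$-pullback gives $p^!\omega_{\hat U/\hat B}\simeq \omega_{\Loc_{{}^LB,F}^{\unip}}$, while the line-bundle twist $\cO(\mu)$ on $\Loc_{{}^LB,F}^{\unip}$ is by construction pulled back along $p$ from the analogous twist on $\hat U/\hat B$ (both arise by pulling the character $\mu$ back along the structural map to $\BB\hat B$, which on $\Loc_{{}^LB,F}^{\unip}\simeq \cL_\phi(\hat U/\hat B)$ factors through $p$). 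The projection formula $p^!(\cG\otimes\cF)\simeq p^!\cG\otimes p^*\cF$ applied with the perfect complex $\cF = \cO(\mu)$ then produces the required isomorphism and completes the proof. The main technical inputs are thus the base change for closed embeddings and the projection formula for perfect complexes in the ind-coherent theory on these derived Artin stacks; both are available from the six-functor formalism developed in \cite{Tame}, and the verification itself should be essentially formal once invoked.
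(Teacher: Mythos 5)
Your proposal is correct and follows essentially the same route as the paper's own proof: reduce via the compatibility square between $\Ch_{LG,\phi}^{\unip}$ and $\Ch_{{}^LG,\phi}^{\unip}$ and the identity $\BB_G^{\unip}(W_\mu)\simeq\iota_*\omega_{\hat U/\hat B}(\mu)$, then apply base change in the Cartesian square $\Loc^{\unip}_{{}^LB,F}=\widetilde\Loc^{\unip}_{{}^LG,F}\times_{S^{\unip}_{\hat G}}\hat U/\hat B$ together with $\pi^{\spec}\circ q=\fq^{\spec}$. Your added remarks on $w_0\in W_0$ (hence $b_{w_0(\lambda)}=b_\lambda$) and on the line-bundle twist being pulled back from $\BB\hat B$ are correct elaborations of points the paper leaves implicit.
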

\begin{proof}
    There is a commutative diagram
    $$\begin{tikzcd}
        \hat{U}/\hat{B} \ar[r,leftarrow]\ar[d,"\iota"] & \Loc^\unip_{{}^LB,F} \ar[d]\ar[rd,"\fq^\unip"]\ar[ld,phantom,"\square", very near start] \\
        S_{\hat{G}}^\unip \ar[r,leftarrow,"\delta^\unip"] & \widetilde{\Loc}^\unip_{{}^LG,F} \ar[r,"\pi^\unip"] & \Loc^{\widehat\unip}_{{}^LG,F}
    \end{tikzcd}$$
    with the left square Cartesian. By base change, we have
    $$\Ch^\unip_{{}^LG,\phi}(\iota_*\omega_{\hat{U}/\hat{B}}(\mu))\simeq (\fq^\unip)_*\omega_{\Loc^\unip_{{}^LB,F}}(\mu)$$
    for any $\mu\in \XX^\bullet(T)$.
\end{proof}

By definition, for two objects $\cF_1,\cF_2\in \Ind\Shv_\fingen(\Iw\backslash LG/\Iw,\Lambda)$, there is a canonical isomorphism
$$\can\colon \Ch^\unip_{LG,\phi}(\phi_*\cF_1\star\cF_2)\simeq \Ch^\unip_{LG,\phi}(\cF_2\star \cF_1).$$
In fact, let $\Hk_\bullet(\Sht_\cI^\loc)$ denote the \v{C}ech nerve of $\Sht_\cI^\loc\to \Isoc_G$. Then $\Hk_1(\Sht_\cI^\loc)$ classifies iterated local shtukas $\phi^*\cE_0\stackrel{\beta_2}\dashrightarrow \cE_1\stackrel{\beta_2}\dashrightarrow \cE_0$. There is a commutative diagram
$$\begin{tikzcd}[column sep=huge]
    \Hk_1(\Sht^\loc_\cI) \ar[d,"\delta_1"]\ar[r,"\pFr"] & \Hk_1(\Sht^\loc_\cI) \ar[d,"\delta_1"] \\
    (\Iw\backslash LG/\Iw)^2 \ar[r,"\sw\circ(1\times\phi)"] &  (\Iw\backslash LG/\Iw)^2, 
\end{tikzcd}$$
where $\pFr$ is the partial Frobenius sending $(\phi^*\cE_0\stackrel{\beta_2}\dashrightarrow \cE_1\stackrel{\beta_1}\dashrightarrow \cE_0)$ to $(\phi^*\cE_1\stackrel{\phi(\beta_1)}\dashrightarrow \phi^*\cE_0\stackrel{\beta_2}\dashrightarrow \cE_1)$, and $\delta_1$ sends $(\phi^*\cE_0\stackrel{\beta_2}\dashrightarrow \cE_1\stackrel{\beta_1}\dashrightarrow \cE_0)$ to $(\beta_2^{-1},\beta_1^{-1})$. Hence there is a canonical isomorphism
$$\pFr^!(\delta_1)^!(\phi_*\cF_1\boxtimes\cF_2)\simeq (\delta_1)^!(\cF_2\boxtimes\cF_1).$$
Pushforward to $\Isoc_G$ defines the canonical isomorphism.

Recall that connected components of $\Iw\backslash LG/\Iw$ is bijective to $\pi_1(G)=\XX_\bullet(T)/\ZZ\hat\Delta$. Paring with the half sum of positive roots $\rho$ defines a function $\langle\rho,-\rangle\colon \pi_1(G)\to \frac12\ZZ/\ZZ$. We say that $\alpha\in \pi_1(G)$ is even if $\langle\rho,\alpha\rangle=0$, and $\alpha\in \pi_1(G)$ is odd if $\langle\rho,\alpha\rangle=\frac12$. For $\cF\in\Ind\Shv_\fingen(\Iw\backslash LG/\Iw,\Lambda)$, denote $d(\cF)=1$ if $\cF$ is supported on the odd components of $\Iw\backslash LG/\Iw$, and $d(\cF)=0$ if $\cF$ is supported on the even components of $\Iw\backslash LG/\Iw$. The commutative constraint of the functor $\Ch^\unip_{LG,\phi}$ is given by
$$\sigma_{\cF_1,\cF_2}=(-1)^{d(\cF_1)d(\cF_2)}\cdot\can\colon \Ch^\unip_{LG,\phi}(\phi_*\cF_1\star\cF_2)\simeq \Ch^\unip_{LG,\phi}(\cF_2\star \cF_1)$$
if $\cF_1,\cF_2$ are supported only on the even components or odd components. In general we define $\sigma_{\cF_1,\cF_2}$ be linear combination. The factor $(-1)^{d(\cF_1)d(\cF_2)}$ is to make it compatible with the commutative constraint of the central functor, cf. \cite[\S 3.5.1]{AR-central}.

One can also consider the unipotent monodromic affine Hecke category $\Shv_{\hat{u}\mbox{-}\mon}(\Iw^u\backslash LG/\Iw^u,\Lambda)$ as in \cite[\S 4.2.3]{Tame}, where $\Iw^u=\ker(\Iw\to T)$. There are unipotent monodromic standard (resp. costandard) objects $\nabla^\mon_{\dot{w},\hat{u}}$ (resp. $\Delta^\mon_{\dot{w},\hat{u}}$) and unipotent monodromic Wakimoto sheaves $W^\mon_{\lambda,\hat{u}}$ in $\Shv_{\hat{u}\mbox{-}\mon}(\Iw^u\backslash LG/\Iw^n,\Lambda)$ for $w\in\widetilde{W}$ and $\lambda\in \XX_\bullet(T)$. There is a unipotent monodromic affine Deligne--Lusztig functor
$$\Ch^{\hat{u}\mbox{-}\mon}_{LG,\phi}\colon \Shv_{\hat{u}\mbox{-}\mon}(\Iw^u\backslash LG/\Iw^u,\Lambda)\to \Shv(\Isoc_G,\Lambda)$$
defined by pull-push along
$$\Iw^u\backslash LG/\Iw^u\xleftarrow{\delta^u} \widetilde{\Sht}_\cI^\loc\xrightarrow{\Nt^u} \Isoc_G,$$
where $\widetilde{\Sht}_\cI^\loc=LG/\Ad_\phi\Iw^u$.
By \cite[Corollary 4.67]{Tame}, if $w\in\widetilde{{W}}$ is $\phi$-straight with image $b\in B(G)$, then
$$\Ch^{\hat{u}\mbox{-}\mon}_{LG,\phi}(\nabla^\mon_{w,\hat{u}})\simeq (i_b)_*\cInd_{\widetilde{I}_b}^{G_b(F)}\Lambda\quad\text{and}\quad\Ch^{\hat{u}\mbox{-}\mon}_{LG,\phi}(\Delta^\mon_{w,\hat{u}})\simeq (i_b)_!\cInd_{\widetilde{I}_b}^{G_b(F)}\Lambda,$$
where $\widetilde{I}_b=I_b$ if $\Lambda$ has characteristic 0, and $\widetilde{I}_b=I_b^{(\ell)}$ is the maximal prime-to-$\ell$ subgroup of $I$ if $\Lambda$ has characteristic $\ell$. There is a unipotent monodromic version of Bezrukavnikov's equivalence
$$\BB^{\hat{u}\mbox{-}\mon}_G\colon \Shv_{\hat{u}\mbox{-}\mon}(\Iw^u\backslash LG/\Iw^u,\Lambda)\simeq \Ind\Coh(S^{\widehat\unip}_{\hat{G}}),$$
where $S^{\widehat\unip}_{\hat{G}}=\hat{U}^\wedge/\hat{B}\times_{\cU_{\hat{G}}^\wedge/\hat{G}}\hat{U}^\wedge/\hat{B}$ is the formal completion of $\hat{B}/\hat{B}\times_{\hat{G}/\hat{G}}\hat{B}/\hat{B}$ along the unipotent cone. There is a corresponding spectral Deligne--Lusztig induction functor
$$\Ch^{\hat{u}\mbox{-}\mon}_{{}^LG,\phi}\colon \Ind\Coh(S^{\widehat\unip}_{\hat{G}})\to \Ind\Coh(\Loc^{\widehat\unip}_{{}^LG,F})$$
defined by pull-push along
$$S^{\widehat\unip}_{\hat{G}}\xleftarrow{\delta^{\widehat\unip}} \widetilde\Loc^{\widehat\unip}_{{}^LG,F}\xrightarrow{\pi^{\widehat\unip}}\Loc^{\widehat\unip}_{{}^LG,F}$$
for $\widetilde\Loc^{\widehat\unip}_{{}^LG,F}\coloneqq\Loc^{\widehat\unip}_{{}^LG,F}\times_{\hat{G}/\hat{G}}\hat{U}^\wedge/\hat{B}$. Then there is a commutative diagram
$$\begin{tikzcd}
    \Shv_{\hat{u}\mbox{-}\mon}(\Iw^u\backslash LG/\Iw^u,\Lambda) \ar[r,"\Ch_{LG,\phi}^{\hat{u}\mbox{-}\mon}"]\ar[d,"\BB_G^{\hat{u}\mbox{-}\mon}"swap,"\simeq"] & \Shv^{\widehat\unip}(\Isoc_G,\Lambda) \ar[d,hook,"\LL_G^{\unip}\circ\Psi^{L,\unip}"] \\
    \Ind\Coh(S^{\widehat\unip}_{\hat{G}}) \ar[r,"\Ch_{{}^L{G},\phi}^{\hat{u}\mbox{-}\mon}"] & \Ind\Coh(\Loc^{\widehat\unip}_{{}^LG,F}).
\end{tikzcd}$$

\subsubsection{Spectral action}
We define spectral action on the unipotent part of the local Langlands category. Let $\Perf(\Loc^{\widehat\unip}_{{}^LG,F})$ be the category of perfect complexes over $\Loc^{\widehat\unip}_{{}^LG,F}$. There is a symmetric monoidal action of $\Perf(\Loc^{\widehat\unip}_{{}^LG,F})$ on $\Ind\Coh(\Loc^{\widehat\unip}_{{}^LG,F})$.

\begin{prop}\label{prop-spectral-action}
    There is a (unique) action $\star$ of $\Perf(\Loc^{\widehat\unip}_{{}^LG,F})$ on $\Ind\Shv_{\fingen}^{\unip}(\Isoc_G,\Lambda)$ (resp. $\Shv^{\widehat\unip}(\Isoc_G,\Lambda)$) making the functor $\LL^{\unip}_G$ (resp. $\LL^{\unip}_G\circ \Psi^{L,\unip}$) linear over $\Perf(\Loc^{\widehat\unip}_{{}^LG,F})$. Moreover, the action of $\Perf(\Loc^{\widehat\unip}_{{}^LG,F})$ on $\Shv^{\widehat\unip}(\Isoc_G,\Lambda)$ preserves compact objects and admissible objects.
\end{prop}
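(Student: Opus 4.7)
The plan is to transport the tensor action of $\Perf(\Loc^{\widehat\unip}_{{}^LG,F})$ on $\Ind\Coh(\Loc^{\widehat\unip}_{{}^LG,F})$ along the fully faithful embedding $\LL^{\unip}_G$ from Theorem~\ref{thm-unipotent-cllc}. Uniqueness is automatic: any two continuous $\Perf(\Loc^{\widehat\unip}_{{}^LG,F})$-actions making $\LL^{\unip}_G$ linear must agree on the essential image of $\LL^{\unip}_G$ and hence everywhere by full faithfulness.

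For existence, the central step is to verify that the essential image of $\LL^{\unip}_G$ is stable under the spectral action. When $\Lambda=\overline{\QQ}_\ell$, $\LL^{\unip}_G$ is already an equivalence and there is nothing to prove. In general, by the commutative square $\LL^{\unip}_G\circ\Ch^{\unip}_{LG,\phi}\simeq\Ch^{\unip}_{{}^LG,\phi}\circ\BB^{\unip}_G$ recalled in \S\ref{subsubsection-unipotent-cll}, objects of the form $\LL^{\unip}_G(\Ch^{\unip}_{LG,\phi}(\cF))$ coincide with objects of the form $\Ch^{\unip}_{{}^LG,\phi}(\cG)$; a generation argument (using that $\BB^{\unip}_G$ is an equivalence and that $\Ch^{\unip}_{LG,\phi}$ hits, up to colimits and shifts, a generating family of $\Ind\Shv^{\unip}_{\fingen}(\Isoc_G,\Lambda)$ via the Deligne--Lusztig parametrization of unipotent sheaves) then identifies this collection, closed under colimits and retracts, with the essential image of $\LL^{\unip}_G$. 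Since $\Ch^{\unip}_{{}^LG,\phi}=(\pi^{\spec})_*(\delta^{\spec})^!$ is pull-push along a correspondence living entirely over $\Loc^{\widehat\unip}_{{}^LG,F}$, the projection formula gives
$$
X\otimes\Ch^{\unip}_{{}^LG,\phi}(\cG)\;\simeq\;\Ch^{\unip}_{{}^LG,\phi}\bigl((\pi^{\spec})^*X\otimes\cG\bigr)
$$
for every $X\in\Perf(\Loc^{\widehat\unip}_{{}^LG,F})$ and every $\cG\in\Ind\Coh(S^{\unip}_{\hat G})$, confirming stability. The action on $\Shv^{\widehat\unip}(\Isoc_G,\Lambda)$ is obtained by the same transport recipe via the fully faithful embedding $\LL^{\unip}_G\circ\Psi^{L,\unip}$.

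Finally, preservation of compact and admissible objects on $\Shv^{\widehat\unip}(\Isoc_G,\Lambda)$. For compacts: under $\LL^{\unip}_G\circ\Psi^{L,\unip}$ compact objects correspond to coherent sheaves on $\Loc^{\widehat\unip}_{{}^LG,F}$, and tensoring a coherent sheaf by a perfect complex on this locally finite-type lci stack remains coherent, giving preservation. For admissibles, it suffices to check the property on a generating class of $\Perf(\Loc^{\widehat\unip}_{{}^LG,F})$; taking the vector bundles arising via geometric Satake from $\hat G$-representations, the transported action is identified with classical Hecke operators $T_V$ on $\Isoc_G$, which preserve admissibility thanks to the ULA/properness properties of the Satake kernels together with the compatibility with each $!$-stalk $(i_b)^!$. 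The main obstacle in the proof is precisely this last identification---ensuring that the abstractly transported spectral action, when restricted to Satake generators, coincides with the concrete Hecke operators---but this matching is built into the categorical trace construction underlying $\LL^{\unip}_G$ via the Satake-equivariance of the Bezrukavnikov equivalence $\BB^{\unip}_G$.
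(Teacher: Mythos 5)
Your overall strategy matches the paper's: transport the $\Perf(\Loc^{\widehat\unip}_{{}^LG,F})$-action along the fully faithful embedding(s) by verifying stability of the essential image, then check the preservation properties on generators. However, two of your steps have genuine gaps.

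First, the projection-formula argument as stated does not close. You write
\[
X\otimes\Ch^{\unip}_{{}^LG,\phi}(\cG)\simeq\Ch^{\unip}_{{}^LG,\phi}\bigl((\pi^{\spec})^*X\otimes\cG\bigr),
\]
but this is not well-typed: $\Ch^{\unip}_{{}^LG,\phi}=(\pi^\spec)_*\circ(\delta^\spec)^!$ has source $\Ind\Coh(S^\unip_{\hat G})$, while $(\pi^\spec)^*X\otimes\cG$ (if $\cG$ is taken over $S^\unip_{\hat G}$) lives over $\widetilde\Loc^\unip_{{}^LG,F}$, not $S^\unip_{\hat G}$. The projection formula along $\pi^\spec$ gives $X\otimes(\pi^\spec)_*\bigl((\delta^\spec)^!\cG\bigr)\simeq(\pi^\spec)_*\bigl((\pi^\spec)^*X\otimes(\delta^\spec)^!\cG\bigr)$, but for a general $X\in\Perf(\Loc^{\widehat\unip}_{{}^LG,F})$ there is no reason for $(\pi^\spec)^*X\otimes(\delta^\spec)^!\cG$ to be of the form $(\delta^\spec)^!\cG'$ for some $\cG'\in\Ind\Coh(S^\unip_{\hat G})$, so you cannot conclude stability of the essential image. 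The paper gets around this by first reducing, via \cite[Theorem VIII.5.2]{FS}, to generators $X=\widetilde V$ pulled back from $\Perf(\BB\hat G)$; only for such $V$ does one have a central sheaf $Z_V$ with $\widetilde V\star\Ch^{\unip}_{{}^LG,\phi}(\cF)\simeq\Ch^{\unip}_{{}^LG,\phi}(Z_V\star\cF)$, which is exactly the compatibility coming from the Satake equivariance of $\BB^{\unip}_G$. You do gesture at this reduction in your final paragraph, but you present it as a consequence rather than as the logical precondition for establishing that the essential image is stable; the argument you actually wrote down would need this reduction performed first.

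Second, and more seriously, you conflate $\Ind\Shv^{\unip}_{\fingen}(\Isoc_G,\Lambda)$ with $\Shv^{\widehat\unip}(\Isoc_G,\Lambda)$. When $\Lambda=\overline{\FF}_\ell$ these are genuinely different categories: $\Psi^{L,\unip}$ is a strict fully faithful embedding. Your generation argument using $\Ch^{\unip}_{LG,\phi}$ of finitely generated affine Hecke sheaves produces generators of $\Ind\Shv^{\unip}_{\fingen}$, not of $\Shv^{\widehat\unip}$, and your claim that ``compact objects correspond to coherent sheaves'' elides the fact that the compact objects of $\Shv^{\widehat\unip}$ form a proper subcategory of $\Shv^{\unip}_{\fingen}$. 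To show that the spectral action preserves $\Shv^{\widehat\unip}$ and its compact objects, the paper invokes a separate argument based on the unipotent \emph{monodromic} affine Hecke category $\Shv_{\hat u\text{-}\mathrm{mon}}(\Iw^u\backslash LG/\Iw^u,\Lambda)$ and the unipotent monodromic central sheaves, which generate the right thing on the automorphic side. This piece is missing from your proposal, and without it the preservation of compacts in $\Shv^{\widehat\unip}$ is not established.

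Finally, on admissibility: your argument via ``ULA/properness of Satake kernels'' is not wrong in spirit, but it re-invokes heavy geometric machinery unnecessarily. The paper's argument is formal and crisper: once one knows $V\star(-)$ preserves compacts for all $V\in\Perf(\BB\hat G)^\heartsuit$, one simply observes that $V\star(-)$ and $V^\vee\star(-)$ are mutually left/right adjoint, and passing to right adjoints converts preservation of compacts into preservation of admissibles. You may wish to replace your last paragraph with this adjunction argument.
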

\begin{proof}
    The first statement was proved in \cite{Tame}. Here we give a different proof.
    Pullback along $\Loc^{\widehat\unip}_{{}^LG,F}\to \BB\hat{G}$ defines a symmetric monoidal functor $\Perf(\BB \hat{G})\to \Perf(\Loc^{\widehat\unip}_{{}^LG,F})$.
    By \cite[Theorem VIII.5.2]{FS}, the category $\Perf(\Loc^{\widehat\unip}_{{}^LG,F})$ is generated under cones and retracts by the image of $\Perf(\BB \hat{G})$. It suffices to show that for $V\in \Perf(\BB \hat{G})^\heartsuit$, the action of $V$ on $\Ind\Coh(\Loc^{\widehat\unip}_{{}^LG,F})$ preserves the essential image of $\LL^{\unip}_G$ (resp. $\LL^{\unip}_G\circ\Psi^{L,\widehat\unip}$), and preserves compact objects, finitely generated objects and admissible objects in $\Shv^{\widehat\unip}(\Isoc_G,\Lambda)$. By \cite[Proposition 4.95]{Tame}, the category $\Ind\Shv_{\fingen}^{\widehat\unip}(\Isoc_G,\Lambda)$ is compactly generated by objects $\Ch^{\unip}_{{}^LG,\phi}(\cF)$ for $\cF\in \Shv_{\fingen}(\Iw\backslash LG/\Iw,\Lambda)$. For $\cF\in\Shv_{\fingen}(\Iw\backslash LG/\Iw,\Lambda)$, we have
    $$V\star \Ch^{\unip}_{{}^LG,\phi}(\cF)\simeq \Ch^{\unip}_{{}^LG,\phi}(Z_V\star \cF),$$
    where $Z_V\in \Shv_{\fingen}(\Iw\backslash LG/\Iw,\Lambda)$ is the central sheaf associated to $V$. It follows that $V\star(-)$ preserves the essential image of $\Ind\Shv_{\fingen}^{\widehat\unip}(\Isoc_G,\Lambda)$ and preserves finitely generated objects. The action of $V\star(-)$ preserves $\Shv(\Isoc_G,\Lambda)$ and compact objects in it can be proved similarly using the unipotent monodromic affine Hecke category $\Shv^{\hat{u}\mbox{-}\mathrm{mon}}(\Iw^u\backslash LG/\Iw^u,\Lambda)$ and unipotent monodromic central sheaves. Let $V^\vee\in \Perf(\BB \hat{G})$ be the dual representation of $V$. The actions $V\star()$ and $V^\vee\star(-)$ on $\Shv^{\widehat\unip}(\Isoc_G,\Lambda)$ are left and right adjoint to each other. Thus $V\star(-)$ also preserves admissible objects in $\Shv^{\widehat\unip}(\Isoc_G,\Lambda)$.
\end{proof}
\begin{rmk}
    The spectral action on the tame part of the local Langlands category $\Shv(\Isoc_G,\Lambda)$ can also be defined via taking categorical trace of the central functor constructed in \cite{ALWY-mixed-central}. We will not explore this approach in this article as it is not needed.
\end{rmk}

\subsubsection{Compatibility with isogenies}

Let $G\to G'$ be a central isogeny between reductive groups over $F$. Assume  the kernel of $G\to G'$ has order invertible in $\Lambda$. We add a superscript $(-)'$ on the stacks and morphisms associated to $G'$. There are natural injections $\XX_\bullet(T)\hookrightarrow \XX_\bullet(T')$ and $\widetilde{W}\hookrightarrow\widetilde{W}'$.  There is a commutative diagram
$$\begin{tikzcd}
    \Iw\backslash LG/\Iw  \ar[d,"\kappa"]  & \Sht^\loc_{\cI} \ar[l,"\delta"swap]\ar[r,"\Nt"]\ar[d,""] & \Isoc_G \ar[d,"\kappa_{\Isoc}"] \\
    \Iw'\backslash LG'/\Iw' & \Sht^\loc_{\cI'} \ar[l,"\delta'"swap]\ar[r,"\Nt'"] & \Isoc_{G'}.
\end{tikzcd}$$
Note that $\kappa^!\Delta_w\simeq \Delta_w$ and $\kappa^!\nabla_w\simeq \nabla_w$ for $w\in \widetilde{W}$. Similarly, $\kappa^!W_\lambda=W_\lambda$ for $\lambda\in \XX_\bullet(T)$.

Similarly we have a commutative diagram
$$\begin{tikzcd}
    \Iw^u\backslash LG/\Iw^u \ar[d,"\kappa^u"]  & \widetilde\Sht^\loc_{\cI} \ar[l,"\delta^u"swap]\ar[r,"\Nt^u"]\ar[d] & \Isoc_G \ar[d,"\kappa_{\Isoc}"] \\
    \Iw^{u\prime}\backslash LG'/\Iw^{u\prime} & \widetilde\Sht^\loc_{\cI'} \ar[l,"\delta^{u\prime}"swap]\ar[r,"\Nt^{u\prime}"] & \Isoc_{G'}.
\end{tikzcd}$$
As the kernel of $G\to G'$ has order invertible in $\Lambda$, pullback along $T\to T'$ defines an equivalence $\Shv_{\hat{u}\mbox{-}\mon}(T')\xrightarrow{\sim}\Shv_{\hat{u}\mbox{-}\mon}(T)$. We see that $(\kappa^u)^!\Delta_{\dot{w},\hat{u}}^\mon\simeq \Delta_{\dot{w},\hat{u}}^\mon$ and $(\kappa^u)^!\nabla_{\dot{w},\hat{u}}^\mon\simeq \nabla_{\dot{w},\hat{u}}^\mon$ for $w\in \widetilde{W}$ and $(\kappa^u)^!W^\mon_{\lambda,\hat{u}}\simeq W^\mon_{\lambda,\hat{u}}$ for $\lambda\in \XX_\bullet(T)$.

Recall that $\pi_0(\Iw\backslash LG/\Iw)=\pi_1(G)$ and $\pi_0(\Iw'\backslash LG'/\Iw')=\pi_1(G')$. There is a open and closed embedding $\Fl_\cI\hookrightarrow\Fl_{\cI'}$ corresponding to the union of connected components indexed by $\pi_1(G)\subseteq \pi_1(G')$. 

\begin{lemma}\label{lemma-llc-isogeny}
    Let $\cF\in \Ind\Shv_\fingen(\Iw'\backslash LG'/\Iw',\Lambda)$ (resp. $\cF\in \Shv_{\hat{u}\mbox{-}\mon}(\Iw'\backslash LG'/\Iw',\Lambda)$) be an object. Then $\Ch^\unip_{LG,\phi}(\kappa^!\cF)$ (resp. $\Ch^{\hat{u}\mbox{-}\mon}_{LG,\phi}(\kappa^!\cF)$) is a direct summand of $(\kappa_{\Isoc})^!\Ch^\unip_{LG',\phi}(\cF)$ (resp. $(\kappa_{\Isoc})^!\Ch^{\hat{u}\mbox{-}\mon}_{LG',\phi}(\cF)$).
\end{lemma}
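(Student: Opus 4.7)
The plan is to unwind the definitions and reduce, via base change on the Newton stratification, to a concrete representation-theoretic direct-summand statement for compactly induced representations along the isogeny of inner forms $G_b(F)\to G'_{b'}(F)$.

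First I would use the commutativities $\delta'\circ\tilde\kappa = \kappa\circ\delta$ and $\Nt'\circ\tilde\kappa = \kappa_\Isoc\circ\Nt$ (reading off the given diagram, with $\tilde\kappa\colon\Sht^\loc_\cI\to\Sht^\loc_{\cI'}$ the middle vertical arrow) to rewrite
\[
\Ch^\unip_{LG,\phi}(\kappa^!\cF) = \Nt_*\tilde\kappa^!(\delta')^!\cF,\qquad
(\kappa_\Isoc)^!\Ch^\unip_{LG',\phi}(\cF) = (\kappa_\Isoc)^!(\Nt')_*(\delta')^!\cF,
\]
so that the base change natural transformation on the right square yields a candidate comparison map LHS $\to$ RHS. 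Since every functor in sight is continuous, it suffices to exhibit a natural splitting on a set of compact generators. I would take these to be the standard and costandard sheaves $\Delta_w,\nabla_w$ for $w\in\widetilde W'$ (resp.\ $\Delta^\mon_{\dot w,\hat u},\nabla^\mon_{\dot w,\hat u}$ in the monodromic case), and then reduce via the Wakimoto filtration to the case $\cF = W_\lambda$ (resp.\ $W^\mon_{\lambda,\hat u}$) for dominant $\lambda\in\XX_\bullet(T')$.

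Next I would split into cases according to whether $\lambda\in\XX_\bullet(T)\subseteq\XX_\bullet(T')$. Since $\ZZ\hat\Delta\subseteq\XX_\bullet(T)$, one has $[\lambda]\in\pi_1(G)$ if and only if $\lambda\in\XX_\bullet(T)$. If $\lambda\notin\XX_\bullet(T)$, both sides vanish: $\kappa^!W_\lambda = 0$ because $W_\lambda$ is supported on a component of $\Iw'\backslash LG'/\Iw'$ outside the image of the open-and-closed embedding $\kappa$, while $\Ch^\unip_{LG',\phi}(W_\lambda)$ is supported on a Newton stratum $b'_\lambda$ whose Kottwitz class lies outside the image of $\pi_1(G)_\Gamma\to\pi_1(G')_\Gamma$, so $(\kappa_\Isoc)^!$ annihilates it. If $\lambda\in\XX_\bullet(T)$ dominant, then $\kappa^!W_\lambda\simeq W_\lambda$ and Proposition \ref{prop-Ch-Wakimoto} gives
\[
\Ch^\unip_{LG,\phi}(W_\lambda) = (i_{b_\lambda})_*\bigl(\cInd_{I_{b_\lambda}}^{G_{b_\lambda}(F)}\Lambda\bigr)[-\langle 2\rho,\nu_{b_\lambda}\rangle]
\]
and the analogous formula for $G'$, with $\nu_{b_\lambda} = \nu_{b'_\lambda} = \lambda^\diamond$. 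Using the injectivity of $\pi_1(G)\hookrightarrow\pi_1(G')$ combined with Lang's theorem for $T(O_{\breve F})$, I would show that $\Isoc_{G,b_\lambda}$ (resp.\ $\Isoc_{G,\leq b_\lambda}$) is a clopen piece of the preimage of $\Isoc_{G',b'_\lambda}$ (resp.\ $\Isoc_{G',\leq b'_\lambda}$) in $\Isoc_G$. Writing $i_{b'_\lambda} = i_{\leq b'_\lambda}\circ j_{b'_\lambda}$ and applying $!$-base change for the closed immersion together with Beck--Chevalley for the open affine embedding $j_{b'_\lambda}$, I obtain a natural direct-summand inclusion $(i_{b_\lambda})_*(\kappa_{\Isoc,b_\lambda})^! \hookrightarrow (\kappa_\Isoc)^!(i_{b'_\lambda})_*$, where $\kappa_{\Isoc,b_\lambda}\colon\BB G_{b_\lambda}(F)\to\BB G'_{b'_\lambda}(F)$ comes from the isogeny of inner forms $G_{b_\lambda}\to G'_{b'_\lambda}$ (with the same central kernel $D$).

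The claim thereby reduces to showing that $\cInd_{I_{b_\lambda}}^{G_{b_\lambda}(F)}\Lambda$ is a direct summand of $\cInd_{I_{b'_\lambda}}^{G'_{b'_\lambda}(F)}\Lambda$ restricted along the isogeny $G_{b_\lambda}(F)\to G'_{b'_\lambda}(F)$. This follows from the orbit decomposition of the $G_{b_\lambda}(F)$-set $G'_{b'_\lambda}(F)/I_{b'_\lambda}$: since the kernel $D(F)\subseteq I_{b_\lambda}$ and the cokernel of the isogeny is a finite abelian group of order invertible in $\Lambda$, there are only finitely many orbits, and the orbit of the identity coset is $G_{b_\lambda}(F)/I_{b_\lambda}$, contributing the desired $\cInd_{I_{b_\lambda}}^{G_{b_\lambda}(F)}\Lambda$ as a direct summand. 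The monodromic version is proved identically with $I_b$ replaced by $\widetilde I_b$ throughout. The main obstacle will be the base change step: carefully verifying that $\Isoc_{G,b_\lambda}$ sits as a clopen piece of the preimage of $\Isoc_{G',b'_\lambda}$ (ultimately relying on Kottwitz's parametrization of $B(G)$ together with Lang's theorem for tori) and validating Beck--Chevalley for open affine immersions in the sheaf theory of \cite{Tame}; once these geometric inputs are established, the remaining representation-theoretic step is a routine orbit calculation.
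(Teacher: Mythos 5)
Your proposal takes a genuinely different route from the paper, and the route has a serious gap. Let me first describe what the paper actually does, and then pinpoint where your argument breaks down.

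The paper's proof is a one-line geometric observation. Writing $X := \Sht^\loc_{\cI'}\times_{\Isoc_{G'}}\Isoc_G$ for the fiber product and $\kappa_1\colon \Sht^\loc_\cI\to X$ for the induced map, one identifies $X\cong LG\times^{LG}\Fl_{\cI'}$ and $\Sht^\loc_\cI\cong LG\times^{LG}\Fl_{\cI}$ (for the $\phi$-twisted conjugation action), so that $\kappa_1$ is induced by the open-and-closed embedding $\Fl_\cI\hookrightarrow\Fl_{\cI'}$ of a union of connected components. Consequently $(\kappa_1)_*(\kappa_1)^!$ is a \emph{natural} direct summand of the identity, and the direct-summand statement for $\Nt_*\delta^!\kappa^!\cF\cong (\Nt_0)_*(\kappa_1)_*(\kappa_1)^!(\delta_0)^!\cF$ inside $(\kappa_{\Isoc})^!(\Nt')_*(\delta')^!\cF\cong(\Nt_0)_*(\delta_0)^!\cF$ (using proper base change for the Cartesian square) follows for \emph{all} $\cF$ at once.

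The gap in your proposal is in the reduction to generators. You correctly note that a \emph{natural} splitting on a set of compact generators would extend by continuity, but the splittings you then construct, via the orbit decomposition of $G'_{b'_\lambda}(F)/I'_{b'_\lambda}$ on each Newton stratum, are constructed object-by-object and are not shown to be natural with respect to maps between different $W_\lambda$'s (or, more to the point, with respect to the $\Delta_w,\nabla_w$ that actually generate). Without naturality you cannot pass from generators to arbitrary objects: knowing that $0\to A\to B\to C\to 0$ has $F(A)$ split in $G(A)$ and $F(C)$ split in $G(C)$ does not give that $F(B)$ is split in $G(B)$. For the same reason the appeal to the Wakimoto filtration is problematic: general objects of $\Ind\Shv_\fingen(\Iw'\backslash LG'/\Iw',\Lambda)$ do not carry Wakimoto filtrations (that device is specific to central sheaves), and even where a filtration exists, a filtration by objects with direct-summand splittings does not produce a direct-summand splitting of the total object. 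There is also a subsidiary issue: you invoke the formula $\Ch^\unip_{LG,\phi}(W_\lambda)\simeq (i_{b_\lambda})_*(\cInd_{I_{b_\lambda}}^{G_{b_\lambda}(F)}\Lambda)[-\langle 2\rho,\nu_{b_\lambda}\rangle]$ for $\lambda$ dominant, but for coweights $\bar\lambda\in\XX_\bullet(T)_\phi^+$ that do not lift to $\XX_\bullet(T)^+$ (which happens whenever $Z_G$ is disconnected, cf.\ Remark \ref{rmk-phi-coinv-surj}) the paper establishes the analogous formula in Lemma \ref{lemma-wakimoto-ch} \emph{using} the present isogeny lemma, so taking the present argument through that route risks circularity. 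The point about vanishing when $\lambda\notin\XX_\bullet(T)$ is harmless (you only need the left-hand side to vanish, which is clear, and the asserted vanishing of the right-hand side is in fact the delicate one: passing from $\pi_1(G)\hookrightarrow\pi_1(G')$ to $\Gamma$-coinvariants is not injective in general), but it does not save the reduction. In short, the paper avoids all of this by locating the splitting at the level of the open-closed embedding $\kappa_1$, where naturality is automatic; I would encourage you to look for that map rather than trying to split Newton stratum by Newton stratum.
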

\begin{proof}
    Let $\cF\in\Ind\Shv_\fingen(\Iw'\backslash LG/\Iw')$. 
    Consider the commutative diagram
    $$\begin{tikzcd}
        \Iw\backslash LG/\Iw \ar[d,"\kappa"] & \Sht^\loc_{\cI} \ar[l,"\delta"swap]\ar[d,hook,"\kappa_1"]\ar[rd,"\Nt"]  \\
        \Iw'\backslash LG'/\Iw' \ar[d,equal] & X \ar[l,"\delta_0"swap]\ar[r,"\Nt_0"]\ar[d,"\kappa_2"]\ar[rd,phantom,"\square",very near start] & \Isoc_G \ar[d,"\kappa_\Isoc"] \\
        \Iw'\backslash LG'/\Iw' & \Sht^\loc_{\cI'} \ar[l,"\delta'"swap]\ar[r,"\Nt'"] & \Isoc_{G'},
    \end{tikzcd}$$
    where the lower right square is Cartesian. We can write
    $$X=LG\times^{LG} \Fl_{\cI'}\quad\text{and}\quad \Sht^\loc_\cI=LG\times^{LG}\Fl_\cI$$
    where $LG$ acts on $LG$ by $\phi$-twisted conjugation. Then natural morphism $\Fl_\cI\hookrightarrow \Fl_{\cI'}$ is a $LG$-equivariant open and closed embedding. It follows that $\kappa_1$ is an open and closed embedding. By base change, we have
    $$(\kappa_\Isoc)^!\Ch^\unip_{LG',\phi}(\cF)=(\kappa_{\Isoc})^!(\Nt')_*(\delta')^!\cF\simeq (\Nt_0)_*(\delta_0)^!\cF.$$
    The object $(\kappa_1)_*(\kappa_1)^!(\delta_0)^!\cF$ is a direct summand of $(\delta_0)^!\cF$. Therefore 
    $$\Ch^\unip_{LG,\phi}(\kappa^!\cF)=\Nt_*\delta^!\kappa^!\cF\simeq(\Nt_0)_*(\kappa_1)_*(\kappa_1)^!(\delta_0)^!\cF$$
    is a direct summand of $\Ch^\unip_{LG',\phi}(\cF)$.

    For unipotent monodromic sheaves, there is a similar diagram 
    $$\begin{tikzcd}
        \Iw^u\backslash LG/\Iw^u \ar[d,"\kappa^u"] & \widetilde\Sht^\loc_{\cI} \ar[l,"\delta^u"swap]\ar[d,"\kappa_1^u"]\ar[rd,"\Nt^u"]  \\
        \Iw^{u\prime}\backslash LG'/\Iw^{u\prime} \ar[d,equal] & \widetilde{X} \ar[l,"\delta_0^u"swap]\ar[r,"\Nt_0^u"]\ar[d,"\kappa_2^u"]\ar[rd,phantom,"\square",very near start] & \Isoc_G \ar[d,"\kappa_\Isoc"] \\
        \Iw^{u\prime}\backslash LG'/\Iw^{u\prime} & \widetilde\Sht^\loc_{\cI'} \ar[l,"\delta^{u\prime}"swap]\ar[r,"\Nt^{u\prime}"] & \Isoc_{G'},
    \end{tikzcd}$$
    The morphism $\kappa_1^u$ can be written as $LG\times^{LG}\widetilde\Fl_\cI\to LG\times^{LG}\widetilde\Fl_{\cI'}$ where $\widetilde\Fl_\cI=LG/\Iw^u$. Then $\widetilde\Fl_{\cI}\to\Fl_{\cI}$ (resp. $\widetilde\Fl_{\cI'}\to\Fl_{\cI'}$) is a $T$-torsor (resp. $T'$-torsor). As the kernel of $G\to G'$ has order invertible in $\Lambda$, pushforward along $\widetilde\Fl_\cI\to \widetilde\Fl_{\cI'}$ defines an equivalence of categories $\Shv_{\hat{u}\mbox{-}\mon}(\widetilde\Fl_{\cI},\Lambda)\simeq\Shv_{\hat{u}\mbox{-}\mon}(\widetilde\Fl_{\cI',\pi_1(G)},\Lambda)$ where $\widetilde\Fl_{\cI',\pi_1(G)}$ is the union of connected components indexed by $\pi_1(G)$. Thus $\Ch^{\hat{u}\mbox{-}\mon}_{LG,\phi}(\kappa^!\cF)$ is a direct summand of $(\kappa_{\Isoc})^!\Ch^{\hat{u}\mbox{-}\mon}_{LG',\phi}(\cF)$ by a similar argument.
\end{proof}

\subsection{Generic part of local Langlands categories}

Denote $Z_{{}^LG,F}^{\widehat\unip}=H^0(\Loc_{{}^LG,F}^{\widehat\unip},\cO)$. 
Recall that $\Lambda$-points of 
$$C_{{}^LG,F}^{\widehat\unip}\coloneqq \Spec Z_{{}^LG,F}^{\widehat\unip}$$ 
are in bijection with semisimple unramified $L$-parameters of ${}^LG$ over $\Lambda$. 

\subsubsection{Generic unramified $L$-parameters}

Let $\varphi\colon W_F\to {}^LG(\Lambda)$ be a semisimple unramified $L$-parameter. After conjugation, we may assume that $\varphi$ factors through ${}^LT\subseteq {}^LG$. Recall that in Definition \ref{def-generic-unramified}, we say that $\varphi$ is generic (resp. strongly generic) if $H^2(W_F,(\hat{\fg}/\hat{\ft})_\varphi)=0$ (resp. $R\Gamma(W_F,(\hat{\fg}/\hat{\ft})_\varphi)=0$).


Recall that for a $\phi$-orbit $\cO\subseteq \Phi(\hat{G},\hat{T})$, we denote
$$\hat\alpha_\cO=\sum_{\hat{\gamma}\in \cO}\hat\gamma \in \XX^\bullet(\hat{T})^\phi.$$

\begin{lemma}\label{lemma-generic-root}
    Let $\varphi\colon W_F\to {}^LG(\Lambda)$ be a semisimple unramified $L$-parameter factors through ${}^LT(\Lambda)$. Denote $\varphi(\phi)=x\phi\in {}^LT(\Lambda)=\hat{T}(\Lambda)\rtimes \langle\phi\rangle$.
    \begin{enumerate}
        \item $\varphi$ is generic if and only if for any $\phi$-orbit $\cO\subseteq \Phi(\hat{G},\hat{T})$, we have
        $$\hat\alpha_\cO(x)\neq \left\{\begin{aligned}
            &q^{|\cO|},\quad &&\text{if $\cO$ has type $\mathsf{A}$ or  $\mathsf{BC^-}$}, \\
            &-q^{|\cO|},\quad &&\text{if $\cO$ has type $\mathsf{BC^+}$}.
        \end{aligned}\right.$$
        \item $\varphi$ is strongly generic if and only if for any $\phi$-orbit $\cO\subseteq \Phi(\hat{G},\hat{T})$, we have $$\hat\alpha_\cO(x)\neq \left\{\begin{aligned}
            &1,q^{|\cO|},\quad &&\text{if $\cO$ has type $\mathsf{A}$ or  $\mathsf{BC^-}$}, \\
            &-1,-q^{|\cO|},\quad &&\text{if $\cO$ has type $\mathsf{BC^+}$}.
        \end{aligned}\right.$$
    \end{enumerate}
\end{lemma}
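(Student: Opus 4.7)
The plan is to translate the genericity conditions into eigenvalue conditions on $\varphi(\phi)=x\phi$ acting on $\hat\fg/\hat\ft$, and then to read them off orbit by orbit from the root space decomposition. Because $\varphi$ is unramified, $W_F$ acts on $V:=(\hat\fg/\hat\ft)_\varphi$ through $W_F/I_F$, and a direct computation of the continuous cohomology of an unramified $\Lambda[W_F]$-module combined with local Tate duality gives
\[
H^0(W_F,V)\simeq\ker\bigl(x\phi-1\mid V\bigr),\qquad H^2(W_F,V)\simeq\mathrm{coker}\bigl(x\phi-q\mid V\bigr).
\]
Hence $H^0(W_F,V)=0$ (resp.\ $H^2(W_F,V)=0$) iff $1$ (resp.\ $q$) is not an eigenvalue of $x\phi$ on $V$, and the Euler characteristic identity $\chi(W_F,V)=0$ (valid since $\ell\ne p$) ensures that $R\Gamma(W_F,V)=0$ is equivalent to the simultaneous vanishing of $H^0$ and $H^2$. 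The lemma is thereby reduced to determining whether $1$ or $q$ occurs as an eigenvalue of $x\phi$ on $\hat\fg/\hat\ft$.

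Next, I decompose $\hat\fg/\hat\ft=\bigoplus_{\cO}V_\cO$ according to $\phi$-orbits $\cO=\{\hat\alpha_0,\phi(\hat\alpha_0),\ldots,\phi^{k-1}(\hat\alpha_0)\}\subseteq\Phi(\hat G,\hat T)$, with $k=|\cO|$ and $V_\cO=\bigoplus_i\hat\fg_{\hat\alpha_i}$. Fix a Chevalley basis $\{e_{\hat\beta}\}$ adapted to the pinning $(\hat B,\hat T,\hat e)$; then $\{\phi^i(e_{\hat\alpha_0})\}_{0\le i<k}$ is a basis of $V_\cO$, in which $x\phi$ acts by the weighted cyclic shift $\phi^i(e_{\hat\alpha_0})\mapsto\hat\alpha_{i+1}(x)\,\phi^{i+1}(e_{\hat\alpha_0})$. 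Since $\phi^k(\hat\alpha_0)=\hat\alpha_0$, the vector $\phi^k(e_{\hat\alpha_0})$ lies in $\hat\fg_{\hat\alpha_0}$ and equals $\epsilon_\cO\,e_{\hat\alpha_0}$ for some sign $\epsilon_\cO\in\{\pm1\}$. Consequently
\[
(x\phi)^{k}\big|_{V_\cO}=\epsilon_\cO\,\hat\alpha_\cO(x)\cdot\mathrm{id},
\]
and a scalar $\lambda\in\Lambda^\times$ is an eigenvalue of $x\phi$ on $V_\cO$ iff $\lambda^{k}=\epsilon_\cO\hat\alpha_\cO(x)$. Specializing to $\lambda=q$ (resp.\ $\lambda\in\{1,q\}$) will yield (1) (resp.\ (2)) modulo identification of $\epsilon_\cO$ with the type of $\cO$.

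The remaining and most technical step is to verify that $\epsilon_\cO=+1$ for types $\mathsf{A}$ and $\mathsf{BC}^-$, while $\epsilon_\cO=-1$ for type $\mathsf{BC}^+$. Since $\epsilon_\cO$ depends only on the $\phi$-stable root subsystem generated by $\cO$, one can reduce to the case that $\hat G_\der$ is almost simple and $\phi$ acts through a pinned diagram automorphism. The classification of such automorphisms shows that the $\mathsf{BC}$ situation arises exclusively from the order-$2$ outer automorphism of $\mathrm{SL}_{2n+1}$ (possibly composed with a permutation of simple factors), the shorter orbit $\cO^+$ being the singleton of a $\phi$-fixed long root such as $e_n-e_{n+2}$; for such a root, an explicit computation in the defining representation gives $\phi(E_{n,n+2})=-E_{n,n+2}$, hence $\epsilon_{\cO^+}=-1$. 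For orbits of types $\mathsf{A}$ and $\mathsf{BC}^-$, a standard consistent choice of Chevalley root vectors makes $\phi$ cyclically permute them without sign, giving $\epsilon_\cO=+1$. I expect this sign identification, which is essentially a case-by-case verification rooted in the structure theory of pinned finite-order automorphisms of reductive groups (and matches the dichotomy recorded in \S\ref{subsubsection-rel-root-data}), to be the main obstacle.
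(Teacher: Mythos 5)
Your proposal is correct and follows essentially the same approach as the paper: decompose $\hat\fg/\hat\ft$ by $\phi$-orbits of roots, reduce the vanishing of $H^0$ and $H^2$ to eigenvalue conditions on $\mathrm{Ad}(\varphi(\phi))$ via local duality and the $|\cO|$-th power map, and identify the sign $\epsilon_\cO$ according to the type of the orbit. The paper simply cites \cite[Lemma 5.1.8]{XZ-vector} for the sign identification that you sketch in-line; your sketch of that sign computation is sound, and the rest matches the paper's argument.
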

\begin{rmk}
    If $G$ is split over $F$, then a semisimple unramified $L$-parameter $\varphi\colon W_F\to \hat{G}(\Lambda)$ with $\varphi(\phi)=x\in \hat{T}$ is generic (resp. strongly generic) if and only $\hat{\alpha}(x)\neq q$ (resp. $\hat{\alpha}(x)\neq 1,q$) for all $\hat\alpha\in \Phi(\hat{G},\hat{T})$.
\end{rmk}
\begin{proof}
   There is a direct sum decomposition
   $$(\hat{\fg}/\hat{\ft})_\varphi=\bigoplus_{\cO\subseteq \Phi(\hat{G},\hat{T})} \big(\bigoplus_{\hat\gamma\in \cO} \hat{\fg}_{\hat\gamma}\big)$$
   as $W_F$-representations. The action of $\phi\in W_F$ on $\big(\bigoplus_{\hat\gamma\in \cO} \hat{\fg}_{\hat\gamma}\big)$ is given by
   $$\phi((z_{\hat\gamma})_{\hat{\gamma}\in \cO})= (\hat\gamma(x)\phi(z_{\phi^{-1}(\hat{\gamma})}))_{\hat\gamma\in \cO}.$$
   By \cite[Lemma 5.1.8]{XZ-vector}, we know that for a $\phi$-orbit $\cO$ and $\hat\gamma\in \cO$, we have $\phi^{|\cO|}(z_{\hat\gamma})=z_{\hat\gamma}$ if $\cO$ has type $\mathsf{A}$ or  $\mathsf{BC^-}$, and $\phi^{|\cO|}(z_{\hat\gamma})=-z_{\hat\gamma}$ if $\cO$ has type $\mathsf{BC^+}$. Thus $H^0(W_F,(\hat\fg/\hat\ft)_\varphi)=0$ if and only if $\hat\alpha_\cO(x)\neq 1$ for $\cO$ has type $\mathsf{A}$ or  $\mathsf{BC^-}$ and $\hat\alpha_\cO(x)\neq -1$ for $\cO$ has type $\mathsf{BC^+}$. By local Tate duality, $H^2(W_F,(\hat\fg/\hat\ft)_\varphi)=0$ if and only if $\hat\alpha_\cO(x)\neq q^{|\cO|}$ for $\cO$ has type $\mathsf{A}$ or  $\mathsf{BC^-}$ and $\hat\alpha_\cO(x)\neq -q^{|\cO|}$ for $\cO$ has type $\mathsf{BC^+}$.
\end{proof}

\subsubsection{Generic locus in the stack of unipotent $L$-parameters}\label{subsubsection-generic-stack-L-par}

We recall that we let $\hat{A}=\hat{T}/(\phi-1)\hat{T}$ denote the $\phi$-coinvariants of $\hat{T}$. Let $\hat{A}^\gen$ (resp. $\hat{A}^\sgen$) denote the generic (resp. strongly generic) locus in $\hat{A}$. By Lemma \ref{lemma-generic-root}, $\hat{A}^\gen$ (resp. $\hat{A}^\sgen$) is the open complement of the Cartier divisor cut out by 
$$\prod_{\cO\text{ type $\mathsf{A}$ or $\mathsf{BC^-}$}}(\hat\alpha_\cO-q^{|\cO|})\prod_{\cO\text{ type $\mathsf{BC^+}$}}(\hat\alpha_\cO+q^{|\cO|})$$
$$\text{resp. }\prod_{\cO\text{ type $\mathsf{A}$ or $\mathsf{BC^-}$}}(\hat\alpha_\cO-q^{|\cO|})(\hat\alpha_\cO-1)\prod_{\cO\text{ type $\mathsf{BC^+}$}}(\hat\alpha_\cO+q^{|\cO|})(\hat\alpha_\cO+1).$$
The open dense subscheme $\hat{A}^\gen$ (resp. $\hat{A}^\sgen$) is stable under $W_0$-action. Let $\hat{A}^\gen\git W_0$ (resp. $\hat{A}^\sgen\git W_0$) denote the GIT quotient.

Recall that there is an isomorphism $\Loc_{{}^LG,F}^{\widehat\unip}\simeq  \cL_\phi(\cU^\wedge_{\hat{G}}/\hat{G})$. We have a $\phi$-equivariant commutative diagram
$$\begin{tikzcd}
    \BB \hat{B} \ar[r]\ar[d] & \hat{U}/\hat{B} \ar[d] \\
    \BB \hat{G} \ar[r] & \cU^\wedge_{\hat{G}}/\hat{G},
\end{tikzcd}$$
where the horizontal morphisms are defined by the identity in $\hat{U}$ (resp. $\cU^\wedge_{\hat{G}}$). Taking $\phi$-fixed points, we obtain a commutative diagram
$$\begin{tikzcd}
    \hat{B}\phi/\hat{B} \ar[r,"\unr_B"]\ar[d,"\pi_\phi"swap] & \Loc^\unip_{{}^LB,F} \ar[d,"\fq^\unip"] \\
    \hat{G}\phi/\hat{G} \ar[r,"\unr_G"] & \Loc^{\widehat\unip}_{{}^LG,F}.
\end{tikzcd}$$
Here, $\pi_\phi$ is the twisted Grothendieck--Springer resolution defined in \S\ref{subsection-twisted-GS}. The morphism $\unr_G$ induces a morphism
$$\unr_G^\coarse\colon \hat{A}\git W_0\cong \hat{G}\phi\git\hat{G}\to C^{\widehat\unip}_{{}^LG,F}$$
on the GIT quotients. On the other hand, evaluating a parameter at $\phi$ defines a morphism $\ev_\phi\colon \Loc^{\widehat\unip}_{{}^LG,F}\to \hat{G}\phi/\hat{G}$. Let  $$\ev_\phi^\coarse\colon C^{\widehat\unip}_{{}^LG,F}\to \hat{A}\git W_0$$ 
denote induced map on coarse quotients. Then $\ev_\phi^\coarse$ is a right inverse of $\unr_G^\coarse$. It follows that the map $\unr_G^\coarse$ identifies $\hat{A}\git W_0$ as the underlying reduced subscheme of $C^{\widehat\unip}_{{}^LG,F}$. In particular, the set of $\Lambda$-points of them are identified. 

Now, let $\xi$ be such a $\Lambda$-point of $\hat{A}\git W_0$, or equivalently a $\Lambda$-point of $C^{\widehat\unip}_{{}^LG,F}$. 

\begin{defn}
    Let
    $V^\wedge_\xi\coloneqq \Loc^{\widehat\unip}_{{}^LG,F}\times_{C^{\widehat\unip}_{{}^LG,F}}(C^{\widehat\unip}_{{}^LG,F})^\wedge_\xi$
    denote the formal completion of $\Loc^{\widehat\unip}_{{}^LG,F}$ along the preimage of $\xi$. Let
    $$i_\xi^\wedge\colon V^\wedge_\xi\hookrightarrow \Loc^{\widehat\unip}_{{}^LG,F}$$
    denote the embedding.

    Let $W^\wedge_\xi\coloneqq \Loc^{\unip}_{{}^LB,F}\times_{C^{\widehat\unip}_{{}^LG,F}}(C^{\widehat\unip}_{{}^LG,F})^\wedge_\xi$ denote the formal completion of $\Loc^{\unip}_{{}^LB,F}$ along the preimage of $\xi$. Let $\fq^\unip_\xi\colon W^\wedge_\xi\to V^\wedge_\xi$ denote the completion of $\fq^\unip$.
\end{defn}

Let $(\hat{G}\phi/\hat{G})^\wedge_\xi$ and $(\hat{B}\phi/\hat{B})^\wedge_\xi$ denote the completion along preimages of $\xi$ respectively. There is a commutative diagram
$$\begin{tikzcd}
    (\hat{B}\phi/\hat{B})^\wedge_\xi \ar[r,"\unr_{B,\xi}"]\ar[d,"\pi_{\phi,\xi}"swap] & W^\wedge_\xi \ar[d,"\fq^\unip_\xi"] \\
    (\hat{G}\phi/\hat{G})_\xi^\wedge \ar[r,"\unr_{G,\xi}"] & V^\wedge_\xi.
\end{tikzcd}$$

\begin{prop}\label{prop-generic-Loc-equal-GS}
    Let $\xi\in (\hat{A}^\gen\git W_0)(\Lambda)$ be a generic point in $\hat{A}\git W_0$. Then the morphism $$\unr_{B,\xi}\colon(\hat{B}\phi/\hat{B})^\wedge_\xi\to W^\wedge_\xi$$ is an isomorphism, and the morphism $$\unr_{G,\xi}\colon (\hat{G}\phi/\hat{G})^\wedge_\xi\to V^\wedge_\xi$$ induces an isomorphism on the underlying reduced stack. If $\Lambda=\overline{\QQ}_\ell$, then $\unr_{G,\xi}$ is also an isomorphism.
\end{prop}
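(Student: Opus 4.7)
The plan is to use the descriptions $\Loc^{\unip}_{{}^LB,F}\simeq \cL_\phi(\hat{U}/\hat{B})$ and $\Loc^{\widehat\unip}_{{}^LG,F}\simeq \cL_\phi(\cU^\wedge_{\hat{G}}/\hat{G})$ so that $\unr_B$ and $\unr_G$ arise as $\cL_\phi$ applied to the closed embeddings of the identity into $\hat{U}/\hat{B}$ and $\cU^\wedge_{\hat{G}}/\hat{G}$, respectively. Both maps are therefore closed embeddings of stacks, and the stated assertions reduce to verifying that their conormal directions vanish in the appropriate formal completion at $\xi$. I analyze this by linearizing the defining equation $xyx^{-1} = \phi(y)^q$: at a point $(b,1)$ (resp.\ $(x,1)$), a first-order $y$-deformation is an element $Y \in \hat\fu$ (resp.\ a nilpotent $Y\in\hat\fg$) satisfying $\Ad(b)Y = q\phi(Y)$ (resp.\ $\Ad(x)Y = q\phi(Y)$).

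For $\unr_{B,\xi}$, the operator $\Ad(b) - q\phi$ respects the decomposition of $\hat\fu$ into $\phi$-orbits of positive root subspaces, and iterating it once around each orbit $\cO$ yields a scalar condition of the form $\pm\hat\alpha_\cO(b_T) = q^{|\cO|}$, the sign depending on whether $\cO$ has type $\mathsf{A}$, $\mathsf{BC^-}$, or $\mathsf{BC^+}$ in the sense of \S\ref{subsubsection-rel-root-data}. By Lemma \ref{lemma-generic-root}, the generic condition on $\xi$ is exactly what rules out all these scalar equations, so $\Ad(b) - q\phi$ is invertible on $\hat\fu$. Hensel iteration then upgrades this to invertibility of the nonlinear equation near $y=1$, so $\unr_{B,\xi}$ is \'etale along its image; being simultaneously a closed embedding, it is an isomorphism.

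For $\unr_{G,\xi}$ on the underlying reduced stack, I would invoke Proposition \ref{prop-adjoint-quot-endoscopy} to identify $(\hat{G}\phi/\hat{G})^\wedge_\xi \simeq \cU^\wedge_{\hat{G}_t}/\hat{G}_t$ for a chosen lift $t$ of $\xi$. For a geometric point $(x\phi, y)$ of $V^\wedge_\xi$, apply Jordan decomposition in ${}^LG$ to assume (up to $\hat{G}$-conjugation) that $x\phi = v\cdot t\phi$ with $v \in \hat{G}_t$ unipotent, conjugate $y$ into $\hat{U}$, and rerun the orbit-sum analysis on $Y = \log y \in \hat\fu$ to force $Y = 0$, yielding the required bijection on geometric points. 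For $\Lambda = \overline{\QQ}_\ell$, refine this to a formal iso via tangent-space comparison: a direct computation of the tangent complex $R\Gamma(W_F, \hat\fg_\varphi)[1]$ at $\varphi = \unr(x\phi)$ (using that $\varphi(\tau)=1$) gives
\[
H^1 = \hat\fg/(\Ad(x)\phi - 1)\hat\fg \oplus \ker(\Ad(x)\phi - q), \qquad H^2 = \mathrm{coker}(\Ad(x)\phi - q).
\]
The first summand of $H^1$ recovers the tangent of $\hat{G}\phi/\hat{G}$ at $x\phi$, and the generic condition on the $\hat\fg/\hat\ft$-part together with $q$ not being a root of unity on the $\hat\ft$-part (valid in char $0$) kills both the second summand of $H^1$ and all of $H^2$, giving smoothness of $\Loc^{\widehat\unip}$ at $\varphi$ with matching tangent space.

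The main obstacle is the reduced iso for $G$: after using the $\hat{G}$-gauge freedom to conjugate $y$ into $\hat{U}$, the element $x'$ is typically no longer in $\hat{B}$, so a priori the operator $\Ad(x') - q\phi$ need not preserve $\hat\fu$. The resolution is to track the block structure relative to the centralizer $\hat{G}_t$ provided by Jordan decomposition, using that the unipotent factor $v$ contributes only nilpotent corrections to the leading-order $\Ad(t)$-eigenvalues so that the root-orbit cancellation from genericity still applies. In positive characteristic, the potential failure of $q - 1$ to be invertible on $\hat\ft$ in $\Lambda$ is precisely what prevents the tangent-space argument from upgrading the reduced iso to a formal one.
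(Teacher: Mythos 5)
Your computations for $\unr_{B,\xi}$ and for the char-$0$ tangent complex are essentially the paper's, repackaged: the paper filters $\hat U$ by root height and solves the equation $xyx^{-1}=\phi(y)^q$ layer by layer, using exactly the per-orbit scalar conditions that Lemma~\ref{lemma-generic-root} rules out, and then invokes that $W^\wedge_\xi$ is quasi-smooth of virtual dimension $0$ to upgrade the classical identification to a derived one. Your ``linearize at $y=1$ and Hensel'' phrasing hides two points that the paper's version handles cleanly: (i) the equation must be analyzed for all of $y\in\hat U$ over an arbitrary test ring $R$, not just near $y=1$ over a henselian local base, and the nilpotent filtration on $\hat U$ is exactly what lets one do this order by order; and (ii) after identifying the classical truncations, one still needs the quasi-smoothness observation to conclude that $\unr_{B,\xi}$ is a derived, not merely classical, isomorphism. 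Neither is fatal, but both should be spelled out.

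The genuine gap is in your treatment of $\unr_{G,\xi}$ on reduced stacks, and you correctly flag it yourself. Your plan --- Jordan-decompose $x\phi$ as $v\cdot t\phi$ with $v\in\hat G_t$ unipotent, conjugate $y$ into $\hat U$, and rerun the orbit analysis --- runs into exactly the wall you describe: the $\hat G$-conjugation that brings $y$ into $\hat U$ need not bring $x$ into $\hat B$, and once $x\notin\hat B$ the operator $\Ad(x)\phi - q$ does not preserve $\hat\fu$, so the root-orbit bookkeeping breaks down. Your proposed fix (``track the block structure relative to $\hat G_t$'' and argue that the unipotent factor $v$ only gives nilpotent corrections) is a heuristic, not an argument, and it is not obviously salvageable: the two conjugations you want --- one taming $x\phi$ and one taming $y$ --- are not simultaneously achievable without additional input. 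The paper sidesteps all of this: it cites the surjectivity of $\fq^{\spec}\colon\Loc^{\unip}_{{}^LB,F}\to\Loc^{\widehat\unip}_{{}^LG,F}$ (from \cite[Proposition~2.42]{Tame}). Given that surjectivity, every geometric point of $V^\wedge_\xi$ lifts to $W^\wedge_\xi\cong(\hat B\phi/\hat B)^\wedge_\xi$, which by the already-established $B$-case has $y=1$; hence $\unr_{G,\xi}$ is surjective on $\Lambda$-points, and being also a closed embedding, it is an isomorphism on reduced stacks. That one citation replaces the entire Jordan-decomposition chase. For the $\Lambda=\overline{\QQ}_\ell$ upgrade, your identification of $H^1$ and $H^2$ of the tangent complex and your use of genericity on $\hat\fg/\hat\ft$ plus finiteness of $\phi$ on $\hat\ft$ in characteristic $0$ match the paper and are correct.
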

\begin{proof}
    We first prove the statement for $\unr_{B,\xi}$. The formal stack $W_\xi^\wedge$ is quasi-smooth of virtual dimension 0. Thus it suffices to show that the underlying classical stack $(W^\wedge_{\xi})_\mathrm{cl}$ is isomorphic to $(\hat{B}\phi/\hat{B})^\wedge_\xi$. We have
    $$(\Loc_{{}^LB,F}^\unip)_{\mathrm{cl}}=\{(x,y)\in\hat{B}\times \hat{U}| xyx^{-1}=\phi(y)^q\}/\hat{B}.$$
    For a positive root $\hat\gamma\in \Phi(\hat{G},\hat{T})^+$, let $\hat{U}_{-\hat\gamma}\simeq \GG_a$ denote the root group in $\hat{U}$ (Recall that $\hat\gamma\in \Phi(\hat{G},\hat{T})^+$ is defined using the opposite Borel $B^+$). Write $\hat{\gamma}=\sum_{\hat\alpha\in \hat\Delta}n_{\hat\alpha}\hat\alpha$ as a positive sum of simple roots. Let $\mathrm{ht}(\hat\gamma)=\sum_{\hat\alpha\in\hat\Delta}n_{\hat\alpha}$ denote the height of $\hat\gamma$. For $i\geq 1$, define the normal subgroup 
    $$\hat{U}_i=\prod_{\mathrm{ht}(\hat\gamma)\geq i}\hat{U}_{-\hat\gamma}\subseteq \hat{U}\subseteq \hat{B}.$$
    Then $\hat{U}_i/\hat{U}_{i+1}\cong \prod_{\mathrm{ht}(\hat\gamma)=i}\hat{U}_{-\hat{\gamma}}$ is commutative. Let $R$ be an ordinary $\Lambda$-algebra, and $(x,y)$ be an $R$-point of $\hat{B}\times\hat{U}$ satisfying $xyx^{-1}=\phi(y)^q$. There is a natural morphism $\hat{B}\to\hat{T}\to\hat{A}\git W_0$. Assume that $x$ lies in the completion of $\hat{B}$ along the preimage of $\xi$. We need to show that $y$ is trivial. We prove by induction that $y$ lies in $\hat{U}_{i}$ for any $i$. Assume that $y$ lies in $\hat{U}_{i}$. Let $\bar{y}$ denote the image of $y$ in $\hat{U}_{i}/\hat{U}_{i+1}$. The adjoint action of $\hat{B}$ on $\hat{U}_{i}/\hat{U}_{i+1}$ factors through $\hat{T}$, and $\hat{T}$ acts on the component $\hat{U}_{=\hat\gamma}$ by $-\hat\gamma$. The $\phi$-action on $\hat{U}_{i}/\hat{U}_{i+1}$ is given by permuting the components. Write $\bar{y}=(\bar{y}_{-\hat\gamma})_{\mathrm{ht}(\hat\gamma)=i}$. Then the relation $xyx^{-1}=\phi(y)^q$ implies that
    $$\hat\gamma(x)^{-1}\bar{y}_{-\hat\gamma}= q\phi(\bar{y}_{-\phi^{-1}(\hat{\gamma})})$$
    for any $\hat\gamma$ with $\mathrm{ht}(\hat{\gamma})=i$. By \cite[Lemma 5.1.8]{XZ-vector}, for each $\phi$-orbit $\cO\subseteq \Phi(\hat{G},\hat{T})^{+,\mathrm{ht}=i}$ and $\hat\gamma\in \cO$, we have 
    $$(\prod_{\hat\gamma'\in \cO}\hat\gamma'(x)-q^{-|\cO|})\bar{y}_{-\hat\gamma}=0$$
    if $\cO$ has type $\mathsf{A}$ or $\mathsf{BC}^-$, and 
    $$(\prod_{\hat\gamma'\in \cO}\hat\gamma'(x)+ q^{-|\cO|})\bar{y}_{-\hat\gamma}=0$$
    if $\cO$ has type $\mathsf{BC}^+$.
    By Lemma \ref{lemma-generic-root}, for each $\phi$-orbit $\cO\subseteq \Phi(\hat{G},\hat{T})^{+,\mathrm{ht}=i}$, the element $\prod_{\hat\gamma\in\cO} \hat\gamma(x)- q^{-|\cO|}$ (resp. $\prod_{\hat\gamma\in\cO} \hat\gamma(x)+ q^{-|\cO|}$) is invertible in $R$. It follows that $\bar{y}=0$, and thus $y\in\hat{U}_{i+1}$.

    By the proof of \cite[Proposition 2.42]{Tame}, we know that the morphism $\Loc^{\unip}_{{}^LB,F}\to \Loc^{\widehat\unip}_{{}^LG,F}$ is surjective. It follows that $\unr_{G,\xi}\colon (\hat{G}\phi/\hat{G})^\wedge_\xi\to V^\wedge_\xi$ is surjective on $\Lambda$-points. Also note that $\unr_{G,\xi}$ is a closed embedding. Thus $\unr_{G,\xi}$ induces an isomorphism on the underlying reduced stacks. 

    If $\Lambda=\overline{\QQ}_\ell$, we need to show that $\unr_{G,\xi}$ is an isomorphism. It suffices to show the tangent complexes of two stacks agree. Let $x\in (\hat{G}\phi/\hat{G})^\wedge_\xi$ be a $\Lambda$-point. Let $\varphi_x$ be the unramified $L$-parameter sending $\phi$ to $x\phi\in {}^LG(\Lambda)$. The tangent complex of $\hat{G}\phi/\hat{G}$ at $x$ is given by 
    $$\hat\fg\xrightarrow{1-\Ad_{x\phi}}\hat{\fg}$$
    placed in degrees $[-1,0]$.
    The tangent complex of $\Loc^{\widehat\unip}_{{}^LG,F}$ at $\varphi_x$ is given by $R\Gamma(W_F,\hat\fg_{\varphi_x})[1]$. The differential of $\unr_{G,\xi}$ at $x$ is given by
    $$[\hat\fg\xrightarrow{1-\Ad_{x\phi}}\hat{\fg}]\simeq R\Gamma(\phi^\ZZ,H^0(I_F,\hat\fg_{\varphi_x}))[1]\to R\Gamma(W_F,\hat\fg_{\varphi_x})[1].$$
    The cone of this map is $R\Gamma(\phi^\ZZ,H^1(I_F,\hat{\fg}_{\varphi_x}))$. Because the $I_F$-action on $\hat{\fg}_{\varphi_x}$ is trivial, we have
    $$H^1(I_F,\hat{\fg}_{\varphi_x})\simeq \hat\fg_{\varphi_x}(-1)$$
    as $\phi^\ZZ$-modules. We may assume that $x$ lies in $\hat{B}$. Then $\hat{\fg}_{\varphi_x}$ admits a $W_F$-invariant filtration with graded pieces $(\hat{\fg}/\hat{\fb})_{\varphi_x}$, $\hat{\ft}_{\varphi_x}$ and $\hat{\fu}_{\varphi_x}$. We can further filter $\hat\fu_{\varphi_x}$ and $(\hat\fg/\hat\fb)_{\varphi_x}$ with respect to the height of roots. By similar arguments as above, we see that $R\Gamma(\phi^\ZZ, \hat{\fu}_{\varphi_x}(-1))$ and $R\Gamma(\phi^\ZZ,(\hat{\fg}/\hat\fb)_{\varphi_x}(-1))$ are acyclic. Because $\Lambda$ has characteristic 0 and $\Ad_{x\phi}$ acts on $\hat\ft$ by a finite order automorphism. Hence $R\Gamma(\phi^\ZZ,\hat\ft_{\varphi_x}(-1))$ is acyclic. Together we see that $R\Gamma(\phi^\ZZ,\hat{\fg}_{\varphi_x}(-1))$ is acyclic. Thus $\unr_{G,\xi}$ induces an isomorphism on tangent complexes. 
\end{proof}

\subsubsection{Generic part of local Langlands categories}

Let $A$ be a(n ordinary) commutative $\Lambda$-algebra. 
Let $Z\subseteq \Spec A$ be a closed subset such that the open complement $U:=\Spec A\backslash Z\subseteq \Spec A$ is quasi-compact. Equivalently, $Z$ can be endowed with a scheme structure defined by a finitely generated ideal of $A$.

In this case, it is well-known that there is a localization sequence of $A$-linear categories (in the sense of \cite[Definition 7.26]{Tame})
\begin{equation*}\label{eq: localization sequence for perfect complex}
   \QCoh(\Spec A)_Z\to\QCoh(\Spec A)\to \QCoh(U),
\end{equation*}
where $\QCoh(\Spec A)_Z$ is the category of $A$-modules that are supported on $Z$. In addition, $\QCoh(\Spec A)_Z$ is compactly generated by $\Perf(\Spec A)_Z$ consisting of perfect $A$-modules that restrict to zero on $U$.


Now let $\bC$ be an $A$-linear (presentable stable) category. We define the categories
$$\bC_{U}\coloneqq\bC\otimes_{\QCoh(\Spec A)}\QCoh(U)\quad\text{and}\quad \bC_Z\coloneqq \bC\otimes_{\QCoh(\Spec A)}\QCoh(\Spec A)_Z.$$
By abuse of terminology, we will say an object in $\bC$ is supported at $Z$ if it belongs to the full subcategory $\bC_Z$.

By tensoring $\bC$ the above localization sequence, we obtain a localization sequence
$$\bC_Z\xrightarrow{i} \bC\xrightarrow{j} \bC_{U}$$
of $A$-linear categories. Note that if $\bC$ is compactly generated by a set of compact generators $\{c_i\}_i$, then $\bC_Z$ is compactly generated by
$\{c_i\boxtimes_A E\}_{i, E\in\Perf(\Spec A)_Z}$. 


We let $j^R$ denote the right adjoint of $j$, which is a fully faithful embedding.
\begin{lemma}\label{lem: localization to open subset}
    We have a natural isomorphism of functors $j^R\circ j\cong (-)\otimes_A R\Gamma(U,\cO_U)$.
\end{lemma}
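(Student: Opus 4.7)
The plan is to realize $j^R$ as the base change of the pushforward functor on quasi-coherent sheaves and then invoke the projection formula. Since $U \subseteq \Spec A$ is a quasi-compact open, the pushforward $j_* \colon \QCoh(U) \to \QCoh(\Spec A)$ is continuous, and by the projection formula it is $\QCoh(\Spec A)$-linear (equivalently, $j \colon U \to \Spec A$ is quasi-compact and quasi-separated). Hence base change yields a continuous $\QCoh(\Spec A)$-linear functor
\[
\id_{\bC} \otimes_{\QCoh(\Spec A)} j_* \colon \bC_U \longrightarrow \bC \otimes_{\QCoh(\Spec A)} \QCoh(\Spec A) \simeq \bC.
\]

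The first step is to identify this base-changed functor with $j^R$. This follows from the general fact that if $F \colon \cA \to \cB$ is a symmetric monoidal, continuous, colimit-preserving functor between presentable symmetric monoidal $\infty$-categories whose right adjoint $F^R$ is $\cA$-linear, then for any $\cA$-module $\bC$ the right adjoint of the base change $\id_{\bC} \otimes F \colon \bC \to \bC \otimes_{\cA} \cB$ is computed by $\id_{\bC} \otimes F^R$. We apply this with $(\cA,\cB,F,F^R) = (\QCoh(\Spec A),\QCoh(U),j^*,j_*)$; the hypothesis is precisely the projection formula established above.

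Having identified $j^R \simeq \id_{\bC} \otimes_{\QCoh(\Spec A)} j_*$, the rest is a direct computation. For any $c \in \bC$, tracing through $j(c) \simeq c \otimes_{\QCoh(\Spec A)} \cO_U$ yields
\[
j^R j(c) \simeq c \otimes_{\QCoh(\Spec A)} j_* j^* \cO_{\Spec A} \simeq c \otimes_A R\Gamma(U,\cO_U),
\]
where the last equality uses that $j_* j^* A = R\Gamma(U,\cO_U)$ as an $A$-module, and that the $\QCoh(\Spec A)$-action on $\bC$ is by definition the $A$-module action. Naturality in $c$ is automatic from the construction.

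The main subtlety is the formal identification $j^R \simeq \id_{\bC} \otimes j_*$ in $\Lincat$; this ultimately reduces to the projection formula for $j$. All remaining steps are purely formal manipulations of base change in $\Lincat_{\QCoh(\Spec A)}$.
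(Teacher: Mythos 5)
Your proof is correct, and it takes essentially the same approach as the paper: the paper's proof simply checks the statement for $\bC=\QCoh(\Spec A)$ (where both sides are $j_*j^*$) and then says the general case "follows immediately," which is precisely the formal base-change argument you spell out in detail (identifying $j^R$ with $\id_{\bC}\otimes_{\QCoh(\Spec A)} j_*$ via the projection formula and rigidity). You have just made the "follows immediately" explicit.
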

\begin{proof}
    This is clear when $\bC=\QCoh(\Spec A)$. The general case then follows immediately. 
\end{proof}
   
\begin{eg}\label{ex: support conditin for indcoh}
    Suppose $A$ is of finite type over $\Lambda$.
    Let $X$ be an algebraic stack over $\Spec A$ and let $\bC$ be the category of ind-coherent sheaves on $X$. We write $X_Z^\wedge=X\times_{\Spec A}Z^\wedge$ and $X_U=X\times_{\Spec A}U$.
    Then $\bC_Z=\Ind\Coh(X_Z^\wedge)$ and $\bC_{U}=\Ind\Coh(X_U)$.
\end{eg}


We have the following description of objects in $\bC_Z$.

\begin{lemma}\label{lemma-supp}
    Assume that $\bC$ is compactly generated. Let $\{c_s\}_s$ be a set of compact generators. Then an object $x\in \bC$ lies in $\bC_Z$ if and only if $\Hom(c_s,x)\in\QCoh(\Spec A)_Z$ for any $c_s$.
\end{lemma}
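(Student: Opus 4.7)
The plan is to identify $\bC_Z$ with the kernel of the localization functor $j\colon \bC\to \bC_U$ and then translate the vanishing of $j(x)$ into a condition on $\Hom(c_s,x)$, using Lemma \ref{lem: localization to open subset} twice: once for $\bC$, and once for $\QCoh(\Spec A)$ itself.

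First, because the localization sequence $\bC_Z\xrightarrow{i}\bC\xrightarrow{j}\bC_U$ exhibits $i$ as a fully faithful embedding with essential image $\ker(j)$, we have $x\in\bC_Z$ if and only if $j(x)=0$. Since the right adjoint $j^R$ is fully faithful, this is equivalent to $j^R j(x)=0$. By Lemma \ref{lem: localization to open subset}, applied to the $A$-linear category $\bC$, we have a canonical equivalence $j^R j(x)\simeq x\otimes_A R\Gamma(U,\cO_U)$. Thus $x\in\bC_Z$ if and only if $x\otimes_A R\Gamma(U,\cO_U)=0$.

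Next, since $\{c_s\}$ is a set of compact generators of $\bC$, the vanishing of an object $y\in\bC$ is detected by $\Hom_{\bC}(c_s,y)=0$ for every $s$. Apply this to $y=x\otimes_A R\Gamma(U,\cO_U)$. The $A$-linear structure on $\bC$ endows the functor $\Hom_{\bC}(c_s,-)\colon \bC\to \QCoh(\Spec A)$ with a canonical $A$-linear refinement, and because $c_s$ is compact this functor preserves all colimits, hence commutes with the $A$-module tensor product in its target:
\[
\Hom_{\bC}(c_s,\,x\otimes_A R\Gamma(U,\cO_U))\;\simeq\;\Hom_{\bC}(c_s,x)\otimes_A R\Gamma(U,\cO_U).
\]
Therefore $x\in\bC_Z$ if and only if $\Hom_{\bC}(c_s,x)\otimes_A R\Gamma(U,\cO_U)=0$ for every $s$.

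Finally, apply Lemma \ref{lem: localization to open subset} now to the $A$-linear category $\bC=\QCoh(\Spec A)$ and to the object $M=\Hom_{\bC}(c_s,x)$: the vanishing of $M\otimes_A R\Gamma(U,\cO_U)\simeq j^R j(M)$ is equivalent to $j(M)=0$, i.e.\ to $M\in\QCoh(\Spec A)_Z$. Combining these equivalences yields the claim. The only non-formal step is the compatibility of $\Hom_{\bC}(c_s,-)$ with the $A$-linear tensor product on the target, which is the routine consequence of $A$-linearity and compactness noted above; no further obstacle arises.
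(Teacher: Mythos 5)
Your proof is correct and follows the same route as the paper's: both identify $\bC_Z=\ker(j)$ with the kernel of $x\mapsto x\otimes_A R\Gamma(U,\cO_U)$ via Lemma \ref{lem: localization to open subset}, detect vanishing using the compact generators, and pull the $A$-module tensor through $\Hom_{\bC}(c_s,-)$ by compactness and $A$-linearity. The only difference is that you spell out the intermediate steps (conservativity of $\Hom(c_s,-)$ jointly in $s$, and the second application of the lemma to $\QCoh(\Spec A)$) that the paper leaves implicit.
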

\begin{proof}
We have $x\in \bC_Z$ if and only if $j^R(j(x))=x\otimes_A R\Gamma(U,\cO_U))=0$ if and only if for every $c_s$ 
\[
\Hom(c_s,x\otimes_A R\Gamma(U,\cO_U))=\Hom(c_s,x)\otimes_AR\Gamma(U,\cO_U)=0.
\]\end{proof}

Now suppose $\bC'\subset \bC$ is a full presentable stable subcategory of $\bC$. Then $\bC'$ is automatically equipped with an $A$-linear structure. It follows that we have the similar localization sequence for $\bC'$.
As $\QCoh(\Spec A)$ is rigid, the natural functors $\bC'_Z\to \bC_Z$ and $\bC'_U\to \bC_U$ are fully faithful (e.g. see \cite[Lemma 7.98(3)]{Tame}).
Clearly, we have $\bC'_Z=\bC_Z\cap \bC'$.

More generally, if $\bC_1$ and $\bC_2$ are two $A$-linear categories, with an $A$-linear functor $F\colon \bC_1\to \bC_2$, then $F$ restricts to a functor $(\bC_1)_Z$ to $(\bC_2)_Z$.

\begin{lemma}\label{lemma-supp-functors}
If $F$ admits a left $F^L$ (resp. continuous right $F^R$) adjoint, then $F^L$ (resp. $F^R$) sends $(\bC_2)_Z$ into $(\bC_1)_Z$.
\end{lemma}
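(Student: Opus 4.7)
The plan is to combine two ingredients: a characterization of $\bC_Z$ via tensoring with $R\Gamma(U,\cO_U)$, and the automatic $\QCoh(\Spec A)$-linearity of the adjoint functors.

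First, I would leverage Lemma \ref{lem: localization to open subset} together with the fact that $j^R$ is fully faithful (hence conservative) on its essential image. Since $j^R j(y)\simeq y\otimes_A R\Gamma(U,\cO_U)$, the vanishing of the latter is equivalent to $j(y)=0$, giving the clean criterion
\[
y\in\bC_Z\quad\Longleftrightarrow\quad y\otimes_A R\Gamma(U,\cO_U)=0.
\]

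Second, I would check that both $F^L$ and $F^R$ are $\QCoh(\Spec A)$-linear, i.e.\ they commute with $(-)\otimes_A M$ for every $M\in\QCoh(\Spec A)$. For dualizable (equivalently, perfect) $M$ with dual $M^\vee$, this is formal from the $A$-linearity of $F$: for $F^L$ one has the chain
\[
\Hom_{\bC_1}\!\bigl(F^L(c\otimes_A M),d\bigr)\simeq\Hom_{\bC_2}\!\bigl(c\otimes_A M,F(d)\bigr)\simeq\Hom_{\bC_2}\!\bigl(c,F(d\otimes_A M^\vee)\bigr)\simeq\Hom_{\bC_1}\!\bigl(F^L(c)\otimes_A M,d\bigr),
\]
yielding a natural equivalence $F^L(c\otimes_A M)\simeq F^L(c)\otimes_A M$, and similarly for $F^R$. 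For a general $M\in\QCoh(\Spec A)$, write $M$ as a colimit of perfect complexes (using that $\QCoh(\Spec A)$ is compactly generated by dualizable objects) and pass to the colimit, using that $F^L$ is automatically continuous and $F^R$ is continuous by hypothesis, while $(-)\otimes_A M$ preserves colimits.

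With these two facts, the lemma is immediate: for $y\in(\bC_2)_Z$,
\[
F^L(y)\otimes_A R\Gamma(U,\cO_U)\simeq F^L\!\bigl(y\otimes_A R\Gamma(U,\cO_U)\bigr)\simeq F^L(0)=0,
\]
so $F^L(y)\in(\bC_1)_Z$, and the identical argument works for $F^R$. I anticipate no real obstacle; the argument is purely formal, with the only mild subtlety being the reduction from dualizable $M$ to arbitrary $M$ in the verification of $A$-linearity, which is standard once one invokes that $\QCoh(\Spec A)$ is compactly generated by perfects.
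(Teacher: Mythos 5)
Your proof is correct and takes essentially the same route as the paper: the paper simply notes that $\QCoh(\Spec A)$ is rigid, hence $F^L$ and $F^R$ are automatically $A$-linear, and concludes; you have unpacked that rigidity argument by hand (dualizable objects plus a colimit argument) and made the criterion $y\in\bC_Z \Leftrightarrow y\otimes_A R\Gamma(U,\cO_U)=0$ explicit via Lemma \ref{lem: localization to open subset}.
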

\begin{proof}
    As the symmetric monoidal category $\QCoh(\Spec A)$ is rigid, both $F^L$ and $F^R$ are $A$-linear. The lemma then is clear.
\end{proof}

\begin{lemma}\label{lemma-tensor-supp}
    Let $A_i$ for $i=1,2$ be commutative $\Lambda$-algebras and $\bC_i$ be $A_i$-linear categories. Let $Z_i\subseteq\Spec A_i$ be closed subschemes with quasi-compact open complements. Denote $A=A_1\otimes A_2$, $\bC=\bC_1\otimes_\Lambda\bC_2$, and $Z=Z_1\times Z_2$. Then there is a natural equivalence 
    $$\bC_{Z}\simeq (\bC_1)_{Z_1}\otimes_\Lambda (\bC_2)_{Z_2}.$$
\end{lemma}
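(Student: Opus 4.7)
The plan is to reduce the statement to the ``universal case'' $\bC_i = \QCoh(\Spec A_i)$ by manipulating relative tensor products in $\Lincat_\Lambda$, and then to handle that case by exhibiting an explicit common compact generator.

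For the reduction, I will unpack the definition $(\bC_i)_{Z_i} = \bC_i \otimes_{\QCoh(\Spec A_i)} \QCoh(\Spec A_i)_{Z_i}$ and use the symmetric monoidal equivalence $\QCoh(\Spec A_1) \otimes_\Lambda \QCoh(\Spec A_2) \simeq \QCoh(\Spec A)$ (valid because $\Lambda$ is a field). Associativity and commutativity of relative tensor products in $\Lincat_\Lambda$ then yield
\begin{align*}
(\bC_1)_{Z_1} \otimes_\Lambda (\bC_2)_{Z_2}
&\simeq (\bC_1 \otimes_\Lambda \bC_2) \otimes_{\QCoh(\Spec A_1)\otimes_\Lambda\QCoh(\Spec A_2)} \bigl(\QCoh(\Spec A_1)_{Z_1} \otimes_\Lambda \QCoh(\Spec A_2)_{Z_2}\bigr) \\
&\simeq \bC \otimes_{\QCoh(\Spec A)} \bigl(\QCoh(\Spec A_1)_{Z_1} \otimes_\Lambda \QCoh(\Spec A_2)_{Z_2}\bigr).
\end{align*}
Since $\bC_Z = \bC \otimes_{\QCoh(\Spec A)} \QCoh(\Spec A)_Z$ by definition, it suffices to establish the universal case $\QCoh(\Spec A_1)_{Z_1} \otimes_\Lambda \QCoh(\Spec A_2)_{Z_2} \simeq \QCoh(\Spec A)_Z$ as $\QCoh(\Spec A)$-linear subcategories of $\QCoh(\Spec A)$.

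For this universal case, quasi-compactness of $U_i = \Spec A_i \setminus Z_i$ allows me to pick finite generators $I_i = (f_{i,1}, \ldots, f_{i,n_i})$ of the ideal defining $Z_i$, and the Koszul complex $K_i = K(f_{i,1}, \ldots, f_{i,n_i})$ is then a compact generator of $\QCoh(\Spec A_i)_{Z_i}$. The tensor product $\QCoh(\Spec A_1)_{Z_1} \otimes_\Lambda \QCoh(\Spec A_2)_{Z_2}$ is therefore compactly generated by the external product $K_1 \boxtimes K_2$. On the other hand, the ideal of $Z$ in $A$ is generated by the concatenated tuple $(f_{1,j}\otimes 1,\ 1\otimes f_{2,k})$, and its Koszul complex $-$ a compact generator of $\QCoh(\Spec A)_Z$ $-$ is precisely $K_1 \boxtimes K_2$. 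Since $K_1, K_2$ are perfect over a field $\Lambda$, Künneth identifies $\mathrm{End}(K_1) \otimes_\Lambda \mathrm{End}(K_2)$ with $\mathrm{End}_A(K_1 \boxtimes K_2)$, so the natural $\Lambda$-linear continuous functor between the two sides is fully faithful on the compact generator and essentially surjective, hence an equivalence.

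The only real content lies in justifying the tensor-product rearrangement of the second paragraph, namely the swap identity $(\bC_1 \otimes_{R_1} D_1) \otimes_\Lambda (\bC_2 \otimes_{R_2} D_2) \simeq (\bC_1 \otimes_\Lambda \bC_2) \otimes_{R_1\otimes_\Lambda R_2} (D_1 \otimes_\Lambda D_2)$ inside $\Lincat_\Lambda$, with $R_i = \QCoh(\Spec A_i)$ and $D_i = \QCoh(\Spec A_i)_{Z_i}$. This is a formal consequence of the symmetric monoidal structure on $\Lincat_\Lambda$, but deserves careful bookkeeping. After that, the Koszul comparison in the universal case is essentially by inspection, so I do not anticipate any serious obstacle.
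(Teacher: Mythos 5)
Your proposal is correct and takes essentially the same route as the paper: reduce to the universal case $\bC_i=\QCoh(\Spec A_i)$, then compare Koszul complex generators. The only cosmetic difference is that you verify full faithfulness via a Künneth computation of endomorphisms of the compact generator, whereas the paper invokes the more general observation (stated just before the lemma) that rigidity of $\QCoh(\Spec A)$ makes the inclusion of a full subcategory remain fully faithful after the relative tensor; both are fine.
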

\begin{proof}
    It suffices to prove this for $\bC_i=\QCoh(\Spec A_i)$. Clearly there is a full embedding $$(\bC_1)_{Z_1}\otimes_\Lambda(\bC_2)_{Z_2}\hookrightarrow \bC_Z.$$ Assume that the ideal defining $Z_i$ is generated by $f_{i,1},\dots,f_{i,n_i}\in A_i$ for $i=1,2$. It is well-known that $\QCoh(\Spec A_i)_{Z_i}\subseteq\QCoh(\Spec A_i)$ is generated by the Koszul complex $K_i=\mathrm{Kos}(A_i,f_{i,1},\dots,f_{i,n_i})$. Similarly $\QCoh(\Spec A)_Z$ is generated by $K=K_1\otimes K_2$. Therefore the above embedding is essentially surjective.
\end{proof}

Now we specialize the above general discussion to the cases we are interested in.
The category $\Ind\Coh(\Loc^{\widehat\unip}_{{}^LG,F})$ is $Z^{\widehat\unip}_{{}^LG,F}$-linear. Let $\xi\in (\hat{A}\git W_0)(\Lambda)=C^{\widehat\unip}_{{}^LG,F}(\Lambda)$ be a closed point. The fully faithful embedding
$$(i_\xi^\wedge)_*\colon \Ind\Coh(V_\xi^\wedge)\hookrightarrow \Ind\Coh(\Loc^{\widehat\unip}_{{}^LG,F})$$
identifies $\Ind\Coh(V_\xi^\wedge)$ with the subcategory $\Ind\Coh(\Loc^{\widehat\unip}_{{}^LG,F})_\xi\subseteq \Ind\Coh(\Loc^{\widehat\unip}_{{}^LG,F})$, as explained in Example \ref{ex: support conditin for indcoh}.

The category $\Ind\Shv_{\fingen}^{\unip}(\Isoc_G,\Lambda)$ (resp.  $\Shv^{\widehat\unip}(\Isoc_G,\Lambda)$) admits a unique $Z^{\widehat\unip}_{{}^LG,F}$-linear structure making functor $\LL^\unip_G$ (resp. $\LL^\unip_G\circ\Psi^{L,\unip}$) $Z^{\widehat\unip}_{{}^LG,F}$-linear. Let $\Shv^{\widehat\unip}(\Isoc_G,\Lambda)_\xi$ (resp. $\Ind\Shv^{\unip}_\fingen(\Isoc_G,\Lambda)_\xi$) denote the subcategory of objects that are supported at the closed point $\xi\in C^{\widehat\unip}_{{}^LG,F}$. Then $\Psi^{L,\unip}$ restricts to a fully faithful functor
$$\Psi^{L,\unip}_\xi\colon \Shv^{\widehat\unip}(\Isoc_G,\Lambda)_\xi\hookrightarrow  \Ind\Shv_\fingen^{\unip}(\Isoc_G,\Lambda)_\xi$$
and the unipotent local Langlands functor induces a fully faithful functor
$$\LL^{\unip}_{G,\xi}\colon \Ind\Shv^{\unip}_{\fingen}(\Isoc_G,\Lambda)_\xi\to \Ind\Coh(V^\wedge_\xi).$$

Let $R_\xi$ (resp. $R_{\xi,\Ind\fingen}$) denote the right adjoint of the fully faithful embedding 
$$i_\xi\colon \Shv^{\unip}(\Isoc_G,\Lambda)_\xi\hookrightarrow \Shv^{\unip}(\Isoc_G,\Lambda) \quad\text{resp.}\quad i_{\xi,\Ind\fingen}\colon \Ind\Shv_\fingen^{\unip}(\Isoc_G,\Lambda)_\xi\hookrightarrow \Ind\Shv_\fingen^{\unip}(\Isoc_G,\Lambda).$$ For an object $\cF$ in $\Shv^{\widehat\unip}(\Isoc_G,\Lambda)$ (resp. $\Ind\Shv_\fingen^{\unip}(\Isoc_G,\Lambda)$), denote
$$\cF_\xi\coloneqq i_\xi\circ R_\xi(\cF)\quad \textrm{resp.}\quad i_{\xi,\Ind\fingen}\circ R_{\xi,\Ind\fingen}(\cF).$$
There is a natural morphism $\cF_\xi\to \cF$ by adjunction.

\begin{lemma}
    The subcategory $\Shv^{\widehat\unip}(\Isoc_G,\Lambda)_\xi$ (resp. $\Ind\Shv_\fingen^{\unip}(\Isoc_G,\Lambda)_\xi$) is closed under the spectral action defined in Proposition \ref{prop-spectral-action}. Moreover, the functor $\cF\mapsto \cF_\xi$ commutes with the spectral action.
\end{lemma}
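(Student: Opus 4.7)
The plan is to reduce both claims to formal properties of localization for linear categories, exploiting the fact that the spectral action of $\Perf(\Loc^{\widehat\unip}_{{}^LG,F})$ refines the $Z^{\widehat\unip}_{{}^LG,F}$-linear structure on the categories in question.

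First, I would verify that for each $V\in\Perf(\Loc^{\widehat\unip}_{{}^LG,F})$ the endofunctor $V\star(-)$ is $\QCoh(\Spec Z^{\widehat\unip}_{{}^LG,F})$-linear. This should follow from the observation that pullback along the structure map $\Loc^{\widehat\unip}_{{}^LG,F}\to \Spec Z^{\widehat\unip}_{{}^LG,F}$ yields a symmetric monoidal functor $\Perf(\Spec Z^{\widehat\unip}_{{}^LG,F})\to \Perf(\Loc^{\widehat\unip}_{{}^LG,F})$, combined with the uniqueness statements characterizing both the spectral action (Proposition \ref{prop-spectral-action}) and the $Z^{\widehat\unip}_{{}^LG,F}$-linear structure. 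Writing $\bC$ for either $\Shv^{\widehat\unip}(\Isoc_G,\Lambda)$ or $\Ind\Shv^{\unip}_{\fingen}(\Isoc_G,\Lambda)$, the subcategory $\bC_\xi$ is by definition obtained by base change along $\QCoh(\Spec Z^{\widehat\unip}_{{}^LG,F})_\xi\hookrightarrow \QCoh(\Spec Z^{\widehat\unip}_{{}^LG,F})$; since any $\QCoh(\Spec Z^{\widehat\unip}_{{}^LG,F})$-linear endofunctor restricts to an endofunctor of such a base change (cf.\ the argument of Lemma \ref{lemma-supp-functors}), this immediately yields the first claim.

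For the second claim, the key step would be to realize $\cF\mapsto \cF_\xi$ as the fiber of the unit map $\id_\bC\to j^R j$ attached to the localization $j\colon \bC\to \bC_U$ onto the quasi-compact open complement $U=\Spec Z^{\widehat\unip}_{{}^LG,F}\setminus\{\xi\}$. By Lemma \ref{lem: localization to open subset} the composite $j^R j$ is canonically the $Z^{\widehat\unip}_{{}^LG,F}$-linear endofunctor $(-)\otimes_{Z^{\widehat\unip}_{{}^LG,F}} R\Gamma(U,\cO_U)$. Since $V\star(-)$ is $Z^{\widehat\unip}_{{}^LG,F}$-linear it commutes with this tensor product, and being an exact functor between stable $\infty$-categories it preserves fibers; together these give a natural isomorphism $V\star \cF_\xi\simeq (V\star\cF)_\xi$, proving the second claim in both variants simultaneously.

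I do not anticipate any serious obstacle: the result is essentially a formal consequence of the $\Perf(\Loc^{\widehat\unip}_{{}^LG,F})$-linearity of the spectral action, once one observes that $Z^{\widehat\unip}_{{}^LG,F}$ is a finitely generated $\Lambda$-algebra (as the ring of global functions on an Artin stack of finite type over $\Lambda$), which guarantees the quasi-compactness needed to invoke Lemma \ref{lem: localization to open subset}.
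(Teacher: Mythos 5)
Your proof is correct and reaches the same conclusion, though by a slightly more formal route than the one the paper takes. The paper passes through the spectral side: after applying the fully faithful embedding $\LL^{\unip}_G$ (resp.\ $\LL^{\unip}_G\circ\Psi^{L,\unip}$), the stability of $\Ind\Coh(V^\wedge_\xi)$ under the $\Perf(\Loc^{\widehat\unip}_{{}^LG,F})$-action gives the first claim directly, and the projector $\cF\mapsto\cF_\xi$ is identified with $(i_\xi^\wedge)_*(i_\xi^\wedge)^!$ on $\Ind\Coh(\Loc^{\widehat\unip}_{{}^LG,F})$, which visibly commutes with the $\Perf$-action. You instead remain entirely on the automorphic side and phrase everything in terms of $Z^{\widehat\unip}_{{}^LG,F}$-linearity of $V\star(-)$: the first claim becomes the observation that a $\QCoh(\Spec Z^{\widehat\unip}_{{}^LG,F})$-linear endofunctor preserves any base-changed full subcategory, and the second uses the recollement fiber sequence $\cF_\xi\to\cF\to j^Rj\cF$ together with Lemma~\ref{lem: localization to open subset} and exactness. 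This is a genuine, if modest, simplification: it makes no use of the fully faithfulness of $\LL^\unip_G$ or of the geometry of $V^\wedge_\xi$, only of the fact (built into Proposition~\ref{prop-spectral-action}) that the spectral action restricts, along $\Perf(\Spec Z^{\widehat\unip}_{{}^LG,F})\to\Perf(\Loc^{\widehat\unip}_{{}^LG,F})$, to the $Z^{\widehat\unip}_{{}^LG,F}$-linear structure used to define $\bC_\xi$. The paper implicitly uses the same fiber-sequence identification $\cF_\xi\simeq\mathrm{fiber}\bigl(\cF\to\cF\otimes_{Z^{\widehat\unip}_{{}^LG,F}}R\Gamma(C^{\widehat\unip}_{{}^LG,F}\setminus\{\xi\},\cO)\bigr)$ a few lines later in the proof of Proposition~\ref{prop-adm-supp}, so your argument is consistent with its toolkit; both proofs are sound.
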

\begin{proof}
    The first statement is clear as $\Ind\Coh(V^\wedge_\xi)\subseteq \Ind\Coh(\Loc^{\widehat\unip}_{{}^LG,F})$ is closed under the action of $\Perf(\Loc^{\widehat\unip}_{{}^LG,F})$. After applying the local Langlands functor, the functor $\cF\mapsto \cF_\xi$ is given by 
    $$(i_\xi^\wedge)_*(i_\xi^\wedge)^!\colon \Ind\Coh(\Loc^{\widehat\unip}_{{}^LG,F})\to \Ind\Coh(\Loc^{\widehat\unip}_{{}^LG,F}),$$
    which commutes with the action of $\Perf(\Loc^{\widehat\unip}_{{}^LG,F})$. This proves the second statement.
\end{proof}

Let $\cF\in \Shv^{\widehat\unip}(\Isoc_G,\Lambda)^\Adm$ be an admissible object. By adjunction, there is a morphism
$$\bigoplus_{\xi}\cF_\xi\to \cF$$
where $\xi$ runs through $\Lambda$ points of $C^{\widehat\unip}_{{}^LG,F}$. Denote $\Shv^{\widehat\unip}(\Isoc_G,\Lambda)^\Adm_{\xi}=\Shv^{\widehat\unip}(\Isoc_G,\Lambda)^\Adm\cap \Shv^{\widehat\unip}(\Isoc_G,\Lambda)_\xi$.

\begin{prop}\label{prop-adm-supp}
    Let $\cF\in \Shv^{\widehat\unip}(\Isoc_G,\Lambda)^\Adm$ be an admissible object. Then natural morphisms
    $$\prod_\xi\cF_\xi\leftarrow\bigoplus_\xi\cF_\xi\to \cF$$
    are isomorphisms.
\end{prop}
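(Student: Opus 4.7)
The plan is to test both morphisms against compact generators of $\Shv^{\widehat\unip}(\Isoc_G,\Lambda)$. The main observation is that, for any compact $c$ and any admissible $\cF$, the complex $\Hom(c,\cF)$ is a perfect $\Lambda$-module. A $Z$-linear action on a perfect (i.e.\ finite-dimensional) $\Lambda$-module, where $Z:=Z^{\widehat\unip}_{{}^LG,F}$ is a finitely generated $\Lambda$-algebra, automatically factors through a finite product of local Artinian rings, producing a finite direct-sum decomposition over closed points of $\Spec Z$. This local-level finiteness is enough to force the coincidence of the coproduct and the product, without any global constraint on the support of $\cF$ itself.

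Let $c\in \Shv^{\widehat\unip}(\Isoc_G,\Lambda)^\omega$ be a compact object. By Theorem \ref{thm-local-Langlands-cat}(3) and the stratification in Theorem \ref{thm-local-Langlands-cat}(1), one reduces to the case $c = (i_b)_! c_b$ with $c_b\in \Rep^{\widehat\unip}(G_b(F),\Lambda)^\omega$ compact for some single $b$. By adjunction, $\Hom((i_b)_! c_b,\cF)\simeq \Hom(c_b,(i_b)^\sharp\cF)$; admissibility of $(i_b)^\sharp\cF$ (Theorem \ref{thm-local-Langlands-cat}(4)) together with compactness of $c_b$ then yields that $\Hom(c,\cF)$ is a perfect $\Lambda$-module. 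Since $Z$ is finitely generated over $\Lambda$, its action on this finite-dimensional module factors through an Artinian quotient, and by the Chinese Remainder theorem this decomposes as a finite product of local Artinian rings, producing $\Hom(c,\cF)\simeq \bigoplus_{\xi\in S_c}\Hom(c,\cF)_\xi$ for a finite subset $S_c\subseteq C^{\widehat\unip}_{{}^LG,F}(\Lambda)$. The $Z$-linearity of $\Hom(c,-)$ (Lemma \ref{lemma-supp-functors}) identifies $\Hom(c,\cF)_\xi$ with $\Hom(c,\cF_\xi)$, which therefore vanishes for $\xi\notin S_c$.

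Now, since $c$ is compact, $\Hom(c,\bigoplus_\xi \cF_\xi)=\bigoplus_\xi \Hom(c,\cF_\xi)$; and $\Hom(c,\prod_\xi \cF_\xi)=\prod_\xi \Hom(c,\cF_\xi)$ as $\Hom(c,-)$ always commutes with limits in the second argument. The finiteness of $S_c$ forces the canonical map $\bigoplus_\xi \Hom(c,\cF_\xi)\to \prod_\xi \Hom(c,\cF_\xi)$ of $\Lambda$-modules to be an isomorphism, and both sides are identified with $\Hom(c,\cF)$. Hence both the natural morphism $\bigoplus_\xi \cF_\xi\to \cF$ and the canonical morphism $\bigoplus_\xi \cF_\xi\to \prod_\xi \cF_\xi$ induce isomorphisms on $\Hom(c,-)$ for every compact $c$, and so are themselves isomorphisms by compact generation. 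The main technical point is thus verifying the perfectness of $\Hom(c,\cF)$; everything else reduces to standard support-theoretic arguments for $Z$-modules.
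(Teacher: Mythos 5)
Your proof is correct and follows essentially the same strategy as the paper: test the two morphisms against compact objects, observe that $\mathrm{Hom}(c,\cF)$ is a perfect $\Lambda$-module (by admissibility of $\cF$ and compactness of $c$), hence supported at finitely many points of $C^{\widehat\unip}_{{}^LG,F}$, and conclude. The only point you should flesh out is the identification $\mathrm{Hom}(c,\cF_\xi)\simeq \mathrm{Hom}(c,\cF)_\xi$: citing Lemma~\ref{lemma-supp-functors} directly only gives that $\mathrm{Hom}(c,\cF_\xi)$ is supported at $\xi$, not that it is the $\xi$-component of $\mathrm{Hom}(c,\cF)$. The cleanest way to get this, and what the paper does, is to apply $\mathrm{Hom}(c,-)$ to the fiber sequence $\cF_\xi\to\cF\to\cF\otimes_{Z^{\widehat\unip}_{{}^LG,F}}R\Gamma(C^{\widehat\unip}_{{}^LG,F}\setminus\{\xi\},\cO)$ of Lemma~\ref{lem: localization to open subset}; $Z$-linearity and continuity of $\mathrm{Hom}(c,-)$ (the latter from compactness of $c$) then pull the tensor through the $\mathrm{Hom}$, giving exactly the $\xi$-fiber sequence for $\mathrm{Hom}(c,\cF)$. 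Your CRT/Artinian observation for the finite decomposition and your reduction of compacts to the form $(i_b)_!c_b$ are both fine but not strictly necessary; the paper simply invokes the same finiteness from perfectness directly.
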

\begin{proof}
    Let $C\in \Shv(\Isoc_G,\Lambda)$ be a compact object. Then $\Hom(C,\cF)$ is a perfect $\Lambda$-module. Thus there is a direct sum decomposition
    $$\Hom(C,\cF)=\bigoplus_\xi \Hom(C,\cF)_\xi$$
    by the support of $Z^{\widehat\unip}_{{}^LG,F}$-action. We claim that there is a natural isomorphism $\Hom(C,\cF_\xi)\xrightarrow{\sim}\Hom(C,\cF)_\xi$. If fact, by Lemma \ref{lem: localization to open subset} there is an isomorphism
    $$\cF_\xi \simeq \mathrm{fiber}\big(\cF\to \cF\otimes_{Z^{\widehat\unip}_{{}^LG,F}}R\Gamma(C_{{}^LG,F}^{\widehat\unip}\backslash\{\xi\},\cO)\big).$$
    Therefore 
    $$\Hom(C,\cF_\xi)\simeq\mathrm{fiber}\big(\Hom(C,\cF)\to \Hom(C,\cF)\otimes_{Z^{\widehat\unip}_{{}^LG,F}}R\Gamma(C_{{}^LG,F}^{\widehat\unip}\backslash\{\xi\},\cO))\big)\simeq \Hom(C,\cF)_\xi.$$
    Hence $\Hom(C,\cF_\xi)=0$ for all but finitely many $\xi$. It follows that  $$\prod_\xi\Hom(C,\cF_\xi)\xleftarrow{\simeq}\bigoplus_\xi\Hom(C,\cF_\xi)\xrightarrow{\simeq}\Hom(C,\cF)$$
    are isomorphisms.
\end{proof}

Let $b\in B(G)$ be an element. The fully faithful embedding $$(i_b)_!\colon\Ind\Rep^\unip_\fingen(G_b(F),\Lambda)\hookrightarrow \Ind\Shv^{\unip}_\fingen(\Isoc_G,\Lambda)$$ 
endows $\Ind\Rep^\unip_\fingen(G_b(F),\Lambda)$ with a  $Z^{\widehat\unip}_{{}^LG,F}$-linear structure. If $\xi$ is a $\Lambda$-point of $\hat{A}\git W_0$, let 
$$\Ind\Rep_\fingen^\unip(G_b(F),\Lambda)_\xi\subseteq \Ind\Rep^\unip_\fingen(G_b(F),\Lambda)$$ 
denote the subcategory of objects supported at $\xi$.

\begin{lemma}\label{lemma-6-functor-supp}
    The functors $(i_b)_!$, $(i_b)_*$, $\cP^{\unip}\circ(i_b)_\flat$, $(i_b)^!$, $(i_b)^*$, and $\cP^{\unip}\circ(i_b)^\sharp$ preserves objects supported at $\xi$.
\end{lemma}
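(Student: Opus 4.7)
The plan is to apply Lemma \ref{lemma-supp-functors} to each of the three natural adjoint pairs among the six listed functors, namely $(i_b)_!\dashv \cP^{\unip}\circ(i_b)^\sharp$, $(i_b)^*\dashv (i_b)_*$, and $(i_b)^!\dashv \cP^{\unip}\circ(i_b)_\flat$, where the first and third adjunctions take place inside the unipotent subcategories, with $\cP^{\unip}$ inserted because $(i_b)^\sharp$ and $(i_b)_\flat$ a priori only land in $\Shv(\Isoc_G,\Lambda)$; cf.\ Proposition \ref{prop-unip-local-Langlands-cat}. For each pair it suffices to exhibit one $Z^{\widehat\unip}_{{}^LG,F}$-linear member together with a continuous adjoint in the other direction, for Lemma \ref{lemma-supp-functors} then forces both members of the pair to preserve objects supported at $\xi$.

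The first pair is immediate. By the very definition of the $Z^{\widehat\unip}_{{}^LG,F}$-linear structure on $\Ind\Rep^\unip_\fingen(G_b(F),\Lambda)$, the functor $(i_b)_!$ is $Z^{\widehat\unip}_{{}^LG,F}$-linear. Moreover $(i_b)_!$ preserves compact objects by Theorem \ref{thm-local-Langlands-cat}(5), and $\cP^{\unip}$ preserves compact/finitely generated objects by Proposition \ref{prop-unip-local-Langlands-cat}(3), so the right adjoint $\cP^{\unip}\circ(i_b)^\sharp$ is continuous. Lemma \ref{lemma-supp-functors} then gives support preservation for $(i_b)_!$ and $\cP^{\unip}\circ(i_b)^\sharp$.

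For the remaining two pairs, the guiding idea is that the $Z^{\widehat\unip}_{{}^LG,F}$-action on $\Shv^{\widehat\unip}(\Isoc_G,\Lambda)$ arises from natural endomorphisms of the identity functor, which by naturality commute with any functor on this category. This automatically produces $Z^{\widehat\unip}_{{}^LG,F}$-linear structures on $\Rep^{\widehat\unip}(G_b(F),\Lambda)$ extracted through $(i_b)_*$ and through $\cP^{\unip}\circ(i_b)_\flat$ that make these functors $Z^{\widehat\unip}_{{}^LG,F}$-linear; the essential thing to check is that these structures coincide with the one imposed by $(i_b)_!$. I would formalize this via the canonical natural transformation $(i_b)_!\to(i_b)_*$ together with fully faithfulness of both embeddings, or most cleanly by transporting the question through $\LL_G^{\unip}\circ\Psi^{L,\unip}$ to a statement about the $\cO(\Loc^{\widehat\unip}_{{}^LG,F})$-module structure on the corresponding ind-coherent sheaves, where the coincidence becomes tautological. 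Once $(i_b)_*$ and $\cP^{\unip}\circ(i_b)_\flat$ are known to be $Z^{\widehat\unip}_{{}^LG,F}$-linear, Lemma \ref{lemma-supp-functors} applied to their left adjoints $(i_b)^*$ and $(i_b)^!$ completes the proof.

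The main obstacle is precisely the verification that the $Z^{\widehat\unip}_{{}^LG,F}$-action on $\Rep^{\widehat\unip}(G_b(F),\Lambda)$ is unambiguously defined, independent of which Newton stratification functor is used to implant it. I expect this to boil down to the observation that the $Z^{\widehat\unip}_{{}^LG,F}$-module structure on the image of any fully faithful $Z^{\widehat\unip}_{{}^LG,F}$-linear embedding is intrinsic, and in particular agrees with the structure inherited from the spectral action transported through the unipotent local Langlands correspondence; any remaining technical subtlety sits in checking this compatibility in the renormalized framework of $\Ind\Shv^\unip_\fingen(\Isoc_G,\Lambda)$ versus $\Shv^{\widehat\unip}(\Isoc_G,\Lambda)$.
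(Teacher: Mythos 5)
Your overall strategy---identifying the three adjoint pairs and applying Lemma \ref{lemma-supp-functors} once one member of each pair is shown to be $Z^{\widehat\unip}_{{}^LG,F}$-linear---is the same structural plan as the paper's. The paper organizes the verification slightly differently: it treats $(i_b)_!$ as trivial, then shows $\cP^\unip\circ(i_b)_\flat$ and $(i_b)_*$ preserve the support condition by a direct $\Hom$-computation with finitely generated objects (using that $(i_b)^!C$ is again finitely generated together with Lemma \ref{lemma-supp}), and finally obtains $(i_b)^!$, $(i_b)^*$, $\cP^\unip\circ(i_b)^\sharp$ from Lemma \ref{lemma-supp-functors}. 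Your grouping by adjoint pair makes a point that the paper's proof leaves somewhat implicit: to invoke Lemma \ref{lemma-supp-functors} for the left adjoints $(i_b)^*$ and $(i_b)^!$, one needs $(i_b)_*$ and $\cP^\unip\circ(i_b)_\flat$ to be honestly $Z$-linear and not merely support-preserving, equivalently one needs the adjunction isomorphism $\Hom(C,\cP^\unip(i_b)_\flat V)\simeq\Hom((i_b)^!C,V)$ to respect the $Z$-module structures.

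That said, your justification for this $Z$-linearity contains an inaccuracy. Natural endomorphisms of the identity functor do \emph{not} commute with an arbitrary functor: for $\alpha\colon\id_\bC\to\id_\bC$ and $F\colon\bC\to\bD$, both $F(\alpha_x)$ and $\alpha_{F(x)}$ are endomorphisms of $F(x)$, but naturality alone does not force them to agree unless $F$ is already $Z$-linear---which is precisely what is at stake. The correct argument here is more specific: one uses the canonical natural transformation $(i_b)_!\to(i_b)_*$ (resp.\ $(i_b)_!\to\cP^\unip(i_b)_\flat$), applies naturality of the $Z$-action to it, and then applies the retraction $(i_b)^*$ (resp.\ $(i_b)^!$), which is an isomorphism on objects in the essential image; this shows that the module structure on $\Ind\Rep^\unip_\fingen(G_b(F),\Lambda)$ induced via $(i_b)_*$ or $\cP^\unip(i_b)_\flat$ agrees with the one induced via $(i_b)_!$. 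Your alternative suggestion---transport to $\Ind\Coh(\Loc^{\widehat\unip}_{{}^LG,F})$, where the $\cO(\Loc)$-action is by multiplication on coherent sheaves---amounts to the same observation and is sound, but the ``commute with any functor'' shortcut is not.
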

\begin{proof}
    The claim for $(i_b)_!$ is trivial. Let $V\in \Ind\Rep^\unip_\fingen(G_b(F),\Lambda)_\xi$. Let $C\in \Shv^{\unip}_\fingen(\Isoc_G,\Lambda)$ be a finitely generated object. Then
    $$\Hom(C,\cP^{\unip}\circ(i_b)_\flat V)\simeq \Hom((i_b)^!C,V)$$
    is supported at $\xi$ by Lemma \ref{lemma-supp}, as $(i_b)^!C$ is finitely generated. The claim for $(i_b)_*$ is similar. The claim for rest functors follows from Lemma \ref{lemma-supp-functors}.
\end{proof}

\begin{lemma}\label{lemma-rep-supp-t-str}
    Let $b\in B(G)$. The standard $t$-structure on $\Rep^{\widehat\unip}(G_b(F),\Lambda)$ restricts to a $t$-structure on $\Rep^{\widehat\unip}(G_b(F),\Lambda)_\xi$ such that
    $$\Rep^{\widehat\unip}(G_b(F),\Lambda)_\xi^{\leq 0}=\Rep^{\widehat\unip}(G_b(F),\Lambda)^{\leq 0}\cap\Rep^{\widehat\unip}(G_b(F),\Lambda)_\xi,$$
    $$\Rep^{\widehat\unip}(G_b(F),\Lambda)_\xi^{\geq 0}=\Rep^{\widehat\unip}(G_b(F),\Lambda)^{\geq 0}\cap\Rep^{\widehat\unip}(G_b(F),\Lambda)_\xi.$$
\end{lemma}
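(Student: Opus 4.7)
The plan is to show that the truncation functors $\tau^{\leq 0}$ and $\tau^{\geq 0}$ of the standard $t$-structure on $\Rep^{\widehat\unip}(G_b(F), \Lambda)$ preserve the support subcategory $\Rep^{\widehat\unip}(G_b(F), \Lambda)_\xi$; once this is granted, the $t$-structure axioms on the subcategory (and the claimed descriptions of the connective and coconnective parts) follow immediately, because $\Rep^{\widehat\unip}(G_b(F), \Lambda)_\xi$ is a full subcategory inside which the truncation triangle $\tau^{\leq -1} V \to V \to \tau^{\geq 0} V$ from the ambient $t$-structure then lives.

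Write $A = Z^{\widehat\unip}_{{}^LG,F}$. First I would observe that the $A$-linear structure on $\Rep^{\widehat\unip}(G_b(F), \Lambda)$ is implemented by endomorphisms of the identity functor. It is inherited via the fully faithful embedding $(i_b)_!$ from the $A$-linear structure on $\Shv^{\widehat\unip}(\Isoc_G, \Lambda)$, which is itself induced by the symmetric monoidal spectral action of $\Perf(\Loc^{\widehat\unip}_{{}^LG,F})$ (Proposition \ref{prop-spectral-action}). In any $\QCoh(\Spec A)$-module category the unit $\cO_{\Spec A}$ acts by the identity endofunctor, so the algebra $\mathrm{End}_{\QCoh(\Spec A)}(\cO_{\Spec A}) = A$ maps into the classical center of the category; each $z \in A$ thus defines a natural transformation $\id \to \id$ of degree $0$. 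Consequently, for every $V \in \Rep^{\widehat\unip}(G_b(F), \Lambda)$ and every $i$, the cohomology $H^i(V)$ carries an induced ordinary $A$-module structure, and the $A$-action commutes with truncation.

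Next I would give a cohomological criterion for the support condition. Let $U = C^{\widehat\unip}_{{}^LG,F} \setminus \{\xi\}$. By Lemma \ref{lem: localization to open subset}, $V$ is supported at $\xi$ if and only if $V \otimes_A R\Gamma(U, \cO_U) = 0$. Since the restriction functor $\QCoh(\Spec A) \to \QCoh(U)$ along the quasi-compact open immersion is $t$-exact, this vanishing is equivalent to the vanishing of each restriction $H^i(V)|_U$, i.e., to each $H^i(V)$ being supported at $\xi$ as a classical $A$-module. With this criterion, truncation trivially preserves the support subcategory: if each $H^i(V)$ is $\xi$-supported, then the cohomology groups of $\tau^{\leq 0} V$ and $\tau^{\geq 0} V$ are either $H^i(V)$ or zero, and are therefore also $\xi$-supported. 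The only genuinely nontrivial input here is the identification of the support condition via cohomology; the rest is formal.
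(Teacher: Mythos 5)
Your overall plan --- reduce to showing that the truncation functors preserve the support subcategory, observe that $A = Z^{\widehat\unip}_{{}^LG,F}$ acts by degree-zero endomorphisms of the identity commuting with truncation, and then invoke a support criterion on cohomology --- is the right approach in outline, and matches the paper's proof in spirit. The difficulty is in the ``cohomological criterion'' step, which you correctly identify as the only nontrivial input but do not actually justify.

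The gap is this: you deduce from the $t$-exactness of the restriction $\QCoh(\operatorname{Spec} A) \to \QCoh(U)$ that the vanishing of $V \otimes_A R\Gamma(U,\cO_U)$ is equivalent to the vanishing of each $H^i(V)|_U$. But $V$ lives in $\Rep^{\widehat\unip}(G_b(F),\Lambda)$, not in $\QCoh(\operatorname{Spec} A)$, and the $t$-exactness of $\QCoh$-restriction is a fact about sheaves on $\operatorname{Spec} A$, not about the $A$-linear category $\bC = \Rep^{\widehat\unip}(G_b(F),\Lambda)$. What you actually need is that the localization $j\colon \bC \to \bC_U$ (equivalently, the functor $V \mapsto V \otimes_A R\Gamma(U,\cO_U)$) interacts correctly with the given $t$-structure on $\bC$ --- and this is essentially equivalent to what you are trying to prove. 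Asserting it by pointing to the $\QCoh$ side is circular. Note also that the ``if'' direction of your criterion (each $H^i(V)$ $\xi$-supported implies $V$ is $\xi$-supported) is particularly delicate for unbounded $V$, since $\bC_Z$ is a localizing subcategory closed under colimits but not under arbitrary limits, so there is no automatic reconstruction of $V$ from its Postnikov tower.

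What makes the paper's argument go through is the use of a compact \emph{projective} generator $C$ of $\Rep^{\widehat\unip}(G_b(F),\Lambda)$: the $t$-exact functor $\Hom(C,-)\colon \bC \to \Mod_A$ detects both the $t$-structure and (by Lemma \ref{lemma-supp}) the support condition, and it intertwines truncation with truncation because $C$ is projective and compact. This transports the entire question to $\Mod_A$, where the compatibility of truncation with restriction to $U$ is the genuine $\QCoh$ fact you were implicitly invoking. If you insert that reduction before your cohomological criterion, your argument closes; without it, the key step is unproven.
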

\begin{proof}
    It suffices to show that the truncation functors $\tau^{\leq0}$ and $\tau^{\geq0}$ preserves objects supported at $\xi$. Let $C$ be a compact projective generator in $\Rep^{\widehat\unip}(G_b(F),\Lambda)$. Then a representation $V\in\Rep^{\widehat\unip}(G_b(F),\Lambda)$ is connective (resp. coconnective) if and only if $\Hom(C,V)$ is connective (resp. coconnective). If $V$ is supported at $\xi$, then the $Z^{\widehat\unip}_{{}^LG,F}$-module $\Hom(C,V)$ is supported at $\xi$. It follows that
    $$\Hom(C,\tau^{\leq0}V)\simeq \tau^{\leq0}\Hom(C,V)$$
    is supported at $\xi$. Hence $\tau^{\leq0}(\cF)$ lies in $\Rep^{\widehat\unip}(G_b(F),\Lambda)_\xi$. Same proof works for $\tau^{\geq0}$.
\end{proof}

By Lemma \ref{lemma-6-functor-supp} and Lemma \ref{lemma-rep-supp-t-str}, the exotic $t$-structure and the perverse $t$-structure on $\Shv^{\widehat\unip}(\Isoc_G,\Lambda)$ restricts to well-defined $t$-structures on the subcategory $\Shv^{\widehat\unip}(\Isoc_G,\Lambda)_\xi$ such that 
$$\Shv^{\widehat\unip}(\Isoc_G,\Lambda)^{p,\leq 0\text{ or }\geq0}_\xi=\Shv^{\widehat\unip}(\Isoc_G,\Lambda)^{p,\leq 0\text{ or }\geq0}\cap \Shv^{\widehat\unip}(\Isoc_G,\Lambda)_\xi$$
and
$$\Shv^{\widehat\unip}(\Isoc_G,\Lambda)^{e,\leq 0\text{ or }\geq0}_\xi=\Shv^{\widehat\unip}(\Isoc_G,\Lambda)^{e,\leq 0\text{ or }\geq0}\cap \Shv^{\widehat\unip}(\Isoc_G,\Lambda)_\xi.$$

\begin{rmk}
    Let $\bC$ be a $Z^{\widehat\unip}_{{}^LG,F}$-linear category, and let
 $\xi\in C^{\widehat\unip}_{{}^LG,F}(\Lambda)$. Let $(C^{\widehat\unip}_{{}^LG,F})_{(\xi)}$ denote the localization of $C^{\widehat\unip}_{{}^LG,F}$ at $\xi$. Then in addition to the subcategory $\bC_\xi$ of objects supported at $\xi$ as discussed above, there is also a subcategory of objects localized at $\xi$, given by \[
\bC_{(\xi)}:=\bC\otimes_{\QCoh(C^{\widehat\unip}_{{}^LG,F})} \QCoh((C^{\widehat\unip}_{{}^LG,F})_{(\xi)}),
    \]
    which contains $\bC_\xi$.
    This was considered in \cite{Hamann-Lee-vanishing} and \cite{Hansen-Beijing-notes} (see \cite[Definition A.1]{Hamann-Lee-vanishing}). 
    When $\bC$ is the local Langlands category, the intersections of the subcategory of admissible objects (or ULA objects, in the terminology of \cite{FS}) with these two categories agree. We choose to work with $\bC_\xi$ since it is easier to describe compact generators in $\bC_\xi$, as we shall see below. 
\end{rmk}

\subsection{$t$-exactness of the local Langlands functor}\label{subsection-t-exact-Hecke}
In this subsection, we show that the spectral action is \emph{exotic} $t$-exact when restricted to $\Shv^{\widehat\unip}(\Isoc_G,\Lambda)_\xi$ provided that $\xi$ is generic.

\subsubsection{Supports of objects in $\Ind\Shv_\fingen^{\widehat\unip}(\Isoc_G,\Lambda)_\xi$.}

Let $\bar\lambda\in \XX_\bullet(T)^+_\phi$ be an dominant element. Let $b\in B(G)_\unr$ denote the image of $-\bar\lambda$ under the canonical morphism $\XX_\bullet(T)_\phi\simeq B(T)\to B(G)_\unr$. Fix a lifting $\lambda\in \XX_\bullet(T)$ of $\bar\lambda$. Denote
$$V_b=(i_b)^!\Ch^\unip_{LG,\phi}(W_\lambda)[\langle2\rho,\nu_b\rangle]\quad\text{resp.}\quad V_b'=(i_b)^!\Ch^\unip_{LG,\phi}(W_{w_0(\lambda)})[\langle2\rho,\nu_b\rangle].$$ 
Similarly, we denote
$$\widetilde{V}_b=(i_b)^!\Ch^{\hat{u}\mbox{-}\mon}_{LG,\phi}(W^\mon_{\lambda,\hat{u}})[\langle2\rho,\nu_b\rangle]\quad\text{resp.}\quad \widetilde{V}_b'=(i_b)^!\Ch^{\hat{u}\mbox{-}\mon}_{LG,\phi}(W^\mon_{w_0(\lambda),\hat{u}})[\langle2\rho,\nu_b\rangle].$$ 
If there exists a dominant lift $\lambda\in \XX_\bullet(T)^+$ of $\bar\lambda$, by Proposition \ref{prop-Ch-Wakimoto}, we have $$V_b\simeq V_b'\simeq \cInd_{I_b}^{G_b(F)}\Lambda\quad\text{and}\quad\widetilde{V}_b\simeq \widetilde{V}'_b\simeq \cInd_{\widetilde{I}_b}^{G_b(F)}\Lambda,$$ 
where $\widetilde{I}_b=I_b$ if $\Lambda$ is of characteristic 0 and $\widetilde{I}_b=I_b^{(\ell)}$ if $\Lambda$ is of characteristic $\ell$.

\begin{lemma}\label{lemma-wakimoto-ch}
    There are isomorphisms
    $$\Ch^\unip_{LG,\phi}(W_\lambda)\simeq (i_b)_*V_b[-\langle2\rho,\nu_b\rangle]\quad\text{and}\quad\Ch^\unip_{LG,\phi}(W_{w_0(\lambda)})\simeq (i_b)_!V_b'[-\langle2\rho,\nu_b\rangle],$$
    $$\Ch^{\hat{u}\mbox{-}\mon}_{LG,\phi}(W^\mon_{\lambda,\hat{u}})\simeq (i_b)_*\widetilde{V}_b[-\langle2\rho,\nu_b\rangle]\quad\text{and}\quad\Ch^{\hat{u}\mbox{-}\mon}_{LG,\phi}(W^\mon_{w_0(\lambda),\hat{u}})\simeq (i_b)_!\widetilde{V}'_b[-\langle2\rho,\nu_b\rangle].$$
    Moreover, $V_b$, $V_b'$, $\widetilde{V}_b$, and $\widetilde{V}'_b$ lies in $\Rep_\fingen^{\unip}(G_b(F),\Lambda)^\heartsuit$, and $\widetilde{V}_b$, $\widetilde{V}'_b$ are compact projective in $\Rep^{\widehat\unip}(G_b(F),\Lambda)^\heartsuit$.
\end{lemma}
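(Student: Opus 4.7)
The plan is to reduce to the case where the given lift $\lambda \in \XX_\bullet(T)$ of $\bar\lambda \in \XX_\bullet(T)^+_\phi$ is dominant, in which event the displayed formulas preceding Proposition \ref{prop-Ch-Wakimoto} already identify $\Ch^\unip_{LG,\phi}(W_\lambda)$ with $(i_{b_\lambda})_*(\cInd_{I_{b_\lambda}}^{G_{b_\lambda}(F)}\Lambda)[-\langle 2\rho,\nu_{b_\lambda}\rangle]$, and the monodromic analogue with $(i_{b_\lambda})_*(\cInd_{\widetilde I_{b_\lambda}}^{G_{b_\lambda}(F)}\Lambda)[-\langle 2\rho,\nu_{b_\lambda}\rangle]$. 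The principal technical input for the reduction is to show that, up to isomorphism, $\Ch^\unip_{LG,\phi}(W_\lambda)$ depends only on the class $\bar\lambda$.

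For this independence, given two lifts $\lambda,\lambda'$ with $\lambda - \lambda' = \phi(\mu)-\mu$ for some $\mu\in \XX_\bullet(T)$, I combine the identities $W_a \star W_b \simeq W_{a+b}$, $\phi_* W_\mu \simeq W_{\phi(\mu)}$, and the commutativity constraint $\Ch^\unip_{LG,\phi}(\phi_* \cF_1 \star \cF_2) \simeq \Ch^\unip_{LG,\phi}(\cF_2 \star \cF_1)$ applied to $\cF_1 = W_\mu$ and $\cF_2 = W_{\lambda-\phi(\mu)}$, which produces
\[
\Ch^\unip_{LG,\phi}(W_\lambda)\simeq \Ch^\unip_{LG,\phi}(\phi_* W_\mu \star W_{\lambda-\phi(\mu)})\simeq \Ch^\unip_{LG,\phi}(W_{\lambda-\phi(\mu)}\star W_\mu)\simeq \Ch^\unip_{LG,\phi}(W_{\lambda'}).
\]
The sign $(-1)^{d(\cF_1)d(\cF_2)}$ in the commutativity constraint is harmless at the level of isomorphism classes. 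The monodromic version is parallel, replacing $W_\lambda$ by $W^\mon_{\lambda,\hat{u}}$.

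Next, to ensure existence of a dominant lift of $\bar\lambda$, I invoke Remark \ref{rmk-phi-coinv-surj}: if $G$ has connected center, the natural map $\XX_\bullet(T)^+\to\XX_\bullet(T)^+_\phi$ is surjective. For general $G$ one passes through a central isogeny $\kappa\colon G'\to G$ with $G'$ having connected center and kernel of order invertible in $\Lambda$ (which is permitted by Assumption \ref{assump-standard}). Using Lemma \ref{lemma-llc-isogeny} together with $\kappa^! W_\lambda \simeq W_\lambda$, one realises $\Ch^\unip_{LG,\phi}(W_\lambda)$ as a direct summand of $(\kappa_\Isoc)^!\Ch^\unip_{LG',\phi}(W_{\lambda'})$ for an appropriate lift $\lambda'\in \XX_\bullet(T')^+$. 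Since being a $*$-extension from a single Newton stratum and lying in the heart are properties preserved under $(\kappa_\Isoc)^!$ and under taking direct summands, the claim on $G$ follows from the claim on $G'$.

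Finally, in the dominant case $t_\lambda$ (and its anti-dominant Weyl translate $t_{w_0(\lambda)}$) is $\phi$-straight, so $\Sht^\loc_{\cI,t_\lambda} \simeq \BB I_{b_\lambda}$ by Proposition 3.13 of \cite{Tame}, and the diagram preceding Proposition \ref{prop-Ch-Wakimoto} yields the displayed formulas and hence $V_{b_\lambda}\simeq V'_{b_\lambda}\simeq \cInd_{I_{b_\lambda}}^{G_{b_\lambda}(F)}\Lambda$, which lies in the heart of $\Rep^\unip_\fingen(G_{b_\lambda}(F),\Lambda)$. For the monodromic version, \cite[Corollary 4.67]{Tame} gives $\widetilde V_{b_\lambda}\simeq \widetilde V'_{b_\lambda} \simeq \cInd_{\widetilde I_{b_\lambda}}^{G_{b_\lambda}(F)}\Lambda$; since $\widetilde I_{b_\lambda}$ is pro-$p$ when $\mathrm{char}(\Lambda)=0$ and of prime-to-$\ell$ order when $\mathrm{char}(\Lambda)=\ell$, this compact induction is projective, hence compact projective, in $\Rep^{\widehat\unip}(G_{b_\lambda}(F),\Lambda)^\heartsuit$. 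The main technical point to watch is bookkeeping the isogeny reduction compatibly with the commutativity constraint, but no deeper obstruction is expected.
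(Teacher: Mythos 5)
Your overall strategy --- reduce to the case of a dominant lift by passing through an auxiliary group with connected center and exploiting the commutative constraint on $\Ch^\unip_{LG,\phi}$ --- is exactly the paper's approach. But there are two concrete issues that would stop the argument as written.

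First, the direction of the isogeny is reversed. You write ``a central isogeny $\kappa\colon G'\to G$ with $G'$ having connected center'', but Lemma~\ref{lemma-llc-isogeny} is stated for a central isogeny \emph{out of} $G$, i.e.\ $G\to G'$, where it pulls sheaves back along $\kappa\colon \Iw\backslash LG/\Iw\to\Iw'\backslash LG'/\Iw'$. Moreover, in general there is no central isogeny onto $G$ from a connected reductive group with connected center (if $Z(G)$ is disconnected, a finite central cover cannot have connected center either). The paper instead takes $G'=G_\ad\times G_\ab$ with the natural central isogeny $G\to G'$; then $Z(G')=G_\ab$ is connected, $\XX_\bullet(T')^+\to\XX_\bullet(T')^+_\phi$ is surjective by Remark~\ref{rmk-phi-coinv-surj}, and one can choose a \emph{dominant} $\lambda'_\dom\in\XX_\bullet(T')^+$ lifting $\bar\lambda'$ (which typically does \emph{not} lie in $\XX_\bullet(T)$). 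The commutativity-constraint manipulation you describe is then performed on $G'$, not on $G$: one writes $\lambda'_\dom=\lambda'+\phi(\nu)-\nu$ with $\lambda'$ the image of the given $\lambda$, uses $W_{\lambda'_\dom}\simeq\phi_*(W_\nu)\star W_{\lambda'}\star W_{-\nu}$, and concludes $\Ch^\unip_{LG',\phi}(W_{\lambda'})\simeq\Ch^\unip_{LG',\phi}(W_{\lambda'_\dom})$.

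Second, the claim that ``being a $*$-extension from a single Newton stratum \ldots\ [is] preserved under $(\kappa_\Isoc)^!$'' is not automatic and hides a real step. The preimage $\kappa_\Isoc^{-1}(\Isoc_{G',b'})$ is a disjoint union of Newton strata $\Isoc_{G,b_\alpha}$ with common Newton point but distinct Kottwitz points. So $(\kappa_\Isoc)^!\Ch^\unip_{LG',\phi}(W_{\lambda'_\dom})$ is a direct sum $\bigoplus_\alpha (i_{b_\alpha})_*(\,\cdot\,)[\,\cdot\,]$, and one must identify which summand supports $\Ch^\unip_{LG,\phi}(W_\lambda)$. The paper does this by observing that $W_\lambda$ is supported on the single connected component of $\Iw\backslash LG/\Iw$ indexed by $[\lambda]\in\pi_1(G)$, which pins down $\kappa_G(b)=[\lambda]$ and hence singles out $b_\alpha=b$. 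Without this, you only know the object lives on a finite union of unramified strata. Once these two points are repaired your argument matches the paper's; your treatment of the sign in the commutativity constraint and the deduction of projectivity of $\widetilde V_b$ from the description of $\widetilde I_b$ are fine (though in characteristic $0$ one need not invoke pro-$p$-ness --- any compact induction from an open compact subgroup is projective when $\ell$ is invertible).
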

\begin{proof}
    Denote $G'=G_\ad\times G_\ab$ with a central isogeny $G\to G'$. Let $\bar{\lambda}'\in \XX_\bullet(T')_\phi^+$ (resp. $\lambda'\in \XX_\bullet(T')$) be the image of $\bar\lambda$ (resp. $\lambda$). We can choose a dominant coweight $\lambda'_\dom\in \XX_\bullet(T')^+$ mapping to $\bar\lambda'$ as $G'$ has connected center. Then $\lambda'_\dom=\lambda'+\phi(\nu)-\nu$ for some $\nu\in \XX_\bullet(T')$. Hence $W_{\lambda'_\dom}\simeq \phi_*(W_{\nu})\star W_{\lambda'}\star W_{-\nu}$. Note that $W_{\lambda'_\dom}\simeq \nabla_{\lambda'_\dom}$. We have
    $$\Ch^\unip_{LG',\phi}(W_{\lambda'})\simeq \Ch^\unip_{LG',\phi}(W_{\lambda_{\dom}'})=(i_{b'})_*\cInd_{I_{b'}'}^{G_{b'}'(F)}\Lambda[-\langle2\rho,{\nu_b'}\rangle].$$
    
    By Proposition \ref{lemma-llc-isogeny}, we know that $\Ch^\unip_{LG,\phi}(W_\lambda)$ is a direct summand of $(\kappa_\Isoc)^!\Ch^{\unip}_{LG',\phi}(W_{\lambda'}).$ Note that
    $$(\kappa_\Isoc)^!\Ch^{\unip}_{LG',\phi}(W_{\lambda'})=\bigoplus_{b_\alpha}(i_{b_\alpha})_*\bigg(\cInd_{I'_{b'}}^{G'_{b'}(F)}\Lambda\bigg|_{G_{b_\alpha}(F)}\bigg)[-\langle2\rho,{\nu_{b_\alpha}}\rangle],$$
    where $b_\alpha$ runs though elements in $B(G)$ mapping to $b'$. All the $b_\alpha$'s appearing in the formula are unramified. Moreover, the Newton point $\nu_{b_\alpha}$ are the same for different $b_\alpha$ and the Kottwitz point $\kappa_G(b_\alpha)$ are distinct. Therefore $\Ch^\unip_{LG,\phi}(W_\lambda)$ is a direct summand of $(i_b)_*\bigg(\cInd_{I'_{b'}}^{G'_{b'}(F)}\Lambda\bigg|_{G_{b}(F)}\bigg)[-\langle2\rho,{\nu_{b_\alpha}}\rangle]$. Thus $\Ch^\unip_{LG,\phi}(W_\lambda)=(i_b)_*V_b[-\langle2\rho,\nu_b\rangle]$, and $V_b$ is concentrated in degree 0. Similarly, we can prove the claim for $W_{w_0(\lambda)}$.

    For $\Ch^{\hat{u}\mbox{-}\mon}_{LG,\phi}(W^\mon_{\lambda,\hat{u}})$, by the same argument in the unipotent monodromic setting, we can show that $\Ch^{\hat{u}\mbox{-}\mon}_{LG,\phi}(W^\mon_{\lambda,\hat{u}})$ is a direct summand of $(i_b)_*\cInd_{\widetilde{I'}_{b'}}^{G'_{b'}(F)}\Lambda\bigg|_{G_b(F)}$. Note that the kernel of $G\to G'$ has order invertible in $\Lambda$, thus we can use Lemma \ref{lemma-llc-isogeny}. Therefore $\Ch^{\hat{u}\mbox{-}\mon}_{LG,\phi}(W^\mon_{\lambda,\hat{u}})\simeq (i_b)_*\widetilde{V}_b$ and $\widetilde{V}_b$ is compact projective in $\Rep(G_b(F),\Lambda)^\heartsuit$. Similar for $\Ch^{\hat{u}\mbox{-}\mon}_{LG,\phi}(W^\mon_{w_0(\lambda),\hat{u}})$.
\end{proof}

By the proof of Proposition \ref{prop-Ch-Wakimoto}, for $\lambda\in \XX^\bullet(T)$, we have
    $$\LL^{\unip}_G(\Ch^{\unip}_{LG,\phi}(W_\lambda))\simeq (\fq^\unip)_*\omega_{\Loc^{\unip}_{{}^LB,F}}(\lambda).$$
    The object $\omega_{\Loc^{\unip}_{{}^LB,F}}(\lambda)$ is the pullpack of $\omega_{\hat{T}\phi/\hat{T}}(\lambda)$ along the natural morphism
    $$\fp^\unip\colon \Loc^{\unip}_{{}^LB,F}\simeq \cL_\phi(\hat{U}/\hat{B})\to \hat{T}\phi/\hat{T}\simeq \cL_\phi(\BB\hat{T}).$$ 
    Using $\hat{T}\phi/\hat{T}\simeq \hat{A}\times \BB\hat{T}^\phi$, we know that the object $(\fq^\unip)_*\omega_{\Loc^{\unip}_{{}^LB,F}}(\lambda)$ carries a natural action of the commutative $\Lambda$-algebra $\cO(\hat{A})$. It follows that $\Ch^{\unip}_{LG,\phi}(W_\lambda)$ also carries an action of $\cO(\hat{A})$. Let $\xi\in (\hat{A}\git W_0)(\Lambda)$. Consider the $\cO(\hat{A})$-module
    $$D_\xi\coloneqq \bigoplus_{\chi}\Lambda_\chi$$
    where $\chi$ runs through $\Lambda$-points of $\hat{A}$ that maps to $\xi$, and $\Lambda_\chi$ is the skyscraper $\cO(\hat{A})$-module supported at $\chi$. Let $b\in B(G)_\unr$ be the image of $\bar\lambda^{-1}$. Fix a lift $\lambda\in \XX_\bullet(T)$ of $\bar\lambda$. By Lemma \ref{lemma-wakimoto-ch}, the object
    $$(i_b)_*V_b[-\langle2\rho,\nu_b\rangle]\simeq \Ch^\unip_{LG,\phi}(W_\lambda) \quad\text{resp.}\quad(i_b)_!V_b'[-\langle2\rho,\nu_b\rangle]\simeq \Ch^\unip_{LG,\phi}(W_{w_0(\lambda)})$$
    carries an action of $\cO(\hat{A})$. Define the $G_b(F)$-representation
    $$M_{\xi,b}\coloneqq V_b\otimes_{\cO(\hat{A})}D_\xi\quad\text{resp.}\quad M'_{\xi,b}=V_b'\otimes_{\cO(\hat{A})}D_\xi.$$

The following lemma will be needed latter.

\begin{lemma}\label{lemma-V_b-fil}
    The $G_b(F)$-representation $\widetilde{V}_b$ admits a finite filtration with graded pieces isomorphic to $V_b$.
\end{lemma}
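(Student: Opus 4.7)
My plan is to reduce to an explicit Iwahori-level computation via the isogeny trick used in the proof of Lemma \ref{lemma-wakimoto-ch}. The case $\Lambda = \overline{\QQ}_\ell$ is immediate since then $\widetilde{I}_b = I_b$ implies $\widetilde{V}_b \simeq V_b$ and the filtration has a single step, so I focus on $\Lambda = \overline{\FF}_\ell$.

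First I would treat the case where $\bar\lambda$ admits a dominant lift $\lambda \in \XX_\bullet(T)^+$. Lemma \ref{lemma-wakimoto-ch} supplies $V_b \simeq \cInd_{I_b}^{G_b(F)}\Lambda$ and $\widetilde{V}_b \simeq \cInd_{\widetilde{I}_b}^{G_b(F)}\Lambda$, and induction in stages gives
\[
\widetilde{V}_b \simeq \cInd_{I_b}^{G_b(F)}\big(\cInd_{\widetilde{I}_b}^{I_b}\Lambda\big) \simeq \cInd_{I_b}^{G_b(F)}\Lambda[I_b/\widetilde{I}_b],
\]
where $\Lambda[I_b/\widetilde{I}_b]$ is regarded as an $I_b$-representation via the quotient map. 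Since $\widetilde{I}_b$ is the maximal prime-to-$\ell$ subgroup of $I_b$, the finite quotient $I_b/\widetilde{I}_b$ is an $\ell$-group, so the augmentation ideal of $\Lambda[I_b/\widetilde{I}_b]$ is nilpotent in characteristic $\ell$. Its successive powers give a finite $I_b$-stable filtration whose graded pieces are (after refinement if necessary) each isomorphic to the trivial representation $\Lambda$. Applying the exact functor $\cInd_{I_b}^{G_b(F)}(-)$ then produces the desired filtration of $\widetilde{V}_b$ with graded pieces $V_b$.

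For general $\bar\lambda$, I would invoke the central isogeny $c\colon G \to G' \coloneqq G_\ad \times G_\ab$ from the proof of Lemma \ref{lemma-wakimoto-ch}. Because $G'$ has connected center, every element of $\XX_\bullet(T')_\phi^+$ admits a dominant lift, so the previous paragraph yields a $G'_{b'}(F)$-equivariant filtration of $\widetilde{V}'_{b'}$ with graded pieces $V'_{b'}$. By the proof of Lemma \ref{lemma-wakimoto-ch}, $V_b$ (resp.\ $\widetilde{V}_b$) is realized as a direct summand of $V'_{b'}|_{G_b(F)}$ (resp.\ $\widetilde{V}'_{b'}|_{G_b(F)}$), cut out by the decomposition of $\Isoc_G$ into Newton strata lying over a single Newton stratum of $\Isoc_{G'}$. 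The main obstacle is checking that this direct-summand projection is compatible with the filtration, but this follows because the projector originates from an intrinsic geometric decomposition on the $\Isoc_G$ side and hence commutes with the $G'_{b'}(F)$-equivariant filtration. Restricting along $G_b(F) \to G'_{b'}(F)$ and extracting the relevant summand then delivers the required filtration of $\widetilde{V}_b$ with graded pieces $V_b$.
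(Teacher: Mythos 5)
Your proof takes a genuinely different route from the paper's. The paper works entirely on the spectral side: it realizes the difference between $\widetilde{V}_b$ and $V_b$ as the difference between pushing forward $\omega_{\cL_\phi(\hat T^\wedge_e/\hat T)}(\lambda)$ and $(i_e)_*\omega_{\hat T\phi/\hat T}(\lambda)$, observes that $\cO(\cL_\phi(\hat T^\wedge_e))\simeq\Lambda[T(\kappa_F)/T(\kappa_F)^{(\ell)}]$ is a local $\Lambda$-algebra with nilpotent maximal ideal, and thus admits a finite filtration with graded pieces $\Lambda$, which induces the desired filtration uniformly — no case analysis, no isogeny. Your argument, by contrast, is automorphic/representation-theoretic: induction in stages and the augmentation-ideal filtration of $\Lambda[I_b/\widetilde{I}_b]$. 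Both ultimately rest on the same local-ring observation, so the underlying mechanism is shared; what differs is on which side of the correspondence one applies it.

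Your step for the case where $\bar\lambda$ admits a dominant lift is correct, cleaner than the general argument, and a nice re-phrasing of the paper's mechanism in Iwahori-theoretic terms. The gap is in the isogeny reduction. You assert that the direct-summand projector cutting out $\widetilde{V}_b$ inside $\cInd_{\widetilde{I}'_{b'}}^{G'_{b'}(F)}\Lambda\big|_{G_b(F)}$ is compatible with the augmentation-ideal filtration, justified by saying the projector "originates from an intrinsic geometric decomposition on the $\Isoc_G$ side." This is not accurate as stated: tracing through Lemma \ref{lemma-llc-isogeny}, the direct-summand structure comes from the open-and-closed embedding $\Fl_\cI\hookrightarrow\Fl_{\cI'}$ on the affine Hecke (shtuka) side, not from the Newton stratification of $\Isoc_G$. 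To complete your argument one must actually verify two things: (a) that this projector commutes with the $\Lambda[I'_{b'}/\widetilde{I}'_{b'}]$-action (equivalently, with the $T'$-monodromy on the monodromic Hecke category, which does hold because the component decomposition of $\widetilde{\Fl}_{\cI'}$ is compatible with the $T'$-torsor structure over $\Fl_{\cI'}$, but this needs to be said), and (b) that the idempotent it induces modulo the augmentation ideal agrees with the idempotent used in Lemma \ref{lemma-wakimoto-ch} to cut out $V_b$ from $\cInd_{I'_{b'}}^{G'_{b'}(F)}\Lambda\big|_{G_b(F)}$. Both points are plausible and I expect they can be checked, but they are not automatic and your one-line justification does not establish them. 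The paper's spectral-side proof avoids the isogeny and these compatibilities altogether, which is precisely why it is the cleaner approach here.
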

\begin{proof}
    We need to check that $\Ch^{\hat{u}\mbox{-}\mon}_{LG,\phi}(W^\mon_{\lambda,\hat{u}})$ has a finite filtration by $\Ch^\unip_{LG,\phi}(W_\lambda)$. We can do this on the spectral side. We have
    $$\begin{tikzcd}
        \hat{T}\phi/\hat{T} \ar[d,"i_e"] & \Loc_{B,F}^{\unip} \ar[d]\ar[l,"\fp^\unip"swap]\ar[rd,"\fq^\unip"]\ar[ld,phantom,"\square",very near start]  \\
        \cL_\phi(\hat{T}^\wedge_e/\hat{T}) & \Loc_{B,F}^{\widehat\unip} \ar[l,"\fp^{\widehat\unip}"swap]\ar[r,"\fq^{\widehat\unip}"] & \Loc_{G,F}^{\widehat\unip},
    \end{tikzcd}$$
    where the left Cartesian square is defined by taking $\phi$-fixed stacks of
    $$\begin{tikzcd}
        \BB\hat{T} \ar[d] & \hat{U}/\hat{B}\ar[l]\ar[d]\ar[ld,phantom,"\square",very near start] \\
        \hat{T}^\wedge_e/\hat{T} & \hat{U}^\wedge/\hat{B}. \ar[l]
    \end{tikzcd}$$
    Note that $\cL_\phi(\hat{T}^\wedge_e/\hat{T})=\hat{T}\phi/\hat{T}\times \cL_\phi(\hat{T}^\wedge_e)$ and $\cL_\phi(\hat{T}^\wedge_e)$ is a nilpotent thickening of $\Spec \Lambda$. In fact, if $\Lambda$ has characteristic 0, then $\cL_\phi(\hat{T}^\wedge_e)=\Spec\Lambda$, and  if $\Lambda$ has characteristic $\ell$, then $\cL_\phi(\hat{T}^\wedge_e)=\Spec\Lambda[T(\kappa_F)/T(\kappa_F)^{(\ell)}]$, where $T(\kappa_F)^{(\ell)}$ is the maximal prime-to-$\ell$ subgroup of $T(\kappa_F)$. We know that
    $$\LL^{\unip}_G(\Ch^{\hat{u}\mbox{-}\mon}_{LG,\phi}(W^\mon_{\lambda,\hat{u}}))=(\fq^{\widehat\unip})_*(\fp^{\widehat\unip})^!\omega_{\cL_\phi(\hat{T}^\wedge_e/\hat{T})}(\lambda)$$
    and
    $$\LL^{\unip}_G(\Ch^{\unip}_{LG,\phi}(W_{\lambda}))=(\fq^\unip)_*(\fp^\unip)^!\omega_{\hat{T}\phi/\hat{T}}(\lambda)\simeq (\fq^{\widehat\unip})_*(\fp^{\widehat\unip})^!(i_e)_*\omega_{\hat{T}\phi/\hat{T}}(\lambda).$$
    Note that $\omega_{\cL_\phi(\hat{T}^\wedge_e/\hat{T})}\simeq\cO_{\cL_\phi(\hat{T}^\wedge_e/\hat{T})}$ and $\omega_{\hat{T}\phi/\hat{T}}\simeq\cO_{\hat{T}\phi/\hat{T}}$ as the stacks $\cL_\phi(\hat{T}^\wedge_e/\hat{T})$ and $\hat{T}\phi/\hat{T}$ are  $\phi$-fixed point stacks. It follows that
    $$(i_e)_*\omega_{\cL_\phi(\hat{T}^\wedge_e/\hat{T})}(\lambda)\simeq \omega_{\cL_\phi(\hat{T}^\wedge_e/\hat{T})}(\lambda)\otimes_{\cO(\cL_\phi(\hat{T}^\wedge_e))}\Lambda.$$
    Thus $\omega_{\cL_\phi(\hat{T}^\wedge_e/\hat{T})}(\lambda)$ admits a finite filtration by $(i_e)_*\omega_{\cL_\phi(\hat{T}^\wedge_e/\hat{T})}(\lambda)$ as $\cO(\cL_\phi(\hat{T}^\wedge_e))$ admits a finite filtration by $\Lambda$. The claim for $\widetilde{V}_b$ follows from this.
\end{proof}

If we restrict to the subcategory $\Ind\Shv^{\unip}_\fingen(\Isoc_G,\Lambda)_\xi$ of objects supported at $\xi$ for $\xi$ generic, then the functor $\LL^{\unip}_{G,\xi}$ becomes an equivalence. Moreover, we can write down generators of $\Ind\Shv^{\unip}_\fingen(\Isoc_G,\Lambda)_\xi$ explicitly.

\begin{prop}\label{prop-generic-equiv}
    Let $\xi\in (\hat{A}^\gen\git W_0)(\Lambda)$ be a generic element. Then the fully faithful functor
    $$\LL^{\unip}_{G,\xi}\colon \Ind\Shv_\fingen^{\unip}(\Isoc_G,\Lambda)_\xi\to \Ind\Coh(V^\wedge_\xi)$$
    is an equivalence of categories. Moreover, the category $\Ind\Shv_\fingen^{\unip}(\Isoc_G,\Lambda)_\xi$ is compactly generated by the collection of objects $\{(i_b)_*M_{\xi,b}\}_{b\in B(G)_\unr}$, or the collection of objects $\{(i_b)_!M'_{\xi,b}\}_{b\in B(G)_\unr}$.
\end{prop}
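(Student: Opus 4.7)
The approach is to transfer generation from the twisted adjoint quotient $(\hat{G}\phi/\hat{G})^\wedge_\xi$---where Proposition~\ref{prop-generators-GS} supplies explicit generators---to $V^\wedge_\xi$ via the comparison morphism of Proposition~\ref{prop-generic-Loc-equal-GS}, and then to identify the resulting generators as the $\LL^\unip_{G,\xi}$-images of the proposed objects $(i_b)_*M_{\xi,b}$.

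First I would compute $\LL^{\unip}_G((i_b)_*M_{\xi,b})$. Given $\lambda\in\XX_\bullet(T)$ lifting $\bar\lambda\in\XX_\bullet(T)^+_\phi$ with $b=b_{-\bar\lambda}$, combining Proposition~\ref{prop-Ch-Wakimoto}, Lemma~\ref{lemma-wakimoto-ch}, and the $\cO(\hat A)$-linearity of $\LL^\unip_G$ afforded by the spectral action yields
\[
\LL^{\unip}_G((i_b)_*M_{\xi,b})\simeq (\fq^\spec)_*\bigl(\omega_{\Loc^\unip_{{}^LB,F}}(\lambda)\otimes_{\cO(\hat A)}D_\xi\bigr)[\langle 2\rho,\nu_b\rangle].
\]

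Next I would exploit the isomorphism $\unr_{B,\xi}\colon(\hat{B}\phi/\hat{B})^\wedge_\xi\xrightarrow{\sim}W^\wedge_\xi$ from Proposition~\ref{prop-generic-Loc-equal-GS}, together with the commutative square $\fq^\spec_\xi\circ\unr_{B,\xi}=\unr_{G,\xi}\circ\pi_{\phi,\xi}$, to rewrite the above as
\[
\LL^{\unip}_G((i_b)_*M_{\xi,b})\simeq (\unr_{G,\xi})_*(\pi_{\phi,\xi})_*(\gamma_{\phi,\xi})^!\cF_\lambda\,[\langle 2\rho,\nu_b\rangle].
\]
The key identification is that tensoring $\omega_{\Loc^\unip_{{}^LB,F}}(\lambda)$ with the skyscraper $D_\xi=\bigoplus_\chi\Lambda_\chi$ over $\cO(\hat A)$ corresponds, under $\unr_{B,\xi}$, to $(\gamma_{\phi,\xi})^!\cF_\lambda=(\gamma_{\phi,\xi})^!\bigoplus_\chi(\iota_\chi)_*\cO_{\BB T^\phi}(\lambda)$, since both select the fiber of a $\lambda$-twisted line bundle at the preimages $\chi\in\hat A$ of $\xi$. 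As $\bar\lambda$ ranges over $\XX_\bullet(T)^+_\phi$, the bijection $\XX_\bullet(T)^+_\phi\cong B(G)_\unr$ ensures $b$ ranges over $B(G)_\unr$.

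By Proposition~\ref{prop-generators-GS}(1), applied to $\hat G$ with its pinned $\phi$-action (which satisfies Assumption~\ref{assump-standard} under our hypothesis on $\ell$), the collection $\{(\pi_{\phi,\xi})_*(\gamma_{\phi,\xi})^!\cF_\lambda\}_{\lambda\in\XX_\bullet(T)^+_\phi}$ generates $\Ind\Coh((\hat{G}\phi/\hat{G})^\wedge_\xi)$ under cones and retracts. It then suffices to verify that $(\unr_{G,\xi})_*\Ind\Coh((\hat{G}\phi/\hat{G})^\wedge_\xi)$ generates $\Ind\Coh(V^\wedge_\xi)$. In characteristic zero, $\unr_{G,\xi}$ is an isomorphism by Proposition~\ref{prop-generic-Loc-equal-GS}, so this is automatic. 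In characteristic $\ell$, $\unr_{G,\xi}$ is a closed immersion that is an isomorphism on underlying reduced stacks; Noetherianity implies its defining ideal is nilpotent, so any coherent sheaf on $V^\wedge_\xi$ admits a finite filtration by successive quotients in the essential image of $(\unr_{G,\xi})_*$, yielding the required generation. Essential surjectivity of $\LL^\unip_{G,\xi}$ combined with full faithfulness from Theorem~\ref{thm-unipotent-cllc} then gives the equivalence. The second description, using $(i_b)_!M'_{\xi,b}$, follows identically from the costandard Wakimoto sheaves $W_{w_0(\lambda)}$, the corresponding assertion of Lemma~\ref{lemma-wakimoto-ch}, and Proposition~\ref{prop-generators-GS}(2).

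The main technical obstacle is the matching step: carefully identifying $\omega_{\Loc^\unip_{{}^LB,F}}(\lambda)\otimes_{\cO(\hat A)}D_\xi$ with $(\gamma_{\phi,\xi})^!\cF_\lambda$ under $\unr_{B,\xi}$ requires unpacking several layers of definitions---spectral completion along $\xi$, skyscraper tensor over the Cartan, and $!$-pullback of $(\iota_\chi)_*\cO_{\BB T^\phi}(\lambda)$---and verifying compatibility fiber-by-fiber over the preimages of $\xi$ in $\hat A$. A secondary subtlety in positive characteristic is ensuring the nilpotent-thickening argument for generation along $\unr_{G,\xi}$ interacts correctly with the derived structure on $V^\wedge_\xi$.
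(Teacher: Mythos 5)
Your proposal follows essentially the same route as the paper: identify $\Xi_{\xi,\bar\lambda}=(\unr_{G,\xi})_*(\pi_{\phi,\xi})_*(\gamma_{\phi,\xi})^!\cF_{\bar\lambda}$ as generators of $\Ind\Coh(V^\wedge_\xi)$ via Proposition~\ref{prop-generators-GS} and Proposition~\ref{prop-generic-Loc-equal-GS}, then match them with $\LL^\unip_G((i_b)_*M_{\xi,b})$ using the Wakimoto/coherent-Springer description and the $\cO(\hat A)$-module structure. The one place you go beyond the paper is in spelling out the nilpotent-thickening argument that $(\unr_{G,\xi})_*$ generates in positive characteristic, which the paper asserts without proof; your one-sentence description of the $\cO(\hat A)$-linearity as coming from ``the spectral action'' is slightly imprecise (it is really the $\cO(\hat A)$-action on the specific object induced by $\fp^\spec\colon\Loc^\unip_{{}^LB,F}\to\hat T\phi/\hat T\simeq\hat A\times\BB\hat T^\phi$, which $\LL^\unip_G$ respects), but this does not affect the argument.
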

\begin{proof}
    By Proposition \ref{prop-generic-Loc-equal-GS}, the closed embedding
    $$\unr_{G,\xi}\colon (\hat{G}\phi/\hat{G})^\wedge_\xi\hookrightarrow V^\wedge_\xi$$
    defines an isomorphism on the underlying reduced stack. Hence objects of the form $(\unr_{G,\xi})_*\cF$ for $\cF\in\Coh((\hat{G}\phi/\hat{G})_\xi^\wedge)$ compactly generate the category $\Ind\Coh(V^\wedge_\xi)$.

    There is a natural morphism $\hat{B}\phi/\hat{B}\to \hat{A}$. For $\chi\in \hat{A}(\Lambda)$, let $(\hat{B}\phi/\hat{B})^\wedge_\chi$ denote the formal completion of $\hat{B}\phi/\hat{B}$ along the preimage of $\chi$. Hence there is a disjoint union decomposition
    $$(\hat{B}\phi/\hat{B})^\wedge_\xi=\bigsqcup_{\chi} (\hat{B}\phi/\hat{B})^\wedge_\chi$$
    where $\chi$ runs through $\Lambda$-points of $\hat{A}$ mapping to $\xi$.
    Moreover, there is a commutative diagram
    $$\begin{tikzcd}[column sep=huge]
        \bigsqcup_{\chi}\hat{A}^\wedge_\chi\times \BB\hat{T}^\phi \ar[r,leftarrow,"\gamma_{\phi,\xi}"] & \bigsqcup_{\chi}(\hat{B}\phi/\hat{B})^\wedge_\chi \ar[r,"\simeq"swap,"\unr_{B,\xi}"]\ar[d,"\pi_{\phi,\xi}"] & W^\wedge_\xi \ar[d,"\fq^\unip_\xi"]\ar[ll,bend right,"\fp^\unip_\xi"swap] \\
        & (\hat{G}\phi/\hat{G})^\wedge_\xi \ar[r,"\unr_{G,\xi}"] & V^\wedge_\xi.
    \end{tikzcd}$$
    By Proposition \ref{prop-generators-GS}, objects of the form
    $$(\pi_{\phi,\xi})_*(\gamma_{\phi,\xi})^!\cF_{\bar\lambda},\quad \cF_{\bar\lambda}\coloneqq \bigoplus_\chi (\iota_\chi)_*\cO_{\BB\hat{T}}(\bar\lambda)$$
    for $\bar\lambda\in \XX_\bullet(T)_\phi^+$ (resp. $\bar\lambda\in -\XX_\bullet(T)_\phi^+$)
    generate the category $\Ind\Coh((\hat{G}\phi/\hat{G})^\wedge_\xi)$ under colimits. Thus the objects 
    $$\Xi_{\xi,\bar\lambda}\coloneqq (\unr_{G,\xi})_*(\pi_{\phi,\xi})_*(\gamma_{\phi,\xi})^!\cF_{\bar\lambda},\quad\bar\lambda\in \XX_\bullet(T)_\phi$$
    generate the category $\Ind\Coh(V^\wedge_\xi)$.

    Let $b\in B(G)_\unr$ associated to $\bar\lambda\in \XX_\bullet(T)_\phi^+$. Fix a lift $\lambda\in \XX_\bullet(T)$ of $\lambda$. Then we have
    $$\begin{aligned}
        \LL^{\unip}_G(\Ch^\unip_{LG,\phi}(W_\lambda)\otimes_{\cO(\hat{A})}D_\xi)&\simeq (\fq^\unip)_*\omega_{\Loc^{\widehat\unip}_{{}^LG,F}}(\lambda)\otimes_{\cO(\hat{A})}D_\xi \\ &\simeq (\fq^\unip)_*(\fp^\unip)^!(\omega_{\hat{T}\phi/\hat{T}}(\lambda)\otimes_{\cO(\hat{A})}D_\xi)\simeq \Xi_{\xi,\bar{\lambda}}.
    \end{aligned}$$
    Here the last isomorphism follows from the canonical isomorphism $\omega_{\hat{T}\phi/\hat{T}}\simeq\cO_{\hat{T}\phi/\hat{T}}$. In particular, we have
    $$\LL^{\unip}_{G}((i_b)_*M_{\xi,b}[-\langle2\rho,\nu_b\rangle])\simeq \Xi_{\xi,\bar{\lambda}}\quad\text{and}\quad\LL^{\unip}_{G}((i_b)_!M_{\xi,b}'[-\langle2\rho,\nu_b\rangle])\simeq \Xi_{\xi,\bar{w_0\lambda}}.$$
    Therefore the functor $\LL^{\unip}_{G,\xi}$ is essentially surjective and the objects $(i_b)_*M_{\xi,b}$ (resp. $(i_b)_!M_{\xi,b}'$) generate the category $\Ind\Shv^{\unip}_\fingen(\Isoc_G,\Lambda)_\xi$ under colimits.
\end{proof}

\begin{cor}\label{cor-supp-generic}
    Let $\xi\in (\hat{A}^\gen\git W_0)(\Lambda)$ be a generic element. Then for any object $\cF\in \Ind\Shv_\fingen^{\unip}(\Isoc_G,\Lambda)_\xi$, the $!$-stalk $(i_b)^!\cF$ and the $*$-stalk $(i_b)^*\cF$ at $\Isoc_{G,b}$ vanish for every $b\in B(G)\setminus B(G)_\unr$. 
\end{cor}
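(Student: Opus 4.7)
The plan is to reduce both vanishings to the level of explicit compact generators and then apply standard recollement. Throughout, fix $b'\in B(G)\setminus B(G)_\unr$ and let $b\in B(G)_\unr$; in particular $b'\neq b$.

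By Proposition \ref{prop-generic-equiv}, the category $\Ind\Shv_\fingen^\unip(\Isoc_G,\Lambda)_\xi$ is compactly generated by \emph{each} of the two families $\{(i_b)_!M'_{\xi,b}\}_{b\in B(G)_\unr}$ and $\{(i_b)_*M_{\xi,b}\}_{b\in B(G)_\unr}$. For the $*$-stalk vanishing the functor $(i_{b'})^*$ is left adjoint to $(i_{b'})_*$, so it commutes with all colimits, and it suffices to show $(i_{b'})^*(i_b)_!M'_{\xi,b}=0$ for the first family. Factoring $i_b=k_b\circ j_b$ with $k_b\colon\Isoc_{G,\leq b}\hookrightarrow\Isoc_G$ closed and $j_b\colon\Isoc_{G,b}\hookrightarrow\Isoc_{G,\leq b}$ open, the claim holds by support when $b'\not\leq b$, and for $b'<b$ by the fact that the locally closed embedding $\Isoc_{G,b'}\hookrightarrow\Isoc_{G,\leq b}$ factors through the closed complement of $j_b$, on which $(j_b)_!$ extends by zero in the $*$-sense.

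For the $!$-stalk vanishing, Theorem \ref{thm-local-Langlands-cat}(5) together with Proposition \ref{prop-unip-local-Langlands-cat} tells us that $(i_{b'})_!$ preserves compact objects, so its right adjoint $(i_{b'})^!$ commutes with filtered colimits; as a right adjoint in a stable category it also commutes with finite colimits and with retracts. Every object of $\Ind\Shv_\fingen^\unip(\Isoc_G,\Lambda)_\xi$ is therefore controlled by its values on the alternative family $\{(i_b)_*M_{\xi,b}\}_{b\in B(G)_\unr}$, reducing the problem to showing $(i_{b'})^!(i_b)_*M_{\xi,b}=0$. Again I dispatch $b'\not\leq b$ by support; for $b'<b$ I apply the dual recollement identity $i^!j_*=0$ whenever $i$ is a locally closed embedding factoring through the closed complement of an open embedding $j$ (here $j=j_b$ and $i\colon\Isoc_{G,b'}\hookrightarrow\Isoc_{G,<b}\hookrightarrow\Isoc_{G,\leq b}$).

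The only step requiring some care is the filtered-colimit reduction for $(i_{b'})^!$; everything else is formal given Proposition \ref{prop-generic-equiv}, and I expect no genuine obstacle once that proposition is in hand.
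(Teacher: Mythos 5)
Your proof is correct and follows exactly the same strategy as the paper's: use the two families of compact generators $\{(i_b)_*M_{\xi,b}\}$ and $\{(i_b)_!M'_{\xi,b}\}$ from Proposition~\ref{prop-generic-equiv}, check that $(i_{b'})^!$ commutes with the relevant colimits and $(i_{b'})^*$ with all colimits, and kill the generators' stalks at $b'\neq b$ via recollement. The only difference is that you spell out the recollement vanishings $(i_{b'})^!(i_b)_*=0$ and $(i_{b'})^*(i_b)_!=0$ explicitly, which the paper leaves implicit.
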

\begin{proof}
    By Proposition \ref{prop-generic-equiv}, the category $\Ind\Shv_\fingen^{\unip}(\Isoc_G,\Lambda)_\xi$ is compactly generated by objects
    $$(i_{b})_*M_{\xi,b},\quad b\in B(G)_\unr.$$
    Note that the $!$-stalks of such objects at every $b\in B(G)\setminus B(G)_\unr$ is trivial. 
    Therefore the $!$-stalk of any object in $\Ind\Shv_\fingen^{\widehat\unip}(\Isoc_G,\Lambda)_\xi$ at any $b\in B(G)\setminus B(G)_\unr$  is trivial. Similarly, the category $\Ind\Shv_\fingen^{\unip}(\Isoc_G,\Lambda)_\xi$ is compactly generated by objects
    $$(i_{b})_!M'_{\xi,b},\quad b\in B(G)_\unr.$$
    This proves the statement for $*$-stalks.
\end{proof}

\begin{cor}\label{cor-detect-t-str-supp}
    Let $b\in B(G)_\unr$ and $\xi\in (\hat{A}^\gen\git W_0)(\Lambda)$ be a generic element. An object $M\in \Rep^{\widehat\unip}(G_b(F),\Lambda)_\xi$ lies in $\Rep^{\widehat\unip}(G_b(F),\Lambda)_\xi^{\leq 0}$ (resp. $\Rep^{\widehat\unip}(G_b(F),\Lambda)_\xi^{\geq 0}$) if and only if
    $$\Hom(\widetilde{V}_b,M)$$
    lies in $\Lambda\mbox{-}\mathrm{mod}^{\leq 0}$ (resp. $\Lambda\mbox{-}\mathrm{mod}^{\geq 0}$). 
\end{cor}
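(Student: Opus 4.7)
The forward direction is immediate from Lemma~\ref{lemma-wakimoto-ch}: since $\widetilde V_b$ is compact projective in $\Rep^{\widehat\unip}(G_b(F),\Lambda)^\heartsuit$, the functor $\Hom(\widetilde V_b,-)$ is $t$-exact for the standard $t$-structure and therefore preserves both the connective and coconnective parts.

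For the converse, this $t$-exactness yields $H^i\Hom(\widetilde V_b,M)\simeq\Hom(\widetilde V_b,H^i(M))$, and Lemma~\ref{lemma-rep-supp-t-str} keeps each $H^i(M)$ in $\Rep^{\widehat\unip}(G_b(F),\Lambda)_\xi^\heartsuit$. Both implications thus reduce to the following nonvanishing statement: for every nonzero $N\in\Rep^{\widehat\unip}(G_b(F),\Lambda)_\xi^\heartsuit$, we have $\Hom(\widetilde V_b,N)\neq 0$. Via the isomorphism $\widetilde V_b\simeq\cInd_{\widetilde I_b}^{G_b(F)}\Lambda$ from Lemma~\ref{lemma-wakimoto-ch} (valid up to an unramified twist), this is equivalent to the assertion that $N$ has nonzero $\widetilde I_b$-fixed vectors.

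My plan is to establish this by showing that, for generic $\xi$, the subcategory $\Rep^{\widehat\unip}(G_b(F),\Lambda)_\xi$ is entirely contained in the Iwahori-spherical Bernstein block of $G_b(F)$, on which $\widetilde V_b$ is a compact projective generator; the nonvanishing is then automatic. This containment should be extracted from Proposition~\ref{prop-generic-equiv}: each compact generator $(i_{b'})_*M_{\xi,b'}$ of $\Ind\Shv^\unip_\fingen(\Isoc_G,\Lambda)_\xi$ is manifestly ``Iwahori-spherical'', since $M_{\xi,b'}=V_{b'}\otimes_{\cO(\hat A)}D_\xi$ is a quotient of $V_{b'}$, and $V_{b'}$ is itself a quotient of $\widetilde V_{b'}\simeq\cInd_{\widetilde I_{b'}}^{G_{b'}(F)}\Lambda$ via the filtration of Lemma~\ref{lemma-V_b-fil}. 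Restricting to the $b$-stratum via $(i_b)^*$ and controlling supports via Corollary~\ref{cor-supp-generic} (which forces only unramified strata to contribute) should then yield the required block containment.

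The main obstacle is the careful bookkeeping of Bernstein blocks across the Newton stratification of $\Isoc_G$: boundary contributions from strata $b'\neq b$, arising through $(i_b)^*(i_{b'})_*$, could a priori shift blocks upon restriction to the $b$-stratum and therefore produce non-Iwahori-spherical components. I expect this to be ruled out by combining Corollary~\ref{cor-supp-generic} with the explicit description of the generic spectral locus $V^\wedge_\xi\simeq (\hat G\phi/\hat G)^\wedge_\xi$ from Proposition~\ref{prop-generic-Loc-equal-GS}, which reduces the issue to a formal statement about coherent sheaves on the twisted adjoint quotient (Section~\ref{section-Springer}), where the Iwahori-spherical character is preserved by the relevant pushforwards and pullbacks.
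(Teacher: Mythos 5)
Your forward implication is correct and is the same as the paper's: since $\widetilde V_b$ is compact projective in $\Rep^{\widehat\unip}(G_b(F),\Lambda)^\heartsuit$, the functor $\Hom(\widetilde V_b,-)$ is $t$-exact and preserves (co)connectivity. You also correctly identify that the whole corollary reduces to conservativity of $\Hom(\widetilde V_b,-)$ on $\Rep^{\widehat\unip}(G_b(F),\Lambda)_\xi$.

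The converse, however, has a genuine gap, and it sits exactly where you lean on the identification $\widetilde V_b\simeq\cInd_{\widetilde I_b}^{G_b(F)}\Lambda$ ``up to an unramified twist.'' That identification is \emph{not} available in general: as Remark~\ref{rmk-detect-t-str} states, it holds only when $b\in B(G)_\unr$ lifts to a \emph{dominant} coweight in $\XX_\bullet(T)^+$, which in turn is guaranteed only when $Z_G$ is connected (Remark~\ref{rmk-phi-coinv-surj}). For general $G$, what the proof of Lemma~\ref{lemma-wakimoto-ch} gives is that $\widetilde V_b$ is merely a \emph{direct summand} of the restriction $\cInd_{\widetilde I'_{b'}}^{G'_{b'}(F)}\Lambda|_{G_b(F)}$, which is a finite direct sum of copies of $\cInd_{\widetilde I_b}^{G_b(F)}\Lambda$. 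This wrecks the reduction: even if you establish that $\Rep^{\widehat\unip}(G_b(F),\Lambda)_\xi$ lives in the Bernstein block where $\cInd_{\widetilde I_b}^{G_b(F)}\Lambda$ is a compact projective generator, a proper direct summand of that generator need not itself generate, so ``$N$ has nonzero $\widetilde I_b$-fixed vectors'' does not imply $\Hom(\widetilde V_b,N)\neq 0$.

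The paper sidesteps the block language entirely and proves the stronger statement you actually need: that $\widetilde V_b$ itself generates $\Rep^{\widehat\unip}(G_b(F),\Lambda)_\xi$ under colimits. Since $M_{\xi,b}$ is known to generate this category under colimits (from Proposition~\ref{prop-generic-equiv}, as used in Corollary~\ref{cor-supp-generic}), it suffices to exhibit $M_{\xi,b}$ as a colimit of copies of $\widetilde V_b$. This is done in two steps: from the proof of Lemma~\ref{lemma-V_b-fil} one gets $V_b\simeq\widetilde V_b\otimes_{\cO(\cL_\phi(\hat T^\wedge_e))}\Lambda$, so $V_b$ is a colimit of copies of $\widetilde V_b$ (resolving $\Lambda$ over the Artinian local ring $\cO(\cL_\phi(\hat T^\wedge_e))$), and then $M_{\xi,b}=V_b\otimes_{\cO(\hat A)}D_\xi$ is a colimit of copies of $V_b$. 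Compactness of $\widetilde V_b$ then upgrades ``generates under colimits'' to ``$\Hom(\widetilde V_b,-)$ is conservative.'' Your proposal invokes the right ingredients (Lemma~\ref{lemma-V_b-fil}, Proposition~\ref{prop-generic-equiv}, Corollary~\ref{cor-supp-generic}) but characterizes $V_b$ only as a ``quotient'' of $\widetilde V_b$ and routes through a block identification that fails; what is needed instead is the colimit statement above, phrased directly about $\widetilde V_b$, not about an $\cInd$ of which it is merely a summand.
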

\begin{rmk}\label{rmk-detect-t-str}
    As mentioned earlier, if there is a dominant coweight $\lambda\in \XX_\bullet(T)^+$ mapping to $b$, then we have $\widetilde{V}_b\simeq \cInd_{\widetilde{I}_b}^{G_b(F)}\Lambda$. For example, the identity element $1\in B(G)$ can always be lifted to $0\in \XX_\bullet(T)^+$, thus an object $M\in\Rep^{\widehat\unip}(G(F),\Lambda)_\xi$ is connective (resp. coconnective) if and only if $\Hom(\cInd_{\widetilde{I}}^{G(F)}\Lambda,M)$ is connective (resp. coconnective). In general, if $Z_G$ is connected, then every element in $B(G)_\unr$ can be lifted to a dominant coweight by Remark \ref{rmk-phi-coinv-surj}.
\end{rmk}
\begin{proof}
    By Lemma \ref{lemma-wakimoto-ch}, the object $\widetilde{V}_b$ is compact projective. It suffices to show that  
    $$\Hom(\widetilde{V}_b,-)\colon \Rep^{\widehat\unip}(G_b(F),\Lambda)_\xi\to \Lambda\mbox{-}\mathrm{mod}$$
    is conservative. By Corollary \ref{cor-supp-generic}, the representation $M_{\xi,b}\in \Rep^\unip_\fingen(G(F),\Lambda)_\xi$ generates $\Rep^{\widehat\unip}(G_b(F),\Lambda)_\xi$ under colimits. It suffices to write $M_{\xi,b}$ as a colimit of $\widetilde{V}_b$. By the proof of Lemma \ref{lemma-V_b-fil}, we know that
    $$V_b\simeq \widetilde{V}_b\otimes_{\cO(\cL_\phi(\hat{T}^\wedge_e))}\Lambda$$
    is a colimit of $\widetilde{V}_b$. As $M_{\xi,b}=V_b\otimes_{\cO(\hat{A})}D_\xi$ is a colimit of $V_b$, we see that $M_{\xi,b}$ is a colimit of $\widetilde{V}_b$.
\end{proof}

\begin{cor}
    Let $b\in B(G)$ and $\pi$ be an irreducible unipotent $G_b(F)$-representation. If the associated semisimple $L$-parameter $\varphi_\pi^\mathrm{ss}$ of $\pi$ constructed in \cite[Theorem 5.19]{Tame} is generic, then $b$ is unramified and $\pi$ admits nonzero $I_b$-invariants.
\end{cor}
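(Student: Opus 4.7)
The first assertion follows immediately from Corollary \ref{cor-supp-generic}. Since $\pi$ is irreducible, it is finitely generated as a $G_b(F)$-representation (generated by any nonzero vector, realising it as a quotient of some $\cInd_{P}^{G_b(F)}\pi_P$), so $(i_b)_*\pi$ lies in $\Shv_\fingen^\unip(\Isoc_G,\Lambda)$. Schur's lemma forces the spectral center $Z^{\widehat\unip}_{{}^LG,F}$ to act on $\pi$ through the character corresponding to $\xi := \varphi_\pi^\mathrm{ss}$, so in fact $(i_b)_*\pi \in \Ind\Shv_\fingen^\unip(\Isoc_G,\Lambda)_\xi$. Applying Corollary \ref{cor-supp-generic}, the $*$-stalk $(i_{b'})^*(i_b)_*\pi$ vanishes for every $b' \notin B(G)_\unr$; specializing to $b' = b$ and noting $(i_b)^*(i_b)_*\pi \simeq \pi \neq 0$ forces $b \in B(G)_\unr$.

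For the second assertion, apply Corollary \ref{cor-detect-t-str-supp} (with $M = \pi$ concentrated in degree $0$) to the nonzero object $\pi \in \Rep^{\widehat\unip}(G_b(F),\Lambda)_\xi$ to obtain $\Hom_{G_b(F)}(\widetilde V_b,\pi) \neq 0$. The plan is to identify this $\Hom$ with a space of Iwahori-type invariants of $\pi$. Let $\bar\lambda \in \XX_\bullet(T)_\phi^+$ correspond to $b$. In the favorable case when $\bar\lambda$ admits a dominant lift to $\XX_\bullet(T)^+$ -- which holds whenever $Z(G)$ is connected, by Remark \ref{rmk-phi-coinv-surj} -- Lemma \ref{lemma-wakimoto-ch} yields $\widetilde V_b \simeq \cInd_{\widetilde I_b}^{G_b(F)}\Lambda$, so $\Hom_{G_b(F)}(\widetilde V_b,\pi) \simeq \pi^{\widetilde I_b} \neq 0$. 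If $\Lambda = \overline{\QQ}_\ell$, then $\widetilde I_b = I_b$ and we conclude. If $\Lambda = \overline{\FF}_\ell$, then $\widetilde I_b = I_b^{(\ell)}$ is the maximal pro-prime-to-$\ell$ subgroup of $I_b$, and the pro-$\ell$ quotient $I_b / I_b^{(\ell)}$ acts smoothly on the $\overline{\FF}_\ell$-vector space $\pi^{I_b^{(\ell)}}$; standard invariant theory for smooth actions of pro-$\ell$ groups over $\overline{\FF}_\ell$ then gives $\pi^{I_b} = (\pi^{I_b^{(\ell)}})^{I_b/I_b^{(\ell)}} \neq 0$.

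The main obstacle is the case where $\bar\lambda$ admits no dominant lift, which can occur when $Z(G)$ is disconnected. I plan to handle this by observing that $\widetilde V_b$ depends on $b$ alone, independently of the chosen lift $\lambda$: using $W^\mon_{\lambda + (\phi-1)\mu,\hat u} \simeq \phi_* W^\mon_{\mu,\hat u}\star W^\mon_{\lambda - \mu,\hat u}$ together with the $\phi$-twisted cyclicity of the categorical trace (i.e., the canonical isomorphism $\Ch(\phi_*\cF_1\star\cF_2)\simeq \Ch(\cF_2\star\cF_1)$ recalled in \S\ref{subsubsection-unipotent-cll}), $\Ch^{\hat u\mbox{-}\mon}_{LG,\phi}(W^\mon_{\lambda,\hat u})$ is unchanged when $\lambda$ is altered by a coboundary $(\phi-1)\mu$. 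An alternative is to reduce directly to the connected-center case via a $z$-extension $\widetilde G \twoheadrightarrow G$ with induced central kernel, lift $\pi$ to an irreducible unipotent representation of $\widetilde G_{\widetilde b}(F)$ whose semisimple parameter is still generic, apply the statement there, and descend using that the Iwahori subgroup of $\widetilde G_{\widetilde b}(F)$ restricts to the Iwahori subgroup of $G_b(F)$.
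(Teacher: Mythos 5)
Your first assertion is argued exactly as in the paper: both use Corollary~\ref{cor-supp-generic} after noting that $\pi$, and hence $(i_b)_*\pi$, is supported at $\xi=\varphi_\pi^{\mathrm{ss}}$ (Schur's lemma). The Schur argument you spell out is a correct, if slightly more detailed, rendition of what the paper takes for granted.

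For the second assertion, your ``favorable'' case is correct, but the proof is incomplete in general, and the two proposals you offer for the remaining case do not close the gap. Plan~(A) only proves that $\widetilde V_b$ is unchanged when $\lambda$ is altered by a coboundary $(\phi-1)\mu$; when $\bar\lambda\in\XX_\bullet(T)_\phi^+$ has \emph{no} dominant lift at all, this invariance tells you nothing about whether $\widetilde V_b$ is an $\cInd$ from $\widetilde I_b$, which is what you actually need. Plan~(B) is in the right spirit but re-derives machinery that is already in the paper, and lifting irreducible unipotent representations and the genericity of their parameters through a $z$-extension would require additional verification.

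The point you are missing is that the needed reduction to connected center has already been carried out inside the proof of Lemma~\ref{lemma-wakimoto-ch}, which you are citing to define $\widetilde V_b$ in the first place. That proof passes to $G'=G_\ad\times G_{\mathrm{ab}}$ (connected center, so $\bar\lambda$ lifts dominantly there) and shows \emph{unconditionally} that $V_b$ is a retract of $\cInd_{I'_{b'}}^{G'_{b'}(F)}\Lambda\big|_{G_b(F)}$, which is a finite direct sum of $\cInd_{I_b}^{G_b(F)}\Lambda$. The paper's argument is therefore: from Corollary~\ref{cor-detect-t-str-supp} one gets $\Hom(\widetilde V_b,\pi)\neq 0$; by the finite filtration of $\widetilde V_b$ with graded pieces $V_b$ (Lemma~\ref{lemma-V_b-fil}) this forces $\Hom(V_b,\pi)\neq 0$; and then the retract structure gives $\Hom(\cInd_{I_b}^{G_b(F)}\Lambda,\pi)=\pi^{I_b}\neq 0$ directly. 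This avoids your dominant-lift dichotomy entirely and, as a bonus, also avoids the separate $\Lambda=\overline{\FF}_\ell$ argument passing from $\pi^{I_b^{(\ell)}}$ to $\pi^{I_b}$ (which in your version is correct but phrased loosely: $I_b/I_b^{(\ell)}$ is a finite abelian $\ell$-group, not pro-$\ell$). You should replace your two plans with this single retract observation.
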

\begin{proof}
    By Corollary \ref{cor-supp-generic}, we know that $b$ must be unramified. By Corollary \ref{cor-detect-t-str-supp}, we know that $\Hom(V_b,\pi)\neq 0$. However, $V_b$ is a retract of $\cInd_{I'_{b'}}^{G'_{b'}(F)}\Lambda\bigg|_{G_b(F)}$, which is a finite direct sum of $\cInd_{I_b}^{G_b(F)}\Lambda$. This implies that $\Hom(\cInd_{I_b}^{G_b(F)}\Lambda,\pi)\neq 0$.
\end{proof}

If moreover the element $\xi$ is \emph{strongly generic}, then a stronger statements holds. We show that sheaves on different Newton strata glued trivially.

\begin{prop}\label{prop-split-sreg}
    Let $\xi\in (\hat{A}^\sgen\git W_0)(\Lambda)$ be a strongly generic element.
    \begin{enumerate}
        \item The natural transforms
            $$(i_b)_!\to (i_b)_*\to\cP^{\unip}\circ(i_b)_\flat $$
            defined by adjunctions are equivalences when restricted to the subcategory $\Ind\Rep^{\unip}_\fingen(G_b(F),\Lambda)_\xi$. 
        \item There are natural equivalences
        $$(i_b)^!\simeq\cP^{\unip}\circ(i_b)^\sharp\rightarrow (i_b)^!$$ 
             when restricted to the subcategory $\Ind\Shv^{\unip}_\fingen(\Isoc_G,\Lambda)_\xi$. Here $\cP^{\unip}\circ(i_b)^\sharp\rightarrow (i_b)^!$ is defined by adjunctions, and $(i_b)^*\simeq\cP^{\unip}\circ(i_b)^\sharp$ is defined by the zigzag
             $$(i_b)^*\to \cP^{\unip}\circ(i_b)^\sharp(i_b)_*(i_b)^*\leftarrow \cP^{\unip}\circ(i_b)^\sharp.$$
        \item For $\cF\in\Ind\Shv^{\unip}_\fingen(\Isoc_G,\Lambda)_\xi$, the natural morphism
            $$\bigoplus_{b\in B(G)_\unr} (i_b)_!(i_b)^!\cF\to \cF$$
            is an isomorphism. Therefore there is a canonical direct sum decomposition
            $$\Ind\Shv^{\unip}_\fingen(\Isoc_G,\Lambda)_\xi\simeq \bigoplus_{b\in B(G)_\unr}\Ind\Rep^{\unip}_\fingen(G_b(F),\Lambda)_\xi.$$
    \end{enumerate}
\end{prop}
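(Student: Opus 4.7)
The key observation is that for strongly generic $\xi$, the spectral side drastically simplifies. By Lemma \ref{lemma-generic-root}, strong genericity forces the vanishing of $H^0(W_F,(\hat{\fg}/\hat{\ft})_{\varphi_x})$, which together with the decomposition $\mathrm{Lie}(\hat{G}^{x\phi}) = \hat{\ft}^\phi \oplus (\hat{\fg}/\hat{\ft})^{x\phi}$ forces the centralizer $\hat{G}^{x\phi}$ to be the torus $\hat{T}^\phi$, for any lift $x\in \hat{T}(\Lambda)$ of a preimage $\chi \in \hat{A}(\Lambda)$ of $\xi$. By Proposition \ref{prop-adjoint-quot-endoscopy} we then obtain
\[
(\hat{G}\phi/\hat{G})^\wedge_\xi \;\cong\; \cU^\wedge_{\hat{T}^\phi}/\hat{T}^\phi \;=\; \BB\hat{T}^\phi .
\]
A tangent-complex argument mirroring the end of the proof of Proposition \ref{prop-generic-Loc-equal-GS}, but invoking the stronger vanishing of the entire complex $R\Gamma(W_F,(\hat{\fg}/\hat{\ft})_{\varphi_x})$ rather than just $H^2$, shows that the closed embedding $\unr_{G,\xi}$ is in fact an isomorphism $V^\wedge_\xi \xrightarrow{\sim} \BB\hat{T}^\phi$ in either characteristic.

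\textbf{Decomposition and adjunction identifications.} Combined with the equivalence $\LL^{\unip}_{G,\xi}$ of Proposition \ref{prop-generic-equiv}, this yields
\[
\Ind\Shv^{\unip}_\fingen(\Isoc_G,\Lambda)_\xi \;\simeq\; \Ind\Coh(\BB\hat{T}^\phi) \;\simeq\; \bigoplus_{\mu \in \XX^\bullet(\hat{T}^\phi)} \Lambda\text{-mod},
\]
the last equivalence because $\Rep \hat{T}^\phi$ is semisimple. To identify this character-level decomposition with the Newton-stratum decomposition, I would track the compact generators from the proof of Proposition \ref{prop-generic-equiv}: for $\bar\lambda \in \XX_\bullet(T)_\phi^+$ with associated $b = b_{\bar\lambda} \in B(G)_\unr$, the spectral avatar $\Xi_{\xi,\bar\lambda}$ of $(i_b)_* M_{\xi,b}$ is supported exactly on the characters in the $W_0$-orbit of $\bar\lambda \in \XX^\bullet(\hat{T}^\phi) = \XX_\bullet(T)_\phi$. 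Since $B(G)_\unr \cong \XX_\bullet(T)_\phi/W_0$, re-grouping characters by $W_0$-orbits yields the decomposition in part (3). Parts (1) and (2) then follow formally: given (3), the cofiber of $(i_b)_! M \to (i_b)_* M$ for $M$ supported on the $b$-piece lies in the strictly-lower strata, which by (3) meets the image of $(i_b)_!$ only in zero, and hence must vanish; the remaining identifications $(i_b)_* \simeq \cP^{\unip} \circ (i_b)_\flat$ and the pullback analogue in (2) follow dually, using the admissible duality of Proposition \ref{prop-adm-dual-exotic} and the compatibilities in Theorem \ref{thm-local-Langlands-cat}.

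\textbf{Main obstacle.} The most delicate point is the matching between a single compact generator $(i_b)_* M_{\xi,b}$ on the geometric side and the $|W_0 \!\cdot\! \bar\lambda|$ many characters of $\hat{T}^\phi$ on the spectral side, with the correct multiplicity coming from the sum over preimages $\chi$ of $\xi$. This requires unwinding how the correspondence $\bigsqcup_\chi (\hat{B}\phi/\hat{B})^\wedge_\chi$ collapses when $G_x$ reduces to $\hat{T}^\phi$, and how the $\cO(\hat{A})$-action used in the construction of $M_{\xi,b}$ translates into the $\hat{T}^\phi$-character grading. A secondary technical point is ensuring that $\unr_{G,\xi}$ is an actual isomorphism (and not merely on reduced substacks) in characteristic $\ell$; strong genericity provides exactly the additional vanishing needed for the tangent-complex calculation to go through without restriction on $\ell$.
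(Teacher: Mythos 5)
Your starting observation — that strong genericity forces the centralizer $\hat G^{x\phi}$ to be the torus $\hat T^\phi$ (so that $(W_0)_\chi$ is trivial) — is correct and is exactly the leverage the paper uses. But the rest of the argument has two genuine errors and one unresolved gap precisely where the actual proof content lives.

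\textbf{First error: the formal completion is not a point.} You write $\cU^\wedge_{\hat T^\phi}/\hat T^\phi = \BB\hat T^\phi$. This is false: the unipotent \emph{cone} of a torus is $\{e\}$, but $\cU^\wedge_{\hat T^\phi}$ is the \emph{formal completion} of $\hat T^\phi$ along $\{e\}$, i.e.\ $(\hat T^\phi)^\wedge_e$, and so $\cU^\wedge_{\hat T^\phi}/\hat T^\phi \cong (\hat T^\phi)^\wedge_e\times\BB\hat T^\phi$. Consequently $\Ind\Coh$ of this stack is $\Ind\Coh((\hat T^\phi)^\wedge_e)\otimes\Rep(\hat T^\phi)$, which is \emph{not} semisimple. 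The $\hat T^\phi$-character grading does exist, but each graded piece carries a nontrivial formal $(\hat T^\phi)^\wedge_e$-direction, so the structure of the pieces must be argued separately; semisimplicity cannot be used to make the gluing of Newton strata trivial for free.

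\textbf{Second error: strong genericity does not control the torus direction in characteristic $\ell$.} You claim strong genericity provides "exactly the additional vanishing needed" for the tangent-complex calculation when $\Lambda=\overline\FF_\ell$. But as the proof of Proposition~\ref{prop-generic-Loc-equal-GS} makes clear, the failure of $\unr_{G,\xi}$ to be an isomorphism in characteristic $\ell$ comes from the $\hat\ft$ summand of $R\Gamma(W_F,\hat\fg_{\varphi_x})$ — concretely, from the nontrivial contribution $\cL_\phi(\hat T^\wedge_e)=\Spec\Lambda[T(\kappa_F)/T(\kappa_F)^{(\ell)}]$ of the tame inertia (cf.\ the proof of Lemma~\ref{lemma-V_b-fil}). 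Strong genericity kills the root summands $\hat\fg/\hat\ft$, not $\hat\ft$. So $\unr_{G,\xi}$ remains only an isomorphism on reduced substacks in characteristic $\ell$, and your proposed route through "$V^\wedge_\xi\cong\BB\hat T^\phi$ in either characteristic" breaks down. (The paper does not need $\unr_{G,\xi}$ to be an isomorphism for this proposition: Proposition~\ref{prop-generic-equiv} already supplies the equivalence $\LL^{\unip}_{G,\xi}$ without it.)

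\textbf{The unresolved gap is the proof.} You correctly flag the "matching" between $(i_b)_*M_{\xi,b}$ and a $W_0$-orbit of $\hat T^\phi$-characters as the delicate point, but this is not a technicality to be filed away — it is the entire content of the proposition. The paper proves it by an explicit computation over the correspondence $\bigsqcup_{w\in W_0}(\hat T^\phi)^\wedge_e\times\BB\hat T^\phi \to (\hat T^\phi)^\wedge_e/\hat T^\phi$: since $(W_0)_\chi$ is trivial, $\pi_{\phi,\xi}$ is just the tuple of Weyl actions, which yields $\Xi_{\xi,\bar\lambda}\simeq\bigoplus_{w\in W_0}(\iota_e)_*\Lambda_e(w\bar\lambda)\simeq\Xi_{\xi,\overline{w_0\lambda}}$. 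This single isomorphism simultaneously shows that $(i_b)_*M_{\xi,b}\simeq(i_b)_!M'_{\xi,b}$, hence the $*$-stalks of $(i_b)_*M_{\xi,b}$ vanish off $b$; that is the key input for (1), from which (3) and the remaining adjunction identities are formal. Your sketch stops exactly before this computation, so it does not yet constitute a proof.
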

\begin{rmk}
    Similar results in the Fargues--Scholze setting were conjectured by Hansen in \cite[Conjecture 2.1.8]{Hansen-Beijing-notes}. For toral $L$-parameters, some cases of Hansen's conjecture follow from the results of \cite{Hamann-Eisenstein}, which relies on the compatibility of Fargues--Scholze parameters and endoscopic classifications.
\end{rmk}
\begin{proof}
    We first show that $(i_b)_!\to (i_b)_*$ is an equivalence when restricted to $\Ind\Rep^{\unip}_\fingen(G_b(F),\Lambda)_\xi$. 
    The category $\Ind\Shv^{\unip}_{\fingen}(\Isoc_G,\Lambda)_\xi$ is compactly generated by objects $(i_b)_*M_{\xi,b}$, $b\in B(G)_\unr$ by the proof of Corollary \ref{cor-supp-generic}. It follows that $\Ind\Rep_\fingen^{\unip}(G_b(F),\Lambda)_\xi$ is compactly generated by $M_{\xi,b}$ for $b\in B(G)_\unr$. Thus it suffices to check that the natural morphism
    $$(i_b)_!M_{\xi,b}\to (i_b)_*M_{\xi,b}$$
    is an isomorphism. It suffices to show that $(i_{b'})^*(i_b)_*M_{\xi,b}$ vanishes for any $b'\neq b$.

    First assume that $\hat{G}$ is almost simple and simply-connected. Fix $b\in B(G)_\unr$. We know that
    $$\LL^{\unip}_{G}((i_b)_*M_{\xi,b}[-\langle2\rho,\nu_b])\simeq \Xi_{\xi,\bar\lambda}\quad\text{and}\quad\LL^{\unip}_{G}((i_b)_!M'_{\xi,b}[-\langle2\rho,\nu_b])\simeq \Xi_{\xi,\overline{w_0\lambda}}.$$
    We claim that there is an isomorphism $\Xi_{\xi,\bar\lambda}\simeq \Xi_{\xi,\bar {w_0\lambda}}$.
    Fix a lift $x\in \hat{T}(\Lambda)$ mapping to $\xi$. By \cite[Lemma 5.2.11]{XZ-vector}, the element $x$ is $\phi$-regular. Hence $\hat G_x\simeq \hat{T}^{\phi}$ is a torus, and $(W_0)_{\bar{x}}$ is trivial. Let $\bar{x}$ be the image of $x$ in $\hat{A}$. By Proposition \ref{prop-adjoint-quot-endoscopy} and Proposition \ref{prop-Groth-endoscopy}, we have
    $$(\hat{G}\phi/\hat{G})^\wedge_\xi\simeq (\hat{T}^{\phi})^\wedge_e/ \hat G_x$$ and $$(\hat{B}\phi/\hat{B})^\wedge_\xi\simeq\bigsqcup_{w\in W_0} (\hat T^{\phi})^\wedge_e \times \BB \hat T^{\phi}$$
    where $\chi$ runs through $\Lambda$-points of $\hat{A}$ mapping to $\xi$. The twisted Grothendieck--Springer resolution $\pi_{\phi,\xi}$ is given by
    $$\pi_{\phi,\xi}=(w)_{w\in W_0}\colon\bigsqcup_{w\in W_0} (\hat T^{\phi})^\wedge_{e} \times \BB \hat T^{\phi}\to (\hat T^{\phi})^\wedge_{e} \times \BB \hat T^{\phi}$$
    where $w$ acts on $(\hat T^{\phi})^\wedge_{e} \times \BB \hat T^{\phi}$ via the Weyl action on $\hat{T}^{\phi}$. Denote $\Lambda_e=(\iota_e)_*\cO_{\BB \hat{T}^\phi}\in \Coh((\hat{T}^{\phi})^\wedge_{e} \times \BB \hat T^{\phi})$ where $\iota_e\colon\BB \hat{T}^\phi\hookrightarrow (\hat{T}^{\phi})^\wedge_e\times \BB\hat{T}^\phi$ is the closed embedding at identity. We have
    $$\begin{aligned}
        (\pi_{\phi,\xi})_*\Lambda_e(\bar\lambda)& \simeq \bigoplus_{w\in W_0}(\iota_e)_*\Lambda_e(w(\bar{\lambda})) \\
        &\simeq \bigoplus_{w\in W_0}(\iota_e)_*\Lambda_e(w(\bar{w_0\lambda}))\\
        &\simeq (\pi_{\phi,\xi})_*\Lambda_e(\bar{w_0\lambda})
    \end{aligned}$$
    It follows that there is an isomorphism
    $$(i_b)_*M_{\xi,b}\simeq (i_b)_!M'_{\xi,b}.$$
    Therefore all the $*$-stalks of $(i_b)_*M_{\xi,b}$ vanishes except at $b$. In general, we can pass to $G'=G_\ad\times G_\ab$. By the argument in the proof of Proposition \ref{prop-generators-GS}, the claim holds for $G'$. As in Lemma \ref{lemma-wakimoto-ch}, we see that the $*$-stalks of $(i_b)_*M_{\xi,b}$ vanishes except at $b$. Hence the natural morphism $(i_b)_!M_{\xi,b}\to (i_b)_*M_{\xi,b}$ is an isomorphism. This proves that $(i_b)_!\xrightarrow{\sim}(i_b)_*$. Taking right adjoint to $(i_b)_!\xrightarrow{\sim} (i_b)_*$ implies that $\cP^{\unip}\circ(i_b)^\sharp\xrightarrow{\sim}(i_b)^!$.
    
    The category $\Ind\Shv^{\unip}_\fingen(\Isoc_G,\Lambda)$ is compactly generated by objects of the form $(i_{b'})_!V$ for $b'\in B(G)$ and $V\in \Rep_\fingen(G_{b'}(F),\Lambda)$. We have
    $$\begin{aligned}
        \Hom((i_{b'})_!V,\bigoplus_{b\in B(G)_\unr} (i_b)_!(i_{b})^!\cF) &\simeq \bigoplus_{b\in B(G)_\unr} \Hom((i_{b'})_!V, (i_{b})_!(i_{b})^!\cF) \\
        &\simeq \bigoplus_{b\in B(G)_\unr} \Hom((i_{b'})_!V, (i_{b})_*(i_{b})^!\cF) \\
        &\simeq \Hom((i_{b'})_!V,(i_{b'})_*(i_{b'})^!\cF) \\
        &\simeq \Hom((i_{b'})_!V,(i_{b'})_!(i_{b'})^!\cF) \\
        &\simeq\Hom(V,(i_{b'})^!\cF) \\
        &\simeq\Hom((i_{b'})_!V,\cF)
    \end{aligned}$$
    if $b\in B(G)_\unr$. Here the second and the fourth isomorphism follow from $(i_b)_!\simeq (i_b)_*$. If $b$ is not unramified, then $\Hom((i_b)_!V,\cF)=0$ by Corollary \ref{cor-supp-generic}. This proves (3).
    
    It follows that 
    $$(i_b)^*\cF\simeq \bigoplus_{b'\in B(G)_\unr}(i_b)^*(i_{b'})_!(i_{b'})^!\cF\simeq (i_b)^*(i_b)_!(i_b')^!\cF\simeq (i_b)^!\cF.$$
    Thus $\cP^{\unip}\circ(i_b)_\flat$ is isomorphic to $(i_b)_*$ as they are both right adjoint of $(i_b)^!\simeq (i_b)^*$. 
\end{proof}

By Proposition \ref{prop-split-sreg}, the exotic $t$-structure and the perverse $t$-structure on $\Shv^{\widehat\unip}(\Isoc_G,\Lambda)_\xi$ agree provided that $\xi$ is strongly regular.

\subsubsection{$t$-exactness of Hecke operators} Let $V\in\Rep(\hat{G})^\heartsuit$ be a finite dimensional representation. Define the Hecke operators
$$T_V\colon \Ind\Shv^{\unip}_{\fingen}(\Isoc_G,\Lambda)\to\Ind\Shv^{\unip}_{\fingen}(\Isoc_G,\Lambda)$$
$$\text{resp.}\quad T_V\colon \Shv^{\widehat\unip}(\Isoc_G,\Lambda)\to \Shv^{\widehat\unip}(\Isoc_G,\Lambda)$$
by $T_V(\cF)=\widetilde{V}\star \cF$, where $\star$ is the spectral action defined in Proposition \ref{prop-spectral-action}, and $\widetilde{V}$ is the vector bundle on $\Loc^{\widehat\unip}_{{}^LG,F}$ associated to $V$. Let $V^\vee$ be the dual representation. Then $T_V$ and $T_{V^\vee}$ are left and right adjoint to each other. 

The following theorem is inspired by \cite[Conjecture 2.4.1]{Hansen-Beijing-notes}. Recall that a (finite dimensional) $\hat{G}$-representation $V$ is called \emph{tilting} if it admits a Weyl filtration and a good filtration. If $\Lambda=\overline{\QQ}_\ell$ has characteristic 0, then every $\hat{G}$-representation is tilting.

\begin{thm}\label{thm-Hecke-t-exact}
    Let $V\in \Rep(\hat{G})^\heartsuit$ be a finite dimensional tilting representation. Let $\xi\in (\hat{A}^\gen\git W_0)(\Lambda)$ be a generic point. Then the Hecke operator
    $$T_V\colon \Shv^{\widehat\unip}(\Isoc_G,\Lambda)_\xi\to \Shv^{\widehat\unip}(\Isoc_G,\Lambda)_\xi$$
    is exotic $t$-exact.
\end{thm}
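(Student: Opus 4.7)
The plan is to reduce the $t$-exactness to a generator-level check and then invoke Proposition~\ref{cor-tilting-tensor-twisted-case}(2) on the spectral side. By the biadjunction $T_V\dashv T_{V^\vee}$ (coming from $\widetilde V$ being a vector bundle on $\Loc^{\widehat\unip}_{{}^LG,F}$) and the fact that the dual of a tilting representation is tilting, it suffices to show $T_V$ is exotic right $t$-exact, i.e.\ preserves $\Shv^{\widehat\unip}(\Isoc_G,\Lambda)_\xi^{e,\leq 0}$; exotic left $t$-exactness then follows by applying the right-exactness assertion to $V^\vee$ and passing through adjunction.

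First I would identify a convenient generating family for the connective part. Since $\xi$ is generic, Corollary~\ref{cor-supp-generic} (extended to $\Shv^{\widehat\unip}$ via the fully faithful embedding $\Psi^{L,\unip}$) forces every object of $\Shv^{\widehat\unip}(\Isoc_G,\Lambda)_\xi$ to be supported on the unramified locus $\bigsqcup_{b\in B(G)_\unr}\Isoc_{G,b}$. Combining this with Lemma~\ref{lemma-wakimoto-ch}, Lemma~\ref{lemma-V_b-fil}, and Corollary~\ref{cor-detect-t-str-supp} (which says $\widetilde V_b$ detects the standard $t$-structure on $\Rep^{\widehat\unip}(G_b(F),\Lambda)_\xi$), together with the recollement along the Newton stratification, I expect $\Shv^{\widehat\unip}(\Isoc_G,\Lambda)_\xi^{e,\leq 0}$ to be generated under colimits and extensions by the objects $\bigl((i_{b_\lambda})_!\widetilde{V}'_{b_\lambda}[-\langle 2\rho,\nu_{b_\lambda}\rangle]\bigr)_\xi[k]$, for $\bar\lambda\in\XX_\bullet(T)_\phi^+$ and $k\geq 0$. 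These objects are exotic connective provided the cohomological amplitude of the link stalks $(i_{b'})^!(i_{b_\lambda})_!$ for $b'<b_\lambda$ in $B(G)_\unr$ is bounded by $\langle 2\rho,\nu_{b_\lambda}-\nu_{b'}\rangle$, matching the $2\rho$-shift built into the exotic $t$-structure.

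Next I would apply $T_V$ to such a generator. By the compatibility of the spectral action with the unipotent monodromic Bezrukavnikov equivalence (the commutative diagram in \S\ref{subsubsection-unipotent-cll}), $T_V$ corresponds on the Hecke side to convolution with the unipotent monodromic central sheaf $Z_V^{\hat{u}\mbox{-}\mon}$, and on the spectral side via $\LL^\unip_{G,\xi}$ to tensoring with $\widetilde V$. By Proposition~\ref{prop-generic-Loc-equal-GS} the reduced substack of $V^\wedge_\xi$ is identified with $(\hat G\phi/\hat G)^\wedge_\xi$, and since $\widetilde V$ is pulled back from $\BB\hat G$, the projection formula for the closed embedding $\unr_{G,\xi}$ reduces the question to the tensor action on coherent sheaves on $(\hat G\phi/\hat G)^\wedge_\xi$. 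Under $\LL^\unip_{G,\xi}$ the proposed generator corresponds to $\Xi_{\xi,\overline{w_0\lambda}}[k]=(\unr_{G,\xi})_*(\pi_{\phi,\xi})_*(\gamma_{\phi,\xi})^!\cF_{\overline{w_0\lambda}}[k]$ from Proposition~\ref{prop-generic-equiv}; by Proposition~\ref{cor-tilting-tensor-twisted-case}(2), using that $V$ tilting admits a Weyl filtration, $V\otimes(\pi_{\phi,\xi})_*(\gamma_{\phi,\xi})^!\cF_{\overline{w_0\lambda}}$ is a retract of an object with a finite filtration by retracts of $(\pi_{\phi,\xi})_*(\gamma_{\phi,\xi})^!\cF_{\bar\mu}$ for $\bar\mu\in-\XX^\bullet(\hat T)_\phi^+$. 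Pushing forward along $(\unr_{G,\xi})_*$ and transporting back through $\LL^\unip_{G,\xi}$, $T_V$ applied to the generator becomes a retract of a finitely filtered object whose graded pieces lie in the generating family from the previous step, hence is exotic connective.

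The hard part is the generator-level description of $\Shv^{\widehat\unip}(\Isoc_G,\Lambda)_\xi^{e,\leq 0}$, and in particular bounding the intermediate link stalks $(i_{b'})^!(i_{b_\lambda})_!$ across the Newton stratification so that the proposed generators genuinely lie in the exotic connective part. Once this is in hand, the remainder is a formal transfer through the categorical local Langlands equivalence combined with Proposition~\ref{cor-tilting-tensor-twisted-case}(2).
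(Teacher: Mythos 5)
Your adjunction reduction is sound (self-biadjoint $T_V\dashv T_{V^\vee}\dashv T_V$, with duals of tilting modules tilting, so one-sided exactness suffices), and the eventual appeal to Proposition~\ref{cor-tilting-tensor-twisted-case} on the spectral side is exactly the mechanism the paper uses. However, you chose to work on the \emph{connective} side, via a generation argument, and the key input you need — that $\Shv^{\widehat\unip}(\Isoc_G,\Lambda)_\xi^{e,\leq 0}$ is generated under colimits, extensions, and non-negative shifts by the objects $\bigl((i_{b_\lambda})_!\widetilde V'_{b_\lambda}[-\langle2\rho,\nu_{b_\lambda}\rangle]\bigr)_\xi$ — is precisely what you acknowledge you cannot prove. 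That is a genuine gap, not a deferred routine check. Your proposed remedy (bounding the link stalks $(i_{b'})^!(i_{b_\lambda})_!$) is not the right path: for merely generic $\xi$ the objects $(i_b)_!(-)_\xi$ and $(i_b)_*(-)_\xi$ genuinely differ (this is exactly what distinguishes generic from strongly generic in Proposition~\ref{prop-split-sreg}), so these boundary stalks are nontrivial, and there is no obvious mechanism forcing the required bound. Incidentally, your stated bound $\langle2\rho,\nu_{b_\lambda}-\nu_{b'}\rangle$ has the wrong sign; unwinding the definition of the exotic $t$-structure gives $\langle2\rho,\nu_{b_\lambda}+\nu_{b'}\rangle$.

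The paper instead works on the \emph{coconnective} side and replaces the generation claim with a \emph{detection} criterion (Lemma~\ref{lemma-generic-unr-detect-t-str}): coconnectivity is characterized by the $\Lambda$-modules $\Hom((i_b)_*M_{\xi,b},\cF)$, a statement whose nontrivial half follows from the compact-projectivity of $\widetilde V_b$ (Corollary~\ref{cor-detect-t-str-supp}) together with the elementary regular-sequence Lemma~\ref{lemma-hom-alg}. With this detection lemma in hand, one computes $T_{V^\vee}\bigl((i_b)_*M_{\xi,b}[-\langle2\rho,\nu_b\rangle]\bigr)$ (i.e., $V^\vee\otimes\Xi_{\xi,\bar\lambda}$ on the spectral side, $\bar\lambda$ \emph{dominant}) via Proposition~\ref{cor-tilting-tensor-twisted-case}(1), and the transpose across the adjunction finishes the argument. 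If you insist on the connective-side route, the detection lemma also yields the required generation statement — but then the canonical generators of the connective part are the $*$-pushforwards $(i_b)_*M_{\xi,b}[-\langle2\rho,\nu_b\rangle]$, which live over \emph{dominant} weights and should be fed into part (1) of Proposition~\ref{cor-tilting-tensor-twisted-case}, not the $!$-pushforwards over anti-dominant weights and part (2) as in your draft. So beyond the unproved generation claim, there is a mismatch in which half of Proposition~\ref{cor-tilting-tensor-twisted-case} is actually needed.
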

\begin{proof}
    Because $T_V$ is left and right adjoint to $T_{V^\vee}$, it suffices to show that $T_V$ preserves the coconnective part. Let $b\in B(G)_\unr$. Recall the representation
    $$M_{\xi,b}=V_b\otimes_{\cO(\hat{A})}D_\xi\in \Rep_\fingen^{\unip}(G_b(F),\Lambda).$$
\begin{lemma}\label{lemma-generic-unr-detect-t-str}
    An object $\cF\in \Shv^{\widehat\unip}(\Isoc_G,\Lambda)_\xi$ lies in $\Shv^{\widehat\unip}(\Isoc_G,\Lambda)_\xi^{e,\geq 0}$ if and only if
    $$\Hom((i_b)_*M_{\xi,b},\cF)\in \Lambda\mbox{-}\mathrm{mod}^{\geq \langle2\rho,\nu_b\rangle}$$
    for all $b\in B(G)_\unr$.
\end{lemma}
\begin{proof}
    The representation $M_{\xi,b}$ lies in $\Rep(G_b(F),\Lambda)^{\leq0}$. Thus the ``only if'' part follows from definition.

    For the ``if'' part, we need to show that $N_b\coloneqq\cP^{\widehat\unip}\circ(i_b)^\sharp\cF$ lies in $ \Rep^{\widehat\unip}(G_b(F),\Lambda)^{\geq \langle2\rho,\nu_b\rangle}$ for any $b\in B(G)$. If $b\notin B(G)_\unr$, then the category $\Rep^{\widehat\unip}(G_b(F),\Lambda)_\xi$ is trivial by Corollary \ref{cor-supp-generic}. Thus we may assume $b\in B(G)_\unr$. By Corollary \ref{cor-detect-t-str-supp}, it suffices to show that $\Hom(\widetilde{V}_b,N_b)$ lies in $\Lambda\mbox{-}\mathrm{mod}^{\geq \langle2\rho,\nu_b\rangle}$. By Lemma \ref{lemma-V_b-fil}, $\widetilde{V}_b$ is a finite extension of $V_b$. Thus it suffices to show $\Hom(V_b,N_b)\in \Lambda\mbox{-}\mathrm{mod}^{\geq \langle2\rho,\nu_b\rangle}$. By definition, we have $M_{\xi,b}=V_b\otimes_{\cO(\hat{A})}D_\xi$. Therefore we have
    $$\Hom(M_{\xi,b},N_b)\simeq \Hom_{\cO(\hat{A})}(D_\xi,\Hom(V_b,N_b)).$$
    Now the claim follows from Lemma \ref{lemma-hom-alg} below, by noting that $D_\xi=\bigoplus_\chi\Lambda_\chi$ and each $\Lambda_\chi$ is cutting out by a regular sequence in $\cO(\hat{A})$.
\end{proof}

\begin{lemma}\label{lemma-hom-alg}
    Let $R$ be a ring. Let $x_1,\dots,x_n$ be a regular sequence in $R$. Let $M$ be a complex of $R$-modules that is supported on the closed subset $V(x_1,\dots,x_n)$ of $\Spec R$. Then $M$ is coconective if and only if
    $$\Hom(R/(x_1,\dots,x_n),M)$$
    is coconnective.
\end{lemma}
\begin{proof}
    If $n=1$, then there is a short exact sequence
    $$0\to \mathrm{coker}(H^{i-1}(M)\xrightarrow{x} H^{i-1}(M))\to H^i(\Hom(R/x_1,M))\to \ker(H^i(M)\xrightarrow{x} H^i(M))\to 0.$$
    As $H^i(M)$ is supported on $V(x_1)$, the kernel and cokernel of $H^i(M)\xrightarrow{x}H^i(M)$ is non-zero if and only if $H^i(M)$ is non-zero. This proves the case $n=1$. In general, it follows from induction on $n$.
\end{proof}

Let $\bar\lambda\in \XX_\bullet(T)^+_\phi$ be a dominant coweight. By Proposition \ref{cor-tilting-tensor-twisted-case}, the object $V^\vee\otimes \Xi_{\xi,\bar\lambda}$ is a retract of an object that admits a filtration by retracts of $\Xi_{\xi,\bar\mu}$ for $\bar\mu\in \XX_\bullet(T)^+_\phi$. Therefore the object $T_{V^\vee}((i_b)_*M_{\xi,b}[-\langle2\rho,\nu_b\rangle])$ is a retract of an object that admits a filtration by retracts of $(i_{b'})_*M_{b',\xi}[-\langle2\rho,\nu_{b'}\rangle]$ for $b'\in B(G)_\unr$. Hence if $\cF\in\Shv^{\widehat\unip}(\Isoc_G,\Lambda)_\xi^{e,\geq0}$, then
$$\Hom((i_b)_*M_{\xi,b}[-\langle2\rho,\nu_b\rangle],T_V(\cF))\simeq \Hom(T_{V^\vee}((i_b)_*M_{\xi,b}[-\langle2\rho,\nu_b\rangle]),\cF)$$
lies in $\Lambda\mbox{-}\mathrm{mod}^{\geq 0}$. Thus $T_V(\cF)$ lies in $\Shv^{\widehat\unip}(\Isoc_G,\Lambda)_\xi^{e,\geq0}$ by Lemma \ref{lemma-generic-unr-detect-t-str}.
\end{proof}

If $\Lambda=\overline{\QQ}_\ell$, we can prove a stronger result. 
\begin{thm}\label{thm-char-0-t-str}
    Assume $\Lambda=\overline{\QQ}_\ell$. Then the functor
    $$\LL^{\unip}_{G,\xi}\colon\Shv^{\widehat\unip}(\Isoc_G,\Lambda)_\xi\xrightarrow{\simeq} \Ind\Coh(V^\wedge_\xi)$$
    is compatible with the exotic $t$-structure on the left hand side and the standard $t$-structure on the right hand side.
\end{thm}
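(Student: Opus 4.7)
The plan proceeds by comparing compact generators on both sides of the equivalence. By Proposition~\ref{prop-generic-equiv}, in characteristic zero $\LL^{\unip}_{G,\xi}$ is already an equivalence of categories (since finite generation and compactness coincide for $\Rep(G_b(F),\overline{\QQ}_\ell)$), so the remaining question is $t$-exactness. Combining Propositions~\ref{prop-generic-Loc-equal-GS} and \ref{prop-adjoint-quot-endoscopy}, I would identify $V^\wedge_\xi \simeq \cU^\wedge_{\hat G_x}/\hat G_x$ for a semisimple lift $x$ of $\xi$, under which the standard $t$-structure on $\Ind\Coh(V^\wedge_\xi)$ becomes the standard $t$-structure on $\Ind\Coh(\cU^\wedge_{\hat G_x}/\hat G_x)$.

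The key step is to verify that for $\bar\lambda \in \XX_\bullet(T)^+_\phi$ (resp.\ $\bar\lambda \in -\XX_\bullet(T)^+_\phi$), the spectral-side compact generator $\Xi_{\xi,\bar\lambda}$ lies in the standard heart, and the corresponding sheaf-side generator $(i_b)_* M_{\xi,b}[-\langle 2\rho,\nu_b\rangle]$ (resp.\ $(i_b)_! M'_{\xi,b}[-\langle 2\rho,\nu_b\rangle]$) lies in the exotic heart. For the former, the decomposition $\Xi_{\xi,\bar\lambda} \simeq \bigoplus_{w \in W_0^{\chi_0}} \pi_{x,*}\cO(w^{-1}\bar\lambda)$ from the proof of Proposition~\ref{prop-generators-GS} reduces the claim to Kempf's vanishing theorem: since the minimal-length representatives $w \in W_0^{\chi_0}$ send $\hat G_x$-positive simple roots to $\hat G$-positive roots, $w^{-1}\bar\lambda$ is $\hat G_x$-dominant when $\bar\lambda$ is $\hat G$-dominant, and in characteristic zero $R^i \pi_{x,*}\cO(w^{-1}\bar\lambda) = 0$ for $i > 0$. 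The anti-dominant case is treated dually via Serre duality on the Springer resolution. For the exotic-heart claim, the direct computation $(i_b)^!(i_b)_*M_{\xi,b}[-\langle 2\rho,\nu_b\rangle] = M_{\xi,b}[-\langle 2\rho,\nu_b\rangle]$ (which lies in degree $\le \langle 2\rho,\nu_b\rangle$ since $\nu_b\ge 0$) together with Corollary~\ref{cor-supp-generic} (vanishing of stalks at non-unramified strata) handles most of the verification; the remaining stalks at other unramified $b' \in B(G)_\unr$ are controlled by transporting via the equivalence the standard-heart property of $\Xi_{\xi,\bar\lambda}$, and by the parallel computation for $(i_b)_! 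M'_{\xi,b}$ combined with the admissible Bernstein--Zelevinsky duality of Proposition~\ref{prop-adm-dual-exotic}.

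Having matched compact generators in the respective hearts, $t$-exactness is promoted to the whole category by combining this with Theorem~\ref{thm-Hecke-t-exact}: in characteristic zero every finite-dimensional $\hat G$-representation is tilting (by Weyl semisimplicity), so Hecke actions are exotic $t$-exact on the sheaf side, while tensoring with a vector bundle is standard $t$-exact on $\Ind\Coh(V^\wedge_\xi)$; since these actions are intertwined by $\LL^{\unip}_{G,\xi}$, both $t$-structures are stable under the common Hecke action, and agreement on compact generators suffices to conclude $t$-exactness of the equivalence. The main obstacle I expect is the careful verification of the exotic-heart condition on $(i_b)_*M_{\xi,b}[-\langle 2\rho,\nu_b\rangle]$ at strata $b'$ distinct from $b$: while Kempf vanishing gives a clean proof of the standard-heart statement, controlling the $!$-stalks and $\sharp$-stalks on all strata in the exotic $t$-structure requires delicate bookkeeping of shifts across the Newton stratification and the spectral identification.
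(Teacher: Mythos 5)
Your route here diverges substantially from the paper's, and I see a genuine gap in the promotion step together with a circularity in the verification of heart membership for $(i_b)_*M_{\xi,b}[-\langle 2\rho,\nu_b\rangle]$.

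The circularity: you propose to verify that $(i_b)_*M_{\xi,b}[-\langle 2\rho,\nu_b\rangle]$ lies in the exotic heart by bounding its $!$- and $\sharp$-stalks at other unramified strata $b'$, and you do this ``by transporting via the equivalence the standard-heart property of $\Xi_{\xi,\bar\lambda}$.'' But transporting exotic-heart membership through $\LL^{\unip}_{G,\xi}$ from the standard-heart membership of $\Xi_{\xi,\bar\lambda}$ is \emph{precisely} the statement of the theorem: you cannot invoke the desired $t$-exactness of $\LL^{\unip}_{G,\xi}$ as a step inside its proof. (The reference to ``admissible Bernstein--Zelevinsky duality of Proposition~\ref{prop-adm-dual-exotic}'' is also off here: $M_{\xi,b}$ is compact, not admissible, so you would need the compact duality $(\DD^{\can})^\omega$ from Theorem~\ref{thm-local-Langlands-cat}(6), not $(\DD^{\can})^{\Adm}$.)

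The deeper gap is the concluding promotion: ``having matched compact generators in the respective hearts\ldots agreement on compact generators suffices to conclude $t$-exactness of the equivalence.'' This inference is not valid as stated. Knowing that a compact generating family and its image both lie in the hearts of the respective $t$-structures does not, by itself, imply that the equivalence is $t$-exact; you would need the analogue of Lemma~\ref{lemma-generic-unr-detect-t-str} on the spectral side, namely that $\cG\in\Ind\Coh(V^\wedge_\xi)^{\geq 0}$ if and only if $\Hom(\Xi_{\xi,\bar\lambda},\cG)$ is coconnective for all $\bar\lambda$. Orthogonality in a $t$-structure gives one inclusion, but the converse requires the $\Xi_{\xi,\bar\lambda}$ to behave like ``projective'' generators for the standard $t$-structure (no positive self-Exts inside $\Coh$), which pushforwards of line bundles under the Springer resolution do not in general satisfy. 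The Kempf/Broer vanishing you invoke correctly shows the Andersen--Jantzen sheaves are concentrated in degree $0$, but ``in the heart'' is strictly weaker than ``detects the $t$-structure.''

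For comparison, the paper sidesteps all of this by using a single universal tester: the unipotent Iwahori--Whittaker object $\mathrm{IW}^\unip$, for which $\LL^{\unip}_G((i_1)_*\mathrm{IW}^\unip)=\cO_{\Loc^{\widehat\unip}_{{}^LG,F}}$. Since $\cO$ is the structure sheaf and $\hat{G}$ is linearly reductive in characteristic zero, the standard $t$-structure on $\Ind\Coh$ is detected by $R\Gamma(\Loc,V\otimes -)$ for $V\in\Rep(\hat{G})^\heartsuit$; back-translating via $\LL^{\unip}_{G,\xi}$ this becomes $\Hom(\mathrm{IW}^\unip,(i_1)^!T_V(\cF))$, and then Theorem~\ref{thm-Hecke-t-exact} gives exotic $t$-exactness of $T_V$, $(i_1)^!=(i_1)^\sharp$ is exotic $t$-exact at the closed basic stratum, and $\mathrm{IW}^\unip$ is compact projective. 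This chain of $t$-exact functors closes the argument with no case analysis over $B(G)_\unr$ and no need to analyze stalks of Wakimoto pushforwards across strata. If you want to salvage your approach, the missing ingredient is exactly a proof that the $\Xi_{\xi,\bar\lambda}$ detect the standard $t$-structure on $\Ind\Coh(V^\wedge_\xi)$, which I do not see an easy route to; replacing them with $\cO$ and its twists is essentially what the paper does.
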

\begin{rmk}
    We expect similar results hold true when $\Lambda=\overline{\FF}_\ell$. 
\end{rmk}

\begin{proof}
    Recall that in characteristic 0, the fully faithful embedding $\Shv^{\widehat\unip}(\Isoc_G,\Lambda)\to \Ind\Shv^{\unip}_{\fingen}(\Isoc_G,\Lambda)$ is an equivalence.
    Let $\mathrm{IW}^\unip\in \Rep^{\widehat\unip}(G(F),\Lambda)$ denote the unipotent component of the Iwahori--Whitakker representation. By \cite[Theorem 5.3 (2)]{Tame}, we have
    $$\LL^{\unip}_G((i_1)_* \mathrm{IW}^\unip)=\cO_{\Loc^{\widehat\unip}_{{}^LG,F}}.$$
    Let $\cF\in\Shv^{\widehat\unip}(\Isoc_G,\Lambda)_\xi$ be a connective (resp. coconnective) object. Then $\LL^{\unip}_{G}(\cF)$ is connective (resp. coconnective) if and only if the cohomology
    $$R\Gamma(\Loc^{\widehat\unip}_{{}^LG,F},V\otimes\LL^{\unip}_{G}(\cF))$$
    is connective (resp. coconnective) for any finite dimensional representation $V\in\Rep(\hat{G})^\heartsuit$. This is equivalent to
    $$\Hom((i_1)_*\mathrm{IW}^\unip, T_V(\cF))\simeq \Hom(\mathrm{IW}^\unip, (i_1)^!T_V(\cF))$$
    being connective (resp. coconnective). By Theorem \ref{thm-Hecke-t-exact}, the object $T_V(\cF)$ is connective (resp. coconnective). The functor $(i_1)^!$ is right $t$-exact by definition, and is left $t$-exact as $i_1$ is a closed embedding and hence $(i_1)^!\simeq (i_1)^\sharp$. It follows that $(i_1)^!T_V(\cF)$ is connective (resp. coconnective). The theorem follows as $\mathrm{IW}^\unip$ is compact projective.
\end{proof}

\subsubsection{Generic part of the cohomology of affine Deligne--Lusztig varieties.}\label{subsubsection-adlv}

Fix a dominant coweight $\mu\in \XX_\bullet(T)^+$. Let $\mu^*=-w_0(\mu)$. Recall the definition of admissible set
$$\Adm(\mu^*)=\{w\in\widetilde{W}| w\leq t_{\lambda}, \text{for some }\lambda\in W\mu^*\}.$$
Let $\Sht^{\loc}_{\cI,\mu}$ is the closed substack of $\Sht^\loc_\cI$ classifying local shtukas $(\cE,\phi_\cE)$ where the modification $\phi_\cE$ is bounded by $\Adm(\mu^*)\subseteq \widetilde{W}$. 
\begin{rmk}
    Our notation is consistent with \cite{XZ-cycles}, \cite{PR-p-adic-shtuka} and \cite{DHKZ-igusa}, but different from \cite{Tame}, where the stack of local shtukas bounded by $\Adm(\mu^*)$ is denoted by $\Sht^\loc_{\cI,\mu^*}$.
\end{rmk}
Let $B(G,\mu^*)\subseteq B(G)$ be the subset of elements $b$ with $\nu_b\leq \mu^\diamond$ and $\kappa(b)=[\mu]$.
For $b\in B(G,\mu^*)$, recall the definition of affine Deligne--Lusztig variety:
$$X_{\cI,\mu}(b)\coloneqq \{g\Iw\in \Fl_{\cI} | g^{-1}b\phi(g)\text{ bounded by }\Adm(\mu^*)\}.$$
Then $X_{\cI,\mu}(b)$ carries an action of $G_b(F)$. Moreover, it fits into a Cartesian diagram
$$\begin{tikzcd}
    G_b(F)\backslash X_{\cI,\mu}(b) \ar[r]\ar[d] & \Sht^\loc_{\cI,\mu} \ar[d,"\Nt"] \\
    \BB G_b(F) \ar[r,"i_b"] & \Isoc_{G}.
\end{tikzcd}$$
Let 
$$\iota_{\mu,b}\colon  X_{\cI,\mu}(b)\to \Iw\backslash LG/\Iw$$ 
be the natural morphism sending $g$ to $\phi(g)^{-1}b^{-1}g$. 
Let $Z_\mu^\Til\in \Shv_\fingen(\Iw\backslash LG/\Iw,\Lambda)$ be the central sheaf associated to the indecomposible tilting representation $V^\Til_\mu\in \Rep(\hat{G})$ with highest weight $\mu$.
By base change, we have as isomorphism
$$(i_b)^!T_{V^\Til_\mu}(i_1)_*(\cInd_I^{G(F)}\Lambda)= (i_b)^!\Nt_*\delta^!Z^\Til_\mu\simeq R\Gamma_c(X_{\cI,\mu}(b),(\iota_{\mu,b})^!Z_\mu^\Til)$$
in $\Rep(G_b(F),\Lambda)$. In particular, $R\Gamma_c(X_{\cI,\mu}(b),(\iota_{\mu,b})^!Z_\mu^\Til)$ carries a natural action of $W_E\times H_I\times G_b(F)$. For $\xi\in (\hat{A}\git W_0)(\Lambda)$, let $R\Gamma_c(X_{\cI,\mu}(b),(\iota_{\mu,b})^!Z^\Til_\mu)_{(\xi)}$ denote the localization of $R\Gamma_c(X_{\cI,\mu}(b),(\iota_{\mu,b})^!Z^\Til_\mu)$ at the maximal ideal of $Z(H_I)\simeq \cO(\hat{A}\git W_0)$ defining $\xi$.

\begin{rmk}
    Suppose $\mu$ is minuscule. Then $V_\mu^\Til=V_\mu$ is the highest weight representation. We expect that $R\Gamma_c(X_{\cI,\mu}(b),(\iota_{\mu,b})^!Z^\Til_\mu)$ is equal to the compactly supported cohomology of the associated local Shimura variety $\Sht_{G,b,\mu,I}$ defined in \cite[\S 24.1]{SW-Berkeley-notes}
\end{rmk}

\begin{thm}\label{thm-coh-vanishing-ADLV}
    If $\xi$ is generic, then $R\Gamma_c(X_{\cI,\mu}(b),(\iota_{\mu,b})^!Z_\mu^\Til)_{(\xi)}$ sits in degrees less than or equal to $\langle2\rho,\nu_b\rangle$. If moreover $\xi$ is strongly generic, then $R\Gamma_c(X_{\cI,\mu}(b),(\iota_{\mu,b})^!Z_\mu^\Til)_{(\xi)}$ is concentrated in degree $\langle2\rho,\nu_b\rangle$.
\end{thm}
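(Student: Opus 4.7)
Set $\cF = (i_1)_* \cInd_I^{G(\QQ_p)}\Lambda \in \Shv^{\widehat\unip}(\Isoc_G,\Lambda)$. Using the base change identification displayed immediately before the theorem, the claim reduces to bounding the cohomological amplitude of $(i_b)^! T_{V_\mu^\Til}(\cF)$ after localization at $\xi$, so the strategy is to package $\cF$ inside the exotic $t$-structure, transport it across $T_{V_\mu^\Til}$ using Theorem \ref{thm-Hecke-t-exact}, and read off the bound via $(i_b)^!$.

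The first step is to check that $\cF$ lies in the heart of the exotic $t$-structure on $\Shv^{\widehat\unip}(\Isoc_G,\Lambda)$. Because $1 \in B(G)$ is basic and hence minimal, $i_1\colon \Isoc_{G,1} \hookrightarrow \Isoc_G$ is a closed embedding; support considerations give $(i_{b'})^!(i_1)_* = 0$ and (via the $(i_{b'})_! \dashv (i_{b'})^\sharp$ adjunction together with the vanishing of $\Hom((i_{b'})_! -, (i_1)_* -)$ for $b' \neq 1$) also $(i_{b'})^\sharp (i_1)_* = 0$. At $b' = 1$ both $(i_1)^!(i_1)_*$ and $(i_1)^\sharp (i_1)_*$ are the identity, and $\cInd_I^{G(\QQ_p)}\Lambda$ sits in cohomological degree $0 = \langle 2\rho, \nu_1\rangle$. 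By Proposition \ref{prop-adm-dual-exotic}, $\cF$ lies in both $\Shv^{\widehat\unip}(\Isoc_G,\Lambda)^{e,\leq 0}$ and $\Shv^{\widehat\unip}(\Isoc_G,\Lambda)^{e,\geq 0}$.

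Next, the center $Z(H_I)\simeq \cO(\hat{A}\git W_0)$ acts on $\cF$ via the right $I$-action on $\cInd_I^{G(\QQ_p)}\Lambda$, and this action is compatible with the spectral action through $\LL^{\unip}_G$. Since localization at the maximal ideal cut out by $\xi$ is flat over the Noetherian ring $Z(H_I)$, it is $t$-exact, so the $\xi$-localization $\cF_{(\xi)}$ remains in the exotic heart. Theorem \ref{thm-Hecke-t-exact} (whose proof goes through verbatim after replacing ``supported at $\xi$'' by ``localized at $\xi$'', since all key ingredients, namely the generators $(i_b)_* M_{\xi,b}$, the filtrations from Proposition \ref{cor-tilting-tensor-twisted-case}, and the detection lemmas, are $\cO(\hat{A}\git W_0)$-linear) then shows that $T_{V_\mu^\Til}$ is exotic $t$-exact on the $\xi$-localized unipotent category, so $T_{V_\mu^\Til}(\cF_{(\xi)})$ is again in the exotic heart. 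Applying $(i_b)^!$ and invoking the defining condition of $\Shv^{\widehat\unip}(\Isoc_G,\Lambda)^{e,\leq 0}$ gives the bound $R\Gamma_c(X_{\cI,\mu}(b),(\iota_{\mu,b})^!Z_\mu^\Til)_{(\xi)} \in \Lambda\text{-}\mathrm{mod}^{\leq \langle 2\rho, \nu_b\rangle}$, which proves the generic statement.

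For the strongly generic case, Proposition \ref{prop-split-sreg}(2) (again extended from the $\xi$-supported to the $\xi$-localized setting by the same argument) provides the canonical equivalence $(i_b)^! \simeq \cP^\unip \circ (i_b)^\sharp$. Combined with the exotic $\geq 0$ part, this produces the lower bound $(i_b)^! T_{V_\mu^\Til}(\cF_{(\xi)}) \in \Lambda\text{-}\mathrm{mod}^{\geq \langle 2\rho, \nu_b\rangle}$, and concentration in degree $\langle 2\rho, \nu_b\rangle$ follows. The main obstacle I anticipate is the bookkeeping in extending Theorem \ref{thm-Hecke-t-exact} and Proposition \ref{prop-split-sreg} from $\bC_\xi$ to $\bC_{(\xi)}$; the point is purely formal (flatness of the localization and $\cO(\hat{A}\git W_0)$-linearity of all functors involved), but it has to be checked that the compact generators used to detect $t$-exactness remain a good test family after passing to the localization.
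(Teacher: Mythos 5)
Your proof follows essentially the same outline as the paper's: set $\cF = (i_1)_*\cInd_I^{G(\QQ_p)}\Lambda$, place it in the exotic heart (correct -- $1 \in B(G)$ is basic, so $i_1$ is a closed embedding and both $(i_b)^!$ and $(i_b)^\sharp$ of $\cF$ vanish for $b\neq 1$), apply Theorem~\ref{thm-Hecke-t-exact} for exotic $t$-exactness of $T_{V_\mu^\Til}$, and read off the degree bound from $(i_b)^!$; in the strongly generic case invoke Proposition~\ref{prop-split-sreg}. The overall reduction is right, and you correctly identify where the proof needs care. But the step you downplay as ``purely formal'' is exactly where the argument as written does not go through.

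The problem with extending Theorem~\ref{thm-Hecke-t-exact} and Proposition~\ref{prop-split-sreg} from $\bC_\xi$ to $\bC_{(\xi)}$ is that the detection lemmas that drive those results -- Lemma~\ref{lemma-hom-alg}, Corollary~\ref{cor-detect-t-str-supp}, Lemma~\ref{lemma-generic-unr-detect-t-str} -- all rest on the \emph{support-at-$\xi$} hypothesis. The compact test family $(i_b)_*M_{\xi,b}$ consists of $\mathfrak m_\xi$-torsion objects, and an object that is merely localized at $\xi$, but not supported there, can be entirely invisible to $R\Hom$ from such torsion objects: already for $R = \Lambda[t]$, $\xi = (t)$, the nonzero $R_{(\xi)}$-module $\Lambda(t)$ satisfies $R\Hom_R(R/t, \Lambda(t)) = 0$. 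So $\cO(\hat{A}\git W_0)$-linearity of the functors together with flatness of localization does \emph{not} by itself carry the detection lemmas over; the ``formal'' extension would fail for a general object of $\bC_{(\xi)}$.

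What rescues the conclusion in this specific case is finiteness/compactness, which you should make explicit rather than treat as bookkeeping. The object $(i_b)^!T_{V_\mu^\Til}\cF$ is compact in $\Rep^{\widehat\unip}(G_b(\QQ_p),\Lambda)$ (Theorem~\ref{thm-local-Langlands-cat}(5) and Proposition~\ref{prop-spectral-action}), so it is a bounded complex whose $\Hom$'s from compact projective unipotent generators are bounded complexes of finitely generated $Z(H_{I_b})\cong\cO(\hat{A}\git W_0)$-modules. For such a complex, Nakayama gives that vanishing of $H^i$ after localizing at $\xi$ is equivalent to vanishing after reducing modulo $\mathfrak m_\xi$ (equivalently, after $\mathfrak m_\xi$-adic completion, since that is faithfully flat over $\cO(\hat{A}\git W_0)_{(\xi)}$), and in the latter setting one \emph{is} in $\bC_\xi$ and the theorems apply directly. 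This is precisely the role of the faithful-flatness line at the end of the paper's own proof. So the route you outline is correct once the finiteness input is inserted; without it, the ``purely formal'' claim is wrong.
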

\begin{proof}
    By Proposition \ref{prop-split-sreg} and Theorem \ref{thm-Hecke-t-exact}, we know that the completion at $\xi$
    $$R\Gamma_c(X_{\cI,\mu}(b),(\iota_{\mu,b})^!Z^\Til_\mu)\otimes_{\cO(\hat{A}\git W_0)}\cO((\hat{A}\git W_0)^\wedge_\xi)\simeq (i_b)^!T_{V^\Til_\mu}(i_1)_*(\cInd_I^{G(F)}\Lambda\otimes_{\cO(\hat{A}\git W_0)}\cO((\hat{A}\git W_0)^\wedge_\xi))$$
    is concentrated in degrees less than or equal to $\langle2\rho,\nu_b\rangle$ if $\xi$ is generic, and is concentrated in degree $\langle2\rho,\nu_b\rangle$ if $\xi$ is strongly generic.
    As $\cO((\hat{A}\git W_0)^\wedge_\xi))$ is a faithfully flat $\cO(\hat{A}\git W_0)_{(\xi)}$-module, we see that $R\Gamma_c(X_{\cI,\mu}(b),(\iota_{\mu,b})^!Z^\Til_\mu)_{(\xi)}$ satisfies the same bound on cohomological degrees.
\end{proof}

\section{Generic part of the cohomology of Shimura varieties}\label{section-shimura}
In this section, we prove our main theorems about cohomology of Shimura varieties. We will let $p$ be an odd prime in this section. We will fix an isomorphism $\CC\simeq \overline{\QQ}_p$.

\subsection{Igusa sheaves}
We first fix a few notations and terminologies that will be used throughout this section. Let $\sG$ be a reductive group over $\QQ$. Let $\sX$ be a $\sG(\RR)$-conjugacy classes of homomorphisms $h\colon \Res_{\CC/\RR}\GG_m\to \sG_\RR$ that satisfies
\begin{enumerate}[(SV1)]
    \item $\Ad\circ h\colon \Res_{\CC/\RR}\GG_m\to \GL(\fg)$ endows $\fg$ with a Hodge structure of type $(-1,1),(0,0),(1,-1)$.
    \item $\Ad\circ h(i)$ is a Cartan involution on $\sG_\RR^{\mathrm{ad}}$.
\end{enumerate}
Then $(\sG,\sX)$ form a Shimura datum. In the literature, a Shimura datum is usually required to satisfy an additional axiom
\begin{enumerate}[(SV3)]
    \item $\sG_\RR^{\mathrm{ad}}$ does not contain a $\QQ$-factor whose $\RR$-points are compact. 
\end{enumerate}
However it is not needed for our purpose as in \cite{XZ-cycles}. The composition $\GG_{m,\CC}\hookrightarrow (\Res_{\CC/\RR}\GG_m)_\CC\xrightarrow{h}\sG_\CC$ defines a $G(\CC)$-conjugacy classes $\{\mu\}$ of cocharacters of $\sG_\CC$. The reflex field $\sE\subseteq \CC$ is the defining field the the conjugacy class $\{\mu\}$. Recall that $(\sG,\sX)$ is of Hodge type if there exists an embedding $(\sG,\sX)\hookrightarrow (\mathsf{H},\sX_{\mathsf{H}})$ where $(\mathsf{H},\sX_{\mathsf{H}})$ is a Siegal Shimura datum. Recall that a Shimura datum $(\sG,\sX)$ is of abelian type if there exists a Shimura datum $(\sG',\sX')$ of Hodge type and a central isogeny $\sG'_\der\to \sG_\der$ which inducing an isomorphism $(\sG'_\ad,\sX'_\ad)\simeq (\sG_\ad,\sX_\ad)$.

Let $(\sG,\sX)$ be a Shimura datum of Hodge type with reflex field $\sE$. Write $G=\sG_{\QQ_p}$. Let $\cI$ be an Iwahori model of $G$ over $\ZZ_p$. Denote $I=\cI(\ZZ_p)\subseteq G(\QQ_p)$. Let $K^p\subseteq \sG(\AAA_f^p)$ be a neat open compact subgroup. Let $\sSh_{K^pI}(\sG,\sX)$ denote the Shimura variety associated to $(\sG,\sX)$ with level $K^pI$. Then $\sSh_{K^pI}(\sG,\sX)$ is a quasi-projective smooth variety over $\sE$.

The fixed isomorphsim $\CC\simeq\overline{\QQ}_p$ induces a $p$-adic place $v$ of $\sE$. Let $E$ denote the local field $\sE_v$ and $O_E$ denote the ring of integers in $E$. Let $\{\mu\}$ denote the $G(\overline\QQ_p)$-conjugacy class of cocharacters of $G_{\overline\QQ_p}$ defined by the isomorphism $\CC\simeq \overline{\QQ}_p$. Then $E$ is identified with the local reflex field of the conjugacy class $\{\mu\}$. Let $\mu$ denote the dominant representative of $\{\mu\}$ in $\XX_\bullet(T)$, where $T$ is the canonical Cartan subgroup of $G$. 

By \cite{PR-p-adic-shtuka}, there is a normal flat integral model $\scS_{K^pI}(\sG,\sX)$ over $O_E$ of $\sSh_{K^pI}(\sG,\sX)$ satisfying certain properties. Let $k=\overline\FF_p$ and $\Sh_{\mu}=\Sh_{\mu,K^p}\coloneqq \scS_{K^pI}(\sG,\sX)^\perf_k$ denote the perfection of the special fiber of $\scS_{K^pI}(\sG,\sX)$ over $k$. Then there exists a crystalline period map
$$\loc_p\colon \Sh_\mu\to \Sht^{\loc}_{\cI,\mu}$$
by \cite{SYZ-EKOR} and \cite{Hoff-parahoric-display}.
Here $\Sht^\loc_{\cI,\mu}\subseteq \Sht^\loc_\cI$ is defined in \S\ref{subsubsection-adlv}. 
Let $\Isoc_{G,\leq\mu^*}$ denoted the closed substack of $\Isoc_G$ associated to the subset $B(G,\mu^*)\subseteq B(G)$.

The following result on existing of perfect Igusa stacks in crucial to us.

\begin{thm}\label{thm-igusa}
    There exists a perfect stack $\Igs=\Igs_{K^p}$ over $k$ such that there exists a Cartesian diagram
    $$\begin{tikzcd}
        \Sh_\mu \ar[r,"\loc_p"]\ar[d,"\Nt^\glob"swap] \ar[rd,phantom,"\square", very near start]& \Sht^\loc_{\cI,\mu} \ar[d,"\Nt"] \\
        \Igs \ar[r,"\loc_p^0"] & \Isoc_{G,\leq\mu^*}
    \end{tikzcd}$$
    of perfect \'etale stacks over $k$. In addition, $\Nt$ is ind-pfp proper and $\loc_p$ is pseudo coh. pro-smooth.
\end{thm}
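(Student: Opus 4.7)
The plan is to invoke the construction of the perfect Igusa stack from \cite{DHKZ-igusa} and then verify the listed properties of the resulting diagram. I will not try to reconstruct the Igusa stack from scratch; the whole point of citing \cite{DHKZ-igusa} is that the hard geometric input — a prestack $\Igs$ together with a map $\Nt^\glob\colon \Sh_\mu\to\Igs$ realizing the Cartesian square — is already available there in the Hodge-type setting, at least after perfection. So the first step is to recall that construction: $\Igs$ is built as a certain quotient stack whose $R$-points parametrize, roughly, a $G$-isocrystal on $D_R^*$ of the appropriate Newton class together with ``Igusa-style" prime-to-$p$ trivialization data matching the prime-to-$p$ level $K^p$, and the map $\loc_p^0$ sends such data to the underlying $G$-isocrystal, landing in $\Isoc_{G,\leq\mu^*}$ by the Mantovan-type analysis of Newton strata.

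Second, I would check that the diagram is indeed Cartesian. The point is that a lift of an $R$-point of $\Igs$ to $\Sh_\mu$ is canonically the same data as equipping the underlying isocrystal with a shtuka structure bounded by $\mu$, i.e.\ an $R$-point of $\Sht^\loc_{\cI,\mu}$ over $\Isoc_{G,\leq\mu^*}$. This matching is essentially the defining property of the Igusa stack in \cite{DHKZ-igusa}; I will just quote that the fiber product $\Igs\times_{\Isoc_{G,\leq\mu^*}}\Sht^\loc_{\cI,\mu}$ is canonically identified with $\Sh_\mu^\perf$ via $(\loc_p,\Nt^\glob)$.

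Third, I would handle the geometric properties. Ind-pfp properness of $\Nt\colon \Sht^\loc_{\cI,\mu}\to\Isoc_G$ is \cite[Lemma 3.28]{Tame} applied to the bounded substack, so it is just a citation. For pseudo coh.\ pro-smoothness of $\loc_p$, I would use that $\scS_{K^pI}(\sG,\sX)$ is flat over $O_E$ with smooth generic fiber and has the local model diagram coming from the Hodge embedding and the results of \cite{PR-p-adic-shtuka}; after perfection, the local geometry of $\Sh_\mu$ along its strata is governed by $\Sht^\loc_{\cI,\mu}$ via $\loc_p$, and the pseudo coh.\ pro-smoothness can be read off from the fact that $\loc_p$ is, up to passing to appropriate covers, modeled on the canonical pro-smooth maps from integral models to the affine flag variety.

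The main obstacle is purely bookkeeping: the statements in \cite{DHKZ-igusa} are formulated for specific choices of integral models and quasi-parahoric level structures, so I need to match conventions (normal flat integral model of Pappas--Rapoport, the perfection conventions of \cite{Tame}, and the bound $\Adm(\mu^*)$ versus $\Adm(\mu)$) to extract exactly the statement as written. Once conventions are aligned, there is no new mathematical content beyond \cite{DHKZ-igusa}, \cite{PR-p-adic-shtuka}, and \cite[Lemma 3.28]{Tame}, and the proof reduces to a short composition of citations.
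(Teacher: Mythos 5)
Your proposal is correct and follows essentially the same approach as the paper: cite \cite{DHKZ-igusa} for the construction of $\Igs$ and the Cartesian square, and then verify the remaining properties by citation (the paper points to \cite[Lemma 6.2]{Tame} for pseudo coh.\ pro-smoothness of $\loc_p$ rather than re-running the local model argument you sketch, but that lemma is exactly the packaged version of what you describe). One bookkeeping point the paper flags that your proposal omits: the statement in \cite{DHKZ-igusa} is proved as a Cartesian diagram of $v$-stacks (or in the $h$-topology after applying the reduction functor of \cite{GL-meromorphic}), not directly in the \'etale topology used here, so one must pass from the $h$/$v$-topological statement to the \'etale one; the paper waves at this (``modulo the difference between \'etale topology and $h$-topology'') and points to \cite[Proposition 6.4]{Tame} for the details.
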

\begin{proof}
    This is proved in \cite[\S 6.5.3]{DHKZ-igusa} modulo the difference between \'etale topology and $h$-topology. See also \cite[Proposition 6.4]{Tame} for the proof.
    The morphism $\loc_p$ is pseudo coh. pro-smooth by \cite[Lemma 6.2]{Tame}. 
\end{proof}

\subsubsection{Sheaves on the perfect Igusa stack}

Let $\ell\neq p$ be a prime. Let $\Lambda=\overline\QQ_\ell$ or $\overline\FF_\ell$ as before. We recall the discussion on $\Shv(\Igs,\Lambda)$ in \cite[\S 6.1.3]{Tame}.

The morphism $\Sht_{\cI,\mu}^\loc\to \Isoc_{G,\leq \mu^*}$ defines a \v{C}ech nerve $\Hk_\bullet(\Sht^\loc_{\cI,\mu})$. Pullback of the groupoid $\Hk_\bullet(\Sht^\loc_{\cI,\mu})$ to $\Sh_\mu$ defines a groupoid $\Hk_\bullet(\Sh_\mu)$ over $\Sh_\mu$ by Theorem \ref{thm-igusa}. Then $\Igs$ is identified with the \'etale sheafification of the geometric realization of $\Hk_\bullet(\Sh_\mu)$. It follows that 
$$\Shv(\Igs,\Lambda)\simeq |\Shv(\Hk_\bullet(\Sh_\mu),\Lambda)|$$
where the transitioning functors are given by $*$-pushforwards. By \cite[Lemma 6.7]{Tame}, the category $\Shv(\Igs,\Lambda)$ is compactly generated by objects of the form $(\Nt^\glob)_*\cF$ for $\cF\in\Shv_c(\Sh_\mu,\Lambda)$. There is a canonical duality
$$\DD^\can_{\Igs}\colon \Shv(\Igs,\Lambda)^\vee \xrightarrow{\sim} \Shv(\Igs,\Lambda)$$
on $\Shv(\Igs,\Lambda)$ induced by the Verdier duality on $\Shv(\Sh_\mu,\Lambda)$. Let 
$$(\DD^\can_{\Igs})^\Adm\colon (\Shv(\Igs,\Lambda)^\Adm)^{\mathrm{op}}\xrightarrow{\simeq}\Shv(\Igs,\Lambda)^\Adm$$
denote the admissible realization of $\DD^\can_{\Igs}$. By \cite[Lemma 6.7]{Tame}, the dualizing sheaf $\omega_{\Igs}$ lies in $\Shv(\Igs,\Lambda)^\Adm$.

\begin{defn}\label{def-omega-can}
    Define the object    $$\omega_{\Igs}^\can\coloneqq (\DD^\can_{\Igs})^\Adm(\omega_{\Igs})\in \Shv(\Igs,\Lambda)^\Adm.$$
\end{defn}

For $\cF\in\Shv(\Sh_\mu,\Lambda)$, there is a canonical isomorphism
$$\Hom((\Nt^\glob)_*\cF,\omega_{\Igs}^\can)= R\Gamma(\Sh_\mu,\cF)^\vee.$$
As explained in \cite[Lemma 6.20]{Tame}, we have $\omega_{\Igs}^\can\simeq \omega_{\Igs}$ if  $\Sh_\mu$ is proper.

\begin{rmk}\label{rmk-omega-can-on-scheme}
    If $X$ is a scheme of finite type over $k$, we set $\omega_X^\can\coloneqq (\DD_X)^\Adm(\omega_X)$, where $\DD_X:\Shv(X)^\vee\cong\Shv(X)$ is the usual Verdier duality. There is a natural isomorphism $\omega_X^\can\simeq f^\sharp\omega_{\Spec k}$ for $f\colon X\to \Spec k$, where $f^\sharp$ is the right adjoint of $f_*$. For an object $\cF\in\Shv(X,\Lambda)$, we have
    $$\Hom(\cF,\omega_X^\can)\simeq R\Gamma(X,\cF)^\vee.$$
    If particular, if $\cF$ is constructible, then 
    $$\Hom(\cF,\omega_X^\can)\simeq R\Gamma_c(X,(\DD_X)^\omega(\cF)).$$
    
    The object $\omega_{\Igs}^\can$ is the descent of $\omega_{\Sh_\mu}^\can$ along the morphism $\Nt^\glob\colon \Sh_\mu\to \Igs$, in the sense that $\omega_{\Sh_\mu}^{\can}\cong (\Nt^\glob)^! \omega_{\Igs}^{\can}$.
\end{rmk}

By \cite[Proposition 6.12]{Tame}, the commutative diagram
$$\begin{tikzcd}
    \Shv(\Isoc_{G,\leq \mu^*},\Lambda) \ar[r,"(\loc_p^0)^!"]\ar[d,"(\Nt)^!"swap] & \Shv(\Igs,\Lambda) \ar[d,"(\Nt^\glob)^!"] \\
    \Shv(\Sht^\loc_{\cI,\mu},\Lambda) \ar[r,"(\loc_p)^!"] & \Shv(\Sh_\mu,\Lambda)
\end{tikzcd}$$
is right adjointable in $\mathrm{Lincat}_\Lambda$. This means that the natural morphism
$$(\Nt)^!\circ(\loc_p^0)_\flat\to (\loc_p)_\flat\circ (\Nt^\glob)^!$$
defined by adjunction is an isomorphism. 

In \cite[Proposition 6.9, Remark 6.10]{Tame}, we defined the object
$$\fI\coloneqq (\loc_p^0)_\flat\omega_{\Igs}\in \Shv(\Isoc_{G,\leq\mu^*},\Lambda)$$
called the \emph{Igusa sheaf}. We will also need a variant of it in the sequel.

\begin{defn}\label{def-Igusa-sheaf}
    Define the \emph{$!$-Igusa sheaf}
    $$\fI^\can\coloneqq (\loc_p^0)_\flat \omega_{\Igs}^\can \in \Shv(\Isoc_{G,\leq \mu^*},\Lambda).$$
\end{defn}

We have the following results on the $!$-Igusa sheaf.

\begin{prop}\label{prop-!-Igusa-dual}
    There is a natural isomorphism
    $$\fI^\can\simeq (\DD^\can_{\Isoc_{G,\leq\mu^*}})^\Adm(\fI).$$
\end{prop}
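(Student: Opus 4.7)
The plan is to derive the isomorphism from a general commutation relation between $(\loc_p^0)_\flat$ and the admissible realization of the canonical duality:
\[
(\loc_p^0)_\flat \circ (\DD^\can_{\Igs})^\Adm \;\simeq\; (\DD^\can_{\Isoc_{G,\leq\mu^*}})^\Adm \circ (\loc_p^0)_\flat
\]
on admissible sheaves. Granting this, applying both sides to $\omega_{\Igs}$ and unwinding the definitions gives
\[
\fI^\can = (\loc_p^0)_\flat\bigl((\DD^\can_{\Igs})^\Adm \omega_{\Igs}\bigr) \simeq (\DD^\can_{\Isoc_{G,\leq\mu^*}})^\Adm\bigl((\loc_p^0)_\flat \omega_{\Igs}\bigr) = (\DD^\can_{\Isoc_{G,\leq\mu^*}})^\Adm(\fI).
\]

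To establish the commutation, I would reduce from $\Isoc_{G,\leq\mu^*}$ to the scheme $\Sh_\mu$ via the right adjointability of the Cartesian diagram in Theorem \ref{thm-igusa} (from \cite{Tame}, Proposition 6.12), which gives $(\Nt)^! \circ (\loc_p^0)_\flat \simeq (\loc_p)_\flat \circ (\Nt^\glob)^!$. Combined with the compatibility of the canonical duality with the ind-pfp proper pushforwards $\Nt$ and $\Nt^\glob$ (in the spirit of the framework of Theorem \ref{thm-local-Langlands-cat}(7) extended to non-closed-immersion proper morphisms), this reduces the question to the analogous commutation for $(\loc_p)_\flat$ on $\Sh_\mu$. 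On $\Sh_\mu$, the morphism $\loc_p$ is pseudo coh.\ pro-smooth, so $(\loc_p)_\flat$ coincides with $(\loc_p)_*$ up to a shift and Tate twist coming from the relative dualizing complex, and the commutation with Verdier duality reduces to the standard identity $\DD f_* \simeq f_! \DD$ combined with the smooth identification of $f_!$ and $f_*$ up to shift and twist.

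The main obstacle is the careful bookkeeping of shifts and Tate twists across the various duality compatibilities. The shifts $[-2\langle 2\rho,\nu_b\rangle](-\langle 2\rho,\nu_b\rangle)$ appearing in the canonical duality compatibilities of Theorem \ref{thm-local-Langlands-cat}(7), together with shifts from the pseudo coh.\ pro-smooth structure of $\loc_p$, must cancel appropriately so as to yield the clean twist-free statement in the proposition. A secondary subtlety is that $(\loc_p^0)_\flat$ does not preserve admissibility in general, so one must verify that the specific objects $\omega_{\Igs}$ and $\omega_{\Igs}^\can$ (which both lie in $\Shv(\Igs,\Lambda)^\Adm$ by the discussion preceding Definition \ref{def-omega-can}) are handled correctly by the machinery, and that the resulting $\fI$ and $\fI^\can$ are sufficiently well-behaved for the duality statement to make sense.
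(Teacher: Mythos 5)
Your high-level plan — first establish the commutation
$$
(\loc_p^0)_\flat \circ (\DD^\can_{\Igs})^\Adm \;\simeq\; (\DD^\can_{\Isoc_{G,\leq\mu^*}})^\Adm \circ (\loc_p^0)_\flat,
$$
then evaluate on $\omega_{\Igs}$ — is exactly the structure of the paper's proof. Where you diverge is in how the commutation is to be proved. The paper does it in two citations: \cite[Lemma 6.8]{Tame} furnishes the commutation of the canonical duality with $(\loc_p^0)^!$ at the level of \emph{compact} objects,
$$
(\DD_{\Igs}^\can)^\omega\circ (\loc_p^0)^!\simeq (\loc_p^0)^!\circ (\DD^\can_{\Isoc_{G,\leq\mu^*}})^\omega,
$$
and the purely categorical transfer \cite[Lemma 7.40]{Tame} converts a compact-level duality commutation for a compact-preserving functor into the admissible-level one for its right adjoint. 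Since the canonical duality $\DD^\can$ already carries the Newton-graded shifts of Theorem \ref{thm-local-Langlands-cat}(6)--(7) internally, no shift/twist bookkeeping ever surfaces.

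Your alternative route contains real gaps. You want to reduce via the right-adjointability $(\Nt)^!\circ(\loc_p^0)_\flat\simeq(\loc_p)_\flat\circ(\Nt^\glob)^!$ to a computation of $(\loc_p)_\flat$ on $\Sh_\mu$, and then identify $(\loc_p)_\flat$ with $(\loc_p)_*$ up to a shift and Tate twist because $\loc_p$ is pseudo coh.\ pro-smooth. This step fails: pseudo coh.\ pro-smoothness does not give a finite-dimensional relative dualizing complex, so there is no finite shift relating $(\loc_p)_\flat$ to $(\loc_p)_*$; and even for a smooth finite-type morphism the chain you sketch requires $f_!\simeq f_*$ up to shift, which needs properness, and neither $\loc_p$ nor $\loc_p^0$ is proper. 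A second gap is that the reduction needs a duality compatibility for the ind-pfp proper pushforwards along $\Nt$ and $\Nt^\glob$, which you acknowledge is only ``in the spirit of'' Theorem \ref{thm-local-Langlands-cat}(7); that result concerns the Newton strata inclusions $i_b$, not $\Nt$, so this is an independent input you have not supplied. Finally, your ``secondary subtlety'' about $(\loc_p^0)_\flat$ failing to preserve admissibility is spurious: because $(\loc_p^0)^!$ preserves compact objects, its right adjoint $(\loc_p^0)_\flat$ automatically preserves admissibles, which is exactly how the paper establishes that $\fI$ and $\fI^\can$ lie in $\Shv(\Isoc_{G,\leq\mu^*},\Lambda)^\Adm$ in the first place.
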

\begin{proof}
    By \cite[Lemma 6.8]{Tame}, we have a  canonical isomorphism
    $$(\DD_{\Igs}^\can)^\omega\circ (\loc_p^0)^!\simeq (\loc_p^0)^!\circ (\DD^\can_{\Isoc_{G,\leq\mu^*}})^\omega.$$
    By \cite[Lemma 7.40]{Tame}, it induces a canonical isomorphism
    $$(\loc_p^0)_\flat\circ (\DD_{\Igs}^\can)^\Adm\simeq (\DD^\can_{\Isoc_{G,\leq\mu^*}})^\Adm\circ (\loc_p^0)_\flat.$$
    The claim now follows by applying the above isomorphism to $\omega_{\Igs}$.
\end{proof}

\begin{prop}\label{prop-!-Igusa-to-Igusa}
    There is a natural morphism $\fI^\can\to \fI$.
\end{prop}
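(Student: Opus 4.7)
The plan is to construct a canonical morphism $\alpha\colon \omega_{\Igs}^{\can} \to \omega_{\Igs}$ in $\Shv(\Igs,\Lambda)$ and then apply the functor $(\loc_p^0)_\flat$ to obtain $\fI^{\can} \to \fI$. The construction of $\alpha$ proceeds by descending the ``forget supports'' map on the scheme $\Sh_\mu$ along $\Nt^{\glob}\colon \Sh_\mu \to \Igs$. To set up the descent picture, I note that by Theorem~\ref{thm-igusa} the morphism $\Nt^{\glob}$ is pfp proper, so $(\Nt^{\glob})_*$ is left adjoint to $(\Nt^{\glob})^!$. Moreover, Remark~\ref{rmk-omega-can-on-scheme} asserts $(\Nt^{\glob})^!\omega_{\Igs}^{\can} \cong \omega_{\Sh_\mu}^{\can}$, and the analogous identity $(\Nt^{\glob})^!\omega_{\Igs} \cong \omega_{\Sh_\mu}$ follows from properness of $\Nt^{\glob}$ together with the compatibility of dualizing sheaves under $!$-pullback along the structure map.

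By \cite[Lemma~6.7]{Tame}, the category $\Shv(\Igs,\Lambda)$ is compactly generated by objects of the form $(\Nt^{\glob})_*\cF$ with $\cF \in \Shv_c(\Sh_\mu,\Lambda)$. For such a generator, adjunction combined with Remark~\ref{rmk-omega-can-on-scheme} yields the natural identifications
\[
\Hom((\Nt^{\glob})_*\cF,\, \omega_{\Igs}^{\can}) \simeq \Hom(\cF,\, \omega_{\Sh_\mu}^{\can}) \simeq R\Gamma_c(\Sh_\mu,\, (\DD_{\Sh_\mu})^\omega\cF),
\]
\[
\Hom((\Nt^{\glob})_*\cF,\, \omega_{\Igs}) \simeq \Hom(\cF,\, \omega_{\Sh_\mu}) \simeq R\Gamma(\Sh_\mu,\, (\DD_{\Sh_\mu})^\omega\cF).
\]
The standard natural transformation $R\Gamma_c(\Sh_\mu,-) \to R\Gamma(\Sh_\mu,-)$ then induces a natural transformation between the two corepresented functors on this class of compact generators. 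By Yoneda, this extends uniquely to a morphism $\alpha\colon \omega_{\Igs}^{\can} \to \omega_{\Igs}$ in $\Shv(\Igs,\Lambda)$. Applying $(\loc_p^0)_\flat$ to $\alpha$ produces the desired morphism $\fI^{\can} = (\loc_p^0)_\flat \omega_{\Igs}^{\can} \to (\loc_p^0)_\flat \omega_{\Igs} = \fI$.

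The main potential subtlety is verifying that the natural transformation defined on the compact generators $(\Nt^{\glob})_*\cF$ is coherent with respect to morphisms among such generators, which is what makes the Yoneda extension well-defined. This coherence is essentially automatic from the naturality of the adjunction $(\Nt^{\glob})_* \dashv (\Nt^{\glob})^!$ and of $R\Gamma_c \to R\Gamma$, though in the $\infty$-categorical setting some care is needed to lift the pointwise natural transformation to an actual natural transformation of corepresentable functors. An alternative, perhaps cleaner, formulation would be to observe that the canonical map $\omega_{\Sh_\mu}^{\can} \to \omega_{\Sh_\mu}$ is automatically equivariant with respect to the groupoid $\Hk_\bullet(\Sh_\mu)$ whose realization gives $\Igs$, and hence descends via the identification of $\Shv(\Igs,\Lambda)$ with the totalization of $\Shv(\Hk_\bullet(\Sh_\mu),\Lambda)$.
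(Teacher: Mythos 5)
Your primary (Yoneda) argument has a genuine gap that you correctly flag but do not resolve. Having a collection of equivalences $\Hom((\Nt^{\glob})_*\cF,\omega^{\can}_{\Igs})\simeq R\Gamma_c(\Sh_\mu,\DD\cF)$ indexed by $\cF\in\Shv_c(\Sh_\mu,\Lambda)$, together with the natural transformation $R\Gamma_c\to R\Gamma$, gives a natural transformation of functors on $\Shv_c(\Sh_\mu,\Lambda)^{\mathrm{op}}$ after composing with $(\Nt^{\glob})_*$. But $(\Nt^{\glob})_*$ is not fully faithful, and Yoneda on $\Shv(\Igs,\Lambda)$ requires a natural transformation of the corepresentable functors restricted to $\Shv(\Igs,\Lambda)^\omega$ itself, not merely to the image of a non-faithful functor into it. There is no a priori reason that the induced maps on $\Hom((\Nt^{\glob})_*\cF_1,-)$ and $\Hom((\Nt^{\glob})_*\cF_2,-)$ agree when $(\Nt^{\glob})_*\cF_1\simeq(\Nt^{\glob})_*\cF_2$ but $\cF_1\not\simeq\cF_2$, nor that the identifications are coherent over morphisms in $\Shv(\Igs,\Lambda)^\omega$ that do not lift along $(\Nt^{\glob})_*$. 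So ``essentially automatic'' is overstated; this is exactly the coherence problem that forces a descent argument.

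Your ``alternative, perhaps cleaner formulation'' at the end is in fact the paper's proof. The paper constructs the morphism $\omega_{\Sh_\mu}^{\can}\to\omega_{\Sh_\mu}$ as the right adjoint of $\pi_!\to\pi_*$, observes that the same construction applies at each level of the \v{C}ech nerve $\Hk_\bullet(\Sh_\mu)$, and — crucially — uses ind-properness of the transition morphisms to get a morphism of simplicial objects, which then descends through the identification $\Shv(\Igs,\Lambda)=|\Shv(\Hk_\bullet(\Sh_\mu),\Lambda)|$. You should make that route the main argument rather than a remark; in particular you need to actually explain why the level-wise maps assemble into a map of simplicial objects, which is where ind-properness of the transition maps enters (it ensures $f^!\omega^{\can}\cong\omega^{\can}$ and $f^!\omega\cong\omega$ compatibly, so that $f^!$ of the map at one level is canonically the map at the next).
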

\begin{proof}
    The morphism is induced by a natural morphism $\omega_{\Igs}^\can\to \omega_{\Igs}$ we now define. As explained in Remark \ref{rmk-omega-can-on-scheme}, we have $(\Nt^\glob)^!\omega^\can_{\Igs}\simeq \omega^\can_{\Sh_\mu}$, where $\omega^\can_{\Sh_\mu}=\pi^\sharp \omega_{\Spec k}$ for $\pi\colon\Sh_\mu\to \Spec k$.
    Over $\Sh_\mu$, there is a natural morphism
    $$\omega_{\Sh_\mu}^\can\to \omega_{\Sh_\mu}$$
    defined by taking right adjoint of the natural transfrom $\pi_!\to \pi_*$. On each level of the \v{C}ech nerve $\Hk_\bullet(\Sh_\mu)$, we also have canonical morphisms
    $$\omega_{\Hk_\bullet(\Sh_\mu)}^\can\to \omega_{\Hk_\bullet(\Sh_\mu)}$$
    as all the transition morphisms in $\Hk_\bullet(\Sh_\mu)$ are ind-proper. It descends to a natural morphism $\omega_{\Igs}^\can\to \omega_{\Igs}$.
\end{proof}

\subsubsection{Well-positioned subschemes}
To compute the stalks of the (!)-Igusa sheaves, we need to recall the notion of well-positioned subsets defined in \cite{Lan-Stroh-compactification-subsch}. Before that, we need to recall compactifications of Shimura varieties. As we are mainly interested in the special fiber of Shimura variety, we shall only work with the special fiber. 

Denote $K=K^pI\subseteq \sG(\AAA_f)$. Let
$$S_K(\sG,\sX)\coloneqq \scS_K(\sG,\sX)\otimes_{O_E}k$$
denote the special fiber of $\scS_K(\sG,\sX)$. Let $S^\mini_K(\sG,\sX)$ denote the special fiber of the integral minimal compactification of $\scS_K(\sG,\sX)$ defined in \cite[Theorem 3]{Pera-integral-compactification}. Then $S^\mini_K(\sG,\sX)$ is projective and normal over $k$. The boundary of $S^\mini_K(\sG,\sX)$ admits a stratification by locally closed subschemes indexed by the set of cusp labels $\Cusp_K(\sG,\sX)$. For $\Phi\in \Cusp_K(\sG,\sX)$, let $S_\Phi$ denote the associated stratum. Thus
$$S^\mini_K(\sG,\sX)=S_K(\sG,\sX)\sqcup\bigsqcup_{\Phi\in\Cusp(\sG,\sX)}S_\Phi.$$
Each stratum $S_\Phi$ is a finite quotient of a smaller Shimura variety of Hodge type.

We fix a complete admissible rpcd cone decomposition $\Sigma=\{\Sigma_\Phi\}_{\Phi\in\Cusp_K(\sG,\sX)}$ as in \cite{Pera-integral-compactification}. By \cite[Theorem 1]{Pera-integral-compactification}, there is a toroidal compactification $S^\tor_K(\sG,\sX)$
of $S_K(\sG,\sX)$ associated to $\Sigma$. There is a surjective proper morphism
$$\oint_{K,\Sigma}\colon S^\tor_K(\sG,\sX)\to S^\mini_K(\sG,\sX).$$

We describe the boundary of $S^\tor_K(\sG,\sX)$ following \cite[Proposition 2.1.2]{Lan-Stroh-compactification-subsch}. Let $\Phi\in\Cusp_K(\sG,\sX)$. There is a subset $\Sigma^+_\Phi\subseteq \Sigma_\Phi$ equipped with an action of an arithmetic group $\Gamma_\Phi$. The preimage $(\oint_{K,\Sigma})^{-1}(S_\Phi)$ admits a stratification $\{S_{\Phi,[\sigma]}\}$ indexed by $[\sigma]\in \Sigma_\Phi^+/\Gamma_\Phi$.
There is a torus $\sT_\Phi$ over $\Spec\ZZ$, a proper surjective morphism $C_\Phi\to S_\Phi$ and an $\sT_\Phi$-torsor $\Xi_\Phi\to C_\Phi$. For each element $\sigma\in \Sigma_\Phi$, there is an affine toroidal embedding
$$
\Xi_\Phi\hookrightarrow\Xi_\Phi(\sigma)
$$
relative to $C_\Phi$, and a $\sT_\Phi$-invariant closed subscheme 
$$
\Xi_{\Phi,\sigma}\hookrightarrow \Xi_\Phi(\sigma).
$$ 
If $\sigma\in \Sigma^+_\Phi$ with image $[\sigma]\in \Sigma^+_\Phi/\Gamma_\Phi$, there is a canonical isomorphism
$$
(\Xi_\Phi(\sigma))^\wedge_{\Xi_{\Phi,\sigma}}\simeq  (S^\tor_K(\sG,\sX))^\wedge_{S_{\Phi,[\sigma]}}
$$
between formal completions, and which restricts to a canonical isomorphism $\Xi_{\Phi,\sigma}\simeq S_{\Phi,[\sigma]}.$
By \cite[Proposition 2.1.2 (9)]{Lan-Stroh-compactification-subsch}, for each closed point $x\in S_{\Phi,[\sigma]}$, there exists a correspondence
$$S^\tor_K(\sG,\sX)\leftarrow U\rightarrow \Xi_\Phi(\sigma)$$
such that two morphisms are \'etale and $U\to S^\tor_K(\sG,\sX)$ is an \'etale neighborhood of $x$. Moreover, the stratification on $U$ induced from $S^\tor_K(\sG,\sX)$ is equal to the pullback of the stratification on $\Xi_\Phi(\sigma)$ indexed by faces of $\sigma$. In particular, the preimage of $S_K(\sG,\sX)$ and the preimage of $\Xi_\Phi$ agree in $U$.

We recall the definition of well-positioned subschemes.
\begin{defn}\label{def-well-positioned-subscheme}
    Let $Y\subseteq S_K(\sG,\sX)$ be a locally closed subscheme. We say that $Y$ is \emph{well-positioned} if there exists a collection
    $$Y_\Phi^\natural\subseteq S_\Phi,\quad \Phi\in\Cusp_K(\sG,\sX)$$
    of locally closed subschemes satisfying the following condition:
    Let $\Phi\in \Cusp_K(\sG,\sX)$, $\sigma\in \Sigma_\Phi^+$ and let 
        $$\Spf R\subseteq (S^\tor_K(\sG,\sX))^\wedge_{S_{\Phi,[\sigma]}}$$
    be an affine open formal subscheme. Denote $W=\Spec R$. Let $W^\circ$ denote the preimage of $S_K(\sG,\sX)$ in $W$.  Then the pullback of $Y$ along $W^\circ\to S_K(\sG,\sX)$ is equal to the pullback of $Y_\Phi^\natural$ along the composition
    $$W^\circ\subseteq W\to \Xi_\Phi(\sigma)\to C_\Phi\to S_\Phi.$$
    Here we use the canonical isomorphism $(\Xi_\Phi(\sigma))^\wedge_{\Xi_{\Phi,\sigma}}\simeq(S^\tor_K(\sG,\sX))^\wedge_{S_{\Phi,[\sigma]}}$.
\end{defn}

Let $Y$ be a well-positioned subscheme of $S_K(\sG,\sX)$ with associated collection $\{Y_\Phi^\natural\}$. Let $\overline{Y}$ denote the schematic closure of $Y$ in $S_K(\sG,\sX)$. Let $\overline{Y}^\mini$ (resp. $\overline{Y}^\tor$) denote the schematic closure of $\overline{Y}$ in $S^\mini_K(\sG,\sX)$ (resp. $S^\tor_K(\sG,\sX)$). Denote $Y_0=\overline{Y}\backslash Y$ and let $Y_0^\mini$ (resp. $Y_0^\tor$) denote the closure of $Y_0$ in $S^\mini_K(\sG,\sX)$ (resp. $S^\tor_K(\sG,\sX)$). Define the partial minimal (resp. toroidal) compactification of $Y$ by
$$Y^\mini=\overline{Y}^\mini\backslash Y_0^\mini,\quad\text{resp.}\quad Y^\tor=\overline{Y}^\tor\backslash Y_0^\tor.$$
For each $\Phi\in\Cusp_K(\sG,\sX)$ and $[\sigma]\in \Sigma_\Phi^+/\Gamma_\Phi$, let $Y_{\Phi,[\sigma]}$ denote $Y^\tor\times_{S^\tor_K(\sG,\sX)}S_{\Phi,[\sigma]}$. For $\sigma\in \Sigma_\Phi^+$, let $Y^\natural_?$ denote the pullback of $Y^\natural_\Phi$ to $?$ for $?=\Xi_\Phi(\sigma)$, $\Xi_{\Phi,\sigma}$, or $C_\Phi$.
By \cite[Theorem 2.3.2 (5)]{Lan-Stroh-compactification-subsch}, For $\sigma\in \Sigma_\Phi^+$ with image $[\sigma]\in \Sigma_\Phi^+/\Gamma_\Phi$, there is an canonical isomorphism
$$(Y^\natural_{\Xi_\Phi(\sigma)})^\wedge_{Y_{\Xi_{\Phi,\sigma}}^\natural}\simeq (Y^\tor)^\wedge_{Y_{\Phi,\sigma}}$$
induced by the canonical isomorphism $(\Xi_\Phi(\sigma))^\wedge_{\Xi_{\Phi,\sigma}}\simeq(S^\tor_K(\sG,\sX))^\wedge_{S_{\Phi,[\sigma]}}$. In particular, it induces a canonical isomorphism $Y_{\Xi_{\Phi,\sigma}}^\natural\simeq Y_{\Phi,\sigma}$.

The notion of well-position subschemes is generalized in \cite[\S 2.3]{Mao-Hodge-well-positioned} in order to treat the case of Igusa schemes. Let $Y\subseteq S_K(\sG,\sX)$ be a well-positioned subscheme. Let $\widetilde{Y}^\tor\to Y^\tor$ be a morphism between schemes. Let $\widetilde{Y}$ denote the pullback of $\widetilde{Y}^\tor$ along $Y\hookrightarrow Y^\tor$.
For $\Phi\in\Cusp_K(\sG,\sX)$, let $\widetilde{Y}_{C_\Phi}^\natural$ be a scheme over $Y_{C_\Phi}^\natural$.

\begin{defn}
    We say that $\widetilde{Y}$ is a \emph{well-positioned} with respect to $\{\widetilde{Y}_{C_\Phi}^\natural\}_{\Phi\in\Cusp_K(\sG,\sX)}$ if $\widetilde{Y}\to Y$ is affine and flat, and for any $\Phi\in\Cusp_K(\sG,\sX)$, $\sigma\in \Sigma_\Phi^+$, and any affine open subscheme 
    $$\Spf(R)\subseteq (Y^\tor)^\wedge_{Y_{\Phi,\sigma}} \simeq(Y^\natural_{\Xi_\Phi(\sigma)})^\wedge_{Y^\natural_{\Xi_{\Phi,\sigma}}},$$ 
    the pullback of $\widetilde{Y}^\tor\to Y^\tor$ and the pullback of $\widetilde{Y}^\natural_{C_\Phi}\to Y_{C_\Phi}^\natural$ to $W=\Spec(R)$ are isomorphic.
\end{defn}

The following Lemma is a slight generalization of \cite[Lemma 2.25]{Mao-Hodge-well-positioned}.

\begin{lemma}\label{lemma-well-positioned-etale-nbhd}
    Assume that $\widetilde{Y}^\tor\to Y^\tor$ is finitely presented. For $\Phi\in\Cusp_K(\sG,\sX)$, $\sigma\in \Sigma^+_\Phi$ and a closed point $x\in Y_{\Phi,\sigma}$, there exists an \'etale neighborhood $U\to S^\tor_K(\sG,\sX)$ of $x$ and an \'etale morphism $U\to \Xi_\Phi(\sigma)$ such that the following conditions hold:
    \begin{enumerate}
        \item The pullback of $\widetilde{Y}^\tor\to S^\tor_K(\sG,\sX)$ and the pullback of $\widetilde{Y}_{C_\Phi}^\natural\to C_\Phi$ to $U$ are isomorphic.
        \item The stratification on $U$ induced from $S_K^\tor(\sG,\sX)$ coincides with the stratification on $U$ induced from $\Xi_\Phi(\sigma)$.
    \end{enumerate}
\end{lemma}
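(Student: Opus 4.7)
The strategy is to start from the étale correspondence between $S^\tor_K(\sG,\sX)$ and $\Xi_\Phi(\sigma)$ provided by the theory of toroidal compactifications, and then use Artin approximation to promote the well-positionedness of $\widetilde{Y}$ from formal completions to an actual étale neighborhood. In particular, condition (2) will be automatic from the Lan--Stroh construction, and the substantive content is condition (1), which reduces to a standard spreading-out argument for finitely presented morphisms over Noetherian henselizations.

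First I would invoke the result of Lan--Stroh recalled above \ref{def-well-positioned-subscheme}: for the closed point $x\in Y_{\Phi,\sigma}\subseteq S_{\Phi,[\sigma]}$, there exists an étale neighborhood $U_0\to S^\tor_K(\sG,\sX)$ of $x$ together with an étale morphism $U_0\to \Xi_\Phi(\sigma)$ inducing the canonical isomorphism between formal completions $(S^\tor_K(\sG,\sX))^\wedge_{S_{\Phi,[\sigma]}}\simeq (\Xi_\Phi(\sigma))^\wedge_{\Xi_{\Phi,\sigma}}$, and such that the two stratifications on $U_0$ agree. Thus condition (2) holds for $U_0$, and it remains only to arrange condition (1), possibly after shrinking $U_0$ to a smaller étale neighborhood of $x$.

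Next, let $A$ be the strict henselization of $\cO_{S^\tor_K(\sG,\sX),x}$; via the Lan--Stroh étale correspondence, $A$ is identified with the strict henselization of $\cO_{\Xi_\Phi(\sigma),x'}$ for the image $x'$ of $x$. The well-positionedness hypothesis on $\widetilde{Y}$, applied to each affine formal chart $\Spf(R)\subseteq (Y^\tor)^\wedge_{Y_{\Phi,\sigma}}$, implies that the pullbacks to $\Spec(\widehat{A})$ of the two finitely presented $A$-schemes
\[
\widetilde{Y}^\tor\times_{S^\tor_K(\sG,\sX)}\Spec(A)\quad\text{and}\quad \widetilde{Y}^\natural_{C_\Phi}\times_{C_\Phi}\Spec(A)
\]
become isomorphic, compatibly over $\Spec(A)$. (Here one uses that the formal completions are recovered as limits of affine open formal subschemes of $(Y^\tor)^\wedge_{Y_{\Phi,\sigma}}$, together with the canonical isomorphism $(Y^\natural_{\Xi_\Phi(\sigma)})^\wedge_{Y^\natural_{\Xi_{\Phi,\sigma}}}\simeq (Y^\tor)^\wedge_{Y_{\Phi,\sigma}}$ recalled above.) By Artin approximation for the henselian Noetherian local ring $A$, this formal isomorphism descends to an isomorphism over $\Spec(A)$ itself, hence spreads out to an isomorphism over some étale neighborhood $U\to U_0$ of $x$. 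Replacing $U_0$ by this $U$ yields the required étale chart.

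The main technical subtlety is to package the well-positionedness condition—which is phrased in terms of affine formal opens of the completion along a stratum—into a single isomorphism over $\Spec(\widehat A)$ to which Artin approximation can be applied; this is handled exactly as in \cite[Lemma 2.25]{Mao-Hodge-well-positioned} using that $\widetilde Y^\tor\to Y^\tor$ and $\widetilde Y^\natural_{C_\Phi}\to Y^\natural_{C_\Phi}$ are both finitely presented and that the canonical identification of formal completions is compatible with the two families of affine charts. Once this is set up, the standard Artin approximation/spreading out argument gives condition (1), while condition (2) is inherited from the Lan--Stroh chart, completing the proof.
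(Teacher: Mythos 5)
Your proposal follows essentially the same route as the paper: first invoke Lan--Stroh \cite[Proposition 2.1.2(9)]{Lan-Stroh-compactification-subsch} to produce an étale chart matching the stratifications (condition (2)), then use Artin approximation over the henselization to upgrade the well-positionedness isomorphism from formal completions to an actual étale neighborhood (condition (1)), citing \cite[Lemma 2.25]{Mao-Hodge-well-positioned} for the packaging. The one point worth making explicit — which the paper's proof does flag — is that the Artin-approximation step requires the morphisms $\widetilde Y^\tor\to Y^\tor$ and $\widetilde Y^\natural_{C_\Phi}\to Y^\natural_{C_\Phi}$ to be \emph{affine} (not merely finitely presented), so that the isomorphism data can be encoded in polynomial coefficients as in the paper's Lemma \ref{lemma-Artin-approx}; this affineness is supplied by \cite[Lemma 2.35]{Mao-Hodge-well-positioned}.
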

\begin{proof}
    By \cite[Proposition 2.1.2 (9)]{Lan-Stroh-compactification-subsch}, we can first choose $U$ so that (2) holds. By Lemma \ref{lemma-Artin-approx}, we can take a further \'etale neighborhood so that (1) holds. Note that by \cite[Lemma 2.35]{Mao-Hodge-well-positioned}, the morphisms $\widetilde{Y}^\tor\to Y^\tor$ and $\widetilde{Y}_{C_\Phi}^\natural\to Y_{C_\Phi}^\natural$ are also affine and flat.
\end{proof}

\begin{lemma}\label{lemma-Artin-approx}
    Let $X_1,X_2,Y_1,Y_2$ be schemes over $k$ of finite type with affine morphisms $f_i\colon Y_i\to X_i$ for $i=1,2$. Let $x_1\in X$, $x_2\in X_2$ be closed points. Assume that we are given an isomorphism $\cO_{X_1,x_1}^\wedge\simeq \cO_{X_2,x_2}^\wedge$ and an isomorphism $Y_1\times_{X_1}\Spf(\cO_{X_1,x_1}^\wedge)\simeq Y_2\times_{X_2}\Spf(\cO_{X_2,x_2}^\wedge)$ over it. Then there exists a correspondence
    $$X_1\leftarrow U\rightarrow X_2$$
    that are \'etale neighborhood of $x_i\in X_i$ for $i=1,2$, such that there is an isomorphism
    $$Y_1\times_{X_1}U\simeq Y_2\times_{X_2}U$$
    over $U$.
\end{lemma}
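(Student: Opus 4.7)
The lemma follows from a single application of Artin approximation, where both the isomorphism of the $X_i$'s and that of the $Y_i$'s are packaged into one moduli functor.

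The plan is as follows. Set $A_i^h = \cO^h_{X_i,x_i}$ and $\hat A_i = \cO^\wedge_{X_i,x_i}$. The hypothesis provides a $k$-isomorphism $\alpha\colon \hat A_1\xrightarrow{\sim}\hat A_2$ together with a compatible isomorphism $Y_1\otimes_{A_1}\hat A_1\xrightarrow{\sim}Y_2\otimes_{A_2,\alpha}\hat A_2$ over it. I would then consider the functor $F$ on local $A_1^h$-algebras essentially of finite type defined by
\[
F(B) = \bigl\{(\phi,\beta)\ \bigm|\ \phi\colon A_2^h\to B\text{ local étale, } \beta\colon Y_1\otimes_{A_1}B\xrightarrow{\sim}Y_2\otimes_{A_2,\phi}B\bigr\}.
\]
Since $X_1,X_2$ are of finite type over $k$ and each $Y_i\to X_i$ is affine of finite presentation, the functor $F$ is locally of finite presentation. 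The given data define an element of $F(\hat A_1)$ by taking $\phi=\alpha$ and $\beta$ the given formal isomorphism.

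Next, I would invoke Artin approximation for the henselian local ring $A_1^h$, which is of essentially finite type over the field $k$: the map $F(A_1^h)\to F(\hat A_1)$ has dense image, and in particular $F(A_1^h)\neq\emptyset$. Spreading out a chosen element yields an étale neighborhood $p_1\colon U\to X_1$ of $x_1$ equipped with a morphism $p_2\colon U\to X_2$ which is étale at the distinguished point (hence, after shrinking, étale in a neighborhood), together with an isomorphism $Y_1\times_{X_1}U\simeq Y_2\times_{X_2}U$ over $U$. Since the distinguished point of $U$ maps to $x_2$ by construction, $p_2$ becomes an étale neighborhood of $x_2$ after possibly shrinking $U$ once more, giving the required correspondence.

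The only nontrivial ingredient is Artin approximation for henselian local rings of finite type over a field, which is classical; no serious obstacle is expected. A minor point is that the output of Artin approximation over $A_1^h$ must be spread out to an actual étale neighborhood of finite type, which is standard since all the structure (étaleness of $p_2$, the isomorphism $\beta$) is described by finitely many equations and inequations that hold on an open locus containing the distinguished point.
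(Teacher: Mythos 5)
Your approach is close in spirit to the paper's, but there is a genuine gap in the way the functor is set up. You require $\phi\colon A_2^h\to B$ to be \emph{local \'etale} in the definition of $F(B)$. With that requirement, the formal data do \emph{not} produce an element of $F(\hat A_1)$: the natural map $A_2^h\to\hat A_2\xrightarrow{\alpha^{-1}}\hat A_1$ (which is what you mean by ``$\phi=\alpha$''; note $\alpha$ as written goes the wrong way) is not \'etale --- it is not even of finite type. So $F(\hat A_1)$, as the literal value of a locally finitely presented functor, is empty in any nontrivial case, and Artin approximation applied to $F$ produces nothing.

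There are two repairs. (a) Drop the \'etaleness requirement on $\phi$ (allow an arbitrary local $k$-algebra map), approximate, and then argue separately that the resulting $\phi_0\colon A_2^h\to A_1^h$ is \'etale because it agrees with an isomorphism of completions to sufficiently high order. That \'etaleness step is precisely the delicate content of Artin's Corollary 2.6 (it needs a two-sided approximation and an argument that a self-map of a complete local ring agreeing with the identity modulo $\mathfrak m^2$ is an automorphism); it is not the ``minor spreading-out'' your note suggests, and in effect you would be re-proving that corollary. (b) Do what the paper does: first invoke Artin's Corollary 2.6 once to produce a common \'etale neighborhood of $X_1$ and $X_2$, reducing immediately to $X_1=X_2=\Spec A$ and $x_1=x_2$; then apply the plain approximation theorem (Artin, Theorem 1.10) to the finitely many polynomial conditions describing a mutual-inverse pair of $A$-algebra maps between $Y_1$ and $Y_2$. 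Splitting the problem this way confines the \'etaleness argument to a cited result rather than folding it silently into the functor.
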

\begin{proof}
    By \cite[Corollary 2.6]{Artin-approximation}, we can assume that $X_1=X_2=\Spec A$ is affine and $x_1=x_2=x$. Let $\widehat{A}$ denote the completion of $A$ at $x$ and $A^h$ denote the henselization of $A$ at $x$. Write $Y_1=\Spec \frac{A[x_1,\dots,x_n]}{(f_1,\dots,f_{n'})}$ and $Y_2=\Spec \frac{A[y_1,\dots, y_m]}{(g_1,\dots,g_{m'})}$. Thus we have an isomorphism 
    $$\widehat{A}[x_1,\dots,x_n]/(f_1,\dots,f_{n'})\simeq \widehat{A}[y_1,\dots,y_m]/(g_1,\dots,g_{m'}).$$
    Write $x_i=H_i(y_1,\dots,y_m)$ and $y_j=G_j(x_1,\dots,x_n)$ for $H_i,G_j\in\widehat{A}[Y_1,\dots,Y_m]$. The condition that $(H_i)_{i=1,\dots n}$ and $(G_j)_{j=1,\dots m}$ define morphisms between $Y_1$ and $Y_2$ and are inverse to each other can be checked by a finitely many polynomial conditions on the coefficients of $H_i,G_j$. By Artin approximation (\cite[Theorem 1.10]{Artin-approximation}), we can find polynomials $H_i',G_j'$ with coefficients in $A^h$ such that the same conditions hold. Thus there is an isomorphism $Y_1\times_{X_1}\Spec A^h\simeq Y_2\times_{X_2}\Spec A^h$. Such an isomorphism is defined over an \'etale neighborhood of $x$ in $\Spec A$.
\end{proof}

\begin{prop}\label{prop-partial-compactly-supp}
    Let $Y$ be a well-positioned subscheme of $S_K(\sG,\sX)$. Let $\widetilde{Y}\to Y$ be a well-positioned morphism such that $\widetilde{Y}^\tor\to Y^\tor$ is finitely presented. Consider the commutative diagram
    $$\begin{tikzcd}
        \widetilde{Y} \ar[r,"j_{\widetilde{Y}^\tor}"]\ar[d,"i_{\widetilde{Y}}"] & \widetilde{Y}^\tor \ar[d,"i_{\widetilde{Y}^\tor}"]\\
        S_K(\sG,\sX) \ar[r,"j^\tor"] & S_K^\tor(\sG,\sX),
    \end{tikzcd}$$
    the natural morphisms
    $$(j^\tor)_!(i_{\widetilde{Y}})_*\Lambda\to (i_{\widetilde{Y}^\tor})_*(j_{\widetilde{Y}^\tor})_!\Lambda$$
    $$(i_{\widetilde{Y}^\tor})_!(j_{\widetilde{Y}^\tor})_*\Lambda\to (j^\tor)_*(i_{\widetilde{Y}})_!\Lambda$$
    defined by adjunctions are isomorphisms.
\end{prop}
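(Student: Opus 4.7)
The claim is étale-local on $S_K^\tor(\sG,\sX)$, so I will work étale-locally around each point. Over the open part $S_K(\sG,\sX)$, the morphism $j^\tor$ is a local isomorphism and both sides restrict there to $(i_{\widetilde{Y}})_*\Lambda$, making the statement tautological. The real content is at boundary points.

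Fix a closed point $x \in S_{\Phi,[\sigma]}$ in the boundary. Applying Lemma \ref{lemma-well-positioned-etale-nbhd} to $\widetilde{Y}^\tor \to S_K^\tor(\sG,\sX)$, I will find an \'etale neighborhood $U \to S_K^\tor(\sG,\sX)$ of $x$ equipped with a compatible \'etale morphism $U \to \Xi_\Phi(\sigma)$ such that the pullback of $\widetilde{Y}^\tor$ to $U$ is identified with the pullback of $\widetilde{Y}^\natural_{C_\Phi} \to C_\Phi$ via $U \to \Xi_\Phi(\sigma) \to C_\Phi$, and such that the stratifications on $U$ agree. After a further \'etale localization (trivializing the $\sT_\Phi$-torsor $\Xi_\Phi \to C_\Phi$ over a suitable \'etale cover $V \to C_\Phi$), the morphism $\Xi_\Phi(\sigma) \to C_\Phi$ becomes the product projection $V \times X(\sigma) \to V$, where $X(\sigma)$ is the affine toric variety of $\sigma$. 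The open immersion $\Xi_\Phi \hookrightarrow \Xi_\Phi(\sigma)$ becomes $\mathrm{id}_V \times j_\sigma\colon V \times \sT_\Phi \hookrightarrow V \times X(\sigma)$, and by well-positionedness the pulled-back morphism $\widetilde{Y}^\tor|_U \to U$ takes the product form $f \times \mathrm{id}_{X(\sigma)}\colon W \times X(\sigma) \to V \times X(\sigma)$ for an affine flat morphism $f\colon W \to V$.

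In this product local model, smooth base change along the projection to the toric factor combined with the projection formula yields
\[
(f \times \mathrm{id}_{X(\sigma)})_*(\Lambda_W \boxtimes G) \simeq f_*\Lambda_W \boxtimes G
\]
for any $G \in \Shv(X(\sigma),\Lambda)$, while K\"unneth for $!$-pushforward gives
\[
(\mathrm{id}_V \times j_\sigma)_!(H \boxtimes F) \simeq H \boxtimes (j_\sigma)_!F.
\]
Taking $G = (j_\sigma)_!\Lambda$ and $H = f_*\Lambda_W$, $F = \Lambda$, both sides of the first natural morphism in the proposition pull back to $f_*\Lambda_W \boxtimes (j_\sigma)_!\Lambda$ on $V \times X(\sigma)$, so the map is an isomorphism after \'etale pullback, and hence globally. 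For the second natural morphism, the same product argument with the roles of $!$ and $*$ swapped (using K\"unneth for $!$-pushforward, plus the identity $(\mathrm{id}_V \times j_\sigma)_*(H \boxtimes \Lambda) \simeq H \boxtimes (j_\sigma)_*\Lambda$ obtained by smooth base change along the projection) shows that both sides pull back to $f_!\Lambda_W \boxtimes (j_\sigma)_*\Lambda$; alternatively, one deduces it from the first by Verdier duality.

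The main subtlety is obtaining the product form $f \times \mathrm{id}_{X(\sigma)}$ for the pullback of $\widetilde{Y}^\tor$ to $U$. This is precisely the content of well-positionedness combined with the \'etale-local trivialization of the torsor $\Xi_\Phi \to C_\Phi$; the trivialization extends automatically to the toroidal embedding $\Xi_\Phi(\sigma)$ since the latter is the associated bundle of $\Xi_\Phi$ along the $\sT_\Phi$-action on $X(\sigma)$. Once the product structure is in hand, every other ingredient (smooth base change, projection formula, K\"unneth) is standard for constructible $\ell$-adic sheaves on finite-type $k$-schemes, so the argument goes through within the sheaf theory $\Shv(-,\Lambda)$ used in the paper.
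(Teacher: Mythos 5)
Your proof is correct and follows essentially the same route as the paper's: reduce to an \'etale-local product model around a boundary point via Lemma \ref{lemma-well-positioned-etale-nbhd} plus a trivialization of the $\sT_\Phi$-torsor, then conclude with K\"unneth/projection-formula identities for the resulting $(\textrm{toric}) \times (\textrm{base})$ decomposition, which is exactly what Lemma \ref{lemma-product-pushforward} packages. No gaps.
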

\begin{proof}
    These morphisms are clearly isomorphisms when restricted to $S_K(\sG,\sX)$. If $x\in S^\tor_K(\sG,\sX)$ doesn't lies in the closure of ${Y}^\tor$, then the stalks of two sides are both zero. Thus it suffices to check the statement around the boundary of $Y^\tor$. Let $x\in Y^\tor\backslash Y$ be a closed point. Assume that $x$ lies in $Y_{\Phi,[\sigma]}=Y^\tor\times_{S^\tor_K(\sG,\sX)}S_{\Phi,[\sigma]}$ for $\Phi\in\Cusp_K(\sG,\sX)$ and $[\sigma]\in \Sigma^+_\Phi/\Gamma_\Phi$. Choose a representative $\sigma\in \Sigma_\Phi^+$ of $[\sigma]$. By Lemma \ref{lemma-well-positioned-etale-nbhd}, there is an \'etale neighborhood $U\to S^\tor_K(\sG,\sX)$ of $x$ and an \'etale morphism $U\to \Xi_\Phi(\sigma)$ such that:
    \begin{enumerate}
        \item The stratification on $S^\tor_K(\sG,\sX)$ and the stratification on $\Xi_\Phi(\sigma)$ agree after pullback to $U$.
        \item The pullback of $\widetilde{Y}^\tor\to S_K^\tor(\sG,\sX)$ and the pullback of $Y^\natural_{C_\Phi}\to C_\Phi$ coincide after pullback to $U$.
    \end{enumerate}
    We may take a further \'etale cover of $U$ so that the $\sT_\Phi$-torsor $\Xi_\Phi\to C_\Phi$ splits. Let $\sT_\Phi(\sigma)$ denote the toric variety associated to $\sigma$. Then we obtain an \'etale morphism
    $$U\to \sT_\Phi(\sigma)\times C_\Phi$$
    such that the preimage of $\sT_\Phi\times C_\Phi$ in $U$ agrees with the preimage of $S_K(\sG,\sX)$ in $U$, and the pullback of $\sT_\Phi\times \widetilde{Y}_{C_\Phi}^\natural$ to $U$ agrees with the pullback of $\widetilde{Y}^\tor$ to $U$. Therefore the pullback of $\sT_\Phi\times\widetilde{Y}_{C_\Phi}^\natural$ in $U$ agrees with the pullback of $\widetilde{Y}$ in $U$. Thus it suffices to show that the natural morphisms
    $$(a\times\id)_!(\id\times b)_*\Lambda\to (\id\times b)_*(a\times\id)_!\Lambda$$
    $$(\id\times b)_!(a\times\id)_*\Lambda\to (a\times\id)_*(\id\times b)_!\Lambda$$
    are isomorphisms for
    $$\begin{tikzcd}
        \sT_\Phi\times\widetilde{Y}_{C_\Phi}^\natural \ar[r,"a\times\id"]\ar[d,"\id\times b"] & \sT_\Phi(\sigma)\times\widetilde{Y}_{C_\Phi}^\natural \ar[d,"\id\times b"]\\
        \sT_\Phi\times C_\Phi \ar[r,"a\times\id"] & \sT_\Phi(\sigma)\times C_\Phi,
    \end{tikzcd}$$
    where $a\colon \sT_\Phi\hookrightarrow \sT_\Phi(\sigma)$ and $b\colon \widetilde{Y}_{C_\Phi}^\natural\to C_\Phi$ are natural morphisms. By K\"unneth formula, we have
    $$(a\times\id)_!(\id\times b)_*\Lambda\simeq a_!\Lambda\boxtimes b_*\Lambda\simeq (\id\times b)_*(a\times\id)_!\Lambda$$
    $$(a\times\id)_*(\id\times b)_!\Lambda\simeq a_*\Lambda\boxtimes b_!\Lambda\simeq (\id\times b)_!(a\times\id)_*\Lambda$$
    and hence finish the proof.
\end{proof}


\subsubsection{Stalks of Igusa sheaves}\label{subsubsection-connective-Igusa}
As $(\loc_p^0)^!$ preserves compact objects, its right adjoint $(\loc_p^0)_\flat$ preserves admissible objects. Therefore the object $\fI^\can$ is admissible. For $b\in B(G,\mu^*)$, let $\Ig_b$ denote the Igusa scheme defined in \cite[Definition 6.18]{Mao-Hodge-well-positioned}, with partial minimal compactification 
$$j_{\Ig_b^\mini}\colon \Ig_b\hookrightarrow \Ig_b^\mini$$
defined in \cite[Definition 6.25]{Mao-Hodge-well-positioned}. For any open compact subgroup $K_b\subseteq I_b$, there is an Igusa variety $\Ig_{b,K_b}$ of level $K_b$ and a partial minimal compactification $j_{\Ig_{b,K_b}^\mini}\colon\Ig_{b,K_b}\hookrightarrow\Ig_{b,K_b}^\mini$. Varying $K_b$, we obtain a finite \'etale (resp. finite flat) tower $\{\Ig_{b,K_b}\}_{K_b}$ (resp. $\{\Ig_{b,K_b}^\mini\}_{K_b}$). Then $\Ig_b$ (resp. $\Ig_b^\mini$) is equal to the inverse limit of the tower $\{\Ig_{b,K_b}\}_{K_b}$ (resp. $\{\Ig_{b,K_b}^\mini\}_{K_b}$).

We define the partially compactly supported cohomology groups
$$R\Gamma_{c-\partial}(\Ig_b,\Lambda)\coloneqq R\Gamma(\Ig_b^\mini,(j_{\Ig_b^\mini})_!\Lambda_{{\Ig_b}})$$
and
$$R\Gamma_{\partial-c}(\Ig_b,\Lambda)\coloneqq R\Gamma_c(\Ig_b^\mini,(j_{\Ig_b^\mini})_*\Lambda_{{\Ig_b}}).$$
As each $\Ig_{b,K_b}$ (resp. $\Ig_{b,K_b}^\mini$) are qcqs, we have
$$R\Gamma_{c-\partial}(\Ig_b,\Lambda)\simeq \lim_{\substack{\longrightarrow\\ K_b}}R\Gamma(\Ig_{b,K_b}^\mini,(j_{\Ig_{b,K_b}^\mini})_!\Lambda_{{\Ig_{b,K_b}}})$$
and
$$R\Gamma_{\partial-c}(\Ig_b,\Lambda) \simeq\lim_{\substack{\longrightarrow\\ K_b}}R\Gamma_c(\Ig_{b,K_b}^\mini,(j_{\Ig_{b,K_b}^\mini})_*\Lambda_{{\Ig_{b,K_b}}}).$$
Note that $\Ig_b$ and $\Ig_{b}^\mini$ carries natural actions of $G_b(\QQ_p)$. Therefore $R\Gamma_{c-\partial}(\Ig_b,\Lambda)$ and $R\Gamma_{\partial-c}(\Ig_b,\Lambda)$ are admissible $G_b(\QQ_p)$-representations. 

\begin{prop}\label{prop-igusa-sheaf-stalk}
    For $b\in B(G,\mu^*)$, there are natural isomorphisms
    $$(i_b)^!\fI^\can\simeq R\Gamma_{c-\partial}(\Ig_b,\Lambda)$$ 
    $$(i_b)^\sharp\fI\simeq R\Gamma_{\partial-c}(\Ig_b,\Lambda)$$
    in $\Rep(G_b(\QQ_p),\Lambda)$.
\end{prop}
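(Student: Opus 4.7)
The plan is to reduce the computation to the Shimura variety by base change, and then apply Mao's well-positionedness of $\Ig_b$ together with Proposition~\ref{prop-partial-compactly-supp} to identify the result. The two isomorphisms are related by admissible Verdier duality (Theorem~\ref{thm-local-Langlands-cat}(7) combined with Proposition~\ref{prop-!-Igusa-dual}), so I will focus on the first.

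First I would base change $\loc_p^0$ along the locally closed embedding $i_b$ to obtain a Cartesian diagram
\[
\begin{tikzcd}
\Igs_b \ar[r,"\tilde{j}_b"]\ar[d,"\tilde{\loc}_p^0"] & \Igs \ar[d,"\loc_p^0"] \\
\Isoc_{G,b} \ar[r,"i_b"] & \Isoc_{G,\leq\mu^*}.
\end{tikzcd}
\]
By the construction of the perfect Igusa stack in \cite{DHKZ-igusa}, the fiber $\Igs_b$ is identified with the quotient stack $[\Ig_b/G_b(\QQ_p)]$ of the perfect Igusa scheme. Passing to right adjoints of the standard base change for $!$-pullback and $!$-pushforward (as in \cite[Proposition~6.12]{Tame}) then yields
\[
(i_b)^!\fI^\can \simeq (\tilde{\loc}_p^0)_\flat(\tilde{j}_b)^!\omega_{\Igs}^\can.
\]

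The key step is to compute $(\tilde{j}_b)^!\omega_{\Igs}^\can$. Since $\omega_{\Igs}^\can$ descends $\omega_{\Sh_\mu}^\can$ along $\Nt^\glob$ (Remark~\ref{rmk-omega-can-on-scheme}), I would transfer the problem to the special fiber of the Shimura variety. At finite level $K_b\subseteq G_b(\QQ_p)$, the Igusa scheme $\Ig_{b,K_b}$ maps to the central leaf in the Newton stratum of $S_{K^pI}(\sG,\sX)$ and is well-positioned in the sense of \cite{Mao-Hodge-well-positioned}. Applying Proposition~\ref{prop-partial-compactly-supp} to the well-positioned pair $(\Ig_{b,K_b},\Ig_{b,K_b}^\mini)$, and then passing to the $G_b(\QQ_p)$-equivariant limit over $K_b$, one obtains that $(\tilde{j}_b)^!\omega_{\Igs}^\can$ agrees, after pullback to $\Ig_b$, with the canonical dual of $(j_{\Ig_b^\mini})_!\Lambda_{\Ig_b}$ on $\Ig_b^\mini$. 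Pushing forward along $\tilde{\loc}_p^0$ then computes $R\Gamma(\Ig_b^\mini,(j_{\Ig_b^\mini})_!\Lambda_{\Ig_b})$ together with its $G_b(\QQ_p)$-action, i.e.\ $R\Gamma_{c-\partial}(\Ig_b,\Lambda)$, establishing the first isomorphism.

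For the second isomorphism, I would combine Proposition~\ref{prop-!-Igusa-dual} with Theorem~\ref{thm-local-Langlands-cat}(7) to write
\[
(i_b)^\sharp\fI \simeq (\DD^\can_{G_b(F)})^\Adm\bigl((i_b)^!\fI^\can\bigr)[-2\langle2\rho,\nu_b\rangle](-\langle2\rho,\nu_b\rangle),
\]
and then appeal to Verdier duality on each finite-level $\Ig_{b,K_b}^\mini$, which interchanges $(j_{\Ig_{b,K_b}^\mini})_!$ with $(j_{\Ig_{b,K_b}^\mini})_*$ and $R\Gamma$ with $R\Gamma_c$, with the necessary shift-twist absorbed by $\dim\Ig_{b,K_b}=\langle2\rho,\nu_b\rangle$. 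The main technical obstacle will be the identification of $(\tilde{j}_b)^!\omega_{\Igs}^\can$: the definition of $\omega_{\Igs}^\can$ uses the full global Shimura variety, so it a priori ``sees'' the minimal compactification of $S_{K^pI}(\sG,\sX)$, while the target involves the purely local partial minimal compactification $\Ig_b^\mini$. Reconciling these requires the well-positioned machinery of \cite{Mao-Hodge-well-positioned}, careful equivariance for the $G_b(\QQ_p)$-action, and compatibility with the descent defining the Igusa stack.
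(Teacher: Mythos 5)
Your overall framework---reduce to the Shimura variety via base change, then use Mao's well-positioned machinery and Proposition~\ref{prop-partial-compactly-supp}---is the right one and matches the paper's intent. The reduction of the second isomorphism to the first via Theorem~\ref{thm-local-Langlands-cat}(7) and Proposition~\ref{prop-!-Igusa-dual} is also correct and is one of the two arguments the paper gives.

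However, your ``key step'' contains a genuine gap. You assert that $(\tilde{j}_b)^!\omega_{\Igs}^\can$, after pullback to $\Ig_b$, ``agrees with the canonical dual of $(j_{\Ig_b^\mini})_!\Lambda_{\Ig_b}$ on $\Ig_b^\mini$.'' This doesn't type-check: the former is an object over $\Ig_b$, the latter an object over $\Ig_b^\mini$, and these are not connected by the formalism of the perfect Igusa stack (there is no $G_b(\QQ_p)$-equivariant map from $\Ig_b^\mini$ into the picture). Under the only charitable reading---restrict the canonical dual from $\Ig_b^\mini$ back to the open $\Ig_b$---you would obtain $\omega_{\Ig_b}^\can$, whose pushforward along $\tilde{\loc}_p^0$ computes $R\Gamma(\Ig_b,\Lambda)^\vee$, not $R\Gamma_{c-\partial}(\Ig_b,\Lambda)$. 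The correct object $(\tilde{j}_b)^!\omega_{\Igs}^\can$ genuinely ``remembers'' the boundary of the Newton stratum inside $S^{\mini}_K(\sG,\sX)$ and does not admit a description intrinsic to $\Ig_b$; identifying it as a sheaf is no easier than, and essentially equivalent to, the proposition itself.

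The paper avoids this obstruction entirely. Rather than trying to describe $(\tilde{j}_b)^!\omega_{\Igs}^\can$ as a sheaf, it computes $\Hom$ against the compact objects $(i_b)_!\cInd_{K_b}^{G_b(\QQ_p)}\Lambda$. Unwinding the adjunction $(\loc_p^0)_\flat\dashv(\loc_p^0)^!$ and the definition of $\omega_{\Igs}^\can$ turns this into
\[
R\Gamma\bigl(\Sh_\mu,(i_{\Ig_{b,K_b}})_!\omega_{\Ig^\perf_{b,K_b}}\bigr)^\vee\simeq R\Gamma_c\bigl(\Sh_\mu,(i_{\Ig_{b,K_b}})_*\Lambda\bigr),
\]
where one is now on $\Sh_\mu$---the open part of $S^{\mini}_K(\sG,\sX)$---so that Proposition~\ref{prop-partial-compactly-supp} (plus the toroidal-to-minimal pushforward $\oint_{K,\Sigma}$) applies directly to yield $R\Gamma(\Ig_{b,K_b}^\mini,(j_{\Ig_{b,K_b}^\mini})_!\Lambda)$. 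If you want to salvage your version, replace the sheaf-level identification of $(\tilde{j}_b)^!\omega_{\Igs}^\can$ with this $\Hom$ computation over all $K_b$; that is then the same proof in different notation.
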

\begin{rmk}
    It is proved in \cite[Proposition 6.14]{Tame} that $(i_b)^!\fI\simeq R\Gamma(\Ig_b,\Lambda)$. On the other hand, it is not hard to show that $(i_b)^\sharp\fI^\can\simeq R\Gamma_c(\Ig_b,\Lambda)$. 
\end{rmk}
\begin{proof}
    Let $b\in B(G,\mu^*)$. Let $w\in \widetilde{W}$ be a $\phi$-straight element mapping to $b\in B(G,\mu^*)$. By \cite[Theorem 3.1]{He-Kottwitz-Rapoport-conj}, we know that $w$ lies in $\Adm(\mu^*)$. By \cite[Lemma 2.4.3]{Kim-central-leaf}, we can choose a completely slope divisible lifting $\dot{w}\in N_G(T)(\breve{\QQ}_p)$ of $w$. Therefore there is an isomorphism $\Sht^\loc_{\cI,w}\simeq \BB I_b$. Let
    $$i_w\colon \BB I_b\simeq \Sht^\loc_{\cI,w}\to \Sht^\loc_{\cI,\mu}$$
    denote the locally closed embedding. Take a pro-$p$ open compact subgroup $K_b\subseteq G_b(\QQ_p)$ contained in $I_b$. Consider the commutative diagram
    $$\begin{tikzcd}
        \Ig_{b,K_b}^\perf\ar[r]\ar[d]\ar[dd,bend right,"i_{\Ig_{b,K_b}}"swap]\ar[rd,phantom,"\square",very near start] & \BB K_b\ar[d]\ar[dd,bend left,"i_{w,K_b}"]\\
        \cC^\perf_b \ar[r]\ar[d,"i_{C_b}"]\ar[rd,phantom,"\square",very near start] &\BB I_{b} \ar[d,"i_w"swap] \\
        \Sh_\mu \ar[r,"\loc_p"]\ar[d,"\Nt^\glob"swap]\ar[rd,phantom,"\square",very near start]&\Sht^\loc_{\cI,\mu} \ar[d,"\Nt"] \\
        \Igs \ar[r,"\loc_p^0"] & \Isoc_{G,\mu^{-1}}.
    \end{tikzcd}$$
    where all the squares are Cartesian. The fiber $\cC_b^\perf$ is the perfection of the central leaf $\cC_b\subseteq S_K(\sG,\sX)$ defined by $\dot{w}$, and $\Ig^\perf_{b,K_b}$ is the perfection of the Igusa variety $\Ig_{b,K_b}$ of level $K_b$. We have
    $$\begin{aligned}
        \Hom(\Nt_*(i_{w,K_b})_!\omega_{\BB K_b},\fI^\can) &\simeq \Hom((\loc_p^0)^!\Nt_*(i_{w,K_b})_!\omega_{\BB K_b},\omega_{\Igs}^\can) \\
        &\simeq \Hom((\Nt^\glob)_*(\loc_p)^!(i_{w,K_b})_!\omega_{\BB K_b},\omega_{\Igs}^\can)\\
        &\simeq R\Gamma(\Sh_\mu,(\loc_p)^!(i_{w,K_b})_!\omega_{\BB K_b})^\vee\\
        &\simeq R\Gamma(\Sh_\mu,(i_{\Ig_{b,K_b}})_!\omega_{\Ig^\perf_{b,K_b}})^\vee\\
        &\simeq R\Gamma_c(\Sh_\mu,(i_{\Ig_{b,K_b}})_*\Lambda_{\Ig^\perf_{b,K_b}}).
    \end{aligned}$$
    Here, the fourth isomorphic follows as $\loc_p$ is pseudo coh. pro-smooth. By \cite[Lemma 5.14]{Mao-Hodge-well-positioned}, $\cC_b$ is a well-positioned subscheme of $S_K(\sG,\sX)$. By \cite[Proposition 6.21]{Mao-Hodge-well-positioned}, the morphism $\Ig_{b,K_b}\to \cC_b$ is well-positioned. Let $\Ig_{b,K_b}^\tor$ denote the partial toroidal compactification of $\Ig_{b,K_b}$. Then $\Ig_{b,K_b}^\mini$ is the relative normalization of $\cC_b^\mini$ in $\Ig_{b,K_b}^\tor$. By \cite[Proposition 6.21]{Mao-Hodge-well-positioned}, the morphism $\Ig_{b,K_b}^\tor\to \cC^\tor_b$ is finite \'etale. Therefore the morphism $\Ig_{b,K_b}^\tor\to \Ig_{b,K_b}^\mini$ is proper. Consider the commutative diagram
    $$\begin{tikzcd}
        \Ig_{b,K_b} \ar[r,"j_{\Ig_{b,K_b}^\tor}"]\ar[rr,bend left,"j_{\Ig_{b,K_b}^\mini}"]\ar[d,"i_{\Ig_{b,K_b}}"] & \Ig^\tor_{b,K_b} \ar[r]\ar[d,"i_{\Ig^\tor_{b,K_b}}"] & \Ig^\mini_{b,K_b} \ar[d,"i_{\Ig^\mini_{b,K_b}}"] \\
        S_K(\sG,\sX) \ar[r,"j^\tor"]\ar[rr,bend right,"j"] & S_K^\tor(\sG,\sX) \ar[r,"\oint_{K,\Sigma}"] & S_K^\mini(\sG,\sX).
    \end{tikzcd}$$
    By Proposition \ref{prop-partial-compactly-supp}, we have
    $$\begin{aligned}
        (i_{\Ig_{b,K_b}^\mini})_*(j_{\Ig_{b,K_b}^\mini})_!\Lambda_{\Ig_{b,K_b}}&\simeq(\oint_{K,\Sigma})_!(i_{\Ig^\tor_{b,K_b}})_*(j_{\Ig_{b,K_b}^\tor})_!\Lambda_{\Ig_{b,K_b}} \\ &\simeq (\oint_{K,\Sigma})_!(j^\tor)_!(i_{\Ig_{b,K_b}})_*\Lambda_{\Ig_{b,K_b}} \\ &\simeq j_!(i_{\Ig_{b,K_b}})_*\Lambda_{\Ig_{b,K_b}}.
    \end{aligned}$$
    It follows that
    $$\begin{aligned}
        R\Gamma_c(\Sh_\mu,(i_{\Ig_{b,K_b}})_*\Lambda_{\Ig^\perf_{b,K_b}}) &\simeq R\Gamma(S_K^\mini(\sG,\sX),j_!(i_{\Ig_{b,K_b}})_*\Lambda_{\Ig_{b,K_b}}) \\&\simeq R\Gamma(S_K^\mini(\sG,\sX),(i_{\Ig_{b,K_b}^\mini})_*(j_{\Ig_{b,K_b}^\mini})_!\Lambda_{\Ig_{b,K_b}}) \\ &\simeq R\Gamma(\Ig_{b,K_b}^\mini,(j_{\Ig_{b,K_b}^\mini})_!\Lambda_{\Ig_{b,K_b}}).
    \end{aligned}$$
    Using that $\Nt_*(i_{w,K_b})_!\omega_{\Sht^\loc_w}\simeq (i_b)_!\cInd_{K_b}^{G_b(\QQ_p)}\Lambda,$ we have
    $$\Hom((i_b)_!\cInd_{K_b}^{G_b(\QQ_p)}\Lambda,\fI^\can)\simeq R\Gamma(\Ig_{b,K_b}^\mini,(j_{\Ig_{b,K_b}^\mini})_!\Lambda_{\Ig_{b,K_b}}).$$
    It implies that $(i_b)^!\fI^\can\simeq R\Gamma_{c-\partial}(\Ig_b,\Lambda)$.

    The statement for $\fI$ follows from Proposition \ref{prop-!-Igusa-dual}, as $\Ig_{b,K_b}$ is smooth of dimension $\langle2\rho,\nu_b\rangle$. Alternatively, we have
    $$\begin{aligned}
        \Hom(\Nt_*(i_{w,K_b})_*\omega_{\BB K_b},\fI) &\simeq \Hom((\loc_p^0)^!\Nt_*(i_{w,K_b})_*\omega_{\BB K_b},\omega_{\Igs}) \\
        &\simeq \Hom((\Nt^\glob)_*(\loc_p)^!(i_{w,K_b})_*\omega_{\BB K_b},\omega_{\Igs})\\
        &\simeq \Hom((\loc_p)^!(i_{w,K_b})_*\omega_{\BB K_b},\omega_{\Sh_\mu})\\
        &\simeq \Hom((i_{\Ig_{b,K_b}})_*\omega_{\Ig_{b,K_b}^\perf},\omega_{\Sh_\mu})\\
        &\simeq R\Gamma(\Sh_\mu,(i_{\Ig_{b,K_b}})_!\Lambda_{\Ig^\perf_{b,K_b}}),
    \end{aligned}$$
    where the last isomorphism follows from taking Verdier dual. By the same argument as above using Proposition \ref{prop-partial-compactly-supp}, we have
    $$R\Gamma(\Sh_\mu,(i_{\Ig_{b,K_b}})_!\Lambda_{\Ig^\perf_{b,K_b}})\simeq R\Gamma_c(\Ig_{b,K_b}^\mini,(j_{\Ig_{b,K_b}^\mini})_*\Lambda_{\Ig_{b,K_b}}).$$
    Using that $\Nt_*(i_{w,K_b})_*\omega_{\Sht^\loc_w}\simeq (i_b)_*\cInd_{K_b}^{G_b(\QQ_p)}\Lambda,$ we obtain 
    $$(i_b)^\sharp\fI\simeq R\Gamma_{\partial-c}(\Ig_b,\Lambda)$$
    as desired.
\end{proof}

\begin{cor}\label{cor-igusa-sheaf-exotic}
    The object $\fI^\can$ (resp. $\fI$) lies in $\Shv(\Isoc_{G,\leq\mu^*},\Lambda)^{e,\leq 0}$ (resp. $\Shv(\Isoc_{G,\leq\mu^*},\Lambda)^{e,\geq 0}$).
\end{cor}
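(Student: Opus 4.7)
The plan is to prove the statement for $\fI^\can$ directly and then deduce the statement for $\fI$ by admissible Verdier duality. The two statements are indeed equivalent: by Proposition \ref{prop-!-Igusa-dual} we have $\fI^\can \simeq (\DD^\can_{\Isoc_{G,\leq\mu^*}})^\Adm(\fI)$, both sheaves are admissible, and by Proposition \ref{prop-adm-dual-exotic}(2) the admissible duality interchanges the connective and coconnective parts of the exotic $t$-structure. So it suffices to check that $\fI^\can \in \Shv(\Isoc_{G,\leq\mu^*},\Lambda)^{e,\leq 0}$.

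By definition of the exotic $t$-structure, I need to verify that for every $b\in B(G,\mu^*)$, the stalk
\[
(i_b)^!\fI^\can \in \Rep(G_b(\QQ_p),\Lambda)^{\leq \langle 2\rho,\nu_b\rangle}.
\]
(For $b\notin B(G,\mu^*)$ the stalk is zero since $\fI^\can$ is supported on $\Isoc_{G,\leq\mu^*}$, so the condition is automatic.) By Proposition \ref{prop-igusa-sheaf-stalk}, this stalk is identified with the partially compactly supported cohomology
\[
R\Gamma_{c-\partial}(\Ig_b,\Lambda) \simeq \colim_{K_b} R\Gamma\bigl(\Ig_{b,K_b}^\mini,(j_{\Ig_{b,K_b}^\mini})_!\Lambda_{\Ig_{b,K_b}}\bigr),
\]
where $K_b$ runs over open compact subgroups of $I_b$ and the colimit is filtered.

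Because filtered colimits are $t$-exact on $\Lambda$-modules, it is enough to bound each term in the colimit individually. Here I invoke the key geometric input from \cite{Mao-Hodge-well-positioned}, namely that the partial minimal compactification $\Ig_{b,K_b}^\mini$ is an \emph{affine} (perfect) scheme. Its dimension equals $\dim \Ig_{b,K_b} = \langle 2\rho,\nu_b\rangle$ (since $\Ig_{b,K_b}$ is finite \'etale over the perfection of the central leaf $\cC_b$, which is smooth of dimension $\langle 2\rho,\nu_b\rangle$). The sheaf $(j_{\Ig_{b,K_b}^\mini})_!\Lambda_{\Ig_{b,K_b}}$ is a constructible sheaf in cohomological degree $0$. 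Therefore classical Artin vanishing for affine schemes gives
\[
R\Gamma\bigl(\Ig_{b,K_b}^\mini,(j_{\Ig_{b,K_b}^\mini})_!\Lambda_{\Ig_{b,K_b}}\bigr) \in \Lambda\text{-}\mathrm{mod}^{\leq \langle 2\rho,\nu_b\rangle},
\]
and the bound passes to the colimit. This establishes $\fI^\can \in \Shv^{e,\leq 0}$, and then duality finishes the proof.

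The only substantive input beyond formal manipulations is thus the affineness of $\Ig_{b,K_b}^\mini$ together with the dimension count; the rest is a direct application of the stalk computation in Proposition \ref{prop-igusa-sheaf-stalk}, Artin vanishing, and the duality between $\fI^\can$ and $\fI$.
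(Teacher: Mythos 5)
Your proof is correct and follows the same route as the paper: identify the $!$-stalks with $R\Gamma_{c-\partial}(\Ig_b,\Lambda)$ via Proposition \ref{prop-igusa-sheaf-stalk}, write these as filtered colimits over levels $K_b$, apply affineness of $\Ig_{b,K_b}^\mini$ (Mao) plus Artin vanishing to get the degree bound $\leq\langle 2\rho,\nu_b\rangle$, and deduce the statement for $\fI$ from Propositions \ref{prop-!-Igusa-dual} and \ref{prop-adm-dual-exotic}. There is no substantive difference.
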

\begin{rmk}
    If $\sSh_{K}(\sG,\sX)$ is proper, this is proved in \cite[Proposition 6.21]{Tame}. In fact, if $\sSh_{K}(\sG,\sX)$ is proper, then $\fI\simeq\fI^\can$ is self-dual under $(\DD^\can_{\Isoc_G})^\Adm$ and hence lies in the heart of the exotic $t$-structure.
\end{rmk}
\begin{proof}
    By Proposition \ref{prop-igusa-sheaf-stalk}, we need to show that
    $$(i_b)^!\fI^\can\simeq R\Gamma_{c-\partial}(\Ig_b,\Lambda)$$
    is concentrated in degrees less than or equal to $\langle2\rho,\nu_b\rangle$. Write
    $$R\Gamma_{c-\partial}(\Ig_b,\Lambda)=\lim_{\substack{\longrightarrow\\ K_b}}R\Gamma(\Ig_{b,K_b}^\mini,(j_{\Ig_{b,K_b}^\mini})_!\Lambda_{\Ig_{b,K_b}})$$
    as in the proof of Proposition \ref{prop-igusa-sheaf-stalk}. By \cite[Corollary 6.26]{Mao-Hodge-well-positioned}, the partial minimal compactification $\Ig_{b,K_b}^\mini$ is affine of dimension $\langle2\rho,\nu_b\rangle$. By Artin vanishing, we see that $R\Gamma(\Ig^\mini_{b,K_b},(j_{\Ig_{b,K_b}^\mini})_!\Lambda_{\Ig_{b,K_b}})$ is concentrated in degrees $[0,\langle2\rho,\nu_b\rangle]$. This gives the desired upper bound. The claim for $\fI$ follows from Proposition \ref{prop-adm-dual-exotic} and Proposition \ref{prop-!-Igusa-dual}.
\end{proof}

\subsection{Cohomology of Shimura varieties}\label{subsection-Igusa-stack-coh-formula}

\subsubsection{A spectral description of cohomology of Shimura varieties}

We study cohomology of Shimura varieties using the perfect Igusa stack and the categorical local Langlands correspondence. 

From now on, we assume that $G$ splits over an unramified extension of $\QQ_p$. If $\Lambda$ has characteristic $\ell$, we assume that $\ell$ is bigger than the Coxeter number of any simple factors of $G$. Note that $G$ appears in a Shimura variety of Hodge type, thus it does not contain simple factors of type $\mathsf{E}_7$ or $\mathsf{E}_8$. 

By \cite[\S 4.2]{Zhu-coherent-sheaf}, there is a groupoid $\mathbf{TS}_G$ attached to $G$. Assume that $\mathbf{TS}_G$ is non-empty and we choose an element $t\in \mathbf{TS}_G$. This amounts to choosing
\begin{itemize}
    \item A pinned quasi-split group $(G^*,B^*,T^*,e^*)$ over $\QQ_p$;
    \item an isomorphism $\eta\colon G_{\breve{\QQ}_p}\simeq G^*_{\breve{\QQ}_p}$ and an element $b\in G^*(\breve{\QQ}_p)$ such that $\eta\circ\phi\circ\eta^{-1}=\Ad_b\circ \phi$.
\end{itemize}
In particular, the group $G^*$ is unramified over $\QQ_p$. The pinning on $G^*$ defines an Iwahori model $\cI^*$ of $G^*$. After $\phi$-conjugating $b$ by an element in $G^*(\breve{\QQ}_p)$, we may assume that $b=\dot{w}$ is a lift of a length 0 element $w\in \widetilde{W}$. Therefore $\cI=\eta^{-1}(\cI^*)$ is an Iwahori model of $G$. If $\cF\in\Shv_\fingen(\Iw\backslash LG/\Iw,\Lambda)$ is an object, we denote by $\eta\cF\coloneqq \eta_*\cF\in \Shv_\fingen(\Iw^*\backslash LG^*/\Iw^*)$ the corresponding object. We have an isomorphism
$$\eta_{\dot{w}}\colon \Isoc_G\xrightarrow{\sim}\Isoc_{G^*},\quad g\mapsto \eta(g)\dot{w}.$$
By Theorem \ref{thm-unipotent-cllc} there is a fully faithful embedding
$$\LL^{\unip}_{G^*}\colon \Ind\Shv_\fingen^{\unip}(\Isoc_{G^*},\Lambda)\hookrightarrow \Ind\Coh(\Loc^{\widehat\unip}_{{}^LG,\QQ_p}).$$
We obtain a fully faithful embedding
$$\Ind\Shv_\fingen^\unip(\Isoc_G,\Lambda)\stackrel{(\eta_{\dot{w}})_*}\simeq \Ind\Shv^\unip_\fingen(\Isoc_{G^*},\Lambda)\stackrel{\LL_{G^*}^\unip}{\hookrightarrow}\Ind\Coh(\Loc^{\widehat\unip}_{{}^LG,\QQ_p}).$$
We denote by $\LL_G^\unip$ the above composition. 

The isomorphism $\eta_{\dot{w}}$ induces an isomorphism $G\simeq G_b$, where $G_b$ is the extended pure inner form of $G^*$ associated to $b=\dot{w}\in B(G)$. The Iwahori model $\cI^*$ defines an Iwahori model $\cI$ of $G$. Let $I=\cI(\ZZ_p)\subseteq G(\QQ_p)$, which is identified with $I_b\subseteq G_b(\QQ_p)$. Thus we have
$$(\eta_{\dot{w}})_*(i_1)_*\cInd_I^{G(\QQ_p)}\Lambda\simeq (i_b)_*\cInd_{I_b}^{G_b(\QQ_p)}\Lambda.$$
Let
$$\fA_{b}\coloneqq \LL^{\unip}_{G}((i_1)_*\cInd_I^{G(\QQ_p)})\simeq \LL^\unip_{G^*}((i_b)_*\cInd_{I_b}^{G_b(\QQ_p)}\Lambda)$$
the coherent sheaf associated to $(i_1)_*\cInd_I^{G(\QQ_p)}\Lambda$.

If $G$ is unramified, then we can choose $\dot{w}=1$ and hence $G=G^*$. In this case, we have
$$\fA_1= \CohSpr^\unip_{{}^LG},$$
where $\CohSpr^\unip_{{}^LG}$ is the unipotent spectral Springer sheaf is defined as 
$$\CohSpr^\unip_{{}^LG}\coloneqq (\fq^\unip)_*\omega_{\Loc^{\unip}_{{}^LB,\QQ_p}}\in  \Coh(\Loc^{\widehat\unip}_{{}^LG,\QQ_p}).$$

Recall that in \cite[Theorem 6.16]{Tame}, we define the unipotent spectral Igusa sheaf
$$\fI^\unip_\spec\coloneqq \LL^{\unip}_{G}(\cP^{\unip}\circ\Psi^L((i_{\leq\mu^*})_*\fI)))$$
in $\Ind\Coh(\Loc^{\widehat\unip}_{{}^LG,\QQ_p})$, where $i_{\leq\mu^*}\colon \Isoc_{G,\leq\mu^*}\hookrightarrow \Isoc_G$ is the closed embedding, $\Psi^L\colon \Shv(\Isoc_G,\Lambda)\hookrightarrow \Ind\Shv_\fingen(\Isoc_G,\Lambda)$ is the natural fully faithful functor, and $\cP^\unip\colon\Ind\Shv_\fingen(\Isoc_G,\Lambda)\to\Ind\Shv^\unip_\fingen(\Isoc_G,\Lambda)$ is the unipotent projector. We can also define the $!$-version.

\begin{defn}\label{def-igusa-sheaf}
    Define the unipotent coherent $!$-Igusa sheaf 
    $$\fI^{\can,\unip}_\spec\coloneqq \LL^{\unip}_{G}(\cP^{\unip}\circ\Psi^L((i_{\leq\mu^*})_*\fI^\can)))$$
    in $\Ind\Coh(\Loc^{\widehat\unip}_{{}^LG,\QQ_p})$.
\end{defn}

By Proposition \ref{prop-!-Igusa-to-Igusa}, there is a natural morphism $$\fI^{\can,\unip}_\spec\to \fI^{\unip}_\spec.$$
Let $V_\mu$ denote the highest weight representation of $\hat{G}$ of highest weight $\mu$. Let $\widetilde{V}_\mu$ be the vector bundle on $\Loc^{\widehat\unip}_{{}^LG,F}$ associated to $V_\mu$.

\begin{prop}\label{prop-coh-of-Sh-igusa-sheaf}
    There are canonical isomorphisms
    $$\Hom(\widetilde{V}_\mu\otimes \fA_b,\fI^{\can,\unip}_\spec)\simeq  R\Gamma_c(\sSh_{K^pI}(\sG,\sX)_{\overline{E}},\Lambda)(d/2)[d],$$
    $$\Hom(\widetilde{V}_\mu\otimes \fA_b,\fI^{\unip}_\spec)\simeq  R\Gamma(\sSh_{K^pI}(\sG,\sX)_{\overline{E}},\Lambda)(d/2)[d],$$
    where $d=\dim\sSh_{K^pI}(\sG,\sX)_{\overline{E}}$. Moreover, the natural morphism $\fI^{\can,\unip}_\spec\to \fI^{\unip}_\spec$ induces the natural morpshim
    $$R\Gamma_c(\sSh_{K^pI}(\sG,\sX)_{\overline{E}},\Lambda)(d/2)[d]\to R\Gamma(\sSh_{K^pI}(\sG,\sX)_{\overline{E}},\Lambda)(d/2)[d]$$
    on cohomologies.
\end{prop}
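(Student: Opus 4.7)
My plan would be to unwind the Hom by adjunctions across the categorical local Langlands correspondence, transfer the computation to the perfect integral model $\Sh_\mu$ via base change along the Cartesian diagram of Theorem \ref{thm-igusa}, and then conclude via a local--global comparison between central sheaves and nearby cycles.

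First, I would reduce the Hom across $\LL^\unip_G$. The spectral action is defined (Proposition \ref{prop-spectral-action}) so that $\LL^\unip_G$ intertwines $\widetilde{V}_\mu\otimes(-)$ with the Hecke operator $T_{V_\mu}$. Using fully faithfulness of $\LL^\unip_G$ (Theorem \ref{thm-unipotent-cllc}) and of $\Psi^L$, together with the adjunction between $\Ind\Shv^\unip_\fingen\hookrightarrow\Ind\Shv_\fingen$ and $\cP^\unip$, one should obtain
$$\Hom(\widetilde{V}_\mu\otimes\fA_b,\fI^{\can,\unip}_\spec)\simeq \Hom\bigl(T_{V_\mu}(i_1)_*\cInd_I^{G(\QQ_p)}\Lambda,\,(i_{\leq\mu^*})_*\fI^\can\bigr),$$
and analogously for $\fI^\unip_\spec$. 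Since $(i_1)_*\cInd_I^{G(\QQ_p)}\Lambda\cong\Ch^\unip_{LG,\phi}(\omega_{\Iw\backslash\Fl_{G,e}})$ is the image of the convolution unit and $\widetilde{V}_\mu\otimes(-)$ corresponds via $\Ch^\unip_{LG,\phi}$ to convolution with the central sheaf $Z_{V_\mu}$, one gets $T_{V_\mu}(i_1)_*\cInd_I^{G(\QQ_p)}\Lambda\simeq \Nt_*\delta^!Z_{V_\mu}$, which is supported on $\Isoc_{G,\leq\mu^*}$.

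Next, I would transfer the computation to $\Sh_\mu$. Combining the adjunctions $(i_{\leq\mu^*})^*\dashv(i_{\leq\mu^*})_*$ and $(\loc_p^0)^!\dashv(\loc_p^0)_\flat$, the right-adjointability $(\loc_p^0)^!\Nt_*\simeq(\Nt^\glob)_*(\loc_p)^!$ coming from the Cartesian diagram of Theorem \ref{thm-igusa}, and properness of $\Nt^\glob$ (so that $(\Nt^\glob)^\sharp\simeq(\Nt^\glob)^!$ and hence $(\Nt^\glob)^!\omega^{(\can)}_{\Igs}\simeq\omega^{(\can)}_{\Sh_\mu}$), the computation should reduce to
$$\Hom(\widetilde{V}_\mu\otimes\fA_b,\fI^{\can,\unip}_\spec)\simeq R\Gamma(\Sh_\mu,(\delta\circ\loc_p)^!Z_{V_\mu})^\vee,$$
$$\Hom(\widetilde{V}_\mu\otimes\fA_b,\fI^\unip_\spec)\simeq R\Gamma_c(\Sh_\mu,(\delta\circ\loc_p)^!Z_{V_\mu})^\vee,$$
using the defining properties of $\omega^\can_{\Sh_\mu}$ and $\omega_{\Sh_\mu}$ recorded in Remark \ref{rmk-omega-can-on-scheme}.

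The hard part will be the final local--global identification: for $\bullet\in\{\emptyset,c\}$, one needs
$$R\Gamma^\bullet\bigl(\Sh_\mu,(\delta\circ\loc_p)^!Z_{V_\mu}\bigr)\simeq R\Gamma^\bullet\bigl(\sSh_{K^pI}(\sG,\sX)_{\overline E},\Lambda\bigr)(d/2)[d],$$
from which the two claimed formulas follow after dualization and Poincar\'e duality on the smooth generic fiber. This amounts to a Gaitsgory--Pappas--Zhu style comparison, developed in \cite{XZ-cycles, PR-p-adic-shtuka}, that identifies $(\delta\circ\loc_p)^!Z_{V_\mu}$ with the nearby cycles of the constant sheaf on $\sSh_{K^pI,\overline E}$, shifted and twisted by $[d](d/2)$, now at Iwahori level; for the $R\Gamma_c$ version the argument must be carried out over a toroidal or minimal compactification where the relevant morphisms remain proper. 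Compatibility of the natural map $\fI^{\can,\unip}_\spec\to\fI^\unip_\spec$ with the canonical map $R\Gamma_c\to R\Gamma$ on the generic fiber will then follow from the construction of $\omega^\can_{\Sh_\mu}\to\omega_{\Sh_\mu}$ used in the proof of Proposition \ref{prop-!-Igusa-to-Igusa}.
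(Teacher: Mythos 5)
Your blueprint is correct and is essentially the proof the paper gives: unwind the spectral action and pass through $\LL^\unip_G$ to reduce to a $\Hom$ against $\Nt_*\delta^!Z_\mu$; use the Cartesian square of Theorem~\ref{thm-igusa}, right adjointability, and the descent identity $(\Nt^\glob)^!\omega^{(\can)}_{\Igs}\simeq\omega^{(\can)}_{\Sh_\mu}$ of Remark~\ref{rmk-omega-can-on-scheme} to express the answer as $R\Gamma^{(\bullet)}(\Sh_\mu,(\loc_p)^!\delta^!Z_\mu)^\vee$; identify $(\loc_p)^!\delta^!Z_\mu$ with the nearby cycle sheaf $R\Psi(d/2)[d]$ and conclude via the smooth--proper formalism and \cite[Corollary 4.6]{Lan-Stroh-nearby-cycle-II} for the compactly supported variant; and deduce the final compatibility from the construction of $\omega^\can_{\Sh_\mu}\to\omega_{\Sh_\mu}$ in Proposition~\ref{prop-!-Igusa-to-Igusa}.

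Two small places where you should be more careful. First, the passage from $\Hom$ in $\Ind\Coh(\Loc^{\widehat\unip}_{{}^LG,\QQ_p})$ through $\Psi^L$ to $\Hom$ in $\Shv(\Isoc_G,\Lambda)$ is not a formal consequence of full faithfulness of $\Psi^L$; one needs to know that the $\Hom$ against the compact object $\Nt_*\delta^!Z_\mu$ computed in $\Ind\Shv_\fingen$ coincides with the one computed in $\Shv$, which is where Lemma~\ref{lemma-igusa-bounded-below} (perverse boundedness below of $\fI^\can$) enters in the paper. Second, the identification $\LL^\unip_G(\Nt_*\delta^!Z_\mu)\simeq\widetilde V_\mu\otimes\fA_b$ holds as you state when $G=G^*$, but in the general case $\fA_b$ is defined through the quasi-split form $G^*$ and the twist $\eta_{\dot w}$; the $W_E$-equivariant matching then requires passing $Z_\mu$ through ${}_{\dot w^{-1}}\eta$ and observing $({}_{\dot{w}^{-1}}\eta)_*Z_\mu\simeq\Delta_{w^{-1}}\star\eta Z_\mu$ and $(\Nt^*)_*(\delta^*)^!\Delta_{w^{-1}}\simeq(i_b)_*\cInd_{I_b}^{G_b(\QQ_p)}\Lambda$ as the paper does. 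Neither affects the overall soundness of your plan.
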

\begin{proof}
    Let $Z_\mu$ denote the central sheaf in $\Shv_\fingen(\Iw\backslash LG/\Iw,\Lambda)$ associated to the highest weight representation $V_\mu$. Recall the natural morphism $\delta\colon \Sht_{\cI,\mu}^\loc\to \Iw\backslash LG/\Iw$. By \cite[Theorem 6.16]{Tame}, there is a canonical isomorphism
    $$\Hom(\Nt_*\delta^!Z_\mu,\fI) \simeq R\Gamma(\sSh_{K^pI}(\sG,\sX)_{\overline{E}},\Lambda)(d/2)[d].$$
    Similarly, we have
    $$\begin{aligned}
        \Hom(\Nt_*\delta^!Z_\mu,\fI^\can) & \simeq \Hom((\loc_p^0)^!\Nt_*\delta^!Z_\mu,\omega^\can_{\Igs}) \\
        &\simeq\Hom((\Nt^\glob)_*(\loc_p)^!\delta^!Z_\mu,\omega_{\Igs}^\can) \\
        &\simeq R\Gamma_c(\Sh_\mu, (\DD_{\Sh_\mu})^\omega((\loc_p)^!\delta^!Z_\mu)) \\
        &\simeq R\Gamma_c(\sSh_{K^pI}(\sG,\sX)_{\overline{E}},\Lambda)(d/2)[d].
    \end{aligned}$$
    Here, for the last isomorphism, we use that $(\loc_p)^!\delta^!Z_\mu\in\Shv_c(\Sh_\mu,\Lambda)$ is isomorphic to nearby cycle sheaf $R\Psi(\Lambda(d/2)[d])$ over $\Sh_\mu$, and by \cite[Corollary 4.6]{Lan-Stroh-nearby-cycle-II}, the compact supported cohomology of $R\Psi\Lambda$ computes the compact supported cohomology of $\sSh_{K^pI}(\sG,\sX)_{\overline{E}}$. By definition, the natural morphism $\fI^\can\to \fI$ induces the natural morphism 
    $$R\Gamma_c(\Sh_\mu,R\Psi(\Lambda(d/2)[d]))\to R\Gamma(\Sh_\mu,R\Psi(\Lambda(d/2)[d]))$$
    on cohomologies.
    
    By the argument of \cite[Corollary 6.15]{Tame} and Lemma \ref{lemma-igusa-bounded-below}, there are canonical isomorphisms
    $$\Hom(\Nt_*\delta^!Z_\mu,\Psi^L((i_{\leq\mu^*})_*\fI))\simeq R\Gamma(\sSh_{K^pI}(\sG,\sX)_{\overline{E}},\Lambda)(d/2)[d],$$
    $$\Hom(\Nt_*\delta^!Z_\mu,\Psi^L((i_{\leq\mu^*})_*\fI^\can))\simeq R\Gamma_c(\sSh_{K^pI}(\sG,\sX)_{\overline{E}},\Lambda)(d/2)[d].$$
    Here, the left Hom-space is computed in $\Ind\Shv_\fingen(\Isoc_G,\Lambda)$. It suffices to show that there is a canonical isomorphism
    $$\LL^\unip_G(\Nt_*\delta^!Z_\mu)\simeq \widetilde{V}_\mu\otimes \fA_b.$$
    There is a commutative diagram
    $$\begin{tikzcd}
        \Iw\backslash LG/\Iw \ar[r,"{}_{\dot{w}^{-1}}\eta","\simeq"swap] & \Iw^*\backslash LG^*/\Iw^* \\
        \Sht^\loc_{\cI} \ar[d,"\Nt"swap]\ar[u,"\delta"]\ar[r,"\eta_{\dot{w}}","\simeq"swap] & \Sht^\loc_{\cI^*} \ar[d,"\Nt^*"]\ar[u,"\delta^*"swap] \\
        \Isoc_G \ar[r,"\eta_{\dot{w}}","\simeq"swap] & \Isoc_{G^*},
    \end{tikzcd}$$
    where ${}_{\dot{w}^{-1}}\eta(g)=\dot{w}^{-1}g$. In particular, we have
    $$({}_{\dot{w}^{-1}}\eta)_* Z_\mu\simeq \Delta_{w^{-1}}\star \eta Z_\mu,$$
    where $\eta Z_\mu$ is the central sheaf for $G^*$.
    It follows that
    $$(\eta_{\dot{w}})_*\Nt_*\delta^!Z_\mu\simeq (\Nt^*)_*(\delta^*)^!(\Delta_{w^{-1}}\star \eta Z_\mu)\simeq T_{V_\mu}( (i_b)_*\cInd_{I_b}^{G_b(\QQ_p)}\Lambda)$$
    as $(\Nt^*)_*(\delta^*)^!\Delta_{w^{-1}}\simeq (i_b)_*\cInd_{I_b}^{G_b(\QQ_p)}\Lambda$. Therefore we have
    $$\LL_G^\unip(\Nt_*\delta^!Z_\mu)\simeq \LL^\unip_{G^*}(T_{V_\mu}((i_b)_*\cInd_{I_b}^{G_b(\QQ_p)}\Lambda)))\simeq \widetilde{V}_\mu\otimes \fA_b$$
    as desired.
\end{proof}

\begin{lemma}\label{lemma-igusa-bounded-below}
    The object $\fI^\can$ is bounded below with respect to the perverse $t$-structure on $\Shv(\Isoc_{G,\leq\mu^*},\Lambda)$.
\end{lemma}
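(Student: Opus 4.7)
The plan is to directly compute the $!$-stalks of $\fI^\can$ at each Newton stratum, then use the finiteness of $B(G,\mu^*)$ together with Artin vanishing. By the description of the perverse $t$-structure recalled in \S\ref{subsection-t-str}, the assertion $\fI^\can \in \Shv(\Isoc_{G,\leq\mu^*},\Lambda)^{p,\geq -N}$ is equivalent to the condition that $(i_b)^!\fI^\can \in \Rep(G_b(\QQ_p),\Lambda)^{\geq \langle 2\rho,\nu_b\rangle - N}$ for every $b \in B(G,\mu^*)$. Since $B(G,\mu^*)$ is finite, any absolute lower bound on the cohomological amplitude of $(i_b)^!\fI^\can$, combined with the finite maximum $N := \max_{b \in B(G,\mu^*)} \langle 2\rho,\nu_b\rangle$, will suffice.

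The main step is then to compute the $!$-stalks via Proposition~\ref{prop-igusa-sheaf-stalk}, which gives $(i_b)^!\fI^\can \simeq R\Gamma_{c-\partial}(\Ig_b,\Lambda)$, and this is by definition the filtered colimit over pro-$p$ open compact subgroups $K_b \subseteq G_b(\QQ_p)$ of $R\Gamma(\Ig_{b,K_b}^\mini,(j_{\Ig_{b,K_b}^\mini})_!\Lambda_{\Ig_{b,K_b}})$. Since each $\Ig_{b,K_b}^\mini$ is affine of dimension $\langle 2\rho,\nu_b\rangle$ by \cite[Corollary 6.26]{Mao-Hodge-well-positioned}, Artin vanishing places each term in cohomological degrees $[0,\langle 2\rho,\nu_b\rangle]$. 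Filtered colimits in $\Lambda$-$\mathrm{mod}$ preserve such degree bounds, and the standard $t$-structure on $\Rep(G_b(\QQ_p),\Lambda)$ is detected on the underlying $\Lambda$-module (since the compact generators are $\cInd_K^{G_b(\QQ_p)}\Lambda$ for pro-$p$ open compact $K$, for which $(-)^K$ is exact); hence $(i_b)^!\fI^\can \in \Rep(G_b(\QQ_p),\Lambda)^{\geq 0}$.

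Taking $N := \max_{b \in B(G,\mu^*)}\langle 2\rho,\nu_b\rangle$, the containment $\Rep(G_b(\QQ_p),\Lambda)^{\geq 0} \subseteq \Rep(G_b(\QQ_p),\Lambda)^{\geq \langle 2\rho,\nu_b\rangle - N}$ holds for each $b$, which produces the desired perverse lower bound on $\fI^\can$. There is no serious obstacle here: the argument runs formally parallel to the proof of the exotic semi-perversity statement in Corollary~\ref{cor-igusa-sheaf-exotic}, and the only substantive input is the affineness and finite-dimensionality of the partial minimal compactifications of the Igusa varieties supplied by~\cite{Mao-Hodge-well-positioned}.
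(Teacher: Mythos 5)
Your proposal is correct and matches the paper's own proof: both identify $(i_b)^!\fI^\can \simeq R\Gamma_{c-\partial}(\Ig_b,\Lambda)$ via Proposition~\ref{prop-igusa-sheaf-stalk}, bound it in degrees $[0,\langle 2\rho,\nu_b\rangle]$ by Artin vanishing for the affine partial minimal compactifications of Igusa varieties, and conclude from the finiteness of $B(G,\mu^*)$. The paper's version is terser, simply citing the degree bound (established in the proof of Corollary~\ref{cor-igusa-sheaf-exotic}) and observing that boundedness below follows, whereas you spell out the uniform lower bound and why the colimit preserves it; this is the same argument in more detail.
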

\begin{proof}
    Let $b\in B(G,\mu^*)$. By Proposition \ref{prop-igusa-sheaf-stalk}, we have
    $$(i_b)^!\fI^\can\simeq R\Gamma_{c-\partial}(\Ig_b,\Lambda)$$
    is concentrated in degrees $[0,\langle2\rho,\nu_b\rangle]$. In particular, it is bounded below.
\end{proof}

By transport of structures, the object $\fA_b$ carries a natural \emph{right} action of the Iwahori--Hecke algebra
$$H_I\coloneqq \Lambda[I\backslash G(\QQ_p)/I].$$
The vector bundle $\widetilde{V}_\mu$ admits a tautological \emph{left} $W_E$-action, where $W_E$ is the Weil group of $E$. The (!)-Igusa sheaf $\fI$ (resp. $\fI^\can$) carries natural action of the prime-to-$p$ Hecke algebra
$$H_{K^p}\coloneqq \Lambda[K^p\backslash \sG(\AAA_f^p)/K^p]$$
as the Igusa stack $\Igs$ carries prime-to-$p$ Hecke actions. It follows that $\fI^\unip_\spec$ and $\fI^{\can,\unip}_\spec$ also admit actions of $H_{K^p}$.
On the other hand, the cohomology $R\Gamma_c(\sSh_{K^pI}(\sG,\sX)_{\overline{E}},\Lambda)(d/2)[d]$ (resp. $R\Gamma(\sSh_{K^pI}(\sG,\sX)_{\overline{E}},\Lambda)(d/2)[d]$) carries a usual $W_E\times H_I\times H_{K^p}$-action. The main result of this section is the following local-global compatibility.

\begin{thm}\label{thm-local-global-compatibility}
    The isomorphisms $$\Hom(\widetilde{V}_\mu\otimes\fA_b,\fI^{\can,\unip}_\spec)\simeq R\Gamma_c(\sSh_{K^pI}(\sG,\sX)_{\overline{E}},\Lambda)(d/2)[d]$$
    $$\Hom(\widetilde{V}_\mu\otimes\fA_b,\fI^{\unip}_\spec)\simeq R\Gamma(\sSh_{K^pI}(\sG,\sX)_{\overline{E}},\Lambda)(d/2)[d]$$
    are compatible with $W_E\times H_I\times H_{K^p}$-actions.
\end{thm}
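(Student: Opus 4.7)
My plan is to upgrade the non-equivariant isomorphism from Proposition \ref{prop-coh-of-Sh-igusa-sheaf} into a $W_E\times H_I$-equivariant one by tracking the two actions along the chain of natural isomorphisms produced in its proof. I will work with the compactly supported version; the statement for $\fI^{\unip}_\spec$ then follows by Verdier duality, using Proposition \ref{prop-!-Igusa-dual} and the compatibility of $\LL_G^{\unip}$ with the admissible duality from Proposition \ref{prop-adm-dual-exotic}. The chain I must keep equivariant is
\[
\begin{aligned}
\Hom(\widetilde V_\mu\otimes\fA_b,\fI^{\can,\unip}_\spec)
&\xrightarrow{\sim}\Hom\bigl(\Nt_*\delta^!Z_\mu,\cP^{\unip}\Psi^L((i_{\leq\mu^*})_*\fI^\can)\bigr)\\
&\xrightarrow{\sim}\Hom\bigl((\loc_p)^!\delta^!Z_\mu,\omega_{\Sh_\mu}^\can\bigr)\\
&\xrightarrow{\sim}R\Gamma_c(\sSh_{K^pI}(\sG,\sX)_{\overline E},\Lambda)(d/2)[d],
\end{aligned}
\]
where the first step uses the fully faithfulness of $\LL_G^{\unip}$ and the identification $\LL_G^{\unip}(\Nt_*\delta^!Z_\mu)\simeq\widetilde V_\mu\otimes\fA_b$, the second combines the definition of $\fI^\can$ with base change along the Cartesian diagram of Theorem \ref{thm-igusa}, and the third identifies $(\loc_p)^!\delta^!Z_\mu$ with the nearby cycles $R\Psi(d/2)[d]$ on $\Sh_\mu$ followed by the comparison of \cite{Lan-Stroh-nearby-cycle-II}.

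The $H_I$-equivariance is essentially formal. The right $H_I$-action on $\fA_b$ comes, by transport of structure through $\LL_G^{\unip}$, from the right convolution action of $H_I=\mathrm{End}((i_1)_*\cInd_I^{G(\QQ_p)}\Lambda)$, which in turn is induced by the right $\Iw$-action on the Hecke stack $\Iw\backslash LG/\Iw$. Under the identification $\Nt_*\delta^!Z_\mu\simeq T_{V_\mu}((i_b)_*\cInd_{I_b}^{G_b(\QQ_p)}\Lambda)$ used in the proof of Proposition \ref{prop-coh-of-Sh-igusa-sheaf}, this action becomes the Hecke action on the target of $\Nt\circ\delta^{-1}$ induced by the $\Iw$-torsor on $\Sht^\loc_\cI$. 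Pulling this back through the Cartesian diagram of Theorem \ref{thm-igusa}, which is Hecke-equivariant since the Iwahori level at $p$ on $\Sh_\mu$ is inherited from the level structure on $\Sht^\loc_{\cI,\mu}$, yields the standard right $H_I$-action on $R\Gamma_c(\sSh_{K^pI}(\sG,\sX)_{\overline E},\Lambda)$ coming from Hecke correspondences at $p$. Each intermediate Hom is defined from a functorial construction, so $H_I$-equivariance follows step by step once the endpoints are correctly matched.

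The $W_E$-equivariance is the main obstacle. On the spectral side the $W_E$-action comes from the tautological $W_{\QQ_p}$-action on $\widetilde V_\mu$ via evaluation at the universal parameter; on the geometric side it is the Galois action on étale cohomology, which is transported by the Lan--Stroh comparison to the monodromy $W_E$-action on $R\Psi$ and, via base change along $\loc_p$ and $\delta$, to the monodromy on the central sheaf $Z_\mu$. The required matching is therefore a local assertion: under $\LL_G^{\unip}$, the $W_E$-monodromy on a central sheaf $Z_V\in\Shv_\fingen(\Iw\backslash LG/\Iw,\Lambda)$ is carried to multiplication by the matrix coefficient of $V$ restricted to $W_E$. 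This is the Iwahori-level extension of the $S=T$ compatibility of \cite{Wu-S-equal-T}; I plan to establish it by first handling the case of anti-dominant Wakimoto sheaves, where the spectral side is the pushforward from $\Loc^{\unip}_{{}^LB,\QQ_p}\simeq\cL_\phi(\hat U/\hat B)$ computed in Proposition \ref{prop-Ch-Wakimoto} and the monodromy becomes the evident toral action, and then propagating to arbitrary central sheaves via the Wakimoto filtration. The hard part will be controlling the Weil-group monodromy of the Beilinson--Drinfeld nearby cycles on $\Fl_\cI$ at Iwahori level and verifying that it agrees with the tautological action on $\widetilde V_\mu$ coming from the spectral construction of $\LL_G^{\unip}$.
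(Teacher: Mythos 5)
Your plan for the $W_E$-equivariance is broadly in the spirit of the paper's Proposition \ref{prop-compatibility-Weil-group} (match the tautological $\phi$- and $\tau$-actions on $\widetilde V_\mu$ to the Weil and monodromy structures on the central sheaf, then push through the chain of isomorphisms), though the paper works directly with the Weil structure on $Z_\mu$ from \cite{ALWY-mixed-central} rather than via a Wakimoto-filtration induction. But the claim that ``$H_I$-equivariance is essentially formal'' is a genuine gap, and it is where the bulk of the paper's argument lives.

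The problem is that after transporting across $\LL_G^{\unip}$ and the Cartesian diagram of Theorem \ref{thm-igusa}, the $H_I$-action on the left-hand side is, in down-to-earth terms, the action by \emph{abstract $S$-operators}: each $f\in H_I$ is realized as an endomorphism of $(i_b)_*\cInd_{I_b}^{G_b(\QQ_p)}\Lambda$ coming from a categorical-trace construction (Weil-equivariant objects in the affine Hecke category with a unit/partial-Frobenius/counit sandwich), not from an $\Iw$-torsor visible over the special fiber. The $H_I$-action on étale cohomology of the Shimura variety, by contrast, comes from \emph{Hecke correspondences}, which are defined over the generic fiber. These two actions live on different fibers of the integral model and there is no purely formal reason for them to agree; making them agree is precisely the $S=T$ problem. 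The paper resolves it in §\ref{subsection-S=T} by building, for each $\cF$ with Weil structure $F$, a cohomological correspondence $\rS^\corr_\cF$ on the $p$-adic shtuka $\cSht_{O_C,\mu}$ over all of $\Spd O_C$ which restricts on the special fiber to the abstract $S$-operator (Lemmas \ref{lemma-excursion-operator}, \ref{lemma-S-operator-comparison}) and on the generic fiber to the Hecke correspondence $\rT^\corr_{f_\cF}$ (Proposition \ref{prop-S=T-no-leg}, Lemma \ref{lemma-coh-correspondence-generic-fiber}); the constancy statement Proposition \ref{prop-coh-diamond} then identifies the two induced endomorphisms of cohomology. Your sketch skips exactly this interpolation, and as written (``Pulling this back through the Cartesian diagram \dots yields the standard right $H_I$-action'') it asserts the conclusion of $S=T$ rather than proving it. Without producing the $p$-adic family of correspondences over $\Spd O_C$, the argument cannot be closed.
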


\begin{rmk}
    We normalize the actions so the both $W_E$ and $H_I$ act on the \emph{left} of both sides. 
\end{rmk}

The proof of Theorem \ref{thm-local-global-compatibility} will be given in \S\ref{subsection-S=T}. 

We give a first application of Theorem \ref{thm-local-global-compatibility}, which allows us to decompose the Igusa sheaf with respect to the Hecke eigen-systems. Let $S$ be a finite set of finite places of $\QQ$ containing $p$ and $\ell$ such that $\sG$ is unramified away from $S$, and the open compact subgroup $K^p$ decompose into $K^p=K^SK_S^p$ with $K^S\subseteq \sG(\AAA_f^S)$ is a product of \emph{hyperspecial} subgroups. Hence
$$H_{K^S}\coloneqq\Lambda [K^S\backslash \sG(\AAA_f^S)/K^S]$$
is a commutative algebra over $\Lambda$. 

The unipotent spectral center $Z^{\widehat\unip}_{{}^LG,\QQ_p}=H^0(\Loc^{\widehat\unip}_{{}^LG,\QQ_p},\cO)$ acts on $\cInd_I^{G(\QQ_p)}\Lambda$, and in particular $\cO(\hat{G}\phi\git \hat{G})$ acts on $\cInd_I^{G(\QQ_p)}\Lambda$ via 
$$\ev^{\coarse}_\phi\colon \cO(\hat{G}\phi\git \hat{G})\to Z^{\widehat\unip}_{{}^LG,\QQ_p}.$$
We will prove later in Proposition \ref{prop-S=T-center} that when $b=1$, this action agrees with the action of the center $Z(H_I)\simeq \cO(\hat{G}\phi\git\hat{G})$.
\begin{lemma}\label{lem: decomposition of Igs sheaf}
    There is a direct sum decomposition
    $$\fI^{\unip}_\spec=\bigoplus_{\xi_p,\xi^p}(\fI^{\unip}_\spec)_{\xi_p,\xi^p}$$
    where $\xi_p$ runs through $\Lambda$-points of $\hat{G}\phi\git \hat{G}$ and $\xi^p$ runs through $\Lambda$-points of $\Spec H_{K^S}$, such that
    $$\Hom(\widetilde{V}_\mu\otimes \fA_b,(\fI^{\unip}_\spec)_{\xi_p,\xi^p})\simeq R\Gamma(\Sh_{K^pI}(\sG,\sX)_{\overline{E},\Lambda},\Lambda)_{\xi_p,\xi^p}(d/2)[d].$$
    Here $R\Gamma(\Sh_{K^pI}(\sG,\sX)_{\overline{E},\Lambda},\Lambda)_{\xi_p,\xi^p}$ is the direct summand of $R\Gamma(\Sh_{K^pI}(\sG,\sX)_{\overline{E},\Lambda},\Lambda)$ where the action of $\cO(\hat{G}\phi\git \hat{G})\otimes H_{K^S}$ is supported at the point $(\xi_p,\xi^p)$. Moreover, $(\fI^{\unip}_\spec)_{\xi_p,\xi^p}$ is non-zero for finitely many $(\xi_p,\xi^p)$. Similar statements hold for $\fI^{\can,\unip}_\spec$.
\end{lemma}
\begin{proof}
    We work with a more general setup. Let $\bC$ be a compactly generated $\Lambda$-linear category. Let $A\in\bC^\Adm$ be an admissible object, or equivalently, $\Hom(C,A)$ is a perfect $\Lambda$-module for any compact $C\in\bC^\omega$. Assume that $A$ carries an action of a commutative $\Lambda$-algebra $R$. Then we for each $\Lambda$-point $\xi$ of $\Spec R$, we define
    $$A_\xi\coloneqq A\otimes_{R}R_{(\xi)}.$$
    Then we have
    $$\Hom(C,A_{\xi})=\Hom(C,A)\otimes_RR_{(\xi)}=\Hom(C,A)_\xi$$
    for any $C\in\bC^\omega$. Here $\Hom(C,A)_\xi$ is the direct summand of $\Hom(C,A)$ where the action of $R$ is supported at $\xi$. The morphism
    $$A\to \bigoplus_\xi A_\xi$$
    induces an isomorphism
    $$\Hom(C,A)\xrightarrow\sim \bigoplus_\xi\Hom(C,A_\xi)\simeq \bigoplus_\xi\Hom(C,A)_\xi$$
    for any $C\in\bC^\omega$. Therefore we have $A\simeq \bigoplus A_\xi$. If we further assume that $A$ is contained in the $\Lambda$-linear subcategory $\langle C_i\rangle_{i=1,\dots,n} \subseteq \bC$ generated under colimits by finitely many compact objects $C_i$, then $A_{\xi}=0$ for all but finitely many $\xi$. It follows as $\Hom(C_i,A)$ is supported at finitely many $\xi$, and hence $\bigoplus_{i=1,\dots,n}\Hom(C_i,A_\xi)=0$ for all but finitely many $\xi$.

    Now apply this to $\bC=\Ind\Coh(\Loc^{\widehat\unip}_{{}^LG,\QQ_p})$ and $A=\fI^{\unip}_{\spec}$ (resp. $A=\fI^{\can,\unip}_{\spec}$), we obtain the desired decomposition. Note that $\fI^{\unip}_{\spec}$ (resp. $\fI^{\can,\unip}_\spec$) lies in the essential image of
    $$\Ind\Shv_\fingen^{\unip}(\Isoc_{G,\leq\mu^*},\Lambda)\stackrel{(i_{\leq\mu^*})_*}\hookrightarrow\Ind\Shv_\fingen^{\unip}(\Isoc_{G},\Lambda)\stackrel{\LL_G^\unip}\hookrightarrow\Ind\Coh(\Loc^{\widehat\unip}_{{}^LG,\QQ_p}),$$
    which is generated by finitely many compact objects.
\end{proof}

\subsubsection{Cohomological correspondences}\label{subsubection-coh-corr}
We recall the properties of cohomological correspondences following \cite[Appendix A.2]{XZ-cycles}. Let
$$X_1\xleftarrow{c_l} C\xrightarrow{c_r} X_2$$
be a correspondence of perfect stacks with $c_l$ and $c_r$ are representable pfp separated. Let $\cF_1\in\Shv(X_1,\Lambda)$ and $\cF_2\in\Shv(X_2,\Lambda)$. A \emph{cohomological correspondence} from $(X_1,\cF_1)$ to $(X_2,\cF_2)$ supported on $C$ is a morphism $u\colon (c_l)^*\cF_1\to (c_r)^!\cF_2.$

\begin{eg}\label{eg-coh-correspondence}
    Here are some examples of functoriality of cohomological correspondences. One can define the functoriality in more general situations, but we shall only need the following cases in the sequel. For more general discussions, see \cite[Appendix A.2]{XZ-cycles}.
\begin{enumerate}
    \item  Let
        $$\begin{tikzcd}
        X_1 \ar[d,"f"] & C \ar[d,"h"]\ar[l,"c_l"swap]\ar[r,"c_r"] & X_2 \ar[d,"g"] \\
        Y_1 & D \ar[l,"d_l"swap]\ar[r,"d_r"] & Y_2
        \end{tikzcd}$$
    be a commutative diagram. Let $u\colon (Y_1,\cF_1)\to (Y_2,\cF_2)$ be a correspondence. 
    
    If the left diagram is Cartesian, then we define the $!$-pullback $h^!(u)\colon (X_1,f^!\cF_1)\to(X_2,g^!\cF_2)$ by the composition
    $$h^!(u)\colon (c_l)^*f^!\cF\to h^!(d_l)^*\cF\xrightarrow{u}h^!(d_r)^!\cF\simeq (c_r)^!g^!\cF,$$
    where the first morphism is defined by adjunction. 
    
    If the right diagram is Cartesian, then we define the $*$-pullback $h^*(u)\colon (X_1,f^*\cF_1)\to(X_2,g^*\cF_2)$ by the composition
    $$h^*(u)\colon  (c_l)^*f^*\cF_1\simeq h^*(d_l)^*\cF_1\xrightarrow{u} h^*(d_r)^!\cF_2\to (c_r)^!g^*\cF_2,$$
    where the third morphism is defined by adjunction.
    \item Assume $X_1,X_2$ are separated pfp perfect schemes. Let $u\colon (X_1,\cF_1)\to (X_2,\cF_2)$ be a cohomological correspondence supported on $C$. Then we define the verdier dual $\DD(u)$ by the composition
    $$\DD(u)\colon (c_r)^*(\DD_{X_2})^\omega(\cF_2)\simeq (\DD_C)^\omega((c_r)^!\cF_2)\xrightarrow{u}(\DD_C)^\omega((c_l)^*\cF_1)\simeq (c_l)^!(\DD_{X_1})^\omega(\cF_1).$$
    \item Assume that there is a commutative diagram
    $$\begin{tikzcd}
        C \ar[r,"c_r"]\ar[d,"c_l"] & X_2 \ar[d,"g"] \\ X_1 \ar[r,"f"] & Z
    \end{tikzcd}.$$
    Let $u\colon (X_1,\cF_1)\to (X_2,\cF_2)$ be a cohomological correspondence. 
    
    If $c_l$ is proper, we define the $!$-pushforward morphism by
    $$f_!\cF_1\to f_!(c_l)_*(c_l)^*\cF_1\xrightarrow{u}f_!(c_l)_*(c_r)^!\cF_2\simeq g_!(c_r)_!(c_r)^!\cF_2\to g_!\cF_2,$$
    where the first and the last morphism are given by adjunction.

    If $c_r$ is proper, we define the $*$-pushforward morphism by
    $$f_*\cF_1\to f_*(c_l)_*(c_l)^*\cF_1\xrightarrow{u}f_*(c_l)_*(c_r)^!\cF_2\simeq g_*(c_r)_*(c_r)^!\cF_2\to g_*\cF_2,$$
    where the first and the last morphism are given by adjunction.

    In particular, if $X_1$, $X_2$ are separated pfp schemes, then we can define
    $$R\Gamma_c(u)\colon R\Gamma_c(X_1,\cF_1)\to R\Gamma_c(X_2,\cF_2)$$
    $$R\Gamma(u)\colon R\Gamma(X_1,\cF_1)\to R\Gamma(X_2,\cF_2)$$
    provided that $c_l$ and $c_r$ are proper.
\end{enumerate}
\end{eg}

Recall the Cartesian diagram 
$$\begin{tikzcd}
    \Sh_\mu \ar[r,"\loc_p"]\ar[d,"\Nt^\glob"swap] & \Sht^\loc_{\cI,\mu} \ar[d,"\Nt"] \\
    \Igs \ar[r,"\loc_p^0"] & \Isoc_{G,\leq\mu^*}
\end{tikzcd}$$
in Theorem \ref{thm-igusa}. By the proof of Proposition \ref{prop-coh-of-Sh-igusa-sheaf} and \cite[Theorem 6.16]{Tame}, we have
$$\Hom(\Nt_*\cF,(\loc_p^0)_\flat\omega_\Igs^\can)\simeq R\Gamma_c(\Sh_\mu,(\DD_{\Sh_\mu})^\omega((\loc_p)^!\cF))$$
$$\Hom(\Nt_*\cF,(\loc_p^0)_\flat\omega_\Igs)\simeq R\Gamma(\Sh_\mu,(\DD_{\Sh_\mu})^\omega((\loc_p)^!\cF))$$
for $\cF\in\Shv_\fingen(\Sht^\loc_{\cI,\mu},\Lambda)$. 

Now let $D$ be a perfect stack together with proper morphisms $d_l,d_r\colon D\to \Sht^\loc_{\cI,\mu}$ such that the diagram
$$\begin{tikzcd}
    D \ar[r,"d_r"]\ar[d,"d_l"swap] & \Sht^\loc_{\cI,\mu} \ar[d,"\Nt"] \\ \Sht^\loc_{\cI,\mu} \ar[r,"\Nt"] & \Isoc_{G,\leq\mu^*}
\end{tikzcd}$$
commutes. In practice, $D$ will be taken to be certain closed substack of $\Hk_1(\Sht^\loc_{\cI,\mu})$. Pulled back to $\Sh_\mu$, we obtain a commutative diagram
$$\begin{tikzcd}
    \Sh_\mu \ar[d,"\loc_p"] & C \ar[ld,phantom,"\square"very near start]\ar[rd,phantom,"\square"very near start]\ar[d,"\loc_{p,C}"]\ar[l,"c_l"swap]\ar[r,"c_r"] & \Sh_\mu \ar[d,"\loc_p"] \\
    \Sht^\loc_{\cI,\mu} & D \ar[l,"d_l"swap]\ar[r,"d_r"] & \Sht^\loc_{\cI,\mu}
\end{tikzcd}$$
with two squares Cartesian. Let 
$$u\colon (d_l)^*\cF_1\to (d_r)^!\cF_2$$
be a cohomological correspondence between $\cF_1,\cF_2\in\Shv_\fingen(\Sht^\loc_{\cI,\mu},\Lambda)$ supported on $D$. By Example \ref{eg-coh-correspondence} (1) and (2), we can define the cohomological correspondence 
$$\DD((\loc_p)^!(u))\colon (c_r)^*(\DD_{\Sh_\mu})^\omega((\loc_p)^!\cF_2)\to (c_l)^!(\DD_{\Sh_\mu})^\omega((\loc_p)^!\cF_1)$$
from $(\Sh_\mu,(\DD_{\Sh_\mu})^\omega((\loc_p)^!\cF_2))$ to $(\Sh_\mu,(\DD_{\Sh_\mu})^\omega((\loc_p)^!\cF_1)$. By Example \ref{eg-coh-correspondence} (3), we obtain morphisms
$$R\Gamma_c(\DD((\loc_p)^!(u)))\colon R\Gamma_c(\Sh_\mu,(\DD_{\Sh_\mu})^\omega((\loc_p)^!\cF_2))\to R\Gamma_c(\Sh_\mu,(\DD_{\Sh_\mu})^\omega((\loc_p)^!\cF_1)),$$
$$R\Gamma(\DD((\loc_p)^!(u)))\colon R\Gamma(\Sh_\mu,(\DD_{\Sh_\mu})^\omega((\loc_p)^!\cF_2))\to R\Gamma(\Sh_\mu,(\DD_{\Sh_\mu})^\omega((\loc_p)^!\cF_1)).$$
On the other hand, by Example \ref{eg-coh-correspondence} (3), we define the pushforward morphism
$$\Nt_*(u)\colon \Nt_*\cF_1\to \Nt_*\cF_2.$$

\begin{lemma}\label{lemma-coh-correspondence}
    Under the isomorphism $\Hom(\Nt_*\cF_i,(\loc_p^0)_\flat\omega_\Igs^\can)\simeq R\Gamma_c(\Sh_\mu,(\DD_{\Sh_\mu})^\omega((\loc_p)^!\cF_i))$ (resp. $\Hom(\Nt_*\cF_i,(\loc_p^0)_\flat\omega_\Igs)\simeq R\Gamma(\Sh_\mu,(\DD_{\Sh_\mu})^\omega((\loc_p)^!\cF_i))$), $i=1,2$, the morphism $R\Gamma_c(\DD((\loc_p)^!(u)))$ (resp. $R\Gamma(\DD((\loc_p)^!(u)))$) agrees with the morphism induced by $\Nt_*(u)$.
\end{lemma}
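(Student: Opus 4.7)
The plan is to prove Lemma~\ref{lemma-coh-correspondence} by unpacking the chain of isomorphisms defining $\Hom(\Nt_*\cF_i,(\loc_p^0)_\flat\omega_\Igs^\can)\simeq R\Gamma_c(\Sh_\mu,(\DD_{\Sh_\mu})^\omega((\loc_p)^!\cF_i))$ and checking naturality term-by-term with respect to $\Nt_*(u)$. This chain factors as
\begin{equation*}
\Hom(\Nt_*\cF_i,(\loc_p^0)_\flat\omega_\Igs^\can)
\xrightarrow{(1)}\Hom((\loc_p^0)^!\Nt_*\cF_i,\omega_\Igs^\can)
\xrightarrow{(2)}\Hom(\Nt^\glob_*(\loc_p)^!\cF_i,\omega_\Igs^\can)
\xrightarrow{(3)} R\Gamma(\Sh_\mu,(\loc_p)^!\cF_i)^\vee
\xrightarrow{(4)} R\Gamma_c(\Sh_\mu,(\DD_{\Sh_\mu})^\omega((\loc_p)^!\cF_i)),
\end{equation*}
where (1) is the $((\loc_p^0)^!,(\loc_p^0)_\flat)$-adjunction, (2) is the right-adjointable base change isomorphism from \cite[Proposition~6.12]{Tame}, (3) is the defining property of $\omega_\Igs^\can$ recalled after Definition~\ref{def-omega-can} (and Remark~\ref{rmk-omega-can-on-scheme}), and (4) is Verdier duality on $\Sh_\mu$.

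First I would verify that each isomorphism is natural in morphisms of the sheaf $\Nt_*\cF_i$. Steps (1) and (2) are natural by construction. For (3), naturality is built into the fact that $\omega_\Igs^\can=(\DD_\Igs^\can)^\Adm(\omega_\Igs)$ represents the functor $\cG\mapsto R\Gamma(\Igs,\cG)^\vee$ on the relevant subcategory. Step (4) is natural because $(\DD_{\Sh_\mu})^\omega$ is an anti-equivalence intertwining $R\Gamma$ and $R\Gamma_c$ in the usual way. It follows that if we can identify $(\loc_p^0)^!\Nt_*(u)$, viewed as a morphism $\Nt^\glob_*(\loc_p)^!\cF_1\to \Nt^\glob_*(\loc_p)^!\cF_2$, with $\Nt^\glob_*((\loc_p)^!(u))$, then everything reduces to ordinary Verdier duality between $R\Gamma$ and $R\Gamma_c$ applied to the pushforward of the cohomological correspondence $(\loc_p)^!(u)$ along $\Nt^\glob$, which by the definition recalled in Example~\ref{eg-coh-correspondence}(3) is exactly $R\Gamma_c(\DD((\loc_p)^!(u)))$.

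The remaining task, and the main technical obstacle, is therefore to establish the identity
\begin{equation*}
(\loc_p^0)^!\bigl(\Nt_*(u)\bigr)\;=\;\Nt^\glob_*\bigl((\loc_p)^!(u)\bigr)
\end{equation*}
of morphisms $\Nt^\glob_*(\loc_p)^!\cF_1\to\Nt^\glob_*(\loc_p)^!\cF_2$. To do this, one writes out $\Nt_*(u)$ as the composition
\begin{equation*}
\Nt_*\cF_1\to \Nt_*(d_l)_*(d_l)^*\cF_1\xrightarrow{\Nt_*(d_l)_*(u)}\Nt_*(d_l)_*(d_r)^!\cF_2=\Nt_*(d_r)_*(d_r)^!\cF_2\to \Nt_*\cF_2,
\end{equation*}
applies $(\loc_p^0)^!$ term by term, and converts each occurrence to the $\Sh_\mu$ side via base change. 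The base changes needed are $(\loc_p^0)^!\Nt_*\simeq \Nt^\glob_*(\loc_p)^!$ and $(\loc_p)^!(d_l)_*\simeq (c_l)_*(\loc_{p,C})^!$ (and similarly for $d_r$); the second holds because $d_l,d_r$ are proper so $(d_l)_*=(d_l)_!$ and the square is Cartesian. One then checks that these base change isomorphisms are mutually compatible in the sense that they assemble the individual terms into the pushforward of the cohomological correspondence $(\loc_p)^!(u)$ on $C$ against $\Nt^\glob$.

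The actual obstacle here is coherence: the two base change isomorphisms must satisfy compatibility with the adjunction units $\id\to (d_l)_*(d_l)^*$ pulled back through $\loc_p$, and with the equality $\Nt d_l=\Nt d_r$ pulled back through $\loc_p^0$. Both compatibilities are instances of the general coherence of the six-functor formalism recorded in \cite[\S 7--8]{Tame} and used throughout Section~\ref{subsubection-coh-corr}; once they are invoked, the identity of morphisms above follows by a diagram chase of finite size. With this identification in hand, the lemma then reduces, by naturality of (1)--(4) above, to the evident Verdier-duality statement that $R\Gamma(\Sh_\mu,-)^\vee$ applied to $R\Gamma$ of $(\loc_p)^!(u)$ coincides with $R\Gamma_c$ of $\DD((\loc_p)^!(u))$, which is precisely how $R\Gamma_c(\DD((\loc_p)^!(u)))$ was defined in Example~\ref{eg-coh-correspondence}. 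The statement for $\fI=(\loc_p^0)_\flat\omega_\Igs$ and $R\Gamma$ is identical upon replacing $\omega_\Igs^\can$ by $\omega_\Igs$ and $R\Gamma_c$ by $R\Gamma$ throughout.
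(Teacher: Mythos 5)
Your proposal is correct and takes essentially the same approach as the paper's proof: both decompose $\Nt_*(u)$ into the three adjunction/correspondence pieces (A), (B), (C), and trade these off against the corresponding steps in $R\Gamma_c(\DD((\loc_p)^!(u)))$ via base change and the coherence of the six-functor formalism. Your reorganization as "first identify $(\loc_p^0)^!(\Nt_*(u))=\Nt^\glob_*((\loc_p)^!(u))$, then conclude by Verdier duality over $\Sh_\mu$" is a valid repackaging, though the second step is not a one-line application of Verdier duality but rather the same kind of unit/counit diagram chase that the paper carries out explicitly.
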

\begin{proof}
    We only prove the case of $\omega_\Igs^\can$ as the other case can be proved similarly.
    The morphism $\Nt_*(u)$ is given by the composition
    $$\Nt_*\cF_1\xrightarrow[(\mathrm{A})]{} \Nt_*(d_l)_*(d_l)^*\cF_1\xrightarrow[(\mathrm{B})]{u}\Nt_*(d_l)_*(d_r)^!\cF_2\simeq \Nt_*(d_r)_*(d_r)^!\cF_2\xrightarrow[(\mathrm{C})]{} \Nt_*\cF_2.$$
    There is a morphism 
    $$\begin{aligned}
        \Hom(\Nt_*(d_l)_*(d_l)^*\cF_1,(\loc_p^0)_\flat\omega_\Igs^\can) &\simeq R\Gamma_c(\Sh_\mu,(\DD_{\Sh_\mu})^\omega((\loc_p)^!(d_l)_*(d_l)^*\cF_1)) \\
        &\simeq R\Gamma_c(\Sh_\mu,(\DD_{\Sh_\mu})^\omega((c_l)_*(\loc_{p,C})^!(d_l)^*\cF_1))\\
        &\to R\Gamma_c(\Sh_\mu,(c_l)_!(c_l)^!(\DD_{\Sh_\mu})^\omega((\loc_p)^!\cF_1))
    \end{aligned}$$
    where the third arrow is induced by the $(c_l)^*(\loc_p)^!\to (\loc_{p,C})^!(d_l)^*$. Applying the functor $\Hom(-,(\loc_p^0)_\flat\omega_\Igs^\can)$ to the morphism labeled by (A) induces a commutative diagram
    $$\begin{tikzcd}
        \Hom(\Nt_*(d_l)_*(d_l)^*\cF_1,(\loc_p^0)_\flat\omega_\Igs^\can) \ar[r,"(\mathrm{A})"]\ar[d] & \Hom(\Nt_*\cF_1,(\loc_p^0)_\flat\omega_\Igs^\can) \ar[d,"\simeq"] \\
        R\Gamma_c(\Sh_\mu,(c_l)_!(c_l)^!(\DD_{\Sh_\mu})^\omega((\loc_p)^!\cF_1)) \ar[r] & R\Gamma_c(\Sh_\mu,(\DD_{\Sh_\mu})^\omega((\loc_p)^!\cF_1))
    \end{tikzcd}$$
    where the lower arrow is defined by the counit morphism $(c_l)_!(c_l)^!\to \id$. There are isomorphisms
    $$\begin{aligned}
        \Hom(\Nt_*(d_r)_*(d_r)^!\cF_2,f_\flat\omega_\Igs^\can) &\simeq R\Gamma_c(\Sh_\mu,(\DD_{\Sh_\mu})^\omega((\loc_p)^!(d_r)_*(d_r)^!\cF_2)) \\
        &\simeq R\Gamma_c(\Sh_\mu,(c_r)_*(c_r)^*(\DD_{\Sh_\mu})^\omega((\loc_p)^!\cF_2))
    \end{aligned}$$
    such that the diagram
    $$\begin{tikzcd}
        \Hom(\Nt_*\cF_2,(\loc_p^0)_\flat \omega_Z^\can) \ar[r,"(\mathrm{C})"]\ar[d,"\simeq"] & \Hom(\Nt_*(d_r)_*(d_r)^!\cF_2,(\loc_p^0)_\flat \omega_\Igs^\can) \ar[d,"\simeq"]\ar[r,"\simeq"]\ar[d,"\simeq"] & \Hom(\Nt_*(d_l)_*(d_r)^!\cF_2,(\loc_p^0)_\flat \omega_\Igs^\can)\ar[d,"\simeq"] \\ 
        R\Gamma_c(\Sh_\mu,(\DD_{\Sh_\mu})^\omega(\cF_2)) \ar[r] & R\Gamma_c(\Sh_\mu,(c_r)_*(c_r)^*(\DD_{\Sh_\mu})^\omega((\loc_p)^!\cF_2)) \ar[r,"\simeq"] & R\Gamma_c(\Sh_\mu,(c_l)_*(c_r)^*(\DD_{\Sh_\mu})^\omega((\loc_p)^!\cF_2))
    \end{tikzcd}$$
    commutes, where the lower left arrow is defined by the unit morphism $\id\to (c_r)_*(c_r)^*$. Finally, we have a commutative diagram
    $$\begin{tikzcd}[column sep=huge]
        \Hom(\Nt_*(d_l)_*(d_r)^!\cF_2,(\loc_p^0)_\flat \omega_\Igs^\can) \ar[r,"u","(\mathrm{B})"swap]\ar[d,"\simeq"] &  \Hom(\Nt_*(d_l)_*(d_l)^*\cF_1,(\loc_p^0)_\flat\omega_\Igs^\can) \ar[d] \\
        R\Gamma_c(\Sh_\mu,(c_l)_*(c_r)^*(\DD_{\Sh_\mu})^\omega((\loc_p)^!\cF_2)) \ar[r,"\DD((\loc_p)^!(u))"] & R\Gamma_c(\Sh_\mu,(c_l)_!(c_l)^!(\DD_{\Sh_\mu})^\omega((\loc_p)^!\cF_1))
    \end{tikzcd}$$
    where the lower arrow is defined by the cohomological correspondence $\DD((\loc_p)^!(u))$. Combine the diagrams above, we finish the proof.
\end{proof}

\subsubsection{Compatibility with $W_E$-actions}
We prove the $W_E$-actions part in Theorem \ref{thm-local-global-compatibility}. 

\begin{prop}\label{prop-compatibility-Weil-group}
    The isomorphisms in Proposition \ref{prop-coh-of-Sh-igusa-sheaf} are compatible with $W_E$-actions.
\end{prop}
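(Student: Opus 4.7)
The plan is to reduce the compatibility to a statement about cohomological correspondences on the local shtuka side, and then invoke Lemma \ref{lemma-coh-correspondence}. More precisely, the isomorphism of Proposition \ref{prop-coh-of-Sh-igusa-sheaf} is obtained as the composition of two chains of isomorphisms: (i) a spectral chain identifying $\Hom(\widetilde{V}_\mu\otimes\fA_b,\fI^{\can,\unip}_\spec)$ with $\Hom(\Nt_*\delta^! Z_\mu,(i_{\leq\mu^*})_*\fI^\can)$ via the fully faithful functor $\LL^\unip_G$ and the identification $\LL^\unip_G(\Nt_*\delta^! Z_\mu)\simeq \widetilde V_\mu\otimes\fA_b$; and (ii) a geometric chain identifying the latter with $R\Gamma_c(\sSh_{K^pI}(\sG,\sX)_{\overline E},\Lambda)(d/2)[d]$ via adjunction and the Lan--Stroh comparison of $(\loc_p)^!Z_\mu$ with $R\Psi(d/2)[d]$. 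I will check $W_E$-equivariance of each chain.

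For chain (i), the $W_E$-action on $\widetilde V_\mu$ is tautological on the vector bundle associated to the $\hat G$-representation $V_\mu$ over $\Loc^{\widehat\unip}_{{}^LG,\QQ_p}$. Under the spectral action, this $W_E$-action on the Hecke operator $T_{V_\mu}$ corresponds, via Bezrukavnikov's equivalence $\BB_G^\unip$ (together with its compatibility with the Frobenius action recalled in \S\ref{subsubsection-unipotent-cll}), to the canonical $W_E$-equivariant structure on the central sheaf $Z_\mu\in \Shv_\fingen(\Iw\backslash LG/\Iw,\Lambda)$, namely the monodromy action arising from nearby cycles of the local model. Transporting along the isomorphism ${}_{\dot w^{-1}}\eta$ and pushing forward along $\Nt\circ\delta$ (which is $W_E$-equivariant because $\Sht^\loc_{\cI,\mu}$ and $\Isoc_G$ are defined over $\overline{\FF}_p$ and carry compatible Frobenius structures), we obtain a $W_E$-equivariant structure on $\Nt_*\delta^! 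Z_\mu$, and by construction the fully faithful embedding $\LL^\unip_G$ intertwines these two $W_E$-actions.

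For chain (ii), recall that the $W_E$-action on $R\Gamma_c(\sSh_{K^pI}(\sG,\sX)_{\overline E},\Lambda)$ factors through the Frobenius/monodromy action on nearby cycles: via \cite[Corollary 4.6]{Lan-Stroh-nearby-cycle-II} it agrees with the $W_E$-action on $R\Gamma_c(\Sh_\mu,(\DD_{\Sh_\mu})^\omega((\loc_p)^!Z_\mu))$ induced by the natural $W_E$-structure on $(\loc_p)^!Z_\mu\simeq R\Psi(d/2)[d]$. The $W_E$-action on a single element $\gamma\in W_E$ is realized by a cohomological self-correspondence of $(\Sht^\loc_{\cI,\mu},Z_\mu)$ supported on the graph of the partial Frobenius, which pulls back via $\loc_p$ to the usual Frobenius correspondence on $(\Sh_\mu,R\Psi)$. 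Applying Lemma \ref{lemma-coh-correspondence} to this correspondence, the induced map on $R\Gamma_c(\Sh_\mu,\DD((\loc_p)^!Z_\mu))$ coincides with the map on $\Hom(\Nt_*\delta^! Z_\mu,(i_{\leq \mu^*})_*\fI^\can)$ induced by the $\gamma$-action on $Z_\mu$. Combined with chain (i), this yields the desired $W_E$-equivariance for the compactly-supported cohomology isomorphism, and the same argument (replacing $\fI^\can$ by $\fI$ and $(\DD_{\Sh_\mu})^\omega((\loc_p)^!Z_\mu)$ by its non-compact variant) gives the statement for $R\Gamma(\sSh_{K^pI}(\sG,\sX)_{\overline E},\Lambda)$.

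The main obstacle is the careful identification in chain (ii) of the geometric Frobenius on étale cohomology with the monodromy action on nearby cycles, together with the matching of Tate twists and the sign/normalization of $\sqrt q$ used to fix the Satake isomorphism. Once the partial Frobenius is identified as a correspondence on $\Sht^\loc_{\cI,\mu}$ whose pullback to $\Sh_\mu$ is (up to the $(d/2)[d]$ shift) the Frobenius correspondence on nearby cycles, Lemma \ref{lemma-coh-correspondence} reduces everything to a formal manipulation inside the six-functor formalism for $\Shv(\Isoc_G,\Lambda)$ developed in \S\ref{subsection-cllc}.
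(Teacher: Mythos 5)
Your proposal correctly identifies the role of Lemma \ref{lemma-coh-correspondence} and the reduction to a correspondence on the local shtuka side, and you correctly recognize (in the aside about chain (i)) that $Z_\mu$ carries a $W_E$-structure coming from nearby cycles. But there is a genuine gap in chain (ii): you write that the action of an arbitrary $\gamma\in W_E$ on $(\Sht^\loc_{\cI,\mu},Z_\mu)$ is realized by a cohomological self-correspondence ``supported on the graph of the partial Frobenius'', and you invoke Lemma \ref{lemma-coh-correspondence} applied to that correspondence. This is only correct when $\gamma$ is a power of the arithmetic Frobenius. The tame inertia generator $\tau$ does \emph{not} act by a partial Frobenius correspondence; it acts by the unipotent monodromy endomorphism $m_{Z_\mu}$ of the central sheaf, a genuine endomorphism supported on the diagonal (not the Frobenius graph). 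The paper therefore splits the $W_E$-action into two separate verifications: (a) for $\phi^s$ one indeed uses a partial-Frobenius correspondence and Lemma \ref{lemma-coh-correspondence}, and must track the signed commutative constraint $\sigma_{-,-}$ (the $(-1)^{d(\cF_1)d(\cF_2)}$ factors) arising from the categorical trace — your appeal to ``formal manipulation'' elides exactly the place where those signs have to be reconciled across $({}_{\dot w^{-1}}\eta)_*$; and (b) for $\tau$ one matches the monodromy $m_{Z_\mu}$ directly with the tautological automorphism $\act_\tau$ under Bezrukavnikov's equivalence, pushes forward along $\Nt_*\delta^!$, and compares with the monodromy on nearby cycles over $\Sh_\mu$, without any partial Frobenius and without Lemma \ref{lemma-coh-correspondence}. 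As written, your chain (ii) proves compatibility only for the Frobenius powers, not the full $W_E$-action, and so the proposal is incomplete. Restructuring the argument to treat $\phi^s$ and $\tau$ separately, and carefully tracking the commutative constraint in the $\phi^s$ case via the identification $\varepsilon_\cF\cdot\can=\sigma_{\eta\cF,-}$ used elsewhere in this section, closes the gap.
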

\begin{proof}
We only deal with first isomorphism for compactly supported cohomology as the second isomorphism can be treated in the same way.
Assume $[E:\QQ_p]=s$. It suffices to compare the actions of the arithmetic Frobenius $\phi^s$ and the actions of the tame inertia generator $\tau$ on both sides.

We first deal with the Frobenius.  The highest weight representation $V_\mu$ carries an natural action of $\hat{G}\rtimes \langle\phi^s\rangle\subseteq {}^LG$. The action of $\phi^s$ on $V_\mu$ defines an isomorphism
$$\phi^s_{V_\mu}\colon (\phi^s)_*V_\mu\xrightarrow{\sim} V_\mu,$$
where $(\phi^s)_*V_\mu$ is the $\hat{G}$-representation defined by the composition $\hat{G}\xrightarrow{\phi^{-s}}\hat{G}\to\GL(V_\mu)$. For any $\hat{G}$-representation $V$, there is a canonical isomorphism $\can\colon V\otimes\cO_{\hat{G}\phi/\hat{G}} \xrightarrow{\sim} \phi_*V\otimes\cO_{\hat{G}\phi/\hat{G}}$ over $\hat{G}\phi/\hat{G}$ defined by sending $f\in\mathrm{Map}(\hat{G}\phi,V)$ to 
$$\can(f)(g\phi)=\phi^{-1}(g)f(g\phi)\in \mathrm{Map}(\hat{G}\phi,\phi_*V).$$
Pulling back along $\ev_\phi\colon\Loc_{{}^LG,\QQ_p}^{\widehat\unip}\to \hat{G}\phi/\hat{G}$, we obtain a canonical isomorphism $\can\colon \widetilde{V}\xrightarrow{\sim}\widetilde{\phi_*V}$. The composition
$$\act_{\phi^s}\colon\widetilde{V}_\mu\xrightarrow{\can^s}\widetilde{(\phi^s)_*V_\mu}\xrightarrow{\phi^s_{V_\mu}}\widetilde{V}_\mu$$
is identified with the tautological action of $\phi^s$ on $\widetilde{V}_\mu$.

\begin{lemma}
    The action of $\act_{\phi^s}\otimes\id$ on 
    $$\Hom(\widetilde{V}_\mu\otimes \fA_b,\fI^{\can,\unip}_\spec)\simeq  R\Gamma_c(\sSh_{K^pI}(\sG,\sX)_{\overline{E}},\Lambda)(d/2)[d]$$
    is identified with the \emph{geometric} Frobenius action on $ R\Gamma_c(\sSh_{K^pI}(\sG,\sX)_{\overline{E}},\Lambda)(d/2)[d]$.
\end{lemma}
\begin{proof}
    The central sheaf $Z_\mu\in\Shv_\fingen(\Iw\backslash LG/\Iw,\Lambda)$ carries a natural Weil structure $F\colon Z_\mu\xrightarrow{\sim}(\phi^s)_*Z_\mu$. Recall the natural isomorphism
    $${}_{\dot{w}^{-1}}\eta\colon \Iw\backslash LG/\Iw\simeq \Iw^*\backslash LG^*/\Iw^*$$
    defined by ${}_{\dot{w}^{-1}}\eta(g)=\dot{w}^{-1}g$. We have
    $$({}_{\dot{w}^{-1}}\eta)_*Z_\mu\simeq \Delta_{w^{-1}}\star\eta Z_\mu$$
    and
    $$({}_{\dot{w}^{-1}}\eta)_*\phi_*(Z_\mu)\simeq \Delta_{w^{-1}}\star(\Delta_{w}\star \phi_*\eta Z_\mu\star \Delta_{w^{-1}})\simeq \phi_*\eta Z_\mu\star \Delta_{w^{-1}}\xrightarrow[\simeq]{\tau_{\eta Z_\mu,\Delta_{w^{-1}}}} \Delta_{w^{-1}}\star \phi_*\eta Z_\mu,$$
    where the $\tau_{-,-}$ the commutative constraint on central sheaves. The canonical Weil structure on $Z_\mu$ is compatible with the canonical Weil structure on $\eta F\colon \eta Z_\mu\xrightarrow{\sim}(\phi^s)_*\eta Z_\mu$ under the above isomorphisms up to $(-1)^{d(Z_\mu)d(\Delta_{w^{-1}})}$. Under the equivalence $\BB^\unip_{G^*}$, $\eta F$ corresponds to the inverse of $\phi_{V_\mu}^s$ on $\iota_*\omega_{\hat{U}/\hat{B}}\otimes V_\mu$. 
    
    By the construction of categorical trace, for $\cF\in\Shv_\fingen(\Iw\backslash LG/\Iw,\Lambda)$, there is a canonical isomorphism
    $$\can\colon \Nt_*\delta^!\cF\xrightarrow{\sim} \Nt_*\delta^!\phi_*\cF.$$
    defined by the commutative diagram
    $$\begin{tikzcd}
        \Iw\backslash LG/\Iw \ar[d,"\phi"] & \Sht^\loc_\cI \ar[l,"\delta"swap]\ar[rd,"\Nt"]\ar[d,"\phi"] \\
        \Iw\backslash LG/\Iw & \Sht^\loc_\cI \ar[l,"\delta"swap]\ar[r,"\Nt"] & \Isoc_G.
    \end{tikzcd}$$
    Here we use that the relative Frobenius on $\Isoc_G$ is canonically trivial. We have
    $$(\eta_{\dot{w}})_*(\Nt_*\delta^!\cF)\simeq (\Nt^*)_*(\delta^*)^!(\Delta_{w^{-1}}\star \eta\cF)=\Ch^\unip_{LG^*,\phi}(\Delta_{w^{-1}}\star \eta\cF)$$
    and
    $$\begin{aligned}
        (\eta_{\dot{w}})_*(\Nt_*\delta^!\phi_*\cF)&\simeq (\Nt^*)_*(\delta^*)^!(\Delta_{w^{-1}}\star (\Delta_w\star \phi_*\eta\cF\star \Delta_{w^{-1}}))\\ &\simeq (\Nt^*)_*(\delta^*)^!(\phi_*\eta\cF\star \Delta_{w^{-1}}).
    \end{aligned}$$
    One can check that the isomorphism $(\eta_{\dot{w}})_*(\can)$ agrees with the composition
    $$(\eta_{\dot{w}})_*(\Nt_*\delta^!\cF)\simeq (\Nt^*)_*(\delta^*)^!(\Delta_{w^{-1}}\star \eta\cF)\xrightarrow{\can}(\Nt^*)_*(\delta^*)^!(\phi_*\eta\cF\star \Delta_{w^{-1}})\simeq(\eta_{\dot{w}})_*(\Nt_*\delta^!\phi_*\cF),$$
    where $\can$ is defined using partial Frobenius, and is equal to $ (-1)^{d(\cF)d(\Delta_{w^{-1}})}\sigma_{\eta\cF,\Delta_{w^{-1}}}$ where $\sigma_{-,-}$ is the commutative constraint of categorical traces as in \S\ref{subsubsection-unipotent-cll}. 
    
    By the above discussion, we know that the composition 
    $$\act_{\phi^s}^\geo\colon \Nt_*\delta^!Z_\mu\xrightarrow{\can^s}\Nt_*\delta^!(\phi^s)_*Z_\mu\xrightarrow{F^{-1}}\Nt_*\delta^!Z_\mu$$
    is identified with
    $$\act_{\phi^s}\otimes\id\colon \widetilde{V}_\mu\otimes \fA_b\xrightarrow{\sim} \widetilde{V}_\mu\otimes \fA_b.$$
    under the functor $\LL_G^{\unip}$.

    Therefore we only need to identify the action on the right hand side induced by $\act_{\phi^s}^\geo$. The morphism $\act_{\phi^s}^\geo$ is induced by the cohomological correspondence
    $$F^{-1}\colon \delta^!Z_\mu \to (\phi^s)^*(\delta^!Z_\mu)\simeq(\phi^s)^!(\delta^!Z_\mu)$$
    supported on 
    $$\Sht^\loc_{\cI,\mu} \xleftarrow{\id} \Sht^\loc_{\cI,\mu} \xrightarrow{\phi^s}\Sht^\loc_{\cI,\mu}.$$
    By \cite[Proposition 5.2.15]{DHKZ-igusa} (see also \cite[Remark 6.11]{Tame}), the relative Frobenius on $\Igs$ is canonically trivial. Therefore the commutative diagram
    $$\begin{tikzcd}
        \Sh_\mu \ar[r,"\phi^s"]\ar[d,"\loc_p"swap] & \Sh_\mu \ar[d,"\loc_p"] \\
        \Sht_{\cI,\mu} \ar[r,"\phi^s"] & \Sht_{\cI,\mu}
    \end{tikzcd}$$
    is Cartesian. Recall that the Weil sheaf $(\loc_p)^!\delta^!Z_\mu$ is identified with the nearby cycle sheaf $R\Psi(\Lambda(d/2)[d])$ on $\Sh_\mu$. 
    By Lemma \ref{lemma-coh-correspondence}, the action of $\act^\geo_{\phi^s}$ on 
    $$\Hom(\Nt_*\delta^!Z_\mu,\fI^\can)\simeq R\Gamma_c(\Sh_\mu, (\DD_{\Sh_\mu})^\omega (R\Psi(\Lambda(d/2)[d])))$$
    is induced by the cohomological correspondence 
    $$\DD((\loc_p)^!F^{-1})\colon (\phi^s)^*(\DD_{\Sh_\mu})^\omega(R\Psi(\Lambda(d/2)[d]))\to (\DD_{\Sh_\mu})^\omega(R\Psi(\Lambda(d/2)[d]))$$
    supported on 
    $$\Sh_\mu\xleftarrow{\phi^s}\Sh_\mu\xrightarrow{\id}\Sh_\mu.$$
    The nearby cycle $R\Psi(\Lambda(d/2)[d])$ is self-dual under Verdier duality. Thus the above cohomological correspondence is identified with 
    $$F\colon (\phi^s)^*R\Psi(\Lambda(d/2)[d])\to R\Psi(\Lambda(d/2)[d])$$
    defined by the Weil structure on $R\Psi(\Lambda(d/2)[d])$. Now the claim follows from the usual theory of Weil sheaves.
\end{proof}

Note that the action of the geometric Frobenius on $\Hom(\widetilde{V}_\mu\otimes \fA_b,\fI^{\can,\unip}_\spec)$ is induced by $\act_{\phi^s}\otimes\id$. Thus the Frobenius actions are compatible.

Now we consider the $\tau$-actions. Let
$m_{Z_\mu}\colon Z_\mu\to Z_\mu$
denote the monodromy action. As explained in \cite[Corollary 4.21]{ALWY-mixed-central}, there is a unipotent action of $I_{\QQ_p}$ on $Z_\mu$. Then $m_{Z_\mu}$ is given by the action of $\tau\in I_{\QQ_p}$. When pulled back to $\Sh_\mu$, the action of $m_{Z_\mu}$ on $R\Psi(\Lambda(d/2)[d])$ equals to the usual action of $\tau$ on nearby cycles. Thus the action of $\tau$ on $R\Gamma_c(\Sh_\mu,R\Psi(\Lambda(d/2)[d]))\simeq \Hom(\Nt_*\delta^!Z_\mu,\fI^\can)$ is induced by $m_{Z_\mu}^{-1}$. On the other hand, the equivalence $\BB^\unip_{G^*}$ matches the monodromy action on $\eta Z_\mu$ with  the tautological automorphism
$$\act_\tau\colon \iota_*\omega_{\hat{U}/\hat{B}}\otimes V_\mu\xrightarrow{\sim}\iota_*\omega_{\hat{U}/\hat{B}}\otimes V_\mu$$
defined by the $\hat{G}$-action on $V_\mu$. After applying the functor $\Ch^\unip_{{}^LG,\phi}$ to $({}_{\dot{w}^{-1}}\eta)_*Z_\mu\simeq\Delta_{w^{-1}}\star \eta Z_\mu$, the automorphism $\act_\tau$ becomes the tautological action of $\tau$ on $\widetilde{V}_\mu\otimes\fA_b$. Thus the isomorphism in Proposition \ref{prop-coh-of-Sh-igusa-sheaf} is also compatible with $\tau$-actions.
\end{proof}

\subsection{$S=T$ for Shimura varieties of Iwahori level structure}\label{subsection-S=T}
In this subsection, we generalize the $S=T$ result for Shimura varieties in \cite{Wu-S-equal-T} to the Iwahori level, and prove the $H_I$-actions part in Theorem \ref{thm-local-global-compatibility}.

\subsubsection{Hecke stacks and $p$-adic shtukas}
We recall the theory of $p$-adic shtukas following \cite{SW-Berkeley-notes} and \cite{PR-p-adic-shtuka}.

Let $\Perfd^\Aff_k$ be the category of affinoid perfectoid algebras over $k$. For $S=\Spa(R,R^+)\in \Perfd^\Aff_k$, we denote
$$\cY_{S,[0,\infty)}\coloneqq \Spa(W(R^+))\backslash \{[\varpi]=0\},$$
where $\varpi\in R^+$ is a pseudo-uniformizer and $[\varpi]\in W(R^+)$ is the Teichm\"uller lift of $\varpi$. We also denote
$$\cY_{S,(0,\infty)}\coloneqq \Spa(W(R^+))\backslash \{p[\varpi]=0\}.$$
Note that for $S\in\Perfd^\Aff_k$, the absolute $q$-Frobenius on $S$ lifts to Frobenius endomorphisms $\phi_S$ on $\cY_{S,[0,\infty)}$ and $\cY_{S,(0,\infty)}$. By \cite[Proposition 11.3.1]{SW-Berkeley-notes}, an untilt $S^\sharp$ of $S$ defines a closed Cartier divisor $S^\sharp\hookrightarrow \cY_{S,[0,\infty)}$. Moreover, the untilt $S^\sharp$ is defined over $\QQ_p$ if and only if the associated Cartier divisor factors through $\cY_{S,(0,\infty)}$.

For $S\in\Perfd_k^\Aff$ and an untilt $S^\sharp$, let $B^+(S^\sharp)$ denote the completion of $\cO_{\cY_{S,[0,\infty)}}$ at the ideal $\cI_{S^\sharp}$ defining $S^\sharp$. Denote $B(S^\sharp)=B^+(S^\sharp)[1/\cI_{S^\sharp}]$. If $S^\sharp$ has characteristic $0$, then $B^+(S^\sharp)=B^+_\mathrm{dR}(S^\sharp)$; and if $S^\sharp=S$ has characteristic $p$, then $B^+(S)=W(S)$. 

\begin{defn}
    Define the $p$-adic local Hecke stack $\cHk=\cHk_\cI$ to be the $v$-stack sending $S\in \Perfd_k^\Aff$ to the datam $(S^\sharp,\cE_1,\cE_2,\beta)$ where
    \begin{enumerate}
        \item $S^\sharp$ is an untilt of $S$.
        \item $\cE_1$ and $\cE_2$ are $\cI$-torsors over $\Spec B^+(S^\sharp)$.
        \item $\beta\colon \cE_2|_{\Spec B(S^\sharp)}\xrightarrow{\sim}\cE_1|_{\Spec B(S^\sharp)}$ is an isomorphism of $G$-torsors.
    \end{enumerate}
\end{defn}

Let $L^+_{\cY_{[0,\infty)}}\cI$ (resp. $L_{\cY_{[0,\infty)}}G$) denote the positive loop group (resp. loop group) over $\Spd\breve\ZZ_p$ sending $S$ to an untilt $S^\sharp$ together with an element in $\cI(B^+(S^\sharp))$ (resp. $G(B(S^\sharp))$).
Let $\cGr=\cGr_\cI\coloneqq L_{\cY_{[0,\infty)}}G/L^+_{\cY_{[0,\infty)}}\cI$ denote the Beilinson--Drinfeld affine Grassmanian over $\Spd\breve\ZZ_p$. Then there is an isomorphism $\cHk\simeq L^+_{\cY_{[0,\infty)}}\cI\backslash \cGr$. 

We recall the definition of $p$-adic shtukas.

\begin{defn}
    Let $S\in\Perfd_k^\Aff$ with an untilt $S^\sharp\hookrightarrow \cY_{S,[0,\infty)}$. An $\cI$-shtuka over $(R,R^+)$ with one leg at $S^\sharp$ is a pair $(\cE,\phi_\cE)$ where
\begin{enumerate}
    \item $\cE$ is an $\cI$-torsor over $\cY_{S,[0,\infty)}$,
    \item $\phi_\cE$ is an $\cI$-torsor isomorphism
        $$\phi_\cE\colon (\phi_{S}^*\cE)|_{\cY_{S,[0,\infty)}\backslash S^\sharp}\xrightarrow{\sim}\cE_{\cY_{S,[0,\infty)}\backslash S^\sharp}$$
        that is meromorphic along $S^\sharp$.
\end{enumerate}
Let $\cSht=\cSht_\cI$ denote the $v$-stack over $\Spd\breve\ZZ_p$ classifies $\cI$-shtukas with one leg.
\end{defn}

\begin{defn}
    Let $(\cE_1,\phi_{\cE_1}), (\cE_2,\phi_{\cE_2})$ be two $\cI$-shtukas over $S$ with one leg at $S^\sharp$. A \emph{quasi-isogeny} between $(\cE_1,\phi_{\cE_1})$ and $(\cE_2,\phi_{\cE_2})$ is an isomorphism
$\gamma\colon \cE_1|_{\cY_{S,(0,\infty)}}\xrightarrow{\sim}\cE_2|_{\cY_{S,(0,\infty)}}$
that is meromorphic along $S$ and satisfies $\phi_{\cE_1}\circ\phi^*_S(\gamma)=\gamma\circ \phi_{\cE_2}$.
\end{defn}
 
\begin{defn}
    Let $n\geq 0$. We define the $v$-stack $\Hk_n(\cSht)$ by sending $S\in \Perfd_k^\Aff$ to the groupoid of data
$$(S^\sharp, (\cE_i,\phi_{\cE_i})_{i=0,\dots,n},(\gamma_j)_{j=1,\dots,n})$$
where
\begin{enumerate}
    \item $S^\sharp$ is an untilt of $S$.
    \item For $i=0,\dots,n$, $(\cE_i,\phi_{\cE_i})$ is an $\cI$-shtuka with obe leg at $S^\sharp$.
    \item For $j=1,\dots,n$, $\gamma_j\colon (\cE_j,\phi_{\cE_j})\dashrightarrow (\cE_{j-1},\phi_{\cE_{j-1}})$ is a quasi-isogeny.
\end{enumerate}
\end{defn}

More concretely, the stack $\Hk_n(\cSht)$ classifies commutative diagrams of the form
$$\begin{tikzcd}
    \phi^*\cE_n \ar[d,dashrightarrow,"\phi_{\cE_n}"]\ar[r,dashrightarrow,"\phi^*(\gamma_n)"] & \phi^*\cE_{n-1} \ar[r,dashrightarrow,"\phi^*(\gamma_{n-1})"]\ar[d,dashrightarrow,"\phi_{\cE_{n-1}}"] & \cdots \ar[r,dashrightarrow,"\phi^*(\gamma_1)"] & \phi^*\cE_0 \ar[d,dashrightarrow,"\phi_{\cE_0}"] \\
    \cE_n \ar[r,dashrightarrow,"\gamma_n"] & \cE_{n-1} \ar[r,dashrightarrow,"\gamma_{n-1}"] & \cdots \ar[r,dashrightarrow,"\gamma_1"] & \cE_0 
\end{tikzcd}$$
where $\phi_{\cE_i}$ are modifications at $S^\sharp$ and $\gamma_j$ are modifications at $S$. The collection $\Hk_\bullet(\cSht)$ clearly form a groupoid in $v$-stacks. 

Let $\mu\in \XX_\bullet(T)$ be a dominant coweight. We say that an $\cI$-shtuka $(\cE,\phi_\cE)$ is bounded by $\mu$ if the modification $\phi_\cE^{-1}$ is bounded by the $v$-local model $\MM_{\cI,\mu}^v$ as in \cite[Definition 2.4.2]{PR-p-adic-shtuka}. We denote by $\cSht_\mu$ the closed substack of $\cSht$ classifying $\cI$-shtukas with one leg bounded by $\mu$. We can similarly define a simplicial stack $\Hk_\bullet(\cSht_\mu)$ by requiring each $\phi_{\cE_i}^{-1}$ bounded by $\MM_{\cI,\mu}^v$. Denote by $|\Hk_\bullet(\cSht_\mu)|$ the geometric realization of $\Hk_\bullet(\cSht_\mu)$ as $v$-stacks.

Let $C$ denote the completion of $\overline{\QQ}_p$. Let $O_C$ denote the ring of integers of $C$. There are natural morphisms $\Spd C\to \Spd O_C\leftarrow \Spd k$ and $\Spd O_C\to \Spd\breve\ZZ_p$.

Let $\Gr_{C}$ denote the $B^+_\mathrm{dR}$-affine Grassmannian over $C$ as in \cite[\S 19]{SW-Berkeley-notes}. For $\mu\in \XX_\bullet(T)^+$, let $\Gr_{C,\mu^*}$ denote the closed Schubert cell associated to $\mu^*$. We have the following result over generic fiber.

\begin{prop}\label{prop-shtuka-char-0}
    For $n\geq 0$, there is a natural isomorphism
    $$\Hk_n(\cSht_{C})\simeq I\backslash G(\QQ_p)\times^I G(\QQ_p)\times^I\cdots\times^I\Gr_{G,C},$$
    where there are $n$ factors of $G(\QQ_p)$. If $\mu\in \XX_\bullet(T)^+$, the above isomorphism restricts to an isomorphism
    $$\Hk_n(\cSht_{C,\mu})\simeq I\backslash G(\QQ_p)\times^I G(\QQ_p)\times^I\cdots\times^I\Gr_{C,\mu^*}.$$
    In particular, we have $\cSht_C\simeq I\backslash\Gr_{G,C}$ and $\cSht_{C,\mu}\simeq I\backslash\Gr_{C,\mu^*}$. 
\end{prop}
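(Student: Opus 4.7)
The plan is to reduce the description of $\Hk_n(\cSht_{C,\mu})$ to the well-known description of $p$-adic shtukas with legs in characteristic zero, following the Fargues--Scholze/Scholze--Weinstein framework (see \cite[\S22--23]{SW-Berkeley-notes} and \cite[\S2.4]{PR-p-adic-shtuka}). Recall that for $S=\Spa(R,R^+)\in\Perfd^\Aff_C$ with untilt $S^\sharp$ over $C$ (which necessarily has characteristic $0$), an $\cI$-shtuka $(\cE,\phi_\cE)$ over $S$ with one leg at $S^\sharp$ is equivalent, by the Fargues/Kedlaya--Liu theorem on $\phi$-modules over the Fargues--Fontaine curve, to the data of a pro-\'etale $\cI$-torsor $\PP$ on $S$ together with a modification of $\PP\times^{\cI}G$ at $S^\sharp$ as a $G$-torsor on $\Spec B^+_{\mathrm{dR}}(S^\sharp)$, i.e., a point of $\cGr_{C}\simeq \Gr_{G,C}$ at the leg. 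The first step is therefore to apply this rigidification, using $\phi_\cE$, to conclude
\[
\cSht_C \;\simeq\; (\text{pro-\'etale $\cI$-torsors on $S$}) \times \Gr_{G,C}\big/(\text{gauge}).
\]
Over $\Spa C$ with $C$ algebraically closed (hence also $C^\flat$ algebraically closed), every pro-\'etale $\cI$-torsor is trivial with automorphism group $I=\cI(\ZZ_p)$, so the first factor collapses to $\BB I$ and we obtain $\cSht_C\simeq I\backslash \Gr_{G,C}$.

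Second, I would analyze the effect of a $p$-quasi-isogeny $\gamma_j:\cE_j\dashrightarrow\cE_{j-1}$ on this rigidified description. Since $\gamma_j$ is defined on $\cY_{S,(0,\infty)}$ (hence away from the characteristic $p$ locus $V(p)$) and commutes with Frobenius, it restricts to an isomorphism of the underlying pro-\'etale $G(\QQ_p)$-local systems $\PP_j\times^{\cI}G(\QQ_p)\simeq \PP_{j-1}\times^{\cI}G(\QQ_p)$ while possibly changing the $\cI$-reduction; on the other hand, the leg data is unchanged (both shtukas have the same leg, so $\gamma_j$ is automatically an iso near $S^\sharp$ compatible with the modification). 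Over $\Spa C$, after trivializing $\PP_0$, the collection of $\cI$-reductions $(\PP_n,\ldots,\PP_0)$ with chosen identifications $\PP_j\times^{\cI}G(\QQ_p)\simeq G(\QQ_p)$ is precisely the data of an $n$-tuple in $I\backslash G(\QQ_p)\times^I\cdots\times^I G(\QQ_p)/I$, while the leg contributes $\Gr_{G,C}$. Convolving everything yields the claimed isomorphism
\[
\Hk_n(\cSht_C)\;\simeq\; I\backslash G(\QQ_p)\times^I\cdots\times^I G(\QQ_p)\times^I \Gr_{G,C}.
\]

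Third, the boundedness condition transfers straightforwardly: by \cite[Definition 2.4.2, Proposition 2.4.\ast]{PR-p-adic-shtuka}, the bound by the $v$-local model $\MM^v_{\cI,\mu}$ at the leg corresponds in characteristic zero precisely to the Schubert-cell bound $\Gr_{C,\mu^*}\subseteq \Gr_{G,C}$ (the appearance of $\mu^*=-w_0\mu$ reflects our convention that bounding $\phi_\cE^{-1}$ by $\mu$ bounds the modification $\phi_\cE$ by $\mu^{-1}$, whose dominant Weyl translate is $\mu^*$); since the $p$-quasi-isogenies are unrestricted modifications away from the leg, the full Hecke stack $\Hk_n(\cSht_{C,\mu})$ is obtained simply by replacing the $\Gr_{G,C}$ factor by $\Gr_{C,\mu^*}$.

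The main technical obstacle will be making the Fargues--Scholze rigidification of the shtuka $\cE$ (away from the leg) functorial in $S$ and compatible with $p$-quasi-isogenies, so that it assembles into a morphism of $v$-stacks rather than just an equivalence of groupoids pointwise; this requires the full strength of the pro-\'etale comparison between $\phi$-modules on $\cY_{S,[0,\infty)}\setminus S^\sharp$ and $G(\QQ_p)$-local systems on $S$ (cf.\ \cite[Theorem 22.5.2]{SW-Berkeley-notes}) applied relatively to perfectoid $C$-algebras, together with the triviality result of \cite[Proposition 22.2.1]{SW-Berkeley-notes} ensuring that the characteristic-$p$ modifications are captured by the double coset $I\backslash G(\QQ_p)/I$. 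Once this rigidification is in place, the identification of the simplicial face and degeneracy maps with the convolution structure is formal.
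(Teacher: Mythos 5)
Your proposal takes essentially the same route as the paper: rigidify the shtuka over characteristic zero into a pro-\'etale $I$-torsor together with a modification at the leg (hence a map to $\Gr_{G,C}$), observe that $p$-quasi-isogenies are precisely isomorphisms of the associated $G(\QQ_p)$-torsors compatible with the leg data, and transfer boundedness via the $v$-local model. The paper simply packages your Steps 1 and 3 by citing \cite[Propositions 2.5.1 and 2.5.2]{PR-p-adic-shtuka}, which already give the rigidification and its bounded refinement functorially in $S$, so the functoriality point you flag as the "main technical obstacle" is outsourced to that reference rather than re-derived from \cite{SW-Berkeley-notes}.
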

\begin{proof}
    Let $S\in\Perfd_k^\Aff$ with an untilt $S^\sharp$ over $C$. By \cite[Proposition 2.5.1]{PR-p-adic-shtuka}, giving an $\cI$-shtuka $(\cE,\phi_\cE)$ with one leg at $S^\sharp$ is equivalent to giving an $I$-torsor $\PP_\cE$ over $S$ together with a $I$-equivariant morphism $\PP_\cE\to \Gr_{C}$. Moreover, two shtukas $(\cE_1,\phi_{\cE_1})$ and $(\cE_2,\phi_{\cE_2})$ are $p$-isogenous if and only there is an isomorphism $\PP_{\cE_1}\times^IG(\QQ_p)\simeq \PP_{\cE_2}\times^IG(\QQ_p)$ of $G(\QQ_p)$-torsors that is compactible with the morphisms to $\Gr_{G,C}$. This proves the first isomorphism. The bounded-by-$\mu$ case follows from \cite[Proposition 2.5.2]{PR-p-adic-shtuka}.
\end{proof}

By Proposition \ref{prop-shtuka-char-0}, we know that there is an isomorphism
$$|\Hk_\bullet(\cSht_{C,\mu})|\simeq G(\QQ_p)\backslash\Gr_{C,\mu^*}$$
over the generic fiber.

\subsubsection{$p$-quasi-isogeny stack}
Let $X$ be a scheme over $\breve\ZZ_p$. We recall two ways to associate a $v$-sheaf to $X$:
\begin{enumerate}
    \item Let $X^\diamond$ be the $v$-sheafification of the presheaf sending $(R,R^+)\in\Perfd_k^\Aff$ to an untilt $(R^\sharp,R^{\sharp+})$ and a morphism $\Spec R^{\sharp+}\to X$.
    \item Let $X^\Diamond$ be the $v$-sheafification of the presheaf sending $(R,R^+)\in\Perfd_k^\Aff$ to an untilt $(R^\sharp,R^{\sharp+})$ and a morphism $\Spec R^{\sharp}\to X$.
\end{enumerate}
If $X$ is a scheme over $k$, then the $v$-sheaves $X^\diamond$ and $X^\Diamond$ depends only on the perfection $X^\perf$ of $X$. We can extend the above definitions to perfect stacks over $k$. There is a natural morphism $j_X\colon X^\diamond\to X^\Diamond$ of $v$-stacks. If $X$ is separated scheme of finite type over $\breve\ZZ_p$, then $j_X$ is an open immersion. If moreover $X$ is proper, then $j_X$ is an isomorphism.

\begin{prop}
    There is are natural isomorphisms $(\Fl_\cI)^\Diamond\simeq\cGr_{\cI,k}$, $(\Iw)^\Diamond\simeq(L^+_{\cY_{S,[0,\infty)}}\cI)_k$, and $(\Iw\backslash LG/\Iw)^\Diamond\simeq \cHk_{\cI,k}$.
\end{prop}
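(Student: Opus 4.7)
The plan is to compare both sides at the level of functors of points on affinoid perfectoids over $k$, treating the three identifications as three instances of the same underlying Bhatt--Scholze-type comparison between the Witt-vector and Beilinson--Drinfeld formulations. I would set up the comparison for the positive loop group first, then bootstrap the others from it.

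First I would handle $(\Iw)^\Diamond \simeq (L^+_{\cY_{[0,\infty)}}\cI)_k$. Since $\Iw$ is a perfect $\kappa_F$-scheme, any morphism $\Spec R^\sharp \to \Iw$ forces the untilt to have characteristic $p$, so $R^\sharp = R$ and the presheaf underlying the left-hand side is $(R,R^+) \mapsto \cI(W(R))$. On the right-hand side, for a characteristic-$p$ untilt we have $B^+(R) = W(R^+)$, hence the functor $(R,R^+) \mapsto \cI(W(R^+))$. The natural inclusion $W(R^+) \hookrightarrow W(R)$ gives a comparison map; I would show it becomes an isomorphism of $v$-sheaves by reducing to strictly totally disconnected $(R,R^+)$. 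For such test objects, $W(R^+)$ is $p$-henselian, and the smoothness of $\cI$ combined with standard Witt-vector descent identifies $\cI$-points valued in $W(R)$ with those valued in $W(R^+)$ after $v$-sheafification.

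Next I would extend the argument from $\Iw$ to the full loop group, obtaining $(LG)^\Diamond \simeq (L_{\cY_{[0,\infty)}} G)_k$ by applying the same strategy over the punctured disk $\Spec W(R^+)[1/p]$. Quotienting by $\Iw$ then yields $(\Fl_\cI)^\Diamond \simeq \cGr_{\cI,k}$, since $(-)^\Diamond$ preserves $v$-stack quotients by representable affine group sheaves. The third identification $(\Iw\backslash LG/\Iw)^\Diamond \simeq \cHk_{\cI,k}$ follows immediately by forming the two-sided quotient on both sides, using the previous two isomorphisms applied to each factor.

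The technical heart of the argument is the loop-group comparison, which is essentially the Bhatt--Scholze theorem on the Witt-vector affine Grassmannian combined with its reformulation in the diamond framework (cf.\ \cite[\S 20--21]{SW-Berkeley-notes}). The main obstacle I expect is bookkeeping around the ind-scheme stratifications: one must verify that the perfect Schubert stratification of $\Fl_\cI$ matches the Schubert stratification of $\cGr_{\cI,k}$ under $(-)^\Diamond$, stratum by stratum. This compatibility is implicit in the cited works, and once it is in hand the remaining identifications for $\Iw$ and for the Hecke stack follow by formal quotient manipulations in the $v$-topology.
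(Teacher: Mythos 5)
There is a genuine error at the very first step of your argument, and once you notice it the whole proposition becomes a tautology — which is why the paper's proof is a single line.

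You claim that for a characteristic-$p$ untilt one has $B^+(R)=W(R^+)$. This is not correct. The ring $B^+(S^\sharp)$ is defined as the completion of $\cO_{\cY_{S,[0,\infty)}}$ along the ideal $\cI_{S^\sharp}$, and for $S^\sharp=S$ that ideal is $(p)$. But on $\cY_{S,[0,\infty)}=\Spa(W(R^+))\setminus\{[\varpi]=0\}$ the element $[\varpi]$ is already invertible, so the relevant ring before $p$-adic completion is $W(R^+)[1/[\varpi]]$, and
\[
W(R^+)[1/[\varpi]]^\wedge_p=\lim_n W_n(R^+)[1/[\varpi]]=\lim_n W_n\big(R^+[1/\varpi]\big)=\lim_n W_n(R)=W(R),
\]
using that $W_n(-)$ commutes with inverting a Teichm\"uller element on perfect rings. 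Hence $B^+(S)=W(R)$, not $W(R^+)$. With the correct formula, both the left and right sides of the first identification assign to $(R,R^+)$ literally the same set $\cI(W(R))$: on the left because a map $\Spec R^\sharp\to\Iw$ forces $R^\sharp=R$ (as $\Iw$ is a $k$-scheme) and then equals $\Iw(R)=\cI(W(R))$, on the right by the computation of $B^+$. There is nothing to sheafify, nothing to descend, and no henselianity argument is needed; this is the content of ``clear from definitions.''

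Your attempted repair — that $\cI(W(R^+))$ and $\cI(W(R))$ become the same after $v$-sheafification because $W(R^+)$ is $p$-henselian on strictly totally disconnected spaces — does not work and would have to fail. The presheaf $(R,R^+)\mapsto\cI(W(R^+))$ is (up to sheafification) precisely $(\Iw)^\diamond$, while $(R,R^+)\mapsto\cI(W(R))$ is $(\Iw)^\Diamond$, and the map $j_{\Iw}\colon (\Iw)^\diamond\to(\Iw)^\Diamond$ is a strict open immersion, not an isomorphism, since $\Iw$ is affine and very far from proper (the paper itself records that $j_X$ is an isomorphism only for proper $X$). Henselianity is irrelevant here: already for $R=C$ an algebraically closed perfectoid field, $\cI(W(\cO_C))\subsetneq \cI(W(C))$, and no $v$-cover can close that gap. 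The downstream worries about Schubert stratification are likewise unnecessary: once $B^+(S)=W(R)$ is in hand, the identifications for $\Fl_\cI$ and $\Iw\backslash LG/\Iw$ follow by forming the corresponding quotients of the loop group functor, exactly as you envisage, but without any comparison between $W(R^+)$ and $W(R)$.
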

\begin{proof}
    Clear from definitions.
\end{proof}

By \cite[Theorem VII]{DHKZ-igusa}, there is a Cartesian diagram
$$\begin{tikzcd}
    \scS_{K^pI}(\sG,\sX)_{O_C}^\diamond \ar[r,"\loc_p"]\ar[d,"\BL^\glob"swap] & \cSht_{O_C,\mu} \ar[d,"\BL"] \\
    \Igs \ar[r,"\loc_p^0"] & \Bun_{G,\leq\mu^*}
\end{tikzcd}$$
of $v$-stacks over $k$, where $\Bun_G$ is the moduli stack of $G$-bundles over the Fargues--Fontaine curve. Note that the notations of morphisms are different with \emph{loc. cit.}. We use $\loc_p$ to denote the crystalline period map $\pi_{\crys,\cI}$ and $\loc_p^0$ to denote the Hodge--Tate period map $\bar\pi_\HT$ of the Igusa stack, so that the notations is consistent with Theorem \ref{thm-igusa}. After applying the reduction functor in \cite{GL-meromorphic}, we obtain the $h$-sheafification of the diagram in Theorem \ref{thm-igusa}. For simplicity, we denote $\scS^\diamond_{O_C}=\scS_{K^pI}(\sG,\sX)_{O_C}^\diamond$ in this subsection.

\begin{prop}\label{prop-p-isog-stack}
    The pullback of the groupoid $\Hk_\bullet(\cSht_{O_C,\mu})$ defines a groupoid $\Hk_\bullet(\scS^\diamond_{O_C})$ over $\scS^\diamond_{O_C}$.
\end{prop}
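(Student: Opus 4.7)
The plan is to first identify $\Hk_\bullet(\cSht_{O_C,\mu})$ as the \v{C}ech nerve of $\BL\colon \cSht_{O_C,\mu}\to \Bun_{G,\leq\mu^*}$, then use the Cartesian diagram recalled just before the proposition to transfer the \v{C}ech-nerve structure to $\scS^\diamond_{O_C}$. Once this is done, the pullback is automatically a groupoid, since a \v{C}ech nerve is always a groupoid.

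First I would verify that for each $n\geq 0$ there is a natural isomorphism
\[
\Hk_n(\cSht_{O_C,\mu})\;\simeq\; \cSht_{O_C,\mu}\times_{\Bun_{G,\leq\mu^*}}\cdots\times_{\Bun_{G,\leq\mu^*}}\cSht_{O_C,\mu}
\]
($n+1$ factors), compatible with face and degeneracy maps. The content here is the description of $p$-quasi-isogenies in terms of the $\BL$-map: by definition, a $p$-quasi-isogeny $\gamma\colon (\cE_1,\phi_{\cE_1})\dashrightarrow (\cE_2,\phi_{\cE_2})$ is an isomorphism on $\cY_{S,(0,\infty)}$ which intertwines the Frobenius structures, so it descends to an isomorphism of the underlying $G$-bundles on the Fargues--Fontaine curve $\cY_{S,(0,\infty)}/\phi_S^{\mathbb{Z}}$. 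Conversely, any such isomorphism on the Fargues--Fontaine curve lifts uniquely to $\cY_{S,(0,\infty)}$ (meromorphic along $S^\sharp$) and produces a $p$-quasi-isogeny, which is exactly the $v$-sheaf-theoretic description of $\Bun_G$ underlying the definition of $\BL$.

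Once this is established, the proposition follows formally. Using the Cartesian square
\[
\scS^\diamond_{O_C}\;\simeq\;\Igs\times_{\Bun_{G,\leq\mu^*}}\cSht_{O_C,\mu},
\]
one computes for each $n$,
\[
\scS^\diamond_{O_C}\times_{\cSht_{O_C,\mu}}\Hk_n(\cSht_{O_C,\mu}) \;\simeq\; \Igs\times_{\Bun_{G,\leq\mu^*}}\cSht_{O_C,\mu}^{\times_{\Bun_{G,\leq\mu^*}}(n+1)} \;\simeq\; \scS^\diamond_{O_C}\times_{\Igs}\cdots\times_{\Igs}\scS^\diamond_{O_C}
\]
(with $n+1$ factors on the right), the last isomorphism using that both expressions unwind to the groupoid of tuples $(i, s_0,\ldots,s_n)$ with $i\in\Igs$, $s_j\in\cSht_{O_C,\mu}$, and $\BL(s_j)=\BL^\glob(i)$ for all $j$. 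Defining $\Hk_n(\scS^\diamond_{O_C})$ as this pullback (along, say, the $0$-th face map) therefore yields the \v{C}ech nerve of $\loc_p^0\circ\BL^\glob\colon\scS^\diamond_{O_C}\to\Igs$ pulled back through any one of its face maps, and this is a groupoid in $v$-stacks with the evident face and degeneracy maps inherited from $\Hk_\bullet(\cSht_{O_C,\mu})$.

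The main (and essentially only) obstacle is the first step, namely the identification of $\Hk_\bullet(\cSht_{O_C,\mu})$ with the \v{C}ech nerve of $\BL$. The subtle point is that $p$-quasi-isogenies are only required to be meromorphic along the leg $S^\sharp$, and one must check that this matches exactly the notion of $G$-bundle isomorphism on the Fargues--Fontaine curve used to define $\Bun_G$. Granted this, the rest of the argument is a purely formal manipulation of fiber products in the $v$-topology, and in particular does not require any further geometric input about the Shimura variety, the Igusa stack, or the boundedness condition $\mu$.
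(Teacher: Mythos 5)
Your overall plan—express $\scS^\diamond_{O_C}$ as a fiber product over $\Bun_{G,\leq\mu^*}$ using the Cartesian square, and then base-change the simplicial structure on $\Hk_\bullet(\cSht_{O_C,\mu})$—is the same skeleton as the paper's argument. But you insert a much stronger intermediate claim than the paper needs, and it is precisely the point you wave through as ``granted'' that is the issue.

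Specifically, you claim that $\Hk_\bullet(\cSht_{O_C,\mu})$ \emph{is} the \v{C}ech nerve of $\BL\colon\cSht_{O_C,\mu}\to\Bun_{G,\leq\mu^*}$. In degree $1$ this is the statement that, given two bounded $\cI$-shtukas over $S$ together with a $\phi_S$-equivariant isomorphism of their restrictions over $\cY_{S,(0,\infty)}$ (equivalently, an isomorphism of the corresponding $G$-bundles on the Fargues--Fontaine curve), there is a unique $p$-quasi-isogeny inducing it. Uniqueness is clear, but existence requires showing that the $\phi_S$-equivariant isomorphism over $\cY_{S,(0,\infty)}$ is automatically meromorphic along $S=\{p=0\}$. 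That is a genuine assertion about pole growth under Frobenius iteration and is not addressed in your write-up; in fact you mislocate the condition, writing ``meromorphic along $S^\sharp$'' when the definition of a $p$-quasi-isogeny demands an honest isomorphism away from $S$ that is meromorphic along $S$ (the characteristic-$p$ locus), not along the leg $S^\sharp$. So the ``main obstacle'' you name is a real gap and you have not filled it.

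The paper avoids this entirely. Its proof does \emph{not} identify $\Hk_\bullet(\cSht_{O_C,\mu})$ with the \v{C}ech nerve of $\BL$; it identifies it (tautologically) with the \v{C}ech nerve of $\cSht_{O_C,\mu}\to|\Hk_\bullet(\cSht_{O_C,\mu})|$, and then only uses the \emph{easy} direction—that $p$-quasi-isogenous shtukas have isomorphic images under $\BL$, so $\BL$ factors through $|\Hk_\bullet(\cSht_{O_C,\mu})|\to\Bun_{G,\leq\mu^*}$. With that factorization the Cartesian square rewrites as
\[
\scS^\diamond_{O_C}\;\simeq\;\Bigl(\Igs\times_{\Bun_{G,\leq\mu^*}}|\Hk_\bullet(\cSht_{O_C,\mu})|\Bigr)\times_{|\Hk_\bullet(\cSht_{O_C,\mu})|}\cSht_{O_C,\mu},
\]
and base change of \v{C}ech nerves gives the pullback groupoid with no need for effectivity of $\BL$. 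If you replace your first step by this factorization (or supply a reference or proof of the automatic meromorphy for bounded shtukas), the rest of your formal manipulation goes through; as written there is a gap.
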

\begin{proof}
    The groupoid $\Hk_\bullet(\cSht_{O_C,\mu})$ is identified with the \v{C}ech nerve of $\cSht_{O_C,\mu}\to |\Hk_\bullet(\cSht_{O_C,\mu})|$. The morphism $\BL$ is defined by sending an $\cI$-shtuka $(\cE,\phi_\cE)$ over $S$ to its restriction $\cE_{\cY_{S,[r,\infty)}}$ for $r\gg 0$ together with $\phi$-structure. Therefore $\BL$ factors through a morphism $|\Hk_\bullet(\cSht_{O_C,\mu})|\to \Bun_{G,\leq\mu^*}$. Therefore the pullback of $\Hk_\bullet(\cSht_{O_C,\mu})$ to $\scS^\diamond_{O_C}$ is identified with the \v{C}ech nerve of $$\scS^\diamond_{O_C}\to \Igs\times_{\Bun_{G,\leq\mu^*}} |\Hk_\bullet(\cSht_{O_C,\mu})|.$$
    In particular, there is a commutative diagram
    $$\begin{tikzcd}
        \scS_{O_C}^\diamond \ar[d] & \Hk_1(\scS_{O_C}^\diamond) \ar[l]\ar[r]\ar[d]\ar[ld,phantom,"\square",very near start]\ar[rd,phantom,"\square",very near start] & \scS_{O_C}^\diamond \ar[d] \\
        \cSht_\mu & \Hk_1(\cSht_\mu) \ar[l]\ar[r] & \cSht_\mu,
    \end{tikzcd}$$
    where both squares are Cartesian.
\end{proof}

We can view $\Hk_1(\scS_{O_C}^\diamond)$ as the $v$-stack which classifies $p$-quasi-isogenies between points in $\scS_{O_C}^\diamond$.

\subsubsection{$S$-operators}

In this subsection, whenever we have a $v$-stack $X_{O_C}$ over $\Spd O_C$, we denote by
$$X_k\xrightarrow{i} X_{O_C}\xleftarrow{j} X_C$$
the embedding of the special fiber and generic fiber respectively.

We first recall the definition of central sheaves in \cite{ALWY-mixed-central}. Let $L$ be a finite extension of $\QQ_\ell$ with a uniformizer $\varpi_L\in O_L$. Denote $\Lambda=O_L/\varpi_L^n$ for some integer $n\geq 1$. Let $D_\et(-,\Lambda)$ denote the \'etale sheaf theory over $v$-stacks defined in \cite[Definition 14.13]{Scholze-diamonds}. Let $\cS_\mu\in D_\et^\ULA(\cHk_{C},\Lambda)$ denote the perverse sheaf associated to $V_\mu$ under the geometric Satake equivalence in \cite[Chapter VI]{FS}. As $\mu$ is minuscule, $\cS_\mu$ is simply a twist of constant sheaf on $\cHk_{C,\mu}$. By \cite[Proposition 6.12]{AGLR-local-model}, the pullback
$$j^*\colon D_\et^\ULA(\cHk_{O_C},\Lambda)\to D_\et^\ULA(\cHk_{C},\Lambda)$$
is an equivalence of categories with inverse given by $j_*$. Thus the object
$$\cZ_\mu\coloneqq j_*\cS_\mu$$
is universally locally acyclic relative to $\cHk_{O_C}\to\Spd O_C$. 

 \begin{rmk}
We recall that by definition 
\[
D_\et^\ULA(\cHk_k,\Lambda)=\colim_{w}\,\colim_{n}\, D_\et^{\ULA}((\Iw_n\backslash LG_{\leq w}/\Iw)^{\Diamond},\Lambda),
\]
where $LG_{\leq w}$ is the closure of $LG_w$ in $LG$, and $\Iw_n=L^{n}\cI$ is the $n$-th jet group of $\cI$ such that the action of $\Iw$ on $\Fl_{\cI,\leq w}=LG_{\leq w}/\Iw$ factors through the action of $\Iw_n$. As proved in \cite[Proposition 6.7, Proposition A.5]{AGLR-local-model}, the natural comparison functor from \cite[\S 27]{Scholze-diamonds} induces an equivalence $D_\et^{\ULA}((\Iw_n\backslash LG_{\leq w}/\Iw)^{\Diamond},\Lambda)\cong \Shv^*_c(\Iw_n\backslash LG_{\leq w}/\Iw,\Lambda)$, where $\Shv^*_c$ is the category of usual constructible sheaves on the perfect algebraic stack $\Iw_n\backslash LG_{\leq w}/\Iw=\Iw_n\backslash \Fl_{\cI,\leq w}$.

 By \cite[Remark 10.89, Remark 10.131]{Tame}, there is a canonical equivalence
$$\Shv_\fingen(\Iw_n\backslash LG_{\leq w}/\Iw,\Lambda)\simeq \Shv^*_{c}(\Iw_n\backslash LG_{\leq w}/\Iw,\Lambda),$$
fitting into the following commutative diagram
$$\begin{tikzcd}
    \Shv_\fingen(\Iw_n\backslash LG/\Iw,\Lambda)\ar[r,"\sim"]\ar[d,"t^!"swap] & \Shv^*_c(\Iw_n\backslash LG/\Iw,\Lambda) \ar[d,"t^*"] \\
    \Shv_\fingen(\Fl_{\cI,\leq w},\Lambda) \ar[r,"\sim"] & \Shv_c^*(\Fl_{\cI,\leq w},\Lambda),
\end{tikzcd}$$
where $t\colon \Fl_{\cI}\to \Iw\backslash LG/\Iw$ is the projection, and where the lower arrow is the canonical equivalence $\Shv_\fingen(\Fl_\cI,\Lambda)=\Shv_c^*(\Fl_{\cI,\leq w},\Lambda)^{\mathrm{op}}\simeq \Shv_c^*(\Fl_{\cI,\leq w},\Lambda)$ given by the usual Verider duality of $\Shv_c^*(\Fl_{\cI,\leq w},\Lambda)$. 

It follows that we have a canonical equivalence $$
D_{\et}^\ULA(\cHk_k,\Lambda)\cong \Shv_\fingen(\Iw\backslash LG/\Iw,\Lambda).$$ 

Under this equivalence, the pullback $Z_\mu^*\coloneqq i^*\cZ_\mu\in D_\et^\ULA(\cHk_k,\Lambda)$ corresponds to an object in $\Shv_\fingen(\Iw\backslash LG/\Iw,\Lambda)$, which is (by defintion) the central sheaf $Z_\mu$. Note that $Z_\mu$ belongs to $\Shv_{\fingen}(\Iw\backslash LG_{\leq w}/\Iw,\Lambda)$ for some $w$. By abuse of notations, we shall also write $Z_\mu^*$ for the corresponding object in $\Shv_c^*(\Iw\backslash LG_{\leq w}/\Iw,\Lambda)$ under the above mentioned equivalence from \cite[Remark 10.89, Remark 10.131]{Tame}.

\end{rmk}
 
Consider the morphism
$$\delta\colon \cSht_{O_C,\mu}\to \cHk_{O_C}$$
sending a $p$-adic shtuka $\phi_\cE\colon\phi^*\cE\dashrightarrow \cE$ with one leg at $S^\sharp$ to the modification $(\phi_\cE)^{-1}\colon \cE|_{\Spec B^+(S^\sharp)}\dashrightarrow\phi^*\cE|_{\Spec B^+(S^\sharp)}$. The goal of this subsection is to define the $S$-operator as a cohomological self-correspondence on $(\cSht_\mu,\delta^*\cZ_\mu)$. 

We define some iterated Hecke stacks following \cite{AR-central}. Let $\sy\in \{\sx,\szero,\sx\cup \szero\}$ be a symbol. Let $S\in\Perfd^\Aff_k$ together with an untilt $S^\sharp$ over $O_C$. Denote
$$\Gamma_\sy=\left\{\begin{aligned}
    & S^\sharp, &&\text{if }\sy=\sx,\\
    & S, &&\text{if }\sy=\szero,\\
    & S^\sharp\cup S, &&\text{if }\sy=\sx\cup \szero,
\end{aligned}\right.$$
as Zariski closed subspaces of $\cY_{S,[0,\infty)}$. We denote by $B_{\sx\cup\szero}^+(S^\sharp)$ the completion of $\cO_{\cY_{S,[0,\infty)}}$ at $S^\sharp\cup S$.

\begin{defn}
    For a sequence $\underline{\sy}=(\sy_1,\dots,\sy_n)$ of symbols in $\{\sx,\szero,\sx\cup \szero\}$, define the iterated Hecke stack $\cHk_{O_C}(\underline{\sy})$ over $\Spd O_C$ by sending $S\in\Perfd^\Aff_k$ to the groupoid of $(S^\sharp,(\cE_t)_{t=0,\dots,n},(\beta_t)_{t=1,\dots,n})$ where
    \begin{enumerate}
        \item $S^\sharp$ is an untilt of $S$ over $O_C$.
        \item For $t=0,\dots,n$, $\cE_t$ is an $\cI$-torsors over $\Spec B_{x\cup p}^+(S^\sharp)$.
        \item For $t=1,\dots,n$, 
            $$\beta_t\colon \cE_t|_{\Spec B_{x\cup p}^+(S^\sharp)\backslash \Gamma_{\sy_t}}\xrightarrow{\sim}\cE_{t-1}|_{\Spec B_{x\cup p}^+(S^\sharp)\backslash \Gamma_{\sy_t}}$$
            is an isomorphism of $\cI$-torsors.
    \end{enumerate}
\end{defn}

There is a natural morphism $p_{\underline{\sy}}\colon \cHk_{O_C}(\underline{\sy})\to\prod_{i=1}^n\cHk_{O_C}(\sy_i)$. Similarly, we can define the stack of iterated shtuka.

\begin{defn}
    For a sequence $\underline{\sy}=(\sy_1,\dots,\sy_n)$ of symbols in $\{\sx,\szero,\sx\cup \szero\}$, define the $v$-stack $\cSht_{O_C}(\underline{\sy})$ by sending $S\in\Perfd^\Aff_k$ to the groupoid of the following data:
    \begin{enumerate}
        \item An untilt $S^\sharp$ of $S$ over $O_C$.
        \item For $t=0,\dots,n$, an $\cI$-torsor $\cE_t$ over $\cY_{S,[0,\infty)}$.
        \item For $t=1,\dots,n$, an isomorphism
        $$\beta_t\colon \cE_{t}|_{\cY_{S,[0,\infty)}\backslash \Gamma_\sy}\xrightarrow\sim  \cE_{t-1}|_{\cY_{S,[0,\infty)}\backslash \Gamma_\sy}$$
        of $\cI$-torsors that is meromorphic along $\Gamma_\sy$.
        \item An isomorphism $\alpha\colon \cE_n\simeq \phi_S^*\cE_0$.
    \end{enumerate}
\end{defn}

If $\underline{\sy}=(\sy_1,\dots,\sy_n)$ is a sequence of symbols, we denote $\underline{\sy}^c=(\sy_n,\dots,\sy_1)$. Let $$\delta_{\underline{\sy}}\colon \cSht_{O_C}(\underline{\sy})\to \cHk_{O_C}(\underline{\sy}^c)$$
be the morphism sending $(S^\sharp, (\cE_i)_{i=0,\dots,n},(\beta_i)_{i=1,\dots,n},\alpha)$ to $(S^\sharp,(\cE_{n-t})_{t=0,\dots,n},((\beta_{n+1-t})^{-1})_{t=1,\dots,n}))$.

Assume that $\underline{\sy}=(\sy_1,\dots,\sy_n)$ is a sequences of symbols with exactly one $\sy_t$ containing $\sx$. As explained in \cite[\S 2.3.7 and \S 2.3.8]{AR-central},
the special fiber $\cHk_k(\underline{\sy})$ is identified with the $n$-iterated affine Hecke stack 
$$(\underbrace{\Iw\backslash LG\times^\Iw LG\times^\Iw\cdots \times^\Iw LG/\Iw}_{n\text{ terms of }LG})^\Diamond.$$
The generic fiber $\cHk_C(\underline{\sy})$ splits into a product
$$\cHk_C\times (\underbrace{\Iw\backslash LG\times^\Iw LG\times^\Iw\cdots \times^\Iw LG/\Iw}_{m\text{ terms of }LG})^\Diamond,$$
where $m$ is the number of those $\sy_t$ containing $\szero$. There is a natural embedding $\cHk_{O_C}(\sx)\hookrightarrow\cHk_{O_C}(\sx\cup \szero)$ identifying $\cHk_{O_C}(\sx)$ with the closure of 
$$\cHk_{C}\times (\BB\Iw)^\Diamond\hookrightarrow \cHk_C\times (\Iw\backslash LG/\Iw)^\Diamond\simeq \cHk_C(\sx\cup\szero).$$

We study some special cases that will be needed later. Consider $\cHk_{O_C}(\szero,\sx\cup \szero)$. We have 
$$\cHk_{C}(\szero,\sx\cup \szero)\simeq \cHk_C\times (\Iw\backslash LG\times^\Iw LG/\Iw)^\Diamond\quad\text{and}\quad\cHk_{k}(\szero,\sx\cup \szero)\simeq (\Iw\backslash LG\times^\Iw LG/\Iw)^\Diamond.$$ Let $\cF_1,\cF_2\in D_\et^\ULA(\cHk_k,\Lambda)$ be two objects. 
Denote 
$$\cF_1\widetilde\boxtimes \cF_2=\Delta^*(\cF_1\boxtimes \cF_2)\in D_\et^\ULA((\Iw\backslash LG\times^\Iw LG/\Iw)^\Diamond,\Lambda)$$
for $\Delta\colon \Iw\backslash LG\times^\Iw LG/\Iw\to (\Iw\backslash LG/\Iw)^2$ sending $(\cE_2\dashrightarrow\cE_1\dashrightarrow\cE_0)$ to $(\cE_1\dashrightarrow\cE_0,\cE_2\dashrightarrow\cE_1)$. Consider the sheaf $\cS_\mu\boxtimes (\cF_1\widetilde\boxtimes \cF_2)\in D_{\et}^\ULA(\cHk_{C}(\szero,\sx\cup \szero),\Lambda).$ Then the object 
$$j_*(\cS_\mu\boxtimes (\cF_1\widetilde\boxtimes \cF_2))\in D_{\et}^\ULA(\cHk_{O_C}(\szero,\sx\cup \szero),\Lambda)$$ 
is universally locally acyclic over $\Spd O_C$.

\begin{lemma}\label{lemma-special-fiber-iterated}
    The object $i^*j_*(\cS_\mu\boxtimes (\cF_1\widetilde\boxtimes \cF_2))\in D_\et^\ULA(\cHk_k(\szero,\sx\cup \szero),\Lambda)$ is identified with
    $\cF_1\widetilde\boxtimes (Z^*_\mu\star \cF_2)$
    under the isomorphism $\cHk_{k}(\szero,\sx\cup \szero)\simeq (\Iw\backslash LG\times^\Iw LG/\Iw)^\Diamond$.
\end{lemma}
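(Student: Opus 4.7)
The plan is to reduce the statement to the fundamental centrality property that characterizes the central sheaf $Z^*_\mu$ as the nearby cycles of $\cS_\mu$ compatible with convolution, and then deploy the ULA base change in the $v$-stack setting.

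First, I would separate out the outer $\szero$-modification (encoded by $\cF_1$), which is geometrically independent of the collision of legs. There is a natural smooth forgetful morphism $p \colon \cHk_{O_C}(\szero,\sx\cup\szero) \to \cHk_{O_C}(\sx\cup\szero)$ remembering only the second (collapsing) modification, whose fibers are copies of $(\Iw\backslash LG/\Iw)^{\Diamond}_{O_C}$. Under this map the sheaf $\cS_\mu \boxtimes (\cF_1 \widetilde{\boxtimes} \cF_2)$ pulls back from an external-product-style object whose $\cF_1$-part lives purely on the constant (non-collapsing) factor. Since $i^*j_*$ commutes with pullback along pro-smooth morphisms and with external products whose second factor is supported over $\Spd k$, it suffices to prove the one-leg version
\[
 i^* j_* (\cS_\mu \boxtimes \cF_2) \;\simeq\; Z^*_\mu \star \cF_2 \qquad \text{in } D_{\et}^{\ULA}\bigl( (\Iw\backslash LG\times^{\Iw} LG/\Iw)^{\Diamond},\Lambda\bigr).
\]

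Second, this is precisely the content of the Gaitsgory-style construction of central sheaves, transposed into the Fargues--Scholze setting. Concretely, the stack $\cHk_{O_C}(\sx\cup\szero)$ is a Beilinson--Drinfeld-type integral model whose generic fiber $\cHk_C \times (\Iw\backslash LG/\Iw)^{\Diamond}$ supports the external product $\cS_\mu \boxtimes \cF_2$, while its special fiber is $(\Iw\backslash LG/\Iw)^{\Diamond}_k$ with a natural factorization as the image of the convolution (multiplication) map
\[
 m \colon (\Iw\backslash LG \times^{\Iw} LG/\Iw)^{\Diamond} \longrightarrow (\Iw\backslash LG/\Iw)^{\Diamond}.
\]
The composition $Z^*_\mu \star \cF_2$ is, by definition, $m_*$ applied to $Z^*_\mu \widetilde{\boxtimes} \cF_2$, and the fusion interpretation provides a canonical identification of $m_*(Z^*_\mu \widetilde{\boxtimes}\cF_2)$ with $i^* j_* (\cS_\mu \boxtimes \cF_2)$.

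To finalize, I would invoke the ULA base change. By \cite[Proposition 6.7, Proposition A.5]{AGLR-local-model} the object $\cZ_\mu = j_*\cS_\mu$ is ULA over $\cHk_{O_C} \to \Spd O_C$, so $i^*j_*$ commutes with proper pushforward (notably along $m$ and its analogues) and with smooth pullback. Combining this with the canonical isomorphism between the special fiber of the two-leg iterated Hecke stack and the one-leg convolution diagram yields the desired identification. Re-inserting the $\cF_1$ factor and unwinding definitions then produces the claim $\cF_1 \widetilde{\boxtimes}(Z^*_\mu \star \cF_2)$.

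The main obstacle is the technical bookkeeping with ULA sheaves and base change in the $v$-stack framework: one must verify compatibility of $i^*j_*$ with the pro-smooth projection $p$ and with the proper multiplication map $m$, and must carefully track the factorizations of iterated Hecke stacks between the generic and special fibers. Once these diagrammatic compatibilities are in place, the conceptual content reduces to the familiar fact that central sheaves are nearby cycles that commute with convolution.
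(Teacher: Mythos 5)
Your proposal has a genuine gap in the reduction step, and it also garbles the geometry of the special fiber. Let me be concrete.

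Your reduction tries to split off the outer $\szero$-modification carrying $\cF_1$ via a ``pro-smooth forgetful morphism'' $p$, claiming that $\cS_\mu\boxtimes(\cF_1\widetilde\boxtimes\cF_2)$ pulls back from something with the $\cF_1$-part on a ``constant factor.'' But the object $\cF_1\widetilde\boxtimes\cF_2$ lives on a \emph{twisted} (convolution) product $\Iw\backslash LG\times^\Iw LG/\Iw$, not on a direct product, and the special fiber $\cHk_k(\szero,\sx\cup\szero)\simeq(\Iw\backslash LG\times^\Iw LG/\Iw)^\Diamond$ does not carry a projection remembering ``only the second factor'' in a way that realizes $\cF_1\widetilde\boxtimes(-)$ as a pullback. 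You have not exhibited the required factorization of the sheaf along $p$, so the assertion that ``it suffices to prove the one-leg version'' does not follow from the commutation of $i^*j_*$ with smooth pullback. This is the key missing step. Moreover, your displayed ``one-leg version'' already has a category mismatch: $Z^*_\mu\star\cF_2$ is a sheaf on $(\Iw\backslash LG/\Iw)^\Diamond=\cHk_k(\sx\cup\szero)$, not on the iterated stack $(\Iw\backslash LG\times^\Iw LG/\Iw)^\Diamond$ where you place it. Finally, your claim that the special fiber of $\cHk_{O_C}(\sx\cup\szero)$ ``factors as the image of the convolution map $m$'' has the geometry backwards --- $\cHk_k(\sx\cup\szero)$ is the \emph{target} of $m$, not the source, and the centrality isomorphism $i^*j_*(\cS_\mu\boxtimes\cF_2)\simeq Z^*_\mu\star\cF_2$ is not produced by pushing forward along $m$.

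The paper's proof sidesteps all of this by working with the \emph{three}-leg Hecke stack $\cHk_{O_C}(\szero,\sx,\szero)$, whose special fiber is $(\Iw\backslash LG\times^\Iw LG\times^\Iw LG/\Iw)^\Diamond$ and whose generic fiber is $\cHk_C\times(\Iw\backslash LG\times^\Iw LG/\Iw)^\Diamond$. With all three legs kept separate over $O_C$, the analogue of \cite[Lemma 2.4.2]{AR-central} directly produces $i^*j_*(\cS_\mu\boxtimes(\cF_1\widetilde\boxtimes\cF_2))\simeq\cF_1\widetilde\boxtimes Z^*_\mu\widetilde\boxtimes\cF_2$ in the three-leg setting. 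One then pushes forward along $m\colon\cHk_{O_C}(\szero,\sx,\szero)\to\cHk_{O_C}(\szero,\sx\cup\szero)$, which is an isomorphism over $C$ and is the convolution of the last two factors over $k$, to get $\cF_1\widetilde\boxtimes(Z^*_\mu\star\cF_2)$. There is no need to isolate $\cF_1$ on a ``constant factor''; the $\widetilde\boxtimes$ bookkeeping is handled uniformly. If you want to salvage your approach you would have to prove the compatibility of $i^*j_*$ with the twisted external product by $\cF_1$ from scratch, which is essentially the same amount of work as the paper's three-leg argument but less transparent.
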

\begin{proof}
    Consider the stack $\cHk_{O_C}(\szero,\sx,\szero)$. We have
    $$\cHk_{C}(\szero,\sx,\szero)\simeq \cHk_C\times (\Iw\backslash LG\times^\Iw LG/\Iw)^\Diamond$$
    and
    $$\cHk_{k}(\szero,\sx,\szero)\simeq (\Iw\backslash LG\times^\Iw LG\times^\Iw LG/\Iw)^\Diamond.$$ By the same proof of \cite[Lemma 2.4.2]{AR-central}, we know that
    $$i^*j_*(\cS_\mu\boxtimes (\cF_1\widetilde\boxtimes\cF_2))\simeq \cF_1\widetilde{\boxtimes}Z^*_\mu\widetilde{\boxtimes}\cF_2$$
    over $\cHk_{k}(\szero,\sx,\szero)$. There is a morphism 
    $$m\colon \cHk_{O_C}(\szero,\sx,\szero)\to \cHk_{O_C}(\szero,\sx\cup \szero)$$
    by composing the last two modifications. Over the generic fiber, $m$ is an isomorphism, and over the special fiber, $m$ is given by composing the last two modifications. Pushing forward along $m$, we see that
    $$i^*j_*(\cS_\mu\boxtimes (\cF_1\widetilde\boxtimes \cF_2))\simeq \cF_1\widetilde\boxtimes m_*(Z_\mu^*\widetilde\boxtimes\cF_2)\simeq\cF_1\widetilde\boxtimes (Z^*_\mu\star \cF_2)$$
    over $\cHk_{k}(\szero,\sx\cup \szero)$ as desired.
\end{proof}

Similarly, we can consider $\cHk_{O_C}(\sx\cup \szero,\szero)$ and we have 
$$i^*j_*(\cS_\mu\boxtimes (\cF_1\widetilde{\boxtimes}\cF_2))\simeq (Z_\mu^*\star \cF_1)\widetilde{\boxtimes} \cF_2$$
over $\cHk_{k}(\sx\cup \szero,\szero)\simeq(\Iw\backslash LG\times^\Iw LG/\Iw)^\Diamond$.

\begin{defn}
    Let $\underline{\sy}=(\sy_1,\dots,\sy_n)$ and $\underline{\sy}'=(\sy'_1,\dots,\sy'_{n'})$ be two sequences of symbols in $\{\sx,\szero,\sx\cup \szero\}$. Define the $v$-stack $\cHk_{O_C}(\underline{\sy}|\underline{\sy}')$ over $\Spd O_C$ that classifies the following data for $S\in\Perfd^\Aff_k$:  
    \begin{enumerate}
        \item An untilt $S^\sharp$ of $S$ over $O_C$.
        \item A point $(S^\sharp,(\cE_t)_{0\leq t\leq n},(\beta_t)_{1\leq t\leq n})$ of $\cHk_{O_C}(\underline{\sy})$.
        \item A point $(S^\sharp,(\cE'_s)_{0\leq s\leq n'},(\beta'_s)_{1\leq s\leq n'})$ of $\cHk_{O_C}(\underline{\sy}')$.
        \item Two isomorphisms $\alpha_0\colon \cE_0\simeq \cE_0'$ and $\alpha_1\colon \cE_n\simeq \cE_{n'}'$ such that the diagram
        $$\begin{tikzcd}
            \cE_n \ar[r,dashrightarrow,"\beta_n"]\ar[d,"\alpha_1"swap,"\simeq"] & \cE_{n-1} \ar[r,dashrightarrow,"\beta_{n-1}"] & \cdots \ar[r,dashrightarrow,"\beta_1"] & \cE_0 \ar[d,"\alpha_0","\simeq"swap] \\
            \cE'_{n'} \ar[r,dashrightarrow,"\beta'_{n'}"] & \cE'_{n'-1} \ar[r,dashrightarrow,"\beta'_{n'-1}"] & \cdots \ar[r,dashrightarrow,"\beta'_1"] & \cE'_0
        \end{tikzcd}$$
        commutes.
    \end{enumerate}
\end{defn}

Now we can define the $S$-correspondences. The discussions in Example \ref{eg-coh-correspondence} about cohomological correspondences obviously generalized to the setting of $v$-stacks. Let $\cF\in D_{\et}^\ULA(\cHk_k,\Lambda)$ 
be an object together with an endomorphism $F\colon \cF\to\phi_*\cF$. By \cite[Lemma 4.42]{Tame} (see also \cite[Proposition VI.8.3]{FS}), the (right) dual $\cF^\vee$ of $\cF$ is identified with $\sw^*\DD(\cF)$, where $\DD$ is the Verdier dual and $\sw\colon \cHk_k\to \cHk_k$ is defined by switching two $\cI$-torsors. Let $Y\subseteq \Iw\backslash LG/\Iw$ be a  $\phi$-stable quasi-compact closed substack such that the objects $\cF$, $\cF^\vee$ are supported on $Y$.

Consider the closed substack $\cHk^{Y,\mu}_{C}(\szero,\sx\cup \szero)$ of $\cHk_{C}(\szero,\sx\cup \szero)$ where the two modifications in characteristic $p$ are bounded by $Y$ and the modification over $C$ is bounded by $\mu$. Then the object $\cS_\mu\boxtimes (\cF\widetilde\boxtimes \cF^\vee)$ is supported on $\cHk^{Y,\mu}_{C}(\szero,\sx\cup \szero)$. Let $\cHk^{Y,\mu}_{O_C}(\szero,\sx\cup \szero)$ be the $v$-closure of $\cHk^{Y,\mu}_{C}(\szero,\sx\cup \szero)$ in $\cHk_{O_C}(\szero,\sx\cup \szero)$. Let $\cHk_{O_C}^\mu(\sx)$ denote the $v$-closure of $\cHk_{C,\mu}\times (\BB\Iw)^\Diamond\subseteq \cHk_C(\sx)$ in $\cHk_{O_C}(\sx)$. Consider the Cartesian diagram
$$\begin{tikzcd}
    \cHk^{Y,\mu}_{O_C}(\sx|(\szero,\sx\cup \szero)) \ar[r,"h_r^\loc"]\ar[d,"h_l^\loc"swap]\ar[rd,phantom,"\square",very near start] & \cHk^{Y,\mu}_{O_C}(\szero,\sx\cup \szero) \ar[d,"m"] \\
    \cHk_{O_C}^\mu(\sx) \ar[r,"s"] & \cHk_{O_C}(\sx\cup \szero).
\end{tikzcd}$$
The stack $\cHk^{Y,\mu}_{O_C}(\sx|(\szero,\sx\cup \szero))$ is a closed substack of $\cHk_{O_C}(\sx|(\szero,\sx\cup \szero))$ defined earlier. The morphisms $h^\loc_l$ and $h^\loc_r$ are proper.

We define a cohomological correspondence 
$$\rC^\loc_+\colon(\cHk_{O_C}^\mu(\sx),j_*(\cS_\mu\boxtimes\Lambda))\to (\cHk^{Y,\mu}_{O_C}(\szero,\sx\cup \szero),j_*(\cS_\mu\boxtimes (\cF\widetilde\boxtimes \cF^\vee)))$$
supported on $\cHk^{Y,\mu}_{O_C}(\sx|(\szero,\sx\cup \szero))$ as follows: Using $\cHk_{C}(\sx\cup\szero)\simeq \cHk_C\times (\Iw\backslash LG/\Iw)^\Diamond$, we have
$$s_*(\cS_\mu\boxtimes \Lambda)\simeq \cS_\mu\boxtimes \Delta_1$$
and
$$m_*(\cS_\mu\boxtimes (\cF\widetilde\boxtimes \cF^\vee))\simeq \cS_\mu\boxtimes (\cF\star\cF^\vee).$$
The unit morphism $\Delta_1\to \cF\star\cF^\vee$ induces a morphism
$s_*(\cS_\mu\boxtimes \Lambda)\to m_*(\cS_\mu\boxtimes (\cF\widetilde{}\boxtimes\cF^\vee))$. Applying $j_*$ defines a morphism $s_*j_*(\cS_\mu\boxtimes \Lambda)\to m_*j_*(\cS_\mu\boxtimes (\cF\widetilde{}\boxtimes\cF^\vee))$. By base change, we obtain a cohomological correspondence
$$\rC_+^\loc\colon (h_l^\loc)^* j_*(\cS_\mu\boxtimes \Lambda)\to (h_r^\loc)^!j_*(\cS_\mu\boxtimes (\cF\widetilde{\boxtimes}\cF^\vee)).$$
There is a commutative diagram
$$\begin{tikzcd}
    \cSht_{O_C,\mu} \ar[d,"\delta_{\sx}"] & \cSht^{Y,\mu}_{O_C}(\sx|(\sx\cup\szero,\szero)) \ar[d]\ar[l,"h_l"swap]\ar[r,"h_r"]\ar[ld,phantom,"\square", very near start]\ar[rd,phantom,"\square", very near start] & \cSht_{O_C}^{Y,\mu}(\sx\cup\szero,\szero) \ar[d,"\delta_{(\sx\cup\szero,\sx)}"] \\
    \cHk_{O_C}^\mu(\sx) & \cHk^{Y,\mu}_{O_C}(\sx|(\szero,\sx\cup\szero)) \ar[l,"h_l^\loc"swap]\ar[r,"h_r^\loc"] & \cHk_{O_C}^{Y,\mu}(\szero,\sx\cup\szero)
\end{tikzcd}$$
such that both squares are Cartesian. Note that $(\delta_\sx)^*(j_*(\cS_\mu\boxtimes\Lambda))\simeq \delta^*\cZ_\mu$ over $\cSht_{O_C,\mu}$. By Example \ref{eg-coh-correspondence} (1), we can define a cohomological correspondence
$$\rC_+\colon (h_l)^* \delta^*\cZ_\mu\to (h_r)^!(\delta_{(\sx\cup\szero,\sx)})^*j_*(\cS_\mu\boxtimes(\cF\widetilde{\boxtimes}\cF^\vee))$$
from $(\cSht_{O_C,\mu},\delta^!\cZ_\mu)$ to $(\cSht_{O_C}^{Y,\mu}(\sx\cup\szero,\szero),(\delta_{(\sx\cup\szero,\sx)})^*j_*(\cS_\mu\boxtimes(\cF\widetilde{\boxtimes}\cF^\vee)))$. We call $\rC_+$ the \emph{creation correspondence}.

By the similar considerations, there is a commutative diagram
$$\begin{tikzcd}
    \cSht_{O_C}^{Y,\mu}(\szero,\sx\cup\szero) \ar[d,"\delta_{(\szero,\sx\cup\szero)}"] & \cSht_{O_C}^{Y,\mu}((\szero,\sx\cup\szero)|\sx) \ar[l,"h_l"swap]\ar[r,"h_r"]\ar[ld,phantom,"\square", very near start]\ar[rd,phantom,"\square", very near start]\ar[d] & \cSht_{O_C,\mu} \ar[d,"\delta_\sx"] \\
    \cHk_{O_C}^{Y,\mu}(\sx\cup\szero,\szero) & \cHk^{Y,\mu}_{O_C}((\sx\cup\szero,\szero)|\sx) \ar[l,"h_l^\loc"swap]\ar[r,"h_r^\loc"] & \cHk_{O_C}(\sx)
\end{tikzcd}$$
with both square Cartesian. The counit morphism $\cF^\vee\star\cF\to \Delta_1$ defines a cohomological correspondence
$$\rC_-^\loc\colon (h_l^\loc)^*j_*(\cS_\mu\boxtimes(\cF^\vee\widetilde\boxtimes\cF))\to (h_r^\loc)^!j_*(\cS_\mu\boxtimes\Lambda)$$
from $( \cHk_{O_C}^{Y,\mu}(\sx\cup\szero,\szero),j_*(\cS_\mu\boxtimes(\cF^\vee\widetilde\boxtimes\cF)))$ to $(\cHk_{O_C}(\sx),j_*(\cS_\mu\boxtimes\Lambda))$. Pulling back to shtukas defines the \emph{annihilation correspondence}
$$\rC_-\colon (h_l)^*(\delta_{(\szero,\sx\cup\szero)})^*j_*(\cS_\mu\boxtimes(\cF^\vee\widetilde\boxtimes\cF))\to (h_r)^!\delta^*\cZ_\mu$$
from $( \cSht_{O_C}^{Y,\mu}(\szero,\sx\cup\szero),(\delta_{(\szero,\sx\cup\szero)})^*j_*(\cS_\mu\boxtimes(\cF^\vee\widetilde\boxtimes\cF)))$ to $(\cSht_{O_C,\mu},\delta^*\cZ_\mu)$.

Define the partial Frobenius
$$\pFr\colon \cSht_{O_C}(\szero,\sx\cup\szero)\to \cSht_{O_C}(\sx\cup\szero,\sx)$$ by sending
$$(\begin{tikzcd}[column sep=small]\phi^*\cE_0\ar[r,"\beta_2","\sx\cup\szero"swap,dashrightarrow] &\cE_1 \ar[r,"\beta_1","\szero"swap,dashrightarrow] &\cE_0\end{tikzcd})\mapsto (\begin{tikzcd}[column sep=small]\phi^*\cE_1\ar[r,"\phi^*(\beta_1)","\szero"swap,dashrightarrow] &\phi^*\cE_0 \ar[r,"\beta_2","\sx\cup\szero"swap,dashrightarrow] &\cE_0\end{tikzcd}).$$
Here the symbols below the arrows indicate the positions of legs. There is a natural commutative diagram
$$\begin{tikzcd}[column sep=huge]
    \cSht_{O_C}(\szero,\sx\cup\szero) \ar[r,"\pFr"]\ar[d,"q_{(\szero,\sx\cup\szero
    )}"swap] & \cSht_{O_C}(\sx\cup\szero,\szero) \ar[d,"q_{(\sx\cup\szero,\szero
    )}"] \\
    \cHk_{O_C}(\sx\cup\szero)\times\cHk_{O_C}(\szero) \ar[r,"(\phi\times\id)\circ\sw"] & \cHk_{O_C}(\szero)\times\cHk(\sx\cup\szero)
\end{tikzcd}$$
where $q_{\underline{\sy}}$ is the composition $\cSht_{O_C}(\underline{\sy})\xrightarrow{\delta_{\underline{\sy}}}\cHk_{O_C}(\underline{\sy}^c)\xrightarrow{p_{\underline{\sy}^c}}\prod_{i=1}^n\cHk_{O_C}(\sy_{n+1-i})$ for $\underline{\sy}=(\sy_1,\dots,\sy_n)$. Consider $j_*(\cS_\mu\boxtimes\cF^\vee)\in D_\et^\ULA(\cHk_{O_C}(\sx\cup\szero),\Lambda)$ and $\phi_*\cF\boxtimes\Lambda\in D_\et^\ULA(\cHk_{O_C}(\szero),\Lambda)$ where $\cHk_{O_C}(\szero)\simeq (\Iw\backslash LG/\Iw)^\Diamond\times\Spd O_C$. The commutative diagram above induces an isomorphism
$$\pFr^*(q_{(\sx\cup\szero,\szero)})^*((\phi_*\cF\boxtimes\Lambda)\boxtimes(j_*(\cS_\mu\boxtimes\cF^\vee)))\simeq (q_{(\szero,\sx\cup\szero)})^*((j_*(\cS_\mu\boxtimes\cF^\vee))\boxtimes (\cF\boxtimes\Lambda)).$$
We see that there is a canonical isomorphism
$$\can\colon \pFr^*(\delta_{(\sx\cup\szero,\szero)})^*j_*(\cS_\mu\boxtimes(\phi_*\cF\widetilde{\boxtimes}\cF^\vee))\simeq (\delta_{(\szero,\sx\cup\szero)})^*j_*(\cS_\mu\boxtimes(\cF^\vee\widetilde{\boxtimes}\cF)).$$
The Weil structure $F\colon \cF\to\phi_*\cF$ defines a cohomological correspondence
$$\rC_{F}\colon \pFr^*(\delta_{(\sx\cup\szero,\szero)})^*j_*(\cS_\mu\boxtimes(\cF\widetilde{\boxtimes}\cF^\vee))\xrightarrow{F}\pFr^*(\delta_{(\sx\cup\szero,\szero)})^*j_*(\cS_\mu\boxtimes(\phi_*\cF\widetilde{\boxtimes}\cF^\vee))\xrightarrow[\simeq]{\varepsilon_\cF\cdot \can} (\delta_{(\szero,\sx\cup\szero)})^*j_*(\cS_\mu\boxtimes(\cF^\vee\widetilde{\boxtimes}\cF))$$
supported on 
$$\cSht^{Y,\mu}_{O_C}(\sx\cup\szero,\szero)\xleftarrow{\pFr}\cSht^{Y,\mu}_{O_C}(\szero,\sx\cup\szero)\xrightarrow{\id}\cSht^{Y,\mu}_{O_C}(\szero,\sx\cup\szero),$$
where we denote $\varepsilon_\cF=(-1)^{d(\cF)(d(\cF)+d(\Delta_{w^{-1}}))}$.
We recall that $d(\cF)$ is $1$ if $\cF$ is supported on odd components of $\Iw\backslash LG/\Iw$, and $d(\cF)=0$ if $\cF$ is supported on even components of $\Iw\backslash LG/\Iw$. This normalization is to make the $S$-correspondence compatible with the abstract $S$-operator defined in \cite[(7.65)]{Tame}.

Consider the Cartesian diagram
$$\begin{tikzcd}
    && \Hk_1^Y(\cSht_{O_C,\mu})\ar[ld,"f_l"swap]\ar[rd,"f_r"] \\
    & \cSht_{O_C}^{Y,\mu}(\sx|(\sx\cup\szero,\szero))\ar[ld,"h_l"swap]\ar[d,"h_r"] && \cSht^{Y,\mu}_{O_C}(\sx|(\szero,\sx\cup\szero)) \ar[d,"h_l"]\ar[rd,"h_r"] \\
    \cSht_{O_C,\mu} & \cSht_{O_C}^{Y,\mu}(\sx\cup\szero,\szero) && \cSht_{O_C}^{Y,\mu}(\szero,\sx\cup\szero)\ar[ll,"\pFr"swap] & \cSht_{O_C,\mu}
\end{tikzcd}$$
where $\Hk_1^Y(\Sht_{O_C,\mu})$ is a closed substack of $\Hk_1(\cSht_{O_C,\mu})$. Here $f_l$ and $f_r$ are defined by sending
$$\begin{tikzcd}
    \phi^*\cE_2 \ar[r,dashrightarrow,"\phi_{\cE_2}"]\ar[d,dashrightarrow,"\phi^*(\gamma)"swap] &\cE_2 \ar[d,dashrightarrow,"\gamma"] \\
    \phi^*\cE_1 \ar[r,dashrightarrow,"\phi_{\cE_1}"] & \cE_1 
\end{tikzcd} \in \Hk_1^Y(\Sht_{O_C,\mu}) $$
to $$\begin{tikzcd}
    (\phi^*\cE_1\ar[r,dashrightarrow,"\phi^*(\gamma)^{-1}"]\ar[rr,bend right,dashrightarrow,"\phi_{\cE_1}"] & \phi^*\cE_2 \ar[r,dashrightarrow,"\gamma\circ\phi_{\cE_2}"] & \cE_1)
\end{tikzcd}\quad\text{and}\quad\begin{tikzcd}
    (\phi^*\cE_2\ar[r,dashrightarrow,"\gamma\circ\phi_{\cE_2}"]\ar[rr,bend right,dashrightarrow,"\phi_{\cE_2}"] & \cE_1 \ar[r,dashrightarrow,"\gamma^{-1}"] & \cE_2)
\end{tikzcd}$$ 
respectively.

\begin{defn}\label{def-excursion}
    Let $\cF\in D_{\et}^\ULA(\cHk_k,\Lambda)$ 
    with $F\colon \cF\to\phi_*\cF$ as above. Define the \emph{$S$-correspondence} $\rS^\corr_\cF$ as the composition
    $$\rS^\corr_\cF\coloneqq\rC_-\circ\rC_F\circ\rC_+\colon (\cSht_{O_C,\mu},\delta^*\cZ_\mu)\to (\cSht_{O_C,\mu},\delta^*\cZ_\mu)$$
    supported on $\Hk_1^Y(\cSht_{O_C,\mu})$.
\end{defn}

Now let $\Lambda$ be one of $O_L$, $O_L/\varpi^n_L$, and $L$ for a finite extension $L/\QQ_\ell$. Let $\cF\in \Shv_\fingen(\Iw\backslash LG/\Iw,\Lambda)$ be an object together with $F\colon\cF\to\phi_*\cF$. Let $Y\subseteq \Iw\backslash LG/\Iw$ be a $\phi$-stable quasi-compact closed substack containing the support of $\cF$ and $\cF^\vee$. We denote by 
$$\mu\circ Y\coloneqq m\circ \delta^{-1}(\Iw\backslash \Fl_{\cI,\mu}\times Y)\subseteq \Iw\backslash LG/\Iw$$
the convolution substack of $\Iw\backslash \Fl_{\cI,\mu}$ and $Y$.
Note that $Z_\mu\star \cF^\vee$ is supported on $\mu\circ Y$.
Similarly, we have a Cartesian diagram over the special fiber:
$$\begin{tikzcd}
    && \Hk_1^Y(\Sht^\loc_{\mu})\ar[ld,"f_l"swap]\ar[rd,"f_r"] \\
    & \Sht^{\loc,0}_{\mu|(\mu\circ Y,Y)}\ar[ld,"h_l"swap]\ar[d,"h_r"] && \Sht^{\loc,0}_{(Y,\mu\circ Y)|\mu} \ar[d,"h_l"]\ar[rd,"h_r"] \\
    \Sht_{\mu}^\loc & \Sht^\loc_{(\mu\circ Y,Y)} && \Sht^\loc_{(Y,\mu\circ Y)}\ar[ll,"\pFr"swap] & \Sht_{\mu}^\loc,
\end{tikzcd}$$
where $\Sht^{\loc}_{\mu|(\mu\circ Y,Y)}$ (resp. $\Sht^{\loc,0}_{(Y,\mu\circ Y)|\mu}$) classifies 
$$\begin{tikzcd}
    (\phi^*\cE_0\ar[r,dashrightarrow,"\beta_2"]& \cE_1 \ar[r,dashrightarrow,"\beta_1"] & \cE_0)
\end{tikzcd}$$
with $(\beta_1)^{-1}$ bounded by $\mu\circ Y$ (resp. $Y$) and $(\beta_2)^{-1}$ bounded by $Y$ (resp. $\mu\circ Y$), and $\Sht^{\loc,0}_{\mu|(\mu\circ Y,Y)}\subseteq \Sht^{\loc}_{\mu|(\mu\circ Y,Y)}$ (resp. $\Sht^{\loc,0}_{(Y,\mu\circ Y)|\mu}\subseteq \Sht^{\loc}_{(Y,\mu\circ Y)}$) is the closed substack where the composition $(\beta_1\circ\beta_2)^{-1}$ is bounded by $\mu$. There are morphisms
$$q_{(\mu\circ Y,Y)}\colon \Sht^\loc_{(\mu\circ Y,Y)}\to (\Iw\backslash LG/\Iw)^2 \quad\text{resp.}\quad q_{(Y,\mu\circ Y)}\colon \Sht^\loc_{(Y,\mu\circ Y)}\to (\Iw\backslash LG/\Iw)^2$$
as before. We define the cohomological correspondence
$$\rC_+^!\colon (\Sht_\mu^\loc,\delta^!Z_\mu)\to (\Sht^\loc_{(\mu\circ Y,Y)},(q_{(\mu\circ Y,Y)})^!(\cF\boxtimes(Z_\mu\star \cF^\vee)))$$
using the composition
$$Z_\mu\xrightarrow{\mathrm{unit}}\cF\star\cF^\vee\star Z_\mu\stackrel{(-1)^{d(Z_\mu)d(\cF)}\tau_{Z_\mu,\cF}}{\simeq} \cF\star Z_\mu\star\cF^\vee,$$
where $\tau_{Z_\mu,\cF}$ is the commutative constraint on the central sheaf. We can similarly define cohomological correspondences
$$\rC_F^!\colon (\Sht^\loc_{(\mu\circ Y,Y)},(q_{(\mu\circ Y,Y)})^!(\cF{\boxtimes}(Z_\mu\star \cF^\vee)))\to (\Sht^\loc_{(Y,\mu\circ Y)},(q_{(Y,\mu\circ Y)})^!((Z_\mu\star\cF^\vee){\boxtimes}\cF))$$
and
$$\rC_-^!\colon (\Sht^\loc_{(Y,\mu\circ Y)},(q_{(Y,\mu\circ Y)})^!((Z_\mu\star\cF^\vee){\boxtimes}\cF))\to (\Sht^\loc_\mu, \delta^!Z_\mu).$$
Then we obtain the $S$-correspondence
$$\rS^{\corr,!}_\cF=\rC_-^!\circ\rC_F^!\circ\rC_+^!\colon (\Sht^\loc_\mu,\delta^!Z_\mu)\to (\Sht^\loc_\mu,\delta^!Z_\mu)$$
over the special fiber (using the sheaf theory $\Shv_\fingen(-,\Lambda)$).

By Example \ref{eg-coh-correspondence} (3), $S_{\cF,k}^{\corr,!}$ induces an endomorphism
$$\Nt_*(\rS^{\corr,!}_{\cF})\colon \Nt_*\delta^!Z_\mu\to \Nt_*\delta^!Z_\mu$$
in $\Shv^{\unip}_\fingen(\Isoc_G,\Lambda)$. 
Denote
$[-]_\phi= \Ch^\unip_{LG^*,\phi}\colon \Ind\Shv_\fingen(\Iw^*\backslash LG^*/\Iw^*,\Lambda)\to \Ind\Shv_\fingen^\unip(\Isoc_{G^*},\Lambda)$
for simplicity.
Applying the functor
$$(\eta_{\dot{w}})_*\colon \Shv_\fingen^\unip(\Isoc_{G},\Lambda)\xrightarrow{\sim} \Shv_\fingen^\unip(\Isoc_{G^*},\Lambda),$$
we obtain an endomorphism
$$(\eta_{\dot{w}})_*\Nt_*(\rS^{\corr,!}_{\cF})\colon [\Delta_{w^{-1}}\star \eta Z_\mu]_\phi \to [\Delta_{w^{-1}}\star \eta Z_\mu]_\phi.$$

We recall the definition of $S$-operators in \cite[(7.65)]{Tame}. The endomorphism $F\colon \cF\to \phi_*F$ induces an endomorphism
$$\eta F\colon \Delta_{w^{-1}}\star\eta\cF \to \phi_*\eta\cF\star \Delta_{w^{-1}}$$
in $\Shv_\fingen(\Iw^*\backslash LG^*/\Iw^*,\Lambda)$. Define the $S$-operator $S_{\eta\cF}$ as the composition
$$\begin{aligned}
    &[\Delta_{w^{-1}}\star \eta Z_\mu]_\phi\xrightarrow{\mathrm{unit}}[\Delta_{w^{-1}}\star \eta Z_\mu\star\eta \cF\star\eta\cF^\vee]_\phi\stackrel{\tau_{\eta Z_\mu,\eta\cF}}{\simeq}[\Delta_{w^{-1}}\star\eta\cF\star\eta Z_\mu\star \eta\cF^\vee]  \\ &\xrightarrow{\eta F}[\phi_*\eta\cF\star \Delta_{w^{-1}}\star\eta Z_\mu\star \eta\cF^\vee] \xrightarrow[\simeq]{\sigma_{\eta\cF,\Delta_{w^{-1}}\star\eta Z_\mu\star\eta\cF^\vee}} [\Delta_{w^{-1}}\star\eta Z_\mu\star\eta\cF^\vee\star \eta\cF]\xrightarrow{\mathrm{counit}}[\Delta_{w^{-1}}\star Z_\mu].
\end{aligned}$$
Here $\tau_{-,-}$ is the commutative constraint for the central sheaves, cf. \cite[\S 3.5.1]{AR-central}, and $\sigma_{-,-}$ is the commutative constraint in the categorical trace defined in \S\ref{subsubsection-unipotent-cll}.. Take $\mu=0$ in the above definition gives the $S$-operator
$$S_{\eta\cF}^0\colon (i_b)_*\cInd_{I_b}^{G_b(\QQ_p)}\Lambda\to(i_b)_*\cInd_{I_b}^{G_b(\QQ_p)}\Lambda$$
as $[\Delta_{w^{-1}}]_\phi=(i_b)_*\cInd_{I_b}^{G_b(\QQ_p)}\Lambda$.
More precisely, it is defined as the composition
$$[\Delta_{w^{-1}}]_\phi\xrightarrow{\mathrm{unit}}[\Delta_{w^{-1}}\star\eta\cF\star\eta\cF^\vee]_\phi\xrightarrow{\eta F}[\phi_*\eta\cF\star\Delta_{w^{-1}}\star\eta\cF^\vee]_\phi\stackrel{\sigma_{\Delta_{w^{-1}}\star \eta\cF,\eta\cF^\vee}}{\simeq} [\Delta_{w^{-1}}\star \eta\cF^\vee\star\eta\cF]_\phi\xrightarrow{\mathrm{counit}}[\Delta_{w^{-1}}]_\phi.$$ 
If $\Lambda=\overline\QQ_\ell$ or $\overline\FF_\ell$, then the same construction works over the spectral side. Recall that $\LL^{\unip}_{G}(\Nt_*\delta^!Z_\mu)=V_\mu\otimes\fA_b$. Let
$$S_{\eta\cF}^{\spec}\colon V_\mu\otimes\fA_b\to V_\mu\otimes\fA_b\quad\text{resp.}\quad S_{\eta\cF}^{0,\spec}\colon \fA_b\to \fA_b$$
denote the corresponding abstract $S$-operators associated to $\BB^{\unip}_{G^*}(\eta\cF)$ and $\BB^\unip_{G^*}(\eta F)$. The functor $\LL^{\unip}_{G^*}$ matches $S_{\eta\cF}$ (resp. $S_{\eta\cF}^0$) with $S_{\eta\cF}^\spec$ (resp. $S_{\eta\cF}^{0,\spec}$). Moreover, it is clear from definition that 
$$S_{\eta\cF}^{\spec}=\id \otimes S_{\eta\cF}^{0,\spec}.$$

\begin{lemma}\label{lemma-excursion-operator}
    The endomorphism $(\eta_{\dot{w}})_*\Nt_*(\rS^{\corr,!}_{\cF})$ defined by $S$-correspondence agrees with the $S$-operator $S_{\eta\cF}$.  
\end{lemma}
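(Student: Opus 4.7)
The plan is to show that pushforward along $\Nt$ turns each of the three pieces $\rC_+^!$, $\rC_F^!$, $\rC_-^!$ making up $\rS^{\corr,!}_{\cF}$ into the corresponding ``creation'', ``Frobenius twist'', and ``annihilation'' step in the definition of the abstract $S$-operator, and that the three compose correctly (including signs). More precisely, we will identify
\[
(\eta_{\dot w})_*\,\Nt_*(\rC_+^!),\qquad (\eta_{\dot w})_*\,\Nt_*(\rC_F^!),\qquad (\eta_{\dot w})_*\,\Nt_*(\rC_-^!)
\]
with the unit/commutator, partial-Frobenius/commutator, and counit steps, respectively, in the definition of $S_{\eta\cF}$.

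First I would handle $\rC_+^!$ and $\rC_-^!$. Here the correspondences are modeled on the local correspondences coming from the unit $\mathrm{unit}:\Lambda_{\Iw\backslash LG/\Iw}\to \cF\star\cF^\vee$ and counit $\mathrm{counit}:\cF^\vee\star\cF\to\Lambda_{\Iw\backslash LG/\Iw}$, precomposed (resp.\ postcomposed) with the central commutative constraint $\tau_{Z_\mu,\cF}$ that swaps $Z_\mu$ past $\cF$. Because $\Nt_*\delta^!$ is symmetric monoidal up to the braided structure discussed in \S\ref{subsubsection-unipotent-cll}, the pushforward $\Nt_*(\rC_\pm^!)$ along the correspondence diagrams $\Sht^\loc_\mu \leftarrow \Sht^\loc_{\mu|(\mu\circ Y,Y)}\to \Sht^\loc_{(\mu\circ Y,Y)}$ (and the analogue on the right) can be computed by restricting and pushing along the respective convolution morphisms; the outcome is precisely the unit/counit map tensored with $Z_\mu$, twisted by $\tau_{Z_\mu,\cF}$ (with the sign $(-1)^{d(Z_\mu)d(\cF)}$ that we fixed in the construction of $\rC_+$). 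After applying $(\eta_{\dot w})_*$ and using $({}_{\dot w^{-1}}\eta)_* Z_\mu\simeq \Delta_{w^{-1}}\star \eta Z_\mu$, this matches the first two arrows (resp.\ last arrow) in the definition of $S_{\eta\cF}$.

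The main obstacle is the middle piece $\rC_F^!$. One should think of it as the ``geometric'' incarnation of the commutative constraint $\sigma$ in the categorical trace (Deligne--Lusztig) functor $\Ch^\unip_{LG^*,\phi}$. The key observation is that the partial Frobenius
\[
\pFr\colon \cSht_{O_C}(\szero,\sx\cup\szero)\to \cSht_{O_C}(\sx\cup\szero,\szero)
\]
together with the tautological trivialization of the relative Frobenius on $\Isoc_G$ realizes exactly the canonical commutativity isomorphism $\can$ between $\Ch^\unip_{LG,\phi}(\phi_*\cF_1\star\cF_2)$ and $\Ch^\unip_{LG,\phi}(\cF_2\star\cF_1)$ described in \S\ref{subsubsection-unipotent-cll} (applied to the Wakimoto component $\Delta_{w^{-1}}\star\eta Z_\mu\star\eta\cF^\vee$ in the first slot and $\eta\cF$ in the second). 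Pushing forward along $\Nt$, applying Lemma \ref{lemma-special-fiber-iterated} (and its partner) to identify $i^*j_*$ of the relevant $\ULA$ sheaves with $Z^*_\mu\star\cF$-type convolutions, and keeping track of the normalization $\varepsilon_\cF=(-1)^{d(\cF)(d(\cF)+d(\Delta_{w^{-1}}))}$, one obtains the composition $\eta F$ followed by $\sigma_{\eta\cF,\,\Delta_{w^{-1}}\star\eta Z_\mu\star\eta\cF^\vee}$, which is precisely the middle step in the definition of $S_{\eta\cF}$. Here the sign $\varepsilon_\cF$ is exactly what is needed: the definition of $\sigma$ on the categorical-trace side involves a factor $(-1)^{d(\cF_1)d(\cF_2)}$, and tracking the two slots through the rearrangement yields $\varepsilon_\cF$.

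Finally, composing the three identifications and using Lemma \ref{lemma-coh-correspondence}-type compatibilities for $\Nt_*$ applied to compositions of proper correspondences, we conclude that $(\eta_{\dot w})_*\Nt_*(\rS^{\corr,!}_{\cF})=S_{\eta\cF}$. The one nontrivial bookkeeping step is to verify that all the signs coming from the commutative constraints $\tau$ (for central sheaves) and $\sigma$ (for categorical traces) combine into the sign $\varepsilon_\cF$ built into $\rC_F$; this is the part that I expect to require the most care.
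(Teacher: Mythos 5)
Your proposal follows essentially the same strategy as the paper's proof: decompose $\rS^{\corr,!}_\cF = \rC_-^!\circ\rC_F^!\circ\rC_+^!$, push each piece forward along $\Nt$, identify the results with the unit/partial-Frobenius/counit steps in the definition of $S_{\eta\cF}$, and track the $\tau$- and $\sigma$-signs, showing they aggregate to $\varepsilon_\cF$. The paper carries this out by explicitly exhibiting the three composite arrows and verifying the sign identity $\varepsilon_\cF\cdot\can=(-1)^{d(\cF)d(Z_\mu)}\sigma_{\eta\cF,\,\Delta_{w^{-1}}\star\eta Z_\mu\star\eta\cF^\vee}$, which you correctly flag as the delicate point without working it out.

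One minor imprecision: since $\rS^{\corr,!}_\cF$ is defined entirely over the special fiber using $\pFr\colon\Sht^\loc_{(Y,\mu\circ Y)}\to\Sht^\loc_{(\mu\circ Y,Y)}$, the relevant partial Frobenius here is the equal-characteristic one on local shtukas, not $\pFr\colon\cSht_{O_C}(\szero,\sx\cup\szero)\to\cSht_{O_C}(\sx\cup\szero,\szero)$ that you cite (the latter enters only in the separate comparison Lemma \ref{lemma-S-operator-comparison}). This does not affect the correctness of the argument in spirit, as the two situations are formally parallel and the mechanism — partial Frobenius realizes the commutative constraint $\can$ of the categorical trace — is identical.
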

\begin{proof}
    By Lemma \ref{lemma-special-fiber-iterated}, the endomorphism $(\eta_{\dot{w}})_*\Nt_*(\rS^{\corr,!}_{\cF})$ is given by the composition
    $$[\Delta_{w^{-1}}\star \eta Z_\mu]_\phi\xrightarrow{\rC_+} [\Delta_{w^{-1}}\star\eta\cF\star\eta Z_\mu\star\eta\cF^\vee]_\phi \xrightarrow{\rC_F} [\Delta_{w^{-1}}\star\eta Z_\mu\star\eta\cF^\vee\star \eta\cF]_\phi\xrightarrow{\rC_-} [\Delta_{w^{-1}}\star \eta Z_\mu]_\phi,$$
    where the three arrows are defined by applying $(\eta_{\dot{w}})_*$ to pushforwards of $\rC_+$, $\rC_F$, and $\rC_-$ to $\Isoc_G$ respectively. We have
    \begin{itemize}
        \item The first morphism is induced by
            $$\Delta_{w^{-1}}\star \eta Z_\mu\xrightarrow{\mathrm{unit}}\Delta_{w^{-1}}\star\eta \cF\star\eta \cF^\vee\star \eta Z_\mu\stackrel{(-1)^{d(Z_\mu)d(\cF)}\tau_{\eta Z_\mu,\eta\cF}}{\simeq} \Delta_{w^{-1}}\star \eta\cF\star\eta Z_\mu\star\eta\cF^\vee.$$
        \item The second morphism is given by
            $$[\Delta_{w^{-1}}\star \eta\cF\star \eta Z_\mu\star\eta\cF^\vee]_\phi\xrightarrow{[\eta F]_\phi}[\phi_*\eta\cF\star\Delta_{w^{-1}}\star \eta Z_\mu\star\eta\cF^\vee]_\phi\stackrel{\varepsilon_\cF\cdot\can}{\simeq}[\Delta_{w^{-1}}\star\eta Z_\mu\star\eta \cF^\vee\star\eta\cF]_\phi,$$
            where $\can$ is the canonical isomorphism defined by partial Frobenius. Note that 
            $$\varepsilon_\cF\cdot\can=(-1)^{d(\cF)d(Z_\mu)}\sigma_{\eta\cF,\Delta_{w^{-1}}\star\eta Z_\mu\star\eta\cF^\vee}.$$ 
        \item The last morphism is induced by
            $$\Delta_{w^{-1}}\star \eta Z_\mu\star\eta\cF^\vee\star\eta\cF\xrightarrow{\mathrm{counit}}\Delta_{w^{-1}}\star \eta Z_\mu.$$
    \end{itemize}
    This agrees with the $S_{\eta\cF}$.
\end{proof}

By \cite[Remark 10.130]{Tame}, there are canonical equivalences
$$\Shv_\fingen(\Sht^\loc_\mu,\Lambda)\simeq \Shv^*_{c}(\Sht^\loc_\mu,\Lambda)\quad\text{and}\quad\Shv_\fingen(\Hk_1(\Sht^\loc_\mu),\Lambda)\simeq \Shv^*_{c}(\Hk_1(\Sht^\loc_\mu),\Lambda).$$
Moreover, under the above equivalences, the object $\delta^!Z_\mu\in \Shv_\fingen(\Sht^\loc_\mu,\Lambda)$ matches with $\delta^*Z^*_\mu\in \Shv^*_{c}(\Sht^\loc_\mu,\Lambda)$. We need the following statement.

\begin{lemma}\label{lemma-sht-analytification}
    Assume that $\Lambda$ is torsion. There are canonical fully faithful embeddings $(c_{\Sht_\mu^{\loc}})^*: \Shv_c^*(\Sht_\mu^{\loc},\Lambda)\to D_{\et}((\Sht_\mu^{\loc})^{\Diamond},\Lambda)$ and $(c_{\Hk_1^Y(\Sht_\mu^{\loc})})^*: \Shv_c^*(\Hk_1^Y(\Sht_\mu^{\loc}),\Lambda)\to D_{\et}((\Hk_1^Y(\Sht_\mu^{\loc}))^{\Diamond},\Lambda)$
\end{lemma}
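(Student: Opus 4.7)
The plan is to reduce both stacks to finite-type perfect algebraic stacks over $k$ and then invoke the equivalence $\Shv^*_c(-,\Lambda) \simeq D^\ULA_\et((-)^\Diamond,\Lambda)$ of \cite[Proposition 6.7, Proposition A.5]{AGLR-local-model}, composed with the tautological fully faithful inclusion $D_\et^\ULA \hookrightarrow D_\et$.

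First, I would present $\Sht^\loc_\mu$ and $\Hk_1^Y(\Sht^\loc_\mu)$ explicitly as finite-type perfect algebraic stacks. Since $\Adm(\mu^*)\subseteq \widetilde{W}$ is a finite set, the subscheme $\Fl_{\cI,\Adm(\mu^*)} = \bigcup_{w\in \Adm(\mu^*)}\Fl_{\cI,\leq w} \subseteq \Fl_\cI$ is pfp, and the $\Iw$-action on it factors through the $n$-th jet group $\Iw_n = L^n\cI$ for all sufficiently large $n$. Thus
$$
\Sht^\loc_\mu \simeq \big[\Fl_{\cI,\Adm(\mu^*)}\,/\,\Ad_\phi \Iw_n\big]
$$
is a finite-type perfect algebraic stack over $k$. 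An analogous presentation holds for $\Hk_1^Y(\Sht^\loc_\mu)$: since both $\mu$ and $Y$ are bounded, it is a closed substack of an iterated quotient of the form $[\Fl_{\cI,\Adm(\mu^*)}\times_{\Iw_n}\Fl_{\cI,Y'}\,/\,\Ad_\phi \Iw_n]$ for some $\Iw_n$-stable pfp closed subscheme $\Fl_{\cI,Y'}\subseteq \Fl_{\cI}$ controlling the second leg.

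Next, for any finite-type perfect algebraic stack $Z$ over $k$ admitting such a presentation $Z\simeq[X/H]$ with $X$ a pfp perfect scheme and $H$ a perfect algebraic group of finite type, the comparison functor of \cite[\S 27]{Scholze-diamonds} together with the results \cite[Proposition 6.7, Proposition A.5]{AGLR-local-model} produces, by descent along the atlas $X\to Z$, a canonical equivalence
$$
\Shv_c^*(Z,\Lambda) \xrightarrow{\ \sim\ } D_\et^\ULA(Z^\Diamond,\Lambda).
$$
Composing with the fully faithful inclusion $D_\et^\ULA(Z^\Diamond,\Lambda)\hookrightarrow D_\et(Z^\Diamond,\Lambda)$ gives the desired fully faithful embedding, which, applied to $Z=\Sht^\loc_\mu$ and $Z=\Hk_1^Y(\Sht^\loc_\mu)$, furnishes the functors $(c_{\Sht^\loc_\mu})^*$ and $(c_{\Hk_1^Y(\Sht^\loc_\mu)})^*$.

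The main subtlety to verify is that the comparison functor is compatible with descent along the $\Iw_n$-action — that is, the atlas-level equivalence respects the descent datum for the quotient stack. This is handled by the functoriality of Scholze's comparison and the fact that the $v$-topology refines the étale topology. The torsion hypothesis on $\Lambda$ is essential here so that étale cohomology of schemes over $k$ agrees with étale cohomology of their diamonds, in the sense of \cite[Theorem 27.2]{Scholze-diamonds}; without it one would need to pass through the formalism of solid or lisse-étale $\Lambda$-sheaves, which is not required for the applications in the sequel.
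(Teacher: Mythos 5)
Your proposed reduction fails at the very first step: the identification $\Sht^\loc_\mu\simeq\bigl[\Fl_{\cI,\Adm(\mu^*)}\,/\,\Ad_\phi\Iw_n\bigr]$ is false, and $\Sht^\loc_\mu$ is not a finite-type perfect algebraic stack. Since $\Iw$ is $\phi$-stable, the $\Ad_\phi$-action of $\Iw$ on $\Fl_\cI=LG/\Iw$ is simply the left-translation action (the factor $\phi(h)^{-1}$ gets absorbed into the right coset), so $\bigl[\Fl_{\cI,\Adm(\mu^*)}\,/\,\Ad_\phi\Iw_n\bigr]=\Iw\backslash LG_{\Adm(\mu^*)}/\Iw$, i.e.\ the local \emph{Hecke} stack at level $\leq\Adm(\mu^*)$ — this is the target of $\delta$, not $\Sht^\loc_\mu$ itself. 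The most direct symptom of the discrepancy is the inertia: for a $\phi$-straight $w\in\widetilde W$ with image $b\in B(G)$, one has $\Sht^\loc_{\cI,w}\cong\BB I_b$, so points of $\Sht^\loc_\mu$ have \emph{profinite} automorphism groups, whereas points of $\bigl[\Fl_{\cI,\Adm(\mu^*)}\,/\,\Iw_n\bigr]$ have inertia inside the \emph{finite-type} group $\Iw_n$. Consequently the AGLR comparison \cite[Propositions 6.7, A.5]{AGLR-local-model}, which is stated for finite-type quotients such as $\Iw_n\backslash LG_{\leq w}/\Iw$, does not directly apply to $\Sht^\loc_\mu$ (and likewise not to $\Hk_1^Y(\Sht^\loc_\mu)$), so your proof as written has a genuine gap.

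The correct argument stays within the placid framework: one observes that the \v{C}ech nerve of the $\Iw$-torsor $LG_\mu\to\Sht^\loc_\mu$ consists of perfect standard placid algebraic spaces which admit cohomologically pro-unipotent morphisms to pfp schemes; for any such $X$ one has the equivalence $\Shv^*_c(X,\Lambda)\simeq D_\et(X,\Lambda)$ from \cite[Remark 10.60]{Tame} followed by the fully faithful comparison functor $D_\et(X,\Lambda)\to D_\et(X^\Diamond,\Lambda)$ of \cite[\S 27]{Scholze-diamonds} (which, as you correctly note, requires $\Lambda$ torsion). The desired embedding for $\Sht^\loc_\mu$ and $\Hk_1^Y(\Sht^\loc_\mu)$ then follows by \'etale descent along this \v{C}ech nerve. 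No detour through $D_\et^\ULA$ is needed, and no finite-type presentation of $\Sht^\loc_\mu$ exists to be used.
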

\begin{proof}
   We first notice that if $X$ is a perfect standard placid algebraic space (in the sense of \cite[Definition 10.61]{Tame}) which admits a cohomologically pro-unipotent morphism $X\to X_0$ with $X_0$ pfp over $k$, then there is a canonical fully faithful embedding
   \[
   \Shv_c^*(X,\Lambda)\simeq D_\et(X,\Lambda)\xrightarrow{(c_X)^*}D_\et(X^{\Diamond},\Lambda),
   \]
   where the first equivalence is as in \cite[Remark 10.60]{Tame}, and the second functor is the comparison functor from \cite[\S 27]{Scholze-diamonds}. 
   
   Applying this to the \v{C}ech nerve of $LG_{\mu}\to \Sht_\mu^{\loc}$, and via the \'etale descent, we obtain the desired fully faithful embedding. Similar for $\Hk_1^Y(\Sht^\loc_\mu)$.
\end{proof}

There is a natural morpshim $\cSht_{k,\mu}\to (\Sht^\loc_\mu)^\Diamond$ defined by restricting a shtuka with one leg at $S$ to the formal neighborhood of $S$. It defines Cartesian diagrams
$$\begin{tikzcd}
    \cSht_{k,\mu} \ar[d] & \Hk_1^Y(\cSht_{k,\mu})\ar[r]\ar[l]\ar[d]\ar[ld,phantom,"\square",very near start]\ar[rd,phantom,"\square",very near start] & \cSht_{k,\mu} \ar[d]\\
    (\Sht^\loc_\mu)^\Diamond & \Hk_1^Y(\Sht^\loc_\mu)^\Diamond \ar[r]\ar[l]& (\Sht^\loc_\mu)^\Diamond.
\end{tikzcd}$$
Let $\Lambda=O_L/\varpi_L^n$ be torsion. By Lemma \ref{lemma-sht-analytification}, we can define the analytification of $\rS_\cF^{\corr,!}$ as a cohomological correspondence
$$(\rS^{\corr,!}_\cF)^\Diamond\colon ((\Sht^\loc_\mu)^\Diamond,(\delta^\Diamond)^*Z_\mu^*)\to ((\Sht^\loc_\mu)^\Diamond,(\delta^\Diamond)^*Z_\mu^*)$$
supported on $(\Hk_1^Y(\Sht^\loc_\mu))^\Diamond$. On the other hand, let $\cF^*\in D_\et^\ULA(\cHk_k,\Lambda)$ be the object associated to $\cF$. We have defined the $S$-correspondence $\rS_{\cF^*}^{\corr}$ on $(\cSht_{O_C,\mu},\delta^*\cZ_\mu)$ supported on $\Hk_1^Y(\cSht_{O_C,\mu})$

\begin{lemma}\label{lemma-S-operator-comparison}
    The pullback of $(\rS^{\corr,!}_\cF)^\Diamond$ to $\cSht_{k,\mu}$ agrees with the restriction of $\rS_{\cF^*}^{\corr}$ to the special fiber.
\end{lemma}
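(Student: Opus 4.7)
The plan is to compare the two cohomological correspondences factor by factor, using that both are defined as compositions of three strictly parallel pieces: a creation $\rC_+$ (resp.\ $\rC_+^!$), a Frobenius twist $\rC_F$ (resp.\ $\rC_F^!$) built from partial Frobenius and the Weil structure $F$, and an annihilation $\rC_-$ (resp.\ $\rC_-^!$), with the same sign factor $\varepsilon_\cF$. Pullback of cohomological correspondences commutes with composition along Cartesian diagrams, so it suffices to verify the comparison separately for each of the three pieces, provided the intermediate sheaves and stacks are matched. The intermediate stacks do match: the special fiber of $\cSht_{O_C}(\szero,\sx\cup\szero)$ (resp.\ $\cSht_{O_C}(\sx\cup\szero,\szero)$) agrees with $(\Sht^\loc_{(\mu\circ Y,Y)})^\Diamond$ (resp.\ $(\Sht^\loc_{(Y,\mu\circ Y)})^\Diamond$) after passing to diamonds, by direct inspection of the functor of points.

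The sheaves are matched via Lemma \ref{lemma-special-fiber-iterated} and its variant for $\cHk_{O_C}(\sx\cup\szero,\szero)$, which compute the nearby cycle of $j_*(\cS_\mu\boxtimes(\cF^*\widetilde\boxtimes(\cF^*)^\vee))$ as $\cF^*\widetilde\boxtimes(Z_\mu^*\star(\cF^*)^\vee)$, and similarly for the reversed ordering. Via the fully faithful comparison functor of Lemma \ref{lemma-sht-analytification}, combined with the equivalence $D_\et^\ULA(\cHk_k,\Lambda)\simeq \Shv_\fingen(\Iw\backslash LG/\Iw,\Lambda)\simeq \Shv_c^*(\Iw\backslash LG/\Iw,\Lambda)$, these correspond, after pullback along the appropriate $q$-maps, to the sheaves $(q_{(\mu\circ Y,Y)})^!(\cF\boxtimes(Z_\mu\star\cF^\vee))$ and $(q_{(Y,\mu\circ Y)})^!((Z_\mu\star\cF^\vee)\boxtimes\cF)$ underlying $\rC_+^!$, $\rC_F^!$, $\rC_-^!$.

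For the three factor comparisons, the creation correspondence $\rC_+$ is obtained from the unit $\Delta_1\to \cF^*\star(\cF^*)^\vee$ in the generic-fiber Hecke category, composed with $\cS_\mu\boxtimes(-)$ and a braiding, all then $j_*$-pushed; its restriction to the special fiber factors through the unit in the special-fiber Hecke category because the ULA duality of $\cF^*$ is compatible with nearby cycles (the unit and counit of duality for ULA objects commute with $i^*j_*$). The braiding $\tau_{Z_\mu,\cF}$ together with its sign $(-1)^{d(Z_\mu)d(\cF)}$ appears in parallel on both sides, matching the corresponding ingredients in $\rC_+^!$. An entirely analogous argument handles $\rC_-$. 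For $\rC_F$, the partial Frobenius on $\cSht_{O_C}$ restricts on the special fiber to the partial Frobenius on local shtuka stacks used to define $\rC_F^!$, and the Weil structure $F\colon \cF^*\to \phi_*\cF^*$ corresponds to $F\colon \cF\to \phi_*\cF$ under the comparison functors. The sign $\varepsilon_\cF$ is engineered precisely so that $\varepsilon_\cF\cdot\mathrm{can}$ matches the commutativity constraint $\sigma$ of the categorical trace used in the definition of the abstract $S$-operator, reconciling the two Frobenius correspondences.

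The main obstacle is bookkeeping rather than conceptual: one must carefully track unit/counit maps, braiding constraints, partial Frobenius, and signs across three distinct sheaf-theoretic formalisms---the $p$-adic theory $D_\et^\ULA$ on $\cHk_{O_C}$, the $\Shv_\fingen$ theory on the perfect algebraic stack $\Iw\backslash LG/\Iw$, and the diamond theory $D_\et$ on $(\Sht^\loc_\mu)^\Diamond$---and verify that all the relevant diagrams commute. The essential geometric input is however already encoded in the nearby cycle computation of Lemma \ref{lemma-special-fiber-iterated}, so once the identifications are set up the remainder of the proof is a formal diagram chase parallel to the one used to establish Lemma \ref{lemma-excursion-operator}.
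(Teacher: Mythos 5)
Your proposal follows the same approach as the paper: decompose $\rS^{\corr}$ as $\rC_-\circ\rC_F\circ\rC_+$ and compare each of the three factors with the analytification of the corresponding factor of $\rS^{\corr,!}_\cF$, using that creation and annihilation both arise from unit/counit of $\cF^*$ at the level of Hecke stacks, and that the partial Frobenius structures match under restriction. This is exactly the structure of the paper's (very brief) proof, so the logic is sound.

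One point in your write-up is stated too strongly. You assert that the special fiber $\cSht^{Y,\mu}_k(\sx\cup\szero,\szero)$ \emph{agrees with} $(\Sht^\loc_{(\mu\circ Y,Y)})^\Diamond$ ``by direct inspection of the functor of points.'' This is not quite right: the $p$-adic side records $\cI$-torsors over all of $\cY_{S,[0,\infty)}$, whereas $\Sht^\loc$ only sees the formal completion at the origin, so there is a natural (forgetful/restriction) morphism
\[
\cSht^{Y,\mu}_k(\sx\cup\szero,\szero)\longrightarrow(\Sht^\loc_{(\mu\circ Y,Y)})^\Diamond,
\]
not an isomorphism. This is exactly the vertical map appearing in the paper's commutative diagram for the $\rC_F$-comparison. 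What saves the argument is that the cohomological correspondences on each side are \emph{pulled back} along this restriction morphism from the same correspondences on iterated local Hecke stacks (combined with the partial Frobenius compatibility), so equality of the pullbacks follows even though the stacks themselves do not coincide. The rest of your argument---matching the sheaves via Lemma~\ref{lemma-special-fiber-iterated}, the comparison functors from Lemma~\ref{lemma-sht-analytification}, and the bookkeeping of signs and braidings---is consistent with the paper; your account is if anything more detailed, since the paper leaves the ULA/nearby-cycle compatibility of the unit and counit implicit.
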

\begin{proof}
    We can define the analytification $(\rC_+^!)^\Diamond$,  $(\rC_-^!)^\Diamond$, and $(\rC_F^!)^\Diamond$ of $\rC_+^!$, $\rC_-^!$, and $\rC_F^!$ respectively. 
    The restriction of $\rC_+$ (resp. $\rC_-$) to the special fiber agrees with the pullback of $(\rC_+^!)^\Diamond$ (resp. $(\rC_-^!)^\Diamond$) to $\cSht_{k,\mu}$ as they are both pulled back from the cohomological correspondences between local Hecke stacks induced by the unit (resp. counit) morphism of $\cF^*$. The claim for $\rC_F$ follows from the commutative diagram
    $$\begin{tikzcd}
        \cSht^{Y,\mu}_k(\sx\cup\szero,\szero) \ar[d] & \cSht^{Y,\mu}_k(\szero,\sx\cup\szero) \ar[d]\ar[l,"\pFr"swap] \\
        (\Sht^\loc_{(\mu\circ Y,Y)})^\Diamond \ar[d]  & (\Sht^\loc_{(Y,\mu\circ Y)})^\Diamond \ar[l,"\pFr^\Diamond"swap] \ar[d] \\
        (\cHk_k)^2 & (\cHk_k)^2. \ar[l,"(\phi\times\id)\circ \sw"swap]
    \end{tikzcd}$$
    We see that the restriction of $\rS^\corr_{\cF^*}$ to the special fiber agrees with the pullback of $(\rS^{\corr,!}_\cF)^\Diamond$.
\end{proof}

\subsubsection{Compatibility with Hecke actions}
In this subsection, we prove the $H_I$-actions part in Theorem \ref{thm-local-global-compatibility}. We first prove that over the generic fiber, the $S$-correspondences defined in Definition \ref{def-excursion} agree with the Hecke correspondences. Let $\Lambda$ be one of $O_L/\varpi_L^n$, $O_L$, and $L$ for $L/\QQ_\ell$ finite.

Let $\Sht^\loc_1$ be the perfect stack classifies $\cI$-local shtukas bounded by $1\in\widetilde{W}$ and $\Hk_1(\Sht^\loc_1)$ be the corresponding stack of Hecke-shtukas. Then the correspondence $$\Sht_1^\loc\xleftarrow{c_l} \Hk_1(\Sht^\loc_1)\xrightarrow{c_r}\Sht^\loc_1$$ is identified with the Hecke correspondence
$$\BB I\xleftarrow{c_l} I\backslash G(\QQ_p)/I\xrightarrow{c_r} \BB I.$$ 
Let $\cF\in \Shv_\fingen(\Iw\backslash LG/\Iw,\Lambda)$ with $F\colon\cF\xrightarrow{\sim}\phi_*\cF$. Repeating the construction of $\rS^{\corr,!}_\cF$ with $\mu=0$, we define the $S$-correspondence 
$$\rS^{\corr,0,!}_\cF\colon (\Sht^\loc_1,\omega_{\Sht^\loc_1})\to (\Sht^\loc_1,\omega_{\Sht^\loc_1})$$
supported on certain quasi-compact closed substack of $\Hk_1(\Sht^\loc_1)$. Similarly, for $\cF\in D_\et^\ULA(\cHk_k,\Lambda)$ and $F\colon \cF\to\phi_*\cF$, we can also define a cohomological correspondence
$$\rS^{\corr,0}_\cF\colon ((\Sht^\loc_1)^\Diamond,\Lambda)\to ((\Sht^\loc_1)^\Diamond,\Lambda).$$

On the other hand, for any double coset $IgI\in I\backslash G(\QQ_p)/I$, there is a canonical correspondence
$$\rT^{\corr,!}_{IgI}\colon (c_{g,l})^*\omega_{\BB I} \xrightarrow{\sim} (c_{g,r})^!\omega_{\BB I}$$
supported on $\BB I\xleftarrow{c_{g,l}} I\backslash IgI/I\xrightarrow{c_{g,r}} \BB I.$ By linear combination, for any element $f\in H_I$, we can define the Hecke cohomological correspondence 
$$\rT^{\corr,!}_f\colon (\BB I,\omega_{\BB I})\to (\BB I,\omega_{\BB I})$$
supported on the support of $f$.

For $\cF\in \Shv_\fingen(\Iw\backslash LG/\Iw,\Lambda)$ and $F\colon \cF\to\phi_*\cF$, let $t^!\cF\in \Shv_\fingen(\Fl_\cI,\Lambda)$ be the pullback of $\cF$. For $g\in G(\QQ_p)/I$, let $(t^!\cF)_g$ denote the $*$-stalk of $t^!\cF$ at $g$. By Grothendieck sheaf-function correspondence, the Weil structure on $\cF$ defines a function
$$f_\cF(g)=\tr(\phi^{-1}|(t^!\cF)_g),\quad g\in I\backslash G(\QQ_p)/I$$
in $H_I$ by taking geometric Frobenius traces.

\begin{prop}\label{prop-S=T-no-leg}
    For $\cF\in \Shv_\fingen(\Iw\backslash LG/\Iw,\Lambda)$ and $F\colon \cF\to\phi_*\cF$, there is an isomorphism
    $\rS^{\corr,0,!}_\cF\simeq \varepsilon_\cF\cdot\rT^{\corr,!}_{f_\cF}$
    of cohomological correspondences from $(\Sht_1^\loc,\omega_{\Sht_1^\loc})$ to $(\Sht_1^\loc,\omega_{\Sht_1^\loc})$. Here $\varepsilon_\cF=(-1)^{d(\cF)(d(\cF)+d(\Delta_{w^{-1}}))}$.
\end{prop}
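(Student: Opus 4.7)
The plan is to compare both sides via the sheaf-function dictionary on the Iwahori Hecke stack. Any cohomological self-correspondence of $(\BB I, \omega_{\BB I})$ supported on a quasi-compact closed substack $Z \subseteq \Hk_1(\Sht_1^\loc) \simeq I\backslash G(\QQ_p)/I$ is uniquely determined by a locally constant $\Lambda$-valued function on $Z$, since on each single Iwahori double coset $I\backslash IgI/I$ the space $\Hom(\omega,\omega)$ is one-dimensional and the correspondence is a scalar there. Under this dictionary, $\rT^{\corr,!}_{IgI}$ corresponds by construction to the characteristic function $\chi_{IgI}$, and hence $\rT^{\corr,!}_{f_\cF}$ corresponds to $f_\cF$. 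By additivity of both sides under triangles in $\cF$, the problem reduces to the case where $\cF$ is supported on a single Schubert cell $\Iw\backslash \Fl_{G,w}$.

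Next, I would compute the function attached to $\rS^{\corr,0,!}_\cF$ at a point $g \in G(\QQ_p)$ by unwinding the composition $\rC_-^! \circ \rC_F^! \circ \rC_+^!$ on the fiber over $g$. The creation $\rC_+^!$ applies the unit $\Lambda \to \cF \star \cF^\vee$; its $g$-stalk picks out the diagonal element in $\bigoplus_h (t^!\cF)_h \otimes (t^!\cF^\vee)_{h^{-1}g}$ via the identification $\cF^\vee \simeq \sw^*\DD(\cF)$. The Frobenius correspondence $\rC_F^!$ then applies the Weil structure $F$ to the first factor while permuting the two modifications via the partial Frobenius on the shtuka stack, and by definition also introduces the overall sign $\varepsilon_\cF$. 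Finally, the annihilation $\rC_-^!$ applies the counit $\cF^\vee\star\cF \to \Lambda$, evaluating the canonical Verdier pairing. The resulting scalar at $g$ is thus the categorical trace of Frobenius on $(t^!\cF)_g$ multiplied by $\varepsilon_\cF$; by Grothendieck's sheaf-function correspondence this equals $\varepsilon_\cF \cdot \tr(\phi^{-1} \mid (t^!\cF)_g) = \varepsilon_\cF \cdot f_\cF(g)$, as desired.

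The main obstacle will be making the above trace computation entirely rigorous in the mixed coefficient setting $\Lambda \in \{O_L/\varpi_L^n,\, O_L,\, L\}$ and on the perfect-prestack geometry of $I\backslash G(\QQ_p)/I$, which is not of finite type. A cleaner route is to deduce the identification from the abstract categorical-trace formalism of \cite[\S 7]{Tame}: the $S$-correspondence at $\mu = 0$ is, by construction and a specialization of the calculation in the proof of Lemma \ref{lemma-excursion-operator}, the geometric realization of the abstract $S$-operator $S^0_{\eta\cF}$ acting on $[\Delta_{w^{-1}}]_\phi \simeq (i_b)_* \cInd_{I_b}^{G_b(\QQ_p)} \Lambda$. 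This abstract operator acts on the compactly induced module by convolution with the function $\varepsilon_\cF \cdot f_\cF \in H_I$, which upon translating back through the identification $\Hk_1(\Sht_1^\loc) \simeq I\backslash G(\QQ_p)/I$ produces the desired equality of cohomological correspondences.
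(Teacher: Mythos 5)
The paper's own ``proof'' of this proposition is just a citation to \cite[Proposition~6.3.3]{XZ-cycles}, and the argument carried out there is precisely the direct trace computation you sketch in your first two paragraphs: identify a cohomological self-correspondence of $(\BB I,\omega)$ supported on $I\backslash G(\QQ_p)/I$ with a function, then unwind the composition $\rC_-^!\circ\rC_F^!\circ\rC_+^!$ pointwise and recognize the resulting scalar as $\varepsilon_\cF\cdot\tr(\phi^{-1}\mid (t^!\cF)_g)$ via the sheaf-function dictionary. So your main route is correct in spirit, and you rightly flag that the work is in making the mixed-coefficient, ind-finite-type stalk computation rigorous. That is indeed where the substance of the cited proof lies.

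However, your proposed ``cleaner route'' in the final paragraph reverses the paper's logical chain and is circular as written. In the paper, Proposition~\ref{prop-S=T-cllc} -- the statement that the abstract $S$-operator $S^0_{\eta\cF}$ acts on $\cInd_I^{G(\QQ_p)}\Lambda$ by convolution with $\varepsilon_\cF\cdot f_\cF$ -- is \emph{deduced from} Lemma~\ref{lemma-excursion-operator} together with the very Proposition~\ref{prop-S=T-no-leg} you are trying to prove. Your third paragraph instead invokes Lemma~\ref{lemma-excursion-operator} plus the conclusion of Proposition~\ref{prop-S=T-cllc} to deduce Proposition~\ref{prop-S=T-no-leg}. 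The assertion ``this abstract operator acts on the compactly induced module by convolution with $\varepsilon_\cF\cdot f_\cF$'' has no independent justification at this point in the argument; an honest direct proof of it would again be a Frobenius-trace computation of exactly the kind sketched in your first two paragraphs. You should therefore commit to that direct route and discard the ``cleaner route.''
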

\begin{proof}
    The proof is same to the proof of \cite[Proposition 6.3.3]{XZ-cycles}.
\end{proof}

Now we can prove $S=T$ for $(i_1)_*\cInd_I^{G(\QQ_p)}\Lambda\in \Shv^\unip_\fingen(\Isoc_G,\Lambda)$.

\begin{prop}\label{prop-S=T-cllc}
    The $S$-operator $S_{\eta\cF}^0\colon (i_1)_*\cInd_I^{G(\QQ_p)}\Lambda\to(i_1)_*\cInd_I^{G(\QQ_p)}\Lambda$ agrees with the action of $\varepsilon_\cF\cdot f_\cF\in H_I$ on $\cInd_I^{G(\QQ_p)}\Lambda$.
\end{prop}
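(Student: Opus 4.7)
The plan is to deduce this directly from the geometric identity established in Proposition~\ref{prop-S=T-no-leg}, by pushing forward the identity of cohomological correspondences along the Newton map $\Nt\colon \Sht^\loc_1 \to \Isoc_G$. The crucial observation is that Proposition~\ref{prop-S=T-no-leg} already identifies $\rS^{\corr,0,!}_\cF$ with $\varepsilon_\cF\cdot \rT^{\corr,!}_{f_\cF}$ as a cohomological self-correspondence of $(\Sht^\loc_1,\omega_{\Sht^\loc_1})$, and both $S^0_{\eta\cF}$ and the Hecke action of $\varepsilon_\cF\cdot f_\cF$ arise as pushforwards of these two sides.

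For the left-hand side, I would invoke Lemma~\ref{lemma-excursion-operator} specialized to $\mu=0$. Although that lemma is stated for general $\mu$, its proof applies verbatim in the $\mu=0$ case and yields the identification $(\eta_{\dot w})_*\Nt_*(\rS^{\corr,0,!}_\cF) \simeq S^0_{\eta\cF}$, since the three steps in the definition of $S^0_{\eta\cF}$ (unit, conjugation by partial Frobenius, counit) correspond exactly to the pushforwards of $\rC_+$, $\rC_F$, and $\rC_-$ after the standard matching of signs involving $\varepsilon_\cF$. For the right-hand side, I would use that $\Nt\colon \Sht^\loc_1 \simeq \BB I \to \Isoc_G$ factors as $\BB I \to \BB G(\QQ_p) \xrightarrow{i_1}\Isoc_G$, and that pushforward along the first map sends $\omega_{\BB I}$ to $\cInd_I^{G(\QQ_p)}\Lambda$. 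Under this pushforward, the Hecke cohomological correspondence $\rT^{\corr,!}_{IgI}$ supported on $\BB I \xleftarrow{c_{g,l}} I\backslash IgI/I \xrightarrow{c_{g,r}} \BB I$ becomes precisely the action of the double coset $IgI$ on $\cInd_I^{G(\QQ_p)}\Lambda$; this is a classical computation of the Hecke action via convolution, and extends linearly to give $\Nt_*(\rT^{\corr,!}_{f_\cF})$ as multiplication by $f_\cF$.

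Combining these two identifications with the factor $\varepsilon_\cF$ from Proposition~\ref{prop-S=T-no-leg} yields the desired equality. The only mild subtlety is bookkeeping between the two presentations $(i_1)_*\cInd_I^{G(\QQ_p)}\Lambda$ (for the quasi-split form $G$) and $(i_b)_*\cInd_{I_b}^{G_b(\QQ_p)}\Lambda$ (for the inner form $G^*$ via $\eta_{\dot w}$), which is transparent since the isomorphism $\eta_{\dot w}\colon \Isoc_G \xrightarrow{\sim} \Isoc_{G^*}$ intertwines the two descriptions and carries the Hecke algebra $H_I$ to $H_{I_b}$. Consequently no new geometric input beyond Proposition~\ref{prop-S=T-no-leg} and Lemma~\ref{lemma-excursion-operator} is required; the argument is essentially a formal pushforward, and I do not anticipate any serious obstacle.
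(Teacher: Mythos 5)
Your proposal is correct and follows the same route as the paper, whose proof is literally the one-liner ``This follows from Lemma~\ref{lemma-excursion-operator} and Proposition~\ref{prop-S=T-no-leg}.'' You have simply unpacked that appeal: apply Lemma~\ref{lemma-excursion-operator} at $\mu=0$ to identify $(\eta_{\dot w})_*\Nt_*(\rS^{\corr,0,!}_\cF)$ with $S^0_{\eta\cF}$, use Proposition~\ref{prop-S=T-no-leg} to rewrite $\rS^{\corr,0,!}_\cF$ as $\varepsilon_\cF\cdot\rT^{\corr,!}_{f_\cF}$, and note that pushing the Hecke correspondence $\rT^{\corr,!}_{f_\cF}$ forward along $\BB I\to\Isoc_G$ realizes the left $H_I$-action on $\cInd_I^{G(\QQ_p)}\Lambda$; the last step is the classical realization of Hecke operators by correspondences and is indeed the implicit ingredient absorbed in the paper's terse phrasing. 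One small bookkeeping correction: in the paper's setup of \S\ref{subsection-Igusa-stack-coh-formula}, $G^*$ is the \emph{quasi-split} group and $G=\sG_{\QQ_p}$ is its inner form identified with $G_b\subseteq G^*$ via $\eta_{\dot w}$, so your parenthetical has the two roles swapped; since $\eta_{\dot w}$ intertwines the two Hecke algebras this does not affect the argument, but the labeling should be fixed.
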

\begin{proof}
    This follows from Lemma \ref{lemma-excursion-operator} and Proposition \ref{prop-S=T-no-leg}.
\end{proof}

Now take $\Lambda=\overline{\QQ}_\ell$ or $\overline\FF_\ell$. Assume that $G=G^*$ is unramified over $\QQ_p$. We can explicitly compute the spectral $S$-operators for elements in the center of $H_I$. Let $V$ be a finite dimensional representation of the Langlands group ${}^LG$. The central sheaf $Z_V\in \Shv_\fingen(\Iw\backslash LG/\Iw,\Lambda)$ corresponding to $V$ carries a natural Weil structure. On the other hand, it defines a function $\ch(V,\phi^{-1})\in Z^{\widehat\unip}_{{}^LG,\QQ_p}$ by sending an $L$-parameter $\varphi\in \Loc^{\widehat\unip}_{{}^LG,\QQ_p}$ to $\tr(\varphi(\phi^{-1})|V).$ 

\begin{prop}\label{prop-S=T-center}
    The action of $(-1)^{d(Z_V)}f_{Z_V}$ on $(i_1)_*\cInd_I^{G(\QQ_p)}\Lambda$ matches with the action of $\ch(V,\phi^{-1})$ on $\CohSpr^\unip_{{}^LG}$.
\end{prop}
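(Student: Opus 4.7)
The plan is to combine Proposition \ref{prop-S=T-cllc} with a direct computation of the spectral $S$-operator $S^{0,\spec}_{Z_\nu}$ under Bezrukavnikov's equivalence. First I would pin down the sign. Because $G = G^*$ is unramified we may take $w = 1$, so $\Delta_{w^{-1}} = \Delta_1$ is supported on the identity component and $d(\Delta_1) = 0$. The central sheaf $Z_\nu$ is supported on the components of $\Iw\backslash LG/\Iw$ indexed by the image of $\nu \in \pi_1(G)$, so by the definition of the parity function via pairing with $\rho$ we have $d(Z_\nu) \equiv \langle 2\rho,\nu\rangle \pmod 2$, and hence
\[
\varepsilon_{Z_\nu} = (-1)^{d(Z_\nu)(d(Z_\nu)+d(\Delta_1))} = (-1)^{d(Z_\nu)^2} = (-1)^{\langle 2\rho,\nu\rangle}.
\]
Proposition \ref{prop-S=T-cllc} then identifies the action of $(-1)^{\langle 2\rho,\nu\rangle} f_{Z_\nu}$ on $(i_1)_*\cInd_I^{G(\QQ_p)}\Lambda$ with the $S$-operator $S^0_{Z_\nu}$, and since $\LL^\unip_G$ intertwines $S^0_\cF$ with $S^{0,\spec}_\cF$, it suffices to show
\[
S^{0,\spec}_{Z_\nu} = \ch(V_\nu, \phi^{-1}) \cdot \id_{\CohSpr^\unip}
\]
as an endomorphism of $\fA_1 = \CohSpr^\unip$.

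To compute $S^{0,\spec}_{Z_\nu}$, I would transport to the spectral side via the unipotent-monodromic Bezrukavnikov equivalence $\BB^{\hat{u}\mbox{-}\mon}_G$. Under this equivalence $Z_\nu$ corresponds to the central object $V_\nu \otimes \iota_*\omega_{\hat{U}^\wedge/\hat{B}}$ in $\Ind\Coh(S^{\widehat\unip}_{\hat{G}})$, and its canonical Weil structure $Z_\nu \xrightarrow{\sim} \phi_*Z_\nu$ corresponds to the tautological automorphism $\id \otimes \phi_{V_\nu}$ induced by the $\phi$-action on $V_\nu$ coming from its ${}^LG$-structure. Thus $S^{0,\spec}_{Z_\nu}$ is the categorical trace of this $\phi$-twisted central object acting on $\CohSpr^\unip \simeq \Ch^{\hat{u}\mbox{-}\mon}_{{}^LG,\phi}(\iota_*\omega_{\hat{U}^\wedge/\hat{B}})$.

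Next I would unpack the three-step construction (unit, Weil structure, counit) along the correspondence
\[
S^{\widehat\unip}_{\hat{G}} \xleftarrow{\delta^\spec_{\hat{u}}} \widetilde{\Loc}^{\widehat\unip}_{{}^LG,\QQ_p} \xrightarrow{\pi^\spec_{\hat{u}}} \Loc^{\widehat\unip}_{{}^LG,\QQ_p}
\]
defining $\Ch^{\hat{u}\mbox{-}\mon}_{{}^LG,\phi}$. The unit and counit collapse the extra convolution factor; what remains is the trace along the fiber of $\pi^\spec_{\hat{u}}$ of the endomorphism of $(\pi^\spec_{\hat{u}})^*\widetilde{V}_\nu$ obtained by composing the commutative constraint $\sigma$ (which, on $\Loc^{\widehat\unip}_{{}^LG,\QQ_p}$ as a $\phi$-fixed-points stack, provides the tautological trivialization $\phi^*\widetilde{V}_\nu \simeq \widetilde{V}_\nu$) with $\phi_{V_\nu}$. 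By the projection formula this trace equals multiplication on $\CohSpr^\unip$ by the regular function on $\Loc^{\widehat\unip}_{{}^LG,\QQ_p}$ whose value at a parameter $\varphi$ is $\tr(\varphi(\phi^{-1}) \mid V_\nu)$, i.e.\ the class function $\ch(V_\nu,\phi^{-1}) \in Z^{\widehat\unip}_{{}^LG,\QQ_p}$.

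The main obstacle is the identification of commutative constraints: one must verify that the categorical-trace constraint $\sigma$ used in the construction of $S^{0,\spec}$, once transported through $\BB^{\hat{u}\mbox{-}\mon}_G$ and the above pull-push, really coincides with the tautological trivialization of $\phi^*\widetilde{V}_\nu$ induced by the ${}^LG$-structure on $V_\nu$. This is a compatibility of Weil structures under Bezrukavnikov's equivalence for central sheaves associated to ${}^LG$-representations; once it is checked, the remaining trace computation is formal and yields the required identity.
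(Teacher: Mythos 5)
Your proposal follows essentially the same route as the paper: compute the sign $\varepsilon_{Z_\nu}=(-1)^{\langle2\rho,\nu\rangle}$ using $d(\Delta_1)=0$ and $d(Z_\nu)\equiv\langle2\rho,\nu\rangle\pmod 2$, reduce via Proposition~\ref{prop-S=T-cllc} to identifying $S^{0,\spec}_{Z_\nu}$, and transport to the spectral side through Bezrukavnikov's equivalence where $Z_\nu$ becomes the central object attached to $V_\nu$. That much matches the paper.

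Where the two diverge is in how the ``main obstacle'' you flag is resolved. You propose to unpack the unit--Weil structure--counit composition directly along the pull-push correspondence $S^{\widehat\unip}_{\hat G}\leftarrow\widetilde{\Loc}^{\widehat\unip}_{{}^LG,\QQ_p}\rightarrow\Loc^{\widehat\unip}_{{}^LG,\QQ_p}$ and argue by a projection-formula fiberwise trace, acknowledging that the commutativity of constraints remains to be verified. The paper instead avoids unpacking the pull-push altogether: it invokes the categorical-trace formalism from \cite{Tame} (Example~7.92 and Proposition~7.94), which gives an $E_1$-algebra isomorphism $\tr(\Ind\Coh(S^\unip_{\hat G}),\phi_*)\simeq\mathrm{End}(\CohSpr^\unip)^{\mathrm{op}}$ and identifies the abstract $S$-operator $S^{0,\spec}_\cF$ with the Chern class $\ch(\cF)$ under this isomorphism. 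Because $\iota_*a^!\colon\Rep(\hat G)\to\Ind\Coh(S^\unip_{\hat G})$ is a monoidal functor compatible with the $\phi$-actions, functoriality of categorical traces produces the commutative square relating $\tr(\Rep(\hat G),\phi_*)\simeq\cO(\hat G\phi/\hat G)$ and $\mathrm{End}(\CohSpr^\unip)^{\mathrm{op}}$ via $(\ev_\phi)^*$ automatically; the only thing left to compute is $\ch(V_\nu)\in\cO(\hat G\phi/\hat G)$, for which the paper unwinds the unit/twist/counit composition and finds $g\phi\mapsto\tr(\phi^{-1}g^{-1}\mid V_\nu)$. In short, what you list as an outstanding check is exactly what the trace/Chern-class machinery packages up, and the only honest computation is the one-line formula for the class function on $\hat G\phi/\hat G$. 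Your approach would work, but to close it you would end up re-proving that compatibility, whereas the paper simply cites it.
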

\begin{proof}
    We need to compute the spectral $S$-operator 
    $$S_{Z_V}^{0,\spec}\colon \CohSpr_{{}^LG}^\unip\to \CohSpr^\unip_{{}^LG}$$
    associated to $\BB^{\unip}_{G}(Z_V)$. Under Bezrukavnikov equivalence, the object $Z_V$ corresponds to $\iota_*a^!V$ for
    $$\BB\hat{G} \xleftarrow{a} \hat{U}/\hat{B} \xrightarrow{\iota} S^\unip_{\hat{G}}.$$
    The functor 
    $$\iota_*a^!\colon \Rep(\hat{G})\to \Ind\Coh(S^\unip_{\hat{G}})$$
    is monoidal and compatible with $\phi$-actions. Taking Frobenius traces, it induces a morphism
    $$u\colon \tr(\Rep(\hat{G}),\phi_*)\to \tr(\Ind\Coh(S^\unip_{\hat{G}}),\phi_*).$$
    By \cite[Example 7.92]{Tame}, there is a natural isomorphism
    $$\tr(\Ind\Coh(S^\unip_{\hat{G}}),\phi_*)\simeq \mathrm{End}(\CohSpr^\unip_{{}^LG})^{\mathrm{op}}$$
    of $E_1$-algebras such that the diagram 
    $$\begin{tikzcd}
        \tr(\Rep(\hat{G}),\phi_*) \ar[d,"\simeq"]\ar[r,"u"] & \tr(\Ind\Coh(S^\unip_{\hat{G}}),\phi_*) \ar[d,"\simeq"] \\
        \cO(\hat{G}\phi/\hat{G}) \ar[r,"(\ev_\phi)^*"] & \mathrm{End}(\CohSpr^\unip_{{}^LG})^{\mathrm{op}}
    \end{tikzcd}$$
    commutes.
    Moreover, for $\cF\in \Coh(S^\unip_{\hat{G}})$ together with $\cF\xrightarrow{\sim}\phi_*\cF$, the Chern class $$\ch(\cF)\in\tr(\Ind\Coh(S^\unip_{\hat{G}}),\phi_*)$$ 
    of $\cF$ is corresponds to the $S$-operator $S_\cF^{0,\spec}$ by \cite[Proposition 7.94]{Tame}. We see that the $S$-operator $S_{Z_V}^{0,\spec}$ is induced by the pullback of the Chern class $\ch(V)\in \cO(\hat{G}\phi/\hat{G})$ along $\ev_\phi\colon \Loc^{\widehat\unip}_{{}^LG,\QQ_p}\to \hat{G}\phi/\hat{G}$. Over $\hat{G}\phi/\hat{G}$, the Chern class $\ch(V)$ is given by the composition
    $$\cO\xrightarrow{\mathrm{unit}} \cO\otimes V\otimes V^*\xrightarrow{\phi^{-1}} \cO\otimes \phi_*V\otimes V^*\xrightarrow{\can^{-1}}\cO\otimes V\otimes V^*\xrightarrow{\mathrm{counit}}\cO$$
    where $\phi^{-1}\colon V\to \phi_*V$ is the action of  $\phi^{-1}\in {}^LG$, and $\can\colon \cO\otimes V\simeq \cO\otimes\phi_*V$ is defined as in proof of Proposition \ref{prop-compatibility-Weil-group}.
    It is straightforward to check that the above composition sends $g\phi\in \hat{G}\phi/\hat{G}$ to $\tr(\phi^{-1}g^{-1}|V)$. Pulled back to $\Loc^{\widehat\unip}_{{}^LG,\QQ_p}$, it agrees with the function $\ch(V,\phi^{-1})$.
\end{proof}

Let $\Lambda=O_L/\varpi^n_L$ be torsion. Take $\cF\in D_\et^\ULA(\cHk_k,\Lambda)$ 
There is a natural morphism
$\Sht_{C,\mu}\to (\Sht_1^\loc)^\Diamond\times \cHk_{C,\mu}$
by restricting a $p$-adic shtuka $\phi^*\cE\dashrightarrow\cE$ with one leg at $S^\sharp$ to the formal neighborhood of $S$ and $S^\sharp$ respectively. This defines a commutative diagram
$$\begin{tikzcd}
    \cSht_{C,\mu}\ar[d] & \Hk_1(\cSht_{C,\mu}) \ar[d]\ar[l]\ar[r]\ar[ld,phantom,"\square",very near start]\ar[rd,phantom,"\square",very near start] & \cSht_{C,\mu} \ar[d] \\
    (\Sht^\loc_1)^\Diamond\times \cHk_{C,\mu} & \Hk_1(\Sht^\loc_1)^\Diamond\times\cHk_{C,\mu} \ar[l]\ar[r] & (\Sht^\loc_1)^\Diamond\times \cHk_{C,\mu}
\end{tikzcd}$$
with both sequare Cartesian. 

\begin{lemma}\label{lemma-coh-correspondence-generic-fiber}
    The cohomological correspondence
    $$\rS_{\cF,C}^\corr\colon (\cSht_{C,\mu},\delta^*\cS_\mu)\to (\cSht_{C,\mu},\delta^*\cS_\mu)$$
    agrees with the $*$-pullback of the cohomological correspondence 
    $$\rS^{\corr,0}_\cF\boxtimes\id\colon((\Sht^\loc_1)^\Diamond\times \cHk_{C,\mu},\Lambda\boxtimes\cS_\mu)\to((\Sht^\loc_1)^\Diamond\times \cHk_{C,\mu},\Lambda\boxtimes\cS_\mu)$$
    along the above diagram.
\end{lemma}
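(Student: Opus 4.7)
The plan is to exploit the fact that over the generic fiber $C$, the Frobenius divisor $S \subset \cY_{S,[0,\infty)}$ (in characteristic $p$) and the leg divisor $S^\sharp$ (in characteristic $0$) are disjoint, so every iterated Hecke/shtuka stack involved in the construction of $\rS^\corr_{\cF,C}$ factors as a product of the corresponding ``$\szero$-only'' stack over $k$ with the Hecke leg over $C$. Concretely, Proposition \ref{prop-shtuka-char-0} gives the product decompositions
\begin{align*}
\cSht_{C,\mu} &\simeq (\Sht_1^\loc)^\Diamond \times \cHk_{C,\mu}, \\
\cSht^{Y,\mu}_{C}(\underline{\sy}) &\simeq \Sht^\loc(\underline{\sy}^{\szero})^\Diamond \times \cHk_{C,\mu},
\end{align*}
for each of the sequences $\underline{\sy}$ of symbols with a single $\sx$ appearing in the construction, where $\underline{\sy}^\szero$ is obtained from $\underline{\sy}$ by erasing the $\sx$ (so e.g.\ $(\sx\cup\szero,\szero)^\szero=(\szero,\szero)$). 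Analogous product decompositions hold for the iterated Hecke stacks on the local model side after base change to $C$.

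First, I would check that each of the pieces $\rC_+$, $\rC_F$ and $\rC_-$ defined in \S\ref{subsection-S=T} factors correctly after restriction to the generic fiber. For $\rC_+$, the construction uses the unit $\Delta_1 \to \cF\star\cF^\vee$ pulled back to the local Hecke stack $\cHk_{O_C}(\sx\cup\szero)$; over $C$ this sheaf is simply the external tensor product $\cS_\mu\boxtimes\Delta_1$ with respect to $\cHk_C(\sx\cup\szero)\simeq \cHk_C\times(\Iw\backslash LG/\Iw)^\Diamond$, and the resulting cohomological correspondence is literally the box product of $\id_{\cS_\mu}$ on the $\mu$-leg factor and the corresponding creation correspondence for shtukas with no $\sx$-leg. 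The same analysis with the counit gives the product decomposition of $\rC_-$. For $\rC_F$, one notes that the partial Frobenius $\pFr$ over $C$ only permutes the $\szero$-positions (it fixes the $\sx$-leg because moving the $\sx$-divisor by $\phi_S$ does not change the fiber over the characteristic $0$ untilt), and under the above product decomposition it corresponds to $\pFr^\szero \times \id_{\cHk_{C,\mu}}$. Combined with the fact that the canonical isomorphism $\can$ and the commutative constraint on central sheaves only affect the $\szero$-legs, $\rC_F$ also splits as a product.

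Next, I would assemble the three pieces: the composition $\rC_- \circ \rC_F \circ \rC_+$ over $C$ becomes, under the product decomposition, $\id_{\cS_\mu}\boxtimes (\rC_-^\szero\circ\rC_F^\szero\circ\rC_+^\szero)^{\Diamond}$, where the second factor is exactly the diamond version $\rS^{\corr,0}_\cF$ of the no-leg $S$-correspondence built from $\cF$. Since the morphism $\cSht_{C,\mu}\to (\Sht_1^\loc)^\Diamond\times\cHk_{C,\mu}$ is itself the product of the restriction-to-$S$ map and the projection to $\cHk_{C,\mu}$, the pullback of $\rS^{\corr,0}_\cF\boxtimes \id$ along it is precisely $\rS^\corr_{\cF,C}$, with the correct sheaf $\delta^*\cS_\mu \simeq \Lambda\boxtimes\cS_\mu$ on each side. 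Functoriality of cohomological correspondences under the Cartesian squares reviewed in Example \ref{eg-coh-correspondence}(1) then identifies the two correspondences.

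The main potential obstacle is bookkeeping: making sure the sign $\varepsilon_\cF$ and the commutative constraints $\tau,\sigma$ that appear in the definitions of $\rC_+$, $\rC_F$, $\rC_-$ are reproduced correctly under the product decomposition (equivalently, that no extra sign appears from interchanging the $\sx$-leg past an $\szero$-leg in the generic fiber). This is handled by observing that on $\cHk_C(\sx\cup\szero)\simeq \cHk_C\times(\Iw\backslash LG/\Iw)^\Diamond$, the commutative constraint acts trivially on the $\cS_\mu$-factor, because swapping an $\sx$-leg past an $\szero$-leg when they are supported on disjoint divisors induces the identity with no sign, so all the signs reside in the $\szero$-factor, exactly matching the signs in the definition of $\rS^{\corr,0}_\cF$. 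Once this sign bookkeeping is in place, the claim follows.
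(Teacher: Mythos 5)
Your proposal rests on the claimed ``product decompositions''
\[
\cSht_{C,\mu} \simeq (\Sht_1^\loc)^\Diamond \times \cHk_{C,\mu},
\qquad
\cSht^{Y,\mu}_{C}(\underline{\sy}) \simeq \Sht^\loc(\underline{\sy}^{\szero})^\Diamond \times \cHk_{C,\mu},
\]
attributed to Proposition~\ref{prop-shtuka-char-0}. This is not what that proposition says, and the claimed isomorphisms are false. Proposition~\ref{prop-shtuka-char-0} gives $\cSht_{C,\mu}\simeq I\backslash\Gr_{C,\mu^*}$ with $I=\cI(\ZZ_p)$ the profinite Iwahori, whereas $\cHk_{C,\mu}$ is a quotient of $\Gr_{C,\mu^*}$ by the (non-discrete) positive loop group over $B^+_{\mathrm{dR}}$, and $(\Sht^\loc_1)^\Diamond=(\BB I)^\Diamond$. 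An $S$-point of $I\backslash\Gr_{C,\mu^*}$ is an $I$-torsor $\PP$ together with an $I$-equivariant map $\PP\to\Gr_{C,\mu^*}$; an $S$-point of $(\BB I)^\Diamond\times\cHk_{C,\mu}$ is an $I$-torsor together with an $L^+\cI$-torsor equipped with an equivariant map to $\Gr_{C,\mu^*}$. These groupoids are not equivalent: the map $\cSht_{C,\mu}\to(\Sht^\loc_1)^\Diamond\times\cHk_{C,\mu}$ recorded in the paper is a genuine morphism, not an isomorphism. The same remark applies to the iterated shtuka stacks. What \emph{does} split as a product over $C$ is the iterated local Hecke stack $\cHk_C(\underline{\sy})\simeq\cHk_C\times(\Iw\backslash LG\times^{\Iw}\cdots\times^{\Iw}LG/\Iw)^\Diamond$ (stated explicitly in the paper before Lemma~\ref{lemma-special-fiber-iterated}); the shtuka stacks over $C$ merely sit in Cartesian squares over the relevant products.

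This matters for your argument because you use the false product decomposition to ``factor'' the partial Frobenius $\pFr$ and hence $\rC_F$ as $\pFr^{\szero}\times\id$. Without the decomposition of the shtuka stack as a product, that factorization is not meaningful as stated. The correct route (and what ``follows from definitions'' means here) is to observe that each of $\rC_+$, $\rC_F$, $\rC_-$ is \emph{defined} by pulling back a correspondence from a local Hecke stack --- the unit, the Frobenius structure together with the commutativity constraint, and the counit --- along a diagram whose shtuka side is Cartesian over the Hecke side. Over $C$ the Hecke stacks do split as external products with $\cHk_{C,\mu}$, the sheaves involved are external products $\cS_\mu\boxtimes(\cdot)$, and the unit/counit/Frobenius morphisms act identically on the $\cS_\mu$-factor; the partial Frobenius diagram over $C$ similarly covers $(\phi\times\id)\circ\sw$ on the Hecke side and is compatible with the product structure there. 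The desired identity $\rS^\corr_{\cF,C}=*\text{-pullback of }\rS^{\corr,0}_\cF\boxtimes\id$ then drops out of the Cartesian functoriality of cohomological correspondences (Example~\ref{eg-coh-correspondence}(1)) applied square by square, with no need for the shtuka stack itself to decompose as a product. Your sign discussion is fine once this correction is made, since the signs all live on the Hecke side.
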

\begin{proof}
    This follows from definitions.
\end{proof}

By Proposition \ref{prop-p-isog-stack}, there is a commutative diagram
$$\begin{tikzcd}
    \scS^\diamond_{O_C} \ar[d,"\loc_p"] & \Hk_1^Y(\scS_{O_C}^\diamond) \ar[l,"c_l^\glob"swap]\ar[r,"c_r^\glob"]\ar[d]\ar[ld,phantom,"\square",very near start]\ar[rd,phantom,"\square",very near start] & \scS^\diamond_{O_C} \ar[d,"\loc_p"] \\
    \cSht_{O_C,\mu}  & \Hk_1^Y(\cSht_{O_C,\mu}) \ar[l,"c_l"swap]\ar[r,"c_r"] & \cSht_{O_C,\mu}
\end{tikzcd}$$
with both squares Cartesian. By Example \ref{eg-coh-correspondence} (1), we define the cohomological correspondence
$$\rS^{\glob,\corr}_{\cF}\colon (\scS^\diamond_{O_C},(\loc_p)^*\delta^*\cZ_\mu)\to (\scS^\diamond_{O_C},(\loc_p)^*\delta^*\cZ_\mu)$$
supported on $\Hk_1^Y(\scS^\diamond_{O_C})$ by pulling back $\rS^\corr_\cF$. Similarly, we define
$$\rS^{\glob,\corr,!}_\cF\colon (\Sh_\mu,(\loc_p)^!\delta^!Z_\mu)\to (\Sh_\mu,(\loc_p)^!\delta^!Z_\mu)$$
for $\cF\in\Shv_\fingen(\Iw\backslash LG/\Iw,\Lambda)$ over the special fiber. Note that if $\cF\in \Shv_\fingen(\Iw\backslash LG/\Iw,\Lambda)$ is associated to $\cF^*\in \Shv^*_c(\Iw\backslash LG/\Iw,\Lambda)$, under the natural functor
$$\Shv_c(\Sh_\mu,\Lambda)\simeq \Shv_c^*(\Sh_\mu,\Lambda)\xrightarrow{(j_{\Sh_\mu})^*(c_{\Sh_\mu})^*} D_\et(\scS^\diamond_k,\Lambda),$$
the object $(\loc_p)^!\delta^!Z_\mu$ is sent to $(\loc_p)^*\delta^* Z_\mu^*\simeq i^*(\loc_p)^*\delta^*\cZ_\mu$, and the correspondence $\rS^{\glob,\corr,!}_\cF$ is sent to $i^*\rS^{\glob,\corr}_{\cF^*}$ by Lemma \ref{lemma-S-operator-comparison}.

\begin{lemma}
    There is a canonical isomorphism
    $$(\loc_p)^*\delta^*\cZ_\mu\simeq j_*\Lambda(d/2)[d]$$
    in $D_\et(\scS^\diamond_{O_C},\Lambda)$, where $d=\dim \sSh_{K^pI}(\sG,\sX)$.
\end{lemma}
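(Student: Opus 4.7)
The plan is to establish the isomorphism by computing pullback over the generic fiber, where the sheaf is explicit, and then propagating to $\scS^\diamond_{O_C}$ via the pushforward $j_*$ along the open immersion $j=j_\scS\colon \scS^\diamond_C\hookrightarrow \scS^\diamond_{O_C}$.

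First, I would organize the morphisms into the commutative diagram with Cartesian squares
\[
\begin{tikzcd}
\scS^\diamond_C \ar[r,"j_\scS"]\ar[d,"\loc_{p,C}"]\ar[rd,phantom,"\square", very near start] & \scS^\diamond_{O_C} \ar[d,"\loc_p"] \\
\cSht_{C,\mu} \ar[r,"j_{\cSht}"]\ar[d,"\delta_C"]\ar[rd,phantom,"\square", very near start] & \cSht_{O_C,\mu} \ar[d,"\delta"] \\
\cHk_C \ar[r,"j_{\cHk}"] & \cHk_{O_C}.
\end{tikzcd}
\]
By definition $\cZ_\mu=(j_{\cHk})_*\cS_\mu$, where $\cS_\mu$ is universally locally acyclic relative to $\cHk_C\to \Spd C$. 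Since $\mu$ is minuscule, the Schubert variety $\Gr_{C,\mu^*}$ is smooth of dimension $d=\langle 2\rho,\mu\rangle=\dim \sSh_{K^pI}(\sG,\sX)$, and $\cS_\mu$ is the twisted constant sheaf $\Lambda(d/2)[d]$ supported on $\cHk_{C,\mu}$.

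Next, I would compute the pullback of $\cS_\mu$ to $\scS^\diamond_C$. By Proposition~\ref{prop-shtuka-char-0}, $\cSht_{C,\mu}\simeq I\backslash \Gr_{C,\mu^*}$, and $\delta_C$ identifies with the natural map into $\cHk_{C,\mu}$, so $(\delta_C)^*\cS_\mu\simeq \Lambda(d/2)[d]$ on $\cSht_{C,\mu}$. Pulling back further along $\loc_{p,C}$ (which is a map between smooth diamonds over $\Spd C$) yields
\[
(\loc_{p,C})^*(\delta_C)^*\cS_\mu\simeq \Lambda(d/2)[d].
\]

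Now I would deduce the integral statement via base change along the open immersions. The natural base change transformation gives
\[
(\loc_p)^*\delta^*(j_{\cHk})_*\cS_\mu\longrightarrow (j_\scS)_*(\loc_{p,C})^*(\delta_C)^*\cS_\mu\simeq (j_\scS)_*\Lambda(d/2)[d].
\]
To verify this map is an isomorphism, I would invoke the universal local acyclicity of $\cZ_\mu$ over $\Spd O_C$: since pullback preserves ULA relative to $\Spd O_C$, the object $\cG\coloneqq (\loc_p)^*\delta^*\cZ_\mu$ is itself ULA over $\Spd O_C$. For a ULA sheaf over $\Spd O_C$ on a smooth(-enough) diamond, the specialization map identifies $\cG$ with $(j_\scS)_*(j_\scS)^*\cG$, and $(j_\scS)^*\cG\simeq \Lambda(d/2)[d]$ by the generic-fiber computation above. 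Equivalently, the argument used in \cite[Proposition~6.12]{AGLR-local-model} showing that $j^*$ is an equivalence on ULA sheaves applies here, with inverse given by $j_*$; combined with ordinary base change across the two Cartesian squares, this yields the desired isomorphism.

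The main obstacle I anticipate is the base change step: making rigorous the claim that $(\loc_p)^*\delta^*$ commutes with the pushforward along $j_{\cHk}$, equivalently that ULA sheaves on $\scS^\diamond_{O_C}$ relative to $\Spd O_C$ are determined by their generic fibers via $(j_\scS)_*$. The cleanest path is to show that pullback along $\loc_p$ and $\delta$ preserves ULA relative to $\Spd O_C$ (which follows from the universal character of ULA together with the Cartesianity of the squares) and then invoke the ULA-equivalence $j_\scS^*$ of the relevant sheaf categories.
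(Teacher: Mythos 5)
Your approach is genuinely different from the paper's, and you've correctly put your finger on where the difficulty lies, but the gap you flag is a real one that your argument does not close.

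The paper does not try to commute $(\loc_p)^*\delta^*$ past the pushforward along the Hecke stack's open immersion, nor to use an abstract ULA extension property on $\scS^\diamond_{O_C}$. Instead it invokes the $v$-sheaf local model diagram of Pappas--Rapoport: there is a $\cI_{O_C}^\Diamond$-torsor $\widetilde{\scS}^\diamond_{O_C}\to \scS^\diamond_{O_C}$ together with a cohomologically smooth $\cI_{O_C}^\Diamond$-equivariant map to $\MM_{\cI,\mu}^\Diamond$, giving a cohomologically smooth map $\bar q^v\colon \scS^\diamond_{O_C}\to(\cI_{O_C}\backslash\MM_{\cI,\mu})^\Diamond$ fitting over $\delta\circ\loc_p$. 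On $(\cI_{O_C}\backslash\MM_{\cI,\mu})^\Diamond$ the relevant sheaf is $j_*\Lambda(d/2)[d]$ essentially by the very construction of $\cZ_\mu$ and the minuscule geometric Satake isomorphism $\cS_\mu\simeq\Lambda(d/2)[d]$; one then applies smooth base change along $\bar q^v$. The point is that the local model $\MM_{\cI,\mu}$ is a proper scheme over $O_C$, and the identification of pullback with $j_*$-extension is unproblematic there; cohomological smoothness of $\bar q^v$ then transports the statement directly to $\scS^\diamond_{O_C}$.

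Your argument instead needs the claim that for any ULA object $\cG$ over $\Spd O_C$ on $\scS^\diamond_{O_C}$ the adjunction $\cG\to (j_\scS)_*(j_\scS)^*\cG$ is an isomorphism, i.e.\ that $j^*$ is an equivalence on ULA objects with inverse $j_*$ on this particular space. The reference you lean on, AGLR's Proposition 6.12, proves this for the closed Hecke stacks $\cHk_{O_C}$, and the argument there uses that these are ind-proper over $\Spd O_C$. The object $\scS^\diamond_{O_C}$ is the $\diamond$-functor of the possibly non-proper scheme $\scS_{K^pI}(\sG,\sX)_{O_C}$ --- so it lives close to the special fiber and is nowhere near proper over $\Spd O_C$ --- and the extension property is not automatic on it. If one wants to justify the extension property on $\scS^\diamond_{O_C}$, the natural way is precisely to descend it from the proper local model along a cohomologically smooth map, i.e.\ to use the local model diagram. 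So your approach, once made rigorous, collapses into the paper's, and as written the ULA extension step is a genuine gap rather than a routine verification.
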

\begin{proof}
    By \cite[\S 4.9.1]{PR-p-adic-shtuka}, there is a $v$-sheaf theoretic local model diagram
    $$\scS_{O_C}^\diamond \xleftarrow{\pi^v} \widetilde{\scS}^\diamond_{O_C}\xrightarrow{q^v}\MM_{\cI,\mu}^\Diamond$$
    such that $\pi^v$ is a $\cI^\Diamond_{O_C}$-torsor and $q^v$ is $\cI^\Diamond_{O_C}$-equivariant and cohomologically smooth. Here $\MM_{\cI,\mu}$ is the schematic local model base changed to $O_C$.
    Moreover, it defines a commutative diagram
    $$\begin{tikzcd}
        \scS^\diamond_{O_C} \ar[r,"\loc_p"]\ar[d,"\bar{q}^v"swap] & \cSht_{O_C,\mu} \ar[d,"\delta"] \\
        (\cI_{O_C}\backslash \MM_{\cI,\mu})^\Diamond & L^+_{O_C}\cI\backslash (\MM_{\cI,\mu})^\Diamond \ar[l]
    \end{tikzcd}$$
    with $\bar{q}^v$ cohomologically smooth. Here, $L^+_{O_C}\cI$ is the base change of $L^+_{\cY_{[0,\infty)}}$ to $O_C$, and the the action of $L^+_{O_C}\cI$ on $\MM_{\cI,\mu}$ factors through $L^+_{O_C}\cI\twoheadrightarrow \cI_{O_C}^\Diamond$. The central sheaf $\cZ_\mu$ is by definition the pullback of $j_*\Lambda(d/2)[d]$ from $(\cI_{O_C}\backslash \MM_{\cI,\mu})^\Diamond$ to $L^+_{O_C}\cI\backslash (\MM_{\cI,\mu})^\Diamond$, using that $\cS_\mu\simeq\Lambda(d/2)[d]$. Therefore its pullback to $\scS^\diamond_{O_C}$ is also identified with $j_*\Lambda(d/2)[d]$ by smooth base change.
\end{proof}

Let $\pi^\diamond\colon \scS^\diamond_{O_C}\to \Spd O_C$ be the natural morphism.
\begin{prop}\label{prop-coh-diamond}
    Assume $\Lambda$ is torsion. The object $(\pi^\diamond)_!j_*\Lambda$ is a constant sheaf over $\Spd O_C$ associated to the $\Lambda$-module $R\Gamma_c(\sSh_{K^pI}(\sG,\sX)_C,\Lambda)$.
\end{prop}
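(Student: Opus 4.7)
The plan is to reduce to the proper case via a toroidal compactification and then apply proper base change in the $v$-sheaf formalism together with Huber's comparison between \'etale cohomology of algebraic varieties and their adic analytifications.

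First, choose a toroidal compactification $\bar{\scS}\coloneqq\scS^{\tor}_{K^pI}(\sG,\sX)_{O_C}$ of $\scS_{K^pI}(\sG,\sX)_{O_C}$, which exists and is proper over $O_C$ (in the Hodge type setting considered earlier in \S\ref{subsubsection-connective-Igusa}). Let $\iota\colon\scS_{O_C}\hookrightarrow\bar{\scS}$ be the open immersion and $\bar{\pi}\colon\bar{\scS}\to\Spec O_C$ the structure morphism. On diamonds, $\bar{\pi}^\diamond\colon\bar{\scS}^\diamond\to\Spd O_C$ is a proper morphism of $v$-sheaves in the sense of \cite{Scholze-diamonds}, so $(\bar{\pi}^\diamond)_!\simeq(\bar{\pi}^\diamond)_*$ and satisfies proper base change. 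Factoring $\pi^\diamond=\bar{\pi}^\diamond\circ\iota^\diamond$, we obtain
$$(\pi^\diamond)_!j_*\Lambda\simeq (\bar{\pi}^\diamond)_*(\iota^\diamond)_!j_*\Lambda.$$

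Next, compute the stalk at the generic point $\Spd C$. The Cartesian square of open immersions
$$\begin{tikzcd}
\scS_C^\diamond \ar[r,"j"]\ar[d,"\iota_C^\diamond"'] & \scS_{O_C}^\diamond \ar[d,"\iota^\diamond"] \\
\bar{\scS}_C^\diamond \ar[r,"\bar{j}"] & \bar{\scS}^\diamond
\end{tikzcd}$$
together with open-embedding base change for $\iota^\diamond_!$ along $\bar{j}$ and the identity $j^*j_*\Lambda\simeq\Lambda$ gives $\bar{j}^*(\iota^\diamond)_!j_*\Lambda\simeq(\iota_C^\diamond)_!\Lambda$. Applying proper base change for $\bar{\pi}^\diamond$, the restriction of $(\pi^\diamond)_!j_*\Lambda$ to $\Spd C$ is
$$R\Gamma(\bar{\scS}_C^\diamond,(\iota_C^\diamond)_!\Lambda)\simeq R\Gamma_c(\scS_C^\diamond,\Lambda)\simeq R\Gamma_c(\sSh_{K^pI}(\sG,\sX)_C,\Lambda),$$
where the last isomorphism is Huber's comparison theorem between \'etale cohomology of algebraic varieties and their adic analytifications, as reformulated in the $v$-sheaf framework of \cite{Scholze-diamonds}.

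To conclude that $(\pi^\diamond)_!j_*\Lambda$ is the constant $v$-sheaf on the connected base $\Spd O_C$ with this value, it suffices to check that the stalk at the closed point $\Spd k$ agrees. By proper base change for $\bar{\pi}^\diamond$, this stalk is the cohomology of the special-fiber diamond $\bar{\scS}_k^\diamond$ with coefficients in the nearby cycles along $\bar{\scS}^\diamond\to\Spd O_C$ applied to $(\iota_C^\diamond)_!\Lambda$. The Huber--Berkovich comparison of nearby cycles on a proper integral model with compactly supported cohomology of its generic fiber, valid in the $v$-sheaf setting for torsion coefficients, identifies this stalk with $R\Gamma_c(\sSh_{K^pI}(\sG,\sX)_C,\Lambda)$, matching the generic stalk.

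The main obstacle is this last constancy step: rigorously confirming in the diamond formalism that the specialization map from the generic to the closed stalk is an isomorphism. This may either be invoked as a form of proper base change combined with the Huber--Berkovich nearby-cycles comparison, or established directly by exploiting the fact that the boundary of $\bar{\scS}$ is horizontal (so that $\bar{j}_*(\iota_C^\diamond)_!\Lambda$ vanishes on the boundary of the special fiber). Either route reduces to ingredients that are standard in the $v$-sheaf theory, but some care is required because $j_*\Lambda$ has nontrivial nearby-cycles content on the special fiber, so the identification is not purely formal.
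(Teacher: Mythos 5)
Your reduction to the proper case via a toroidal compactification and the computation of the generic stalk are both sound (the identification $\bar j^*(\iota^\diamond)_!j_*\Lambda\simeq(\iota_C^\diamond)_!\Lambda$ and proper base change over $\Spd C$ work as you say, and the final comparison with $R\Gamma_c(\sSh_C,\Lambda)$ is precisely what the paper cites as \cite[Lemma 5.10]{Wu-S-equal-T}; the formula ``Huber's comparison'' is a little imprecise since $\scS^\diamond_C$ is the good reduction locus rather than the full analytification, but the ingredient is the same). The problem is in the closed-fiber step.

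After proper base change along $\bar\pi^\diamond$, the closed stalk is
$$R\Gamma\bigl(\bar\scS_k^\diamond,\; i^*(\iota^\diamond)_!j_*\Lambda\bigr),$$
and pulling $(\iota^\diamond)_!$ through the closed immersion $i$ (open-immersion base change) gives
$$i^*(\iota^\diamond)_!j_*\Lambda\;\simeq\;(\iota^\diamond_k)_!\,i_\scS^*j_*\Lambda,$$
i.e.\ the \emph{extension by zero} of the nearby-cycle complex of the \emph{open} integral model $\scS_{O_C}$. This is \emph{not} the same object as $i^*\bar j_*(\iota_C^\diamond)_!\Lambda$, the nearby cycles on the proper compactification applied to $(\iota_C^\diamond)_!\Lambda$, which is what your proof uses when it invokes the Huber--Berkovich comparison ``on a proper integral model.'' The two differ by the restriction of $\bar j_*(\iota_C^\diamond)_!\Lambda$ to the \emph{boundary} of the special fiber $\bar\scS_k\setminus\scS_k$, which is a perfectly ordinary punctured-tube cohomology and has no reason to vanish in general. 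Your closing sentence does acknowledge that one must show ``$\bar j_*(\iota_C^\diamond)_!\Lambda$ vanishes on the boundary of the special fiber,'' but you assert this on the basis of the boundary being ``horizontal'' without proof; this is exactly the geometric input one needs, and it is not formal.

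The paper avoids this entirely: it never compactifies, computes the closed stalk directly as $R\Gamma_c\bigl(\scS^\diamond_k,i^*j_*\Lambda\bigr)$, identifies this with $R\Gamma_c(\Sh_\mu,R\Psi)$ via Lemma~\ref{lemma-compare-cohom} (together with the algebraic/adic comparison of nearby cycles established earlier), and then cites \cite[Corollary~4.6]{Lan-Stroh-nearby-cycle-II}, which is precisely the statement that for these Shimura varieties the natural map $R\Gamma_c(\sSh_C,\Lambda)\to R\Gamma_c(\Sh_\mu,R\Psi)$ is an isomorphism. That citation is the nontrivial geometric input packaged in well-positionedness of the boundary; without an analogous input your proof does not close. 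If you prefer the compactified route, you would need to prove your vanishing claim on the boundary of the special fiber, or equivalently the identification $(\iota^\diamond)_!j_*\Lambda\xrightarrow{\sim}\bar j_*(\iota_C^\diamond)_!\Lambda$, which is again a Lan--Stroh-type statement rather than an instance of Huber--Berkovich.
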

\begin{proof}
    It suffices to compare the stalks over the generic point and the special point. Over $C$, this is proved in \cite[Lemma 5.10]{Wu-S-equal-T}. Over $k$, there are canonical isomorphisms 
    $$R\Gamma_c(\scS^\diamond_k,i^*j_*\Lambda)\simeq R\Gamma_c(\Sh_\mu,R\Psi\Lambda)\simeq R\Gamma_c(\sSh_{K^pI}(\sG,\sX)_C,\Lambda)$$
    by Lemma \ref{lemma-compare-cohom} and \cite[Corollary 4.6]{Lan-Stroh-nearby-cycle-II}.
\end{proof}
\begin{lemma}\label{lemma-compare-cohom}
    Assume $\Lambda$ is torsion. Let $X$ be a separated scheme of finite type over $k$.     Let $c_X\colon X^\Diamond_\et\to X_\et$ be the analytification functor between \'etale sites and $j_X\colon X^\diamond\hookrightarrow X^\Diamond$ be the natural open immersion as before. Let $\cF\in \Shv^*_c(X,\Lambda)$ be a constructible sheaf. Then there is a canonical isomorphism
    $$R\Gamma_c(X,\cF)\simeq R\Gamma_c(X^\diamond,(j_X)^*(c_X)^*\cF).$$
\end{lemma}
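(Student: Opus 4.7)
The plan is to reduce the statement to Scholze's analytification comparison theorem for a proper scheme via Nagata compactification. Choose an open immersion $\bar j\colon X\hookrightarrow \bar X$ with $\bar X$ proper over $k$; by definition, $R\Gamma_c(X,\cF)\simeq R\Gamma(\bar X,\bar j_!\cF)$, and similarly $R\Gamma_c(X^\diamond,j_X^*c_X^*\cF)\simeq R\Gamma(\bar X^\diamond,(\bar j^\diamond)_!j_X^*c_X^*\cF)$, where $\bar j^\diamond\colon X^\diamond\hookrightarrow\bar X^\diamond$ is the induced open immersion of diamonds.

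For the proper scheme $\bar X$, the valuative criterion for properness together with $v$-descent yields a canonical isomorphism $\bar X^\diamond\simeq\bar X^\Diamond$, and the analytification comparison theorem in Huber's form, as generalized to diamonds in \cite[\S 27]{Scholze-diamonds}, gives $R\Gamma(\bar X,\bar j_!\cF)\simeq R\Gamma(\bar X^\Diamond, c_{\bar X}^*\bar j_!\cF)$ for torsion coefficients. Compatibility of analytification with $!$-extensions along open immersions of schemes further identifies $c_{\bar X}^*\bar j_!\cF\simeq (\bar j^\Diamond)_!c_X^*\cF$, where $\bar j^\Diamond\colon X^\Diamond\hookrightarrow\bar X^\Diamond$ is the open immersion on the $\Diamond$-side corresponding to $X\subseteq\bar X$.

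The remaining step, which I expect to be the main obstacle, is to identify $R\Gamma(\bar X^\diamond,(\bar j^\Diamond)_!c_X^*\cF)$ with $R\Gamma(\bar X^\diamond,(\bar j^\diamond)_!j_X^*c_X^*\cF)$ under the equality $\bar X^\diamond=\bar X^\Diamond$. Since $\bar j^\diamond=\bar j^\Diamond\circ j_X$, the cofiber of the natural map $(\bar j^\diamond)_!j_X^*c_X^*\cF\to(\bar j^\Diamond)_!c_X^*\cF$ is the extension by zero from the locally closed sub-diamond $X^\Diamond\setminus X^\diamond\subseteq\bar X^\diamond$ of the restriction of $c_X^*\cF$, and the identification reduces to the vanishing $R\Gamma_c(X^\Diamond\setminus X^\diamond, c_X^*\cF|_{X^\Diamond\setminus X^\diamond})=0$. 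Geometrically, $X^\Diamond\setminus X^\diamond$ is an open annular tube around the boundary at infinity---for instance, the annulus $\{|t|>1\}$ in the adic affine line when $X=\AAA^1_k$---and this vanishing encodes an overconvergence property of analytifications of scheme-theoretic constructible sheaves. I would verify it locally around the boundary, where the computation reduces to a direct excision calculation on a punctured tubular neighborhood, as already exemplified by the toy case of the closed unit disk inside the adic projective line, where the corresponding $R\Gamma_c$ on the punctured open disk at infinity vanishes trivially.
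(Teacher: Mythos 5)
Your setup — Nagata compactification, identification $\bar X^\diamond\simeq\bar X^\Diamond$ for proper $\bar X$, compatibility of analytification with $\bar j_!$, and the reduction to a boundary-vanishing statement — is sound as far as it goes, and the reformulation as $R\Gamma_c\bigl(X^\Diamond\setminus X^\diamond,\,c_X^*\cF\bigr)=0$ is logically correct given the proper comparison. But the claim that this vanishing is a "direct excision calculation" or can be "verified locally around the boundary" is where the argument breaks down, and in fact the reformulated statement is not easier than the lemma itself.

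Two concrete problems. First, compactly supported cohomology is not a local invariant, so there is no sense in which a "local verification around the boundary" computes $R\Gamma_c$ of the annular tube; you would need a genuinely global argument. Second, the toy case you cite is not trivial. For $X=\AAA^1_k\subset\PP^1_k$, the locus $X^\Diamond\setminus X^\diamond$ over a geometric point $\Spd(C,O_C)$ is a punctured open disk, whose compactly supported étale cohomology is famously \emph{nonzero} (by Poincaré duality and nontriviality of $\mu_\ell$-covers). The vanishing that you need is therefore special to the absolute situation over $\Spd k$, and for $\cF=\Lambda$ it is, up to the proper comparison for $\PP^1$, precisely the cited statement $R\Gamma_c(\AAA^1,\Lambda)\simeq R\Gamma_c((\AAA^1)^\diamond,\Lambda)$ from \cite[Proposition A.3]{Wu-S-equal-T}; calling it "trivial" hides the actual content.

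Even granting the $\cF=\Lambda$ case, you have not addressed how to pass to an \emph{arbitrary} constructible $\cF$, and this is where the real work lies. The paper's proof handles this by showing that sheaves $f_*\Lambda$ for $f\colon Y\to X$ proper generate $\Shv_c^*(X,\Lambda)$ (reducing $u_!\Lambda$, $u$ étale, to $\overline u_*\Lambda$ and $(\partial u)_*\Lambda$ via relative normalization), and then for $\cF=f_*\Lambda$ the Cartesian diagram for $Y^\diamond/Y^\Diamond$ over $X^\diamond/X^\Diamond$ together with \cite[Proposition 27.4]{Scholze-diamonds} reduces directly to the $\cF=\Lambda$ case on $Y$. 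Your strategy, if completed, would need exactly this resolution to push the boundary vanishing from $\Lambda$ to general $\cF$, at which point it becomes the paper's argument in a more roundabout form. The "overconvergence" intuition is a correct heuristic, but turning it into a proof requires this devissage rather than a punctual excision calculation.
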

\begin{proof}
    The case $\cF=\Lambda$ is proved in \cite[Proposition A.3]{Wu-S-equal-T}. If $f\colon Y\to X$ is a proper morphism, then the claim holds for $f_*\Lambda\in \Shv_c^*(X,\Lambda)$ as the diagram 
    $$\begin{tikzcd}
        Y^\diamond \ar[r,"j_Y"]\ar[d,"f^\diamond"swap] & Y^\Diamond \ar[d,"f^\Diamond"] \\
        X^\diamond \ar[r,"j_X"] & X^\Diamond
    \end{tikzcd}$$
    is Cartesian, and $(f^\Diamond)_*(c_Y)^*\simeq (c_X)^*f_*$ (by \cite[Proposition 27.4]{Scholze-diamonds}). It suffices to show that objects of the form $f_*\Lambda$ for $f$ proper generates the category $\Shv_c^*(X,\Lambda)$. It suffices to show that $u_!\Lambda$ are generated by those $f_*\Lambda$ for any \'etale morphism $u\colon U\to X$. Let $\overline{U}$ be the relative normalization of $X$ in $U$. Then $\overline{u}\colon \overline{U}\to X$ is finite. Denote $\partial u\colon \overline{U}\backslash U\to X$. Then there is a fiber sequence
    $$u_!\Lambda\to (\overline{u})_*\Lambda\to (\partial u)_*\Lambda.$$
    Hence we finish the proof.
\end{proof}

Now we are ready to prove Theorem \ref{thm-local-global-compatibility}.

\begin{proof}[Proof of Theorem \ref{thm-local-global-compatibility}]
    Compatibility with $W_E$-actions is treated in Proposition \ref{prop-compatibility-Weil-group}. Thus it suffices to deal with $H_I$-actions. We only prove the statements for $\fI^\can$ as the statements for $\fI$ can be proved in the same way.

    Let $\cF\in \Shv_{\fingen}(\Iw\backslash LG/\Iw,\Lambda)$ be an object together with a Weil structure $F\colon \cF\to \phi_*\cF$. Recall the $S$-operator 
    $$\Nt_*(\rS_\cF^{\corr,!})\colon \Nt_*\delta^!Z_\mu\to \Nt_*\delta^!Z_\mu,$$
    defined by pushforward. By Lemma \ref{lemma-excursion-operator}, is corresponds to the spectral $S$-operator
    $$S_{\eta\cF}^\spec\colon \widetilde{V}_\mu\otimes \fA_b\to \widetilde{V}_\mu\otimes \fA_b$$
    under $\LL^\unip_G$. Moreover, $S_{\eta\cF}^\spec$ splits into $\id\otimes S^{\spec,0}_{\eta\cF}$. By Proposition \ref{prop-S=T-cllc}, the action of $S^{\spec,0}_\cF$ on $\CohSpr^\unip_{{}^LG}$ is identified with the action of $\varepsilon_\cF\cdot f_{\cF}$ on $\fA_b$. Thus we need to show that $S_{\eta\cF}$ induces the action of $\varepsilon_\cF\cdot f_{\cF}$ on $R\Gamma_c(\sSh_{K^pI}(\sG,\sX)_C,\Lambda)$. 

    By Lemma \ref{lemma-coh-correspondence}, the action of $S_{\eta\cF}$ on $R\Gamma_c(\sSh_{K^pI}(\sG,\sX)_C,\Lambda)(d/2)[d]$ is induced by the cohomological correspondence $\DD(S_\cF^{\glob,\corr,!})$ dual to $S_\cF^{\glob,\corr,!}$. If $\Lambda=\overline\FF_\ell$, we denote by $\cF^*\in D_\et^\ULA(\cHk_k,\Lambda)$ the object associated to $\cF$. Then the analytification of $S_\cF^{\glob,\corr,!}$ is equal to the restriction of $S_{\cF^*}^{\glob,\corr}$ to the special fiber. By Example \ref{eg-coh-correspondence} (3), the cohomological correspondence $\DD(\rS^{\glob,\corr}_{\cF^*})$ induces an endomorphism $S^\glob_{\cF^*}$ of $(\pi^\diamond)_*j_*\Lambda(d/2)[d]$ on $\Spd O_C$, where $\pi^\diamond\colon \scS^\diamond_{O_C}\to \Spd O_C$ is the natural projection. By Proposition \ref{prop-coh-diamond}, the object $(\pi^\diamond)_*j_*\Lambda(d/2)[d]$ is the constant sheaf associated to $R\Gamma_c(\sSh_{K^pI}(\sG,\sX)_C,\Lambda)(d/2)[d]$. By Lemma \ref{lemma-S-operator-comparison}, the natural analytification morphism 
    $$R\Gamma_c(\Sh_\mu,(\loc_p)^!\delta^!Z_\mu)\xrightarrow{\sim}R\Gamma_c(\scS^\diamond_k,(\loc_p)^*\delta^*Z_\mu^*)$$
    is compatible with the action of $S_{\eta\cF}$ on the left and the action of $S_{\cF^*,k}^\glob$ on the right. Therefore it suffices to compute the action of $S_{\cF^*}^\glob$ on $(\pi^\diamond)_*j_*\Lambda(d/2)[d]$.
    
    Over the generic fiber, there is a pullback diagram
    $$\begin{tikzcd}
        \scS^\diamond_C \ar[d] & \Hk_1(\scS^\diamond_C)\ar[r]\ar[l]\ar[d]\ar[ld,phantom,"\square",very near start]\ar[rd,phantom,"\square",very near start] & \scS^\diamond_C \ar[d] \\
        \BB I & I\backslash G(\QQ_p)/I \ar[r]\ar[l] & \BB I.
    \end{tikzcd}$$
    By Proposition \ref{prop-S=T-no-leg} and Lemma \ref{lemma-coh-correspondence-generic-fiber}, we know that $\DD(\rS_{\cF^*,C}^{\glob,\corr})$ is induced by the pullback of 
    $$\varepsilon_\cF\cdot \rT^\corr_{\sw (f_\cF)}\colon (\BB I,\Lambda(d/2)[d])\to (\BB I,\Lambda(d/2)[d])$$
    along the above diagram. Here $\sw(f_\cF)$ is the function defined by $\sw(f_\cF)(g)=f_\cF(g^{-1})$ for $g\in I\backslash G(\QQ_p)/I$. We see that over the generic fiber, $S_{\cF^*}^\glob$ is identified with the \emph{left} action of $\varepsilon_\cF\cdot f_\cF$ on $R\Gamma_c(\sSh_{K^pI}(\sG,\sX)_C,\Lambda)(d/2)[d]$. Hence over the special fiber, $S_{\cF^*}^\glob$ also agrees with the left action of $\varepsilon_\cF\cdot f_\cF$. Any element in $H_I$ can be written as $\varepsilon_\cF\cdot f_\cF$ for some $\cF$ and $F\colon \cF\to\phi_*\cF$. Hence we finish the proof. 

    If $\Lambda=\overline{\QQ}_\ell$, we can assume that $\cF\in \Shv_\fingen(\Iw\backslash LG/\Iw,\Lambda)$ is equals to $\cF_0\otimes_{O_L}\overline{\QQ}_\ell$ for some object $\cF_0\in \Shv_\fingen(\Iw\backslash LG/\Iw, O_L)$. Moreover, after modifying by a scalar, we can assume that $F\colon \cF\rightarrow\phi_*\cF$ comes from an endomorphism $F_0\colon \cF_0\rightarrow \phi_*\cF_0$. Then we can write $\cF_0$ as an inverse limit $\lim\limits_{\substack{\longleftarrow\\ n}}\cF_n$ with $\cF_n=\cF_0\otimes_{O_L}(O_L/\varpi_L^n)$. The action of $\rS_\cF^{\glob,\corr,!}$ on $R\Gamma_c(\sSh_{K^pI}(\sG,\sX)_C,\overline\QQ_\ell)(d/2)[d]$ is given by the base change of the action of $\rS_{\cF_0}^{\glob,\corr,!}$ on $R\Gamma_c(\sSh_{K^pI}(\sG,\sX)_C,O_L)(d/2)[d]$. Moreover, under the isomorphism  
    $$R\Gamma_c(\sSh_{K^pI}(\sG,\sX)_C,O_L)(d/2)[d]=\lim_{\substack{\longleftarrow\\ n}}R\Gamma_c(\sSh_{K^pI}(\sG,\sX)_C,O_L/\varpi_L^n)(d/2)[d],$$
    the action of $\rS_{\cF_0}^{\glob,\corr,!}$ is given by the limit of $\rS_{\cF_n}^{\glob,\corr,!}$. Thus we reduce to the torsion case. The above arguments show that the action of $\rS_{\cF_n}^{\glob,\corr,!}$ on $R\Gamma_c(\sSh_{K^pI}(\sG,\sX)_C,O_L/\varpi_L^n)(d/2)[d]$ is given by the left action of $\varepsilon_\cF\cdot f_{\cF_n}\in (O_L/\varpi_L^n)[I\backslash G(\QQ_p)/I]$. Passing to limits, and base change to $\overline\QQ_\ell$, we see that The action of $\rS_\cF^{\glob,\corr,!}$ on $R\Gamma_c(\sSh_{K^pI}(\sG,\sX)_C,\overline\QQ_\ell)(d/2)[d]$ is given by the action of $\varepsilon_\cF\cdot f_\cF\in H_I$.
\end{proof}

\subsection{Torsion vanishing for Shimura varieties of abelian type}

In this subsection we prove our main result Theorem \ref{thm-main-intro}. We shall first deal with Shimura varieties of Hodge type. Then we can deduce the results for Shimura varieties of abelian type using the description of connected components in \cite{Deligne-Shimura}. 

\subsubsection{Torsion vanishing for Shimura varieties of Hodge type}

Let notation be as in \S\ref{subsection-Igusa-stack-coh-formula}. In particular, we assume that the coxeter number of any simple factors of $G$ is invertible in $\Lambda$. We further assume that $G$ is unramified over $\QQ_p$. Recall that $\hat{A}=\hat{T}/(\phi-1)\hat{T}$ where $\phi\in W_E$ is a arithemtic Frobenius and $W_0=W^\phi$ is the relative Weyl group. We have the Bernstein isomorphism $Z(H_I)\simeq \cO(\hat{A}\git W_0)$ where $Z(H_I)$ is the center of $H_I$. For a closed point $\xi$ in $\hat{A}\git W_0$, let 
$$R\Gamma(\sSh_{K^pI}(\sG,\sX)_{\overline{E}},\Lambda)_\xi\quad \text{resp.}\quad R\Gamma_c(\sSh_{K^pI}(\sG,\sX)_{\overline{E}},\Lambda)_\xi$$
denote the localization of the (compactly supported) cohomology of $\sSh_{K^pI}(\sG,\sX)_{\overline{E}}$ with respect to the $Z(H_I)$-actions. The goal of this subsection is to prove that following theorem. 

\begin{thm}\label{thm-torsion-vanishing}
    Let $\xi\in (\hat{A}^\gen\git W_0)(\Lambda)$ be a generic element. Then $R\Gamma(\sSh_{K^pI}(\sG,\sX)_{\overline{E}},\Lambda)_\xi$ (resp. $R\Gamma_c(\sSh_{K^pI}(\sG,\sX)_{\overline{E}},\Lambda)_\xi$) is concentrated in degrees $[d,2d]$ (resp. $[0,d]$).
\end{thm}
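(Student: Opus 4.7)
My plan for Theorem \ref{thm-torsion-vanishing} combines three ingredients already in place: the local--global compatibility of Theorem \ref{thm-local-global-compatibility}, the exotic $t$-exactness of Hecke operators at generic parameters (Theorem \ref{thm-Hecke-t-exact}), and the exotic connectivity/coconnectivity of the Igusa sheaves (Corollary \ref{cor-igusa-sheaf-exotic}). Since $G$ is assumed unramified we may take $b=1$ in Theorem \ref{thm-local-global-compatibility}, so that $\fA_1 = \CohSpr^\unip_G = \LL_G^\unip((i_1)_*\cInd_I^{G(\QQ_p)}\Lambda)$. Transporting the Hom through $\LL_G^\unip$ back to the automorphic side, and using the adjunction $(T_{V_\mu}, T_{V_\mu^\vee})$ of the spectral action (Proposition \ref{prop-spectral-action}), one rewrites
\[
R\Gamma(\sSh_{K^pI}(\sG,\sX)_{\overline E},\Lambda)(d/2)[d] \simeq \Hom\bigl((i_1)_*\cInd_I^{G(\QQ_p)}\Lambda,\ T_{V_\mu^\vee}\cP^{\widehat\unip}((i_{\leq\mu^*})_*\fI)\bigr),
\]
together with the analogous identity for $R\Gamma_c$ using $\fI^\can$ in place of $\fI$; here $\cP^{\widehat\unip}$ is the right adjoint of $\Shv^{\widehat\unip}(\Isoc_G,\Lambda) \hookrightarrow \Shv(\Isoc_G,\Lambda)$. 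These identities are $Z(H_I)$-equivariant, and localizing at $\xi$ preserves their form.

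Next I will check the exotic $t$-position of both sides. Because $1$ is basic, $i_1$ is a closed immersion, and since $\langle 2\rho,\nu_1\rangle = 0$, the object $(i_1)_*\cInd_I^{G(\QQ_p)}\Lambda$ lies in the heart of the exotic $t$-structure. Because $\mu$ comes from a Shimura datum it is minuscule, so $V_\mu$ and $V_\mu^\vee$ are irreducible Weyl modules, hence tilting. Theorem \ref{thm-Hecke-t-exact} then makes $T_{V_\mu^\vee}$ exotic $t$-exact after $\xi$-localization. Both $(i_{\leq\mu^*})_*$ (a closed immersion) and $\cP^{\widehat\unip}$ preserve the exotic $t$-structure, so Corollary \ref{cor-igusa-sheaf-exotic} places the right-hand Hom targets in $\Shv^{\widehat\unip}(\Isoc_G,\Lambda)_\xi^{e,\geq 0}$ for $\fI$ and in $\Shv^{\widehat\unip}(\Isoc_G,\Lambda)_\xi^{e,\leq 0}$ for $\fI^\can$.

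The general fact that $\Hom(A,-)$ with $A$ in the heart of a $t$-structure sends $D^{\geq 0}$ (resp.\ $D^{\leq 0}$) to $D^{\geq 0}$ (resp.\ $D^{\leq 0}$) now yields
\[
R\Gamma(\sSh)(d/2)[d]_\xi \in D^{\geq 0},\qquad R\Gamma_c(\sSh)(d/2)[d]_\xi \in D^{\leq 0},
\]
that is, $R\Gamma(\sSh)_\xi$ sits in cohomological degrees $\geq d$ and $R\Gamma_c(\sSh)_\xi$ in degrees $\leq d$. The remaining inequalities $R\Gamma(\sSh) \in D^{\leq 2d}$ and $R\Gamma_c(\sSh) \in D^{\geq 0}$ are automatic since $\sSh$ is a smooth separated variety of dimension $d$, giving the desired ranges $[d,2d]$ and $[0,d]$.

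The main obstacle is purely formal book-keeping: verifying that the spectral action commutes with $\Psi^L$ and with $\cP^{\widehat\unip}$, that $T_V$ commutes with $\xi$-localization, and that the exotic $t$-structure restricts to the $\xi$-supported subcategory (the latter via Lemma \ref{lemma-6-functor-supp} and Lemma \ref{lemma-rep-supp-t-str}). The real conceptual novelty, relative to earlier work, is the use of the $!$-Igusa sheaf $\fI^\can$ of Definition \ref{def-Igusa-sheaf}: Proposition \ref{prop-igusa-sheaf-stalk} identifies its stalks with the partially compactly supported cohomology of Igusa varieties, and its exotic connectivity is precisely Artin vanishing applied to the affine partial minimal compactifications $\Ig_b^\mini$ of \cite{Mao-Hodge-well-positioned}; this is what powers the upper bound on $R\Gamma_c$.
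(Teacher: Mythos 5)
Your approach follows the same skeleton as the paper's proof: local--global compatibility, adjunction of $T_{V_\mu}$ and $T_{V_\mu^\vee}$, exotic connectivity of $\fI^\can$ from Artin vanishing on the affine $\Ig_b^\mini$, and $t$-exactness of Hecke operators at generic $\xi$. However, there is a genuine gap in your final step.

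You invoke as a ``general fact'' that $\Hom(A,-)$ with $A$ in the heart of a $t$-structure sends $D^{\leq 0}$ to $D^{\leq 0}$. This is false. The half you need is precisely the problematic direction: for connective $A$ and coconnective $B$ one does have $\Hom(A,B)\in D^{\geq 0}$ (vanishing of negative Exts between objects of the heart), but for $B\in D^{\leq 0}$ the complex $\Hom(A,B)$ is controlled by $\mathrm{Ext}^i(A, H^j(B))$ with $j\leq 0$ and \emph{arbitrary} $i\geq 0$, so $\Hom(A,B)$ can have cohomology in arbitrarily high positive degrees unless $A$ has no higher Exts out of it. In other words, you need $A=\cInd_I^{G(\QQ_p)}\Lambda$ to be projective (or at least compact-projective against the relevant coefficients), and this is only automatic when $\Lambda=\overline{\QQ}_\ell$.

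When $\Lambda=\overline{\FF}_\ell$ the object $\cInd_I^{G(\QQ_p)}\Lambda$ is \emph{not} projective in $\Rep(G(\QQ_p),\Lambda)$ --- only $\cInd_{I^{(\ell)}}^{G(\QQ_p)}\Lambda$ is (where $I^{(\ell)}$ is the maximal pro-prime-to-$\ell$ subgroup). The paper's actual proof isolates the inner Hom $M:=(i_1)^!T_{V_\mu^\vee}(i_{\leq\mu^*})_*\fI^\can_\xi$ in $\Rep^{\widehat\unip}(G(\QQ_p),\Lambda)$, first checks that $\Hom(\cInd_{I^{(\ell)}}^{G(\QQ_p)}\Lambda,M)$ is a connective perfect complex (projectivity of $\cInd_{I^{(\ell)}}^{G(\QQ_p)}\Lambda$), and then passes to $I$-invariants via the finite abelian $\ell$-group $I/I^{(\ell)}$. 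Here the crucial input is that the resulting module over $\Lambda[I/I^{(\ell)}]$ has bounded $I/I^{(\ell)}$-invariants (known a priori from Theorem \ref{thm-local-global-compatibility}), which by Lemma \ref{lemma-fg-compact} forces it to be a perfect $\Lambda[I/I^{(\ell)}]$-module and makes the norm map $N_{I/I^{(\ell)}}\to N^{I/I^{(\ell)}}$ an isomorphism, so that taking invariants becomes a connective operation. Without this auxiliary argument, your bound on $R\Gamma_c$ does not follow in positive characteristic. The rest of your proposal (choice $b=1$, use of the minuscule/tilting property of $V_\mu$, the lower bound $R\Gamma_c\in D^{\geq 0}$ and upper bound $R\Gamma\in D^{\leq 2d}$ by cohomological dimension, and deriving the $R\Gamma$ statement from the $R\Gamma_c$ statement by Poincar\'e duality) is correct, and matches the paper.
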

\begin{proof}
    Let $\fI^\can_\xi$ denote the $\xi$-component of $\cP^{\widehat\unip}(\fI^\can)$ as in Proposition \ref{prop-adm-supp}. By Theorem \ref{thm-local-global-compatibility} and Lemma \ref{lem: decomposition of Igs sheaf}, we have
    $$\begin{aligned}
        R\Gamma_c(\sSh_{K^pI}(\sG,\sX)_{\overline{E}},\Lambda)_\xi[d]&\simeq \Hom(T_{V_\mu}\circ(i_1)_*\cInd_I^{G(\QQ_p)}\Lambda,(i_{\leq\mu^*})_*\fI^\can_\xi) \\
        &\simeq \Hom((i_1)_*\cInd_I^{G(\QQ_p)}\Lambda),T_{V_\mu^\vee}\circ(i_{\leq\mu^*})_*\fI^\can_\xi)\\
        &\simeq \Hom(\cInd_I^{G(\QQ_p)}\Lambda),(i_1)^!\circ T_{V_\mu^\vee}\circ(i_{\leq\mu^*})_*\fI^\can_\xi)
    \end{aligned}$$
    By Corollary \ref{cor-igusa-sheaf-exotic}, we know that $\fI^\can$ lies in $\Shv(\Isoc_{G,\leq\mu^*},\Lambda)_\xi^{e,\leq 0}$. The functor $\cP^{\widehat\unip}$ sends $\Shv(\Isoc_G,\Lambda)^{e,\leq0}$ to $\Shv^{\widehat\unip}(\Isoc_G,\Lambda)^{e,\leq 0}$ as $\cP^{\widehat\unip}\circ (i_b)^!\simeq (i_b)^!\circ \cP^{\widehat\unip}$ for any $b$ by Proposition \ref{prop-unip-local-Langlands-cat} (3), and $$\cP^{\widehat\unip}\colon \Rep(G_b(F),\Lambda)\to \Rep^{\widehat\unip}(G_b(F),\Lambda)$$
    is $t$-exact. Therefore $\cP^{\widehat\unip}(\fI^\can)$ lies in $\Shv^{\widehat\unip}(\Isoc_G,\Lambda)^{e,\leq 0}$. Hence $\fI^\can_\xi$ lies in $\Shv^{\widehat\unip}(\Isoc_G,\Lambda)^{e,\leq 0}_\xi$ as it is a direct summand of $\cP^{\widehat\unip}(\fI^\can)$. The functor $(i_{\leq\mu^*})_*$ is exotic right $t$-exact, hence $(i_{\leq\mu^*})_*\fI^\can_\xi$ lies in $\Shv(\Isoc_G,\Lambda)_\xi^{e,\leq 0}$. By Theorem \ref{thm-Hecke-t-exact}, we know that $T_{V_\mu^\vee}\circ(i_{\leq\mu^*})_*\fI^\can_\xi$ lies in $\Shv(\Isoc_G,\Lambda)_\xi^{e,\leq 0}$. The functor $(i_1)^!\simeq (i_1)^\sharp$ is exotic $t$-exact. It follows that the $G(\QQ_p)$-representation 
    $$M\coloneqq (i_1)^!\circ T_{V_\mu^\vee}\circ(i_{\leq\mu^*})_*\fI^\can_\xi$$ 
    lies in $\Rep^{\widehat\unip}(G(\QQ_p),\Lambda)_\xi^{\leq 0}$. 

    We claim that $\Hom(\cInd_I^{G(\QQ_p)}\Lambda,M)$ is connective. If $\Lambda$ has characteristic 0, this is clear as $\cInd_I^{G(\QQ_p)}\Lambda$ is projective. If $\Lambda$ has characteristic $\ell$, then $\Hom(\cInd_{I^{(\ell)}}^{G(\QQ_p)}\Lambda,M)$ is a perfect $\Lambda$-module and lies in $\Lambda\mbox{-}\mathrm{mod}^{\leq 0}.$ Therefore $\Hom(\cInd_{I^{(\ell)}}^{G(\QQ_p)}\Lambda,M)$ is a $I/I^{(\ell)}$-representation on perfect $\Lambda$-modules. Because
    $$\Hom(\cInd_{I}^{G(\QQ_p)}\Lambda,M)\simeq R\Gamma_c(\sSh_{K^pI}(\sG,\sX)_{\overline{E}},\Lambda)_\xi(d/2)[d]$$
    is bounded. By adjunction, we have
    $$\Hom(\cInd_{I}^{G(\QQ_p)}\Lambda,M)\simeq \Hom_{\Lambda[I/I^{(\ell)}]}(\Lambda,\Hom(\cInd_{I^{(\ell)}}^{G(\QQ_p)}\Lambda,M)).$$
    By Lemma \ref{lemma-fg-compact}, there is a canonical isomorphism
    $$\Hom(\cInd_{I}^{G(\QQ_p)}\Lambda,M)\simeq \Hom(\cInd_{I^{(\ell)}}^{G(\QQ_p)}\Lambda,M)\otimes_{\Lambda[I/I^{(\ell)}]}\Lambda.$$
    Therefore $\Hom(\cInd_{I}^{G(\QQ_p)}\Lambda,M)$ lies in $\Lambda\mbox{-}\mathrm{mod}^{\leq 0}$. Hence $R\Gamma_c(\sSh_{K^pI}(\sG,\sX)_{\overline{E}},\Lambda)_\xi$ lies in degrees $[0,d]$. By Poincar\'e duality, we see that $R\Gamma(\sSh_{K^pI}(\sG,\sX)_{\overline{E}},\Lambda)_\xi$ lies in degrees $[d,2d]$.    
\end{proof}

\begin{lemma}\label{lemma-fg-compact}
    Let $\Lambda=\overline{\FF}_\ell$. Let $H$ be a finite abelian $\ell$-group. Let $V\in \Rep_c(H)$ be an $H$-representation on perfect $\Lambda$-modules. If $V^H$ is bounded in the standard $t$-structure of $\Lambda\mbox{-}\mathrm{mod}$, then $V$ is compact in $\Rep(H)$. Moreover, the norm map $V_H\to V^H$ is an isomorphism.
\end{lemma}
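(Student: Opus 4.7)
The plan is to reduce the statement to standard homological properties of the local Artinian Frobenius algebra $A = \Lambda[H]$. Since $H$ is a finite abelian $\ell$-group and $\mathrm{char}\,\Lambda = \ell$, the augmentation ideal $I_H$ is the unique maximal ideal of $A$, with residue field $A/I_H \cong \Lambda$. The symmetric trace form sending $h \in H$ to $\delta_{h,1}$ exhibits $A$ as a Frobenius algebra, so $A$ is self-injective, and consequently $0$-Gorenstein as a local Noetherian ring.

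For the first assertion, $\Rep(H) \simeq D(A)$ and the compact objects coincide with the perfect complexes $\Perf(A)$. For $V$ bounded with finite-dimensional $\Lambda$-cohomology, being perfect over $A$ is equivalent to $\mathrm{pd}_A(V) < \infty$. I would invoke the $0$-Gorenstein property to get the equivalence
\[ \mathrm{pd}_A(V) < \infty \iff \mathrm{id}_A(V) < \infty \]
on $D^b$ of finitely generated $A$-modules. By the standard local-ring criterion, $\mathrm{id}_A(V) < \infty$ holds if and only if $\mathrm{Ext}^i_A(\Lambda, V) = 0$ for $i \gg 0$, that is, $V^H = R\Hom_A(\Lambda, V)$ is bounded. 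This is exactly the hypothesis, yielding $V \in \Perf(A)$.

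Having established perfection, I would represent $V$ by a bounded complex $P^\bullet$ of finite free $A$-modules. Then both $V_H = \Lambda \otimes_A P^\bullet$ and $V^H = \Hom_A(\Lambda, P^\bullet)$ are computed degreewise, with no derived correction needed since each $P^i$ is projective and (by Frobenius-ness) injective. The norm map is induced from the classical chain map $v \mapsto \sum_{h \in H} hv$, which factors through $(P^i)_H \to (P^i)^H$. On a single free summand $A$, this identifies with the map $A_H = \Lambda \xrightarrow{\cdot N} A^H = \Lambda \cdot N$, where $N = \sum_{h \in H} h$ generates the one-dimensional socle of $A$; this is manifestly an isomorphism. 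Hence the norm is a degreewise isomorphism of bounded complexes, therefore an isomorphism on the total complex.

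The main technical input, and essentially the only nontrivial step, is the Gorenstein equivalence of finite projective and finite injective dimension for the local Frobenius algebra $A$, which converts the given boundedness of $V^H = R\Hom_A(\Lambda, V)$ into perfection of $V$ over $A$. Once this is granted, all remaining assertions reduce to direct checks on free $A$-modules.
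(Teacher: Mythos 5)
Your proof is correct, but it takes a genuinely different route from the paper's. The paper exploits the \emph{complete intersection} structure of $\Lambda[H]$ (coming from $H$ being a product of cyclic $\ell$-groups) and appeals to singular support theory: from $V^H$ bounded and the fact that $\Rep_c(H)$ is generated by $\Lambda$ under cones and retracts, one deduces that $\mathrm{End}_H(V)$ is bounded, hence the singular support of $V$ sits in the zero section, hence $V$ is perfect by a general theorem from \cite{Tame}. You instead use only the weaker \emph{Gorenstein} (self-injective Frobenius) structure of $\Lambda[H]$, combining the identification projective $=$ injective with the Baer/Bass-type criterion that, over a Noetherian local ring, a bounded coherent complex has finite injective dimension precisely when $R\Hom(\Lambda,V)$ is bounded. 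Both routes land on $V\in\Perf(\Lambda[H])$; yours is more elementary and avoids the singular-support machinery, at the cost of invoking a slightly less automatically-verified step (the extension of the injective-dimension criterion from modules to complexes, which is true, via minimal injective resolutions and Bass numbers, but worth spelling out). For the norm map, your degreewise verification on a bounded complex of finite frees and the paper's d\'evissage argument reducing to $V=\Lambda[H]$ via cones and retracts are essentially two phrasings of the same computation; both hinge on the observation that $N=\sum_{h\in H}h$ spans the one-dimensional socle and that $A_H\to A^H$ is the isomorphism $\Lambda\xrightarrow{\;\cdot N\;}\Lambda\cdot N$.
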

\begin{proof}
    Let $R=\Lambda[H]$ be the group algebra. Then $R$ is of completely intersection as $H$ is a product of cyclic $\ell$-groups. We can identify $\Rep_c(H)\simeq \Coh(R)$ and $\Rep(H)^\omega\simeq \Perf(R)$. Note that the category $\Rep_c(H)$ is generated by $\Lambda$ under cones and retracts. Therefore the assumption implies that $\Hom_{H}(V,V)$ is bounded. Hence the singular support of $V$ as a coherent $R$-module is contained in the zero section. Hence $H$ is a perfect $R$-module by \cite[Corollary 9.64]{Tame}. If $V=\Lambda[H]$ is the regular representation, then the norm map is an isomorphism. In general, every compact object $V\in \Rep(H)^\omega$ can be obtained from $\Lambda[H]$ by taking finite cones and retracts. Therefore the norm map is an isomorphism for any compact object $V\in\Rep(H)^\omega$.
\end{proof}

It is straightforward to generalize the statement to more general parahoric levels as follows. Let $\sSh_{K^p}(\sG,\sX)$ denote the inverse limit $\lim\limits_{\substack{\longleftarrow\\K_p}}\sSh_{K^pK_p}(\sG,\sX)$, where $K_p$ runs through open compact subgroups of $G(\QQ_p)$.
Denote
$$R\Gamma(K^p,\Lambda)\coloneqq R\Gamma(\sSh_{K^p}(\sG,\sX)_{\overline{E}},\Lambda)$$
As $\sSh_{K^pK_p}(\sG,\sX)_{\overline{E}}$ are qcqs for any $K_p$, we have
$$R\Gamma(K^p,\Lambda)\simeq\lim_{\substack{\longrightarrow\\K_p}}R\Gamma(\sSh_{K^pK_p}(\sG,\sX)_{\overline{E}},\Lambda).$$
Therefore $R\Gamma(K^p,\Lambda)$ is an admissible $G(\QQ_p)$-representation.

\begin{lemma}\label{lemma-coh-Sh-as-group-homology}
    For $K_p\subseteq G(\QQ_p)$ an open compact subgroup, we have
    $$R\Gamma(\sSh_{K^pK_p}(\sG,\sX)_{\overline{E}},\Lambda)\simeq R\Gamma(K_p,R\Gamma(K^p,\Lambda)),$$
    where $R\Gamma(K_p,-)$ is the group cohomology.
\end{lemma}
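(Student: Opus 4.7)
The plan is to use étale descent along the pro-étale tower $\sSh_{K^p}(\sG,\sX)\to \sSh_{K^pK_p}(\sG,\sX)$ and pass to the limit.

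First, I would fix an open normal subgroup $K_p'\trianglelefteq K_p$ with $K^pK_p'$ neat. Then the transition map $\sSh_{K^pK_p'}(\sG,\sX)\to \sSh_{K^pK_p}(\sG,\sX)$ is a finite étale Galois cover with Galois group $K_p/K_p'$ (this is standard for neat levels, and the action is free since both levels are neat). The Hochschild--Serre spectral sequence, together with the Galois descent for $\Lambda$-coefficients, then yields
$$R\Gamma(\sSh_{K^pK_p}(\sG,\sX)_{\overline E},\Lambda)\;\simeq\;R\Gamma\bigl(K_p/K_p',\,R\Gamma(\sSh_{K^pK_p'}(\sG,\sX)_{\overline E},\Lambda)\bigr).$$

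Next, as recorded above, $\sSh_{K^p}(\sG,\sX)_{\overline E}=\varprojlim_{K_p'}\sSh_{K^pK_p'}(\sG,\sX)_{\overline E}$ is a qcqs limit of qcqs schemes along affine transition maps, hence
$$R\Gamma(K^p,\Lambda)\;\simeq\;\colim_{K_p'}R\Gamma(\sSh_{K^pK_p'}(\sG,\sX)_{\overline E},\Lambda),$$
and the $K_p$-representation $R\Gamma(K^p,\Lambda)$ is smooth with $R\Gamma(K^p,\Lambda)^{K_p'}\simeq R\Gamma(\sSh_{K^pK_p'}(\sG,\sX)_{\overline E},\Lambda)$ for $K_p'$ ranging through a cofinal system of open normal subgroups of $K_p$ (the neatness condition is eventually satisfied).

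Substituting this into the Hochschild--Serre identification and taking the colimit over $K_p'$ gives
$$R\Gamma(\sSh_{K^pK_p}(\sG,\sX)_{\overline E},\Lambda)\;\simeq\;\colim_{K_p'}R\Gamma\bigl(K_p/K_p',\,R\Gamma(K^p,\Lambda)^{K_p'}\bigr),$$
where I use that the left-hand side is independent of $K_p'$. The right-hand side is exactly the continuous group cohomology $R\Gamma(K_p,R\Gamma(K^p,\Lambda))$ of the profinite group $K_p$ acting smoothly on $R\Gamma(K^p,\Lambda)$, which by definition (or the identification $R\Gamma(K_p,V)=\colim_{K_p'}R\Gamma(K_p/K_p',V^{K_p'})$ for smooth $V$) proves the lemma. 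The only nontrivial point to check carefully is this last identification of smooth group cohomology as a colimit of finite group cohomologies of the fixed parts, which is standard but uses that compact inductions $\cInd_{K_p'}^{K_p}\Lambda$ form a set of compact projective generators of $\Rep(K_p,\Lambda)^{\mathrm{sm}}$.
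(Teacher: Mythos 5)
Your proposal is correct and is precisely the argument that the paper compresses to a single line: the paper simply asserts that the identification "is clear as $\sSh_{K^p}(\sG,\sX)\to \sSh_{K^pK_p}(\sG,\sX)$ is a pro-\'etale $K_p$-torsor," and what you have written out — Hochschild--Serre along the finite Galois covers at levels $K_p'\trianglelefteq K_p$, then the colimit identification of continuous group cohomology of the smooth $K_p$-module $R\Gamma(K^p,\Lambda)$ — is exactly the standard unwinding of that assertion. There is no gap and no genuinely different route.
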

\begin{proof}
    This is clear as $\sSh_{K^p}(\sG,\sX)\to \sSh_{K^pK_p}(\sG,\sX)$ is a pro-\'etale $K_p$-torsor.
\end{proof}

For $\xi\in(\hat{A}\git W_0)(\Lambda)$, we denote
$$R\Gamma(K^p,\Lambda)_\xi\coloneqq \cP^{\widehat\unip}(R\Gamma(K^p,\Lambda))_\xi.$$

\begin{cor}\label{cor-torsion-vanishing-infinite-level}
    Let assumptions be as in Theorem \ref{thm-torsion-vanishing}. Then $R\Gamma(K^p,\Lambda)_\xi$ is concentrated in degrees $[d,2d]$.
\end{cor}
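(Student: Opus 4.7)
The plan is to reduce the infinite-level statement to the finite-level Theorem \ref{thm-torsion-vanishing} at arbitrary open compact $K_p\subseteq G(\QQ_p)$ by a colimit argument. At infinite level, $R\Gamma_c(K^p,\Lambda)$ is assembled from the finite-level pieces via the pro-\'etale tower: explicitly,
$$R\Gamma_c(K^p,\Lambda)\simeq \colim_{K_p} R\Gamma_c(\sSh_{K^pK_p}(\sG,\sX)_{\overline E},\Lambda),$$
with transition maps induced by pullback along the finite \'etale projections (for which compactly supported cohomology is covariant). Localization at $\xi$ commutes with this filtered colimit because the Bernstein-center action is compatible across the tower, and filtered colimits in $\Lambda\mbox{-}\mathrm{mod}$ are $t$-exact. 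Hence a uniform degree bound on $R\Gamma_c(\sSh_{K^pK_p}(\sG,\sX)_{\overline E},\Lambda)_\xi$ at every finite level $K_p$ transfers to the infinite level; the precise range $[d,2d]$ stated in the corollary then follows via Poincar\'e--Verdier duality between compactly supported and ordinary cohomology at infinite level.

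The second step is to extend Theorem \ref{thm-torsion-vanishing} from the Iwahori $I$ to arbitrary open compact $K_p\subseteq G(\QQ_p)$. The proof of that theorem is essentially level-independent: the spectral object
$$M_\xi \;\coloneqq\; (i_1)^!\circ T_{V_\mu^\vee}\circ (i_{\leq\mu^*})_*\fI^\can_\xi$$
lies in the exotically connective part of $\Rep^{\widehat\unip}(G(\QQ_p),\Lambda)_\xi$ by Corollary \ref{cor-igusa-sheaf-exotic} combined with the exotic $t$-exactness of Hecke operators at generic $\xi$ (Theorem \ref{thm-Hecke-t-exact}), and this input is insensitive to the level at $p$. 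The level-$K_p$ analogue of Proposition \ref{prop-coh-of-Sh-igusa-sheaf}, obtained by substituting $\cInd_{K_p}^{G(\QQ_p)}\Lambda$ for $\cInd_I^{G(\QQ_p)}\Lambda$, yields
$$R\Gamma_c(\sSh_{K^pK_p}(\sG,\sX)_{\overline E},\Lambda)_\xi[d]\simeq \Hom_{G(\QQ_p)}(\cInd_{K_p}^{G(\QQ_p)}\Lambda,M_\xi).$$
One concludes as in the endgame of the proof of Theorem \ref{thm-torsion-vanishing}: when $\Lambda=\overline{\QQ}_\ell$ the compact projective $\cInd_{K_p}^{G(\QQ_p)}\Lambda$ controls the bound directly; when $\Lambda=\overline\FF_\ell$, one first reduces to the pro-$p$ Sylow via the compact projective $\cInd_{K_p^{(\ell)}}^{G(\QQ_p)}\Lambda$ and then invokes Lemma \ref{lemma-fg-compact} applied to the finite $\ell$-group $K_p/K_p^{(\ell)}$ to pass back to $K_p$-invariants without loss of exactness.

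The main obstacle is verifying the level-$K_p$ versions of Proposition \ref{prop-coh-of-Sh-igusa-sheaf} and Theorem \ref{thm-local-global-compatibility}. The Igusa-stack construction (Theorem \ref{thm-igusa}) is intrinsic to $\Igs$ and does not depend on the level at $p$; the spectral-side object $\fI^{\can,\unip}_\spec$ depends only on $\fI^\can$; and the $S=T$ apparatus from \S\ref{subsection-S=T} is formulated so that the Hecke algebra $H_{K_p}$ can be substituted for $H_I$ without modification. Consequently, the formal adjunctions in the derivation of the Hom-formula carry through to arbitrary quasi-parahoric $K_p$, as indicated by the remark following Theorem \ref{thm-main-intro}; the passage to an arbitrary open compact $K_p$ is then a further descent via finite invariants, which preserves the degree bound by the Lemma \ref{lemma-fg-compact} argument recalled above.
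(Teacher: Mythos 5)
Your proposal misreads what is actually being proved and, as a result, chooses an approach that reverses the paper's logical structure and relies on claims that have no support in the text.

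First, note that despite the ``$R\Gamma_c$'' in the displayed statement, the object in play here is the \emph{ordinary} cohomology $R\Gamma(K^p,\Lambda)_\xi$ at infinite level: this is the only object the paper defines at infinite level (see the definition $R\Gamma(K^p,\Lambda)_\xi\coloneqq \cP^{\widehat\unip}(R\Gamma(K^p,\Lambda))_\xi$ just above the corollary), the degree range $[d,2d]$ is the ordinary-cohomology range throughout the paper, and the paper's own proof manipulates $R\Gamma(K^p,\Lambda)_\xi$. Your attempt to salvage the $R\Gamma_c$ reading via ``Poincar\'e--Verdier duality at infinite level'' does not parse: at finite level $R\Gamma_c$ lies in $[0,d]$, so any filtered colimit of it also lies in $[0,d]$, not $[d,2d]$, and there is no infinite-level duality turning one into the other without first changing the object.

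Second, and more substantively, your proposal inverts the order of the argument. You propose extending Theorem~\ref{thm-torsion-vanishing} to arbitrary open compact $K_p$ first and then taking a colimit. The paper does the opposite: the subsequent Corollary~\ref{cor-torsion-vanishing-parahoric-level} is \emph{deduced} from the infinite-level statement, and the Iwahori-level theorem is all that is needed. Your route requires a level-$K_p$ analogue of Proposition~\ref{prop-coh-of-Sh-igusa-sheaf} with $\cInd_{K_p}^{G(\QQ_p)}\Lambda$ in place of $\cInd_I^{G(\QQ_p)}\Lambda$, which you assert ``carries through without modification.'' This is not true for arbitrary open compact $K_p$: Proposition~\ref{prop-coh-of-Sh-igusa-sheaf} and Theorem~\ref{thm-local-global-compatibility} rest on the Cartesian diagram of Theorem~\ref{thm-igusa}, whose left column $\Sh_\mu\to\Igs$ and horizontal crystalline period map $\loc_p:\Sh_\mu\to\Sht^{\loc}_{\cI,\mu}$ are built from an Iwahori (more generally, quasi-parahoric) integral model. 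There is no such period map at deep level $K_p$, and the substitution you propose has no geometric source. Also, your appeal to Lemma~\ref{lemma-fg-compact} for the quotient $K_p/K_p^{(\ell)}$ is suspect: that lemma assumes a finite \emph{abelian} $\ell$-group, which holds for $I/I^{(\ell)}$ (a quotient of $T(\kappa_F)$) but need not hold for general $K_p$.

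The paper's actual argument is much shorter and bypasses all of this. The object $R\Gamma(K^p,\Lambda)_\xi$ lives in $\Rep^{\widehat\unip}(G(\QQ_p),\Lambda)_\xi$, and for generic $\xi$ the $t$-structure on this category is detected by a single compact projective generator: by Corollary~\ref{cor-detect-t-str-supp} and the remark following it, $\Hom(\cInd_{\widetilde I}^{G(\QQ_p)}\Lambda,-)=R\Gamma(\widetilde I,-)$ is conservative and $t$-exact on this subcategory (here $\widetilde I=I^{(\ell)}$ when $\Lambda$ has positive characteristic and $\widetilde I=I$ otherwise). Theorem~\ref{thm-torsion-vanishing} together with Lemma~\ref{lemma-coh-Sh-as-group-homology} gives that $R\Gamma(I,R\Gamma(K^p,\Lambda)_\xi)$ lies in $[d,2d]$; since $\Lambda[I/\widetilde I]$ is a finite successive extension of $\Lambda$ as an $I$-module, $R\Gamma(\widetilde I,R\Gamma(K^p,\Lambda)_\xi)=\Hom_I(\Lambda[I/\widetilde I],R\Gamma(K^p,\Lambda)_\xi)$ inherits the same bound. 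No colimit over levels and no extension of the Igusa-sheaf or $S=T$ apparatus beyond Iwahori level is required.
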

\begin{proof}
    By Corollary \ref{cor-detect-t-str-supp} and the remark below it, we need the show that 
    $$\Hom(\cInd_{\widetilde{I}}^{G(\QQ_p)}\Lambda,R\Gamma(K^p,\Lambda)_\xi)\simeq R\Gamma(\widetilde{I},R\Gamma(K^p,\Lambda)_\xi)$$
    sits in degrees $[d,2d]$. By Theorem \ref{thm-torsion-vanishing}, we know that $R\Gamma(I,R\Gamma(K^p,\Lambda)_\xi)$ sits in degrees $[d,2d]$. Using that $\Lambda[I/\widetilde{I}]$ is a finite extension of $\Lambda$ as $I/\widetilde{I}$-representations, we see that
   $$R\Gamma(\widetilde{I},R\Gamma(K^p,\Lambda)_\xi)=\Hom_{I}(\Lambda[I/\widetilde{I}],R\Gamma(K^p,\Lambda)_\xi)$$
   is concentrated in degrees $[d,2d]$ as desired.
\end{proof}

We recall that a subgroup $\breve{K}_p\subseteq G(\breve{\QQ}_p)$ is called a quasi-parahoric subgroup if it fits into
$$G(\breve{\QQ}_p)^\circ\cap \mathrm{Stab}(\mathfrak{F})\subseteq \breve{K}_p\subseteq G(\breve{\QQ}_p)^1\cap \mathrm{Stab}(\mathfrak{F}),$$
where $\mathfrak{F}$ is a facet in the Bruhat--Tits building $\mathscr{B}(G,\breve{\QQ}_p)$, $G(\breve{\QQ}_p)^\circ=\ker(G(\breve\QQ_p)\to \pi_1(G)_{I_F})$, and $G(\breve{\QQ}_p)^1=\ker(G(\breve\QQ_p)\to \pi_1(G)_{I_F}\otimes\QQ)$. Any quasi-parahoric subgroup $\breve{K}_p$ contains a parahoric subgroup $\breve{K}_p^\circ=G(\breve{\QQ}_p)^\circ\cap \mathrm{Stab}(\mathfrak{F})$ of finite index. If $\breve{K}_p$ is stable under Frobenius action, then its Frobenius invariance defines a quasi-parahoric subgroup $K_p$ of $G(\QQ_p)$.

Let $K_p\subseteq G(\QQ_p)$ be a quasi-parahoric subgroup. The compact induction $\cInd_{K_p}^{G(F)}\Lambda$ is unipotent. Therefore there is a homomorphism $Z^{\widehat\unip}_{{}^LG,\QQ_p}\to Z(H_{K_p})$ where $Z(H_{K_p})$ is the center of the Hecke algebra $H_{K_p}\coloneqq \Lambda[K_p\backslash G(\QQ_p)/K_p]$. Therefore we can define the localizations
$R\Gamma(\sSh_{K^pK_p}(\sG,\sX)_{\overline{E}},\Lambda)_\xi$ and  $R\Gamma_c(\sSh_{K^pK_p}(\sG,\sX)_{\overline{E}},\Lambda)_\xi$
at $\xi\in (\hat{A}\git W_0)(\Lambda)$.

\begin{cor}\label{cor-torsion-vanishing-parahoric-level}
    Let assumptions $\xi$ be a generic element in $(\hat{A}\git W_0)(\Lambda)$. Then $R\Gamma(\sSh_{K^pK_p}(\sG,\sX)_{\overline{E}},\Lambda)_\xi$ (resp. $R\Gamma_c(\sSh_{K^pK_p}(\sG,\sX)_{\overline{E}},\Lambda)_\xi$) is concentrated in degrees $[d,2d]$ (resp. $[0,d]$).
\end{cor}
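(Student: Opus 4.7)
The plan is to reduce to Corollary \ref{cor-torsion-vanishing-infinite-level} via Lemma \ref{lemma-coh-Sh-as-group-homology}, and then propagate the cohomological bound from pro-prime-to-$\ell$ invariants up to invariants under the full quasi-parahoric $K_p$ by iterating Lemma \ref{lemma-fg-compact} along a solvable filtration of a finite $\ell$-group.

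Set $M := R\Gamma(K^p,\Lambda)_\xi$. Since $\sSh_{K^p}(\sG,\sX)\to\sSh_{K^pK'}(\sG,\sX)$ is a pro-\'etale $K'$-torsor for any open compact subgroup $K'\subseteq G(\QQ_p)$, the proof of Lemma \ref{lemma-coh-Sh-as-group-homology} gives $R\Gamma(\sSh_{K^pK'}(\sG,\sX)_{\overline{E}},\Lambda)_\xi\simeq R\Gamma(K',M)=M^{K'}$, so that $M^{K'}$ is \emph{a priori bounded} as the cohomology of a smooth $E$-variety of dimension $d$. By Corollary \ref{cor-torsion-vanishing-infinite-level} together with Corollary \ref{cor-detect-t-str-supp}, $M$ lies in degrees $[d,2d]$ with respect to the standard $t$-structure on $\Rep^{\widehat\unip}(G(\QQ_p),\Lambda)_\xi$, and $M^{K'}$ is a perfect $\Lambda$-module for every open compact $K'$ by admissibility of $M$.

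Let $\widetilde{K}_p\subseteq K_p$ be the maximal normal pro-prime-to-$\ell$ open subgroup; since the maximal pro-$\ell$ quotient of a compact $p$-adic Lie group is finite for $\ell\neq p$, the quotient $H_0:=K_p/\widetilde{K}_p$ is a finite $\ell$-group. Finite $\ell$-groups are nilpotent, so we fix a normal series $K_p=K_0\triangleright K_1\triangleright\cdots\triangleright K_n=\widetilde{K}_p$ with each quotient $K_i/K_{i+1}$ a finite abelian $\ell$-group. Because $\widetilde{K}_p$ is pro-prime-to-$\ell$, invariants under it are $t$-exact on smooth representations, hence $M^{\widetilde{K}_p}$ is perfect and lies in $[d,2d]$. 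We show by downward induction on $i$ that $M^{K_i}$ is perfect and lies in degrees $[d,2d]$. The base case $i=n$ is the previous sentence. For the inductive step, set $V:=M^{K_{i+1}}$ and $H_i:=K_i/K_{i+1}$: $V$ is perfect and in $[d,2d]$ by hypothesis, the invariants $V^{H_i}=M^{K_i}$ are bounded by the geometric remark above, and $H_i$ is a finite abelian $\ell$-group. Lemma \ref{lemma-fg-compact} applies and furnishes a norm isomorphism $M^{K_i}\simeq V\otimes^{L}_{\Lambda[H_i]}\Lambda$. Right $t$-exactness of $\otimes^{L}$ forces $M^{K_i}$ into degrees $\leq 2d$, while left $t$-exactness of $R\Gamma(K_i,-)$ places it in degrees $\geq d$. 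Iterating down to $i=0$ yields $R\Gamma(\sSh_{K^pK_p}(\sG,\sX)_{\overline{E}},\Lambda)_\xi=M^{K_p}\in[d,2d]$.

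The assertion for compactly supported cohomology follows from Poincar\'e duality on the smooth $E$-variety $\sSh_{K^pK_p}(\sG,\sX)$ of dimension $d$. The main obstacle is that Lemma \ref{lemma-fg-compact} treats only finite \emph{abelian} $\ell$-groups, whereas $K_p/\widetilde{K}_p$ is typically nonabelian; this is circumvented by combining nilpotency of finite $\ell$-groups with the crucial observation that the a priori geometric boundedness of $M^{K_i}$ supplies the hypothesis of Lemma \ref{lemma-fg-compact} at every rung of the induction, bootstrapping the abelian case to the general one.
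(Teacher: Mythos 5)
You have correctly identified that the one-line proof in the paper is terse and that a word is needed to pass from $M := R\Gamma(K^p,\Lambda)_\xi$ lying in the window $[d,2d]$ to the same bound on $M^{K_p}$. However, there are two problems with your proposal.

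\textbf{A genuine error.} Your assertion that $H_0 := K_p/\widetilde{K}_p$ is a finite $\ell$-group is false for a general (quasi-)parahoric $K_p$. What is true is that $H_0$ has trivial $\ell'$-core $O_{\ell'}(H_0)=1$; this does not make $H_0$ an $\ell$-group. Concretely, if $K_p^u$ is the pro-unipotent radical and $K_p/K_p^u$ is a finite reductive group $L(\kappa_F)$, then $\widetilde{K}_p$ is the preimage of $O_{\ell'}(L(\kappa_F))$, and for many $L$ this $\ell'$-core is central or trivial while $L(\kappa_F)/O_{\ell'}$ is a nonabelian almost-simple group. Only in the Iwahori case, where $I/I^u\cong T(\kappa_F)$ is abelian, does this quotient collapse to an abelian $\ell$-group — which is exactly why the paper's $\widetilde{I}=I^{(\ell)}$ argument works verbatim there and not for general parahorics. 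Your citation of the finiteness of the maximal pro-$\ell$ quotient of a $p$-adic Lie group does not bridge this gap: finiteness of the $\ell$-quotient is not the same as the quotient by the $\ell'$-core being an $\ell$-group, i.e.\ $\ell$-nilpotency. One could repair this by passing to the preimage $H'\subseteq K_p$ of a Sylow $\ell$-subgroup of $K_p/K_p^u$: since $[K_p:H']$ is prime to $\ell$, $\Lambda$ is a retract of $\Ind_{H'}^{K_p}\Lambda$, so $M^{K_p}$ is a retract of $M^{H'}$, and then $H'/K_p^u$ \emph{is} a finite $\ell$-group to which your nilpotent-filtration induction applies.

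\textbf{The argument is over-engineered.} After the repair just indicated, your proof would be correct, but it is far more elaborate than what the paper intends, and in fact you already possess the two cheap ingredients needed. The lower bound $\geq d$ follows immediately from Corollary \ref{cor-torsion-vanishing-infinite-level} together with the fact that $\cInd_{K_p}^{G(\QQ_p)}\Lambda$ sits in the heart of the (standard, hence also the $\xi$-restricted) $t$-structure, so that $R\Hom\bigl(\cInd_{K_p}^{G(\QQ_p)}\Lambda, M\bigr)$ is coconnective of degree $\geq d$ whenever $M$ is. For the upper bound $\leq 2d$, you do not need the norm isomorphism of Lemma \ref{lemma-fg-compact} (or any comparison of invariants with coinvariants) at all: $R\Gamma(\sSh_{K^pK_p}(\sG,\sX)_{\overline{E}},\Lambda)$ lies in degrees $[0,2d]$ by cohomological dimension of the $d$-dimensional quasi-projective variety $\sSh_{K^pK_p}(\sG,\sX)_{\overline{E}}$, and its $\xi$-localization is a direct summand, hence also in $[0,2d]$. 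Combining these two one-line observations already yields the bound $[d,2d]$, and the compactly supported case follows by Poincar\'e duality, which is what the paper's proof records. Your invocation of Lemma \ref{lemma-fg-compact} with the nilpotent filtration is therefore doing work that the geometry does automatically: you correctly note that $M^{K_i}$ is ``a priori bounded as the cohomology of a smooth $E$-variety of dimension $d$,'' but use this only as an input to Lemma \ref{lemma-fg-compact}, not realizing that the precise bound $\leq 2d$ is already contained in this remark.
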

\begin{proof}
    The claim of $R\Gamma(\sSh_{K^pK_p}(\sG,\sX)_{\overline{E}},\Lambda)_\xi$ follows from Lemma \ref{lemma-coh-Sh-as-group-homology} and Corollary \ref{cor-torsion-vanishing-infinite-level}. The claim for $R\Gamma_c(\sSh_{K^pK_p}(\sG,\sX)_{\overline{E}},\Lambda)_\xi$ follows from Poincar\'e duality.
\end{proof}

\subsubsection{Passing to abelian type}
Now we deduce the torsion vanishing for Shimura varieties of abelian type from the results of Shimura varieties of Hodge type. Similar statements was proved in \cite[\S 5.2]{Hamann-Lee-vanishing}, in the case when the Shimura datum $(\sG,\sX)$ of abelian type has the same derived subgroup with some Shimura datum of Hodge type. 

Roughly speaking, our strategy is as follows. For an abelian type Shimura datum $(\sG,\sX)$, we find an axillary abelian type Shimura datum $(\sG'',\sX'')$ and a Hodge type Shimura datum $(\sG',\sX')$ such that there are maps
$$(\sG',\sX') \leftarrow (\sG'',\sX'')\to (\sG,\sX)$$
of Shimur data, such that the associated adjoint Shimura data are the same, and $\sG''_\der\simeq \sG'_\der$. We first deduce the torsion vanishing for $(\sG'',\sX'')$ from that for $(\sG',\sX')$, by an argument slightly different from the one in \cite{Hamann-Lee-vanishing}, and then deduce the torsion vanishing for $(\sG,\sX)$ from that for $(\sG'',\sX'')$ by writing $\Sh(\sG,\sX)$ as a quotient of the Shimura variety associated to $(\sG'',\sX'')$. To carry out the second step, we prove that generic unipotent representations remain generic unipotent under restrictions along central isogenies of $p$-adic groups.

We first set up some notations. Let $(\sG,\sX)$ be a Shimura datum. For an open compact subgroup $K\subseteq \sG(\AAA_f)$, denote by $\Sh_K(\sG,\sX)$ the underlying topological space of the $\CC$-points of $\sSh_K(\sG,\sX)_\CC$. Then we have
$$\Sh_K(\sG,\sX)=\sG(\QQ)\backslash (\sX\times\sG(\AAA_f)/K).$$ 
Denote 
$$\Sh(\sG,\sX)=\lim_{\substack{\longleftarrow\\K}}\Sh_K(\sG,\sX),$$
where $K$ runs through open compact subgroups of $\sG(\AAA_f)$. Fix a connected component $\sX^+\subseteq \sX$. Let $\Sh^+(\sG,\sX)$ denote the connected component of $\Sh(\sG,\sX)$ equals to the image of $\sX^+\times\{1\}$. Then $\Sh^+(\sG,\sX)$ depends only on the connected Shimura datum $(\sG_\der,\sX^+)$. Let $\sG_\ad(\RR)_+$ be the identity connected component of $\sG_\ad(\RR)$. Let $\sG(\RR)_+$ denote the preimage of $\sG_\ad(\RR)_+$ and denote $\sG(\QQ)_+=\sG(\QQ)\cap\sG(\RR)_+$.
We have
$$\pi_0(\Sh(\sG,\sX))=\frac{\sG(\AAA_f)}{\sG(\QQ)_+^-}$$
as pro-finite sets, where $\sG(\QQ)_+^-$ is the closure of $\sG(\QQ)_+$ in $\sG(\AAA_f)$. Let $\sZ$ be the center of $\sG$. Define the groups
$$\scA(\sG)\coloneqq\frac{\sG(\AAA_f)}{\sZ(\QQ)^-}*_{\sG(\QQ)_+/\sZ(\QQ)}\sG_\ad(\QQ)_1$$
and
$$\scA(\sG)^\circ\coloneqq\frac{\sG(\QQ)_+^-}{\sZ(\QQ)^-}*_{\sG(\QQ)_+/\sZ(\QQ)}\sG_\ad(\QQ)_1$$
following \cite{Deligne-Shimura}, where $\sZ(\QQ)^-$ is the closure of $\sZ(\QQ)$ in $\sG(\AAA_f)$. Then $\scA(\sG)$ (resp. $\scA(\sG)^\circ$) acts on $\Sh(\sG,\sX)$ (resp. $\Sh^+(\sG,\sX)$). Moreover, we have
$$\Sh^+(\sG,\sX)\times^{\scA(\sG)^\circ}\scA(\sG)=\Sh(\sG,\sX).$$
The group $\scA(\sG)^\circ$ depends only on $\sG_\der$. In fact, $\scA(\sG)^\circ$ is the completion of $\sG_\ad(\QQ)_+$ with topology generated by images of congruence subgroups of $\sG_\der(\QQ)_+$.

Denote $G\coloneqq\sG_{\QQ_p}$. Fix an open compact subgroup $K_p\subseteq G(\QQ_p)$. Denote
$$\Sh_{K_p}(\sG,\sX)=\lim_{\substack{\longleftarrow\\K^p}}\Sh_{K^pK_p}(\sG,\sX),$$
where $K^p$ runs through open compact subgroups of $\sG(\AAA_f^p)$. Let 
$R\Gamma(\Sh_{K_p}(\sG,\sX),\Lambda)$ denote the Betti cohomology. As $\Sh_{K^pK_p}(\sG,\sX)$ are qcqs for any $K^p$, we have
$$R\Gamma(\Sh_{K_p}(\sG,\sX),\Lambda)=\lim_{\substack{\longrightarrow\\K^p}}R\Gamma(\Sh_{K^pK_p}(\sG,\sX),\Lambda).$$
Hence $R\Gamma(\Sh_{K_p}(\sG,\sX),\Lambda)$ admits an action of the Hecke algebra $H_{K_p}=\Lambda[K_p\backslash G(\QQ_p)/K_p]$. 

Now we assume that $(\sG,\sX)$ is of abelian type. Assume that $\sG$ is unramified over $\QQ_p$ and the coxeter number of any simple factors of $\sG$ is invertible in $\Lambda$. 

\begin{lemma}
    Let $(\sG,\sX)$ be a Shimura datum of abelian type. There there exists a Shimura variety of Hodge type $(\sG',\sX')$ and a Shimura variety of abelian type $(\sG'',\sX'')$ together with morphisms
    $$(\sG',\sX')\leftarrow (\sG'',\sX'')\rightarrow (\sG,\sX)$$
    such that $\sG''_\der\xrightarrow{\sim} \sG'_\der$ is an isomorphism and $\sG''_\der\to \sG_\der$ is a central isogeny. Moreover, it induces isomorphisms $(\sG_\ad,\sX_\ad)\simeq (\sG''_\ad,\sX_\ad'')\simeq (\sG'_\ad,\sX'_\ad)$ of adjoint Shimura data.
\end{lemma}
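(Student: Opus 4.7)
The plan is to construct $(\sG'',\sX'')$ as (the identity component of) the fiber product of $(\sG',\sX')$ and $(\sG,\sX)$ taken over their common adjoint Shimura datum.

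First, I would invoke the definition of abelian type to fix a Shimura datum $(\sG',\sX')$ of Hodge type together with a central isogeny $\varphi\colon \sG'_\der\to \sG_\der$ inducing an isomorphism $(\sG'_\ad,\sX'_\ad)\xrightarrow{\sim}(\sG_\ad,\sX_\ad)$. Next, I would set $\sG''$ to be the identity component of the algebraic fiber product $\sG'\times_{\sG_\ad}\sG$, formed using this identification. This is a connected reductive group over $\QQ$ equipped with two canonical projections $p'\colon\sG''\to \sG'$ and $p\colon\sG''\to \sG$, whose kernels are the identity components of the tori $Z(\sG)$ and $Z(\sG')$ respectively; in particular both projections are surjective.

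The derived subgroup would be computed as follows. Any commutator in $\sG''$ must land in $\sG'_\der\times \sG_\der$, and matching adjoint components forces such a pair into $\{(g,\varphi(g)z):g\in \sG'_\der,\ z\in Z(\sG_\der)\}$. Connectedness of the derived subgroup of a connected reductive group then restricts $\sG''_\der$ to the identity component, namely the graph of $\varphi$, which maps isomorphically to $\sG'_\der$ under $p'$ and maps to $\sG_\der$ via $\varphi$ under $p$. This simultaneously verifies the required isomorphism $\sG''_\der\xrightarrow{\sim}\sG'_\der$ and the central isogeny property of $\sG''_\der\to \sG_\der$.

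I would then define $\sX''$ to be the $\sG''(\RR)$-conjugacy class of homomorphisms $h''\colon \Res_{\CC/\RR}\GG_m\to \sG''_\RR$ corresponding to pairs $(h',h)\in \sX'\times \sX$ with matching adjoint component in $\sX_\ad$. Non-emptiness is immediate from the surjectivity of $\sX'\to\sX_\ad$ and $\sX\to\sX_\ad$. Axioms SV1 and SV2 for $(\sG'',\sX'')$ should follow from the corresponding axioms for $(\sG',\sX')$ and $(\sG,\sX)$: the Hodge structure on $\mathrm{Lie}(\sG'')\subseteq \mathrm{Lie}(\sG')\oplus \mathrm{Lie}(\sG)$ is induced from the two factors and has the required type, while the Cartan involution condition is automatic since $\sG''_\ad=\sG_\ad$. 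The abelian type property for $(\sG'',\sX'')$ is then tautological, with $(\sG',\sX')$ serving as the Hodge-type witness via $p'$, and the adjoint data of all three Shimura data are canonically identified through projection to the common adjoint quotient.

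The main technical point I expect to have to handle carefully is connectedness of the fiber product: since $Z(\sG')$ and $Z(\sG)$ need not be connected, $\sG'\times_{\sG_\ad}\sG$ itself may be disconnected, and I must verify that passing to the identity component $\sG''$ preserves surjectivity of the two projections onto the (connected) targets $\sG'$ and $\sG$. This amounts to checking that the kernels restricted to $\sG''$ are the identity components of the respective central tori, which is straightforward from the general structure of fiber products of reductive groups over a common quotient.
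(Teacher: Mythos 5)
Your strategy is exactly the one the paper uses: form $(\sG'\times_{\sG_\ad}\sG)^\circ$ and lift a compatible pair of cocharacters. The group-theoretic part of your argument (identifying $\sG''_\der$ with the graph of $\varphi$ and verifying it is a central isogeny onto $\sG_\der$ and an isomorphism onto $\sG'_\der$, the behavior of the centers, etc.) is essentially correct, modulo a small imprecision that the kernel of $\sG''\to\sG'$ is $Z(\sG)\cap\sG''$ rather than literally $Z(\sG)^\circ$ (for $\sG=\SL_2$ and $\sG'=\GL_2$ the fiber product over $\PGL_2$ is connected and already contains all of $\{e\}\times\mu_2$).

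There is, however, a genuine gap in the construction of $\sX''$: you assert that $\sX'\to\sX_\ad$ and $\sX\to\sX_\ad$ are surjective and invoke this to produce a compatible pair $(h',h)$, but these maps are not surjective in general. Since $\sX_\ad$ must be a full $\sG_\ad(\RR)$-conjugacy class (otherwise it does not define a Shimura datum for $\sG_\ad$), and $\sG(\RR)\to\sG_\ad(\RR)$ need not be surjective, the image of $\sX$ is typically only a union of some connected components of $\sX_\ad$. For instance, with $\sG=\SL_2$ one has $\sG_\ad(\RR)=\PGL_2(\RR)$ disconnected while $\SL_2(\RR)$ maps into the identity component, so the image of $\sX$ misses a full component of $\sX_\ad$. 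Consequently you cannot, without further argument, select $h'\in\sX'$ and $h\in\sX$ with the same adjoint image. The paper handles exactly this issue: given arbitrary $h'\in\sX'$ and $h\in\sX$, one conjugates $h'$ by a suitable $g\in\sG_\ad(\RR)$ so that the adjoint images coincide, and then replaces $(\sG',\sX')$ by the conjugate datum $(\sG',g(\sX'))$; the nontrivial point, which needs to be cited, is that $(\sG',g(\sX'))$ is still of Hodge type, by Deligne's \cite[Proposition 2.3.2]{Deligne-Shimura}. Without that input your proof does not actually produce the Hodge-type Shimura datum appearing in the final diagram.
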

\begin{proof}
    This is proved in  \cite{XZ-cycles}. As the revised version \cite{XZ-cycles} has not appeared, we give a proof here for completeness. Let $(\sG',\sX')$ be a Shimura variety of Hodge type with a central isogeny $\sG'_\der\to \sG_\der$. We take $\sG''=(\sG'\times_{\sG_\ad}\sG)^\circ$. Let $h'\in \sX'$ and $h\in \sX$. After conjugating $h'$ by some element $g\in \sG_\ad(\RR)$, we can assume that $h'=h\in \sX_\ad$. It induces a cocharacter $h''$ of $\sG''(\RR)$, and hence defines a Shimura datum $(\sG'',\sX'')$. After replacing the Shimura datum $(\sG',\sX')$ by $(\sG',g(\sX'))$, we obtain the desired diagram. The Shimura datum $(\sG',g(\sX'))$ is still of Hodge type by \cite[Proposition 2.3.2]{Deligne-Shimura}.
\end{proof}

By assumption, the results in Corollary \ref{cor-torsion-vanishing-parahoric-level} holds for $(\sG',\sX')$. By the above lemma, we need the following two steps to conclude torsion vanishing for $(\sG,\sX)$:
\begin{enumerate}
    \item Torsion vanishing for $(\sG',\sX')$ implies torsion vanishing for $(\sG'',\sX'')$.
    \item Torsion vanishing for $(\sG'',\sX'')$ implies torsion vanishing for $(\sG,\sX)$.
\end{enumerate}

We first deal with the step (1). Let $I'$ (resp. $I''$) be an Iwahori subgroup of $G'(\QQ_p)$ (resp. $G''(\QQ_p)$). We can assume that $I''$ is contained in the preimage of $I'$. Note that the commutative diagram
$$\begin{tikzcd}
    \sG'' \ar[r,"\nu"]\ar[d]\ar[rd,phantom,"\square", very near start] & \sG''_\ab \ar[d] \\
    \sG' \ar[r,"\nu"] & \sG'_\ab
\end{tikzcd}$$
is Cartesian. Fix an element $h''\in\sX''$. Let $h'\in\sX'$ be the image of $h''$. Let $(\sG_\ab',\{h'\})$ (resp. $(\sG_\ab'',\{h''\})$) denote the associated Shimura datum of $\sG_\ab'$ (resp. $\sG''_{\ab}$).

\begin{lemma}\label{lemma-Sh-same-derived-subgroup}
    The natural morphism
    $$\Sh(\sG'',\sX'')\to \Sh(\sG',\sX')\times_{\Sh(\sG'_\ab,\{h'\})}\Sh(\sG''_\ab,\{h''\})$$
    is a closed embedding and identifies $\Sh(\sG'',\sX'')$ with a union of connected components of the right hand side. Moreover, the above morphism is compatible with $\sG''(\AAA_f)$-actions. 
\end{lemma}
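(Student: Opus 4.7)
The plan is to exploit Deligne's structural description of Shimura varieties,
$$\Sh(\mathsf{H},\sX_{\mathsf{H}})\simeq \Sh^+(\mathsf{H},\sX_{\mathsf{H}})\times^{\scA(\mathsf{H})^\circ}\scA(\mathsf{H}),$$
applied to each of the three Shimura data appearing in the statement, together with the key facts that $\Sh^+$ and $\scA(-)^\circ$ depend only on the connected Shimura datum $(\mathsf{H}_{\der},\sX^+_\mathsf{H})$. The $\sG''(\AAA_f)$-equivariance and well-definedness of the morphism are automatic from functoriality applied to $\sG''\to\sG'$ and $\sG''\to\sG''_\ab$ using the Cartesian square, so the substance of the lemma is that the morphism is a closed embedding onto a union of connected components.

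First I would use the hypothesis $\sG''_\der\xrightarrow{\sim}\sG'_\der$ together with the matching of adjoint Shimura data to choose compatible base points $\sX''^+$, $\sX'^+$ so that $\Sh^+(\sG'',\sX'')\simeq\Sh^+(\sG',\sX')$ canonically and $\scA(\sG'')^\circ\simeq \scA(\sG')^\circ$. Since the abelian Shimura varieties are zero-dimensional, $\Sh(\sG'_\ab,\{h'\})=\scA(\sG'_\ab)$ and $\Sh(\sG''_\ab,\{h''\})=\scA(\sG''_\ab)$, and the target fiber product rewrites as
$$\Sh^+(\sG',\sX')\times^{\scA(\sG')^\circ}\big(\scA(\sG')\times_{\scA(\sG'_\ab)}\scA(\sG''_\ab)\big).$$
This reduces the lemma to showing that the natural map of topological groups
$$\alpha\colon \scA(\sG'')\longrightarrow \scA(\sG')\times_{\scA(\sG'_\ab)}\scA(\sG''_\ab)$$
is injective and identifies $\scA(\sG'')$ with a union of left cosets of $\scA(\sG'')^\circ=\scA(\sG')^\circ$, since the component set of $\Sh(\sG'',\sX'')$ is then a union of components of the target.

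Next I would unravel the amalgamated-product description
$$\scA(\sG)=\tfrac{\sG(\AAA_f)}{\sZ(\sG)(\QQ)^-}*_{\sG(\QQ)_+/\sZ(\sG)(\QQ)}\sG_\ad(\QQ)_1$$
and feed in the Cartesian square $\sG''=\sG'\times_{\sG'_\ab}\sG''_\ab$, which gives the identification $\sG''(\AAA_f)=\sG'(\AAA_f)\times_{\sG'_\ab(\AAA_f)}\sG''_\ab(\AAA_f)$ and analogous identifications on $\QQ$-points, centers, and adjoint groups. Injectivity of $\alpha$ follows formally from injectivity of $\sG''(\AAA_f)\to \sG'(\AAA_f)\times\sG''_\ab(\AAA_f)$. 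For the image computation, passing to $\pi_0$ reduces to analyzing
$$\sG''(\AAA_f)/\sG''(\QQ)_+^-\longrightarrow \sG'(\AAA_f)/\sG'(\QQ)_+^-\times_{\sG'_\ab(\AAA_f)/\sG'_\ab(\QQ)^-}\sG''_\ab(\AAA_f)/\sG''_\ab(\QQ)^-.$$

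The main obstacle will be this $\pi_0$ analysis: while the corresponding identity at the level of $\sG(\AAA_f)$-points is immediate from the Cartesian square, the formation of the closures $\sG(\QQ)_+^-$, $\sZ(\sG)(\QQ)^-$ is sensitive to strong approximation and does not automatically commute with fiber products. The saving point is that $\sG''_\der\xrightarrow{\sim}\sG'_\der$, so strong approximation (and its failure) is the same on both sides, and the discrepancy between $\alpha$ and an isomorphism is confined to the abelian quotients. A direct diagram chase comparing $\sG''_\ab(\QQ)^-$ with the fiber product of $\sG'_\ab(\QQ)^-$ and $\sG''_\ab(\QQ)^-$ over $\sG_\ab(\QQ)^-$ should exhibit the image of $\alpha$ as an open subgroup of finite index, hence a union of connected components, completing the proof.
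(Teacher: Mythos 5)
Your proposal is correct and takes essentially the same route as the paper: reduce to comparing $\pi_0$'s via the identification $\Sh^+(\sG'',\sX'')\simeq\Sh^+(\sG',\sX')$ coming from $\sG''_\der\xrightarrow{\sim}\sG'_\der$, then run a diagram chase on the exact sequences built from $\sG''=\sG'\times_{\sG'_\ab}\sG''_\ab$ and the closures of $\QQ$-rational points to conclude a closed embedding of component groups. One small remark: you do not need to show the image is open of finite index --- a closed embedding of the profinite $\pi_0$-sets already suffices to identify $\Sh(\sG'',\sX'')$ with a union of connected components, and that is exactly what the diagram chase produces.
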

\begin{proof}
    The connected Shimura varieties $\Sh^+(\sG'',\sX'')$ and $\Sh^+(\sG',\sX')$ are identical as $\sG''_\der\simeq\sG'_\der$. It suffices to show that the above map induces a closed embedding on $\pi_0$'s. We have exact sequences of groups
    $$\begin{tikzcd}
        0 \ar[r] & \sG''(\QQ)_+^- \ar[r]\ar[d] & (\sG'\times\sG''_\ab)(\QQ)_+^- \ar[r]\ar[d] & \sG'_\ab(\QQ)_+^- \ar[d] \\
        0 \ar[r] & \sG''(\AAA_f) \ar[r] & (\sG'\times\sG''_\ab)(\AAA_f) \ar[r] & \sG'_\ab(\AAA_f).
    \end{tikzcd}$$
    A diagram chasing shows that there is  a closed embedding
    $$\frac{\sG''(\AAA_f)}{\sG''(\QQ)_+^-}\hookrightarrow \frac{\sG'(\AAA_f)}{\sG'(\QQ)_+^-}\times_{\frac{\sG'_\ab(\AAA_f)}{\sG'_\ab(\QQ)_+^-}} \frac{\sG''_\ab(\AAA_f)}{\sG''_\ab(\QQ)_+^-}.$$
    Hence we finish the proof.
\end{proof}

Let $I_\der'\coloneqq I'\cap G'_\der(\QQ_p)$ is an Iwahori subgroup of $G'_\der(\QQ_p)$. As $G''_\der=G'_\der$, we have embedding of Hecke algebras
$$H_{I'}\hookleftarrow H_{I'_\der}\hookrightarrow H_{I''}.$$
It induces embeddings
$$Z(H_{I'})\hookleftarrow Z(H_{I'_\der})\hookrightarrow Z(H_{I''})$$
of centers. Recall that $\Lambda$-points of $\Spec Z(H_{I'_\der})$ are in bijection with conjugacy classes of semisimple unramified $L$-parameters $W_{\QQ_p}\to {}^LG'_{\der}(\Lambda)$. Let $\xi'_\der$ be a $\Lambda$-point of $\Spec Z(H_{I'_\der})$. Denote by
$$R\Gamma(\Sh_{I'}(\sG',\sX'),\Lambda)_{\xi'_\der} \quad\text{resp.}\quad R\Gamma(\Sh_{I''}(\sG'',\sX''),\Lambda)_{\xi'_\der}$$
the localization of $R\Gamma(\Sh_{I'}(\sG',\sX'),\Lambda)$ (resp. $R\Gamma(\Sh_{I''}(\sG'',\sX''),\Lambda)$) at the maximal ideal of $Z(H_{I'_\der})$ defining $\xi'_\der$.

\begin{lemma}\label{lemma-G''-torsion-vanising}
    Assume that $\xi_\der'$ is generic. Then $R\Gamma(\Sh_{I''}(\sG'',\sX''),\Lambda)_{\xi_\der'}$ is concentrated in degrees $[d,2d]$. Here $d=\dim\Sh(\sG',\sX')=\dim \Sh(\sG'',\sX'')$. 
\end{lemma}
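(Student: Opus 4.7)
The plan is to reduce the statement to Corollary \ref{cor-torsion-vanishing-parahoric-level} for the Hodge-type datum $(\sG', \sX')$ via the closed embedding from Lemma \ref{lemma-Sh-same-derived-subgroup}. First, I would descend that statement to finite level: for appropriate open compact subgroups $K^{\prime p} \subseteq \sG'(\AAA_f^p)$, $K^{\prime\prime p}\subseteq \sG''(\AAA_f^p)$, and their abelianized images $\bar K'\subseteq \sG'_\ab(\AAA_f)$, $\bar K''\subseteq \sG''_\ab(\AAA_f)$ (where the $p$-component of the levels comes from $I'$, $I''$, and their images under $\nu$), one obtains a closed embedding
\[
\Sh_{K^{\prime\prime p}I''}(\sG'',\sX'') \hookrightarrow \Sh_{K^{\prime p}I'}(\sG',\sX') \times_{\Sh_{\bar K'}(\sG'_\ab,\{h'\})} \Sh_{\bar K''}(\sG''_\ab,\{h''\})
\]
identifying the source with a union of connected components of the target. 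Passing to the limit over prime-to-$p$ level (for a fixed identification of derived parts of prime-to-$p$ levels on both sides), this is $\scA(\sG'')$-equivariant, and respects the actions of $H_{I'_\der}\subseteq H_{I'}$ and $H_{I'_\der}\subseteq H_{I''}$.

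Next, since $(\sG'_\ab,\{h'\})$ and $(\sG''_\ab,\{h''\})$ are zero-dimensional Shimura data, the cohomology of $\Sh_{\bar K'}(\sG'_\ab,\{h'\})$ and $\Sh_{\bar K''}(\sG''_\ab,\{h''\})$ is concentrated in degree $0$ and is given by locally constant $\Lambda$-valued functions on these profinite sets. By the K\"unneth formula (together with the fact that the target of the fibered product is a finite \'etale $\Lambda$-scheme, so its structure sheaf is a projective module over itself), we get a natural isomorphism
\[
R\Gamma\Bigl(\Sh_{K^{\prime p}I'}(\sG',\sX')\times_{\Sh_{\bar K'}(\sG'_\ab)}\Sh_{\bar K''}(\sG''_\ab),\Lambda\Bigr) \simeq R\Gamma(\Sh_{K^{\prime p}I'}(\sG',\sX'),\Lambda)\otimes_{M'} M'',
\]
where $M' = H^0(\Sh_{\bar K'}(\sG'_\ab),\Lambda)$ and $M'' = H^0(\Sh_{\bar K''}(\sG''_\ab),\Lambda)$, and $M''$ is a flat $M'$-module. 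Hence the right-hand side inherits the degree bounds of $R\Gamma(\Sh_{K^{\prime p}I'}(\sG',\sX'),\Lambda)$, and the cohomology of $\Sh_{K^{\prime\prime p}I''}(\sG'',\sX'')$ is a direct summand by the component statement.

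Now I would localize at $\xi'_\der$. The natural map $Z(H_{I'_\der})\to Z(H_{I'})$ identifies $\Spec Z(H_{I'})$ with $\hat T'/W_0'$ lying over $\Spec Z(H_{I'_\der})$; the fiber over $\xi'_\der$ is a finite set of unramified parameters $\xi'$ for $G'$, each of which is generic because the summand $(\hat\fg'/\hat\ft')_{\varphi'}$ depends only on the root datum, hence only on the derived/adjoint group. Therefore the $\xi'_\der$-localization of the left-hand side decomposes as a finite direct sum of $R\Gamma(\Sh_{K^{\prime p}I'}(\sG',\sX')_{\overline E},\Lambda)_{\xi'}$ (tensored with the $M'$/$M''$ data, which is concentrated in degree zero), and each of these summands sits in degrees $[d,2d]$ by Corollary \ref{cor-torsion-vanishing-parahoric-level}. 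Passing to the direct summand cut out by $\Sh_{K^{\prime\prime p}I''}(\sG'',\sX'')$ and taking the colimit over $K^{\prime\prime p}$ yields the claim.

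The main technical obstacle I anticipate is the bookkeeping around the component-group torsors and the compatibility of the $H_{I'_\der}$-actions and the $Z(H_{I'_\der})$-action on both sides of Lemma \ref{lemma-Sh-same-derived-subgroup}, especially since one must argue that ``generic for $\sG'_\der$'' lifts consistently to ``generic for $\sG'$'' under the map $\hat T'/W'_0 \to \hat T'_\der/W'_{0,\der}$; but this is purely a statement about root data and is clear once one unwinds the genericity condition in Definition \ref{def-generic-unramified}. The Betti--\'etale comparison at the finite levels involved is standard, and the K\"unneth argument is elementary because the abelian factor contributes only in degree zero.
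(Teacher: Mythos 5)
Your proof is correct and follows essentially the same approach as the paper's. Both proofs rely on three ingredients: the closed embedding from Lemma~\ref{lemma-Sh-same-derived-subgroup} identifying $\Sh_{I''}(\sG'',\sX'')$ with a union of connected components of the fiber product over the zero-dimensional abelian Shimura sets; the fact that the abelian factors contribute only in degree zero, so the degree bounds on the Hodge-type factor propagate to the fiber product; and the observation that genericity of a parameter for $\sG'$ depends only on its image in ${}^LG'_\der$ (since $\hat\fg'/\hat\ft'$ sees only the root system, i.e.\ the adjoint quotient). The only cosmetic differences are that you work at finite prime-to-$p$ level and pass to the colimit at the end and spell out the K\"unneth/fiber-product decomposition as $(-)\otimes_{M'} M''$ over the finite \'etale base, whereas the paper works directly with the inverse limit $\Sh_{I'}(\sG',\sX')$ and presents the structural argument more tersely; neither difference affects the substance of the argument.
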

\begin{proof}
    Note that a semisimple unramified $L$-parameter $\varphi\colon W_{\QQ_p}\to {}^LG'(\Lambda)$ is generic if and only if the composition $W_{\QQ_p}\xrightarrow{\varphi}{}^LG'\to {}^LG'_\der$
    is generic. By Theorem \ref{thm-torsion-vanishing}, we know that $R\Gamma(\Sh_{I'}(\sG',\sX'),\Lambda)_{\xi_\der'}$ is concentrated in degree $[d,2d]$. Note that $I''\simeq I'\times_{\nu(I')}\nu(I'')$ inside $G''(\QQ_p)\simeq G'(\QQ_p)\times_{G'_\ab(\QQ_p)}G''_\ab(\QQ_p)$. Taking quotient of the closed embedding in Lemma \ref{lemma-Sh-same-derived-subgroup} by $I''$-actions, we see that
    $$\Sh_{I''}(\sG'',\sX'')\hookrightarrow\Sh_{I'}(\sG',\sX')\times_{\Sh_{\nu(I')}(\sG'_\ab,\{h'\})}\Sh_{\nu(I'')}(\sG''_\ab,\{h''\})$$
    is a closed embedding that identifies the left hand side as a union of connected components of the right hand side. Moreover, the embedding is compatible with $H_{I''}$-actions. Therefore $R\Gamma(\Sh_{I''}(\sG'',\sX''),\Lambda)_{\xi_\der'}$ is also concentrated in degrees $[d,2d]$. Note that $G'_\der(\QQ_p)$ acts trivially on $\Sh_{\nu(I')}(\sG_\ab',\{h'\})$ and $\Sh_{\nu(I'')}(\sG''_\ab,\{h''\})$.
\end{proof}

Now we deal with step (2).

\begin{lemma}\label{lemma-Sh-derived-not-equal}
    There is a $\sG(\AAA_f)$-equivariant isomorphism
    $$\Sh(\sG'',\sX'')\times^{\scA(\sG'')}\scA(\sG)\simeq \Sh(\sG,\sX).$$
\end{lemma}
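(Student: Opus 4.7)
The plan is to reduce the lemma to Deligne's reconstruction formula $\Sh(\sG,\sX) = \Sh^+(\sG,\sX)\times^{\scA(\sG)^\circ}\scA(\sG)$, together with an identification of the relevant groups and connected components under the central isogeny $\sG''_\der\to \sG_\der$. First I would apply Deligne's formula to both sides, rewriting
\[
\Sh(\sG'',\sX'')\times^{\scA(\sG'')}\scA(\sG)
\;=\;\Sh^+(\sG'',\sX'')\times^{\scA(\sG'')^\circ}\scA(\sG''),
\]
and then using associativity of the balanced product to collapse the outer action and arrive at $\Sh^+(\sG'',\sX'')\times^{\scA(\sG'')^\circ}\scA(\sG)$.

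The crucial step is then to show that the natural map $\sG''\to \sG$ induces canonical identifications $\scA(\sG'')^\circ\xrightarrow{\sim}\scA(\sG)^\circ$ and $\Sh^+(\sG'',\sX'')\xrightarrow{\sim}\Sh^+(\sG,\sX)$ compatible with the obvious $\scA(\sG)^\circ$-actions. Both follow from the description recalled above: $\scA(\sG)^\circ$ is the completion of $\sG_\ad(\QQ)_+$ with respect to the topology generated by images of congruence subgroups of $\sG_\der(\QQ)_+$, and $\Sh^+(\sG,\sX)$ is the inverse limit $\lim_\Gamma \Gamma\backslash\sX^+$ over these same images. Since $\sG''_\der\to\sG_\der$ is a central isogeny of semisimple $\QQ$-groups and the identification $\sG_\ad=\sG''_\ad$ is fixed, one shows via strong approximation that the image in $\sG_\ad(\QQ)_+$ of any congruence subgroup of $\sG_\der(\QQ)_+$ contains the image of a congruence subgroup of $\sG''_\der(\QQ)_+$, and vice versa. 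Consequently the two filtered systems are cofinal, which yields the required equalities of topological groups and of pro-finite schemes.

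Combining these identifications and feeding them back into Deligne's formula for $\Sh(\sG,\sX)$ produces the desired isomorphism. For $\sG(\AAA_f)$-equivariance it is enough to check that the actions of $\scA(\sG)$ on the two sides agree; by construction both are induced by the action of $\scA(\sG)$ on the second factor, and the map $\sG(\AAA_f)\to\scA(\sG)$ factors the outer $\sG(\AAA_f)$-action through the same balanced product, so the claim reduces to the functoriality of the balanced product construction.

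The main subtlety I expect is the cofinality assertion for images of congruence subgroups, since a priori a central isogeny $\sG''_\der\to\sG_\der$ can fail to induce a surjection on $\QQ$-points and one must be careful about the distinction between the algebraic image and its image on $\QQ$-points. This is precisely where strong approximation (applied to the simply connected cover, which factors through both $\sG''_\der$ and $\sG_\der$) enters, and where one must check that the normal subgroups in $\sG_\ad(\QQ)_+$ so produced do generate the correct topology. Once this topological point is settled, the rest of the argument is a formal manipulation of balanced products.
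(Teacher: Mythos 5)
Your balanced-product reduction via Deligne's formula is the right skeleton, and the final conclusion is correct, but the central identifications you assert are wrong. You claim that $\scA(\sG'')^\circ\to\scA(\sG)^\circ$ and $\Sh^+(\sG'',\sX'')\to\Sh^+(\sG,\sX)$ are isomorphisms, supported by a two-sided cofinality argument for images of congruence subgroups in $\sG_\ad(\QQ)_+$. Only one direction of this cofinality holds: if $\Gamma=\sG_\der(\QQ)_+\cap K$, then its image contains that of $\sG''_\der(\QQ)_+\cap\pi^{-1}(K)$, so the $\sG''$-topology refines the $\sG$-topology. The reverse direction is false in general: the image of $\sG''_\der(\QQ)_+$ in $\sG_\ad(\QQ)_+$ is typically a \emph{proper} subgroup of the image of $\sG_\der(\QQ)_+$, with cokernel controlled by $H^1(\QQ,\ker(\sG''_\der\to\sG_\der))$, and no congruence subgroup of $\sG''_\der(\QQ)_+$ can have image containing that of a congruence subgroup of $\sG_\der(\QQ)_+$. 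Strong approximation applied to the simply connected cover does not repair this; it only delivers the one-sided inclusion you already have, which is precisely where the subtlety you flagged resides.

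What is actually true, and what the paper uses, is weaker: $\scA(\sG)^\circ$ is a \emph{quotient} of $\scA(\sG'')^\circ$ (not isomorphic to it), and $\Sh^+(\sG,\sX)$ is the quotient of $\Sh^+(\sG'',\sX'')$ by the kernel of $\scA(\sG'')^\circ\twoheadrightarrow\scA(\sG)^\circ$. These surjections suffice: since the kernel $N$ acts trivially on $\scA(\sG)$,
\[
\Sh^+(\sG'',\sX'')\times^{\scA(\sG'')^\circ}\scA(\sG)
\;\cong\;\bigl[\Sh^+(\sG'',\sX'')/N\bigr]\times^{\scA(\sG)^\circ}\scA(\sG)
\;\cong\;\Sh^+(\sG,\sX)\times^{\scA(\sG)^\circ}\scA(\sG)
\;\cong\;\Sh(\sG,\sX),
\]
and combining with Deligne's formula for $\Sh(\sG'',\sX'')$ gives the lemma. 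So replace your claimed isomorphisms by surjections and the proof closes.
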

\begin{proof}
    This is clear from definition. Note that $\scA(\sG)^\circ$ is a quotient of $\scA(\sG'')^\circ$. Therefore $\Sh^+(\sG,\sX)$ is a quotient of $\Sh^+(\sG'',\sX'')$ by the kernel of $\scA(\sG'')^\circ\to \scA(\sG)^\circ$.
\end{proof}

By Lemma \ref{lemma-Sh-derived-not-equal}, we have
$$R\Gamma(\Sh(\sG,\sX),\Lambda)\simeq R\Gamma(\scA(\sG''),\, R\Gamma(\Sh(\sG'',\sX''),\Lambda)\otimes_\Lambda C^\infty(\scA(\sG),\Lambda)),$$
where $C^\infty(\scA(\sG),\Lambda)$ is the space of smooth functions on $\scA(\sG)$, and $\scA(\sG'')$ acts on $C^\infty(\scA(\sG),\Lambda)$ via left multiplication. 

Recall that $\Rep^{\widehat\unip}(G(\QQ_p),\Lambda)$ is the subcategory of unipotent representations, and $\cP^{\widehat\unip}$ is the unipotent projector. The functor
$$\Res\colon \Rep(G(\QQ_p),\Lambda)\to \Rep(G''(\QQ_p),\Lambda)$$
defined by restriction along $G''(\QQ_p)\to G(\QQ_p)$ preserves the unipotent blocks. Therefore we have
$$\cP^{\widehat\unip}(R\Gamma(\Sh(\sG,\sX),\Lambda))\simeq R\Gamma(\scA(\sG''),\,\cP^{\widehat\unip}(R\Gamma(\Sh(\sG'',\sX''),\Lambda))\otimes_\Lambda \cP^{\widehat\unip}(C^\infty(\scA(\sG),\Lambda))).$$

As $\Rep^{\widehat\unip}(G(\QQ_p),\Lambda)$ is linear over $Z_{{}^LG,\QQ_p}^{\widehat\unip}$. Let $\xi$ be a generic $\Lambda$-point in $\hat{A}\git W_0$. Let
$$\cP^{\widehat\unip}(R\Gamma(\Sh(\sG,\sX),\Lambda))_\xi$$
denote the $\xi$-component as in Proposition \ref{prop-adm-supp}. It makes sense as $R\Gamma(\Sh(\sG,\sX),\Lambda)$ is a filtered colimit of admissible $G(\QQ_p)$-representations. We can similarly define $\cP^{\widehat\unip}(R\Gamma(\Sh(\sG,\sX),\Lambda))_{\xi''}$ for $\xi''\in (\hat{A}''\git W_0)(\Lambda)$. 

\begin{lemma}\label{lemma-restr}
    Let $\xi\in (\hat{A}\git W_0)(\Lambda)$ be a generic point. Then its image $\xi''\in (\hat{A}''\git W_0)(\Lambda)$ is generic and restriction along $G''(\QQ_p)\to G(\QQ_p)$ defines a functor
    $$\Rep^{\widehat\unip}(G(\QQ_p),\Lambda)_\xi\to \Rep^{\widehat\unip}(G''(\QQ_p),\Lambda)_{\xi''}.$$
\end{lemma}
\begin{proof}
    The first statement is clear, as the genericness of $\xi$ can be checked inside the adjoint group. Let $I$ (resp. $I''$) be an Iwahori subgroup of $G(\QQ_p)$ (resp. $G''(\QQ_p)$) such that $I''$ is contained in the preimage of $I$.
    By Proposition \ref{prop-generic-equiv}, the category $\Rep^{\widehat\unip}(G(\QQ_p),\Lambda)_\xi$ is generated under colimits by 
    $$(\cInd_I^{G(\QQ_p)}\Lambda)\otimes_{\cO(\hat{A}\git W_0)}\Lambda_\xi,$$
    where $\Lambda_\xi$ is the skyscraper module of $\cO(\hat{A}\git W_0)$ at $\xi$: The $\cO(\hat{A})$-module $D_\xi$ can be written as a colimit of $\cO(\hat{A})\otimes_{\cO(\hat{A}\git W_0)}\Lambda_\xi$. By Proposition \ref{prop-S=T-center}, we know that the action of $\cO(\hat{A}\git W_0)$ on $\cInd_I^{G(\QQ_p)}\Lambda$ agrees with the action of the center $Z(H_I)\cong \cO(\hat{A}\git W_0)$.
    By Lemma \ref{lemma-Iwahori-vs-Borel}, there is a canonical isomorphism
    $$\cInd_I^{G(\QQ_p)}\Lambda\simeq \nInd_{B(\QQ_p)}^{G(\QQ_p)} \chi_\univ,$$
    where $\chi_\univ \colon T(\QQ_p)\to \cO(\hat{A})^\times$ is the universal unramified character over $\cO(\hat{A})$. By \cite[Lemma 1.7.1, Lemma 2.3.1]{HKP-Iwahori}, the action of $Z(H_I)$ on the left agrees with the action of the subalgebra $\cO(\hat{A}\git W_0)\subseteq \cO(\hat{A})$ on the right. Note that in the \emph{loc. cit.} the authors only consider the case when $\Lambda$ is of characteristic $0$. However the proofs works for $\Lambda=\overline{\FF}_\ell$ as well.
    It follows that the $G(\QQ_p)$-representation $(\cInd_I^{G(\QQ_p)}\Lambda)\otimes_{\cO(\hat{A}\git W_0)}\Lambda_\xi$ is a finite successive extensions of the normalized Borel inductions
    $$\nInd_{B(\QQ_p)}^{G(\QQ_p)}\chi,$$
    where $\chi$ runs through preimages of $\xi$ in $\hat{A}(\Lambda)$. Therefore the collection $\nInd_{B(\QQ_p)}^{G(\QQ_p)}\chi$'s generate the category $\Rep^{\widehat\unip}(G(\QQ_p),\Lambda)_\xi$ under colimits. Note that the restriction of $\nInd_{B(\QQ_p)}^{G(\QQ_p)}\chi$ to $G''(\QQ_p)$ is identified with $\nInd_{B''(\QQ_p)}^{G''(\QQ_p)}\chi''$, where $\chi''\in \hat{A}''(\Lambda)$ is the image of $\chi$, which lies in the category $\Rep^{\widehat\unip}(G''(\QQ_p),\Lambda)_{\xi''}$. The lemma follows as restriction along $G''(\QQ_p)\to G(\QQ_p)$ preserves colimits.
\end{proof}

We used the following result, which is well-known for when the coefficient $\Lambda$ is a field of characteristic zero and is  implicitly in \cite[(1.4)]{Dat_2009} for general coefficient. We give an easy proof for general coefficient for completeness.

\begin{lemma}\label{lemma-Iwahori-vs-Borel}
    Let $G$ be an unramified reductive group over a non-archimedean local field $F$ of residue characteristic $p$. Let $B\subseteq G$ be a Borel subgroup and $T\subseteq B$ be a maximal torus. Let $I\subseteq G$ be an Iwahori subgroup that satisfies Iwahori decomposition relative to $(B,T)$. Then there is a canonical isomorphism
    $$\cInd_I^{G(F)}\Lambda\simeq \nInd_{B(F)}^{G(F)}\chi_\univ$$
    of $G(F)$-representations.
    Here $\chi_\univ\colon T(F)\to \cO(\hat{A})^\times$ is the universal unramified character defined over $\cO(\hat{A})$.
\end{lemma}
\begin{proof}
    Let $U^-$ be the unipotent radical of the opposite Borel subgroup $B^-$ relative to $B$. Denote $T_0=T\cap I$, $U_0=U\cap I$, and $U_0^-=U^-\cap I$. By Iwahori decomposition, we have a bijection
    $$U_0^- \cdot T_0\cdot U_0 \xrightarrow{\sim} I.$$
    Let $2\rho\colon \GG_m\to T$ be the sum of positive roots relative to $B$. Denote $U_n=\Ad_{\varpi^{-2n\rho}} U_0$ and $U_n^-=\Ad_{\varpi^{2n\rho}} U_0^-$ for $n\in \ZZ$. Then 
    $$ \cdots \subseteq U_{-1}\subseteq U_0\subseteq U_1\subseteq \cdots \subseteq U(F)\quad\text{resp.}\quad \cdots \subseteq U_{-1}^-\subseteq U_0^-\subseteq U_1^-\subseteq \cdots \subseteq U^-(F)$$
    defines an exhaustive and separated filtration of $U(F)$ (resp. $U^-(F)$) by open compact subgroups. Denote $I_n=U_{-n}^-T_0U_n$. Then
    $$I_n=\Ad_{\varpi^{-2n\rho}}I$$
    is also an Iwahori subgroup of $G(F)$.

    By definition, we have
    $$\nInd_{B(F)}^{G(F)}\chi_\univ\simeq \cInd_{T_0 U(F)}^{G(F)}\Lambda=C_c^\infty(T_0 N(F)\backslash G(F)).$$
    We can write $T_0U(F)$ as a union of compact open subgroups $T_0U_n,n\in \ZZ$. Hence we can write $C_c^\infty(T_0 N(F)\backslash G(F))$ as a sequential colimit
    $$C_c^\infty(T_0 N(F)\backslash G(F))=\lim_{\longrightarrow}(\cdots\to C_c^\infty(T_0 N_0\backslash G(F))\to C_c^\infty(T_0 N_1\backslash G(F))\to\cdots),$$
    where the connecting morphisms are given by averaging. For each $n\in\ZZ$, we have a commutative diagram
    $$\begin{tikzcd}
        C_c^\infty(T_0 N_{n}\backslash G(F)) \ar[rr,two heads] && C_c^\infty(T_0 N_{n+1}\backslash G(F)) \\
        C_c^\infty(I_n\backslash G(F)) \ar[u,hook]\ar[rr,"e_n"]\ar[rd,hook] && C_c^\infty(I_{n+1}\backslash G(F)) \ar[u,hook] \\
        & C_c^\infty(N_{-n-1}^-T_0N_{n}\backslash G(F)) \ar[ru,two heads]
    \end{tikzcd}$$
    Here the lower arrow $e_n$ is defined by first restriction along $I_n=N^-_{-n}T_0N_n\supseteq N^-_{-n-1}T_0N_n$, then averaging along $N^-_{-n-1}T_0N_n\subseteq N^-_{-n-1}T_0N_{n+1}=I_{n+1}$.

    We claim that $e_n\colon C_c^\infty(I_n\backslash G(F))\to C_c^\infty(I_{n+1}\backslash G(F))$ is an isomorphism. The left multiplication of $\varpi^{-2\rho}$ defines an isomorphism 
    $$\varpi^{-2\rho}\colon C_c^\infty(I_n\backslash G(F))\xrightarrow{\sim}C_c^\infty(I_{n+1}\backslash G(F))$$
    of $G(F)$-representations. Compositing with this isomorphism, the morphism $e_n$ becomes the action of $1_{I_n\varpi^{-2\rho}I_n}\in H_{I_n}$ on $C_c^\infty(I_n\backslash G(F))$, where $H_{I_n}=\Lambda[I_n\backslash G(F)/I_n]$ is the Iwahori--Hecke algebra. However, the element $1_{I_n\varpi^{-2\rho}I_n}$ is invertible in $H_{I_n}$. This proves the claim.

    We have a morphism of ind-systems
    $$\begin{tikzcd}
        C_c^\infty(T_0 N_{0}\backslash G(F)) \ar[r,two heads] & C_c^\infty(T_0 N_{1}\backslash G(F)) \ar[r,two heads] & \cdots \\
        C_c^\infty(I_0\backslash G(F)) \ar[u,hook]\ar[r,"e_0","\simeq"swap] & C_c^\infty(I_{1}\backslash G(F)) \ar[u,hook] \ar[r,"e_1",,"\simeq"swap] & \cdots
    \end{tikzcd}$$
    We claim that they have the same colimits. Let $f\in C_c^\infty(T_0N_n\backslash G(F))$ be an element for some $n\geq 0$. Because $\{N_{-m}^-T_0 N_n\}_{m\geq n}$ forms a basis of open neighborhoods of $T_0N_n$ in $G(F)$, we can assume that $f$ is left invariant function under $N_{-m}T_0N_n$ for some $m\gg 0$. Therefore the image of $f$ in $C_c^\infty(T_0N_m\backslash G(F))$ lies in the image of $C_c^\infty(I_m\backslash G(F))$. Hence the two ind-systems have the same colimits.

    Now we have an isomorphism
    $$C_c^\infty(I\backslash G(F))\simeq \lim_{\substack{\longrightarrow\\ n\geq 0}}(C_c^\infty(I_n\backslash G(F)) \xrightarrow{\sim}  \lim_{\substack{\longrightarrow\\ n\geq 0}} (C_c^\infty(T_0N_n\backslash G(F))\simeq C_c^\infty (T_0N(F)\backslash G(F))$$
    as desired. Note that the isomorphism sends the characteristic function on $I$ to the characteristic function on $T_0N(F)\cdot I$.
\end{proof}

\begin{lemma}\label{lemma-torsion-abelian-infinite-level}
    Let $\xi\in(\hat{A}^\gen\git W_0)(\Lambda)$ be a generic point. Then $\cP^{\widehat\unip}(R\Gamma(\Sh(\sG,\sX),\Lambda))_\xi$ is concentrated in degrees $[d,2d]$.
\end{lemma}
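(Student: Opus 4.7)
The plan is to leverage the decomposition
$$\cP^{[I]}(R\Gamma(\Sh(\sG,\sX),\Lambda))\simeq R\Gamma(\scA(\sG''),\cP^{[I'']}(R\Gamma(\Sh(\sG'',\sX''),\Lambda))\otimes_\Lambda \cP^{[I]}(C^\infty(\scA(\sG),\Lambda)))$$
derived from Lemma \ref{lemma-Sh-derived-not-equal}, together with Lemma \ref{lemma-G''-torsion-vanising}. The first step is to track the action of $Z(H_I)\cong\cO(\hat{A}\git W_0)$ through this isomorphism. The restriction along the closed immersion $G''(\QQ_p)\hookrightarrow G(\QQ_p)$ preserves principal blocks and induces a map $Z(H_I)\to Z(H_{I''})$ corresponding dually to a natural map of spectral coarse quotients $\hat{A}''\git W_0''\to \hat{A}\git W_0$. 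Localizing at a generic $\xi$ on the left therefore matches localization of the inner factor at a pullback $\xi''$, whose further pullback $\xi'_\der$ to $\hat{A}'_\der\git W'_{0,\der}$ remains generic, since genericity of an unramified $L$-parameter depends only on its composition with the projection to the derived group.

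The second step is to apply (a variant of) Lemma \ref{lemma-G''-torsion-vanising}, upgraded from the $I''$-invariants formulation to the principal block $\cP^{[I'']}$ via the standard compatibility between $\cInd_{I''}^{G''(\QQ_p)}\Lambda$-generators and $I''$-invariants (as in the passage from Theorem \ref{thm-torsion-vanishing} to Corollary \ref{cor-torsion-vanishing-infinite-level}). This yields that $\cP^{[I'']}(R\Gamma(\Sh(\sG'',\sX''),\Lambda))_{\xi'_\der}$ is concentrated in degrees $[d,2d]$. Tensoring with $\cP^{[I]}(C^\infty(\scA(\sG),\Lambda))_\xi$, which is a smooth representation concentrated in degree zero, preserves this $[d,2d]$ bound.

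The main obstacle will be step three: showing that the continuous $\scA(\sG'')$-cohomology of the above tensor product remains in degrees $[d,2d]$. My plan is to proceed at finite level: for each open compact $K=K^pK_p\subseteq\sG(\AAA_f)$, the variety $\sSh_K(\sG,\sX)$ admits a presentation as a finite disjoint union of quotients $\sSh_{K''_j}(\sG'',\sX'')/\Gamma_j$ by finite abelian groups $\Gamma_j$ encoding the discrepancy between $\sG$ and $\sG''$ (arising from the central isogeny $\sG''_\der\to\sG_\der$ and from the difference of abelianizations). Under the hypothesis that $\ell$ is larger than the Coxeter number of any simple factor of $G$, together with shrinking $K^p$ if necessary so that $K^p$ is $\Gamma_j$-stable and of prime-to-$\ell$ index, one arranges $|\Gamma_j|$ to be invertible in $\Lambda$, so that $R\Gamma(\Gamma_j,-)$ reduces to $\Gamma_j$-invariants, a $t$-exact operation that preserves the $[d,2d]$ bound. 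Passing to the limit over $K^p$ and identifying $\cP^{[I]}(R\Gamma(\Sh(\sG,\sX),\Lambda))_\xi$ with the appropriate inverse limit of finite level cohomologies (all concentrated in $[d,2d]$) then yields the desired conclusion.
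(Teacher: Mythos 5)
Your steps (1)--(2) are essentially the paper's approach, modulo a minor reversal: the restriction along $G''(\QQ_p)\to G(\QQ_p)$ dualizes to a map of coarse quotients $\hat{A}\git W_0\to \hat{A}''\git W_0$ (not the other way around), and $\xi''$ is the \emph{image} of $\xi$ under that map, not a preimage of it.

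The genuine gap is in step (3). You propose to control $R\Gamma(\scA(\sG''),-)$ by writing finite-level $\sSh_K(\sG,\sX)$ as a disjoint union of quotients $\sSh_{K_j''}(\sG'',\sX'')/\Gamma_j$ by finite abelian groups, and then arranging $|\Gamma_j|$ to be prime to $\ell$ by ``shrinking $K^p$.'' This cannot work: the groups $\Gamma_j$ measure the discrepancy between $\sG_\der$ and $\sG''_\der$ (and between the abelianizations), and their $\ell$-parts are intrinsic arithmetic data independent of the level away from $p$. Shrinking $K^p$ (or invoking the Coxeter-number bound on $\ell$, which concerns only the types of simple factors, not the kernel of $\sG_\der''\to\sG_\der$) does not make $|\Gamma_j|$ prime to $\ell$. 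Consequently $R\Gamma(\Gamma_j,-)$ is not simply $\Gamma_j$-invariants, and $R\Gamma(\scA(\sG''),-)$ does \emph{not} preserve the upper degree bound $2d$; it only preserves the lower bound.

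The paper sidesteps this entirely. From the decomposition of $\bigoplus_\xi \cP^{[I]}(R\Gamma(\Sh(\sG,\sX),\Lambda))_\xi$ as $\scA(\sG'')$-cohomology, one extracts only the lower bound $[d,\infty)$, which \emph{does} pass through continuous cohomology of a profinite group applied to a coconnective complex. The upper bound $2d$ is then obtained for free from the Betti cohomological dimension of the $d$-dimensional complex variety $\Sh_{K}(\sG,\sX)$, which caps the ambient complex $R\Gamma(\Sh(\sG,\sX),\Lambda)$ in degrees $[0,2d]$ before any localization. So the two bounds come from two different, independent sources; you do not need (and cannot get) the upper bound from the $\scA(\sG'')$-cohomology step.
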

\begin{proof}
By Lemma \ref{lemma-G''-torsion-vanising}, if $\xi''\in(\hat{A}''\git W_0)(\Lambda)$ is generic, then $R\Gamma(\Sh_{I''}(\sG'',\sX''),\Lambda)_{\xi''}$ is concentrated in degrees $[d,2d]$.
The same argument of Corollary \ref{cor-torsion-vanishing-infinite-level} shows that if $\xi''$ is generic, then $\cP^{\widehat\unip}(R\Gamma(\Sh(\sG,\sX),\Lambda))_{\xi''}$ is concentrated in degrees $[d,2d]$. 

By Lemma \ref{lemma-restr}, the restriction functor sends $\Rep^{\widehat\unip}(G(\QQ_p),\Lambda))_\xi$ to $\Rep^{\widehat\unip}(G''(\QQ_p),\Lambda))_{\xi''}$, where $\xi''$ is the image of $\xi$ along $\hat{A}\git W_0\to \hat{A}''\git W_0$. Therefore $\cP^{\widehat\unip}(R\Gamma(\Sh(\sG,\sX),\Lambda))_\xi$ is equal to the $\xi$-component of
$$R\Gamma(\scA(\sG''),\cP^{\widehat\unip}(R\Gamma(\Sh(\sG'',\sX''),\Lambda))_{\xi''}\otimes_\Lambda \cP^{\widehat\unip}(C^\infty(\scA(\sG),\Lambda))).$$
Therefore $\cP^{\widehat\unip}(R\Gamma(\Sh(\sG,\sX),\Lambda))_\xi$ is concentrated in degree $\geq d$. The upper bound follows from the cohomology amplitude.
\end{proof}

Finally, we prove our main theorem. Let $K_p\subseteq G(\QQ_p)$ be a quasi-parahoric subgroup and $K^p\subseteq \sG(\AAA_f^p)$ be a neat open compact subgroup. As in Corollary \ref{cor-torsion-vanishing-parahoric-level}, we define the $\xi$-component
$$R\Gamma(\Sh_{K^pK_p}(\sG,\sX),\Lambda)_\xi\quad\text{resp.}\quad R\Gamma(\Sh_{K^pK_p}(\sG,\sX),\Lambda)_\xi$$
for $\xi\in(\hat{A}\git W_0)(\Lambda)$.

\begin{thm}\label{thm-torsion-vanising-abelian-type}
    Assume that $\xi$ is generic. Then $R\Gamma(\Sh_{K^pK_p}(\sG,\sX),\Lambda)_\xi$ (resp. $R\Gamma_c(\Sh_{K^pK_p}(\sG,\sX),\Lambda)_\xi$) is concentrated in degrees $[d,2d]$ (resp. $[0,d]$).
\end{thm}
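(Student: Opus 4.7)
The proof will parallel the Hodge-type descent in Corollary \ref{cor-torsion-vanishing-parahoric-level}, now using Lemma \ref{lemma-torsion-abelian-infinite-level} in place of the Hodge-type infinite-level input. Poincaré duality on the smooth quasi-projective variety $\sSh_{K^pK_p}(\sG,\sX)_{\overline E}$ of dimension $d$ reduces the compactly supported statement to the usual one, so it suffices to show that $R\Gamma(\sSh_{K^pK_p}(\sG,\sX)_{\overline E},\Lambda)_\xi$ is concentrated in degrees $[d,2d]$.

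First, I would pass from infinite level to finite prime-to-$p$ level $K^p$. Since the $\sG(\AAA_f^p)$-action on $R\Gamma(\Sh(\sG,\sX),\Lambda)$ commutes with the $G(\QQ_p)$-action, with the projector $\cP^{[I]}$, and with $\xi$-localization, and since $K^p$ is neat so that $R\Gamma(K^p,-)$ is exact, combining Lemma \ref{lemma-torsion-abelian-infinite-level} with Artin's comparison between Betti and étale cohomology (via the fixed isomorphism $\CC\simeq\overline{\QQ}_p$) yields that
\[
\cP^{[I]}\bigl(R\Gamma(\sSh_{K^p}(\sG,\sX)_{\overline E},\Lambda)\bigr)_\xi
\]
is concentrated in degrees $[d,2d]$, where $\sSh_{K^p}$ denotes the inverse limit over $K_p$ of $\sSh_{K^pK_p}$.

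Next, I descend to the quasi-parahoric $K_p$-level via the analogue of Lemma \ref{lemma-coh-Sh-as-group-homology}. The key observation is that $\cInd_{K_p}^{G(\QQ_p)}\Lambda$ lies in the principal block $\Rep^{[I]}$: any quasi-parahoric $K_p$ contains a $G(\QQ_p)$-conjugate of $I$, and the natural $G(\QQ_p)$-equivariant surjection $\cInd_I^{G(\QQ_p)}\Lambda \twoheadrightarrow \cInd_{K_p}^{G(\QQ_p)}\Lambda$, $e_{gI}\mapsto e_{gK_p}$, exhibits the target as a quotient of an object of $\Rep^{[I]}$. Matching the $Z(H_{K_p})$-structure with the $Z^{\widehat\unip}_{{}^LG,\QQ_p}$-linearity on $\Rep^{[I]}$ via the Satake isomorphism gives
\[
 R\Gamma(\sSh_{K^pK_p}(\sG,\sX)_{\overline E},\Lambda)_\xi \;\simeq\; \Hom_{G(\QQ_p)}\!\bigl(\cInd_{K_p}^{G(\QQ_p)}\Lambda,\;\cP^{[I]}\bigl(R\Gamma(\sSh_{K^p}(\sG,\sX)_{\overline E},\Lambda)\bigr)_\xi\bigr).
\]
In characteristic $0$ this already suffices since $\cInd_{K_p}^{G(\QQ_p)}\Lambda$ is projective. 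In characteristic $\ell$, I would replace $K_p$ by its maximal pro-$p$ subgroup $\widetilde K_p$, for which $\cInd_{\widetilde K_p}^{G(\QQ_p)}\Lambda$ is compact projective; apply Corollary \ref{cor-detect-t-str-supp} together with Remark \ref{rmk-detect-t-str} to detect the cohomological bound; and then recover the $K_p$-statement from the $\widetilde K_p$-statement via Lemma \ref{lemma-fg-compact}, as in the proof of Corollary \ref{cor-torsion-vanishing-infinite-level}.

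The principal obstacle I anticipate is the careful bookkeeping of the various $\xi$-localizations and principal-block projections: verifying that $K^p$-invariants commute with $\cP^{[I]}$ and with $\xi$-localization, and checking that the $Z(H_{K_p})$-action on $K_p$-invariants matches the $Z^{\widehat\unip}_{{}^LG,\QQ_p}$-linear structure on $\cP^{[I]}$-projected objects — the latter ultimately relying on the Hecke-algebra compatibility established in Theorem \ref{thm-local-global-compatibility}.
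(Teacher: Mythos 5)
Your proposal follows essentially the same route as the paper: reduce to the usual cohomology via Poincar\'e duality, then feed Lemma \ref{lemma-torsion-abelian-infinite-level} into a Lemma-\ref{lemma-coh-Sh-as-group-homology}--style descent from infinite level to finite level $K^pK_p$. The paper does this descent in one shot, writing $R\Gamma(\Sh_{K^pK_p}(\sG,\sX),\Lambda)_\xi\simeq R\Gamma(\overline{K^pK_p},\cP^{[I]}(R\Gamma(\Sh(\sG,\sX),\Lambda))_\xi)$ with $\overline{K^pK_p}=K^pK_p/(\sZ(\QQ)^-\cap K^pK_p)$, whereas you factor it as $K^p$-invariants followed by $K_p$-invariants; this is the same argument up to bookkeeping. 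One small inaccuracy: $R\Gamma(K^p,-)$ is not exact just because $K^p$ is neat (it can have nontrivial $\ell$-cohomological dimension if an $\ell$-Sylow sits inside $K^p$), and in the abelian-type case you should really be taking invariants along the quotient of $K^pK_p$ by $\sZ(\QQ)^-\cap K^pK_p$; but neither defect matters, since what you actually need is only that group cohomology is left $t$-exact (giving the lower bound $\geq d$) together with the Artin--Grothendieck bound $\leq 2d$ for the $d$-dimensional smooth quasi-projective variety $\sSh_{K^pK_p}(\sG,\sX)$, which is how the paper's argument works as well.
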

\begin{proof}
    The claim for $R\Gamma(\Sh_{K^pK_p}(\sG,\sX),\Lambda)_\xi$ follows from Lemma \ref{lemma-torsion-abelian-infinite-level}, using that
    $$R\Gamma(\Sh_{K^pK_p}(\sG,\sX),\Lambda)_\xi\simeq R\Gamma(\overline{K^pK_p},\cP^{\widehat\unip}(R\Gamma(\Sh(\sG,\sX),\Lambda))_\xi),$$
    where $\overline{K^pK_p}=\frac{K^pK_p}{\sZ(\QQ)^-\cap K^pK_p}$.
    The claim for $R\Gamma_c(\Sh_{K^pK_p}(\sG,\sX),\Lambda)_\xi$ follows from Poincar\'e duality.
\end{proof}

\bibliographystyle{alpha}
\bibliography{bib}
\end{document}